\pgfplotsset{compat=1.10}
\pgfplotsset{soldot/.style={color=black,only marks,mark=*}} \pgfplotsset{holdot/.style={color=black,fill=white,only marks,mark=*}}
\newtheorem{thm}{Theorem}[subsection]
\newtheorem{lem}[thm]{Lemma}
\newtheorem{prop}[thm]{Proposition}
\newtheorem{cor}[thm]{Corollary}
\newtheorem{crit}[thm]{Criterion}
\newtheorem{alg}[thm]{Algorithm}
\newtheorem{notation}[thm]{Notation}
\theoremstyle{definition}
\newtheorem{defn}[thm]{Definition}
\newtheorem{defns}[thm]{Definitions}
\newtheorem*{ack}{Acknowledgements}
\theoremstyle{remark}
\newtheorem{remark}[thm]{Remark}
\newtheorem{remarks}[thm]{Remarks}
\newtheorem{example}[thm]{Example}
\newtheorem{examples}[thm]{Examples}
\numberwithin{equation}{section}
\numberwithin{figure}{section}
 \newcommand{\N}{{\mathbb N}}
\newcommand{\Z}{{\mathbb Z}} \newcommand{\R}{{\mathbb R}}
\newcommand{\Q}{{\mathbb Q}} \newcommand{\C}{{\mathbb C}}
 \newcommand{\PP}{{\mathbb P}}
\newcommand{\reg}{{\mathcal R}} 
 \newcommand{\I}{{\mathcal I}}
\newcommand{\G}{\mathbb{G}}
\newcommand{\gtp}{{\mathfrak p}} \newcommand{\gtq}{{\mathfrak q}}
\newcommand{\gtm}{{\mathfrak m}} \newcommand{\gtn}{{\mathfrak n}}
\newcommand{\gta}{{\mathfrak a}} \newcommand{\gtb}{{\mathfrak b}}
 \newcommand{\gtQ}{{\mathfrak Q}}
\newcommand{\gtd}{{\mathfrak d}} 
 \newcommand{\gtA}{{\mathfrak A}}
 \newcommand{\gtM}{{\mathfrak M}} \newcommand{\gtN}{{\mathfrak N}}  
\newcommand{\Bb}{{\EuScript B}}
\newcommand{\Zz}{{\EuScript Z}}
\newcommand{\cinfty}{{\EuScript{C}^\infty}}
\newcommand{\czero}{{\EuScript{C}^0}}
\newcommand{\Reg}{\operatorname{Reg}}
\newcommand{\Sing}{\operatorname{Sing}}
\newcommand{\qf}{\operatorname{qf}}
\newcommand{\cl}{\operatorname{Cl}}
\newcommand{\hgt}{\operatorname{ht}}
\newcommand{\tr}{\operatorname{tr}}
\newcommand{\id}{\operatorname{id}}
\newcommand{\x}{{\tt x}}
\newcommand{\y}{{\tt y}} 
\newcommand{\z}{{\tt z}}
\renewcommand{\t}{{\tt t}}
\newcommand{\ii}{{\tt i}}
\newcommand{\ol}{\overline}
\newcommand{\qr}{{\ol{\Q}^r}}
\newcommand{\qbar}{{\ol{\Q}}}
\newcommand{\ove}{{\ol{E}}}
\newcommand{\kr}{{\ol{K}^r}}
\newcommand{\krn}{{(\kr)^n}}
\newcommand{\kbar}{{\ol{K}}}
\newcommand{\kbarn}{{\kbar^n}}
\newcommand{\mr}{\mathrm}
\newcommand{\zcl}{\mr{Zcl}}
\newcommand{\II}{{\mc{I}}}
\newcommand{\ZZ}{{\mc{Z}}}
\newcommand{\mc}{\mathcal}
\newcommand{\sing}{\mr{Sing}}
\newcommand{\mk}{\mathfrak}
\newcommand{\sfh}{\mathsf{h}}
\def\@tocline#1#2#3#4#5#6#7{\relax
 \ifnum #1>\c@tocdepth 
 \else
 \par \addpenalty\@secpenalty\addvspace{#2}%
 \begingroup \hyphenpenalty\@M
 \@ifempty{#4}{%
 \@tempdima\csname r@tocindent\number#1\endcsname\relax
 }{%
 \@tempdima#4\relax
 }%
 \parindent\z@ \leftskip#3\relax \advance\leftskip\@tempdima\relax
 \rightskip\@pnumwidth plus4em \parfillskip-\@pnumwidth
 #5\leavevmode\hskip-\@tempdima
 \ifcase #1
 \or\or \hskip 1em \or \hskip 2em \else \hskip 3em \fi%
 #6\nobreak\relax
 \dotfill\hbox to\@pnumwidth{\@tocpagenum{#7}}\par
 \nobreak
 \endgroup
 \fi}
\begin{document}

$\;$
\vspace{-2em}
\title[Subfield-algebraic geometry]{Subfield-algebraic geometry}

\author{Jos\'e F. Fernando}
\address{Departamento de \'Algebra, Geometr\'\i a y Topolog\'\i a, Facultad de Ciencias Matem\'aticas, Universidad Complutense de Madrid, Plaza de Ciencias 3, 28040 MADRID (SPAIN)}
\email{josefer@mat.ucm.es}

\author{Riccardo Ghiloni}
\address{Dipartimento di Matematica, Universit\`a degli studi di Trento, Via Sommarive 14, 38123 Povo-Trento (ITALY)}
\email{riccardo.ghiloni@unitn.it}

\keywords{Generalized algebraic geometry, algebraic geometry over subfields, Galois theory, extension of coefficients, dimension of (local) rings, Faltings' theorem, Hilbert's Nullstellensatz, Real Nullstellensatz, nonsingular points, singular points, Whitney regular stratifications, $\Q$-algebraicity problem}
\subjclass[2020]{Primary: 14P05, 14R05; Secondary: 14R10, 14A10}

\maketitle 

\renewcommand{\contentsname}{Table of Contents}
{
\footnotesize
\setcounter{tocdepth}{2}
\tableofcontents
}

\section{Introduction}

\subsection{The idea of subfield-algebraic geometry and overview of the contents}
This paper is devoted to the study of the geometric properties of algebraic sets in an affine or projective space over a fixed field $L$ that derive from polynomials with coefficients in a fixed subfield $K$ of~$L$. Here we develop the foundational elements and results of the new theory resulting from this study, which we call `subfield-algebraic geometry'.

Our main goal is to study the real case, more precisely the case in which $L$~is a real closed field and $K$ is an ordered subfield of $L$. The most interesting phenomena appear when $K$ is not a real closed field. A main example to keep in mind is the one in which $L$ is the field $\R$ of real numbers or the field $\qr$ of real algebraic numbers and $K$ is the field $\Q$ of rational numbers.

Recall that an ordering of a field $L$ is a total order relation $\leq$ on $L$ that is compatible with the field operations of $L$ in the following sense: $\forall x,y,z\in L$, $(x\leq y\Longrightarrow x+z\leq y+z)$ and $(0\leq x,0\leq y\Longrightarrow0\leq xy)$. The field $L$ is ordered if it endowed with such an ordering. The field $L$ is real if it admits at least one ordering. A field $L$ is a real closed field, r.c.f.\ for short, if $L$ is real but all the proper algebraic extensions of $L$ are not. A real closed field admits only one ordering. If $L$ is an ordered field, e.g., a real closed field endowed with its unique ordering, then an ordered subfield of $L$ is a subfield of $L$ endowed with the ordering induced by the one of $L$. 

A real field $L$ has always characteristic zero, and an ordered field $L$ is a r.c.f.\ if and only if $L[\ii]$ is the algebraic closure of $L$, where $\ii:=\sqrt{-1}$. It is therefore not surprising that, to achieve our goal of studying the real case, we make extensive use of the complex case in which $L$ is an algebraically closed field of characteristic zero, an a.c.f.\ for short. For this reason, we also carefully analyze and present the complex case.

\vspace{3mm}
The starting question of this paper is:

\begin{itemize}[leftmargin=*]
\item[] \textit{Question A.} What is the foundational concept of algebraic geometry?
\end{itemize}

\noindent Complex and real algebraic geometers agree on the answer: \textsf{algebraic~set}.

\begin{itemize}[leftmargin=*]
\item[] \textit{Answer A in the complex case.} Quoting page 1 of the book `Algebraic Geometry~I.\ Complex Projective Varieties' \cite{mumford} by Mumford: ``The beginning of the whole subject is the following definition: A closed algebraic subset $X$ of $\C^n$ is the set of zeros of a finite set of polynomials''. The symbol $\C$ denotes the field of complex numbers as usual.
\end{itemize}

\begin{itemize}[leftmargin=*]
\item[] \textit{Answer A in the real case.} Quoting page 1 of the book `Real Algebraic Geometry' \cite{bcr} by Bochnak, Coste and Roy: ``In simplest terms, algebraic geometry is the study of the set of solutions of a system of polynomial equations. The main goal of real algebraic geometry is the study of real algebraic sets i.e.\ subsets of $\R^n$ defined by polynomial equations.''
\end{itemize}

\noindent Let us try to give a more precise answer. Let $n\in\N^*:=\N\setminus\{0\}$ and let $L$ be either an a.c.f.\ or a r.c.f.. Given sets $F\subset L[\x]:=L[\x_1,\ldots,\x_n]$ and $X\subset L^n$, define
\begin{align*}
\II_L(X)&:=\{f\in L[\x]: f(x)=0,\ \forall x\in X\},\\
\ZZ_L(F)&:=\{x\in L^n: f(x)=0,\ \forall f\in F\}.
\end{align*}
If $F:=\{f_1,\ldots,f_s\}\subset L[\x]$ is a finite set of polynomials, we write $\ZZ_L(f_1,\ldots,f_s):=\ZZ_L(F)$. The set $X\subset L^n$ is algebraic if $X=\ZZ_L(F)$ for some $F\subset L[\x]$: this is the standard concept of algebraic set.

\begin{itemize}[leftmargin=*]
\item[] \textit{Answer $A'$.} In more precise terms: algebraic geometry over $L$ is the study of those geometric properties of the algebraic sets $X\subset L^n$ that derive from the zero ideals $\II_L(X)$ of $L[\x]$.
\end{itemize}

\noindent Let $K$ be a subfield of $L$. Consider $K[\x]$ as a subset of $L[\x]$, and $K^n$ as a subset of $L^n$. Given a set $X\subset L^n$, define
$$
\II_K(X):=\{f\in K[\x]: f(x)=0,\ \forall x\in X\}.
$$
Observe that~$X$ is a subset of the whole $L^n$, but $\II_K(X)$ consists only of polynomials with coefficients in $K$ that vanish on~$X$, so it is an ideal of $K[\x]$. We say that the set $X\subset L^n$ is \emph{$K$-algebraic} if $X=\ZZ_L(F)$ for some $F\subset K[\x]$. Evidently, one can calculate $\II_K(X)$ from $\II_L(X)$ by simply writing $\II_K(X)=\II_L(X)\cap K[\x]$. Moreover, $X\subset L^n$ is a $K$-algebraic set if and only if $X=\ZZ_L(\II_K(X))$. Here we present some examples of $K$-algebraic sets in the cases $L|K=\C|\Q$ or $\R|\Q$:
\begin{itemize}
\item The singletons $\{\sqrt{2}\}\subset\C$, $\{\sqrt{2}\}\subset\R$ are not $\Q$-algebraic and the singleton $\{\sqrt[3]{2}\}\subset\C$ neither. Instead, $\{\sqrt[3]{2}\}=\ZZ_\R(\x_1^3-2)\subset\R$ is a $\Q$-algebraic set. 
\item The complex line $\ZZ_\C(\x_1-\sqrt[3]{2}\x_2)\subset\C^2$ is not $\Q$-algebraic, whereas the real line $\ZZ_\R(\x_1-\sqrt[3]{2}\x_2)=\ZZ_\R(\x_1^3-2\x_2^3)\subset\R^2$ is a $\Q$-algebraic set.
\item The planes $\ZZ_\C(\x_1+\sqrt{2}\x_2+\sqrt[4]{2}\x_3)\subset\C^3$ and $\ZZ_\R(\x_1+\sqrt{2}\x_2+\sqrt[4]{2}\x_3)\subset\R^3$ are not $\Q$-algebraic. We will prove this fact later using Galois theory.
\end{itemize}

\noindent Consider again the fixed extension of fields $L|K$, where $L$ is either an a.c.f.\ or a r.c.f..

\begin{itemize}[leftmargin=*]
\item[] \textit{Question B.} What do we mean by subfield-algebraic geometry, or more precisely by $K$-alge\-braic geometry over $L$, in short $L|K$-algebraic geometry?
\end{itemize}

\begin{itemize}[leftmargin=*]
\item[] \textit{Answer B.} $L|K$-algebraic geometry means the study of the geometric properties of the $K$-algebraic sets $X\subset L^n$ that derive from the zero ideals $\II_K(X)$ of $K[\x]$.
\end{itemize}

\noindent As an $L$-algebraic subset of $L^n$ is a usual algebraic subset of $L^n$, the $L|L$-algebraic geometry is the standard algebraic geometry over $L$. If $X\subset L^n$ is $K$-algebraic, then $X$ is also algebraic in $L^n$, so a natural question arises.

\begin{itemize}[leftmargin=*]
\item[] \textit{Question C.} Is there a direct link between the $L|K$-algebraic geometry of a $K$-algebraic set $X\subset L^n$ and the standard $L|L$-algebraic geometry of $X\subset L^n$? More concretely, is it true that $\II_L(X)$ is generated by $\II_K(X)$ in $L[\x]$?
\end{itemize}
The answers in the complex case and in the real case are different in general.
\begin{itemize}[leftmargin=*]
\item[] \textit{Positive answer C in the complex case.}\ A standard result from commutative algebra asserts that, if $\gta$ is a radical ideal of $K[\x]$, then $\gta L[\x]=\gta\otimes_KL$ is a radical ideal of $L[\x]$. Therefore, if $L$ is an a.c.f.\ and $X\subset L^n$ is a $K$-algebraic set, then Hilbert's Nullstellensatz \cite[pag.320-321]{hil} implies that $\II_L(X)=\II_K(X)L[\x]$, because $\II_L(X)=\II_L(\ZZ_L(\II_K(X)))=\II_L(\ZZ_L(\II_K(X)L[\x]))=
\sqrt{\II_K(X)L[\x]}=\II_K(X)L[\x]$. Consequently, if $L$ is an a.c.f., a subset $X$ of $L^n$ is the common zero set of a family of polynomials in $K[\x]$ if and only if $X\subset L^n$ is defined over $K$ in the sense that $\II_L(X)=\II_K(X)L[\x]$. 
\end{itemize}

\begin{itemize}[leftmargin=*]
\item[] \textit{A negative answer C in the real case.}\ If $L$ is a r.c.f.\ and $K=\Q$, there exist $K$-algebraic sets $X\subset L^n$ such that $\II_L(X)\neq\II_K(X)L[\x]$. For instance, if $X:=\{\sqrt[3]{2}\}\subset L$, then
$
\II_L(X)=(\x_1-\sqrt[3]{2})L[\x_1]\supsetneqq(\x_1^3-2)L[\x_1]=\II_K(X)L[\x_1].
$
Consequently, there exist subsets $X$ of $L^n$ that are the common zero sets of families of polynomials in $K[\x]$, but are not defined over $K$ in the sense that $\II_L(X)\neq\II_K(X)L[\x]$. This will be a crucial point along this work.
\end{itemize}
In the complex case, as we will see, the behavior of $K$-algebraic sets is tame and its description is clear, although not trivial. Consequently, we mainly focus on the real case.

Assume that $L$ is a r.c.f.\ and $K$ is an ordered subfield of $L$. If $X\subset L^n$ is $K$-algebraic, the intersection $X\cap K^n$ is algebraic in $K^n$, and a further natural question arises.
\begin{itemize}[leftmargin=*]
\item[] \textit{Question D.} Is there a direct link between the $L|K$-algebraic geometry of a $K$-algebraic set $X\subset L^n$ and the standard $K|K$-algebraic geometry of $X\cap K^n\subset K^n$? More concretely, is it possible to compute $\II_K(X)\subset K[\x]$ from $\II_K(X\cap K^n)\subset K[\x]$?
\end{itemize}
This is a very deep question that leads to Diophantine geometry and has a negative answer in general.
\begin{itemize}[leftmargin=*]
\item[] \textit{A negative answer D.} Suppose that $K=\Q$. For each $h\geq 3$, consider the Fermat curve $F_h:=\ZZ_L(\x_1^{2h}+\x_2^{2h}-2^h)\subset L^2$. Fermat's Last Theorem of Wiles \cite{wi} implies that $F_h\cap\Q^2=\varnothing$, so $F_h\cap\Q^2$ provides no information about $F_h$. More generally, Faltings' solution of Mordell's conjecture \cite{fa83,fwgss} states that, if $H$ is a curve of $L^n$ of genus $\geq2$ that is a $\Q$-algebraic set, then $\ZZ_L(H)$ is a finite set for each finitely generated extension $L|\Q$.
\end{itemize}
Actually, there are remarkable situations where the answer is positive. 
\begin{itemize}[leftmargin=*]
\item[] \textit{A positive answer D.} If $L$ and $K$ are both r.c.f., then $\II_K(X)=\II_K(X\cap K^n)$ (and $\II_L(X)=\II_K(X)L[\x]$) by the `exten\-sion of coefficients' procedure due to Tarski-Seidenberg's principle. (The same is true if $L$ and $K$ are both a.c.f.).
\end{itemize}
Also in this case, that is, if $L$ and $K$ are both r.c.f., the behavior of $K$-algebraic subsets of~$L^n$ is tame and its description is clear, although not trivial. The latter considerations suggest focusing on the real case in which $L$ is a r.c.f.\ whereas $K$ is not: for instance, $L|K=\R|\Q$ or $\qr|\Q$. We will see that in this situation the geometry of $K$-algebraic sets is rich in many new phenomena.

The main purpose of this paper is to introduce and study the foundational concepts of $K$-Zariski topology of $L^n$, $K$-dimension $\dim_K(X)$ of a $K$-algebraic set $X\subset L^n$, and $E|K$-nonsingular and $E|K$-singular points of the $K$-algebraic set $X\subset L^n$ for every extension of fields $L|E|K$, and to compare them with the corresponding concepts of Zariski topology, dimension and nonsingular and singular points in the standard context of the $L|L$-algebraic and $K|K$-algebraic geometries. We include numerous explicit examples and calculations. Finally, we present four examples of how the developed theory can be applied.

\vspace{3mm}

The paper is organized into six sections and three appendices. Below we provide an overview of the contents.

\vspace{3mm}
\textsc{Section \ref{s1}} provides basic definitions, constructions and results concerning $K$-algebraic sets. A subset of $L^n$ is $K$-algebraic if it is the common zero set in $L^n$ of a family of polynomials in $K[\x]$. The $K$-algebraic subsets of $L^n$ are the closed sets of a topology of $L^n$ that we call $K$-Zariski topology. Using this topology, we introduce the concepts of $K$-Zariski closure in $L^n$, and $K$-irreducible, $K$-Zariski locally closed and $K$-constructible subsets of $L^n$. The $K$-Zariski topology is coarser than the usual ($L$-)Zariski topology of $L^n$, so it is Noetherian and we can speak about the $K$-irreducible components of any $K$-algebraic subset of $L^n$. Given a $K$-algebraic set $X\subset L^n$, we define the $K$-dimension $\dim_K(X)$ of $X\subset L^n$ as the Krull dimension of the ring $K[\x]/\II_K(X)$. We study the main properties of these concepts and their interactions.

Noether's normalization theorem applied to $\II_K(X)$ allows us to prove that $\dim_L(X)\leq\dim_K(X)$ and the equality holds when $\II_L(X)=\II_K(X)L[\x]$, for instance when $L$ is an a.c.f.. Assuming that $X\subset L^n$ is the usual ($L$-)Zariski closure of a usual ($K$-)algebraic set $Y\subset K^n$, we investigate the relationships between vanishing ideals, irreducible components and dimensions of $X\subset L^n$ and $Y\subset K^n$. These relationships are particularly strong when $L$ and $K$ are both a.c.f.\ or r.c.f., because `extension of coefficients' procedure works by Tarski-Seidenberg's principle (see Appendix \ref{appendix}). In fact, in this case, we have $\II_L(X)=\II_K(X)L[\x]$ and $\II_K(X)=\II_K(X\cap K^n)$.

An important step in the development of the theory is to understand the behavior of the $K$-Zariski closure of a complex algebraic set. Suppose for a while that $L$ is an a.c.f., $\kbar$ is the algebraic closure of $K$ in $L$, $X$ is an arbitrary ($L$-)algebraic subset of $L^n$, and $T$ is the $K$-Zariski closure of $X$ in $L^n$, that is, $T=\ZZ_L(\II_K(X))$. If $X\subset L^n$ is not $\kbar$-algebraic, we have no control over $T$: if $X\subset L^n$ is in addition irreducible, the `transcendence of $X\subset L^n$ over $K$' is revealed by the strict inequality $\dim_L(X)<\dim_L(T)$. 

If $X\subset L^n$ is $\kbar$-algebraic, then:
\begin{itemize}
\item We define the Galois completion of $X\subset L^n$ as the smallest subset of $L^n$ that contains $X$ and is invariant under the action of the Galois group $G:=G(L:K)$, that is, the set $\bigcup_{\psi\in G}\psi_n(X)$, where $\psi_n:L^n\to L^n$ is the bijection $\psi_n(z_1,\ldots,z_n):=(\psi(z_1),\ldots,\psi(z_n))$ associated to each $\psi\in G$. It holds $T=\bigcup_{\psi\in G}\psi_n(X)$.
\item We provide an algorithm to compute $T$ and $\II_K(T)$, starting from any finite system of polynomial equations of $X\subset L^n$ over $\kbar$. We involve any finite Galois sub\-extension of $\kbar|K$ containing all the coefficients of the previous equations.
\item We deduce that $\dim_L(X)=\dim_L(T)=\dim_K(T)=\dim_K(X)$. 
\item We analyze in more detail the case of hypersurfaces.
\end{itemize}

Some relevant consequences for complex $K$-algebraic sets are the following (recall that $L$ is assumed to be an a.c.f.\ at this time): 
\begin{itemize}
\item A $\kbar$-algebraic subset of $L^n$ is $K$-algebraic if and only if it is invariant under the action of the Galois group $G=G(L:K)$.
\item All the $L$-irreducible components of a $K$-irreducible $K$-algebraic subset of $L^n$ have the same dimension and are provided by the algorithm mentioned above.
\item If $X\subset L^n$ is a $K$-irreducible $K$-algebraic set, $Y_1,\ldots,Y_s$ are its $L$-irreducible components, $E$ is any intermediate field (that is, $K\subset E\subset L$) and $W_1,\ldots,W_t$ are the $E$-irreducible components of $X\subset L^n$, then the following complex clustering phenomenon occurs: $t\leq s$ and there exists a partition of $\{1,\ldots,s\}=J_1\sqcup\cdots\sqcup J_t$ such that $\{Y_j:j\in J_i\}$ is the family of $L$-irreducible components of $W_i\subset L^n$ for each $i\in\{1,\ldots,t\}$.
\item As we said, the invariance of dimension works when $L$ is an a.c.f.: if $X\subset L^n$ is $K$-algebraic, then $\dim_L(X)=\dim_K(X)$. This invariance of dimension remains still valid in the case when $L$ is a r.c.f., even if the equality $\II_L(X)=\II_K(X)L[\x]$ does not hold. Consequently, when $L$ is either an a.c.f.\ or a r.c.f., we can speak about the dimension of any $K$-algebraic set $X\subset L^n$ without referring to $K$. Such invariance of dimension is a tool of crucial importance for developing the theory presented in this paper.

The invariance of dimension does not always work. Using Faltings' theorem, we present explicit examples of extensions of fields $L|K$ and $K$-algebraic sets $X\subset L^n$ such that $\dim_L(X)<\dim_K(X)$.
\item We revisit the concept of complexification using the algorithm mentioned above. 
\end{itemize} 

This section concludes with the projective case. We introduce projective $K$-algebraic sets, and present $L|K$-versions of both the main theorem of elimination theory and Chevalley's projection theorem. The proofs of these results highlight some of the difficulties that can be encountered when trying to generalize a classical $L|L$-result to the $L|K$-context.

\vspace{3mm}
\textsc{Section \ref{s2}} focuses on real $K$-algebraic sets. Suppose now that $L=R$ is a real closed field and $K$ is a subfield of $R$. Denote $\kr$ the algebraic closure of $K$ in~$R$, $\kbar:=\kr[\ii]$ the algebraic closure of $K$ and $C:=R[\ii]$ the algebraic closure of $R$. We introduce and study the concepts of complex and real Galois completions of a $\kr$-algebraic set $Y\subset R^n$. Let $Z\subset C^n$ be the complexification of $Y\subset R^n$, that is, the Zariski closure of $Y$ in $C^n$. We define the complex Galois completion $T$ of $Y$ as the Galois completion of $Z$, and the real Galois completion $T^r$ of $Y$ as the intersection of $T$ with $R^n$, that is, $T=\bigcup_{\psi\in G}\psi_n(Z)$ and $T^r:=\bigcup_{\psi\in G}(\psi_n(Z)\cap R^n)$, where $G$ is the Galois group $G(C:K)$. We prove that $T$ and $T^r$ are the $K$-Zariski closures of $Y$ in $C^n$ and in $R^n$, respectively. The complexification of $T^r\subset R^n$ is contained in $T$, however this inclusion may be strict. We describe an algorithm to compute $T$, $T^r$ and $\II_K(T)=\II_K(T^r)$, beginning from any finite system of generators of $\II_\kr(Y)$ in $\kr[\x]$ and any finite Galois subextension of $\kbar|K$ containing all the coefficients of such generators.

These results have several consequences for real $K$-algebraic sets. We summarize next the most relevant ones:
\begin{itemize}
\item The $\kr$-algebraic subset $Y$ of $R^n$ is $K$-alge\-braic if and only if it is $G$-stable, that is, if $\psi_n(Z)\cap R^n\subset Y$ for all $\psi\in G=G(C:K)$.
\item Let $X\subset R^n$ be a $K$-irreducible $K$-algebraic set of dimension $d$. Choose an ($R$-)irreducible component $Y$ of $X$ of its same dimension $d$, and denote again $Z\subset C^n$ the complexification of $Y\subset R^n$. Then $Y\subset R^n$ is a $\kr$-irreducible component of $X$, whose complex Galois completion $T=\bigcup_{\psi\in G}\psi_n(Z)$ is equal to the $K$-Zariski closure of $X$ in $C^n$ and whose real Galois completion $T^r=\bigcup_{\psi\in G}(\psi_n(Z)\cap R^n)$ coincides with $X$. In addition, $T\subset C^n$ is $K$-irreducible, $\{\psi_n(Z)\}_{\psi\in G}$ is the family of all $C$-irreducible components of $T\subset C^n$ and each $\psi_n(Z)\subset C^n$ has ($C$-)dimension equal to $d$. Moreover, $\dim(\psi_n(Z)\cap R^n)\leq d$ for all $\psi\in G$ and this inequality can be strict for some $\psi$. The family of intersections $\psi_n(Z)\cap R^n$ having dimension $d$ coincides with that of all $R$-irreducible components of $X\subset R^n$ of dimension $d$, whereas it may happen that $X\subset R^n$ has $R$-irreducible components of dimension strictly smaller than $d$. We define the $K$-bad set of $X$ as the union of all the intersections $\psi_n(Z)\cap R^n$ of dimension strictly smaller than $d$. See Figure \ref{im:poly}.
\item We extend the previous results and the definition of $K$-bad set to arbitrary real $K$-algebraic sets, possibly $K$-reducible.
\item We study the clustering phenomenon in the real case.
\item We deal with the special case of real hypersurfaces separately. We introduce and study the concepts of $K$-geometric polynomial and $K$-geometric hypersurface. A polynomial $f\in K[\x]$ is $K$-geometric in $R^n$ if $\II_K(\ZZ_R(f))=(f)K[\x]$. The zero set of a $K$-geometric polynomial in $R^n$ is called $K$-geometric hypersurface of $R^n$. An enlightening example is the following: the polynomial $\x_1^3-2\in\Q[\x_1]$ is $\Q$-geometric but not $R$-geometric in~$R$, whereas its zero set $\{\sqrt[3]{2}\}$ is both a $\Q$-geometric and $R$-geometric hypersurface of $R$, because $\{\sqrt[3]{2}\}$ is also the zero set of the $R$-geometric polynomial $\x_1-\sqrt[3]{2}\in R[\x_1]$ in $R$.
 
Given a $\kr$-geometric polynomial $g\in\kr[\x]$ in $R^n$, we simplify the algorithm mentioned above to compute the complex and real Galois completions of $\ZZ_R(g)\subset R^n$, and analyze the different phenomena that appear.
\end{itemize}

Let us give the idea of how the algorithm works on an explicit example. 
\begin{example}
Let $K:=\Q$, let $R$ be a real closed field, let $g:=\x_1+\sqrt{2}\x_2+\sqrt[4]{2}\x_3\in\qr[\x]:=\qr[\x_1,\x_2,\x_3]$, let $Y:=\ZZ_R(g)\subset R^3$ and let $Z\subset C^3$ be the complexification of $Y$. Observe that $\II_\qr(Y)=(g)\qr[\x]$, so $g$ is $\qr$-geometric in $R^3$ and $Z=\ZZ_C(g)$. Consider the finite Galois subextension $E:=\Q[\sqrt[4]{2},\ii]|\Q$ of $\qbar|\Q$ and the Galois group $G':=G(E:\Q)$. The group $G'$~acts on the coefficients of $g$ providing (twice) the following four distinct polynomials:
\begin{align*}
g_0&:=g, \quad g_1:=\x_1-\sqrt{2}\x_2+\ii\sqrt[4]{2}\x_3,\\
g_2&:=\x_1+\sqrt{2}\x_2-\sqrt[4]{2}\x_3, \quad g_3:=\x_1-\sqrt{2}\x_2-\ii\sqrt[4]{2}\x_3.
\end{align*}
Define:
\begin{align*}
&Z_a:=\ZZ_C(g_a)\subset C^3\;\text{ and }\;X_a:=Z_a\cap R^3\;\text{ for each }a\in\{0,1,2,3\},\\
&\textstyle T:=\bigcup_{a=0}^3Z_a, \quad X:=\bigcup_{a=0}^3X_a, \quad g^\bullet:=g_0g_1g_2g_3\in\Q[\x].
\end{align*}
Observe that $X_0=Y$ and $Z_0=Z$. We have:
\begin{itemize}
\item $T$ is the complex Galois completion of $Y$, that is, the $\Q$-Zariski closure of $Y$ in $C^3$.
\item $T\subset C^3$ is $\Q$-irreducible but $C$-reducible and its $C$-irreducible components are the four distinct planes $Z_0,Z_1,Z_2,Z_3$ of $C^3$. 
\item $X_0$ and $X_2$ are two distinct planes of $R^3$. $X_1$ and $X_3$ are both equal to the line of $R^3$ with equations $\x_1-\sqrt{2}\x_2=0,\x_3=0$. Observe that $g_1g_3\in R[\x]$ and $\ZZ_R(g_1g_3)=X_1=X_3$.
\item $X$ is the real Galois completion of $Y$, that is, the $\Q$-Zariski closure of $Y$ in $R^3$.
\item $X\subset R^3$ is $\Q$-irreducible but $R$-reducible. The planes $X_0,X_2$ and the line $X_1=X_3$ of $R^3$ are its $R$-irreducible components. The $\Q$-bad set of $X$ is equal to the line $X_1=X_3$. See Figure \ref{im:poly}.
\item $T=\ZZ_C(g^\bullet)$ and $X=\ZZ_R(g^\bullet)$. The polynomial $g^\bullet$ is reducible in $R[\x]$ and $g^\bullet=g_0g_2(g_1g_3)$ is its factorization in $R[\x]$. In addition, $g^\bullet$ is irreducible in $\Q[\x]$, $\Q$-geometric in $R^3$ but not $R$-geometric in $R^3$. The algebraic set $X$ is a $\Q$-geometric but not $R$-geometric hypersurface of $R^3$.
\item The complexification of the line $X_1=X_3$ of $R^3$ is strictly contained in the plane $Z_a$ of $C^3$ for $a\in\{1,3\}$. Thus, the complexification of $X$ is strictly contained in~$T$. $\sqbullet$
\end{itemize}
\end{example}

The previous example is part of Examples \ref{exa:gc}, where slightly different notations are used and further phenomena are shown.

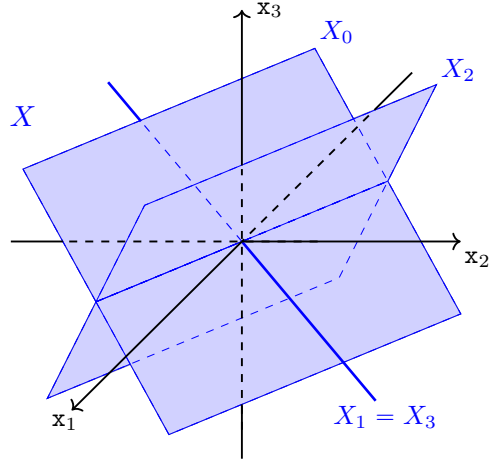
\begin{figure}[!ht]
\begin{center}
\begin{tikzpicture}[scale=0.32]

\draw[fill=blue!60,opacity=0.30,draw] (-6,-2.5) -- (-8,-6.5) -- (-4.58,-5.1) -- (-3,-8) -- (9,-3) -- (6,2.5) -- (8,6.5) -- (4.58,5.1) -- (3,8) -- (-9,3) -- (-6,-2.5);

\draw[blue,line width=0.01mm,dashed] (-6,-2.5) -- (6,2.5) -- (8,6.5) -- (-4,1.5) -- (-6,-2.5);
\draw[blue,line width=0.01mm,dashed] (-6,-2.5) -- (6,2.5) -- (4,-1.5) -- (-8,-6.5) -- (-6,-2.5);
\draw[blue,line width=0.01mm,dashed] (-6,-2.5) -- (-3,-8) -- (9,-3) -- (6,2.5) -- (-6,-2.5);
\draw[blue,line width=0.01mm,dashed] (-6,-2.5) -- (-9,3) -- (3,8) -- (6,2.5) -- (-6,-2.5);

\draw[blue,line width=0.05mm] (6,2.5) -- (9,-3) -- (-3,-8) -- (-9,3) -- (3,8) -- (4.58,5.1);
\draw[blue,line width=0.05mm] (6,2.5) -- (8,6.5) -- (-4,1.5) -- (-6,-2.5) -- (0,0);
\draw[blue,line width=0.05mm] (-6,-2.5) -- (-8,-6.5) -- (-4.6,-5.085);
\draw[blue,line width=0.05mm] (-6,-2.5) -- (6,2.5);

\draw[blue,line width=0.35mm] (-4.15,4.98) -- (-5.5,6.6);
\draw[blue,line width=0.15mm,dashed] (0,0) -- (-4.2,5.04);
\draw[blue,line width=0.35mm] (5.5,-6.6) -- (0,0);

\draw[line width=0.25mm] (-9.5,0) -- (-7.35,0);
\draw[line width=0.25mm,dashed] (-7.1,0) -- (3.375,0);
\draw[->,line width=0.25mm] (0,0) -- (9,0);
\draw[line width=0.25mm] (0,-9) -- (0,-6.75);
\draw[line width=0.25mm,dashed] (0,-7.8) -- (0,3);
\draw[->,line width=0.25mm] (0,3.15) -- (0,9.6);
\draw[line width=0.25mm] (7,7) -- (5.45,5.45);
\draw[line width=0.25mm,dashed] (5.22,5.22) -- (0,0);
\draw[->,line width=0.25mm] (0,0) -- (-7,-7);

\draw (-7,-7.5) node{\small$\!\!\!\x_1$};
\draw (9.45,-0.75) node{\small$\;\,\x_2$};
\draw (0.75,9.6) node{\small$\;\;\,\x_3$};
\draw (3.9,8.7) node{\small\color{blue}$X_0$};
\draw (5.9,-7.28) node{\small\color{blue}$X_1=X_3$};
\draw (8.9,7.1) node{\small\color{blue}$X_2$};
\draw (-9,5.2) node{\color{blue}$X$};
\end{tikzpicture}
\end{center}
\caption{The real Galois completion $X$ of the $\qr$-algebraic plane $Y=X_0$ of $R^3$ together with its three $R$-irreducible components: the planes $X_0,X_2$ and the line $X_1=X_3$ of $R^3$ ($X\subset R^3$ is $\Q$-irreducible, coincides with the $\Q$-Zariski closure of $Y$ in $R^3$ and has the line $X_1=X_3$ as its $\Q$-bad set).}\label{im:poly}
\end{figure}

In Section \ref{s2}, we also introduce and study the notions of $K$-reliable family of polynomials and of real algebraic set defined over $K$. A finite subset $\{f_1,\ldots,f_r\}$ of $K[\x]$ is said to be $K$-reliable if $\II_F(\ZZ_F(f_1,\ldots,f_r))=(f_1,\ldots,f_r)F[\x]$ for all real closed field $F$ that contains $K$ as an ordered subfield, that is, $F\supset K$ and the ordering of $F$ extends that of $K$. A polynomial $f\in K[\x]$ is $K$-reliable if so is $\{f\}\subset K[\x]$ or, equivalently, if $f$ is $F$-geometric in $F^n$ for all real closed fields $F$ containing $K$ as an ordered subfield. Fix a real closed field $R$ such that $K$ is an ordered subfield of $R$. An algebraic set $X\subset R^n$ is defined over $K$ if $\II_R(X)=\II_K(X)R[\x]$. Observe that, if $X\subset R^n$ is defined over $K$, then it is also $K$-algebraic, because $X=\ZZ_R(\II_K(X))$.

We highlight the following results:
\begin{itemize}
\item A polynomial in $K[\x]$ is $K$-reliable if and only if it is $F$-geometric in $F^n$ for at least one real closed field $F$ that contains $K$ as an ordered subfield, for instance $F=R$.
\item If a polynomial in $K[\x]$ is $K$-reliable, then it is also $K$-geometric in $R^n$. If $K$ is a real closed field, then the `extension of coefficients' procedure implies that the inverse implication is also true, so a polynomial in $K[\x]$ is $K$-reliable if and only if it is $K$-geometric in $R^n$.
 
If $K:=\Q$, the polynomial $\x_1^3-2\in\Q[\x_1]$ is $\Q$-geometric in $R$, but not $\Q$-reliable. The polynomial $g^\bullet\in\Q[\x_1,\x_2,\x_3]$ mentioned above is also $\Q$-geometric in~$R^3$, but not $\Q$-reliable. Thus, the singleton $\{\sqrt[3]{2}\}\subset R$ and the set $X=\ZZ_R(g^\bullet)\subset R^3$ of Figure \ref{im:poly} are examples of $\Q$-geometric hypersurfaces, that are algebraic sets not defined over $\Q$.

If $h\geq 3$, the Fermat curve $F_h:=\ZZ_R(\x_1^{2h}+\x_2^{2h}-2^{h})$ is an example of an irreducible geometric hypersurface of $R^2$ defined over $\Q$ without rational points.
\item Given a $K$-algebraic set $X\subset R^n$, the following equivalences hold: $X$ is defined over $K$ $\Leftrightarrow$ $\II_R(X)$ is generated in $R[\x]$ by a finite set of polynomials in $K[\x]$ $\Leftrightarrow$ the $K$-Zariski closure of $X$ in $C^n$ coincides with the complexification of $X\subset R^n$ $\Leftrightarrow$ $X=\ZZ_R(f_1,\ldots,f_r)$ for some $K$-reliable family $\{f_1,\ldots,f_r\}\subset K[\x]$ $\Leftrightarrow$ all the $K$-irreducible components of $X$ are defined over $K$. Moreover, if $X\subset R^n$ is $K$-irreducible and defined over $K$, then its $K$-bad set is empty. The singleton $\{\sqrt[3]{2}\}\subset R$ is an example of $\Q$-irreducible $\Q$-algebraic set with empty $\Q$-bad set, not defined over $\Q$.
\end{itemize} 

We conclude the section by studying the underlying real structure of a complex algebraic set. Given an algebraic set $X\subset C^n$, the underline real structure $X^R\subset R^{2n}=C^n$ of~$X$ is $X^R:=\{(x,y)\in R^n\times R^n=R^{2n}:x+\ii y\in X\}$. Observe that $X^R\subset R^{2n}$ is $K$-algebraic, provided $X\subset C^n$ is $K[\ii]$-algebraic. We prove some basic properties of the underline real structures and provide explicit examples of complex $\Q$-algebraic sets whose underline real structures are ($\Q$-algebraic but) not defined over $\Q$.

The Real Nullstellensatz plays a key role in this second section.

The previous results show some similarities between $L|K$-algebraic geometry (when $L$ is either an a.c.f.\ or a r.c.f.) and complex and real analytic geometry. For instance, we analogize the good behavior of complex $K$-algebraic sets with the coherence of complex analytic sets. In the real analytic case, coherence is not guaranteed, and there are two kinds of (global) real analytic sets: those that are the zero set of a coherent sheaf, that we analogize with the real $K$-algebraic sets, and those that are coherent, which we analogize with the real algebraic sets defined over~$K$.

\vspace{3mm}

In \textsc{Section \ref{s3}}, we define and study the concepts of local ring, Zariski tangent space and differential of a regular map in the context of $L|K$-algebraic geometry. The concepts and results presented here will be used in the subsequent two sections.

Consider again the extension of fields $L|K$, where $L$ is either an a.c.f.\ or a r.c.f.. Let $E$ be any subfield of $L$ containing $K$, and let $\ove^\sqbullet$ be the algebraic closure of $E$ in $L$. Pick any point $a:=(a_1,\ldots,a_n)\in L^n$. Define the ideal $\gtn_a:=\II_E(\{a\})=\{f\in E[\x]:f(a)=0\}$ of $E[\x]$, and the subring $E[a]:=E[a_1,\ldots,a_n]$ of $L$. The first question is to understand when $\gtn_a$ is maximal or, equivalently, when the ring $E[a]\cong E[\x]/\gtn_a$ is a field. Standard commutative algebra provides the solution: the ideal $\gtn_a$ of $E[\x]$ is maximal if and only if $a\in(\ove^\sqbullet)^n$.

Consider a $K$-algebraic set $X\subset L^n$ and suppose that $a\in X\cap(\ove^\sqbullet)^n$. We define the $E|K$-local ring $\reg^{E|K}_{X,a}$ of $X$ at $a$ as the local ring $\reg^{E|K}_{X,a}:=E[\x]_{\gtn_a}/(\II_K(X)E[\x]_{\gtn_a})$. Of course, if $L=E=K$, then $\ove^\sqbullet=L$, $\reg^{E|K}_{X,a}$ coincides with the usual local ring of an algebraic set $X\subset L^n$ at $a$, and $a$ is an arbitrary point of $X=X\cap L^n$. The $E|K$-local rings of $X\subset L^n$ are invariant under the action of the Galois group $G(L:E)$ when $L$ is an a.c.f.\ and under the action of the Galois group $G(L[\ii]:E)$ when $L$ is a r.c.f.. More precisely, if $a\in X\cap(\ove^\sqbullet)^n$ and $b$ is any point of the $E$-Zariski closure of $\{a\}$ in $L^n$, then $b\in X\cap(\ove^\sqbullet)^n$ and $\gtn_b=\gtn_a$, so $\reg^{E|K}_{X,b}=\reg^{E|K}_{X,a}$.

Let $a:=(a_1,\ldots,a_n)\in(\ove^\sqbullet)^n$. For each $i\in\{1,\ldots,n\}$, the subring $E[a_1,\ldots,a_{i-1}]$ of $L$ coincides with the subfield $E(a_1,\ldots,a_{i-1})$ of $L$ genera\-ted by $\{a_1,\ldots,a_{i-1}\}$ over $E$, where $E(a_1,\ldots,a_{i-1})$ denotes $E$ when $i=1$. Thus, we can find a polynomial $f_i\in E[\x_1,\ldots,\x_i]$ that is monic with respect to $\x_i$ and such that $f_i(a_1,\ldots,a_{i-1},\x_i)$ is the minimum polynomial of $a_i$ over $E[a_1,\ldots,a_{i-1}]=E(a_1,\ldots,a_{i-1})$. It holds:
\begin{itemize}
\item $E[\x]_{\gtn_a}$ is a regular local ring of dimension $n$, and the polynomials $f_1,\ldots,f_n$ constitute one of its regular systems of parameters.
\item The residue field of $E[\x]_{\gtn_a}$ is isomorphic to $E[a]$ via the isomorphism $\frac{f}{g}+\gtn_aE[\x]_{\gtn_a}\mapsto f(a)(g(a))^{-1}$.
\item The completion of $E[\x]_{\gtn_a}$ is isomorphic to $E[a][[\x]]$ via an isomorphism $\varphi$ sending each $f_i$ to $\x_i$.
\item If $\gta$ is an ideal of $K[\x]$ contained in $\gtn_a=\II_E(\{a\})$, the completion of $E[\x]_{\gtn_a}/(\gta E[\x]_{\gtn_a})$ is isomorphic to $E[a][[\x]]/(\varphi(\gta)E[a][[\x]])$.
\end{itemize} 

Pick a point $a\in X\cap(\ove^\sqbullet)^n$. We define the $E|K$-Zariski tangent space $T^{E|K}_a(X)$ of $X$ at $a$ as $T^{E|K}_a(X):=\{v\in E[a]^n:\langle\nabla g(a),v\rangle=0,\ \forall g\in\II_K(X)\}$, where $\langle\nabla g(a),v\rangle:=\sum_{i=1}^n\frac{\partial g}{\partial \x_i}(a)v_j$ if $v=(v_1,\ldots,v_n)$. Let $\gtM_a:=\gtn_a E[\x]_{\gtn_a}/(\II_K(X)E[\x]_{\gtn_a})$ be the maximal ideal of $\reg^{E|K}_{X,a}$. Using the regular system of parameters $\{f_1,\ldots,f_n\}$ of $E[\x]_{\gtn_a}$, one shows that $\gtM_a/\gtM_a^2$ and the dual of $T^{E|K}_a(X)$ are isomorphic as $E[a]$-vector spaces. 

Let $Y\subset L^m$ be another $K$-algebraic set and let $f:X\to Y$ be a $K$-regular map at $a$, that is, $f=\big(\frac{p_1}{q},\ldots,\frac{p_m}{q}\big)$ on a $K$-Zariski open neighborhood of $a$ in $X$ for some $p_1,\ldots,p_m,q\in K[\x]$. The next goal is to define the notion of $E|K$-differential of $f$ at $a$. Let $v\in T^{E|K}_a(X)$. We consistently define $J_a(f)v$ as the column vector in $E[a]^n$ whose $i^{\mr{th}}$-entry is $q(a)^{-1}\langle\nabla p_i(a),v\rangle$, and prove that $\langle\nabla h(f(a)),J_a(f)v\rangle=0$ in $E[a]$ for all $h\in\II_K(Y)$. Observe that $E[f(a)]$ is a subfield of $E[a]$. It may happen that $E[f(a)]$ is strictly contained in $E[a]$, so $J_a(f)v$ may not belong to $T^{E|K}_{f(a)}(Y)$. This phenomenon appears for instance in the following example: If $E=K:=\Q$, $X:=\ZZ_L(\x_1-\x_2^2)\subset L^2$, $Y:=L$, $f:X\to Y$ is the projection $(x_1,x_2)\mapsto x_1$ and $a:=(\sqrt{2},\sqrt[4]{2})\in X\cap(\qbar^\sqbullet)^2$, then the column vector $v:=(2\sqrt[4]{2},1)^T$ (here the exponent $^T$ indicates the transpose operation) belongs to $T^{\Q|\Q}_a(X)\subset(\Q[\sqrt[4]{2}])^2$, but $J_a\big(\frac{\x_1}{1}\big)v=2\sqrt[4]{2}\not\in\Q[\sqrt{2}]=T^{\Q|\Q}_{f(a)}(Y)$.

Under the additional condition that $E[f(a)]=E[a]$, we define the $E|K$-differential $d^{E|K}_af:T^{E|K}_a(X)\to T^{E|K}_{f(a)}Y$ of $f$ at $a$ as the $E[a]$-linear map $v\mapsto J_a(f)v$, and prove that the dual map of $d^{E|K}_af$ corresponds to the pullback map $\gtM_{f(a)}/\gtM_{f(a)}^2\to\gtM_a/\gtM_a^2$ induced by $f$, as in the standard case when $L=E=K$. 

\vspace{3mm}

\textsc{Section \ref{s4}} is devoted to $E|K$-nonsingular and $E|K$-singular points of a $K$-algebraic set. As in the previous section, $L|K$ is an extension of fields such that $L$ is either an a.c.f.\ or a r.c.f., $E$ is any subfield of $L$ containing $K$, and $\ove^\sqbullet$ is the algebraic closure of $E$ in $L$.

Let $X\subset L^n$ be a $K$-algebraic set and let $a\in X\cap(\ove^\sqbullet)^n$. We say that $a$ is an $E|K$-nonsingular point of $X$ of dimension $e$ if the local ring $\reg^{E|K}_{X,a}$ is regular and of dimension $e$. Denote $\Reg^{E|K}(X,e)$ the set of all $E|K$-nonsingular points of $X$ of dimension $e$. An $E|K$-nonsingular point of $X\subset L^n$ of dimension $\dim(X)$ is simply called $E|K$-nonsingular point of $X\subset L^n$, and the set of all such points is also denoted $\Reg^{E|K}(X)$. We define the set $\Sing^{E|K}(X):=(X\cap(\ove^\sqbullet)^n)\setminus\Reg^{E|K}(X)$ and call $E|K$-singular points of $X\subset L^n$ the points of $\Sing^{E|K}(X)$. If $L=E=K$, then any point $a$ of the algebraic set $X\subset L^n$ is either a $L|L$-nonsingular or $L|L$-singular point of $X\subset L^n$ if and only if $a$ is either a nonsingular or singular point of $X\subset L^n$ in the usual sense, respectively. Thus, $\Reg^{L|L}(X)$ coincides with the set $\Reg(X)$ of usual nonsingular points of $X\subset L^n$, whereas $\Sing^{L|L}(X)$ coincides with the set $\Sing(X)$ of usual singular points of $X\subset L^n$.

Let $a\in(\ove^\sqbullet)^n$ and let $\gta$ be an ideal of $K[\x]$ such that $\gta\subset\gtn_a:=\{f\in E[\x]:f(a)=0\}$. We define the rank ${\rm rk}_a(\gta)$ of $\gta$ at $a$ as the dimension of the $E[a]$-vector subspace of $E[a]^n$ generated by the set of gradients $\{\nabla g(a)\}_{g\in\gta}$. Define $r:={\rm rk}_a(\gta)$ and $A:=E[\x]_{\gtn_a}/(\gta E[\x]_{\gtn_a})$.

Combining some results of the previous sections and some classical theorems of commutative algebra concerning regular local rings (suitably adapted to the present $E|K$-context), we prove the following basic facts:
\begin{itemize}
\item The rank ${\rm rk}_a(\gta)$ is equal to the dimension of the $E[a]$-vector space $(\gta E[a]_{\gtn_a}+\gtN_a^2)/\gtN_a^2$, where $\gtN_a=\gtn_aE[\x]_{\gtn_a}$ is the maximal ideal of $E[\x]_{\gtn_a}$.
\item The local ring $A$ is regular $\Leftrightarrow$ there exist $r$ elements $g_1,\ldots,g_r$ of $\gta$ such that $\gta E[\x]_{\gtn_a}=(g_1,\ldots,g_r)E[\x]_{\gtn_a}$ $\Leftrightarrow$ $\dim(A)=n-r$. Moreover, if $A$ is not regular, the minimal cardinality of a system of generators of $\gta E[\x]_{\gtn_a}$ in $E[\x]_{\gtn_a}$ is $>r$ and $\dim(A)<n-r$.
\item The following $E|K$-Jacobian criterion holds: the local ring $A$ is regular of dimension $e$ $\Leftrightarrow$ there exist polynomials $f_1,\ldots,f_{n-e}\in\gta$ and a $E$-Zariski open (or a Euclidean open) neighborhood $U$ of $a$ in $(\ove^\sqbullet)^n$ such that the gradients $\nabla f_1(a),\ldots,\nabla f_{n-e}(a)$ are linearly independent in $E[a]^n$ and $\ZZ_{\ove^\sqbullet}(\gta)\cap U=\ZZ_{\ove^\sqbullet}(f_1,\ldots,f_{n-e})\cap U$.
\item Suppose that $\gta$ is a prime ideal of $K[\x]$. Let $Y$ be the zero set $\ZZ_{\ove^\sqbullet}(\gta)$ of $\gta$ in $(\ove^\sqbullet)^n$ and let $S$ be the set of all points $b\in Y$ such that the local ring $E[\x]_{\gtn_b}/(\gta E[\x]_{\gtn_b})$ is not regular. Then $\II_K(Y)=\gta$ if and only if $S\subsetneqq Y$. Moreover, if $\II_K(Y)=\gta$, then $\dim(\reg^{E|K}_{Y,b})=\dim(Y)$ for each $b\in Y$, and $S=\{b\in Y:{\rm rk}_b(\gta)<n-\dim(Y)\}$ is a $K$-algebraic subset of $Y$ of dimension $<\dim(Y)$.
\end{itemize}

These results allow us to study the structure of $E|K$-nonsingular and $E|K$-singular loci of any $K$-algebraic sets $X\subset L^n$, and to compare them with the $L|K$-nonsingular and $L|K$-singular loci of $X\subset L^n$:
\begin{itemize}
\item $\mr{Sing}^{E|K}(X)\subset(\ove^\sqbullet)^n$ is a $K$-algebraic subset of $X\cap(\ove^\sqbullet)^n\subset(\ove^\sqbullet)^n$ of dimension $<\dim(X)$, so $\Reg^{E|K}(X)$ is a non-empty $K$-Zariski open subset of $X\cap(\ove^\sqbullet)^n\subset(\ove^\sqbullet)^n$.
\item If $X\subset L^n$ is $K$-irreducible, $\Reg^{E|K}(X,e)\neq\varnothing$ implies $e=\dim(X)$. 
\item If $X_1,\ldots,X_s$ are the $K$-irreducible components of $X$, $a\in X\cap(\ove^\sqbullet)^n$ and $e\in\N$, then the following equivalences hold: $a\in\Reg^{E|K}(X,e)$ $\Leftrightarrow$ there exists a unique index $i\in\{1,\ldots,s\}$ such that $a\in \Reg^{E|K}(X_i)\setminus\bigcup_{j\in\{1,\ldots,s\}\setminus\{i\}}X_j$ and $\dim(X_i)=e$ $\Leftrightarrow$ there exist $f_1,\ldots,f_{n-e}\in\II_K(X)$ and a $E$-Zariski open (or a Euclidean open) neighborhood $U$ of $a$ in $L^n$ such that the gradients $\nabla f_1(a),\ldots,\nabla f_{n-e}(a)$ are linearly independent in $L^n$ and $X\cap U=\ZZ_L(f_1,\ldots,f_{n-e})\cap U$.
\item $\mr{Sing}^{L|K}(X)\subset L^n$ is the extension of coefficients of $\mr{Sing}^{E|K}(X)\subset(\ove^\sqbullet)^n$ {(from $\ove^\sqbullet$)} to $L$, that is, $(\Sing^{E|K}(X))_L=\Sing^{L|K}(X)$. In particular, $\Sing^{E|K}(X)=\Sing^{L|K}(X)\cap(\ove^\sqbullet)^n$ and it holds: $\mr{Sing}^{L|K}(X)=\varnothing$ if and only if $\mr{Sing}^{E|K}(X)=\varnothing$.
Consequently, $(\Reg^{E|K}(X,e))_L=\Reg^{L|K}(X,e)$ and $\Reg^{E|K}(X,e)=\Reg^{L|K}(X,e)\cap(\ove^\sqbullet)^n$ for each $e\in\N$. In particular, $\Reg^{L|K}(X)=X$ if and only if $\Reg^{E|K}(X)=X\cap(\ove^\sqbullet)^n$.
\item We prove also a result concerning the $K$-algebraicity of differences of $K$-algebraic sets: If $X$ and $Z$ are $K$-algebraic subsets of $L^n$ such that $Z\subset X$, $\dim(X)=\dim(Z)$ and $Z=\Reg^{L|K}(Z)\subset\Reg^{L|K}(X)$, then $X\setminus Z$ is also a $K$-algebraic subset of $L^n$. In addition, we have $\Reg^{L|K}(X\setminus Z)=\Reg^{L|K}(X)\setminus Z$, provided $Z\neq\Reg^{L|K}(X)$.
\end{itemize}

We deal with the case of hypersurfaces separately.

We combine the previous results with some from the previous sections to describe the relationship between the usual ($L|L$-)nonsingular and ($L|L$-)singular loci and the $L|K$-nonsingular and $L|K$-singular loci of any $K$-algebraic set $X\subset L^n$. Here the concept of $K$-bad set $B_K(X)$ of $X\subset L^n$ plays a central role, when $L$ is a r.c.f..

\begin{itemize}
\item If $L$ is an a.c.f.\ or $L|K$ is an extension of r.c.f.\ or $L$ is a r.c.f.\ and $X\subset L^n$ is defined over $K$, one has the expected equalities: $\Reg^{L|L}(X,e)=\Reg^{L|K}(X,e)$ for each $e\in\N$, so $\Reg(X):=\Reg^{L|L}(X)$ and $\Sing(X):=\Sing^{L|L}(X)$ coincide with $\Reg^{L|K}(X)$ and $\Sing^{L|K}(X)$, respectively.
\item Suppose that $L$ is a r.c.f., whereas its ordered subfield $K$ is not. Let $\kr\subset L$ be the real closure of $K$. Then $\Sing^{L|K}(X)$ is a $K$-algebraic subset of $X$ of dimension $<\dim(X)$, $\Sing(X)$ and $B_K(X)$ are $\kr$-algebraic subsets of $X$ of dimension $<\dim(X)$, and $\Sing^{L|K}(X)=\Sing(X)\cup B_K(X)$. Consequently, $\Reg^{L|K}(X)$ is a non-empty $K$-Zariski open subset of $X$, $\Reg(X)$ is a non-empty $\kr$-Zariski open subset of $X$, and $\Reg^{L|K}(X)=\Reg(X)\setminus B_K(X)$.
 
In addition, we present some explicit examples in the case $K=\Q$ that describe the possible behaviors of the sets $\Sing(X)$, $B_K(X)$ and $\Sing^{L|K}(X)$. In particular, we prove the sharpness of the previous result in the following sense: there exist $\Q$-algebraic sets $X\subset L^n$ such that $\Sing(X)\subset L^n$ and $B_\Q(X)\subset L^n$ are $\qr$-algebraic but not $\Q$-algebraic, whereas their union $\Sing^{L|\Q}(X)\subset L^n$ is $\Q$-algebraic.
\end{itemize}

\vspace{3mm}

\textsc{Section \ref{s5}} deals with four applications of the theory developed in the previous sections.

Let $R$ be a real closed field and let $K$ be an ordered subfield of $R$ (for example $K:=\Q$).

Subsection \ref{s51} is devoted to some diophantine properties of real $K$-algebraic sets. Let $X\subset R^n$ be a $K$-algebraic set and let $X(K):=X\cap K^n$. We prove that, if $X(K)$ is $K$-Zariski dense in~$X$, then $X\subset R^n$ is defined over $K$. Equivalently, if $X\subset R^n$ is not defined over $K$, then $X(K)$ is not $K$-Zariski dense in $X$. Furthermore, if $X\subset R^n$ is $K$-irreducible but $R$-reducible, then $\dim_K(X(K))<\dim(X)$.

In Subsection \ref{subsec:Whitney}, we adapt to our context the classical concepts of semialgebraic set, Nash manifold and stratification. We introduce the $K$-semialgebraic sets $S\subset R^n$, $K$-algebraic partial manifolds $M\subset R^n$, and $K$-algebraic stratifications of $K$-semialgebraic sets $S\subset R^n$. The latter concept concerns finite partitions $\{M_i\}_{i\in I}$ of~$S$, where the $M_i\subset R^n$ are semialgebraically connected $K$-algebraic partial manifolds satisfying the frontier condition. We prove that each $K$-semialgebraic set admits a Whitney regular $K$-algebraic stratification, answering affirmatively to a problem posed by Wies{\l}aw Paw{\l}ucki.

Subsection \ref{subsec:proj2} is devoted to a $R|K$-generic projection theorem. Let $X\subset R^n$ be a $K$-algebraic set of dimension $d$. Define the $R|K$-embedded dimension $e^{R|K}(X)$ of $X$ as the maximum $R$-vector dimension of the $R|K$-Zariski tangent space $T^{R|K}_a(X)$, when $a$ varies in $X$. Suppose that $r:=\max\{e^{L|K}(X)+d-1,2d+1\}<n$. Write $x=(x_1,\ldots,x_n)=(x',x'')\in R^n$, where $x':=(x_1,\ldots,x_r)$ and $x'':=(x_{r+1},\ldots,x_n)$, and consider $x$, $x'$ and $x''$ as column vectors. Denote $\mc{M}_{r,n-r}(K)$ the set of all $r\times(n-r)$-matrices with coefficients in $K$, and identify $\mc{M}_{r,n-r}(K)$ with $K^{r(n-r)}$. For each $A\in\mc{M}_{r,n-r}(K)$, let $\pi_A:R^n\to R^r$ be the projection in the direction of the $R$-vector subspace $\{(x',x'')\in L^n:x'=Ax''\}$ of $R^n$, that is, the $R$-linear map $\pi_A(x):=x'-Ax''$. We prove that there exists a non-empty $K$-Zariski open subset $\Omega$ of $\mc{M}_{r,n-r}(K)=K^{r(n-r)}$ such that, for each $A\in\Omega$, the set $\pi_A(X)\subset R^r$ is $K$-algebraic and the restriction of $\pi_A$ from $X\subset R^n$ to $\pi_A(X)\subset R^r$ is a $K$-biregular isomorphism. This result remains valid if $R$ is replaced with any algebraically closed field. 

In Subsection \ref{nash-tognoli-Q}, we present the main results of \cite{GS}, namely a `Nash-Tognoli theorem over the rationals' and a version of it for real algebraic sets with isolated singularities. These results were obtained in \cite{GS} making use of the $L|K$-algebraic geometry developed in the previous sections (in the case $L|K=\R|\Q$) and further developments obtained~in~\cite{GS}. The celebrated Nash-Tognoli theorem asserts that: \textit{Every compact smooth manifold $M$ of dimension $d$ is smoothly diffeomorphic to a nonsingular algebraic subset $M'$ of $\R^{2d+1}$}. Theorem 1.7 of \cite{GS} states that: \textit{$M'\subset\R^{2d+1}$ can be chosen to be $\Q$-nonsingular $\Q$-algebraic}. This result guarantees for the first time that, up to smooth diffeomorphisms, every compact smooth manifold $M$ can be described by means of finitely many exact data, such as a finite system of generators of the ideal $\II_\Q(M')$ of $\Q[\x_1,\ldots,\x_{2d+1}]$. According to \cite[Def.1.9]{GS}, a $\Q$-algebraic set $X\subset\R^n$ is called $\Q$-determined if $\Reg^\Q(X)=\Reg(X)$. Theorem 1.10 of \cite{GS} asserts that: \textit{Every real algebraic set with isolated singularities is semialgebraically homeomorphic to a $\Q$-determined $\Q$-algebraic set with isolated singularities}. Consequently, we also have: \textit{Every real algebraic set germ with an isolated singularity is semialgebraically homeomorphic to a real $\Q$-determined $\Q$-algebraic set germ with an isolated singularity} (see \cite[Thm.1.14]{GS}).

\vspace{3mm}

The paper ends with three appendices. In Appendix \ref{appendix}, we explain how to derive the `extension of coefficients' procedure for algebraically closed fields from the corresponding one for real closed fields. We give the definition of Euclidean topology on an algebraically closed field and show that this concept is not unique in general, providing an explicit example suggested by Elias~Baro. In Appendix \ref{appendix:b}, we present Laksov's Nullstellensatz that generalizes Hilbert's and Real Nullstellens\"atze. Appendix \ref{appendix:c} contains the proofs of some results stated in the paper. These proofs follow well-known arguments. However, they are not straightforwardly obtained, because they are supplemented by concepts and results introduced and proven for the first time in this paper.


\subsection{Related previous works on this topic}
$(\mr{i})$ Some of the concepts and results of this paper were already known. Definition \ref{def:overK} of real algebraic set defined over~$K$ is a special case of Definition~3 on page 30 of \cite{to2} due to Tognoli. Definition \ref{def:331} of $K$-regular map is a reformulation of Definition~4 on page 30 of \cite{to2}. In \cite[pp.28-29]{to2}, Tognoli proved Lemma \ref{k0}, Corollary \ref{cor-cap} and part of Proposition \ref{prop:zar} (see Remark \ref{to22}). In \cite{k}, M.\ Kato proved Corollary \ref{k}. Theorem \ref{thm:NTQ} is a $\Q$-version of the celebrated Nash-Tognoli theorem proven in \cite[Thm.1.7]{GS}. Proofs of a strong version of Theorem \ref{thm:NTQ} were proposed in \cite[Thm.2, p.56]{to2} and \cite[Thm.0.1]{bt}. However, these proofs are not complete and therefore such a strong version of Theorem \ref{thm:NTQ} cannot be considered valid, see \cite[Sect.1.3]{GS}.

Two important tools used in this work are the $L|K$-version of Hilbert's Nullstellensatz provided in Corollary \ref{kreliablec} and the classical Real Nullstellensatz \cite{d,r}. Corollary \ref{kreliablec} is certainly well-known for complex algebraic geometers, although we have not found it stated with such generality in the literature. $L|K$-versions of Real Nullstellensatz and Positivstellensatz were proven by Stengle in \cite{st}. If $L|K$ is an algebraic extension, the mentioned $L|K$-versions of Hilbert's and Real Nullstellens\"atze were generalized by Laksov in \cite{la}. Appendix \ref{appendix:b} is devoted to such generalization that we call Laksov's Nullstellensatz. 

$(\mr{ii})$ Let $R$ be a real closed field and let $K$ be an ordered subfield of $R$. A subset of $R^n$ is called \emph{$K$-semialgebraic} if it can be described as a finite Boolean combination of polynomial equalities and inequalities with coefficients in $K$ (see \cite[Ch.6.\S7]{abr}). Let $S\subset R^n$ be a usual semialgebraic set. In \cite{jrs}, the authors study the problem of algorithmically computing the semialgebraically connected components $S_i$ of $S$, and prove that if $S\subset R^n$ is $K$-semialgebraic then each $S_i$ is also $K$-semialgebraic. We will use this result in Subsection \ref{subsec:Whitney}.

$(\mr{iii})$ In \cite{Te90}, Teissier proposed an example of a complex analytic surface singularity in $\C^3$ that is not Whitney-equisingular to any complex surface singularity in $\C^3$ defined over $\Q$. A~complete proof of a corrected version of this example was recently given by Parusi\'{n}ski and Paunescu in \cite{PP}. This `negative $\Q$-approximation' example is related to the `affirmative $\Q$-approximation' results of Section \ref{nash-tognoli-Q}, especially Theorems \ref{thm:main}, \ref{thm:main-2} and \ref{thm:main-germs}. However, these results are not contradictory, since in the latter theorems the `affirmative $\Q$-approximations' occur through semialgebraic homeomorphisms and not through Whitney-equisingular deformations (of the complexification).

$(\mr{iv})$ Hilbert's $17^{\mr{th}}$ Problem asks whether every polynomial in $\R[\x]$ non-negative on $\R^n$ is a sum of squares of rational functions with coefficients in $\R$. This problem was solved in the affirmative by E.\ Artin. It is also known that the denominators cannot be omitted: there exist non-negative polynomials in $\R[\x]$ that are not sums of squares of polynomials in $\R[\x]$. A related problem raised by Sturmfels asks whether every polynomial in $\Q[\x]$ that is a sum of squares of polynomials in $\R[\x]$ is also a sum of squares of polynomials in $\Q[\x]$. This problem is related to the existence of exact positivity certificates for real polynomials in semidefinite programming, see \cite{bv,klyz,sz,wsv}. The answer to Sturmfels' problem is negative. By a smart use of Galois theory, Scheiderer provided explicit examples of polynomials in $\Q[\x]$ that are sums of squares in $\R[\x]$, but not in $\Q[\x]$. For more information on this result of Scheiderer and its extensions and applications, we refer to \cite{be,fe,hi,laplagne,msv,nds,ns,q}.

\subsection{To ease the reading} Along the rest of the paper, the abbreviations `a.c.f.' for algebraically closed fields of characteristic zero and `r.c.f.' for real closed field will no longer be used.

All the algebraically closed fields considered below will be of chara\-cteristic zero. For short, we will write `algebraically closed field' meaning `algebraically closed field of characteristic zero'.

Sometimes we will use the terms `complex' and `real' to mean any algebraically closed field or any real closed field, respectively. 

For all extensions of fields $L|K$ in which $L$ will be real closed, we will assume that $K$ is endowed with the ordering induced by that of $L$. In particular, if $K$ will also be a real closed field, the ordering of $L$ will extend that of $K$.

We will say that $L|E|K$ is an extension of fields if $L|E$ and $E|K$ are extensions of fields, that is, $K$ is a subfield of $E$ which is a subfield of $L$. Analogously, we will say that $L|F|E|K$ is an extension of fields if $L|F$, $F|E$ and $E|K$ are extensions of fields.

The letter $n$ will indicate a fixed positive natural number and, for each field $L$, the symbol $L[\x]$ will be the abbreviation for $L[\x_1,\ldots,\x_n]$, the ring of polynomials in the indeterminates $\x_1,\ldots,\x_n$ with coefficients in $L$.

In some parts of the paper, we will consider families $\{Z_\sigma\}_{\sigma\in G}$ of subsets of $L^n$ in which repetitions may occur. When necessary, we will implicitly assume that repetitions have been eliminated. This assumption will be clear from the context.

At the beginning of each section and some subsections, we will specify the type of fields that will be used in that section or subsection.

\vspace{1em}

One may think that many proofs in this paper are immediate generalizations of proofs of standard $L|L$-algebraic geometry. Actually, this happens only few times. Such proofs are duly brief, lacking in detail, and begins with the phrase `A standard argument works'. We decided to add them for the sake of completeness. Most of the results and proofs in this paper are original and specific to $L|K$-algebraic geometry. 

\begin{ack}
This work was supported by the ``National Group for Algebraic
and Geometric Structures, and their Applications'' (GNSAGA - INDAM).

The first author is supported by Spanish STRANO PID2021-122752NB-I00 and Grupos UCM 910444. This paper has been developed during several one month research stays of the first author in the Dipartimento di Matematica of the Universit\`a di Trento. The first author would like to thank the department for the invitations and the very pleasant working conditions.

The second author is partially supported by Spanish STRANO PID2021-122752NB-I00.
\end{ack}

\section{$K$-algebraic sets}\label{s1}

\subsection{Preliminary definitions and properties}

{\textit{Fix $n\in\N^*:=\N\setminus\{0\}$, fix any field $L$ and fix any subfield $K$ of $L$.}}

\subsubsection{$K$-algebraic sets and $K$-Zariski topology} In order to shorten notations, we denote $L[\x]:=L[\x_1,\ldots,\x_n]$. Consider $K[\x]$ as a subset of $L[\x]$, and $K^n$ as a subset of $L^n$. Given sets $F\subset L[\x]$ and $S\subset L^n$, define
\begin{align*}
\ZZ_L(F)&:=\{x\in L^n: f(x)=0,\ \forall f\in F\},\\
\II_K(S)&:=\{f\in K[\x]: f(x)=0,\ \forall x\in S\}.
\end{align*}
Observe that $\ZZ_L(F)$ is an algebraic subset of $L^n$ and $\II_K(S)=\II_L(S)\cap K[\x]$ is an ideal of $K[\x]$. If $F=\{f_1,\ldots,f_s\}\subset L[\x]$, we set $\ZZ_L(f_1,\ldots,f_s):=\ZZ_L(F)$.

Let us introduce the notion of $K$-algebraic subset of $L^n$.

\begin{defn}\label{def:K-alg}
Let $X$ be a subset of $L^n$. We say that $X$ is a \emph{$K$-algebraic subset of $L^n$}, or $X\subset L^n$ is a \emph{$K$-algebraic set}, if $X=\ZZ_L(F)$ for some $F\subset K[\x]$. $\sqbullet$
\end{defn}

\begin{remark}
Observe that $X\subset L^n$ is a $K$-algebraic set if and only if $X=\ZZ_L(\II_K(X))$. $\sqbullet$
\end{remark}

If $K=L$, an $L$-algebraic subset of $L^n$ is a usual algebraic subset of $L^n$. The family of all $K$-algebraic subsets of $L^n$ provides a topology on $L^n$ coarser than the usual Zariski topology on $L^n$. In particular, it is a Noetherian topology.

\begin{defn}\label{def:K-zar}
The \emph{$K$-Zariski topology of $L^n$} is the Noetherian topology of $L^n$ whose closed sets are the $K$-algebraic subsets of~$L^n$. We call the open sets of this topology \emph{$K$-Zariski open} subsets of $L^n$ and its closed sets \emph{$K$-Zariski closed} subsets of $L^n$.

Given a subset $S$ of $L^n$, we define the \emph{$K$-Zariski topology of $S$} as the relative topology of $S$ induced by the $K$-Zariski topology of $L^n$. As above, we call the open sets of this topology \emph{$K$-Zariski open} subsets of $S$ and its closed sets \emph{$K$-Zariski closed} subsets of $S$. We also say that a $K$-Zariski closed subset of $S$ is a \emph{$K$-algebraic} subset of $S$. $\sqbullet$
\end{defn}

By the Noetherianity of the $K$-Zariski topology of $L^n$, each $K$-algebraic subset $X$ of $L^n$ is the zero set of finitely many polynomials $f_1,\ldots,f_r\in K[\x]$. If in addition the field $L$ is real, then $X\subset L^n$ is the zero set of the single polynomial $f_1^2+\ldots+f_r^2\in K[\x]$. 

If $E$ is a subfield of $L$ containing $K$, the $K$-Zariski topology of $E^n$ coincides with the relative topology of $E^n$ induced by the $K$-Zariski topology of $L^n$.

\begin{defn}\label{def:K-constructible}
We say that a subset $S$ of $L^n$ is \emph{$K$-Zariski locally closed} if it is locally closed in $L^n$ with respect to the $K$-Zariski topology, that is, $S=S_1\setminus S_2$ for some $K$-Zariski closed subsets $S_1$ and $S_2$ of $L^n$. We also say that $S$ is \emph{$K$-constructible} if it is a Boolean combination of $K$-Zariski closed subsets of $L^n$ or, equivalently, if $S$ is the disjoint union of finitely many $K$-Zariski locally closed subsets of $L^n$. $\sqbullet$ 
\end{defn}

We will use the concept of $K$-constructible set in Subsections \ref{subsec:proj} and \ref{subsec:proj2}.

\subsubsection{$K$-irreducibility and $K$-irreducible components}\label{kirredk}
Let us introduce the concepts of $K$-irredu\-cibility and $K$-irreducible components of a $K$-algebraic set and present some of their properties.

\begin{defn}
Given a $K$-algebraic subset $X$ of $L^n$, we say that $X$ is \emph{$K$-irreducible} if it is irreducible with respect to the $K$-Zariski topology of $X\subset L^n$ or, equivalently, if it is non-empty and there do not exist $K$-algebraic subsets $X_1$ and $X_2$ of $L^n$ such that $X_1\subsetneqq X$, $X_2\subsetneqq X$ and $X=X_1\cup X_2$. Otherwise, we say that $X$ is \emph{$K$-reducible}. $\sqbullet$ 
\end{defn}

\begin{lem}\label{lem:prime}
Let $X\subset L^n$ be a $K$-algebraic set. Then $X$ is $K$-irreducible if and only if $\II_K(X)$ is a prime ideal of $K[\x]$.
\end{lem}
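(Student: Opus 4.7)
This is the $K$-algebraic analogue of the classical correspondence between irreducible closed sets and prime ideals, so I would mimic the standard argument while checking that the ingredients available in the $L|K$-setting suffice.

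For the direction $(\Rightarrow)$, assume $X$ is $K$-irreducible, in particular $X\neq\varnothing$, so $\II_K(X)\neq K[\x]$. Take $f,g\in K[\x]$ with $fg\in\II_K(X)$. Since $f,g\in K[\x]$, the sets $\ZZ_L(f)$ and $\ZZ_L(g)$ are $K$-algebraic, hence so are $X\cap\ZZ_L(f)$ and $X\cap\ZZ_L(g)$. From $X\subset\ZZ_L(fg)=\ZZ_L(f)\cup\ZZ_L(g)$ I get the $K$-algebraic decomposition $X=(X\cap\ZZ_L(f))\cup(X\cap\ZZ_L(g))$, and $K$-irreducibility forces one of the two pieces to equal $X$. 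In the first case $f\in\II_K(X)$, in the second $g\in\II_K(X)$.

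For the direction $(\Leftarrow)$, assume $\II_K(X)$ is prime. Primality of an ideal rules out the whole ring, so $1\notin\II_K(X)$, which gives $X\neq\varnothing$. Suppose for contradiction that $X=X_1\cup X_2$ with $X_1,X_2$ $K$-algebraic subsets of $L^n$ and $X_i\subsetneqq X$. The key observation is that a $K$-algebraic set is recovered from its $K$-ideal via $X_i=\ZZ_L(\II_K(X_i))$ (by the very definition of $K$-algebraicity applied to $X_i$), so $\II_K(X)\subsetneqq\II_K(X_i)$ for $i=1,2$; indeed equality would imply $X_i=\ZZ_L(\II_K(X))=X$, using that $X$ itself is $K$-algebraic. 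Picking $f_i\in\II_K(X_i)\setminus\II_K(X)$, the product $f_1f_2$ vanishes on $X_1\cup X_2=X$ and thus lies in $\II_K(X)$, contradicting primality.

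The only nonroutine check is the recovery identity $X_i=\ZZ_L(\II_K(X_i))$ in the reverse direction, which is exactly the remark recorded right after Definition \ref{def:K-alg}; apart from this, everything reduces to the interplay of $\ZZ_L$ and $\II_K$ as bookkeeping, with the fact that $K[\x]$ is closed under products of its own elements replacing the usual $L[\x]$ step. I do not expect a real obstacle: the statement is structural and does not require Nullstellensatz-type input (which is precisely why the equivalence survives the subtleties of the $L|K$-setting emphasised earlier in the introduction).
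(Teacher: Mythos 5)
Your argument is correct and matches the paper's proof up to a reformulation: you prove each implication directly where the paper proves the contrapositive, but the decompositions used (via $\ZZ_L(f)\cup\ZZ_L(g)$ in one direction, via picking $f_i\in\II_K(X_i)\setminus\II_K(X)$ in the other) are identical. Your explicit verification of $X_i=\ZZ_L(\II_K(X_i))$ is a sound touch that the paper leaves implicit.
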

\begin{proof}
A standard argument works. If $X$ is $K$-reducible, $X\neq\varnothing$ and $X=X_1\cup X_2$ for some $K$-algebraic sets $X_1,X_2\subset L^n$ with $X_i\subsetneqq X$ for each $i\in\{1,2\}$, then there exists $f_i\in\II_K(X_i)\setminus\II_K(X)$, so $\II_K(X)$ is not prime, because $f_1f_2\in\II_K(X)$. Conversely, if $\II_K(X)$ is not prime and $\II_K(X)\neq K[\x]$, there exist $g_1,g_2\in K[\x]\setminus\II_K(X)$ such that $g_1g_2\in\II_K(X)$. Denote $X_i'\subset L^n$ the $K$-algebraic set $X\cap\ZZ_L(g_i)$ for each $i\in\{1,2\}$. It follows that $X$ is $K$-reducible, because $X_i'\subsetneqq X$ and $X=X_1'\cup X_2'$. 
\end{proof}

A special case of the previous result reads as follows: if $f\in K[\x]$ and $\II_K(\ZZ_L(f))=(f)K[\x]$, then $\ZZ_L(f)\subset L^n$ is $K$-irreducible if and only if $f$ is irreducible as a polynomial of $K[\x]$.

\begin{lem}\label{inclusion}
Let $X,Y_1,\ldots,Y_r\subset L^n$ be $K$-algebraic sets such that $X$ is $K$-irreducible and $X\subset\bigcup_{i=1}^rY_i$. Then $X\subset Y_j$ for some $j\in\{1,\ldots,r\}$.
\end{lem}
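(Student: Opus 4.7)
The plan is to reduce the problem to the definition of $K$-irreducibility by intersecting each $Y_i$ with $X$ and then applying induction on $r$. First I would note that, since the family of $K$-algebraic subsets of $L^n$ forms the closed sets of the $K$-Zariski topology, finite intersections of $K$-algebraic sets are $K$-algebraic; in particular $X \cap Y_i \subset L^n$ is a $K$-algebraic set for each $i \in \{1,\ldots,r\}$. The hypothesis $X \subset \bigcup_{i=1}^r Y_i$ gives the decomposition
\[
X = \bigcup_{i=1}^r (X \cap Y_i).
\]

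Next I would argue by induction on $r$. The case $r = 1$ is trivial. For $r = 2$, since $X = (X \cap Y_1) \cup (X \cap Y_2)$ is a union of two $K$-algebraic subsets of $L^n$, the $K$-irreducibility of $X$ (in the form stated in the definition just above the lemma: $X$ cannot be written as a union $X_1 \cup X_2$ of two $K$-algebraic subsets of $L^n$ each strictly smaller than $X$) forces $X \cap Y_j = X$ for some $j \in \{1, 2\}$, that is, $X \subset Y_j$. For the inductive step, write
\[
X = \Big(\bigcup_{i=1}^{r-1} (X \cap Y_i)\Big) \cup (X \cap Y_r) = \Big(X \cap \bigcup_{i=1}^{r-1} Y_i\Big) \cup (X \cap Y_r),
\]
observe that both summands are $K$-algebraic subsets of $L^n$, apply the $r = 2$ case to conclude that $X \subset \bigcup_{i=1}^{r-1} Y_i$ or $X \subset Y_r$, and in the first alternative invoke the inductive hypothesis.

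There is no real obstacle here: the argument is essentially the standard fact that an irreducible closed set contained in a finite union of closed sets is contained in one of them, and it transfers verbatim to the $K$-Zariski topology once one notes that $X \cap Y_i$ is $K$-algebraic. The only point worth being slightly careful about is that the definition of $K$-irreducibility used above excludes the empty set; this is harmless since $X \neq \varnothing$ by $K$-irreducibility, so at least one $X \cap Y_j$ must be non-empty, and then the argument above gives equality $X \cap Y_j = X$.
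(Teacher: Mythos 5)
Your proof is correct and takes essentially the same approach as the paper: reduce to the definition of $K$-irreducibility via the decomposition $X = \bigcup_i (X\cap Y_i)$, then group the union into two $K$-algebraic pieces and conclude. The paper organizes the grouping via a maximal index $s$ with $X\not\subset\bigcup_{i=1}^s Y_i$ and argues by contradiction, whereas you use induction on $r$; this is only a bookkeeping difference.
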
 
\begin{proof}
A standard argument works. Suppose $X\not\subset Y_j$ for all $j\in\{1,\ldots,r\}$. In particular, $r\geq2$. Let $s\in\{1,\ldots,r-1\}$ be the maximum $j$ such that $X\not\subset\bigcup_{i=1}^j Y_i$. Define the $K$-algebraic sets $X_1,X_2\subset L^n$ as $X_1:=X\cap(\bigcup_{i=1}^s Y_i)$ and $X_2:=X\cap X_{s+1}$. We have $X_1\subsetneqq X$, $X_2\subsetneqq X$ and $X=X_1\cup X_2$. This is impossible, because $X$ is $K$-irreducible. Consequently, $X\subset Y_j$ for some $j\in\{1,\ldots,r\}$, as required.
\end{proof}

As the $K$-topology of $L^n$ is Noetherian, the following general result holds, see \cite[Prop.1.5, p.5]{ha} for a proof.

\begin{lem}\label{lem:irred}
For each $K$-algebraic subset $X$ of $L^n$, there exists a finite family $\{X_1,\ldots,X_r\}$ of $K$-irreducible $K$-algebraic subsets of $L^n$, uniquely determined by $X$, such that $X=\bigcup_{i=1}^rX_i$ and $X_i\not\subset\bigcup_{j\in\{1,\ldots,r\}\setminus\{i\}}X_j$ for all $i\in\{1,\ldots,r\}$. We call the $X_i$ the \emph{$K$-irreducible components} of~$X$. 
\end{lem}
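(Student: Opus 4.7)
The plan is to follow the classical Noetherian decomposition argument, which transfers cleanly to the $K$-Zariski topology because this topology is explicitly stated to be Noetherian (Definition \ref{def:K-zar}).

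First, I would establish existence by a standard Noetherian induction. Let $\Sigma$ be the collection of all $K$-algebraic subsets of $L^n$ that cannot be written as a finite union of $K$-irreducible $K$-algebraic subsets. If $\Sigma\neq\varnothing$, Noetherianity of the $K$-Zariski topology furnishes a minimal element $X\in\Sigma$ (with respect to inclusion). In particular, $X$ itself is not $K$-irreducible, so there exist $K$-algebraic sets $X',X''\subset L^n$ with $X',X''\subsetneqq X$ and $X=X'\cup X''$. By minimality of $X$, neither $X'$ nor $X''$ lies in $\Sigma$, so each admits a finite decomposition into $K$-irreducible $K$-algebraic sets; concatenating these gives such a decomposition of $X$, a contradiction. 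Hence $\Sigma=\varnothing$ and every $K$-algebraic $X\subset L^n$ can be written as $X=\bigcup_{i=1}^{r}X_i$ with each $X_i$ $K$-irreducible. After discarding, one at a time, any $X_i$ that happens to be contained in $\bigcup_{j\neq i}X_j$, we obtain a decomposition satisfying the irredundancy condition.

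For uniqueness, suppose $X=\bigcup_{i=1}^{r}X_i=\bigcup_{j=1}^{s}Y_j$ are two decompositions satisfying the conclusions. Fix $i\in\{1,\dots,r\}$. Then $X_i=\bigcup_{j=1}^{s}(X_i\cap Y_j)$, and since $X_i$ is $K$-irreducible, Lemma \ref{inclusion} yields some $j=j(i)$ with $X_i\subset Y_{j(i)}$. Applying the same argument to $Y_{j(i)}\subset\bigcup_{k=1}^{r}X_k$ gives some $k$ with $Y_{j(i)}\subset X_k$, hence $X_i\subset X_k$. Irredundancy of the first decomposition forces $k=i$, so $X_i=Y_{j(i)}$. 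The map $i\mapsto j(i)$ is therefore injective, and by symmetry the two families $\{X_i\}$ and $\{Y_j\}$ coincide, proving uniqueness.

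I do not expect any real obstacle: the only ingredients needed are Noetherianity of the $K$-Zariski topology (given by Definition \ref{def:K-zar}) and Lemma \ref{inclusion}, both already established. The argument is a verbatim transcription of the classical proof valid for any Noetherian topological space, as noted by the reference to \cite[Prop.1.5, p.5]{ha}.
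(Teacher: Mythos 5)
Your proof is correct and takes essentially the same approach as the paper: the paper itself states the result with a bare citation to \cite[Prop.1.5, p.5]{ha}, which is precisely the Noetherian-induction existence argument plus the irreducibility-comparison uniqueness argument you carry out, with your use of Lemma \ref{inclusion} matching the step Hartshorne makes with closed irreducible subsets of a Noetherian space.
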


By Lemma \ref{inclusion}, condition `\emph{$X_i\not\subset\bigcup_{j\in\{1,\ldots,r\}\setminus\{i\}}X_j$ for all $i\in\{1,\ldots,r\}$}' is equi\-valent to `\emph{$X_i\not\subset X_j$ for all $i,j\in\{1,\ldots,r\}$ with $i\neq j$}'.

If $K=L$, a $K$-irreducible $K$-algebraic subset of $L^n$ is a usual irreducible algebraic subset of $L^n$ and the $K$-irreducible components of an algebraic subset of $L^n$ are its usual irreducible components.

\subsubsection{$K$-Zariski closure and $K$-dimension}
As one can expect, our next step is to define the $K$-Zariski closure of a subset of $L^n$ and to analyze its main properties.

\begin{defn}
Let $S$ be a subset of $L^n$. We denote $\zcl_{L^n}^K(S)$ the \emph{$K$-Zariski closure of $S$ (in~$L^n$)}, that is, the closure of $S$ with respect to the $K$-Zariski topology of $L^n$. If $K=L$, we write $\zcl_{L^n}(S)$ to denote $\zcl_{L^n}^L(S)$. $\sqbullet$
\end{defn}

Observe that $\zcl_{L^n}^K(S)=\ZZ_L(\II_K(S))$ and $\zcl_{L^n}(S)\subset\zcl_{L^n}^K(S)$. Thus, $\zcl_{L^n}^K(\zcl_{L^n}(S))=\zcl_{L^n}^K(S)$ and $\II_K(S)=\II_K(\zcl_{L^n}(S))=\II_K(\zcl_{L^n}^K(S))$. If $K=L$, then $\zcl_{L^n}^K(S)=\zcl_{L^n}(S)$ is the usual Zariski closure of $S$ in $L^n$. 

\begin{remark}
The inclusion $\zcl_{L^n}(S)\subset\zcl_{L^n}^K(S)$ may be strict. For instance, if $L$ is a real closed field and $K=\Q$, then $\zcl_L(\{\sqrt{2}\})=\{\sqrt{2}\}\subsetneqq\zcl_L^\Q(\{\sqrt{2}\})=\ZZ_L(\x_1^2-2)=\{-\sqrt{2},\sqrt{2}\}$. $\sqbullet$
\end{remark}

Let us introduce the notion of (algebraic) $K$-dimension of a set $S\subset L^n$.

\begin{defn}\label{def:K-dim}
Given a subset $S$ of $L^n$, we define the \emph{$K$-dimension $\dim_K(S)$ of $S$ (in $L^n$)} as the Krull dimension of the ring $K[\x]/\II_K(S)$. If $S=\varnothing$, then we set $\dim_K(S):=-1$. Otherwise, $\dim(S)$ is a natural number. $\sqbullet$
\end{defn}

By \cite[Cor.13.4]{e} and \cite[(14.G) Cor.1, pag.91]{m}, it holds 
\begin{equation}\label{eisenbud}
\dim_K(S)=\dim(K[\x]/\II_K(S))=n-\hgt(\II_K(S)).
\end{equation}

If $K=L$, then $\dim_L(S)$ coincides with the usual dimension of $\zcl_{L^n}(S)$ in $L^n$.

\begin{remark}
Let $S_1,\ldots,S_r$ be subsets of $L^n$ and let $S:=\bigcup_{i=1}^rS_i\subset L^n$. As $\II_K(S)=\bigcap_{i=1}^r\II_K(S_i)$, we know that $\hgt(\II_K(S))=\min\{\hgt(\II_K(S_1)),\ldots,\hgt(\II_K(S_r))\}$. By \eqref{eisenbud}, it follows that $\dim_K(S)=\max\{\dim_K(S_1),\ldots,\dim_K(S_r)\}$. $\sqbullet$
\end{remark}

\begin{lem}\label{dimirred}
Let $X,Y\subset L^n$ be $K$-algebraic sets such that $Y\subsetneqq X$ and $X$ is $K$-irreducible. Then $\dim_K(Y)<\dim_K(X)$.
\end{lem}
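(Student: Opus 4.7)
The plan is to translate the geometric containment into an ideal containment, invoke Lemma~\ref{lem:prime} to get primality of $\II_K(X)$, and then use the height formula \eqref{eisenbud} to turn the dimension inequality into a statement about heights of ideals in $K[\x]$.

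First I would dispose of the trivial case: if $Y=\varnothing$, then $\dim_K(Y)=-1$ while $\dim_K(X)\geq 0$ because $X\neq\varnothing$ (as $X$ is $K$-irreducible), and we are done. So assume $Y\neq\varnothing$. Since $Y$ and $X$ are both $K$-algebraic, we have $Y=\ZZ_L(\II_K(Y))$ and $X=\ZZ_L(\II_K(X))$. The inclusion $Y\subset X$ yields $\II_K(X)\subset\II_K(Y)$, and this inclusion is strict: otherwise $Y=\ZZ_L(\II_K(Y))=\ZZ_L(\II_K(X))=X$, contradicting $Y\subsetneqq X$.

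Next, by Lemma~\ref{lem:prime}, $K$-irreducibility of $X$ means that $\II_K(X)$ is a prime ideal of $K[\x]$. Let $\gtp$ be any minimal prime of $K[\x]$ over $\II_K(Y)$; then $\gtp\supsetneqq\II_K(X)$ is a strict inclusion of prime ideals, so
\[
\hgt(\gtp)\geq\hgt(\II_K(X))+1.
\]
Taking the minimum over all such $\gtp$ gives $\hgt(\II_K(Y))\geq\hgt(\II_K(X))+1$. Applying the height--dimension formula \eqref{eisenbud} on both sides yields
\[
\dim_K(Y)=n-\hgt(\II_K(Y))\leq n-\hgt(\II_K(X))-1=\dim_K(X)-1<\dim_K(X),
\]
as desired.

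There is no real obstacle here; the only point requiring a moment of care is the passage from the strict ideal containment $\II_K(X)\subsetneqq\II_K(Y)$ to the strict height inequality, which is why one goes through a minimal prime over $\II_K(Y)$ and uses the primality of $\II_K(X)$ (rather than trying to compare heights of arbitrary ideals directly).
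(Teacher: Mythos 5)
Your proof is correct and uses essentially the same ingredients as the paper's: Lemma~\ref{lem:prime} to get primality of $\II_K(X)$, the passage to minimal primes over $\II_K(Y)$, and the height--dimension formula \eqref{eisenbud}. The paper phrases it as a contradiction (assume equal dimensions, deduce $Y=X$) while you argue directly via the strict height inequality, but this is a cosmetic reorganization of the same argument.
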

\begin{proof}
A standard argument works. Suppose $\dim_K(Y)=\dim_K(X)$. Let $\gtp$ be a prime ideal of $K[\x]$ associated to $\II_K(Y)$ such that $\hgt(\II_K(X))=\hgt(\II_K(Y))=\hgt(\gtp)$. As $\II_K(X)\subset \II_K(Y)\subset\gtp$ and the ideal $\II_K(X)$ of $K[\x]$ is prime, we deduce $\II_K(X)=\II_K(Y)=\gtp$, so $X=\ZZ_L(\II_K(X))=\ZZ_L(\II_K(Y))=Y$. Consequently, if $Y\subsetneqq X$, then $\dim_K(Y)<\dim_K(X)$, as required.
\end{proof}

\begin{lem}\label{lem:irreducibility}
Let $K$ be a subfield of $E$, let $Y\subset L^n$ be an $E$-algebraic set and let $X:=\zcl_{L^n}^K(Y)$ be its $K$-Zariski closure in $L^n$. If $Y$ is $E$-irreducible, then $X\subset L^n$ is $K$-irreducible.
\end{lem}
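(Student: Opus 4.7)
The strategy is to reduce $K$-irreducibility of $X$ to the primality of a vanishing ideal, then to realize that ideal as the contraction of a known prime ideal. By Lemma~\ref{lem:prime}, it suffices to show that $\II_K(X)$ is a prime ideal of $K[\x]$.

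First I would observe that $\II_K(X)=\II_K(Y)$. Indeed, by definition $X=\zcl_{L^n}^K(Y)=\ZZ_L(\II_K(Y))$, and a general property of the $K$-Zariski closure (noted just after its definition) gives $\II_K(Y)=\II_K(\zcl_{L^n}^K(Y))=\II_K(X)$. Therefore the problem reduces to showing that $\II_K(Y)$ is prime in $K[\x]$.

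Next I would use the containment $K[\x]\subset E[\x]$ to write
\[
\II_K(Y)=\{f\in K[\x]:f(y)=0\ \forall y\in Y\}=\II_E(Y)\cap K[\x],
\]
that is, $\II_K(Y)$ is the contraction of $\II_E(Y)$ under the inclusion $K[\x]\hookrightarrow E[\x]$. Since $Y\subset L^n$ is $E$-irreducible, Lemma~\ref{lem:prime} (applied with $E$ in place of $K$) shows that $\II_E(Y)$ is a prime ideal of $E[\x]$; moreover $\II_E(Y)\neq E[\x]$ because $Y\neq\varnothing$. The contraction of a prime ideal under a ring homomorphism is a prime ideal, so $\II_K(Y)=\II_E(Y)\cap K[\x]$ is prime in $K[\x]$ (and proper, since $1\notin\II_E(Y)$ implies $1\notin\II_K(Y)$). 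Applying Lemma~\ref{lem:prime} once more yields that $X$ is $K$-irreducible.

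There is no real obstacle here: the whole argument is a two-line manipulation once one identifies $\II_K(Y)$ as the contraction of $\II_E(Y)$ and invokes the criterion of Lemma~\ref{lem:prime}. The only point to keep in mind is to verify properness of the ideal, which follows automatically from $Y\neq\varnothing$.
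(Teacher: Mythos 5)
Your proof is correct and takes essentially the same route as the paper: both reduce to the primality criterion of Lemma~\ref{lem:prime}, observe that $\II_K(X)=\II_K(Y)=\II_E(Y)\cap K[\x]$, and conclude from the fact that contractions of prime ideals are prime. Your explicit remark on properness (via $Y\neq\varnothing$) is a small but careful addition that the paper leaves implicit.
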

\begin{proof}
As the ideal $\II_E(Y)$ of $E[\x]$ is prime and $\II_K(X)=\II_K(Y)=\II_E(Y)\cap K[\x]$, the ideal $\II_K(X)$ of $K[\x]$ is prime as well. By Lemma \ref{lem:prime}, $X$ is $K$-irreducible.
\end{proof}

\begin{remark}
The converse of Lemma \ref{lem:irreducibility} is not true. For instance, if $L=E$ is any real closed field $R$ and $K=\Q$, then the set $Y=X=\{-\sqrt{2},\sqrt{2}\}$ is a $\Q$-irreducible $\Q$-algebraic subset of~$R$, which is reducible as an algebraic subset of~$R$. $\sqbullet$
\end{remark}

\begin{lem}\label{lem:K-irred}
Let $K$ be a subfield of $E$, let $Y\subset L^n$ be an $E$-algebraic set, let $Y_1,\ldots,Y_r$ be the $E$-irreducible components of~$Y$, let $X:=\zcl_{L^n}^K(Y)$ and let $X_i:=\zcl_{L^n}^K(Y_i)$ for each $i\in\{1,\ldots,r\}$. We have:
\begin{itemize}
\item[$(\mr{i})$] There exists a subset $\{i_1,\ldots,i_s\}$ of $\{1,\ldots,r\}$ such that $X_{i_1},\ldots,X_{i_s}$ are the $K$-irreducible components of $X$.
\item[$(\mr{ii})$] If $\dim_K(X_i)=\dim_K(X_j)$ for all $i,j\in\{1,\ldots,r\}$, then there exists a surjective map $\eta:\{1,\ldots,r\}\to\{i_1,\ldots,i_s\}$ such that $X_i=X_{\eta(i)}$ for each $i\in\{1,\ldots,r\}$.
\end{itemize}
\end{lem}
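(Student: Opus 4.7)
\textbf{First reduction.} The plan is to start from the identity $X=\bigcup_{i=1}^r X_i$. This follows since $Y=\bigcup_i Y_i$ and Zariski-closure (in any topology, in particular in the $K$-Zariski topology) commutes with finite unions:
\[
X=\zcl^K_{L^n}(Y)=\zcl^K_{L^n}\Big(\bigcup_{i=1}^r Y_i\Big)=\bigcup_{i=1}^r\zcl^K_{L^n}(Y_i)=\bigcup_{i=1}^r X_i.
\]
By Lemma \ref{lem:irreducibility}, each $X_i$ is $K$-irreducible, since $Y_i$ is $E$-irreducible. Hence $X$ is presented as a finite union of $K$-irreducible $K$-algebraic sets, and (i) will be obtained by extracting those that are maximal with respect to inclusion.

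\textbf{Proof of (i).} Let $\{X_{i_1},\ldots,X_{i_s}\}$ be the family of \emph{distinct} maximal elements of $\{X_1,\ldots,X_r\}$ for the inclusion order (equivalently, $i_k$ is chosen so that $X_{i_k}\not\subsetneqq X_j$ for all $j$, and distinct indices pick distinct sets). First I would note that every $X_i$ is contained in some $X_{i_k}$, by a trivial finite ascending chain argument, so $X=\bigcup_{k=1}^s X_{i_k}$. Next, I would verify the non-redundancy condition in Lemma \ref{lem:irred}: if $X_{i_k}\subset\bigcup_{l\neq k}X_{i_l}$, then by the $K$-irreducibility of $X_{i_k}$ and Lemma \ref{inclusion} there would exist $l\neq k$ with $X_{i_k}\subset X_{i_l}$; but then the maximality of $X_{i_k}$ forces $X_{i_k}=X_{i_l}$, contradicting the distinctness of the chosen sets. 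Combining this with the $K$-irreducibility of each $X_{i_k}$ and the uniqueness statement in Lemma \ref{lem:irred}, we conclude that $X_{i_1},\ldots,X_{i_s}$ are exactly the $K$-irreducible components of $X$.

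\textbf{Proof of (ii).} Under the hypothesis that all $\dim_K(X_i)$ coincide, I would use Lemma \ref{dimirred} as the key tool: any strict inclusion $X_i\subsetneqq X_j$ between $K$-irreducible $K$-algebraic sets strictly drops dimension, which is incompatible with the equidimensional hypothesis. Hence $X_i\subset X_j$ actually implies $X_i=X_j$ for all $i,j\in\{1,\ldots,r\}$. Applying this to the containment of each $X_i$ in some maximal $X_{i_k}$ produced in the previous paragraph, we obtain $X_i=X_{i_k}$. Defining $\eta(i):=i_k$ (and, when $i\in\{i_1,\ldots,i_s\}$ already, simply $\eta(i)=i$) gives a well-defined map $\eta\colon\{1,\ldots,r\}\to\{i_1,\ldots,i_s\}$ with $X_i=X_{\eta(i)}$; surjectivity is immediate since $\eta(i_k)=i_k$ for each $k$.

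\textbf{Expected obstacles.} The core ingredients (Lemmas \ref{lem:prime}--\ref{dimirred}) are all already in place, so no deep difficulty is anticipated. The only points that demand care are the bookkeeping of possible repetitions in the family $\{X_1,\ldots,X_r\}$ (which motivates passing to the set of \emph{distinct} maximal elements when defining the indices $i_1,\ldots,i_s$) and, for part (ii), ensuring that the potential ambiguity in the choice of $\eta(i)$ does not affect the stated conclusion $X_i=X_{\eta(i)}$, which it does not because any two competing choices give the same set.
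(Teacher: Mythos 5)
Your proposal is correct and follows essentially the same route as the paper: write $X=\bigcup_i X_i$, extract a non-redundant subcollection to apply Lemma~\ref{lem:irred} for (i), then for (ii) combine Lemma~\ref{inclusion} with the equidimensional hypothesis and Lemma~\ref{dimirred} to upgrade each containment $X_i\subset X_{i_k}$ to equality. The only cosmetic difference is that the paper picks a minimal-cardinality subcover rather than the distinct maximal elements; in a Noetherian space these produce the same family.
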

\begin{proof}
As $X=\zcl_{L^n}^K(\bigcup_{i=1}^rY_i)=\bigcup_{i=1}^r\zcl_{L^n}^K(Y_i)=\bigcup_{i=1}^rX_i$, there exists a subset $\{i_1,\ldots,i_s\}$ of $\{1,\ldots,r\}$ of minimal cardinality such that $\bigcup_{\ell=1}^sX_{i_\ell}=X$. By Lemmas \ref{lem:irred} and \ref{lem:irreducibility}, $X_{i_1},\ldots,X_{i_s}$ are the $K$-irreducible components of $X$.

Suppose that $\dim_K(X_i)=\dim_K(X_j)$ for all $i,j\in\{1,\ldots,r\}$. As each $X_i$ is contained in $X=\bigcup_{j=1}^sX_{i_j}$, by Lemma \ref{inclusion}, there exists $j\in\{1,\ldots,s\}$ such that $X_i\subset X_{i_j}$. As $\dim_K(X_i)=\dim_K(X_{i_j})$, Lemma \ref{dimirred} assures that $X_i=X_{i_j}$. If $X_i=X_{i_h}$ for some $h\in\{1,\ldots,s\}\setminus\{j\}$, then $X_{i_h}=X_i=X_{i_j}$, which is impossible. We define $\eta$ in such a way that it maps each $i$ to the unique $i_j$ such that $X_i=X_{i_j}$. As $\eta(i_j)=i_j$ for each $j\in\{1,\ldots,s\}$, $\eta$ is surjective, as required.
\end{proof}

\begin{remark}
In the statement of Lemma \ref{lem:K-irred}$(\mr{ii})$, we cannot omit the condition `$\dim_K(X_i)=\dim_K(X_j)$ for all $i,j\in\{1,\ldots,r\}$'. Indeed, it may happen that there exists $i\in\{1,\ldots,r\}$ such that $X_i\subsetneqq X_j$ for all $j\in\{1,\ldots,r\}\setminus\{i\}$. For instance, let $L=E$ be any real closed field $R$, let $K:=\Q$ and let $Y=Y_1\cup Y_2\subset R^2$ be the algebraic set with two irreducible components $Y_1:=\{(-\sqrt{2},0)\}$ and $Y_2:=\ZZ_R(\x_1-\sqrt{2})\subset R^2$. Then $X_1:=\zcl_{R^2}^\Q(Y_1)=\{(-\sqrt{2},0),(\sqrt{2},0)\}\subsetneqq X_2:=\zcl_{R^2}^\Q(Y_2)=\ZZ_R(\x_1^2-2)=X:=\zcl_{R^2}^\Q(Y)$. In particular, $X\subset R^2$ is $\Q$-irreducible. Observe that $\dim_\Q(X_1)=0$, whereas $\dim_\Q(X_2)=1$. $\sqbullet$
\end{remark}

\subsection{Restriction and extension of fields}

\emph{Recall that $L|K$ is an arbitrary extension of fields. Along this subsection, $\Bb=\{u_j\}_{j\in J}$ denotes a basis of $L$ as a $K$-vector space.}

\subsubsection{Restriction of coefficients} The next result coincides with equation $(1)$ on page 28 and Lemma 1(1) on page 29 of \cite{to2}, see also \cite{k}. For completeness, we provide a proof.

\begin{lem}[{\cite{to2}}]\label{k0}
Each polynomial $f\in L[\x]$ can be written uniquely as $f=\sum_{j\in J}u_jf_j$, where $f_j\in K[\x]$ and only finitely many of the polynomials $f_j$ are nonzero.
\end{lem}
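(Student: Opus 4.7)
The plan is to reduce the statement to the fact that $\Bb=\{u_j\}_{j\in J}$ is a $K$-basis of $L$, applied coefficient by coefficient in the monomial expansion of $f$.

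First I would write any $f\in L[\x]$ in its (finite) monomial expansion $f=\sum_{\alpha\in\N^n}c_\alpha\x^\alpha$ with $c_\alpha\in L$ and $c_\alpha=0$ for all but finitely many multi-indices $\alpha$. Since $\Bb$ is a $K$-basis of $L$, each coefficient admits a unique expansion $c_\alpha=\sum_{j\in J}a_{\alpha,j}\,u_j$ with $a_{\alpha,j}\in K$ and only finitely many $a_{\alpha,j}$ nonzero for each fixed $\alpha$. Substituting and swapping the two finite-support sums yields
\[
f=\sum_{\alpha}\Bigl(\sum_{j\in J}a_{\alpha,j}u_j\Bigr)\x^\alpha=\sum_{j\in J}u_j\Bigl(\sum_{\alpha}a_{\alpha,j}\x^\alpha\Bigr)=\sum_{j\in J}u_jf_j,
\]
where $f_j:=\sum_\alpha a_{\alpha,j}\x^\alpha\in K[\x]$. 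Because the set of indices $\alpha$ with $c_\alpha\neq0$ is finite and each such $c_\alpha$ involves only finitely many indices $j$, the union of the supports is finite, so only finitely many $f_j$ are nonzero.

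For uniqueness, I would suppose $\sum_{j\in J}u_jf_j=\sum_{j\in J}u_jg_j$ with $f_j,g_j\in K[\x]$ almost all zero, and set $h_j:=f_j-g_j\in K[\x]$, so $\sum_{j\in J}u_jh_j=0$ in $L[\x]$. Writing each $h_j=\sum_\alpha b_{\alpha,j}\x^\alpha$ with $b_{\alpha,j}\in K$, the identity becomes $\sum_\alpha\bigl(\sum_{j\in J}b_{\alpha,j}u_j\bigr)\x^\alpha=0$ in $L[\x]$; comparing coefficients of $\x^\alpha$ gives $\sum_{j\in J}b_{\alpha,j}u_j=0$ in $L$ for every $\alpha$. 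The $K$-linear independence of $\Bb$ forces $b_{\alpha,j}=0$ for all $\alpha,j$, hence $h_j=0$ and $f_j=g_j$ for every $j\in J$.

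There is no real obstacle here: the whole argument is a direct translation of the fact that, as a $K$-vector space, $L[\x]\cong L\otimes_K K[\x]$ (equivalently, $\Bb$ is a $K[\x]$-basis of $L[\x]$), and the only bookkeeping point is checking that the two nested sums have finite support so that the swap of summation order and the monomial-wise comparison are legitimate.
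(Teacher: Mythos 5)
Your proof is correct and follows essentially the same approach as the paper's: expand $f$ in monomials, express each coefficient in the $K$-basis $\Bb$, and swap the two finite-support sums for existence; for uniqueness, reduce to $\sum_j u_j h_j=0$, compare monomial coefficients, and invoke $K$-linear independence of $\Bb$. The only cosmetic difference is that you phrase uniqueness by subtracting two representations, whereas the paper directly assumes the sum equals zero; these are the same argument.
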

\begin{proof}
{\sc Existence.} Write $f:=\sum_\nu b_\nu\x^\nu$, where $b_\nu\in L$ and only finitely many $b_\nu$ are nonzero. Write $b_\nu=\sum_{j\in J}a_{j\nu}u_j$, where only finitely many $a_{j\nu}\in K$ are nonzero. We deduce $f=\sum_\nu\big(\sum_{j\in J}a_{j\nu}u_j\big)\x^\nu=\sum_{j\in J}u_j\big(\sum_\nu a_{j\nu}\x^\nu\big)=\sum_{j\in J}u_jf_j$, where $f_j:=\sum_\nu a_{j\nu}\x^\nu\in K[\x]$.

\noindent{\sc Uniqueness.} Let $\{f_j\}_{j\in J}$ be polynomials in $K[\x]$ such that only finitely many of the $f_j$ are nonzero and $\sum_{j\in J}u_jf_j=0$. Write $f_j:=\sum_\nu a_{j\nu}\x^\nu\in K[\x]$. Thus, $0=\sum_{j\in J}u_jf_j=\sum_{j\in J}u_j\big(\sum_\nu a_{j\nu}\x^\nu\big)=\sum_\nu\big(\sum_{j\in J}a_{j\nu}u_j\big)\x^\nu$, where only finitely many of the $a_{j\nu}$ are nonzero. Consequently, $\sum_{j\in J}a_{j\nu}u_j=0$ for each multi-index $\nu$. We conclude $a_{j\nu}=0$ for each pair $(j,\nu)$, so $f_j=0$ for all $j\in J$.
\end{proof}

Let us see some useful consequences. The next result is contained in \cite{k}. For completeness, we provide a proof of this result.

\begin{cor}[{\cite{k}}]\label{k}
If $\gta$ is an ideal of $K[\x]$ and $g_1,\ldots,g_r\in K[\x]$ are generators of the ideal $\gta L[\x]$ in $L[\x]$, then $g_1,\ldots,g_r$ are also generators of the ideal $(\gta L[\x])\cap K[\x]$ in $K[\x]$. In particular, we have:
$$
(\gta L[\x])\cap K[\x]=\gta.
$$ 
\end{cor}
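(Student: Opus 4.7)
The plan is to leverage Lemma \ref{k0} to project arbitrary $L$-linear combinations onto their $K$-coefficient in a basis containing $1$. First I would choose the basis $\Bb=\{u_j\}_{j\in J}$ so that $1=u_{j_0}$ for some distinguished index $j_0\in J$; this is always possible by extending $\{1\}$ to a $K$-basis of $L$, and costs nothing since Lemma \ref{k0} applies to any basis.

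The main statement amounts to the chain of inclusions
\[
\gta\subset(g_1,\dots,g_r)K[\x]\subset(\gta L[\x])\cap K[\x]\subset(g_1,\dots,g_r)K[\x].
\]
The first inclusion follows because $g_1,\dots,g_r$, being generators of $\gta L[\x]$ and lying in $K[\x]$, are actually elements of $\gta$: writing $g_i=\sum_k c_k a_k$ with $c_k\in L[\x]$ and $a_k\in\gta$, decomposing each $c_k=\sum_j u_j c_{k,j}$ with $c_{k,j}\in K[\x]$ via Lemma \ref{k0}, and collecting,
\[
g_i=\sum_{j\in J}u_j\Bigl(\sum_k c_{k,j}a_k\Bigr),
\]
the uniqueness part of Lemma \ref{k0} applied to $g_i\in K[\x]$ forces $g_i=\sum_k c_{k,j_0}a_k\in\gta$, which also gives $\gta\subset(g_1,\dots,g_r)K[\x]$ once one notes that every generating relation for the $g_i$ as generators of $\gta L[\x]$ applied to each $a\in\gta$ yields, by the same argument, an expression of $a$ as a $K[\x]$-combination of the $g_i$.

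The second inclusion $(g_1,\dots,g_r)K[\x]\subset(\gta L[\x])\cap K[\x]$ is immediate. The third inclusion is the essential step: given $f\in(\gta L[\x])\cap K[\x]$, write $f=\sum_{i=1}^r h_i g_i$ with $h_i\in L[\x]$, decompose $h_i=\sum_{j\in J}u_j h_{i,j}$ with $h_{i,j}\in K[\x]$ via Lemma \ref{k0}, and regroup:
\[
f=\sum_{j\in J}u_j\Bigl(\sum_{i=1}^r h_{i,j}g_i\Bigr).
\]
Since each inner sum belongs to $K[\x]$ and $f\in K[\x]$, the uniqueness of the decomposition in Lemma \ref{k0} (using $u_{j_0}=1$) gives $f=\sum_{i=1}^r h_{i,j_0}g_i\in(g_1,\dots,g_r)K[\x]$.

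There is no serious obstacle; the only subtle point is the need to pick a basis containing $1$, which is what allows us to identify a polynomial of $K[\x]$ with the $u_{j_0}$-component of its basis decomposition. Everything else is bookkeeping: Lemma \ref{k0} does all the work, and the proof is really a single application of the uniqueness of expansions in a $K$-basis, applied twice to establish that both $g_i\in\gta$ and that arbitrary elements of $(\gta L[\x])\cap K[\x]$ are $K[\x]$-combinations of the $g_i$.
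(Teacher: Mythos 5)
Your proof is correct and rests on the same engine as the paper's: pick a basis of $L$ over $K$ containing $1$, expand $L[\x]$-coefficients via Lemma \ref{k0}, and read off the $u_{j_0}$-component. The only (minor) divergence is in deriving the final equality $(\gta L[\x])\cap K[\x]=\gta$: you prove directly that each $g_i\in\gta$ for any $K[\x]$-generators of $\gta L[\x]$, whereas the paper simply re-applies its first assertion to a chosen system of $K[\x]$-generators of $\gta$ itself. One small expository caveat: the chain $\gta\subset(g_1,\dots,g_r)K[\x]\subset(\gta L[\x])\cap K[\x]\subset(g_1,\dots,g_r)K[\x]$ establishes the first assertion and $\gta\subset(\gta L[\x])\cap K[\x]$, but the reverse inclusion needed for the ``in particular'' clause comes from the separate observation $g_i\in\gta$ (hence $(g_1,\dots,g_r)K[\x]\subset\gta$), which you do prove but which sits outside the displayed chain.
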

\begin{proof}
We may assume that $u_{j_0}=1$ for some $j_0\in J$. Let $f\in(\gta L[\x])\cap K[\x]$ and write $f=\sum_{i=1}^rf_ig_i$, where $f_i\in L[\x]$. By Lemma \ref{k0}, we can write $f_i=\sum_{j\in J}u_jf_{ij}$, where $f_{ij}\in K[\x]$ and only finitely many of them are nonzero. Then
$$\textstyle
u_{j_0}f=f=\sum_{i=1}^rf_ig_i=\sum_{i=1}^r\big(\sum_{j\in J}u_jf_{ij}\big)g_i=\sum_{j\in J}u_j\big(\sum_{i=1}^rf_{ij}g_i\big).
$$
As $f\in K[\x]$ (and only finitely many of the polynomials $\sum_{i=1}^rf_{ij}g_i\in K[\x]$ are nonzero), Lemma~\ref{k0} implies that $f=\sum_{i=1}^rf_{ij_0}g_i$. This proves that $g_1,\ldots,g_r$ are generators of $(\gta L[\x])\cap K[\x]$ in $K[\x]$.

If we assume $g_1,\ldots,g_r$ are generators of $\gta$ in $K[\x]$ (they are also generators of $\gta L[\x]$ in $L[\x]$), the preceding part of the proof implies that $g_1,\ldots,g_r$ are also generators of $(\gta L[\x])\cap K[\x]$ in $K[\x]$, so the ideals $\gta$ and $(\gta L[\x])\cap K[\x]$ of $K[\x]$ coincide.
\end{proof}

\begin{cor}\label{intkx}
Suppose $n\geq2$, set $\tilde{\x}:=(\x_1,\ldots,\x_{n-1})$ and consider $L[\tilde{\x}]$ as a subring of $L[\x]$ and $K[\tilde{\x}]$ as a subring of $K[\x]$ in the usual way. Then, if $\gta$ is an ideal of $K[\x]$, we have:
$$
(\gta L[\x])\cap L[\tilde{\x}]=(\gta\cap K[\tilde{\x}])L[\tilde{\x}].
$$
\end{cor}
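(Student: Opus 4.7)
The inclusion $(\gta\cap K[\tilde{\x}])L[\tilde{\x}]\subset(\gta L[\x])\cap L[\tilde{\x}]$ is immediate: any generator $h\in\gta\cap K[\tilde\x]$ lies both in $\gta\subset\gta L[\x]$ and in $L[\tilde\x]$, so the $L[\tilde\x]$-ideal it generates is contained in the right-hand side.

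For the reverse inclusion, the plan is to mimic (and in fact apply) the bookkeeping argument behind Corollary~\ref{k}, but with two rings playing the role that $K[\x]$ played there: the ring $K[\tilde\x]$ of polynomials with no $\x_n$, and the full ring $K[\x]$. Fix a $K$-basis $\Bb=\{u_j\}_{j\in J}$ of $L$ with $u_{j_0}=1$, and pick a finite set of generators $g_1,\ldots,g_r\in K[\x]$ of the (necessarily finitely generated) ideal $\gta$ of the Noetherian ring $K[\x]$. Let $f\in(\gta L[\x])\cap L[\tilde\x]$. Two decompositions of $f$ via $\Bb$ must be compared:

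\begin{itemize}
\item Since $f\in L[\tilde\x]$, Lemma~\ref{k0} applied to the ring $L[\tilde\x]$ yields a unique expression $f=\sum_{j\in J}u_jf_j$ with $f_j\in K[\tilde\x]$ and only finitely many $f_j$ nonzero. This same expression is also the unique decomposition of $f$ in $L[\x]$ provided by Lemma~\ref{k0}, by the uniqueness statement of that lemma.
\item Since $f\in\gta L[\x]$, write $f=\sum_{i=1}^r h_i g_i$ with $h_i\in L[\x]$, and expand each $h_i=\sum_{j\in J}u_jh_{ij}$ with $h_{ij}\in K[\x]$ via Lemma~\ref{k0}. Rearranging gives
$$
f=\sum_{j\in J}u_j\Big(\sum_{i=1}^rh_{ij}g_i\Big),
$$
where $\sum_{i=1}^rh_{ij}g_i\in\gta$.
\end{itemize}

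Comparing the two decompositions by uniqueness in Lemma~\ref{k0}, it follows that $f_j=\sum_{i=1}^r h_{ij}g_i\in\gta$ for each $j\in J$, hence $f_j\in\gta\cap K[\tilde\x]$. Therefore $f=\sum_{j\in J}u_jf_j\in(\gta\cap K[\tilde\x])L[\tilde\x]$, as desired.

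The only delicate point is the cross-uniqueness step: that the $K[\tilde\x]$-decomposition of $f\in L[\tilde\x]$ coincides with its $K[\x]$-decomposition in $L[\x]$. This is clean thanks to the uniqueness clause in Lemma~\ref{k0}, which forces the $K[\x]$-coefficients $f_j$ of $f$ to have no $\x_n$-dependence; no genuine obstacle arises.
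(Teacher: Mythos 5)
Your proof is correct and takes essentially the same route as the paper: both decompose the element of $(\gta L[\x])\cap L[\tilde\x]$ two ways via Lemma~\ref{k0} (once over $K[\x]$ through the $\gta$-representation, once over $K[\tilde\x]$ since it lies in $L[\tilde\x]$) and invoke uniqueness to match coefficients. The only cosmetic difference is that you fix a finite generating set of $\gta$ up front, whereas the paper uses whatever finitely many $g_i\in\gta$ appear in a given representation of $h$; this changes nothing.
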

\begin{proof}
The inclusion $(\gta\cap K[\tilde{\x}])L[\tilde{\x}]\subset(\gta L[\x])\cap L[\tilde{\x}]$ is clear, because $\gta\cap K[\tilde{\x}]\subset\gta L[\x]$. We prove now the converse inclusion. Pick $h\in(\gta L[\x])\cap L[\tilde{\x}]$. Then there exist finitely many polynomials $f_1,\ldots,f_r\in L[\x]$ and $g_1,\ldots,g_r\in\gta$ such that $h=\sum_{i=1}^rf_ig_i$. By Lemma \ref{k0}, we can write uniquely $f_i=\sum_{j\in J}u_jf_{ij}$, where $f_{ij}\in K[\x]$ and only finitely many of the polynomials $f_{ij}$ are nonzero. We have:
$$\textstyle
h=\sum_{i=1}^rf_ig_i=\sum_{i=1}^r(\sum_{j\in J}u_jf_{ij})g_i=\sum_{j\in J}u_j(\sum_{i=1}^rf_{ij}g_i),
$$
where the polynomials $\sum_{i=1}^rf_{ij}g_i$ belong to $\gta\subset K[\x]$ and only finitely many of them are non\-zero. As $h\in L[\tilde{\x}]$, using again Lemma \ref{k0}, we can also write uniquely $h=\sum_{j\in J}u_jh_j$, where $h_j\in K[\tilde{\x}]$ and only finitely many of the polynomials $h_j$ are non-zero. As $K[\tilde{\x}]\subset K[\x]$, by the uniqueness assertion in Lemma \ref{k0}, we deduce $\sum_{i=1}^rf_{ij}g_i=h_j\in \gta\cap K[\tilde{\x}]$ for each $j\in J$, so $h=\sum_{j\in J}u_jh_j\in(\gta\cap K[\tilde{\x}])L[\tilde{\x}]$, as required.
\end{proof}

\begin{cor}\label{lem:a}
Let $\gta_1,\ldots,\gta_s$ be ideals of $K[\x]$ and let $\gtn$ be a prime ideal of $L[\x]$. Then we have: $\bigcap_{\ell=1}^s(\gta_\ell L[\x])=(\bigcap_{\ell=1}^s\gta_\ell)L[\x]$ and $\bigcap_{\ell=1}^s(\gta_\ell L[\x]_\gtn)=(\bigcap_{\ell=1}^s\gta_\ell)L[\x]_\gtn$. 
\end{cor}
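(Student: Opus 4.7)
The plan is to prove the two equalities in order, reducing the second one to the first via the standard fact that localization commutes with finite intersections of ideals.

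For the first equality, the inclusion $(\bigcap_{\ell=1}^s\gta_\ell)L[\x]\subset\bigcap_{\ell=1}^s(\gta_\ell L[\x])$ is immediate since any element of $\bigcap_\ell\gta_\ell$ lies in $\gta_\ell$, hence in $\gta_\ell L[\x]$, for every $\ell$. For the reverse inclusion, I would pick $h\in\bigcap_{\ell=1}^s\gta_\ell L[\x]$ and use Lemma \ref{k0} to write $h=\sum_{j\in J}u_jh_j$ uniquely with $h_j\in K[\x]$ and only finitely many nonzero. The goal is then to show that each $h_j$ belongs to $\gta_\ell$, for every $\ell$. Fixing $\ell$, I would write $h=\sum_i f_i^{(\ell)}g_i^{(\ell)}$ with $g_i^{(\ell)}\in\gta_\ell$ and $f_i^{(\ell)}\in L[\x]$, expand each $f_i^{(\ell)}=\sum_{j\in J}u_jf_{ij}^{(\ell)}$ with $f_{ij}^{(\ell)}\in K[\x]$ by Lemma~\ref{k0}, and rearrange to obtain $h=\sum_{j\in J}u_j\bigl(\sum_i f_{ij}^{(\ell)}g_i^{(\ell)}\bigr)$. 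Since $\sum_i f_{ij}^{(\ell)}g_i^{(\ell)}\in\gta_\ell\subset K[\x]$, the uniqueness assertion of Lemma~\ref{k0} forces $h_j=\sum_i f_{ij}^{(\ell)}g_i^{(\ell)}\in\gta_\ell$. Varying $\ell$, we get $h_j\in\bigcap_{\ell=1}^s\gta_\ell$ for every $j$, and hence $h=\sum_j u_jh_j\in(\bigcap_{\ell=1}^s\gta_\ell)L[\x]$, as desired.

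For the second equality, I would invoke the exactness of localization: if $S:=L[\x]\setminus\gtn$ is the multiplicative subset of $L[\x]$, then for any finite family of ideals $I_1,\dots,I_s$ of $L[\x]$ one has $S^{-1}\bigl(\bigcap_{\ell=1}^s I_\ell\bigr)=\bigcap_{\ell=1}^s S^{-1}I_\ell$. Applying this with $I_\ell:=\gta_\ell L[\x]$ and combining with the first equality just proved yields
\[
\bigcap_{\ell=1}^s(\gta_\ell L[\x]_\gtn)=\bigcap_{\ell=1}^s S^{-1}(\gta_\ell L[\x])=S^{-1}\!\left(\bigcap_{\ell=1}^s\gta_\ell L[\x]\right)=S^{-1}\!\left(\Bigl(\bigcap_{\ell=1}^s\gta_\ell\Bigr)L[\x]\right)=\Bigl(\bigcap_{\ell=1}^s\gta_\ell\Bigr)L[\x]_\gtn,
\]
completing the proof.

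The essential obstacle is the nontrivial inclusion in the first equality, and the key device is the uniqueness of the $\Bb$-expansion provided by Lemma~\ref{k0}: this is exactly what lets us separate a single equation $h=\sum_i f_i^{(\ell)}g_i^{(\ell)}$ living in $L[\x]$ into parallel equations $h_j=\sum_i f_{ij}^{(\ell)}g_i^{(\ell)}$ living in $K[\x]$. Once the first identity is established, the second is a purely formal consequence of the exactness of localization.
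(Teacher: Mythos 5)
Your proof is correct and follows essentially the same approach as the paper: the first equality is handled identically via the uniqueness assertion of Lemma~\ref{k0}, and the second is reduced to the first by commuting localization with finite intersections. The only difference is that you invoke the standard fact that localization preserves finite intersections of ideals, whereas the paper re-derives the needed instance directly by clearing denominators with $h:=\prod_\ell h_\ell\in L[\x]\setminus\gtn$; these are the same idea packaged differently.
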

\begin{proof}
The inclusions `$\supset$' are evident. Let us prove the converse inclusions. Pick a polynomial $f_0\in\bigcap_{\ell=1}^s(\gta_\ell L[\x])$. By Lemma \ref{k0}, we have $f_0=\sum_{j\in J}u_jf_{0j}$, where $f_{0j}\in K[\x]$ and only finitely many of the $f_{0j}$ are nonzero. Fix $\ell\in\{1,\ldots,s\}$. Let $\{g_1,\ldots,g_r\}$ be a system of generators of $\gta_\ell$ in $K[\x]$ and let $f_1,\ldots,f_r\in L[\x]$ be such that $f_0=\sum_{i=1}^rf_ig_i$. By Lemma \ref{k0}, for each $i\in\{1,\ldots,r\}$, we have $f_i=\sum_{j\in J}u_jf_{ij}$, where $f_{ij}\in K[\x]$ and only finitely many of the $f_{ij}$ are nonzero. As $\sum_{j\in J}u_jf_{0j}=f_0=\sum_{i=1}^r(\sum_{j\in J}u_jf_{ij})g_i=\sum_{j\in J}u_j(\sum_{i=1}^rf_{ij}g_i)$, the uniqueness assertion in Lemma \ref{k0} assures that $f_{0j}=\sum_{i=1}^rf_{ij}g_i\in\gta_\ell$ for each $j\in J$. 

This is true for each $\ell$, so $f_{0j}\in\bigcap_{\ell=1}^s\gta_\ell\subset(\bigcap_{\ell=1}^s\gta_\ell)L[\x]$ for each $j\in J$. It follows that $f_0=\sum_{j\in J}u_jf_{0j}\in(\bigcap_{\ell=1}^s\gta_\ell)L[\x]$ too.

Next, consider $\frac{f}{g}\in\bigcap_{\ell=1}^s(\gta_\ell L[\x]_\gtn)$. It suffices to show that $f\in(\bigcap_{\ell=1}^s\gta_\ell)L[\x]_\gtn$. As $f\in\bigcap_{\ell=1}^s(\gta_\ell L[\x]_\gtn)$, for each $\ell\in\{1,\ldots,s\}$, there exists $h_\ell\in L[\x]\setminus\gtn$ such that $h_\ell f\in\gta_\ell L[\x]$. As $L[\x]\setminus\gtn$ is a multiplicative subset of $L[\x]$, the polynomial $h:=\prod_{\ell=1}^sh_\ell$ belongs to $L[\x]\setminus\gtn$. In addition, $hf\in\bigcap_{\ell=1}^s(\gta_\ell L[\x])$. By the preceding part of the proof, we have $hf\in(\bigcap_{\ell=1}^s\gta_\ell)L[\x]$, so $f\in(\bigcap_{\ell=1}^s\gta_\ell)L[\x]_\gtn$, as required.
\end{proof}

The next result was originally proved by Tognoli in Lemma 1(3) on page 29 of \cite{to2}.

\begin{cor}[{\cite{to2}}]\label{cor-cap}
If $X$ is an algebraic subset of $L^n$, then $X\cap K^n$ is an algebraic subset of $K^n$.
\end{cor}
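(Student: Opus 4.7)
The plan is to reduce the problem to a direct application of Lemma \ref{k0}. Start by writing $X=\ZZ_L(F)$ for some $F\subset L[\x]$, which is possible by hypothesis. The goal is to produce an explicit subset $G\subset K[\x]$ with $X\cap K^n=\ZZ_K(G)$, since then $X\cap K^n$ will be algebraic in $K^n$ by definition.

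The key observation I would use is the following: for any $f\in L[\x]$, Lemma \ref{k0} provides a unique expression $f=\sum_{j\in J}u_jf_j$ with $f_j\in K[\x]$ almost all zero. When $x\in K^n$, each $f_j(x)$ lies in $K$, so the evaluation reads
$$
f(x)=\sum_{j\in J}u_jf_j(x),
$$
which is a finite $K$-linear combination of the basis vectors $u_j$ of $L$ over $K$. By $K$-linear independence of $\Bb=\{u_j\}_{j\in J}$, this vanishes if and only if $f_j(x)=0$ for all $j\in J$.

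With this in hand, I would set
$$
G:=\{f_j : f\in F,\ j\in J,\ f_j\neq 0\}\subset K[\x],
$$
where $f=\sum_{j\in J}u_jf_j$ is the decomposition given by Lemma \ref{k0}. The observation above immediately yields the chain of equalities
$$
X\cap K^n=\ZZ_L(F)\cap K^n=\{x\in K^n:f(x)=0,\ \forall f\in F\}=\{x\in K^n:f_j(x)=0,\ \forall f\in F,\ \forall j\in J\}=\ZZ_K(G),
$$
which is an algebraic subset of $K^n$, concluding the proof.

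I do not foresee any substantial obstacle here: the proof is essentially a one-line consequence of Lemma \ref{k0}, combined with the observation that evaluation at a $K$-rational point preserves $K$-coefficients and hence respects the $K$-basis decomposition. The only subtle point worth stating carefully is the use of $K$-linear independence of $\Bb$ at the evaluation step, which is what forces each $f_j$ in the decomposition to vanish separately at points of $K^n$.
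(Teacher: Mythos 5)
Your proof is correct and takes essentially the same approach as the paper: decompose each defining polynomial via Lemma \ref{k0} and use $K$-linear independence of the basis to conclude that vanishing at a $K$-rational point forces each $K$-coefficient polynomial to vanish. The only cosmetic difference is that the paper starts from a finite defining family $f_1,\ldots,f_r$ whereas you allow an arbitrary $F\subset L[\x]$; both are fine.
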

\begin{proof}
Let $f_1,\ldots,f_r\in L[\x]$ be polynomials such that $X=\ZZ_L(f_1,\ldots,f_r)$. By Lemma \ref{k0}, each $f_i\in L[\x]$ can be uniquely written as $f_i=\sum_{j\in J}u_jf_{ij}$, where $f_{ij}\in K[\x]$ and only finitely many of the $f_{ij}$ are nonzero. Observe that a point $x\in K^n$ belongs to $X$ if and only if $0=f_i(x)=\sum_{j\in J}u_jf_{ij}(x)$ for all $i\in\{1,\ldots,r\}$. As each $f_{ij}(x)$ belongs to $K$, one deduces that $x\in\ZZ_K(\{f_{ij}\}_{i\in\{1,\ldots,r\},j\in J})$. Consequently, $X\cap K^n=\ZZ_K(\{f_{ij}\}_{i\in\{1,\ldots,r\},j\in J})$, as required.
\end{proof}

\begin{cor}\label{LK-zar}
If $S$ is a subset of $K^n$, then $\zcl_{K^n}(S)=\zcl_{L^n}(S)\cap K^n=\zcl_{L^n}^K(S)\cap K^n$, $\zcl_{L^n}(\zcl_{K^n}(S))=\zcl_{L^n}(S)$ and $\zcl_{L^n}^K(\zcl_{K^n}(S))=\zcl_{L^n}^K(S)$.
\end{cor}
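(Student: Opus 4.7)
The central claim is that the vanishing ideals of $S$ in $K[\x]$ and $L[\x]$ are directly related, namely $\II_L(S)=\II_K(S)L[\x]$ whenever $S\subset K^n$. Once this is established, all three asserted equalities will follow by short manipulations of zero sets and Zariski closures.

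\textbf{Step 1 (the key identity).} I will first prove that, for $S\subset K^n$, one has $\II_L(S)=\II_K(S)L[\x]$. The inclusion $\supset$ is immediate. For $\subset$, take $f\in\II_L(S)$ and apply Lemma \ref{k0} to write uniquely $f=\sum_{j\in J}u_jf_j$ with $f_j\in K[\x]$ and only finitely many nonzero. For any $s\in S\subset K^n$ one has $f_j(s)\in K$ for every $j$, and since $0=f(s)=\sum_{j\in J}u_jf_j(s)$ is a $K$-linear relation among the basis vectors $\{u_j\}_{j\in J}$ of $L$ over $K$, it forces $f_j(s)=0$ for every $j$. Hence each $f_j$ lies in $\II_K(S)$ and $f\in\II_K(S)L[\x]$.

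\textbf{Step 2 (the three equalities in a row).} From Step 1, taking zero sets in $L^n$ gives
$$
\zcl_{L^n}(S)=\ZZ_L(\II_L(S))=\ZZ_L(\II_K(S)L[\x])=\ZZ_L(\II_K(S))=\zcl_{L^n}^K(S).
$$
Intersecting with $K^n$ and noting that $\ZZ_L(\II_K(S))\cap K^n=\ZZ_K(\II_K(S))=\zcl_{K^n}(S)$ (because $\II_K(S)$ has coefficients in $K$) yields the first chain of equalities
$$
\zcl_{K^n}(S)=\zcl_{L^n}(S)\cap K^n=\zcl_{L^n}^K(S)\cap K^n.
$$

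\textbf{Step 3 (stability under $K$-closure).} For the remaining equalities, the inclusions $\zcl_{L^n}(S)\subset\zcl_{L^n}(\zcl_{K^n}(S))$ and $\zcl_{L^n}^K(S)\subset\zcl_{L^n}^K(\zcl_{K^n}(S))$ are automatic from $S\subset\zcl_{K^n}(S)$ and monotonicity of closure. Conversely, the first chain of equalities already proven gives $\zcl_{K^n}(S)\subset\zcl_{L^n}(S)$ and $\zcl_{K^n}(S)\subset\zcl_{L^n}^K(S)$; applying $\zcl_{L^n}$ and $\zcl_{L^n}^K$ respectively, and using the idempotence of Zariski closure, yields
$$
\zcl_{L^n}(\zcl_{K^n}(S))\subset\zcl_{L^n}(S)\quad\text{and}\quad\zcl_{L^n}^K(\zcl_{K^n}(S))\subset\zcl_{L^n}^K(S),
$$
completing the proof.

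The only nontrivial input is Step 1, and the mild obstacle there is simply recognizing that the $K$-linear independence of the basis $\Bb$ converts the vanishing of $f$ at a $K$-rational point into the simultaneous vanishing of all its $\Bb$-coordinates $f_j$. Everything else reduces to the tautology $\ZZ_L(\gta)\cap K^n=\ZZ_K(\gta)$ for ideals $\gta\subset K[\x]$ and basic formal properties of closures.
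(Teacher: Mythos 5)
Your proof is correct, but it takes a genuinely different route from the paper's. The paper argues entirely at the level of Zariski closures: starting from Corollary \ref{cor-cap} (that $X\cap K^n$ is algebraic in $K^n$ whenever $X$ is algebraic in $L^n$), it sets up the chain $\zcl_{K^n}(S)\subset\zcl_{L^n}(S)\cap K^n\subset\zcl_{L^n}^K(S)\cap K^n=\ZZ_K(\II_K(S))=\zcl_{K^n}(S)$ and then finishes with the same idempotence argument you use in Step 3. You, by contrast, go through the ideal-theoretic identity $\II_L(S)=\II_K(S)L[\x]$ for arbitrary $S\subset K^n$, obtained from the coordinate decomposition in Lemma \ref{k0}; this is in fact the exact argument the paper deploys a few lines later in the proof of Proposition \ref{prop:zar}$(\mr{i})$, so you are anticipating (and mildly generalizing, from algebraic $Y$ to arbitrary $S$) that step. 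Your route buys the stronger intermediate statement $\zcl_{L^n}(S)=\zcl_{L^n}^K(S)$ — not merely that their traces on $K^n$ coincide — which the corollary does not assert but which is worth noticing. The paper's route is a bit shorter given Corollary \ref{cor-cap} already in hand and respects the logical ordering of the text (Proposition \ref{prop:zar} comes after this corollary, so it would be circular for the paper to invoke it here). Both derivations are sound.
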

\begin{proof}
Let $X:=\zcl_{L^n}(S)\subset L^n$ and let $X':=\zcl_{L^n}^K(S)\subset L^n$. By Corollary \ref{cor-cap}, $X\cap K^n\subset K^n$ is algebraic. As $S\subset X\cap K^n$ and $X\subset X'$, it follows that $\zcl_{K^n}(S)\subset X\cap K^n\subset X\subset X'$, $X\subset\zcl_{L^n}(\zcl_{K^n}(S))\subset\zcl_{L^n}(X)=X$ and $X'\subset\zcl_{L^n}^K(\zcl_{K^n}(S))\subset\zcl_{L^n}^K(X')=X'$, so $X=\zcl_{L^n}(\zcl_{K^n}(S))$ and $X'=\zcl_{L^n}^K(\zcl_{K^n}(S))$. In addition,
$$
\zcl_{K^n}(S)\subset X\cap K^n\subset X'\cap K^n=\ZZ_L(\II_K(S))\cap K^n=\ZZ_K(\II_K(S))=\zcl_{K^n}(S),
$$
so $\zcl_{K^n}(S)=X\cap K^n=X'\cap K^n$, as required.
\end{proof}

We compare next the dimensions $\dim_K(X)$ and $\dim_L(X)$ of a $K$-algebraic set $X\subset L^n$.

\begin{lem}\label{intersection}
Let $\gta$ be an ideal of $K[\x]$ and let $r\in\{0,\ldots,n\}$. If $\gta\cap K[\x_{r+1},\ldots,\x_n]=(0)$, then $\gta L[\x]\cap L[\x_{r+1},\ldots,\x_n]=(0)$, where $K[\x_{r+1},\ldots,\x_n]=L[\x_{r+1},\ldots,\x_n]:=(0)$ if $r=n$.
\end{lem}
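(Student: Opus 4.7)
The plan is to adapt the argument used in the proof of Corollary \ref{intkx} to this slightly more general setting. The edge cases $r=0$ (where the claim is trivial since the ring $L[\x_{r+1},\ldots,\x_n]=L[\x]$ only intersects $\gta L[\x]$ in $\gta L[\x]$ itself, and the hypothesis $\gta\cap K[\x]=(0)$ forces $\gta=(0)$) and $r=n$ (where both sides are $(0)$ by convention) I would handle first, so that in what follows we may assume $0<r<n$.

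Next, I would pick an arbitrary $h\in\gta L[\x]\cap L[\x_{r+1},\ldots,\x_n]$ and aim to show $h=0$. Write a finite system of generators $g_1,\ldots,g_s\in K[\x]$ of $\gta$, so that there exist $f_1,\ldots,f_s\in L[\x]$ with $h=\sum_{i=1}^s f_ig_i$. By Lemma \ref{k0} applied in $L[\x]$, each $f_i$ admits a unique expansion $f_i=\sum_{j\in J}u_jf_{ij}$ with $f_{ij}\in K[\x]$ and only finitely many $f_{ij}$ nonzero. Rearranging,
\[
h=\sum_{j\in J}u_j\Big(\sum_{i=1}^s f_{ij}g_i\Big),
\]
where the inner sums $\sum_{i=1}^s f_{ij}g_i$ belong to $\gta\subset K[\x]$ and only finitely many are nonzero.

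Now I would exploit the hypothesis $h\in L[\x_{r+1},\ldots,\x_n]$. Applying Lemma \ref{k0} to the subring $L[\x_{r+1},\ldots,\x_n]$ with the same $K$-basis $\Bb=\{u_j\}_{j\in J}$, there is a unique expansion $h=\sum_{j\in J}u_jh_j$ with $h_j\in K[\x_{r+1},\ldots,\x_n]$ and only finitely many nonzero. Since $K[\x_{r+1},\ldots,\x_n]\subset K[\x]$, this expansion is also a legitimate $K[\x]$-expansion of $h$, so by the uniqueness part of Lemma \ref{k0} applied in $L[\x]$, we must have $h_j=\sum_{i=1}^s f_{ij}g_i$ for each $j\in J$. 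Hence $h_j\in\gta\cap K[\x_{r+1},\ldots,\x_n]=(0)$, so $h_j=0$ for every $j$, and therefore $h=0$.

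The only subtlety — and the step that must be articulated carefully — is the double use of uniqueness in Lemma \ref{k0}: the expansion of $h$ produced from the representation $h=\sum_i f_ig_i$ has coefficients in $K[\x]$, while the one coming from $h\in L[\x_{r+1},\ldots,\x_n]$ has coefficients in the smaller ring $K[\x_{r+1},\ldots,\x_n]$, and we need the two to coincide. This is immediate once one observes that both are valid decompositions with respect to the basis $\Bb$ viewed inside $L[\x]$, so uniqueness in the larger ring forces the coefficients from the first expansion to lie in $K[\x_{r+1},\ldots,\x_n]$ after all, and the conclusion then follows at once from the hypothesis $\gta\cap K[\x_{r+1},\ldots,\x_n]=(0)$.
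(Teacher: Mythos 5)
Your proof is correct and follows essentially the same route as the paper's: both reduce to the case $0\le r<n$, expand the $L[\x]$-coefficients via Lemma \ref{k0}, regroup to exhibit the coefficients $\sum_i f_{ij}g_i\in\gta$, and then invoke the uniqueness in Lemma \ref{k0} to match these against the coefficients of $h$ regarded as an element of $L[\x_{r+1},\ldots,\x_n]$, forcing each to lie in $\gta\cap K[\x_{r+1},\ldots,\x_n]=(0)$. The only differences are cosmetic (variable names, and your explicit handling of the $r=0$ subcase, which the general argument already covers).
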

\begin{proof}
As the case $r=n$ is evident, we can assume $r<n$. Choose a system of generators $\{f_1,\ldots,f_s\}$ of $\gta$ in $K[\x]$. Let $g_0\in\gta L[\x]\cap L[\x_{r+1},\ldots,\x_n]$ and let $g_1,\ldots,g_s\in L[\x]$ be such that $g_0=\sum_{i=1}^sg_if_i$. By Lemma \ref{k0}, we write uniquely $g_i=\sum_{j\in J}u_jg_{ij}$, where $g_{ij}\in K[\x]$ for each $i\in\{0,\ldots,s\}$ and only finitely many of the polynomials $g_{ij}$ are nonzero. By the uniqueness property stated in such lemma, we deduce that $g_{0j}\in K[\x_{r+1},\ldots,\x_n]$ for each $j\in J$, because $g_0\in L[\x_{r+1},\ldots,\x_n]$. Thus,
$$\textstyle
\sum_{j\in J}u_jg_{0j}=g_0=\sum_{i=1}^sg_if_i=\sum_{i=1}^s\big(\sum_{j\in J}u_jg_{ij}\big)f_i=\sum_{j\in J}u_j\big(\sum_{i=1}^sg_{ij}f_i\big).
$$
Using again the uniqueness property, $g_{0j}=\sum_{i=1}^sg_{ij}f_i\in\gta\cap K[\x_{r+1},\ldots,\x_n]=(0)$ for each $j\in J$. We deduce $g_{0j}=0$ for each $j\in J$, so $g_0=0$ and $\gta L[\x]\cap L[\x_{r+1},\ldots,\x_n]=(0)$, as required.
\end{proof}

\begin{cor}\label{dimkl}
If $X\subset L^n$ is a $K$-algebraic set, then $\dim_K(X)=\dim(L[\x]/(\II_K(X)L[\x]))$. If in addition $\II_L(X)=\II_K(X)L[\x]$, then $\dim_K(X)=\dim_L(X)$ and $\hgt(\II_L(X))=\hgt(\II_K(X))$.
\end{cor}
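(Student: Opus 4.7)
The plan is to reduce the first equality to Noether's normalization lemma for the finitely generated $K$-algebra $K[\x]/\II_K(X)$, together with Lemma \ref{intersection}, which allows us to transport the key transcendence property across the base change $K\subset L$.

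First I would set $d:=\dim_K(X)=\dim(K[\x]/\II_K(X))$ and apply Noether's normalization lemma over the field $K$ (which, since $K$ has characteristic zero, may be carried out via a $K$-linear change of variables). This yields new indeterminates $\y_1,\ldots,\y_n$, obtained from $\x_1,\ldots,\x_n$ via a $K$-linear invertible change of coordinates, such that, writing $\gta$ for the image of $\II_K(X)$ in $K[\y]:=K[\y_1,\ldots,\y_n]$, we have
\begin{itemize}
\item[(a)] $\gta\cap K[\y_{n-d+1},\ldots,\y_n]=(0)$, and
\item[(b)] the induced map $K[\y_{n-d+1},\ldots,\y_n]\hookrightarrow K[\y]/\gta$ is a finite ring extension.
\end{itemize}
Since the change of variables is $K$-linear, $K[\y]=K[\x]$ and $L[\y]=L[\x]$ as rings, and $\gta L[\y]=\II_K(X)L[\x]$.

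Next, I would invoke Lemma \ref{intersection} with the ideal $\gta\subset K[\y]$ and $r=n-d$: from (a) we immediately obtain
\[
\gta L[\y]\cap L[\y_{n-d+1},\ldots,\y_n]=(0).
\]
Thus the composition $L[\y_{n-d+1},\ldots,\y_n]\hookrightarrow L[\y]\twoheadrightarrow L[\y]/\gta L[\y]$ is injective. On the other hand, (b) gives integral equations (with coefficients in $K[\y_{n-d+1},\ldots,\y_n]$) witnessing that $\y_1,\ldots,\y_{n-d}$ are integral over $K[\y_{n-d+1},\ldots,\y_n]$ modulo $\gta$; the very same equations are integral equations over $L[\y_{n-d+1},\ldots,\y_n]$ modulo $\gta L[\y]$. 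Hence
\[
L[\y_{n-d+1},\ldots,\y_n]\hookrightarrow L[\y]/\gta L[\y]
\]
is a finite injective extension, and so
\[
\dim\big(L[\x]/(\II_K(X)L[\x])\big)=\dim L[\y_{n-d+1},\ldots,\y_n]=d=\dim_K(X),
\]
which is the first claim.

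For the second part, assume $\II_L(X)=\II_K(X)L[\x]$. Then $L[\x]/\II_L(X)=L[\x]/(\II_K(X)L[\x])$, so by the first equality
\[
\dim_L(X)=\dim\big(L[\x]/\II_L(X)\big)=\dim\big(L[\x]/(\II_K(X)L[\x])\big)=\dim_K(X).
\]
Applying \eqref{eisenbud} to both $\II_K(X)\subset K[\x]$ and $\II_L(X)\subset L[\x]$ then yields $\hgt(\II_L(X))=n-\dim_L(X)=n-\dim_K(X)=\hgt(\II_K(X))$. The main technical point, and the only step that is not immediately formal, is the preservation of the transcendence condition (a) after base change from $K$ to $L$: this is exactly the content of Lemma \ref{intersection}, which has already been proven using the basis $\Bb$ of $L$ over $K$, so no additional obstacle arises.
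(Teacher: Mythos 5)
Your proof is correct and follows essentially the same route as the paper's: Noether normalization of $K[\x]/\II_K(X)$, preservation of the transcendence condition under base change via Lemma \ref{intersection}, and the observation that the same monic integral relations make the extension $L[\y_{n-d+1},\ldots,\y_n]\hookrightarrow L[\y]/\gta L[\y]$ both injective and finite. The only cosmetic differences are that the paper treats $\II_K(X)=(0)$ as a trivial case beforehand (where your argument goes through uniformly anyway) and cites a specific axiom of dimension theory for the final step.
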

\begin{proof}
If $\II_K(X)=(0)$, then $\dim_K(X)=n=\dim(L[\x]/(\II_K(X)L[\x]))$. Suppose $\II_K(X)\neq(0)$. By Noether's normalization theorem \cite[Thm.3.4.1]{gp} applied to the ideal $\II_K(X)$ of $K[\x]$, up to a linear change of coordinates in $L^n$ induced by an invertible $(n\times n)$-matrix $M$ with coefficients in $K$, there exist an integer $r\in\{1,\ldots,n\}$ and monic polynomials $f_i\in\II_K(X)\cap K[\x_{i+1},\ldots,\x_n][\x_i]\subset \II_K(X)L[\x] \cap L[\x_{i+1},\ldots,\x_n][\x_i]$ for each $i\in\{1,\ldots,r\}$, and $\II_K(X)\cap K[\x_{r+1},\ldots,\x_n]=(0)$. Observe that, to guarantee that the invertible matrix $M$ has coefficients in $K$, we have to use that $K$ is infinite (recall that $K$ has characteristic zero), so {$K$ is $L$-Zariski dense in $L$}.

Let $j_K:K[\x_{r+1},\ldots,\x_n]\to K[\x]/\II_K(X)$ and $j_L:L[\x_{r+1},\ldots,\x_n]\to L[\x]/(\II_K(X)L[\x])$ be the standard homomorphisms. Lemma \ref{intersection} implies that $\II_K(X)L[\x]\cap L[\x_{r+1},\ldots,\x_n]=(0)$, so the homomorphisms $j_K$ and $j_L$ are both injective. The existence of the monic polyno\-mials $f_i$ also implies that the homomorphisms $j_K$ and $j_L$ are finite. Consequently, by \cite[axiom D3, p.219]{e}, we have $\dim(K[\x]/\II_K(X))=\dim(K[\x_{r+1},\ldots,\x_n])=n-r$ and $\dim(L[\x]/(\II_K(X)L[\x]))=\dim(L[\x_{r+1},\ldots,\x_n])=n-r$. Thus, $\dim_K(X)=n-r=\dim(L[\x]/(\II_K(X)L[\x]))$.

If in addition $\II_L(X)=\II_K(X)L[\x]$, then $\dim_K(X)=\dim(L[\x]/\II_L(X))=\dim_L(X)$ and $\hgt(\II_L(X))=n-\dim_L(X)=n-\dim_K(X)=\hgt(\II_K(X))$ by \eqref{eisenbud}.
\end{proof}

In the next result, we show that $\dim_L(X)\leq\dim_K(X)$ for each extension of fields $L|K$ and for each $K$-algebraic set $X\subset L^n$.

\begin{cor}\label{cor:dimLleqdimK}
If $X\subset L^n$ is a $K$-algebraic set, then $\dim_L(X)\leq\dim_K(X)$.
\end{cor}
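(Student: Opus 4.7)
The plan is to combine Corollary \ref{dimkl} with the trivial containment of extended ideals. By Corollary \ref{dimkl}, we already know that
\[
\dim_K(X) \;=\; \dim\bigl(L[\x]/(\II_K(X)L[\x])\bigr),
\]
so the whole proof reduces to comparing the Krull dimensions of $L[\x]/(\II_K(X)L[\x])$ and $L[\x]/\II_L(X)$.

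For the key step, I would note that since every polynomial in $\II_K(X) \subset K[\x]$ vanishes on $X$ and also belongs to $L[\x]$, one has the inclusion of ideals $\II_K(X)L[\x] \subset \II_L(X)$ in $L[\x]$. This inclusion induces a surjective ring homomorphism
\[
L[\x]/(\II_K(X)L[\x]) \;\twoheadrightarrow\; L[\x]/\II_L(X),
\]
and surjections of rings do not raise Krull dimension. Hence
\[
\dim_L(X) \;=\; \dim\bigl(L[\x]/\II_L(X)\bigr) \;\leq\; \dim\bigl(L[\x]/(\II_K(X)L[\x])\bigr) \;=\; \dim_K(X),
\]
which is the desired inequality.

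There is essentially no obstacle here: the only non-obvious ingredient is Corollary \ref{dimkl}, whose proof uses Noether normalization (applied to $\II_K(X)$ with a change of coordinates by a matrix in $K$, relying on the infinitude of $K$) together with Lemma \ref{intersection} to ensure that the normalization extends from $K[\x]$ to $L[\x]$. Once that is in hand, the dimension inequality is an immediate consequence of the inclusion $\II_K(X)L[\x] \subset \II_L(X)$. Note also that the inequality can be strict when $\II_L(X) \supsetneqq \II_K(X)L[\x]$ (as in the real case, e.g.\ $X=\{\sqrt[3]{2}\}\subset L$), consistent with Corollary \ref{dimkl} giving equality precisely when $\II_L(X)=\II_K(X)L[\x]$.
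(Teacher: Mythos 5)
Your proof is correct and follows essentially the same route as the paper: the inclusion $\II_K(X)L[\x]\subset\II_L(X)$ gives a surjection $L[\x]/(\II_K(X)L[\x])\twoheadrightarrow L[\x]/\II_L(X)$, which cannot raise Krull dimension, and then Corollary \ref{dimkl} identifies $\dim(L[\x]/(\II_K(X)L[\x]))$ with $\dim_K(X)$. Your added remarks about the role of Noether normalization in Corollary \ref{dimkl} and about strictness of the inequality are accurate and consistent with the paper.
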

\begin{proof}
As $\II_K(X)L[\x]\subset\II_L(X)$, we have a surjective homomorphism $L[\x]/(\II_K(X)L[\x])\to L[\x]/\II_L(X)$. Consequently, $\dim(L[\x]/(\II_K(X)L[\x]))\geq\dim(L[\x]/\II_L(X))=\dim_L(X)$. Corollary \ref{dimkl} implies that $\dim_K(X)\geq\dim_L(X)$.
\end{proof}

\begin{remark}\label{rem:dimLleqdimK}
The previous result extends to all subsets of $L^n$: \emph{for each set $S\subset L^n$, it holds $\dim_L(S)\leq\dim_K(S)$.} Indeed, if we set $X:=\zcl_{L^n}^K(S)\subset L^n$, then $\II_K(X)=\II_K(S)$ and $\II_L(X)\subset\II_L(S)$, so $\dim_K(X)=\dim_K(S)$ and $\dim_L(S)\leq\dim_L(X)$. By Corollary \ref{cor:dimLleqdimK}, we deduce: $\dim_L(S)\leq\dim_L(X)\leq\dim_K(X)=\dim_K(S)$, as required. $\sqbullet$
\end{remark}

The relationships between $\dim_L(X)$ and $\dim_K(X)$ of a $K$-algebraic set $X\subset L^n$ will be explored further in Subsection \ref{sdimension}.

Let us study the Zariski closures in~$L^n$ of algebraic subsets of $K^n$.

\begin{prop}\label{prop:zar}
Let $Y\subset K^n$ be an algebraic set and let $X:=\zcl_{L^n}(Y)\subset L^n$. We have:
\begin{itemize}
\item[$(\mr{i})$] $\II_L(X)=\II_K(Y)L[\x]$, $\II_K(X)=\II_K(Y)$ and $X=\ZZ_L(\II_K(Y))$. In particular, $X\subset L^n$ is $K$-algebraic and $X=\zcl_{L^n}^K(Y)$.
\item[$(\mr{ii})$] $X\cap K^n=Y$.
\item[$(\mr{iii})$] If $Y_1,\ldots,Y_s$ are the irreducible components of $Y\subset K^n$ and $X_i:=\zcl_{L^n}(Y_i)$ for each $i\in\{1,\ldots,s\}$, then $X_1,\ldots,X_s$ are both the ($L$-)irreducible components of $X\subset L^n$ and the $K$-irreducible components of $X\subset L^n$. In particular, the family of irreducible components of $X\subset L^n$ coincides with the family of $K$-irreducible components of $X$. In addition, $X\subset L^n$ is irreducible if and only if it is $K$-irreducible or, equivalently, if $Y\subset K^n$ is irreducible.
\item[$(\mr{iii}')$] If $X_1,\ldots,X_s$ are the irreducible components of $X\subset L^n$, then $X_1\cap K^n,\ldots,X_s\cap K^n$ are the irreducible components of $Y\subset K^n$.
\item[$(\mr{iv})$] $\dim_L(X)=\dim_K(X)=\dim_K(Y)$.
\end{itemize}
\end{prop}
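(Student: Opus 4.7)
The crux of the proof is statement (i); everything else will follow routinely from it. To establish the key equality $\II_L(Y)=\II_K(Y)L[\x]$, I would start from $f\in\II_L(Y)$, expand $f=\sum_{j\in J}u_jf_j$ uniquely with $f_j\in K[\x]$ via Lemma \ref{k0}, and then exploit that for every $y\in Y\subset K^n$ one has $f_j(y)\in K$, so $0=f(y)=\sum_{j\in J}u_jf_j(y)$ and the $K$-linear independence of the $u_j$ force $f_j(y)=0$ for all $j$. Hence $f_j\in\II_K(Y)$ and $f\in\II_K(Y)L[\x]$; the reverse containment is trivial. Since $X=\zcl_{L^n}(Y)$ implies $\II_L(X)=\II_L(Y)$, this yields $\II_L(X)=\II_K(Y)L[\x]$ and therefore $X=\ZZ_L(\II_L(X))=\ZZ_L(\II_K(Y))$. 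Intersecting with $K[\x]$ and applying Corollary \ref{k} gives $\II_K(X)=\II_K(Y)L[\x]\cap K[\x]=\II_K(Y)$, and the identity $\zcl_{L^n}^K(Y)=\ZZ_L(\II_K(Y))=X$ closes part (i).

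Parts (ii) and (iv) are immediate consequences. For (ii), Corollary \ref{LK-zar} gives $\zcl_{K^n}(Y)=X\cap K^n$; since $Y$ is already closed in $K^n$, we conclude $X\cap K^n=Y$. For (iv), the equality $\II_K(X)=\II_K(Y)$ established in (i) gives $\dim_K(X)=\dim_K(Y)$ directly from Definition \ref{def:K-dim}, and since (i) also provides $\II_L(X)=\II_K(X)L[\x]$, the second half of Corollary \ref{dimkl} furnishes $\dim_L(X)=\dim_K(X)$.

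The substantive work lies in (iii). Applying (i) to each $Y_i$, one gets $X_i=\zcl_{L^n}^K(Y_i)$ with $\II_K(X_i)=\II_K(Y_i)$, a prime ideal, so Lemma \ref{lem:prime} immediately yields $K$-irreducibility of $X_i$. The $L$-irreducibility of $X_i$ is the only potentially delicate point, and I would obtain it by a geometric restriction argument rather than trying to show $\II_K(Y_i)L[\x]$ is prime algebraically: suppose $X_i=A\cup B$ with $A,B\subsetneqq X_i$ both $L$-algebraic; intersecting with $K^n$, Corollary \ref{cor-cap} makes $A\cap K^n$ and $B\cap K^n$ algebraic in $K^n$, and their union equals $X_i\cap K^n=Y_i$ by (ii). Irreducibility of $Y_i$ forces say $A\cap K^n=Y_i$, whence $Y_i\subset A$ and therefore $X_i=\zcl_{L^n}(Y_i)\subset A$, a contradiction. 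Since $X=\bigcup_i X_i$ (Zariski closure commutes with finite unions) and $X_i\cap K^n=Y_i\not\subset Y_j=X_j\cap K^n$ for $i\neq j$, the $X_i$ are pairwise non-comparable, so $\{X_1,\ldots,X_s\}$ is simultaneously the family of $L$-irreducible and of $K$-irreducible components of $X$; the displayed equivalences are then the case $s=1$. For (iii'), any enumeration of the $L$-irreducible components of $X$ must coincide (up to order) with $\{\zcl_{L^n}(Y_j)\}$ by uniqueness, so intersecting with $K^n$ recovers the $Y_j$ by (ii). The main obstacle one might fear — that $\II_K(Y_i)L[\x]$ could fail to be prime in $L[\x]$ for some strange extension $L|K$ — is neatly bypassed by this restriction argument, which needs only that $Y_i$ has its points in $K^n$ together with the algebraicity preservation of Corollary \ref{cor-cap}.
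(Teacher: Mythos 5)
Your proposal is correct and follows essentially the same route as the paper: for (i) it uses the Lemma~\ref{k0} decomposition $f=\sum_j u_j f_j$ and the observation that $f_j(y)\in K$ for $y\in Y\subset K^n$ to conclude $f_j\in\II_K(Y)$, followed by Corollary~\ref{k}; and for (iii) it establishes $L$-irreducibility of $X_i$ by the same restriction-to-$K^n$ argument (intersect a putative decomposition with $K^n$, invoke Corollary~\ref{cor-cap} and the irreducibility of $Y_i$, then close up again). The side remark that this sidesteps any need to show primeness of $\II_K(Y_i)L[\x]$ in $L[\x]$ is exactly the reason the paper also proceeds geometrically rather than algebraically here.
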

\begin{proof}
$(\mr{i})$ If $g\in\II_K(Y)$, then $Y\subset\ZZ_K(g)\subset\ZZ_L(g)$, so $X=\zcl_{L^n}(Y)\subset\ZZ_L(g)$. It follows that $\II_K(Y)\subset\II_L(X)$, so $\II_K(Y)L[\x]\subset\II_L(X)$.

To prove the converse, pick $f\in \II_L(X)$. By Lemma \ref{k0}, we can write $f=\sum_{j\in J}u_jf_j$, where $f_j\in K[\x]$ and only finitely many of the $f_j$ are nonzero. If $x\in Y$, then $f_j(x)\in K$ for each $j\in J$, and $0=f(x)=\sum_{j\in J}u_jf_j(x)$, so $f_j(x)=0$ for each $j\in J$. Thus, each polynomial $f_j$ belongs to $\II_K(Y)$, hence $f=\sum_{j\in J}u_jf_j\in \II_K(Y)L[\x]$. This proves the inclusion $\II_L(X)\subset \II_K(Y)L[\x]$, so the equality $\II_L(X)=\II_K(Y)L[\x]$. By Corollary \ref{k}, we deduce $\II_K(X)=\II_K(Y)$. As $X\subset L^n$ is Zariski closed, $X=\ZZ_L(\II_L(X))=\ZZ_L(\II_K(Y))=\zcl_{L^n}^K(Y)$.

$(\mr{ii})$ It is an immediate consequence of Corollary \ref{LK-zar}.

$(\mr{iii})$ First, suppose $Y\subset K^n$ is irreducible. We have to prove: $X\subset L^n$ is irreducible as well.

Assume there exist algebraic sets $Z_1,Z_2\subset L^n$ such that $Z_1\subsetneqq X$, $Z_2\subsetneqq X$ and $X=Z_1\cup Z_2$. By $(\mr{ii})$, $Y=X\cap K^n=(Z_1\cap K^n)\cup(Z_2\cap K^n)$. By Corollary \ref{cor-cap}, $Z_1\cap K^n$ and $Z_2\cap K^n$ are algebraic subsets of $K^n$. As $Y\subset K^n$ is irreducible, we may assume $Y=Z_1\cap K^n$. Thus, $Z_2\subset X=\zcl_{L^n}(Z_1\cap K^n)\subset Z_1$, which is a contradiction. Consequently, $X \subset L^n$ is irreducible.

Let us complete the proof of $(\mr{iii})$. Let $Y_1,\ldots,Y_s$ and $X_1,\ldots,X_s$ be as in the statement of $(\mr{iii})$. We have proven that $X_i\cap K^n=Y_i$, and $X_i\subset L^n$ is irreducible for each $i$. As $Y=\bigcup_{i=1}^sY_i$, we have
$$\textstyle
X=\zcl_{L^n}\left(\bigcup_{i=1}^sY_i\right)=\bigcup_{i=1}^s\zcl_{L^n}\left(Y_i\right)=\bigcup_{i=1}^sX_i.
$$
As $Y_i\not\subset\bigcup_{j\neq i}Y_j$ for each $i$, it holds $X_i\not\subset\bigcup_{j\neq i}X_j$ for each $i$ as well. Otherwise, $X_i\subset\bigcup_{j\neq i}X_j$ for some $i$, so $Y_i=X_i\cap K^n\subset(\bigcup_{j\neq i}X_j)\cap K^n=\bigcup_{j\neq i}(X_j\cap K^n)=\bigcup_{j\neq i}Y_i$, which is a contradiction. This proves that $X_1,\ldots,X_s$ are the irreducible components of $X\subset L^n$. By $(\mr{i})$, each set $X_i\subset L^n$ is $K$-algebraic. As $X_i\subset L^n$ is irreducible, it is also $K$-irreducible. It follows that $X_1,\ldots,X_s$ are also the $K$-irreducible components of $X\subset L^n$. 

$(\mr{iii}')$ This item is an immediate consequence of $(\mr{ii})$, $(\mr{iii})$ and the uniqueness of irreducible components. 

$(\mr{iv})$ By $(\mr{i})$, we have $\II_L(X)=\II_K(X)L[\x]$ and $\II_K(X)=\II_K(Y)$. The first equality and Corollary \ref{dimkl} imply that $\dim_L(X)=\dim_K(X)$, the second equality that $\dim_K(X)=\dim_K(Y)$, as required.
\end{proof}

\begin{remark}\label{to22}
The equality $\II_L(X)=\II_K(X)L[\x]$ of item $(\mr{i})$ was originally proved by Tognoli in Lemma 1(2) on page 29 of \cite{to2}. Observe that in \cite{to2} the Zariski closure $X=\zcl_{L^n}(Y)$ of $Y$ in $L^n$ is called the completion of $Y$ in $L$, see Definition 1 on page 28 of \cite{to2}. In Lemma 3 on page 29 of the same paper \cite{to2}, Tognoli also proves the first part of $(\mr{iii})$, i.e., the statement `\textit{If $Y_1,\ldots,Y_s$ are the irreducible components of $Y\subset K^n$ and $X_i:=\zcl_{L^n}(Y_i)$ for each $i\in\{1,\ldots,s\}$, then $X_1,\ldots,X_s$ are the ($L$-)irreducible components of $X\subset L^n$}'. $\sqbullet$
\end{remark}

\begin{cor}\label{<=dim}
Let $X\subset L^n$ be an algebraic set and consider the algebraic subset $Y:=X\cap K^n$ of $K^n$. We have:
\begin{itemize}
\item[$(\mr{i})$] $\dim_K(Y)\leq\dim_L(X)$.
\item[$(\mr{ii})$] If $X\subset L^n$ is irreducible and $\dim_K(Y)=\dim_L(X)$, then $Y\subset K^n$ is irreducible and $X=\zcl_{L^n}(Y)$.
 \end{itemize}
\end{cor}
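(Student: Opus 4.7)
The plan is to reduce both parts to Proposition \ref{prop:zar} and Lemma \ref{dimirred} via the auxiliary set $Z := \zcl_{L^n}(Y)$, the $L$-Zariski closure of $Y$ in $L^n$. Since $Y \subset X$ and $X \subset L^n$ is Zariski closed, we have $Z \subset X$. Because $Y \subset K^n$ is an algebraic set, Proposition \ref{prop:zar}$(\mr{iv})$ gives the key identity $\dim_L(Z) = \dim_K(Y)$.

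For $(\mr{i})$, the inclusion $Z \subset X$ translates to $\II_L(X) \subset \II_L(Z)$, whence $\dim_L(Z) \leq \dim_L(X)$. Combining this with $\dim_L(Z) = \dim_K(Y)$ yields the claimed inequality $\dim_K(Y) \leq \dim_L(X)$.

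For $(\mr{ii})$, assume $X \subset L^n$ is irreducible and $\dim_K(Y) = \dim_L(X)$. Then $Z \subset X$ and $\dim_L(Z) = \dim_L(X)$. Applying Lemma \ref{dimirred} in the case $K = L$ to the irreducible algebraic set $X$ and the algebraic subset $Z$: if $Z$ were strictly contained in $X$, we would have $\dim_L(Z) < \dim_L(X)$, a contradiction. Hence $Z = X$, i.e., $X = \zcl_{L^n}(Y)$. Now Proposition \ref{prop:zar}$(\mr{iii}')$ applies: the irreducible components of $Y \subset K^n$ are exactly the sets $X_i \cap K^n$ where the $X_i$ are the irreducible components of $X \subset L^n$. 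Since $X$ is irreducible, there is only one such $X_i$, namely $X$ itself, so $Y = X \cap K^n$ is irreducible, completing the proof.

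There is no real obstacle here; the entire argument is a short concatenation of results already established earlier in the section. The only subtle point is to remember that the classical fact ``a proper closed subset of an irreducible algebraic set has strictly smaller dimension'' appears in this paper as a special case of Lemma \ref{dimirred} with $K = L$, rather than as a standalone statement.
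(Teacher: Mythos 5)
Your proof is correct and follows essentially the same route as the paper's: introduce $Z=\zcl_{L^n}(Y)$, apply Proposition \ref{prop:zar}$(\mr{iv})$ to get $\dim_L(Z)=\dim_K(Y)$, then use irreducibility of $X$ to force $Z=X$ and conclude with Proposition \ref{prop:zar}. The only cosmetic differences are that you invoke Lemma \ref{dimirred} with $K=L$ explicitly where the paper leaves the ``proper closed subset has smaller dimension'' step implicit, and you use part $(\mr{iii}')$ of Proposition \ref{prop:zar} rather than part $(\mr{iii})$; both are equally valid.
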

\begin{proof}
Let $X':=\zcl_{L^n}(Y)\subset X$. By Proposition \ref{prop:zar}$(\mr{iv})$, $\dim_K(Y)=\dim_L(X')\leq\dim_L(X)$. This proves $(\mr{i})$. If $\dim_K(Y)=\dim_L(X)$ and $X\subset L^n$ is irreducible, then $\dim_L(X')=\dim_L(X)$, so $X'=X$. Proposition \ref{prop:zar}$(\mr{iii})$ implies that $Y=X'\cap K^n\subset K^n$ is irreducible, as required. 
\end{proof}

\begin{remark}
In statement $(\mr{i})$ of the previous result, the inequality can be strict. Assume $K$ is real closed and $L=K[\ii]$. The $K$-algebraic set $X:=\ZZ_L(\x_1^2+\ldots+\x_n^2+1)\subset L^n$ is a hypersurface of $L^n$, whereas $Y=X\cap R^n$ is the empty set.  $\sqbullet$
\end{remark}

\subsubsection{Extension of coefficients}
We refer the reader to \cite[\S5]{bcr} for the details concerning extension of coefficients for real closed fields. Similar results hold for extensions of algebraically closed fields \cite[Def.3.1.13 \& Prop.3.1.14 \& Thm.3.2.2]{dm}. In Subsection A.1 of Appendix A, we sketch the construction for deriving the `extension of coefficients' procedure for algebraically closed fields from that for real closed fields.

We reformulate Proposition \ref{prop:zar} in a context where the extension of coefficients can be used. First, we give a definition.

\begin{defn}\label{def:L|K-acac-rcrc}
Let $L$ be a field and let $K$ be a subfield of $L$. We say that $L|K$ is an \emph{extension of algebraically closed fields} if $L$ and $K$ are both algebraically closed. Similarly, $L|K$ is an \emph{extension of real closed fields} if $L$ and $K$ are both real closed. $\sqbullet$
\end{defn}

If $L|K$ is an extension of real closed fields, then $K$ is an ordered subfield of $L$, that is, the ordering of $L$ extends the one of $K$. Indeed, if $(R,\leq)$ is a real closed field and $x,y\in R$, then $x\leq y$ if and only if there exists $z\in R$ such that $y-x=z^2$, see \cite[Thm.1.2.2]{bcr}.

\begin{prop}\label{extension-zar}
Suppose that $L|K$ is either an extension of algebraically closed fields or an extension of real closed fields. Let $Y\subset K^n$ be an algebraic set and let $Y_L\subset L^n$ be the extension of coefficients of $Y$ to~$L$. We have:
\begin{itemize}
\item[$(\mr{i})$] $Y_L=\zcl_{L^n}(Y)=\zcl_{L^n}^K(Y)$.
\item[$(\mr{ii})$] $\II_L(Y_L)=\II_K(Y_L)L[\x]$, $\II_K(Y_L)=\II_K(Y)$ and $Y_L\cap K^n=Y$.
\item[$(\mr{iii})$] If $Y_1,\ldots,Y_s$ are the irreducible components of $Y\subset K^n$ and $X_i:=(Y_i)_L$ for each $i\in\{1,\ldots,s\}$, then $X_1,\ldots,X_s$ are both the ($L$-)irreducible components of $X\subset L^n$ and the $K$-irreducible components of $X\subset L^n$. In particular, the family of irreducible components of $Y_L\subset L^n$ coincides with the family of $K$-irreducible components of $Y_L$. In addition, $Y_L\subset L^n$ is irreducible if and only if it is $K$-irreducible or, equivalently, if $Y\subset K^n$ is irreducible.
\item[$(\mr{iii}')$] If $X_1,\ldots,X_s$ are the irreducible components of $Y_L\subset L^n$, then $X_1\cap K^n,\ldots,X_s\cap K^n$ are the irreducible components of $Y\subset K^n$.
\item[$(\mr{iv})$] $\dim_L(Y_L)=\dim_K(Y_L)=\dim_K(Y)$. 
\end{itemize}
\end{prop}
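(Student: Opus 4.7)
The strategy is to establish item (i) first and then read off (ii), (iii), (iii') and (iv) from the corresponding items of Proposition~\ref{prop:zar} applied to the algebraic set $X:=Y_L$. In other words, the proposition is essentially a repackaging of Proposition~\ref{prop:zar} once we know that the `extension of coefficients' of $Y$ equals the Zariski closure of $Y$ in $L^n$.

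For (i), the plan is as follows. Proposition~\ref{prop:zar}(i) already gives $\zcl_{L^n}(Y)=\zcl_{L^n}^K(Y)=\ZZ_L(\II_K(Y))$, so it suffices to show $Y_L=\ZZ_L(\II_K(Y))$. Choose a finite system of generators $f_1,\ldots,f_r$ of $\II_K(Y)$ in $K[\x]$, so that $Y=\ZZ_K(f_1,\ldots,f_r)$. The very definition of the extension of coefficients procedure for algebraic sets, combined with the Tarski--Seidenberg transfer principle in the real closed case (and with model-completeness of algebraically closed fields in the algebraically closed case), as recalled in \cite[\S5]{bcr} and in Subsection~A.1 of Appendix~\ref{appendix}, yields $Y_L=\ZZ_L(f_1,\ldots,f_r)$ and ensures that this description is independent of the chosen generators. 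Therefore $Y_L=\ZZ_L(\II_K(Y))=\zcl_{L^n}(Y)=\zcl_{L^n}^K(Y)$, proving (i).

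Once (i) is in hand, items (ii)--(iv) are immediate translations of the corresponding items of Proposition~\ref{prop:zar} with $X$ replaced by $Y_L$. More precisely: the equalities $\II_L(Y_L)=\II_K(Y_L)L[\x]$ and $\II_K(Y_L)=\II_K(Y)$ in (ii) are part of Proposition~\ref{prop:zar}(i); the equality $Y_L\cap K^n=Y$ in (ii) is Proposition~\ref{prop:zar}(ii); items (iii) and (iii') are word-for-word restatements of the corresponding items of Proposition~\ref{prop:zar}, noting that $(Y_i)_L=\zcl_{L^n}(Y_i)$ by (i) applied to each irreducible component; and (iv) is Proposition~\ref{prop:zar}(iv).

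The only substantive ingredient is the appeal to the transfer principle in (i); the rest is bookkeeping. The main potential pitfall is to make precise the identification of the two a priori different notions of $Y_L$---the set-theoretic extension of coefficients on the one hand, and the Zariski closure $\zcl_{L^n}(Y)$ on the other---which is exactly the content of (i) and is what the transfer/model-completeness argument delivers. Note that the hypothesis that $L|K$ is either an extension of algebraically closed fields or an extension of real closed fields is used \emph{only} at this step; the rest of the argument uses nothing beyond Proposition~\ref{prop:zar}.
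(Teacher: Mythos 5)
Your proof is correct and takes essentially the same route as the paper: both reduce the whole statement to Proposition~\ref{prop:zar} via the single substantive claim $Y_L=\zcl_{L^n}(Y)$, and both obtain that claim by picking generators of $\II_K(Y)$ and invoking the extension-of-coefficients/transfer principle (the paper cites \cite[Prop.5.1.1]{bcr} for the step $\ZZ_L(g_1,\ldots,g_r)=(\ZZ_K(g_1,\ldots,g_r))_L$, which is exactly what you appeal to more informally).
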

\begin{proof}
By Proposition \ref{prop:zar}, it is enough to show that $Y_L=\zcl_{L^n}(Y)$. Let $X:=\zcl_{L^n}(Y)$ and let $g_1,\ldots,g_r$ be generators of $\II_K(Y)$ in $K[\x]$. By Proposition \ref{prop:zar}$(\mr{i})$ and \cite[Prop.5.1.1]{bcr}, we have $\II_L(X)=(g_1,\ldots,g_r)L[\x]$ and $X=\ZZ_L(g_1,\ldots,g_r)=(\ZZ_K(g_1,\ldots,g_r))_L=Y_L$, as required.
\end{proof}

\begin{cor}\label{inter}
Suppose that $L|K$ is either an extension of algebraically closed fields or an extension of real closed fields. Let $X\subset L^n$ be a $K$-algebraic set. We have:
\begin{itemize}
\item[$(\mr{i})$] $X=\zcl_{L^n}(X\cap K^n)=\zcl_{L^n}^K(X\cap K^n)$.
\item[$(\mr{ii})$] $\II_L(X)=\II_K(X)L[\x]$ and $\II_K(X)=\II_K(X\cap K^n)$.
\item[$(\mr{iii})$] If $X_1,\ldots,X_s$ are the irreducible components of $X\subset L^n$, then $X_1,\ldots,X_s$ are also the $K$-irreducible components of $X$ and $X_1\cap K^n,\ldots,X_s\cap K^n$ are the irreducible components of $X\cap K^n\subset K^n$. 
In particular, $X\subset L^n$ is irreducible if and only if it is $K$-irreducible or, equivalently, if $X\cap K^n\subset K^n$ is irreducible.
\item[$(\mr{iv})$] $\dim_L(X)=\dim_K(X)=\dim_K(X\cap K^n)$.
\end{itemize}
\end{cor}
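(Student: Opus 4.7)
The plan is to reduce everything to Proposition \ref{extension-zar} by exhibiting $X$ as the extension of coefficients to $L$ of the algebraic subset $Y:=X\cap K^n$ of $K^n$. Once this identification is in place, the four statements follow by direct translation.

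First, I would use Noetherianity of the $K$-Zariski topology of $L^n$ to choose polynomials $f_1,\ldots,f_r\in K[\x]$ such that $X=\ZZ_L(f_1,\ldots,f_r)$. Setting $Y:=\ZZ_K(f_1,\ldots,f_r)\subset K^n$, one checks immediately that $Y=X\cap K^n$. Since $X\subset L^n$ is in particular algebraic, Corollary \ref{cor-cap} could alternatively be invoked to see that $X\cap K^n$ is algebraic in $K^n$, but here its equality with $Y$ is built into the choice of the $f_i$. The key observation is that, in the settings of the statement (either $L|K$ an extension of algebraically closed fields or of real closed fields), the extension of coefficients to $L$ of the algebraic set $\ZZ_K(f_1,\ldots,f_r)$ is precisely $\ZZ_L(f_1,\ldots,f_r)$; therefore $Y_L=X$.

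Given the equality $Y_L=X$, Proposition \ref{extension-zar}$(\mr{i})$ gives $X=Y_L=\zcl_{L^n}(Y)=\zcl_{L^n}^K(Y)$, which is item $(\mr{i})$. Proposition \ref{extension-zar}$(\mr{ii})$ gives $\II_L(X)=\II_L(Y_L)=\II_K(Y_L)L[\x]=\II_K(X)L[\x]$ together with $\II_K(X)=\II_K(Y_L)=\II_K(Y)=\II_K(X\cap K^n)$, which is item $(\mr{ii})$. For item $(\mr{iii})$, note that if $X_1,\ldots,X_s$ are the ($L$-)irreducible components of $X=Y_L$, then by Proposition \ref{extension-zar}$(\mr{iii}')$ the sets $X_1\cap K^n,\ldots,X_s\cap K^n$ are the irreducible components of $Y=X\cap K^n$, and by Proposition \ref{extension-zar}$(\mr{iii})$ these same $X_1,\ldots,X_s$ coincide with the $K$-irreducible components of $X$; the equivalence `$X$ irreducible $\Leftrightarrow$ $X$ $K$-irreducible $\Leftrightarrow$ $X\cap K^n$ irreducible' then follows at once. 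Finally, item $(\mr{iv})$ is just Proposition \ref{extension-zar}$(\mr{iv})$ applied to $Y$, since $\dim_L(X)=\dim_L(Y_L)=\dim_K(Y_L)=\dim_K(Y)=\dim_K(X\cap K^n)$ and $\dim_K(X)=\dim_K(Y_L)=\dim_K(Y)$.

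There is no real obstacle here; the only point requiring a moment of care is the identification $Y_L=X$, which hinges on the fact that, under the hypothesis that $L|K$ is an extension of algebraically closed fields or of real closed fields, the extension of coefficients is computed by the same defining equations (this is what underlies the use of \cite[Prop.5.1.1]{bcr} in the proof of Proposition \ref{extension-zar} and the analogous statement sketched in Appendix \ref{appendix} for the algebraically closed case). Once this is granted, the corollary is essentially a reformulation of Proposition \ref{extension-zar} with the roles of $Y\subset K^n$ and $X\subset L^n$ interchanged.
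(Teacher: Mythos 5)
Your proposal is correct and takes essentially the same approach as the paper: it reduces to Proposition \ref{extension-zar} by identifying $X$ with $Y_L$ where $Y:=X\cap K^n$, with the only cosmetic difference that you verify $Y_L=X$ directly from the defining polynomials $f_1,\ldots,f_r\in K[\x]$ whereas the paper cites \cite[Prop.5.1.1]{bcr} for the same identity.
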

\begin{proof}
As $X\subset L^n$ is $K$-algebraic, we have $X=(X\cap K^n)_L$ (we use here \cite[Prop.5.1.1]{bcr}), so we can apply Proposition \ref{extension-zar} to $Y:=X\cap K^n$.
\end{proof}

\begin{remark}\label{rem:fermat1}
In the preceding result, we cannot omit the condition `\textit{$L|K$ is either an extension of algebraically closed fields or an extension of real closed fields}'. For instance, if $K=\Q$ and $L$ is any real closed field $R$, Fermat's Last Theorem of Wiles \cite{wi} implies that the $\Q$-algebraic curve $F_k\subset R^2$ of equation $\x_1^{2k}+\x_2^{2k}=2^k$ have no rational points for $k\geq 3$, whereas it has infinitely many real algebraic points. $\sqbullet$
\end{remark}

\subsubsection{Transcendence and dimension} The next result describes a transcendence phenomenon related to the $L$-dimension of the $K$-Zariski closures.

\begin{lem}\label{big}
Suppose that $L$ is either an algebraically closed field or a real closed field. Denote $\kbar^\sqbullet$ the algebraic closure of $K$ in $L$. Let $X\subset L^n$ be an irreducible algebraic set and let $T:=\zcl_{L^n}^K(X)$. If $X$ is not a $\kbar^\sqbullet$-algebraic subset of $L^n$, then $\dim_L(X)<\dim_L(T)$.
\end{lem}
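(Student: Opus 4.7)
The plan is to prove the contrapositive: assuming $\dim_L(X)=\dim_L(T)$, I will show that $X$ is $\kbar^\sqbullet$-algebraic. The strategy is to exhibit $X$ as an $L$-irreducible component of $T$, and then use that the extension $L\,|\,\kbar^\sqbullet$ falls into the good setting of Corollary \ref{inter}, forcing every such component to be $\kbar^\sqbullet$-algebraic.

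First, I will observe that $\kbar^\sqbullet$ has the correct type: when $L$ is algebraically closed, the relative algebraic closure of $K$ inside $L$ is algebraically closed; when $L$ is real closed, it is the real closure of $K$ (with the induced ordering), hence real closed. Thus $L\,|\,\kbar^\sqbullet$ is either an extension of algebraically closed fields or an extension of real closed fields, and Corollary \ref{inter} applies to it. Since $T=\zcl_{L^n}^K(X)$ is $K$-algebraic and $K\subset\kbar^\sqbullet$, $T$ is also a $\kbar^\sqbullet$-algebraic subset of $L^n$. Applying Corollary \ref{inter}(iii) to $L\,|\,\kbar^\sqbullet$ and $T$, the $L$-irreducible components $T_1,\ldots,T_s$ of $T\subset L^n$ coincide with its $\kbar^\sqbullet$-irreducible components, and therefore each $T_j\subset L^n$ is in particular $\kbar^\sqbullet$-algebraic.

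Next, since $X\subset T=\bigcup_{j=1}^sT_j$ and $X\subset L^n$ is ($L$-)irreducible, Lemma \ref{inclusion} (applied with $K=L$) yields some $j$ with $X\subset T_j$. The hypothesis $\dim_L(X)=\dim_L(T)$ combined with $X\subset T_j\subset T$ gives
\[
\dim_L(X)\ \leq\ \dim_L(T_j)\ \leq\ \dim_L(T)\ =\ \dim_L(X),
\]
so $\dim_L(X)=\dim_L(T_j)$. Since $T_j\subset L^n$ is $L$-irreducible, Lemma \ref{dimirred} (applied with $K=L$) forces $X=T_j$. Hence $X$ coincides with a $\kbar^\sqbullet$-algebraic set, contradicting the hypothesis that $X\subset L^n$ is not $\kbar^\sqbullet$-algebraic, and the stated strict inequality $\dim_L(X)<\dim_L(T)$ must hold.

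The only delicate point is to ensure that $\kbar^\sqbullet$ itself is algebraically closed in the complex case and real closed in the real case, so that Corollary \ref{inter} is available for $L\,|\,\kbar^\sqbullet$; once this is in place, the remainder is a routine application of the dimension bound from Lemma \ref{dimirred} and the irreducibility packaging in Lemma \ref{inclusion}.
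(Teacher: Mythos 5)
Your proof is correct, but it takes a genuinely different route from the paper's. You argue by contrapositive: assuming $\dim_L(X)=\dim_L(T)$, you decompose $T$ into its $L$-irreducible components, invoke Corollary~\ref{inter}$(\mr{iii})$ for the extension $L\,|\,\kbar^\sqbullet$ to see that these components are $\kbar^\sqbullet$-algebraic, trap $X$ inside one component $T_j$ via Lemma~\ref{inclusion}, and then force $X=T_j$ by the dimension count and Lemma~\ref{dimirred}. The paper instead works directly: it sets $Z:=\zcl_{L^n}^{\kbar^\sqbullet}(X)$, shows $Z$ is $\kbar^\sqbullet$-irreducible by Lemma~\ref{lem:irreducibility} and then $L$-irreducible by the same Corollary~\ref{inter}$(\mr{iii})$, observes $X\subsetneqq Z\subset T$ (the strict inclusion being automatic since $X$ is not $\kbar^\sqbullet$-algebraic), and reads off $\dim_L(X)<\dim_L(Z)\le\dim_L(T)$. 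Both approaches hinge on Corollary~\ref{inter}$(\mr{iii})$ and the principle that a strict inclusion into an $L$-irreducible algebraic set drops the dimension; you trade Lemma~\ref{lem:irreducibility} for Lemma~\ref{inclusion}. The paper's direct route is slightly leaner and gives an explicit intermediate algebraic set $Z$ witnessing the strict jump in dimension, while your component-based argument is equally valid and perhaps makes more transparent why equality of dimensions would force $X$ to equal a $\kbar^\sqbullet$-algebraic component. One small presentational point: you announce a contrapositive argument and then close with ``contradicting the hypothesis''; either framing works, but it is worth settling on one to avoid the impression of a gap.
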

\begin{proof}
Let $Z:=\zcl_{L^n}^{\kbar^\sqbullet}(X)$. By Lemma \ref{lem:irreducibility}, the $\kbar^\sqbullet$-algebraic set $Z\subset L^n$ is $\kbar^\sqbullet$-irreducible. By Corollary \ref{inter}$(\mr{iii})$, the algebraic set $Z\subset L^n$ is irreducible. As $K\subset\kbar^\sqbullet$, we have $Z\subset T$. As $X\subsetneqq Z$ by hypothesis, we deduce $\dim_L(X)<\dim_L(Z)\leq\dim_L(T)$, as required.
\end{proof}

A simple example is the following. Take $K=\Q$ and $L=\R$, denote $\qr$ the field of real algebraic numbers and pick $p\in\R\setminus\qr$. The singleton $X:=\{p\}\subset\R$ is an irreducible algebraic set, which is not $\qr$-algebraic. In this case, $T:=\zcl_\R^\Q(X)=\R$, so $\dim_\R(X)=0<1=\dim_\R(T)$.

\subsubsection{Some well-known results of commutative algebra} We conclude this part with some well-known results from commutative algebra. Let us begin with a result regarding radical ideals.

\begin{lem}[{\cite[Ch.V,\! \S15,\! Prop.5]{b}}]\label{radical}
Let $L|K$ be an extension of fields. If $\gta$ is a radical ideal of $K[\x]$, then $\gta L[\x]=\gta\otimes_KL$ is also a radical ideal of $L[\x]$.
\end{lem}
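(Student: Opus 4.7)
The plan is to reduce the statement to the case of a prime ideal and then embed the quotient into a tensor product of a field with $L$, where reducedness follows from separability considerations in characteristic zero.

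First I would write $\gta$ as a finite intersection of its minimal primes $\gta=\gtp_1\cap\cdots\cap\gtp_r$ (recall $\gta$ is radical and $K[\x]$ is Noetherian). Applying Corollary~\ref{lem:a}, which is already established, gives
\[
\gta L[\x]=\bigcap_{i=1}^r\gtp_iL[\x].
\]
Since the intersection of radical ideals is radical, it suffices to prove that for a prime ideal $\gtp\subset K[\x]$ the extended ideal $\gtp L[\x]\subset L[\x]$ is radical. So from now on I would assume $\gta=\gtp$ is prime.

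Set $A:=K[\x]/\gtp$, an integral domain, and let $\kappa:=\mr{Frac}(A)$. Since $L$ is flat (indeed free) as a $K$-module, the natural map $A\hookrightarrow\kappa$ remains injective after $-\otimes_K L$, so
\[
L[\x]/(\gtp L[\x])\cong A\otimes_K L\hookrightarrow \kappa\otimes_K L.
\]
Therefore it is enough to show that $\kappa\otimes_K L$ is a reduced ring. For this I would split $\kappa|K$ into its transcendental and algebraic parts: pick a transcendence basis $\{t_j\}_{j\in J}$ of $\kappa|K$ and set $\kappa_0:=K(\{t_j\}_{j\in J})$. Writing $\kappa\otimes_K L=\kappa\otimes_{\kappa_0}(\kappa_0\otimes_K L)$, the second factor $\kappa_0\otimes_K L$ is the localization of $L[\{t_j\}_{j\in J}]$ at the image of the multiplicative set $K[\{t_j\}_{j\in J}]\setminus\{0\}$, hence an integral domain with some fraction field $Q$.

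The last step is the real obstacle: show that for every algebraic extension $\kappa|\kappa_0$ (which in characteristic zero is automatically separable) and every domain $R=\kappa_0\otimes_K L$ with fraction field $Q$, the ring $\kappa\otimes_{\kappa_0}R$ is reduced. Writing $\kappa$ as a directed union of finite separable extensions $F=\kappa_0[\alpha]/(f(\alpha))$ with $f$ separable, one has $F\otimes_{\kappa_0}R=R[\alpha]/(f(\alpha))\hookrightarrow Q[\alpha]/(f(\alpha))$, and the latter is a finite \'etale $Q$-algebra, hence a finite product of separable field extensions of $Q$ and in particular reduced. Taking direct limits preserves reducedness, so $\kappa\otimes_K L$ is reduced, which completes the argument. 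The characteristic-zero assumption, implicit throughout the paper, is precisely what makes every algebraic extension of $\kappa_0$ separable and thus powers this final separability step; in positive characteristic the lemma would fail without an additional separability or perfectness hypothesis.
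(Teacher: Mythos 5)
The paper does not prove this lemma; it is stated with a citation to Bourbaki [Ch.V, \S15, Prop.5], so there is no in-text argument to compare against. Your proof is correct: the reduction to a prime ideal $\gtp$ via primary decomposition of a radical ideal together with Corollary~\ref{lem:a}, the use of flatness of $L$ over $K$ to embed $L[\x]/(\gtp L[\x])\cong A\otimes_K L$ into $\kappa\otimes_K L$, the split of $\kappa|K$ into a purely transcendental part $\kappa_0$ (so that $\kappa_0\otimes_K L$ is a localization of $L[\{t_j\}]$, hence a domain) and an algebraic part (automatically separable by characteristic zero), and the finite-\'etale step $F\otimes_{\kappa_0}R=R[\alpha]/(f)\hookrightarrow Q[\alpha]/(f)$ all hold up, and the passage to the directed union is fine since reducedness is preserved under direct limits. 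You are also right to flag that the separability of the algebraic extension is exactly where the characteristic-zero hypothesis, implicit throughout the paper, is used. In effect you have given a self-contained re-derivation of the cited Bourbaki proposition, which is a legitimate and arguably more transparent route than the bare citation.
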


We deduce a version of Hilbert's Nullstellensatz that will be useful later.

\begin{cor}[{$C|K$-Nullstellensatz}]\label{kreliablec}
Let $C|K$ be an extension of fields such that $C$ is algebraically closed. If $\gta$ is a radical ideal of $K[\x]$, then $\II_C(\ZZ_C(\gta))=\gta C[\x]$ and $\II_K(\ZZ_C(\gta))=\gta$. Equivalently, if $\gtb$ is an arbitrary ideal of $K[\x]$ and $\sqrt{\gtb}$ is its radical in $K[\x]$, then $\II_C(\ZZ_C(\gtb))=\sqrt{\gtb} C[\x]$ and $\II_K(\ZZ_C(\gtb))=\sqrt{\gtb}$.
\end{cor}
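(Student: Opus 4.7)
The plan is to reduce the statement to Hilbert's classical Nullstellensatz over the algebraically closed field $C$, and then descend the resulting equality to $K[\x]$ using the contraction result already established in Corollary~\ref{k}. Since the second formulation with an arbitrary ideal $\gtb$ follows from the first applied to $\sqrt{\gtb}$ (noting that $\ZZ_C(\gtb)=\ZZ_C(\sqrt{\gtb})$), I will focus on the case where $\gta$ is radical.

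First I would invoke Lemma~\ref{radical} to upgrade the radicality of $\gta\subset K[\x]$ to radicality of the extended ideal $\gta C[\x]\subset C[\x]$. Next, since $\ZZ_C(\gta)=\ZZ_C(\gta C[\x])$ by definition, I would apply Hilbert's Nullstellensatz in the classical form over $C$ (here one uses that $C$ is algebraically closed) to obtain
\[
\II_C(\ZZ_C(\gta))=\II_C(\ZZ_C(\gta C[\x]))=\sqrt{\gta C[\x]}=\gta C[\x],
\]
where the last equality is the radicality coming from Lemma~\ref{radical}. This establishes the first claimed equality.

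For the second equality, the key is to contract back to $K[\x]$. Since $\II_K(S)=\II_C(S)\cap K[\x]$ for any subset $S\subset C^n$ (by the very definitions of $\II_K$ and $\II_C$), I would simply compute
\[
\II_K(\ZZ_C(\gta))=\II_C(\ZZ_C(\gta))\cap K[\x]=(\gta C[\x])\cap K[\x]=\gta,
\]
where the final equality is exactly the content of Corollary~\ref{k}. This gives $\II_K(\ZZ_C(\gta))=\gta$ and concludes the radical case.

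I do not anticipate any real obstacle: both ingredients (extension of radicality under field extensions, and faithful contraction of extended ideals) have already been established in the excerpt, and the classical Hilbert Nullstellensatz over the algebraically closed field $C$ is standard. The mildly delicate point to watch is that Lemma~\ref{radical} and Corollary~\ref{k} are stated for an arbitrary extension of fields $L|K$, so their applicability to the extension $C|K$ with $C$ algebraically closed requires no extra hypothesis, and no Noetherianity assumption on $\gta$ is needed anywhere.
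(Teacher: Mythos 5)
Your proof is correct and follows essentially the same route as the paper: first apply the classical Hilbert Nullstellensatz together with Lemma~\ref{radical} to obtain $\II_C(\ZZ_C(\gta))=\gta C[\x]$, then contract to $K[\x]$ via Corollary~\ref{k}, and finally reduce the general case to the radical one by replacing $\gtb$ with $\sqrt{\gtb}$. No gaps.
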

\begin{proof}
By Hilbert's Nullstellensatz, $\II_C(\ZZ_C(\gta))=\II_C(\ZZ_C(\gta C[\x]))=\sqrt{\gta C[\x]}$. By Lemma \ref{radical}, $\gta C[\x]=\gta\otimes_KC$ is a radical ideal of $C[\x]$, so $\II_C(\ZZ_C(\gta))=\gta C[\x]$. By Corollary \ref{k}, we have $\II_K(\ZZ_C(\gta))=\gta$. In particular, if we set $\gta:=\sqrt{\gtb}$, we deduce $\II_C(\ZZ_C(\gtb))=\II_C(\ZZ_C(\sqrt{\gtb}))=\sqrt{\gtb}C[\x]$ and $\II_K(\ZZ_C(\gtb))=\II_K(\ZZ_C(\sqrt{\gtb}))=\sqrt{\gtb}$, as required.
\end{proof}

The next result will also be useful later.

\begin{lem}\label{extension}
Let $C|K$ be an extension of fields such that $C$ is algebraically closed. Then each automorphism of $K$ extends to an automorphism of $C$.
\end{lem}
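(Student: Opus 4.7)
The plan is to reduce to the classical extension theorem for algebraic closures by first ``making room'' for the transcendental part. Let $\sigma\colon K\to K$ be the given automorphism. I would proceed in three steps.

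First, pick a transcendence basis $B$ of $C$ over $K$ (existence of $B$ follows from a standard Zorn's lemma argument applied to the poset of algebraically independent subsets of $C$ over $K$). Because $B$ is algebraically independent over $K$, the subfield $K(B)$ is $K$-isomorphic to the field of rational functions in the indeterminates $\{\x_b\}_{b\in B}$. This lets me extend $\sigma$ to an automorphism $\tau\colon K(B)\to K(B)$ by declaring $\tau|_K=\sigma$ and $\tau(b)=b$ for every $b\in B$: indeed, a rational function $\frac{P(\x_{b_1},\ldots,\x_{b_r})}{Q(\x_{b_1},\ldots,\x_{b_r})}$ evaluated at $b_1,\ldots,b_r$ is sent to $\frac{P^\sigma(\x_{b_1},\ldots,\x_{b_r})}{Q^\sigma(\x_{b_1},\ldots,\x_{b_r})}$ evaluated at $b_1,\ldots,b_r$, where $P^\sigma,Q^\sigma$ denote the polynomials obtained by applying $\sigma$ to the coefficients, and this is well-defined and bijective since $\sigma$ is bijective.

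Second, since $B$ is a transcendence basis and $C$ is algebraically closed, $C$ is an algebraic closure of $K(B)$. Composing $\tau$ with the inclusion $K(B)\hookrightarrow C$ produces a field embedding $\tau\colon K(B)\to C$ into an algebraically closed field. The standard extension theorem for algebraic closures (proved by Zorn's lemma on the poset of pairs $(F,\varphi)$ with $K(B)\subset F\subset C$ and $\varphi\colon F\to C$ an extension of $\tau$) then produces a field embedding $\widetilde\sigma\colon C\to C$ extending $\tau$.

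Third, I would check that $\widetilde\sigma$ is surjective, hence an automorphism of $C$. The image $\widetilde\sigma(C)$ is algebraically closed, contains $\tau(K(B))=K(B)$, and $C$ is algebraic over $K(B)$; therefore $\widetilde\sigma(C)=C$. By construction $\widetilde\sigma|_K=\sigma$, which is the required extension.

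The only genuinely delicate point is the extension step in the second paragraph: one must apply Zorn's lemma correctly to the family of partial extensions $(F,\varphi)$ with $K(B)\subset F\subset C$, verifying that a maximal element must have $F=C$ (otherwise, choosing $\alpha\in C\setminus F$ and sending it to any root in $C$ of the image under $\varphi$ of its minimal polynomial over $F$ would enlarge $(F,\varphi)$, contradicting maximality). Everything else is a bookkeeping exercise.
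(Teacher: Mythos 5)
Your proof is correct and follows essentially the same route as the paper's: pick a transcendence basis $B$ of $C$ over $K$, extend $\sigma$ to $K(B)$ by fixing $B$ pointwise and acting by $\sigma$ on coefficients, and then extend to $C$ using the standard extension/uniqueness theorem for algebraic closures (which the paper cites from Zariski--Samuel and you reprove via Zorn's lemma). The surjectivity check at the end is the same observation in both cases.
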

\begin{proof}
For the sake of completeness, we provide a proof. Let $\sigma:K\to K$ be an automorphism and let $S:=\{s_h\}_{h\in H}$ be a transcendence basis of $C$ over $K$. Consider the subring $K[S]$ of $C$. For each $r\in\N$ and for each polynomial $P(\y)=\sum_\nu a_\nu\y^\nu$ in $K[\y_1,\ldots,\y_r]$, define $P^\sigma(\y):=\sum_\nu \sigma(a_\nu)\y^\nu\in K[\y_1,\ldots,\y_r]$, where $K[\y_1,\ldots,\y_r]:=K$ if $r=0$. Consider the map $\phi_\sigma:K[S]\to K[S]$ given by: $\phi_\sigma(P(s_{h_1},\ldots,s_{h_r})):=P^\sigma(s_{h_1},\ldots,s_{h_r})$ for each finite subset $\{s_{h_1},\ldots,s_{h_r}\}$ of $S$ and each $P\in K[\y_1,\ldots,\y_r]$. As the elements of $S$ are algebraically independent over $K$, we deduce that $\phi_\sigma$ is a well-defined automorphism that extends $\sigma$ and its inverse is $\phi_{\sigma^{-1}}$. The automorphism $\phi_\sigma$ of $K[S]$ extends to the automorphism $\phi'_\sigma$ of the field of fractions $K(S)$ of $K[S]$ defined by $\phi'_\sigma(ab^{-1}):=\phi_\sigma(a)(\phi_\sigma(b))^{-1}$ for all $a,b\in K[S]$ with $b\neq0$. As $C$ is the algebraic closure of $K(S)$, the uniqueness of algebraic closure guarantees the existence an automorphism $\Phi_\sigma$ of $C$ that extends $\phi'_\sigma$ (see \cite[Thms.32\&33, pp.106-108]{zs1} for a proof). Thus, $\Phi_\sigma|_K=\sigma$, as required.
\end{proof}

\begin{remark}
If the Galois group $G(C:K)$ of $C|K$ is not trivial, the previous extension $\Phi_\sigma$ is not unique. Pick $\Psi\in G(C:K)\setminus\{\id_C\}$ and observe that $(\Phi_\sigma\circ\Psi)|_K=\sigma$, whereas $\Phi_\sigma\circ\Psi\neq\Phi_\sigma$. $\sqbullet$
\end{remark}

\subsection{$K$-Zariski closure, Galois completion and its algorithmic computation}\label{sgc0}

\emph{Along this subsection, $C|K$ is an extension of fields such that $C$ is algebraically closed, $G$ is the Galois group $G(C:K)$ and $\kbar$ is the algebraic closure of $K$ in $C$.} Observe that $\kbar$ coincides with the algebraic closure of $K$, and the full Galois group $G(\kbar:K)$ of $K$ is isomorphic to $G(C:K)/G(C:\kbar)$.

\subsubsection{Galois completion of a complex algebraic set} 

Our next purpose is to study the $K$-Zariski closure of an algebraic set $X\subset C^n$. Some restrictions concerning $X$ are needed in order to have satisfactory results. Obviously, we have to avoid the transcendence phenomenon described in Lemma \ref{big}. Thus, we focus on the $K$-Zariski closure of $\kbar$-algebraic subsets of $C^n$. We characterize such Zariski closures by means of the Galois group $G$ and we provide an algorithm to compute them involving certain finite Galois subextensions of $\kbar|K$.

For each automorphism (of fields) $\psi:C\to C$, define the isomorphism (of $\Q$-vector spaces) $\psi_n:C^n\to C^n$ and the ring automorphism $\widehat{\psi}:C[\x]\to C[\x]$ by
\begin{align}
&\psi_n(z_1,\ldots,z_n):=(\psi(z_1),\ldots,\psi(z_n)),\label{psi}\\
&\textstyle\widehat{\psi}\big(\sum_\nu a_\nu\x^\nu\big):=\sum_\nu\psi(a_\nu)\x^\nu.\label{hatpsi}
\end{align}

\begin{defn}\label{gc0def}
Let $S$ be a subset of $C^n$. We call the \emph{Galois completion of $S\subset C^n$ (with respect to the extension of fields $C|K$)} the subset $\bigcup_{\psi\in G}\psi_n(S)$ of $C^n$. $\sqbullet$
\end{defn}

\begin{defn}\label{def:sigma}
Let $E|K$ be any extension of fields and let $G(E:K)$ be the Galois group of $E|K$. Given $\sigma\in G(E:K)$ and $g=\sum_\nu a_\nu\x^\nu\in E[\x]$, we define $g^\sigma:=\sum_\nu\sigma(a_\nu)\x^\nu\in E[\x]$. $\sqbullet$
\end{defn}

\begin{alg}\label{gc}
The algorithm works as follows:
\begin{itemize}
\item[(0)] Start with a $\kbar$-algebraic set $X\subset C^n$.
\item[(1)] Choose polynomials $g_1,\ldots,g_r\in\kbar[\x]$ such that $X=\ZZ_C(g_1,\ldots,g_r)$.
\item[(2)] Choose any finite Galois subextension $E|K$ of $\kbar|K$ that contains all the coefficients of the polynomials $g_1,\ldots,g_r$ and set $G':=G(E:K)$.
\item[(3)] For each $\sigma\in G'$, define $Z^\sigma:=\ZZ_C(g_1^\sigma,\ldots,g_r^\sigma)\subset C^n$.
\item[(4)] Consider the $\kbar$-algebraic set $T:=\bigcup_{\sigma\in G'}Z^\sigma$.
\end{itemize}
\end{alg}

The following result shows as the reader can expect that the output $T$, the Galois completion $\bigcup_{\psi\in G}\psi_n(X)$ of $X\subset C^n$ and the $K$-Zariski closure of $X$ in $C^n$ coincide. In fact, it provides a procedure to compute $\II_K(T)$ from a finite collection of polynomials in $\kbar[\x]$ whose zero set is~$X$.

\begin{thm}[Galois completion and $K$-Zariski closure]\label{thm:gc0}
Let $X\subset C^n$ be a $\kbar$-algebraic set and let $T\subset C^n$ be a $\kbar$-algebraic set obtained applying Algorithm {\em \ref{gc}}. For each $\sigma\in G'$, choose an automorphism $\Phi_\sigma:C\to C$ such that $\Phi_\sigma|_E=\sigma$ (see Lemma \emph{\ref{extension}}).

We have:
\begin{itemize}
\item[$(\mr{i})$] $\psi_n(X)=\ZZ_C(\widehat{\psi}(g_1),\ldots,\widehat{\psi}(g_r))$ and $\II_C(\psi_n(X))=\widehat{\psi}(\II_C(X))$ for each $\psi\in G$. In particular, $Z^\sigma=\Phi_{\sigma,n}(X)$ and $\II_C(Z^\sigma)=\widehat{\Phi}_\sigma(\II_C(X))$ for each $\sigma\in G'$, where $\Phi_{\sigma,n}:=(\Phi_\sigma)_n$.
\item[$(\mr{ii})$] $\{\widehat{\psi}(g_i):\,\psi\in G\}=\{\widehat{\Phi}_\sigma(g_i):\, \sigma\in G'\}$ for each $i\in\{1,\ldots,r\}$, $\{\psi_n(X):\, \psi\in G\}=\{Z^\sigma:\,\sigma\in G'\}$ and $T=\bigcup_{\sigma\in G'}Z^\sigma=\bigcup_{\psi\in G}\psi_n(X)$. In particular, $T\subset C^n$ is the Galois completion of $X\subset C^n$.
\item[$(\mr{ii}')$] Given any $\sigma\in G'$, we have $\dim_C(Z^\sigma)=\dim_C(X)$. Moreover, $Z^\sigma\subset C^n$ is irreducible if and only if $X\subset C^n$ is $\kbar$-irreducible (or, equivalently, if $X\subset C^n$ is irreducible).
\item[$(\mr{iii})$] Let ${\mathfrak H}\subset(g_1,\ldots,g_r)\kbar[\x]\cap E[\x]$ be the set of all products of the form $\prod_{\sigma\in G'}h_\sigma$, where $h_\sigma\in\{g_1^\sigma,\ldots,g_r^\sigma\}$ for each $\sigma\in G'$. Then $T=\ZZ_C({\mathfrak H})$ and $\widehat{\psi}({\mathfrak H})={\mathfrak H}$ for each $\psi\in G$.
\item[$(\mr{iv})$] For each $h\in{\mathfrak H}$, define
$$
P_h(\t):=\prod_{\tau\in G'}(\t-h^\tau)=\t^d+\sum_{j=1}^d(-1)^jq_{hj}\t^{d-j}\in E[\x][\t],
$$
where $d$ is the order of $G'$ and set ${\mathfrak G}:=\{q_{hj}\}_{h\in{\mathfrak H},\,j\in\{1,\ldots,d\}}$. Then ${\mathfrak G}$ is a subset of $K[\x]$ and $T=\ZZ_C({\mathfrak G})$.
\item[$(\mr{v})$] $T=\zcl^K_{C^n}(X)=\ZZ_C((g_1,\ldots,g_r)\kbar[\x]\cap K[\x])$.
\item[$(\mr{vi})$] $\II_K(T)=\II_K(X)=\sqrt{{\mathfrak G}K[\x]}$. Moreover, $\II_C(T)=\II_K(X)C[\x]$ and 
$$
\dim_C(X)=\dim_C(T)=\dim_K(T)=\dim_K(X).
$$
\end{itemize}
\end{thm}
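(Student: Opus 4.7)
The plan is to dispatch the six parts in the stated order, each relying on its predecessors.

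For (i), I would unwind the definitions: for any field automorphism $\psi$ of $C$ and $g \in C[\x]$, the identity $\psi(g(x)) = \widehat{\psi}(g)(\psi_n(x))$ (obvious from writing $g = \sum a_\nu \x^\nu$) gives both $\psi_n(X) = \ZZ_C(\widehat{\psi}(g_1),\ldots,\widehat{\psi}(g_r))$ (using bijectivity of $\psi_n$) and $\widehat{\psi}(\II_C(X)) = \II_C(\psi_n(X))$; specializing to $\psi = \Phi_\sigma$ produces the second sentence. For (ii), the key observation is that the restriction map $G \to G'$ is surjective --- every $\sigma \in G'$ lifts to some $\Phi_\sigma \in G$ by Lemma \ref{extension} applied to the extension $C|E$ --- while $\widehat{\psi}(g_i)$ depends only on $\psi|_E$ since $g_i \in E[\x]$. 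Combining with (i) gives $\{\psi_n(X)\}_{\psi \in G} = \{Z^\sigma\}_{\sigma \in G'}$, hence $T = \bigcup_{\psi \in G} \psi_n(X)$. Part (ii$'$) is then immediate: $\widehat{\Phi}_\sigma$ is a ring automorphism of $C[\x]$ carrying $\II_C(X)$ onto $\II_C(Z^\sigma)$ by (i), so the quotients are isomorphic, sharing Krull dimension and primeness; the equivalence of $C$-irreducibility and $\kbar$-irreducibility of the $\kbar$-algebraic set $X$ comes from Corollary \ref{inter}(iii) applied to $C|\kbar$.

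For (iii), each $h \in \mathfrak{H}$ has one factor $h_{\id} \in \{g_1,\ldots,g_r\}$, so $h \in (g_1,\ldots,g_r)\kbar[\x]$, and all factors lie in $E[\x]$. The equality $T = \ZZ_C(\mathfrak{H})$ follows by a choice-function argument: a product vanishes at $x$ iff one of its factors does, so $x \notin T$ iff for each $\sigma$ one can pick $h_\sigma \in \{g_1^\sigma,\ldots,g_r^\sigma\}$ with $h_\sigma(x) \neq 0$, yielding $h = \prod h_\sigma \in \mathfrak{H}$ with $h(x) \neq 0$. The $G$-stability of $\mathfrak{H}$ reduces via (ii) to the fact that $\widehat{\psi}$ permutes the set $\{g_i^\sigma\}_{\sigma \in G'}$ for each fixed $i$ (via $g_i^\sigma \mapsto g_i^{\psi|_E \circ \sigma}$), hence permutes the choice functions defining $\mathfrak{H}$. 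For (iv), reindexing the product shows $h^\tau \in \mathfrak{H}$ for every $\tau \in G'$, so the $\{h^\tau\}_{\tau \in G'}$ form a $G'$-orbit inside $\mathfrak{H}$; the coefficients $q_{hj}$, being elementary symmetric functions of this orbit, are $G'$-invariant in $E[\x]$, and Galois theory gives $E[\x]^{G'} = K[\x]$, so $q_{hj} \in K[\x]$. For the identity $T = \ZZ_C(\mathfrak{G})$: if $x \in T$, then every $h^\tau$ lies in $\mathfrak{H}$ and thus vanishes at $x$, so $P_h(\t)|_{\x=x} = \t^d$ and all $q_{hj}(x) = 0$; conversely, $q_{hj}(x) = 0$ for all $j$ forces $P_h(\t)|_{\x=x} = \t^d$, whose roots --- including $h(x)$ --- are all zero, so $x \in \ZZ_C(\mathfrak{H}) = T$.

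For (v), the inclusion $\zcl^K_{C^n}(X) \subset T$ is immediate from (iv): $T = \ZZ_C(\mathfrak{G})$ is $K$-algebraic and contains $X = Z^{\id}$. The reverse inclusion: any $f \in \II_K(X)$ satisfies $\widehat{\psi}(f) = f$ (its coefficients are $G$-fixed), and by (i) it vanishes on every $\psi_n(X)$, hence on $T$. Setting $\gta := (g_1,\ldots,g_r)\kbar[\x] \cap K[\x]$, the inclusion $\mathfrak{G} \subset \gta$ follows from (iv) together with the observation that each $h^\tau$ (and therefore each summand of $q_{hj}$) contains the factor $g_{i(\tau^{-1})} \in \{g_1,\ldots,g_r\}$, while $X \subset \ZZ_C(\gta)$ is trivial; this produces the sandwich $T \supset \ZZ_C(\gta) \supset \zcl^K_{C^n}(X) = T$, giving equality throughout. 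For (vi), $\II_K(T) = \II_K(X)$ is immediate from (v); the $C|K$-Nullstellensatz (Corollary \ref{kreliablec}) applied to the ideal $\mathfrak{G} K[\x]$ gives $\II_K(T) = \sqrt{\mathfrak{G} K[\x]}$. Since $\II_K(X)$ is radical in $K[\x]$, Lemma \ref{radical} makes $\II_K(X) C[\x]$ radical in $C[\x]$, so Hilbert's Nullstellensatz yields $\II_C(T) = \II_K(X) C[\x]$. Corollary \ref{dimkl} then forces $\dim_K(T) = \dim_C(T)$, while $\dim_C(T) = \max_\sigma \dim_C(Z^\sigma) = \dim_C(X)$ by (ii$'$) and the fact that the dimension of a finite union is the maximum.

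The main obstacle I anticipate is careful bookkeeping of several parallel actions --- $G$ on $C$, $G'$ on $E$, $\widehat{\psi}$ on $C[\x]$, $\psi_n$ on $C^n$, and the translation between $\Phi_\sigma$ and $\sigma$ --- together with the verification that $\mathfrak{G} \subset \gta$, where one must locate within each product $h^\tau$ a factor drawn from the original generating set $\{g_1,\ldots,g_r\}$.
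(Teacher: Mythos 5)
Your proof is correct and tracks the paper's own argument step by step: the same restriction/extension bijection between $G$ and $G'$ in (ii), the same translation trick for $G$-stability of $\mathfrak{H}$ in (iii) and for $G'$-invariance of $P_h$ in (iv), and the same use of Corollaries \ref{kreliablec} and \ref{dimkl} in (vi). The only deviations are cosmetic — a pointwise choice-function argument in place of the paper's product-ideal identity for $T=\ZZ_C(\mathfrak{H})$, elementary symmetric functions in place of the auxiliary automorphism $\sigma^*$, and a root-of-$\t^d$ argument for $T=\ZZ_C(\mathfrak{G})$ instead of the containment $\mathfrak{G}\kbar[\x]\subset\mathfrak{H}\kbar[\x]\subset\sqrt{\mathfrak{G}\kbar[\x]}$ — all of which amount to the same computations.
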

\begin{proof}
$(\mr{i})$ Observe that $\psi(g(x))=\widehat{\psi}(g)(\psi_n(x))$ for each $x\in C^n$, $g\in C[\x]$ and $\psi\in\ G$. As $\psi$ and $\widehat{\psi}$ are automorphisms, we deduce that $\II_C(\psi_n(X))=\widehat{\psi}(\II_C(X))$. Moreover, $\psi_n(x)\in\ZZ_C(\widehat{\psi}(g_1),\ldots,\widehat{\psi}(g_r))$ if and only if $x\in \ZZ_C(g_1,\ldots,g_r)=X$, so $\psi_n(X)=\ZZ_C(\widehat{\psi}(g_1),\ldots,\widehat{\psi}(g_r))$.

The second part of statement $(\mr{i})$ follows readily.

$(\mr{ii})$ As $E|K$ is a Galois extension, each automorphism $\psi\in G$ restricts to an element $\sigma:=\psi|_E$ of $G'$. Conversely, by Lemma \ref{extension}, each automorphism $\sigma\in G'$ extends to an automorphism $\psi_\sigma\in G$. We deduce that the set $\{\widehat{\psi}(g_i):\,\psi\in G\}$ coincides with the set $\{\widehat{\psi}_\sigma(g_i):\,\sigma\in G'\}=\{g_i^\sigma:\,\sigma\in G'\}$ for each $i\in\{1,\ldots,r\}$. By $(\mr{i})$, the sets $\{\psi_n(X):\, \psi\in G\}$ and $\{Z^\sigma:\,\sigma\in G'\}$ coincide and are finite, so $T=\bigcup_{\sigma\in G'}Z^\sigma=\bigcup_{\psi\in G}\psi_n(X)$ is a $\kbar$-algebraic subset of $C^n$.

$(\mr{ii}')$ As $\II_C(Z^\sigma)=\widehat{\Phi}_\sigma(\II_C(X))$ by $(\mr{i})$, and $\widehat{\Phi}_\sigma$ is an automorphism of $C[\x]$, the quotient rings $C[\x]/\II_C(Z^\sigma)$ and $C[\x]/\II_C(X)$ are isomorphic and have the same dimension. In addition, the ideal $\II_C(Z^\sigma)$ of $C[\x]$ is prime if and only if so is $\II_C(X)$. Thus, $Z^\sigma\subset C^n$ is irreducible if and only if so is $X\subset C^n$. By Corollary \ref{inter}$(\mr{iii})$, the latter condition is equivalent to the $\kbar$-irreducibility of $X\subset C^n$.

$(\mr{iii})$ As $Z^\sigma=\ZZ_C(g_1^\sigma,\ldots,g_r^\sigma)=\ZZ_C((g_1^\sigma,\ldots,g_r^\sigma)\kbar[\x])$, we have
$$\textstyle
T=\bigcup_{\sigma\in G'}Z^\sigma=\bigcup_{\sigma\in G'}\ZZ_C((g_1^\sigma,\ldots,g_r^\sigma)\kbar[\x])=\ZZ_C\big(\prod_{\sigma\in G'}(g_1^\sigma,\ldots,g_r^\sigma)\kbar[\x]\big).
$$
The product ideal $\prod_{\sigma\in G'}(g_1^\sigma,\ldots,g_r^\sigma)\kbar[\x]$ of $\kbar[\x]$ is generated by $\mathfrak{H}\subset E[\x]$, so $T=\ZZ_C({\mathfrak H})$. Pick $h\in\mathfrak{H}$. By definition of $\mathfrak{H}$, there exists a function $q:G'\to\{1,\ldots,r\}$ such that $h=\prod_{\sigma\in G'}g^\sigma_{q(\sigma)}$. Denote $e$ the identity isomorphism on $E$. As $g_i^e=g_i$ for each $i\in\{1,\ldots,r\}$, we deduce $h\in(g_1,\ldots,g_r)\kbar[\x]$. Fix $\psi\in G$ and define the translation map $\Psi:G'\to G'$ by $\Psi(\sigma):=\psi|_E\circ\sigma$. Observe that $\Psi$ is bijective and its inverse is the translation map $\Psi^{-1}:G'\to G'$, $\sigma\mapsto\psi^{-1}|_E\circ\sigma$. Define the function $Q:G'\to\{1,\ldots,r\}$ by $Q:=q\circ\Psi^{-1}$. We have: $\widehat{\psi}(h)=\prod_{\sigma\in G'}g^{\Psi(\sigma)}_{q(\sigma)}=\prod_{\sigma\in G'}g^{\Psi(\sigma)}_{Q(\Psi(\sigma))}=\prod_{\tau\in G'}g^{\tau}_{Q(\tau)}\in\mathfrak{H}$. We conclude that $\widehat{\psi}(\mathfrak{H})=\mathfrak{H}$ for each $\psi\in G$.

$(\mr{iv})$ Fix $\sigma\in G'$. Let $\sigma^*$ be the unique automorphism of $E[\x,\t]$ that extends $\sigma$ and satisfies $\sigma^*(\t)=\t$ and $\sigma^*(\x_i)=\x_i$ for each $i\in\{1,\ldots,n\}$. Let us prove: {\em $\sigma^*(P_h)=P_h$ for each $h\in{\mathfrak H}$}.

As the translation map $G'\to G',\ \tau\mapsto\sigma\circ\tau$ is bijective, we have
$$\textstyle
\sigma^*(P_h)=\prod_{\tau\in G'}(\t-\sigma^*(h^\tau))=\prod_{\tau\in G'}(\t-h^{\sigma\circ\tau})=P_h.
$$
Consequently, $P_h(\t)=\prod_{\tau\in G'}(\t-h^\tau)\in K[\x][\t]$ for each $h\in{\mathfrak H}$. 

As $\widehat{\psi}(\mathfrak{H})=\mathfrak{H}$ for each $\psi\in G$, the coefficients of $P_h$ corresponding to $\t^{d-j}$ for $j\geq1$ belongs to the ideal ${\mathfrak H}\kbar[\x]$ of $\kbar[\x]$, so the (finite) set ${\mathfrak G}$ constituted by all such coefficients is contained in ${\mathfrak H}\kbar[\x]\cap K[\x]\subset(g_1,\ldots,g_r)\kbar[\x]\cap K[\x]$. As $P_h(h)=0$, we have $h^d\in{\mathfrak G}\kbar[\x]$ for each $h\in{\mathfrak H}$. It follows that ${\mathfrak G}\kbar[\x]\subset{\mathfrak H}\kbar[\x]\subset\sqrt{{\mathfrak G}\kbar[\x]}$. Thus, $T=\ZZ_C({\mathfrak H})$ coincides with the $K$-algebraic set $\ZZ_C({\mathfrak G})\subset C^n$.

$(\mr{v})$ As $T\subset C^n$ is a $K$-algebraic set containing $X$, we deduce $\zcl_{C^n}^K(X)\subset T$.

Let $f_1,\ldots,f_\ell\in K[\x]$ be such that $\zcl_{C^n}^K(X)=\ZZ_C(f_1,\ldots,f_\ell)$. If $\psi\in G$ and $j\in\{1,\ldots,\ell\}$, then $0=\psi(f_j(x))=f_j(\psi_n(x))$ for all $x\in X$. Thus, $T=\bigcup_{\psi\in G}\psi_n(X)\subset\zcl_{C^n}^K(X)$, so $T=\zcl_{C^n}^K(X)$. 

As $g_1,\ldots,g_r\in\II_\kbar(X)$, we have ${\mathfrak G}\subset(g_1,\ldots,g_r)\kbar[\x]\cap K[\x]\subset\II_{\kbar}(X)\cap K[\x]=\II_K(X)$, so
\begin{align*}
T=\zcl_{C^n}^K(X)=\ZZ_C(\II_K(X))\subset\ZZ_C((g_1,\ldots,g_r)\kbar[\x]\cap K[\x])\subset\ZZ_C({\mathfrak G})=T,
\end{align*}
so $T=\ZZ_C((g_1,\ldots,g_r)\kbar[\x]\cap K[\x])$.

$(\mr{vi})$ As $T=\zcl_{C^n}^K(X)$, we have $\II_K(T)=\II_K(X)$ so $\dim_K(T)=\dim_K(X)$. Define $\gta:=\sqrt{{\mathfrak G}K[\x]}$. As $T=\ZZ_C({\mathfrak G})=\ZZ_C(\gta)$, we deduce by Corollary \ref{kreliablec} that $\II_C(T)=\gta C[\x]$ and $\II_K(T)=\gta$, so $
\II_C(T)=\II_K(T)C[\x]$. By Corollary \ref{dimkl}, we have $\dim_C(T)=\dim_K(T)$. By $(\mr{ii})$\&$(\mr{ii}')$, $\dim_C(T)=\max_{\sigma\in G'}\{\dim_C(Z^\sigma)\}=\dim_C(X)$, as required.
\end{proof}

\begin{defn}
A set $S\subset C^n$ is \em $G$-invariant \em if $\psi_n(S)=S$ for each $\psi\in G$. $\sqbullet$
\end{defn}

Theorem \ref{thm:gc0} has the following immediate consequence:

\begin{cor}[Complex $K$-algebraicity and $G$-invariance]\label{cor:invariance}
A $\kbar$-algebraic subset of $C^n$ is $K$-algebraic if and only if it is $G$-invariant.
\end{cor}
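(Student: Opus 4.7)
The plan is to prove both implications separately, with Theorem \ref{thm:gc0} doing essentially all the heavy lifting for the nontrivial direction.

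For the forward direction, I would take a $\kbar$-algebraic set $X \subset C^n$ that is $K$-algebraic, so $X = \ZZ_C(F)$ for some $F \subset K[\x]$, and verify $G$-invariance by direct computation. The key observation is that if $f \in K[\x]$ and $\psi \in G$, then $\widehat{\psi}(f) = f$ because $\psi$ fixes the coefficients of $f$ pointwise. Combined with the identity $\psi(f(x)) = \widehat{\psi}(f)(\psi_n(x))$ recalled at the start of the proof of Theorem \ref{thm:gc0}(i), this gives $f(\psi_n(x)) = \psi(f(x)) = 0$ for all $x \in X$ and $f \in F$, so $\psi_n(X) \subset X$. Applying the same argument to $\psi^{-1}$ yields the reverse inclusion, hence $\psi_n(X) = X$.

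For the reverse direction, suppose $X \subset C^n$ is $\kbar$-algebraic and $G$-invariant. Run Algorithm \ref{gc} on $X$ to produce the Galois completion $T$. By Theorem \ref{thm:gc0}(ii), $T = \bigcup_{\psi \in G} \psi_n(X)$, which equals $X$ by $G$-invariance. By Theorem \ref{thm:gc0}(v), $T = \zcl^K_{C^n}(X)$, which is a $K$-algebraic subset of $C^n$ by construction. Therefore $X = \zcl^K_{C^n}(X)$ is $K$-algebraic.

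There is no serious obstacle here: the corollary is essentially a packaging of Theorem \ref{thm:gc0}(ii) and (v) together with the trivial Galois-invariance of polynomials with coefficients in $K$. The only point that deserves care is to make sure the forward direction does not implicitly invoke the algorithm or the $\kbar$-algebraicity hypothesis, since the $G$-invariance of a $K$-algebraic set is elementary and holds without any such assumption.
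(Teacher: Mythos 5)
Your proof is correct and is essentially the approach the paper intends: the paper states the corollary as an immediate consequence of Theorem \ref{thm:gc0}, and your argument simply unpacks this by combining parts (ii) and (v) of that theorem for the nontrivial direction, while the easy direction is the elementary observation that polynomials with coefficients in $K$ are fixed by $\widehat{\psi}$ for every $\psi\in G$. Your final remark that the forward implication needs neither the algorithm nor the $\kbar$-algebraicity hypothesis is a fair and accurate observation.
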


We highlight the following two dimensional consequences of Theorem \ref{thm:gc0}.

\begin{cor}\label{kdim}
If $X\subset C^n$ is a $\kbar$-algebraic set (for instance, a $K$-algebraic set), then $\dim_C(X)=\dim_K(X)$.
\end{cor}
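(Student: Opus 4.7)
The plan is to recognize that this corollary is essentially an immediate repackaging of part (vi) of Theorem \ref{thm:gc0}. By hypothesis $X \subset C^n$ is $\kbar$-algebraic (and this includes the parenthetical case of $K$-algebraic sets, since $K \subset \kbar$ ensures that every $K$-algebraic subset of $C^n$ is a fortiori $\kbar$-algebraic). Thus the input requirement of Algorithm \ref{gc} is met, and I may run the algorithm on $X$ to produce the associated set $T \subset C^n$.

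Next I would simply invoke Theorem \ref{thm:gc0}(vi), which supplies the full chain
\[
\dim_C(X) = \dim_C(T) = \dim_K(T) = \dim_K(X),
\]
and in particular yields the outer equality that is the content of the corollary.

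For clarity of exposition, I would briefly recall where each equality in the chain comes from, so that the reader does not have to reopen the proof of Theorem \ref{thm:gc0}. The equality $\dim_C(X) = \dim_C(T)$ comes from $T = \bigcup_{\sigma \in G'} Z^\sigma$ (Theorem \ref{thm:gc0}(ii)) together with the fact that each Galois transform $Z^\sigma$ has the same $C$-dimension as $X$ (Theorem \ref{thm:gc0}(ii$'$)), combined with the behavior of dimension under finite unions. The middle equality $\dim_C(T) = \dim_K(T)$ is obtained from $\II_C(T) = \II_K(T)C[\x]$ (a consequence of Corollary \ref{kreliablec} applied to the radical ideal $\sqrt{\mathfrak{G}K[\x]}$) and Corollary \ref{dimkl}. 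Finally, $\dim_K(T) = \dim_K(X)$ follows from $T = \zcl^K_{C^n}(X)$ (Theorem \ref{thm:gc0}(v)), since the $K$-Zariski closure preserves $\II_K$.

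There is essentially no main obstacle at this point: the heavy lifting was done in the proof of Theorem \ref{thm:gc0}, and the only minor care required is to note that the two classes of sets appearing in the statement (namely $\kbar$-algebraic, and the special case of $K$-algebraic) both satisfy the hypothesis of the theorem. The result should therefore be presented as a one-line deduction, perhaps with a parenthetical pointer to the relevant sub-equalities inside Theorem \ref{thm:gc0}(vi) for the convenience of the reader.
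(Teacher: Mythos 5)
Your proof is correct and matches the paper's approach exactly: the paper states this corollary without proof, presenting it as an immediate consequence of Theorem~\ref{thm:gc0}(vi), which is precisely what you invoke. Your additional unpacking of where each equality in the chain comes from is accurate and helpful but optional, since the theorem already supplies the full chain.
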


\begin{cor}\label{dimalg}
Let $L|K$ be an algebraic extension and let $X\subset L^n$ be an algebraic set. Then $\dim_L(X)=\dim_K(X)$.
\end{cor}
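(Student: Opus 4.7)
My plan is to reduce to Corollary \ref{kdim} by passing to a common algebraic closure. First I would introduce $C$, the algebraic closure of $L$; because $L|K$ is algebraic, $C$ is simultaneously the algebraic closure $\kbar$ of $K$. Then I would form the Zariski closure $Y:=\zcl_{C^n}(X)\subset C^n$, which will serve as the bridge between the $L$-picture and the $K$-picture.

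Next I would invoke Proposition \ref{prop:zar} for the extension $C|L$ applied to the algebraic set $X\subset L^n$: item (i) guarantees that $Y$ is $L$-algebraic in $C^n$ (its $C$-vanishing ideal is generated by polynomials of $L[\x]$, namely by any finite set of generators of $\II_L(X)$), while item (iv) supplies the equality $\dim_C(Y)=\dim_L(X)$. Since $L\subset\kbar=C$, those same generators show that $Y$ is \emph{a fortiori} a $\kbar$-algebraic subset of $C^n$, so Corollary \ref{kdim} applies and yields $\dim_C(Y)=\dim_K(Y)$.

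To close the argument I need the identity $\dim_K(Y)=\dim_K(X)$, which follows from $\II_K(Y)=\II_K(X)$. The inclusion $\II_K(Y)\subset\II_K(X)$ is immediate from $X\subset Y$; conversely, any $f\in K[\x]\subset C[\x]$ vanishing on $X$ must vanish on the $C$-Zariski closure $Y$ of $X$, giving the opposite inclusion. Chaining the three equalities
\[
\dim_L(X)\;=\;\dim_C(Y)\;=\;\dim_K(Y)\;=\;\dim_K(X)
\]
yields the claim. There is no genuine obstacle here; the crux is the conceptual observation that the algebraicity of $L|K$ forces the algebraic closure of $L$ to coincide with $\kbar$, which is precisely what puts Corollary \ref{kdim} at our disposal after the routine passage to $Y$.
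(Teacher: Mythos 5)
Your proposal is correct and follows essentially the same route as the paper: both note that $\overline{L}=\kbar$ since $L|K$ is algebraic, pass to the Zariski closure $Y$ of $X$ in $\kbar^n$, apply Proposition \ref{prop:zar}(iv) for $C|L$ to get $\dim_C(Y)=\dim_L(X)$, apply Corollary \ref{kdim} for $C|K$ to get $\dim_C(Y)=\dim_K(Y)$, and finish with $\II_K(Y)=\II_K(X)$. The only cosmetic difference is that the paper deduces $\II_K(Z)=\II_K(X)$ from the chain of inclusions $X\subset Z\subset\zcl^K_{\kbar^n}(X)$, whereas you argue the two inclusions directly; these are interchangeable.
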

\begin{proof}
First observe that $\ol{L}=\kbar$. Let $Z$ be the Zariski closure of $X$ in $\ol{L}^n=\kbar^n$. By Proposition \ref{prop:zar}$(\mr{iv})$, we have $\dim_L(X)=\dim_\kbar(Z)$. As $Z\subset\kbar^n$ is a $\kbar$-algebraic set, Corollary \ref{kdim} assures that $\dim_\kbar(Z)=\dim_K(Z)$. As $X\subset Z\subset\zcl_{\kbar^n}^K(X)\subset\kbar^n$, we have $\II_K(Z)=\II_K(X)$, so $\dim_K(Z)=\dim_K(X)$. We conclude $\dim_L(X)=\dim_\kbar(Z)=\dim_K(Z)=\dim_K(X)$, as required.
\end{proof}

\subsubsection{Simultaneous Galois completion}\label{sgc}
Observe that Algorithm \ref{gc} can be applied simultaneously to finitely many $\kbar$-algebraic sets $X_1,\ldots,X_s\subset C^n$. To that end, we take a finite Galois extension $E|K$ that contains the coefficients of polynomials $\{g_{j1},\ldots,g_{jr_j}\}_{j\in\{1,\ldots,s\}}\subset\kbar[\x]$ such that $X_j=\ZZ_C(g_{j1},\ldots,g_{jr_j})$ for each $j\in\{1,\ldots,s\}$. Thus, the previous Algorithm \ref{gc} provides us $T_1:=\zcl_{C^n}^K(X_1),\ldots,T_s:=\zcl_{C^n}^K(X_s)$ simultaneously.

\subsubsection{Galois presentation of a complex $K$-algebraic set}\label{gp0}
Recall that $C|K$ is an extension of fields such that $C$ is algebraically closed. Let $X\subset C^n$ be a $K$-algebraic set. We want to find a `minimal' algebraic set $Y\subset C^n$ whose $K$-Zariski closure is $X$. 
 
\begin{lem}\label{lem:gp0}
Let $X\subset C^n$ be a $K$-irreducible $K$-algebraic set and let $Y\subset C^n$ be a $C$-irredu\-cible component of $X$. We have:
\begin{itemize}
\item[$(\mr{i})$] $Y\subset C^n$ is a $\kbar$-irreducible component of $X$.
\item[$(\mr{ii})$] Apply Algorithm \emph{\ref{gc}} to $Y\subset C^n$. Choose polynomials $g_1,\ldots,g_r\in\kbar[\x]$ such that $Y=\ZZ_C(g_1,\ldots,g_r)$ and a finite Galois subextension $E|K$ of $\kbar|K$ that contains all the coefficients of $g_1,\ldots,g_r$. Consider the finite Galois group $G':=G(E:K)$, the family $\{Z^\sigma:=\ZZ_C(g_1^\sigma,\ldots,g_r^\sigma)\}_{\sigma\in G'}$ of algebraic subsets of $C^n$ and the Galois completion $T:=\bigcup_{\sigma\in G'}Z^\sigma$ of $Y$. Then $T=\zcl_{C^n}^K(Y)=X$.
\item[$(\mr{iii})$] The family $\{Z^\sigma\}_{\sigma\in G'}$ coincides with the one of all $C$-irreducible components of $X$. Moreover, $\dim_C(Z^\sigma)=\dim_C(X)$ for each $\sigma\in G'$.
\end{itemize}
\end{lem}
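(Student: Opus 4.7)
The plan is to treat the three parts in order, using Theorem \ref{thm:gc0} together with the $G$-invariance characterization of Corollary \ref{cor:invariance} as the main engines.

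For part $(\mr{i})$, the idea is just to pass the $C$-irreducible decomposition through the algebraically closed extension $C|\kbar$. Since $X \subset C^n$ is $K$-algebraic and $K \subset \kbar$, it is in particular $\kbar$-algebraic. Applying Corollary \ref{inter}$(\mr{iii})$ to the extension $C|\kbar$ of algebraically closed fields gives that the $C$-irreducible components of $X$ coincide with its $\kbar$-irreducible components; in particular $Y$ is a $\kbar$-irreducible component of $X$, so Algorithm \ref{gc} applies to~$Y$ and the notation $T$, $Z^\sigma$, $G'$ makes sense.

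For part $(\mr{ii})$, I plan to establish the two inclusions $T \subset X$ and $X \subset T$ separately. For `$\subset$', I use Theorem \ref{thm:gc0}$(\mr{v})$ to identify $T = \zcl_{C^n}^K(Y)$, together with the fact that $X$ is $K$-Zariski closed and contains $Y$. For the reverse inclusion, write $X = Y_1 \cup \cdots \cup Y_s$ as its $C$-irreducible (equivalently, by $(\mr{i})$, $\kbar$-irreducible) decomposition, and take $Y = Y_1$. Because $X$ is $K$-algebraic, Corollary \ref{cor:invariance} makes $X$ $G$-invariant, and since each $\psi_n$ is induced by a field automorphism of $C$ it preserves $C$-irreducibility; consequently $G$ permutes the set $\{Y_1,\ldots,Y_s\}$. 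Let $O \subset \{1,\ldots,s\}$ be the indices in the $G$-orbit of $1$. By Theorem \ref{thm:gc0}$(\mr{ii})$, $T = \bigcup_{\psi \in G}\psi_n(Y) = \bigcup_{i \in O} Y_i$. If $O \neq \{1,\ldots,s\}$, set $T' := \bigcup_{i \notin O} Y_i$; by the same argument $T'$ is $G$-invariant and $\kbar$-algebraic, hence $K$-algebraic by Corollary \ref{cor:invariance}. Then $X = T \cup T'$ with both $T,T' \subsetneq X$, contradicting the $K$-irreducibility of $X$. Therefore $O = \{1,\ldots,s\}$ and $T = X$.

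For part $(\mr{iii})$, everything follows from part $(\mr{ii})$ and Theorem \ref{thm:gc0}. By Theorem \ref{thm:gc0}$(\mr{ii})$, $\{Z^\sigma : \sigma \in G'\} = \{\psi_n(Y) : \psi \in G\}$, and the argument of $(\mr{ii})$ just showed this set (after eliminating repetitions) equals $\{Y_1,\ldots,Y_s\}$, the family of all $C$-irreducible components of $X$. The dimension statement is immediate from Theorem \ref{thm:gc0}$(\mr{ii}')$, which gives $\dim_C(Z^\sigma) = \dim_C(Y)$ for every $\sigma$; since all $Y_i$ are $G$-translates of $Y$ they share the common dimension $\dim_C(Y)$, and as $\dim_C(X) = \max_i \dim_C(Y_i)$, we conclude $\dim_C(Z^\sigma) = \dim_C(X)$.

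The only mildly delicate step is the transitivity argument inside part $(\mr{ii})$: the potential obstacle is guaranteeing that a nontrivial partition of the $C$-irreducible components into $G$-orbits really does produce two $K$-algebraic pieces whose union is $X$. This is handled cleanly by using $G$-invariance of each union of orbits together with Corollary \ref{cor:invariance} applied to the already-$\kbar$-algebraic set $T'$, so no additional machinery is needed.
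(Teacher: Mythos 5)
Your proof is correct, but it takes a genuinely different route from the paper's.

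The paper proves $(\mr{ii})$ and $(\mr{iii})$ together by a dimension argument: it first picks a $C$-irreducible component $Y_1$ of $X$ of \emph{maximal} dimension $d=\dim_C(X)$, applies Theorem~\ref{thm:gc0}$(\mr{v})(\mr{vi})$ to get $T_1=\zcl_{C^n}^K(Y_1)\subset X$ with $\dim_K(T_1)=d$, uses Corollary~\ref{kdim} to see $\dim_K(X)=d$ as well, and then invokes Lemma~\ref{dimirred} to force $T_1=X$. From Theorem~\ref{thm:gc0}$(\mr{ii}')$ it then reads off that all $C$-irreducible components of $X$ have dimension $d$, and only at this point can it circle back and run the same argument with $Y_1$ replaced by the originally given $Y$. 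Your proof instead argues directly on $Y$ via the Galois-orbit structure: $X$ is $G$-invariant, $G$ permutes the $C$-irreducible components, and if the orbit of $Y$ were not all of them, the complementary union $T'$ would be a $G$-invariant $\kbar$-algebraic proper subset, hence $K$-algebraic by Corollary~\ref{cor:invariance}, yielding a decomposition $X=T\cup T'$ that contradicts $K$-irreducibility. The two approaches ultimately lean on the same engine (Theorem~\ref{thm:gc0}), but yours obtains transitivity of the Galois action as the structural reason for $T=X$ and thereby avoids both the detour through the maximal-dimensional component and the dimension bookkeeping via Corollary~\ref{kdim} and Lemma~\ref{dimirred}; the paper's version avoids any explicit reference to orbits and works purely with Krull-dimension comparisons. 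Both are equally rigorous; yours is arguably the more transparent explanation of \emph{why} the $C$-irreducible components of a $K$-irreducible set form a single Galois orbit.
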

\begin{proof}
$(\mr{i})$ As $X\subset C^n$ is $K$-algebraic, it is also $\kbar$-algebraic so $Y\subset C^n$ is a $\kbar$-irreducible component of $X$ by Corollary \ref{inter}$(\mr{iii})$. In particular, as $Y\subset C^n$ is $\kbar$-algebraic, we can apply Algorithm \ref{gc} to $Y$.

$(\mr{ii})\ \&\ (\mr{iii})$ Let $Y_1$ be a $C$-irreducible component of $X$ of dimension $d:=\dim_C(X)$. By $(\mr{i})$, $Y_1\subset C^n$ is a $\kbar$-irreducible $\kbar$-algebraic set. In particular, we can apply Algorithm \ref{gc} to $Y_1$ and obtain: a finite Galois group $G'_1$, a family $\{Z^\sigma_1\}_{\sigma\in G'_1}$ of algebraic subsets of $C^n$ and the Galois completion $T_1$ of $Y_1$. By Theorem \ref{thm:gc0}$(\mr{v})(\mr{vi})$, $T_1=\zcl_{C^n}^K(Y_1)\subset X$ and $d=\dim_C(Y_1)=\dim_K(T_1)$. Corollary \ref{kdim} implies that $d=\dim_K(X)$, so $\dim_K(T_1)=\dim_K(X)$ and Lemma \ref{dimirred} implies that $T_1=X$. By Theorem \ref{thm:gc0}$(\mr{ii}')$, $\{Z^\sigma_1\}_{\sigma\in G'_1}$ is the family of all $C$-irreducible components of $X$ and all such components have the same dimension $d$. In particular, $Y\subset C^n$ has dimension $d$, so we can apply the preceding argument for $Y_1:=Y$.
\end{proof}

\begin{cor}\label{cho}
Let $X\subset C^n$ be a $K$-reducible $K$-algebraic set and let $X_1,\ldots,X_s$ be the $K$-irreducible components of $X$. 
Choose any index $i\in\{1,\ldots,s\}$ and a $C$-irreducible component $Y$ of $X_i\subset C^n$. Then $Y\not\subset\bigcup_{j\in\{1,\ldots,s\}\setminus\{i\}}X_j$.
\end{cor}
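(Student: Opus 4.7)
The plan is to argue by contradiction, combining Lemma \ref{inclusion} (used in the standard $C|C$-Zariski setting) with the identification of $X_i$ as the $K$-Zariski closure of $Y$ provided by Lemma \ref{lem:gp0}. Suppose $Y\subset\bigcup_{j\in\{1,\ldots,s\}\setminus\{i\}}X_j$. Since every $K$-algebraic subset of $C^n$ is in particular $C$-algebraic, each $X_j$ is closed in the usual Zariski topology of $C^n$, and the chosen $Y$, being a $C$-irreducible component of $X_i$, is irreducible in that same topology. Lemma \ref{inclusion} (with both $L$ and the ``subfield'' taken to be $C$) therefore yields an index $j_0\in\{1,\ldots,s\}\setminus\{i\}$ with $Y\subset X_{j_0}$.

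Next, I would pass to $K$-Zariski closures in $C^n$. On the one hand, since $X_{j_0}$ is $K$-algebraic, $\zcl_{C^n}^K(X_{j_0})=X_{j_0}$. On the other hand, since $X_i$ is $K$-irreducible and $K$-algebraic and $Y$ is a $C$-irreducible component of $X_i$, Lemma \ref{lem:gp0}(ii) applies and gives $\zcl_{C^n}^K(Y)=X_i$. Taking $K$-Zariski closures of the inclusion $Y\subset X_{j_0}$ therefore produces $X_i\subset X_{j_0}$ with $j_0\neq i$.

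This last inclusion contradicts the defining property of $K$-irreducible components stated in Lemma \ref{lem:irred}: the minimality condition $X_i\not\subset\bigcup_{j\neq i}X_j$, together with Lemma \ref{inclusion} applied in the $C|K$-setting (using that $X_i$ is $K$-irreducible and the $X_j$ are $K$-algebraic), is equivalent to $X_i\not\subset X_j$ for every $j\neq i$. The only subtlety worth flagging is the dual use of Lemma \ref{inclusion}: first in its $C|C$-form, exploiting the $C$-irreducibility of $Y$, to extract a single $X_{j_0}$ containing $Y$; then implicitly again, in its $C|K$-form, to rephrase the minimality of the $K$-irreducible decomposition. Once these two appearances are kept separate, the argument is short and essentially mechanical.
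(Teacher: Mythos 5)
Your proof is correct and relies on the same key step as the paper: invoking Lemma \ref{lem:gp0}(ii) to obtain $\zcl_{C^n}^K(Y)=X_i$ and then contradicting the minimality of the $K$-irreducible decomposition. The extra detour through Lemma \ref{inclusion} in the $C|C$-setting to pin down a single $X_{j_0}$ is harmless but unnecessary, since the paper applies the $K$-Zariski closure directly to the assumed inclusion $Y\subset\bigcup_{j\neq i}X_j$, the right-hand side being already $K$-Zariski closed.
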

\begin{proof}
If $Y\subset\bigcup_{j\in\{1,\ldots,s\}\setminus\{i\}}X_j$, then Lemma \ref{lem:gp0}$(\mr{ii})$ would imply that $X_i=\zcl_{C^n}^K(Y)\subset\bigcup_{j\in\{1,\ldots,s\}\setminus\{i\}}X_j$, which is a contradiction.
\end{proof}

We introduce now the concept of Galois presentation of a $K$-algebraic subset of $C^n$.

\begin{defn}\label{def:gp0}
Let $X\subset C^n$ be a $K$-algebraic set and let $(X_1,\ldots,X_s)$ be the $K$-irreducible components of $X$ listed in some order. Choose a $C$-irreducible component $Y_i$ of $X_i$ for each $i\in\{1,\ldots,s\}$. Apply Algorithm \ref{gc} simultaneously to $Y_1,\ldots,Y_s$ and obtain: a finite Galois group $G'$ and finite families $\{Z_1^\sigma\}_{\sigma\in G'},\ldots,\{Z_s^\sigma\}_{\sigma\in G'}$ of algebraic subsets of $C^n$ (such that $X_i=\bigcup_{\sigma\in G'}Z_i^\sigma$ for each $i\in\{1,\ldots,s\}$ by Lemma \ref{lem:gp0}$(\mr{ii})$). We call the tuple
$$
(Y_1,\ldots,Y_s;G';\{Z_1^\sigma\}_{\sigma\in G'},\ldots,\{Z_s^\sigma\}_{\sigma\in G'})
$$
a \emph{Galois presentation of $X\subset C^n$} and $(Y_1,\ldots,Y_s)$ the \emph{start} of such a presentation. For simplicity, we say that $X=\bigcup_{i=1}^s\bigcup_{\sigma\in G'}Z_i^\sigma$ is a Galois presentation of $X\subset C^n$ with start $(Y_1,\ldots,Y_s):=(Z_1^e,\ldots,Z_s^e)$, where $e$ is the identity of $G'$. $\sqbullet$
\end{defn}

\subsubsection{Complex clustering phenomenon} We present two more consequences of Lemma~\ref{lem:gp0}.

\begin{lem}\label{lem:238}
Let $X\subset C^n$ be a $K$-irreducible $K$-algebraic set, let $C|E|K$ be an extension of fields, let $\gtp_1,\ldots,\gtp_t\subset E[\x]$ be the minimal prime ideals of $\II_E(X)$ and let $W_i:=\ZZ_C(\gtp_i)\subset C^n$ for each $i\in\{1,\ldots,t\}$. We have:
\begin{itemize}
\item[$(\mr{i})$] $\II_E(X)=\II_K(X)E[\x]$.
\item[$(\mr{ii})$] $W_1,\ldots,W_t$ are the $E$-irreducible components of $X\subset C^n$.
\item[$(\mr{iii})$] Let $Y_1,\ldots,Y_s$ be the $C$-irreducible components of $X\subset C^n$. Then $t\leq s$ and there exists an injective map $\eta:\{1,\ldots,t\}\to\{1,\ldots,s\}$ such that $Y_{\eta(i)}$ is a $C$-irreducible component of $W_i\subset C^n$ for each $i\in\{1,\ldots,t\}$.
\item[$(\mr{iv})$] $\dim_E(W_i)=\dim_C(W_i)=\dim_C(X)=\dim_K(X)$ for each $i\in\{1,\ldots,t\}$.
\item[$(\mr{v})$] $\II_E(W_i)=\gtp_i$ and $\hgt(\gtp_i)=\hgt(\II_K(X))$ for each $i\in\{1,\ldots,t\}$.
\end{itemize}
\end{lem}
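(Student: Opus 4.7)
Part (i) is the main engine; from it (ii) and (v) follow, then (iv), and finally (iii).

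\emph{Parts} (i) \emph{and} (ii). Since $X$ is $K$-algebraic, $X$ is its own $K$-Zariski closure, so Theorem~\ref{thm:gc0}(vi) yields $\II_C(X)=\II_K(X)C[\x]$. Intersecting both sides with $E[\x]$ and applying Corollary~\ref{k} to the extension $C|E$ (with $\gta:=\II_K(X)E[\x]$) collapses $\II_C(X)\cap E[\x]$ to $\II_K(X)E[\x]$, proving (i). Next, $X$ is $E$-algebraic because $K\subset E$, so the $C|E$-version of Corollary~\ref{kreliablec} gives
\[
X=\ZZ_C(\II_E(X))=\ZZ_C\!\Bigl(\bigcap_{i=1}^t\gtp_i\Bigr)=\bigcup_{i=1}^tW_i,
\]
each $W_i$ is $E$-irreducible because $\II_E(W_i)=\gtp_i$ is prime (by the same Nullstellensatz, which already yields the first half of (v)), and no $W_i$ contains another since the $\gtp_i$ are pairwise distinct minimal primes. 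Thus the $W_i$ are precisely the $E$-irreducible components of $X\subset C^n$.

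\emph{Parts} (v) \emph{and} (iv). It remains to prove $\hgt(\gtp_i)=\hgt(\II_K(X))$. Set $d:=\dim_K(X)$; since $\II_K(X)$ is prime, $A:=K[\x]/\II_K(X)$ is a $K$-domain of Krull dimension $d$, and a Noether normalization $K[\y_1,\ldots,\y_d]\hookrightarrow A$ (finite and injective) tensors over $K$ with the faithfully flat extension $E|K$ to a finite injection $E[\y_1,\ldots,\y_d]\hookrightarrow A\otimes_KE\cong E[\x]/\II_E(X)$, where the last isomorphism uses (i). By incomparability for integral extensions, every minimal prime $\gtp_i/\II_E(X)$ of $A\otimes_KE$ contracts to a minimal prime of $E[\y_1,\ldots,\y_d]$, hence to $(0)$, so $E[\y_1,\ldots,\y_d]$ embeds as a finite subring of the domain $E[\x]/\gtp_i$, forcing $\dim(E[\x]/\gtp_i)=d$ and thus $\hgt(\gtp_i)=n-d=\hgt(\II_K(X))$. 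Consequently $\dim_E(W_i)=n-\hgt(\gtp_i)=d=\dim_K(X)$, Corollary~\ref{kdim} supplies $\dim_C(X)=\dim_K(X)$, and the identities $\II_C(W_i)=\gtp_iC[\x]=\II_E(W_i)C[\x]$ (from the Nullstellensatz and the first half of (v)) allow Corollary~\ref{dimkl} applied over $E$ to deliver $\dim_C(W_i)=\dim_E(W_i)$, completing (iv).

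\emph{Part} (iii). For each $j\in\{1,\ldots,s\}$, set $V_j:=\zcl_{C^n}^E(Y_j)$; applying Lemma~\ref{lem:irreducibility} with $E$ in place of $K$ and $C$ in place of $L$ makes $V_j$ $E$-irreducible, and clearly $\II_E(V_j)=\II_E(Y_j)$. By Lemma~\ref{lem:gp0}(i), $Y_j$ is $\kbar$-algebraic, hence algebraic over the algebraic closure of $E$ in $C$ (since $\kbar$ is contained therein); Corollary~\ref{kdim} applied with base field $E$ then yields $\dim_C(Y_j)=\dim_E(Y_j)$, while Lemma~\ref{lem:gp0}(iii) gives $\dim_C(Y_j)=\dim_C(X)=d$. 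Hence $\dim_E(V_j)=\dim_E(Y_j)=d=\dim_E(W_i)$ for every $i$. Since $V_j$ is $E$-irreducible and $V_j\subset X=\bigcup_iW_i$, Lemma~\ref{inclusion} puts $V_j\subset W_i$ for some $i$, and matching $E$-dimensions combined with Lemma~\ref{dimirred} force $V_j=W_i$, so the index $i=i(j)$ is uniquely determined by $j$. Conversely, each nonempty $W_i$ has $C$-irreducible components, which, being maximal $C$-irreducible subsets of $W_i\subset X$, must lie among $\{Y_1,\ldots,Y_s\}$; so each fiber $\{j:i(j)=i\}$ is nonempty. Picking $\eta(i)$ in the $i$-th fiber produces an injection $\eta:\{1,\ldots,t\}\to\{1,\ldots,s\}$ with $Y_{\eta(i)}$ a $C$-irreducible component of $W_i$, and in particular $t\le s$. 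The delicate step is the equidimensionality claim in (v): general flatness does not guarantee that every minimal prime of $\gtp E[\x]$ retains the height of $\gtp$ when $E|K$ is transcendental, and the Noether-normalization-plus-integrality argument is the essential tool that circumvents this.
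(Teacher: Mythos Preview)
Your argument for~(v) has a genuine gap. You write: ``By incomparability for integral extensions, every minimal prime $\gtp_i/\II_E(X)$ of $A\otimes_KE$ contracts to a minimal prime of $E[\y_1,\ldots,\y_d]$, hence to $(0)$.'' Incomparability does not yield this. For a finite injective extension $B\hookrightarrow B'$ with $B$ a normal domain, going-down holds only when $B'$ is itself a domain (or at least torsion-free over $B$), and $A\otimes_KE$ need not be a domain --- that is exactly why there are several $\gtp_i$. A concrete counterexample to the bare claim: $k[x]\hookrightarrow k[x,y]/(y^2-y,xy)\cong k[x]\times k$ is finite and injective, yet the minimal prime $(y-1,x)$ contracts to $(x)\neq(0)$. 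So the Noether-normalization-plus-integrality step, as written, does not establish $\gtq_i\cap E[\y]=(0)$, and the chain~(v)$\Rightarrow$(iv)$\Rightarrow$(iii) collapses.

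Ironically, the tool you dismiss in your final sentence is the one that works cleanly. The extension $K[\x]\to E[\x]$ is faithfully flat, so going-down holds and each $\gtp_i$ contracts to $\II_K(X)$ in $K[\x]$ (the unique minimal prime over $\II_K(X)E[\x]\cap K[\x]=\II_K(X)$). The dimension formula for the flat local map $K[\x]_{\II_K(X)}\to E[\x]_{\gtp_i}$ then gives $\hgt(\gtp_i)=\hgt(\II_K(X))+\dim\bigl(E[\x]_{\gtp_i}/\II_K(X)E[\x]_{\gtp_i}\bigr)$, and the fiber is zero-dimensional precisely because $\gtp_i$ is minimal over $\II_K(X)E[\x]$. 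This proves~(v) directly, after which your deductions of~(iv) and~(iii) go through (though in~(iii) you should say explicitly why every $C$-irreducible component of $W_i$ is one of the $Y_j$: apply Lemma~\ref{lem:gp0}(iii) to the $E$-irreducible set $W_i$ to see its $C$-components are equidimensional of dimension $d$, hence equal to the $Y_j$ they meet). For comparison, the paper avoids the height computation altogether by running the logic in the opposite order: it establishes~(iii) first via Lemma~\ref{lem:gp0}, uses the resulting inclusion $Y_{\eta(i)}\subset W_i\subset X$ together with $\dim_C(Y_{\eta(i)})=\dim_C(X)$ to get~(iv), and reads off~(v) from~\eqref{eisenbud}.
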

\begin{proof}
By Corollary \ref{kreliablec}, we have $\II_C(X)=\II_K(X)C[\x]$. Corollary \ref{k} implies
$$
\II_E(X)=\II_C(X)\cap E[\x]=\II_K(X)C[\x]\cap E[\x]=(\II_K(X)E[\x])C[\x]\cap E[\x]=\II_K(X)E[\x].
$$
As $\gtp_i$ is a prime ideal of $E[\x]$, $\gtp_i$ is also a radical ideal of $E[\x]$, so $\II_C(W_i)=\gtp_iC[\x]$ and $\II_E(W_i)=\II_C(W_i)\cap E[\x]=\gtp_iC[\x]\cap E[\x]=\gtp_i$. Moreover, the $E$-algebraic set $W_i\subset C^n$ is $E$-irreducible. As $ \II_E(X)\subset E[\x]$ is radical and $X\subset C^n$ is $E$-algebraic, it follows that $\II_E(X)=\bigcap_{i=1}^t\gtp_i$ and $X=\ZZ_C(\II_E(X))=\bigcup_{i=1}^tW_i$. Observe that $W_i\not\subset W_j$ for each $i,j\in\{1,\ldots,t\}$ with $i\neq j$. Otherwise, $W_i\subset W_j$ and $\gtp_i=\II_E(W_i)\supset\II_E(W_j)=\gtp_j$, which is a contradiction. This proves that $W_1,\ldots,W_t$ are the $E$-irreducible components of $X\subset C^n$.

For each $i\in\{1,\ldots,t\}$, let $W_{i,1},\ldots,W_{i,T_i}$ be the $C$-irreducible components of $W_i\subset C^n$. As $\bigcup_{i=1}^t\bigcup_{\ell=1}^{T_i}W_{i,\ell}=X$, by the uniqueness of the decomposition into $C$-irreducible components of $X=\bigcup_{j=1}^sY_j$, we have that $\{Y_1,\ldots,Y_s\}\subset\bigcup_{i=1}^t\{W_{i,1},\ldots,W_{i,T_i}\}$.

Let us prove: \emph{for each $i\in\{1,\ldots,t\}$, there exist $j_i\in\{1,\ldots,s\}$ and $\ell_i\in\{1,\ldots,T_i\}$ such that $W_{i,\ell_i}=Y_{j_i}$}. Suppose this is not true. Then there exists $I\in\{1,\ldots,t\}$ such that $\{Y_1,\ldots,Y_s\}\cap\{W_{I,1},\ldots,W_{I,t_I}\}=\varnothing$, so $\{Y_1,\ldots,Y_s\}\subset\bigcup_{i\in\{1,\ldots,t\}\setminus\{I\}}\{W_{i,1},\ldots,W_{i,T_i}\}$ and
$$
\textstyle
X=\bigcup_{j=1}^sY_j\subset\bigcup_{i\in\{1,\ldots,t\}\setminus\{I\}}\bigcup_{\ell=1}^{T_i}W_{i,\ell}=\bigcup_{i\in\{1,\ldots,t\}\setminus\{I\}}W_i\subsetneqq X,
$$
which is a contradiction.

Define the function $\eta:\{1,\ldots,t\}\to\{1,\ldots,s\}$ as $\eta(i):=j_i$. Observe that $\eta$ is injective: indeed, if $i$ and $i'$ are indexes in $\{1,\ldots,t\}$ with $j_i=j_{i'}$, then $W_{i,\ell_i}=Y_{j_i}=Y_{j_{i'}}=W_{i',\ell_{i'}}$ for some $\ell_i\in\{1,\ldots,T_i\}$ and $\ell_{i'}\in\{1,\ldots,T_{i'}\}$, and Lemma \ref{lem:gp0}$(\mr{ii})$ implies that $W_i=\zcl_{C^n}^E(W_{i,\ell_i})=\zcl_{C^n}^E(W_{i',\ell_{i'}})=W_{i'}$, so $i=i'$.

By Lemma \ref{lem:gp0}$(\mr{iii})$, we have $\dim_C(W_{i,\ell_i})=\dim_C(Y_{j_i})=\dim_C(X)$ for each $i\in\{1,\ldots,t\}$. As $W_{i,\ell_i}\subset W_i\subset X$, it follows $\dim_C(W_i)=\dim_C(X)$. By Corollary \ref{kdim}, $\dim_C(W_i)=\dim_E(W_i)$ and $\dim_C(X)=\dim_K(X)$. Thus, $\dim_E(W_i)=\dim_C(W_i)=\dim_C(X)=\dim_K(X)$. Consequently, by \eqref{eisenbud}, $\hgt(\gtp_i)=\hgt(\II_E(W_i))=n-\dim_E(W_i)=n-\dim_K(X)=\hgt(\II_K(X))$, as required.
\end{proof}

\begin{thm}[{$C|E|K$-clustering phenomenon}] \label{thm:238}
Let $X\subset C^n$ be a $K$-irreducible $K$-algebraic set, let $Y_1,\ldots,Y_s$ be the $C$-irreducible components of $X\subset C^n$, let $C|E|K$ be an extension of fields and let $W_1,\ldots,W_t$ be the $E$-irreducible components of $X\subset C^n$. Then $t\leq s$ and there exists a partition of $\{1,\ldots,s\}=J_1\sqcup\cdots\sqcup J_t$ such that $\{Y_j:j\in J_i\}$ is the family of $C$-irreducible components of $W_i\subset C^n$ for each $i\in\{1,\ldots,t\}$.
\end{thm}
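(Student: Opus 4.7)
The plan is to build on Lemma \ref{lem:238}, which already carries most of the structural information: the $W_i$'s are the $E$-irreducible components with $\dim_C(W_i) = \dim_K(X)$, there is an injection $\eta:\{1,\ldots,t\}\to\{1,\ldots,s\}$ with $Y_{\eta(i)}$ a $C$-irreducible component of $W_i$, and the proof of that lemma establishes the key mid-step observation that every $C$-irreducible component of every $W_i$ must coincide with some $Y_k$ (otherwise the union of all $C$-irreducible components of the $W_i$'s would be strictly smaller than $X$). My proof will essentially upgrade $\eta$ to a partition of $\{1,\ldots,s\}$.

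First, I would define a map $\phi:\{1,\ldots,s\}\to\{1,\ldots,t\}$ by $\phi(j):=i$ whenever $Y_j\subset W_i$. For existence of such $i$, note that each $W_i$ is $E$-algebraic (hence $C$-algebraic), so $Y_j\subset X=\bigcup_{i=1}^tW_i$ with $W_i$ being $C$-Zariski closed; since $Y_j$ is $C$-irreducible, Lemma \ref{inclusion} produces at least one $i$ with $Y_j\subset W_i$. Set $J_i:=\phi^{-1}(i)$.

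The crux is uniqueness of $\phi(j)$ and the identification $\{Y_j:j\in J_i\}=\{C\text{-irreducible components of }W_i\}$. Suppose $Y_j\subset W_i$. The $C$-irreducible components of $W_i$ are, by the mid-proof observation from Lemma \ref{lem:238}, themselves of the form $Y_k$, and $Y_j$ must be contained in one of them; by maximality of the $C$-irreducible components of $X$ this forces $Y_j=Y_k$, so $Y_j$ is itself a $C$-irreducible component of $W_i$. Now apply Lemma \ref{lem:gp0}(ii) to the $E$-irreducible $E$-algebraic set $W_i$ and its $C$-irreducible component $Y_j$: this gives $\zcl^E_{C^n}(Y_j)=W_i$. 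If $Y_j$ were also contained in some $W_{i'}$ with $i'\neq i$, the same reasoning would yield $\zcl^E_{C^n}(Y_j)=W_{i'}$, hence $W_i=W_{i'}$, a contradiction. This shows $\phi$ is well-defined and that every $Y_j$ with $j\in J_i$ is a $C$-irreducible component of $W_i$. The reverse inclusion is the mid-proof observation itself (every $C$-irreducible component of $W_i$ is some $Y_k$, and then $Y_k\subset W_i$ forces $k\in J_i$). In particular each $J_i$ is nonempty, so $\{1,\ldots,s\}=J_1\sqcup\cdots\sqcup J_t$, which together with $t\leq s$ from Lemma \ref{lem:238}(iii) completes the proof.

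The main obstacle I foresee is precisely the uniqueness of $\phi(j)$: bare containment $Y_j\subset W_i\cap W_{i'}$ is hard to rule out directly, but the refinement that containment actually promotes $Y_j$ to being a full $C$-irreducible component of $W_i$ (via the mid-proof observation) unlocks Lemma \ref{lem:gp0}(ii) and makes the argument go through cleanly.
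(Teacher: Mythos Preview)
Your overall strategy is correct and in fact more economical than the paper's. The paper proves the key containment ``every $C$-irreducible component of $W_i$ is one of the $Y_j$'' by running Algorithm~\ref{gc} twice on a chosen $Y_i$, once over $K$ and once over $E$, and showing via a Galois restriction argument that $\{Z_2^\sigma:\sigma\in G(E_2:E)\}\subset\{Z_1^\rho:\rho\in G(E_1:K)\}$; it then obtains disjointness of the $J_i$ from Corollary~\ref{cho}. You bypass all of this by using Lemma~\ref{lem:gp0}$(\mr{ii})$ directly (applied with $E$ in place of $K$) to get $\zcl^E_{C^n}(Y_j)=W_i$ once you know $Y_j$ is a $C$-irreducible component of $W_i$, which simultaneously yields uniqueness of $\phi(j)$. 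That is a genuine simplification.

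There is one inaccuracy you should fix. The ``mid-proof observation'' you attribute to Lemma~\ref{lem:238} is not actually established there, and your parenthetical justification (``otherwise the union \ldots\ would be strictly smaller than $X$'') does not work: if some $W_{i,\ell}\subsetneq Y_k$, the other $W_{i',\ell'}$'s can still cover $X$. What the proof of Lemma~\ref{lem:238} shows is the \emph{opposite} inclusion $\{Y_1,\ldots,Y_s\}\subset\bigcup_i\{W_{i,1},\ldots,W_{i,T_i}\}$. The fact you need follows instead from dimension: by Lemma~\ref{lem:238}$(\mr{iv})$ one has $\dim_C(W_i)=\dim_C(X)$, and Lemma~\ref{lem:gp0}$(\mr{iii})$ applied to the $E$-irreducible $E$-algebraic set $W_i$ gives $\dim_C(W_{i,\ell})=\dim_C(W_i)=\dim_C(X)$ for every $\ell$; since each $W_{i,\ell}$ is $C$-irreducible and contained (via Lemma~\ref{inclusion}) in some $Y_j$, which also has dimension $\dim_C(X)$, equality $W_{i,\ell}=Y_j$ follows. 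With this correction your argument goes through.
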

\begin{proof}
For each $i\in\{1,\ldots,t\}$, let $W_{i,1},\ldots,W_{i,T_i}$ be the $C$-irreducible components of $W_i\subset C^n$. By Lemma \ref{lem:238}$(\mr{iii})$, $t\leq s$ and there exists an injective map $\{1,\ldots,t\}\to\{1,\ldots,s\},\,i\mapsto j_i$ such that $Y_{j_i}=W_{i,\ell_i}$ for some $\ell_i\in\{1,\ldots,T_i\}$. Rearranging the indices if necessary, we can assume that $j_i=i$, that is, $Y_i$ is a $C$-irreducible component of the $E$-algebraic set $W_i\subset C^n$ for each $i\in\{1,\ldots,t\}$.

Fix $i\in\{1,\ldots,t\}$. By Lemma \ref{lem:gp0}$(\mr{i})$, we apply Algorithm \ref{gc} to $Y_i\subset C^n$ with respect to both points of view: $Y_i\subset C^n$ is both a $\kbar$-irreducible component of the $K$-irreducible $K$-algebraic set $X\subset C^n$ and a $\ove$-irreducible component of the $E$-irreducible $E$-algebraic set $W_i\subset C^n$. Choose polynomials $g_1,\ldots,g_r\in\kbar[\x]\subset\ove[\x]$ such that $Y_i=\ZZ_C(g_1,\ldots,g_r)$, a finite Galois subextension $E_1|K$ of $\kbar|K$ that contains all the coefficients of $g_1,\ldots,g_r$ and a finite Galois subextension $E_2|E$ of $\ove|E$ that contains all the coefficients of $g_1,\ldots,g_r$. Consider the finite Galois groups $G_1:=G(E_1:K)$ and $G_2:=G(E_2:E)$, the families $\{Z_1^\rho:=\ZZ_C(g_1^\rho,\ldots,g_r^\rho)\}_{\rho\in G_1}$ and $\{Z_2^\sigma:=\ZZ_C(g_1^\sigma,\ldots,g_r^\sigma)\}_{\sigma\in G_2}$ of algebraic subsets of $C^n$ and the Galois completions $T_1:=\bigcup_{\rho\in G_1}Z_1^\rho$ and $T_2:=\bigcup_{\sigma\in G_2}Z_2^\sigma$ of $Y_i\subset C^n$. By Lemma \ref{lem:gp0}$(\mr{ii})(\mr{iii})$, we have: $T_1=X$, $\{Z_1^\rho:\rho\in G_1\}=\{Y_1,\ldots,Y_s\}$, $T_2=W_i$ and $\{Z_2^\sigma:\sigma\in G_2\}=\{W_{i1},\ldots,W_{i,T_i}\}$.

Let us prove: $\{Z_2^\sigma:\sigma\in G_2\}\subset\{Z_1^\rho:\rho\in G_1\}$. Choose $\sigma\in G_2$. By Lemma \ref{extension}, there exists an automorphism $\Phi_\sigma$ of $C$ such that $\Phi_\sigma|_{E_2}=\sigma$. As $\Phi_\sigma$ fixes $E$ (and hence $K$) and $E_1|K$ is a Galois extension, the automorphism $\Phi_\sigma$ restricts to an element $\tau:=\Phi_\sigma|_{E_1}$ of $G_1$. Observe that $E_2$ and $E_1$ are both subfields of $C$, so their intersection $E_2\cap E_1$ is a subfield of~$C$. As all the coefficients of $g_1\ldots,g_r$ belong to $E_2\cap E_1$, we have that $g_h^\sigma=\widehat{\Phi}_\sigma(g_h)=g_h^\tau$ for each $h\in\{1,\ldots,r\}$, so $Z_2^\sigma=\ZZ_C(g_1^\sigma,\ldots,g_r^\sigma)=\ZZ_C(g_1^\tau,\ldots,g_r^\tau)=Z_1^\tau$. This proves the inclusion $\{Z_2^\sigma:\sigma\in G_2\}\subset\{Z_1^\rho:\rho\in G_1\}$.

Consequently, for each $i\in\{1,\ldots,t\}$, the family $\{W_{i,1},\ldots,W_{i,T_i}\}$ of $C$-irreducible components of $W_i\subset C^n$ coincides with $\{Y_j:j\in J_i\}$ for some subset $J_i$ of $\{1,\ldots,s\}$.

The sets $J_i$ are pairwise disjoint. Otherwise, there exist $i,k\in\{1,\ldots,t\}$ and $\ell\in\{1,\ldots,s\}$ such that $i\neq k$ and $\ell\in J_i\cap J_k$, so $Y_\ell$ is a $C$-irreducible component of both $W_i\subset C^n$ and $W_k\subset C^n$. This is a contradiction by Corollary \ref{cho}.

The sets $J_i$ cover $\{1,\ldots,s\}$. Otherwise, there exists $\ell\in\{1,\ldots,s\}\setminus\bigcup_{i=1}^tJ_i$ and $X=\bigcup_{i=1}^tW_i=\bigcup_{i=1}^t\bigcup_{j\in J_i}Y_j\subset\bigcup_{h\in\{1,\ldots,s\}\setminus\{\ell\}}Y_h\subsetneqq X$, which is a contradiction.
\end{proof}

\begin{example}
Consider the extension of fields $C|E|K$ where $E|K=\Q(\sqrt{2})|\Q$. Consider the polynomial $p(\x_1):=(\x_1^2-2)^2-2\in\Q[\x_1]$ and the $\Q$-algebraic set $X:=\ZZ_C(p)\subset C$. The polynomial $p$ is irreducible in $\Q[\x_1]$ and $X\subset C$ is $\Q$-irreducible. Moreover, the following four singletons $Y_1$, $Y_2$, $Y_3$, $Y_4$ are the $C$-irreducible components of $X\subset C$:
$$\textstyle
Y_1:=\big\{-\sqrt{2+\sqrt{2}}\big\},\;\; Y_2:=\big\{-\sqrt{2-\sqrt{2}}\big\},\;\; Y_3:=\big\{\sqrt{2-\sqrt{2}}\big\}, \;\; Y_4:=\big\{\sqrt{2+\sqrt{2}}\big\}.
$$
Set $p_\pm(\x_1):=\x_1^2-2\pm\sqrt{2}\in\Q(\sqrt{2})[\x_1]$. As the polynomials $p_\pm$ in $\Q(\sqrt{2})[\x_1]$ are irreducible, $X\subset C$ has two $\Q(\sqrt{2})$-irreducible components, namely $W_1$ and $W_2$ where
$$
W_1:=\ZZ_C(p_+)=Y_2\cup Y_3, \quad W_2:=\ZZ_C(p_-)=Y_1\cup Y_4. \;\text{ $\sqbullet$}
$$
\end{example}

\subsubsection{Hypersurfaces}
As before, $C|K$ is an extension of fields such that $C$ is algebraically closed and $\kbar\subset C$ is the algebraic closure of $K$.

Let $H$ be a field, let $f\in H[\x]$ be a non-constant polynomial and let $f=uf_1^{m_1}\cdots f_s^{m_s}$ be the factorization of $f$ in $H[\x]$, that is, $u\in H^*:=H\setminus\{0\}$, each $f_i$ is an irreducible polynomial in $H[\x]$, the polynomials $f_i$ and $f_j$ are non-associated for all $i,j\in\{1,\ldots,s\}$ with $i\neq j$, and each $m_i$ is a positive natural number (if $H$ is algebraically closed, we can assume $u=1$). In case each exponent $m_i$ is equal to $1$, we say that $f$ is \emph{square-free}.

\begin{remark}\label{squaresquarefree}
A well-known result that we will use below is the following: \textit{Let $H'|H$ be any extension of fields of characteristic zero and let $f$ be a polynomial in $H[\x]$. Then $f$ is square-free as a polynomial in $K[\x]$ if and only if $f$ is square-free as a polynomial in $H'[\x]$}. The `if' implication is evident. To prove the `only if' implication, we use the properties of the discriminant. Indeed, as $H$ is infinite (because it has characteristic zero), after a suitable change of coordinates in $H^n$, we can assume that $f$ is monic with respect to $\x_n$, see \cite[Lem.2.1.6]{jp}. As $f$ is square-free in $H[\x]$, its discriminant with respect to $\x_n$ is non-zero, so it is also square-free in $H'[\x]$. $\sqbullet$
\end{remark}

\begin{remark}\label{rem:ref0}
Let $E|K$ be a finite Galois subextension of $\ol{K}|K$ and let $G':=G(E:K)$. Given $g\in E[\x]$ and $\sigma\in G'$, define $g^\sigma\in E[\x]$ as in Definition \ref{def:sigma}. Then $g^*:=\prod_{\sigma\in G'}g^\sigma\in K[\x]$, because it is invariant under the action of $G'$ (that is, $(g^*)^\tau=g^*$ for each $\tau\in G'$). Even if $g$ is square-free, $g^*$ needs not to be square-free, see Examples \ref{exa:gc}. $\sqbullet$
\end{remark}

A particular but interesting case of Algorithm \ref{gc} concerns hypersurfaces. In this situation, we can simplify the presentation of Algorithm \ref{gc} and obtain the following:

\begin{alg}[Galois completion of a polynomial]\label{gcpol}
The algorithm works as follows:
\begin{itemize}
\item[(0)] Start with a non-constant polynomial $g\in\kbar[\x]$.
\item[(1)] Choose a finite Galois subextension $E|K$ of $\kbar|K$ that contains all the coefficients of $g$ and denote $G'$ the corresponding Galois group $G(E:K)$.
\item[(2)] Define $g^*:=\prod_{\sigma\in G'}g^\sigma\in K[\x]$ (see Remark \emph{\ref{rem:ref0}}). Factorize $g^*=uf_1^{m_1}\cdots f_s^{m_s}$ in $K[\x]$, where $u\in K^*=K\setminus\{0\}$, the $f_i$ are pairwise non-associated irreducible polynomials in $K[\x]$ and the $m_i$ are positive integers.
\item[(3)] Define the polynomial $g^\bullet:=f_1\cdots f_s\in K[\x]$, which satisfies $\sqrt{(g^*)K[\x]}=(g^\bullet)K[\x]$.
\end{itemize}
\end{alg}

Applying Theorem \ref{thm:gc0} to $X=\ZZ_C(g)\subset C^n$ and keeping the notations of its statement, we obtain: $\mathfrak{H}=\{g^*\}$, $\mathfrak{G}=\{\binom{d}{j}(g^*)^j\}_{j\in\{1,\ldots,d\}}$, $\II_K(X)=\sqrt{\mathfrak{G}K[\x]}=\sqrt{(g^*)K[\x]}=(g^\bullet)K[\x]$, $\ZZ_C(g^\bullet)\subset C^n$ is the Galois completion of $X$ and $\II_C(\zcl_{C^n}^K(X))=(g^\bullet)C[\x]$. This provides:
\begin{align}
&\II_K(\ZZ_C(g))=(g^\bullet)K[\x]=\sqrt{(g^*)K[\x]},\label{22}\\
&\ZZ_C(g^\bullet)=\zcl_{C^n}^K(\ZZ_C(g)),\label{23}\\
&\II_C(\ZZ_C(g^\bullet))=(g^\bullet)C[\x].\label{22b}
\end{align}

\begin{defn}\label{gbullet}
We call $g^\bullet\in K[\x]$ the {\em Galois completion} of the polynomial $g\in\kbar[\x]$. $\sqbullet$
\end{defn}

\begin{remark}\label{welld-sfree}
By \eqref{22}, $g^\bullet$ is uniquely determined by $g$ (up to multiplication by elements of~$K^*$). By \eqref{22b}, $g^\bullet$ is square-free as a polynomial in $C[\x]$ (see also Remark \ref{squaresquarefree}). Thus, the Galois completion of each polynomial in $\kbar[\x]$ is well-defined (up to multiplication by elements of $K^*$) and square-free as a polynomial in $C[\x]$. $\sqbullet$
\end{remark}

\begin{remark}\label{gcpolr}
Pick a non-constant polynomial $g\in\kbar[\x]$ and apply Algorithm \ref{gcpol} to $g$ obtaining the polynomials $g^*,g^\bullet\in K[\x]$.

$(\mr{i})$ If $g$ is irreducible in $\kbar[\x]$, then $g^\bullet$ is irreducible in $K[\x]$ and $g^*=u(g^\bullet)^m$ for some $u\in K^*$ and $m\in\N^*$. Indeed, by Hilbert's Nullstellensatz, by Proposition \ref{extension-zar}$(\mr{iii})$ and by Lemma \ref{lem:irreducibility}, we deduce that $\ZZ_\kbar(g)\subset\kbar^n$ is $\kbar$-irreducible, $\ZZ_C(g)=(\ZZ_\kbar(g))_C\subset C^n$ is $C$-irreducible and $\zcl_{C^n}^K(\ZZ_C(g))\subset C^n$ is $K$-irreducible. Lemma \ref{lem:prime} and equalities \eqref{22} assure that $\II_K(\zcl_{C^n}^K(\ZZ_C(g)))=\II_K(\ZZ_C(g))=(g^\bullet)K[\x]=\sqrt{(g^*)K[\x]}$ is a prime ideal of $K[\x]$, so $g^\bullet$ is irreducible in $K[\x]$ and $g^*=u(g^\bullet)^m$ for some $u\in K^*$ and $m\in\N^*$, as required.

$(\mr{ii})$ Suppose that $g\in\kbar[\x]$ is reducible and denote $g=g_1^{\alpha_1}\cdots g_r^{\alpha_r}$ its factorization in $\kbar[\x]$. Suppose further that, in step {\it (1)} of Algorithm \ref{gcpol} we are applying to $g$, a finite Galois extension $E|K$ has been chosen in such a way that $E$ contains all the coefficients of $g_1,\ldots,g_r$ (and in particular all the coefficients of $g$). We keep the notations of Algorithm \ref{gcpol}. Let $g_j^*:=\prod_{\sigma\in G'}g_j^\sigma\in K[\x]$ and let $g_j^\bullet\in K[\x]$ be the Galois completion of $g_j$ for each $j\in\{1,\ldots,r\}$. By the preceding item $(\mr{i})$, $g_j^\bullet$ is irreducible in $K[\x]$ and $g_j^*=u'_j(g_j^\bullet)^{\ell_j}$ for some $u'_j\in K^*$ and $\ell_j\in\N^*$. As $g^\sigma=\prod_{j=1}^r(g_j^\sigma)^{\alpha_j}$ for each $\sigma\in G'$, we deduce $g^*=\prod_{\sigma\in G'}\prod_{j=1}^r(g_j^\sigma)^{\alpha_j}=\prod_{j=1}^r(g_j^*)^{\alpha_j}=u'\prod_{j=1}^r(g_j^\bullet)^{\alpha_j\ell_j}$, where $u':=\prod_{j=1}^r(u'_j)^{\alpha_j}\in K^*$. As $K[\x]$ is a UFD, there exists a surjective map $\eta:\{1,\ldots,r\}\to\{1,\ldots,s\}$ such that $f_{\eta(j)}=g_j^\bullet$ for each $j\in\{1,\ldots,r\}$ (up to multiplication by elements of $K^*$) and $\textstyle\sum_{j\in\eta^{-1}(i)}\ell_j\alpha_j=m_i\,$ for each $i\in\{1,\ldots,s\}$. $\sqbullet$
\end{remark}

\subsection{On the subfield-dimension invariance. Counterexamples via Faltings' theorem}\label{sdimension}
The next result is the version of Corollary \ref{kdim} for real closed fields.

\begin{lem}\label{rkdim}
Let $R|K$ be an extension of fields such that $R$ is a real closed field, and let $\kr$ be the algebraic closure of $K$ in $R$. If $X\subset R^n$ is a $\kr$-algebraic set (for instance, a $K$-algebraic set), then $\dim_R(X)=\dim_K(X)$.
\end{lem}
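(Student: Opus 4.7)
The plan is to reduce from the real closed field $R$ down to $K$ in two steps, using the two dimension invariance results already available in the previous subsections: Corollary \ref{inter}(iv) for the extension of real closed fields $R|\kr$, and Corollary \ref{dimalg} for the algebraic extension $\kr|K$. The key observation is that $\kr$, being the algebraic closure of the ordered subfield $K$ in the real closed field $R$, is itself a real closed field, so $R|\kr$ is an extension of real closed fields.

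First I would set $Y:=X\cap\kr^n\subset\kr^n$. Since $X\subset R^n$ is $\kr$-algebraic and $R|\kr$ is an extension of real closed fields, Corollary \ref{inter} applies to the $\kr$-algebraic set $X\subset R^n$, giving both
\[
X=\zcl_{R^n}(Y)\qquad\text{and}\qquad\dim_R(X)=\dim_\kr(X)=\dim_\kr(Y).
\]
Next, because $\kr|K$ is an algebraic extension and $Y\subset\kr^n$ is an algebraic set, Corollary \ref{dimalg} yields $\dim_\kr(Y)=\dim_K(Y)$.

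It remains to bridge $\dim_K(Y)$ and $\dim_K(X)$. Since $Y\subset X$, one inclusion $\II_K(X)\subset\II_K(Y)$ is trivial. For the converse, if $f\in K[\x]\subset R[\x]$ vanishes on $Y$, then $f$ vanishes on the $R$-Zariski closure of $Y$ in $R^n$, which equals $X$ by the first step; hence $f\in\II_K(X)$. Therefore $\II_K(X)=\II_K(Y)$ and consequently $\dim_K(X)=\dim_K(Y)$. Chaining the equalities gives
\[
\dim_R(X)=\dim_\kr(Y)=\dim_K(Y)=\dim_K(X),
\]
as desired.

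There is really no obstacle here beyond checking that the cited tools apply: the only point requiring a moment's thought is confirming that $\kr$ is a real closed field (so that Corollary \ref{inter} genuinely applies to $R|\kr$), which follows from the standing convention that $K$ is an ordered subfield of $R$ together with the standard fact that the algebraic closure of an ordered field inside a real closed extension is its real closure. All three ingredients (Corollary \ref{inter}, Corollary \ref{dimalg}, and the identity $\II_K(X)=\II_K(X\cap\kr^n)$) are already in place, so the proof is essentially a short diagram of three equalities.
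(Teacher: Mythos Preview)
Your proof is correct and follows essentially the same approach as the paper: reduce via $Y:=X\cap\krn$ using Corollary~\ref{inter} for the extension of real closed fields $R|\kr$, then Corollary~\ref{dimalg} for the algebraic extension $\kr|K$, and finally establish $\II_K(X)=\II_K(Y)$ to close the chain. The only cosmetic difference is that the paper derives $\II_K(X)=\II_K(Y)$ by intersecting the equality $\II_\kr(X)=\II_\kr(Y)$ from Corollary~\ref{inter}(ii) with $K[\x]$, whereas you argue directly via $X=\zcl_{R^n}(Y)$; both are immediate.
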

\begin{proof}
By Corollary \ref{inter}$(\mr{iv})$ and Corollary \ref{dimalg}, we have $\dim_R(X)=\dim_{\kr}(X\cap\krn)$ and $\dim_{\kr}(X\cap\krn)=\dim_K(X\cap\krn)$, respectively. Thus, $\dim_R(X)=\dim_K(X\cap\krn)$. By Corollary \ref{inter}$(\mr{ii})$, we have $\II_\kr(X)=\II_\kr(X\cap\krn)$, so $\II_K(X)=\II_\kr(X)\cap K[\x]=\II_\kr(X\cap\krn)\cap K[\x]=\II_K(X\cap\krn)$ and $\dim_K(X)=\dim_K(X\cap\krn)$. We conclude $\dim_R(X)=\dim_K(X)$, as required.
\end{proof}

Corollary \ref{kdim} and Lemma \ref{rkdim} have the following relevant consequence.

\begin{thm}[Subfield-dimension invariance]\label{dimension}
Let $L|K$ be an extension of fields such that $L$ is either algebraically closed or real closed, and let $X\subset L^n$ be a $K$-algebraic set. Then $\dim_L(X)=\dim_K(X)$.
\end{thm}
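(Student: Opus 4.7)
The plan is to observe that Theorem \ref{dimension} is essentially a repackaging of the two preceding results, Corollary \ref{kdim} and Lemma \ref{rkdim}, obtained by splitting into the two permitted cases for $L$. The only thing that needs to be checked is that, in each case, a $K$-algebraic subset of $L^n$ automatically satisfies the stronger algebraicity hypothesis required by the relevant lemma.

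First I would record the trivial observation that underlies the reduction: if $X=\ZZ_L(F)$ for some $F\subset K[\x]$ and $F'$ is the same set $F$ viewed inside $E[\x]$ for any intermediate field $K\subset E\subset L$, then $X=\ZZ_L(F')$, so $X\subset L^n$ is also $E$-algebraic. In particular, taking $E$ to be the algebraic closure of $K$ inside $L$, every $K$-algebraic subset of $L^n$ is automatically $E$-algebraic.

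Then I would split into two cases. In the algebraically closed case, with $L=C$ and $\kbar\subset C$ the algebraic closure of $K$ in $C$, the set $X$ is $\kbar$-algebraic by the above, and Corollary \ref{kdim} applies directly to give $\dim_L(X)=\dim_C(X)=\dim_K(X)$. In the real closed case, with $L=R$ and $\kr\subset R$ the algebraic closure of $K$ in $R$, the set $X$ is $\kr$-algebraic, so Lemma \ref{rkdim} applies and yields $\dim_L(X)=\dim_R(X)=\dim_K(X)$.

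There is no real obstacle here: the genuine content has already been absorbed into Corollary \ref{kdim} (which is proved via Algorithm \ref{gc} and the Galois completion machinery of Theorem \ref{thm:gc0}$(\mr{vi})$, combined with Corollary \ref{dimkl}) and into Lemma \ref{rkdim} (which is proved by passing to the intersection with $(\kr)^n$ via Corollary \ref{inter} and then invoking Corollary \ref{dimalg}). Theorem \ref{dimension} simply packages these into a single statement once one notes that the hypothesis `$K$-algebraic' trivially upgrades to `$\kbar$-algebraic' or `$\kr$-algebraic', as appropriate.
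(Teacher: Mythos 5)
Your proposal is correct and matches the paper's intended argument: the paper states Theorem \ref{dimension} as an immediate consequence of Corollary \ref{kdim} and Lemma \ref{rkdim}, both of which already carry the parenthetical remark ``(for instance, a $K$-algebraic set)'' encoding exactly the trivial upgrade from $K$-algebraic to $\kbar$-algebraic (resp.\ $\kr$-algebraic) that you spell out. Your case split and the accompanying observation are precisely what the paper leaves implicit.
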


\begin{remark}\label{dime}
By the latter result, when we are dealing with algebraically closed fields or real closed fields, the dimension does not depend on the ground subfield we are working with and we will often avoid referring to such ground field. $\sqbullet$
\end{remark}

The invariance of dimension described in the previous theorem extends to other types of extensions of fields. Recall that a field $L$ is said to be real if it admits at least one (total) ordering that is compatible with the field operations or, equivalently, if $-1$ cannot be written as a finite sum of squares in $L$. Suppose $K$ is a real closed field and its extension $L$ is real. As the non-negative elements of a real closed field are squares and the squares in each ordered field are non-negative, we deduce that each ordering of $L$ extends the ordering of $K$.

\begin{lem}\label{acfideal}
Let $L|K$ be an extension of fields such that either $K$ is algebraically closed or $L$ is real and $K$ is real closed. Denote $\ol{L}^\sqbullet$ the algebraic closure of $L$ in the first case and the real closure of $L$ in the second case. Let $X\subset L^n$ be a $K$-algebraic set. Then we have $\II_L(X)=\II_L(X\cap K^n)=\II_K(X)L[\x]$, $\dim_L(X)=\dim_K(X)=\dim_K(X\cap K^n)$ and $X=(X\cap K^n)_{\ol{L}^\sqbullet}\cap L^n$.
\end{lem}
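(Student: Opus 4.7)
The strategy is to introduce the intermediate field $F:=\ol{L}^\sqbullet$ and to reduce everything to the `extension of coefficients' statements already established in Corollary \ref{inter}. The point is that, in both hypotheses on $L|K$, the extension $F|K$ is either an extension of algebraically closed fields (when $K$ is algebraically closed and $F=\ol{L}$) or an extension of real closed fields (when $K$ is real closed, $L$ is real, and $F$ is a real closure of $L$); so Corollary \ref{inter} applies to $F|K$, which is exactly what fails for $L|K$ itself.

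I will work with the auxiliary set $X':=\ZZ_F(\II_K(X))\subset F^n$, which is $K$-algebraic by construction. Since $X\subset L^n$ is $K$-algebraic, the definition of $X'$ gives $X'\cap L^n=\ZZ_L(\II_K(X))=X$, and in particular $X'\cap K^n=X\cap K^n$. A routine double inclusion (using $X\subset X'$ in one direction and the definition of $X'$ in the other) yields $\II_K(X)=\II_K(X')$. Now Corollary \ref{inter}$(\mr{ii})$ applied to the $K$-algebraic set $X'\subset F^n$ delivers the crucial equality
\[
\II_K(X)=\II_K(X')=\II_K(X'\cap K^n)=\II_K(X\cap K^n).
\]

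From here the remaining conclusions are essentially formal. The equality $\II_L(X)=\II_L(X\cap K^n)=\II_K(X)L[\x]$ follows from the obvious chain $\II_K(X)L[\x]\subset\II_L(X)\subset\II_L(X\cap K^n)$ together with a short application of Lemma \ref{k0}: expanding any $f\in\II_L(X\cap K^n)$ along a $K$-basis $\{u_j\}$ of $L$ as $f=\sum_j u_jf_j$ and evaluating at points of $X\cap K^n\subset K^n$ forces each $f_j\in\II_K(X\cap K^n)=\II_K(X)$, so $f\in\II_K(X)L[\x]$. The dimension equalities drop out immediately: $\dim_K(X)=\dim_K(X\cap K^n)$ from the equality of $K$-vanishing ideals, and $\dim_L(X)=\dim_K(X)$ from Corollary \ref{dimkl} applied to $\II_L(X)=\II_K(X)L[\x]$. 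For the last assertion, I will apply Proposition \ref{extension-zar}$(\mr{i})$ to $X\cap K^n\subset K^n$ and $F|K$ to identify $(X\cap K^n)_F=\ZZ_F(\II_K(X\cap K^n))=\ZZ_F(\II_K(X))=X'$, and then intersect with $L^n$ to conclude $X=X'\cap L^n=(X\cap K^n)_{\ol{L}^\sqbullet}\cap L^n$.

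The single non-formal ingredient, and the main obstacle, is the identification $\II_K(X)=\II_K(X\cap K^n)$. Attempting it directly inside $L^n$ is hopeless, because polynomials in $K[\x]$ can vanish on the $K$-rational points of a $K$-algebraic set without vanishing on the set itself (the singleton $\{\sqrt[3]{2}\}\subset\R$ with $K=\Q$ already illustrates this failure when both hypotheses of the lemma are dropped). The two hypotheses are tailored precisely so that an intermediate field $F$ with a usable Tarski--Seidenberg transfer to $K$ exists, and the round trip $X\rightsquigarrow X'\rightsquigarrow X'\cap K^n=X\cap K^n$ is what produces the desired ideal equality.
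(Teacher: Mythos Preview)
Your proof is correct. The overall architecture matches the paper's---both pass to $F=\ol{L}^\sqbullet$ and exploit that $F|K$ is an extension of algebraically/real closed fields---but the key step $\II_K(X)=\II_K(X\cap K^n)$ is obtained differently. The paper observes that $X\cap K^n=\ZZ_K(\II_K(X))$ and applies Hilbert's or the Real Nullstellensatz directly inside $K$ (checking first, in the real case, that $\II_K(X)$ is a real ideal of $K[\x]$ because $L$ is real). You instead push $X$ up to $X'=\ZZ_F(\II_K(X))$ and invoke Corollary \ref{inter}$(\mr{ii})$ for $F|K$, which hides the Nullstellensatz/real-ideal verification inside the extension-of-coefficients machinery. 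For the remaining identity $\II_L(X)=\II_K(X)L[\x]$, the paper runs an inclusion chain through $\II_{\ol{L}^\sqbullet}(\cdot)$ and intersects back with $L[\x]$ via Corollary \ref{k}, whereas your basis-expansion argument with Lemma \ref{k0} is exactly Proposition \ref{prop:zar}$(\mr{i})$ applied to $X\cap K^n\subset K^n$. Both routes are valid; yours is a bit more modular (it reuses Corollary \ref{inter} wholesale), while the paper's makes the role of the Nullstellensatz explicit.
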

\begin{proof}
Let $Z:=\zcl_{(\ol{L}^\sqbullet)^n}(X)$. By Proposition \ref{prop:zar}, we have $\II_{\ol{L}^\sqbullet}(Z)=\II_L(X)\ol{L}^\sqbullet[\x]$. As $\II_{\ol{L}^\sqbullet}(Z)=\II_{\ol{L}^\sqbullet}(X)$, we also have $\II_{\ol{L}^\sqbullet}(X)=\II_L(X)\ol{L}^\sqbullet[\x]$.

If $L$ is real and $K$ is real closed, then $\II_K(X)$ is a real ideal of $K[\x]$ (in the usual sense of \cite[Def.4.1.3]{bcr}). To prove this fact, consider $p_1,\ldots,p_\ell\in K[\x]$ such that $p_1^2+\ldots+p_\ell^2\in\II_K(X)$. Then $p_1(x)^2+\ldots+p_\ell(x)^2=0$ for each $x\in X$. As $p_1(x),\ldots,p_\ell(x)\in L$ and $L$ is real, we deduce $p_1(x)=\ldots=p_\ell(x)=0$ for each $x\in X$, so $p_1,\ldots,p_\ell\in\II_K(X)$ and $\II_K(X)$ is a real ideal of $K[\x]$.

Define $S:=X\cap K^n$ and observe that $S=\ZZ_L(\II_K(X))\cap K^n=\ZZ_K(\II_K(X))$. If $K$ is algebraically closed, Hilbert's Nullstellensatz implies that $\II_K(S)=\II_K(\ZZ_K(\II_K(X)))=\II_K(X)$, because $\II_K(X)$ is a radical ideal of $K[\x]$. If $K$ is real closed, Real Nullstellensatz \cite[Thm.4.1.4]{bcr} implies that $\II_K(S)=\II_K(\ZZ_K(\II_K(X)))=\II_K(X)$, because $\II_K(X)$ is a real ideal of $K[\x]$.

Using Proposition \ref{prop:zar} again, we deduce that $\II_{\ol{L}^\sqbullet}(S)=\II_K(S)\ol{L}^\sqbullet[\x]=\II_K(X)\ol{L}^\sqbullet[\x]$. Thus, 
$$
\II_{\ol{L}^\sqbullet}(S)=\II_{K}(X)\ol{L}^\sqbullet[\x]\subset\II_L(X)\ol{L}^\sqbullet[\x]=\II_{\ol{L}^\sqbullet}(X)\subset\II_{\ol{L}^\sqbullet}(S).
$$ 
Consequently, $\II_{\ol{L}^\sqbullet}(S)=\II_K(X)\ol{L}^\sqbullet[\x]=\II_L(X)\ol{L}^\sqbullet[\x]$. Intersecting with $L[\x]$ and applying Corollary \ref{k}, we obtain $\II_L(S)=\II_K(X)L[\x]=\II_L(X)$ and in particular $\II_K(S)=\II_K(X)$, so $\dim_K(S)=\dim_K(X)$. By Corollary \ref{dimkl}, it follows that $\dim_L(X)=\dim_K(X)=\dim_K(S)$. As $\II_{\ol{L}^\sqbullet}(S)=\II_K(X)\ol{L}^\sqbullet[\x]$, by Proposition \ref{extension-zar}$(\mr{i})$, we have
\begin{align*}
(X\cap K^n)_{\ol{L}^\sqbullet}\cap L^n&=(S)_{\ol{L}^\sqbullet}\cap L^n=\zcl_{(\ol{L}^\sqbullet)^n}(S)\cap L^n\\
&=\ZZ_{\ol{L}^\sqbullet}(\II_{\ol{L}^\sqbullet}(S))\cap L^n=\ZZ_{\ol{L}^\sqbullet}(\II_K(X))\cap L^n=\ZZ_L(\II_K(X))=X,
\end{align*} 
as required.
\end{proof}

\begin{cor}
Let $L|K$ be an extension of fields and let $\kbar^\sqbullet$ be the algebraic closure of $K$ in~$L$. Suppose that either $\kbar^\sqbullet$ coincides with the algebraic closure of $K$ or $K$ is an ordered field and $\kbar^\sqbullet$ is the real closure of~$K$. If $X\subset L^n$ is a $K$-algebraic set, then $\dim_L(X)=\dim_K(X)$.
\end{cor}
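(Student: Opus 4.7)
My approach is to apply Lemma \ref{acfideal} to the extension $L|\kbar^\sqbullet$ and chain the resulting dimension equalities. Since $X$ is $K$-algebraic and $K\subset\kbar^\sqbullet$, $X$ is also $\kbar^\sqbullet$-algebraic. In the first hypothesis, $\kbar^\sqbullet$ coincides with the algebraic closure of $K$ and is therefore algebraically closed, so the first hypothesis of Lemma \ref{acfideal} holds for the extension $L|\kbar^\sqbullet$. In the second hypothesis, $\kbar^\sqbullet$ is real closed as the real closure of $K$, and the second hypothesis of Lemma \ref{acfideal} holds as soon as $L$ is real. In either situation Lemma \ref{acfideal} provides
$$
\dim_L(X)=\dim_{\kbar^\sqbullet}(X)=\dim_{\kbar^\sqbullet}\bigl(X\cap(\kbar^\sqbullet)^n\bigr)\quad\text{and}\quad\II_{\kbar^\sqbullet}(X)=\II_{\kbar^\sqbullet}\bigl(X\cap(\kbar^\sqbullet)^n\bigr).
$$

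Next I would set $Y:=X\cap(\kbar^\sqbullet)^n=\ZZ_{\kbar^\sqbullet}(\II_K(X))$, which is a $K$-algebraic subset of $(\kbar^\sqbullet)^n$. Since $\kbar^\sqbullet$ is either algebraically closed or real closed, Theorem \ref{dimension} (subfield-dimension invariance for algebraically or real closed ambient fields) yields $\dim_{\kbar^\sqbullet}(Y)=\dim_K(Y)$. Intersecting the identity $\II_{\kbar^\sqbullet}(X)=\II_{\kbar^\sqbullet}(Y)$ with $K[\x]$ produces $\II_K(X)=\II_K(Y)$, hence $\dim_K(X)=\dim_K(Y)$. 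Concatenating all the equalities yields the desired $\dim_L(X)=\dim_K(X)$.

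The delicate point lies in the sub-case of the second hypothesis in which $L$ happens not to be a real field: the assumption $\kbar^\sqbullet$ equals the real closure of $K$ forces $\ii\notin L$, but $-1$ can still be a sum of squares in $L$ (as in the function field $\kbar^\sqbullet(\y_1,\y_2)/(\y_1^2+\y_2^2+1)$), in which case Lemma \ref{acfideal} does not apply directly. To handle this sub-case I would exploit the fact that, by Noether normalization of $\II_K(X)$ (after a $K$-linear change of coordinates, possible since $K$ is infinite) together with Lemma \ref{intersection}, every minimal prime of $\II_K(X)L[\x]$ in $L[\x]$ has the same height $\hgt(\II_K(X))$, as follows from the going-up/down property for the integral extension $L[\x_{r+1},\ldots,\x_n]\hookrightarrow L[\x]/\II_K(X)L[\x]$. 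Consequently the desired equality $\dim_L(X)=\dim_K(X)$ reduces to exhibiting at least one minimal prime $\mathfrak{p}$ of $\II_K(X)L[\x]$ such that $\mathfrak{p}\subset\II_L(X)$, equivalently $\ZZ_L(\mathfrak{p})=X$. This is a Nullstellensatz statement for the algebraic extension $\kbar^\sqbullet|K$ and would be furnished by Laksov's Nullstellensatz (Appendix \ref{appendix:b}), which operates uniformly on algebraic extensions without requiring the ambient field $L$ to be real.
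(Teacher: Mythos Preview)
Your first two paragraphs follow essentially the paper's argument: apply Lemma~\ref{acfideal} to the extension $L|\kbar^\sqbullet$ and then use dimension invariance over the algebraically (resp.\ real) closed field $\kbar^\sqbullet$. The paper invokes Corollary~\ref{dimalg} (invariance for algebraic extensions) where you invoke Theorem~\ref{dimension}; both give $\dim_{\kbar^\sqbullet}(Y)=\dim_K(Y)$. The paper derives $\II_K(X)=\II_K(Y)$ by intersecting the identity $\II_L(X)=\II_L(X\cap(\kbar^\sqbullet)^n)$ with $K[\x]$ rather than passing through $\II_{\kbar^\sqbullet}$, but these are cosmetic differences.

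You are right that Lemma~\ref{acfideal}, in its real-closed branch, requires $L$ to be real, and that the hypothesis ``$\kbar^\sqbullet$ is the real closure of $K$'' does not literally force this (your example $L=\kbar^\sqbullet(\y_1,\y_2)/(\y_1^2+\y_2^2+1)$ shows that $\ii\notin L$ is compatible with $-1$ being a sum of squares in $L$). The paper's proof makes the same silent assumption.

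Your proposed repair in the third paragraph, however, does not close the gap. Even granting that all minimal primes of $\II_K(X)L[\x]$ have height $h:=\hgt(\II_K(X))$, the condition ``some minimal prime $\mathfrak p$ of $\II_K(X)L[\x]$ satisfies $\mathfrak p\subset\II_L(X)$'' only yields $\hgt(\II_L(X))\geq\hgt(\mathfrak p)=h$, i.e.\ $\dim_L(X)\leq\dim_K(X)$, which is the direction already provided by Corollary~\ref{cor:dimLleqdimK}. What you actually need is a minimal prime $\gtq$ of $\II_L(X)$ with $\hgt(\gtq)\leq h$; equivalently, some minimal prime $\mathfrak p$ of $\II_K(X)L[\x]$ for which $\II_L(\ZZ_L(\mathfrak p))=\mathfrak p$. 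That is a Nullstellensatz for the ambient field $L$, not for the algebraic extension $\kbar^\sqbullet|K$: Laksov's theorem (Theorem~\ref{thm:laksov}) computes $\II_K(\ZZ_L(\cdot))$ and says nothing about $\II_L(\ZZ_L(\cdot))$ when $L$ is neither real nor algebraically closed. So the sketch does not supply the missing inequality.
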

\begin{proof}
As $X\subset L^n$ is also $\kbar^\sqbullet$-algebraic, Lemma \ref{acfideal} implies $\II_L(X)=\II_L(X\cap(\kbar^\sqbullet)^n)$ and $\dim_L(X)=\dim_{\kbar^\sqbullet}(X\cap(\kbar^\sqbullet)^n)$. By Corollary \ref{dimalg}, we also have $\dim_{\kbar^\sqbullet}(X\cap(\kbar^\sqbullet)^n)=\dim_K(X\cap(\kbar^\sqbullet)^n)$, so $\dim_L(X)=\dim_K(X\cap(\kbar^\sqbullet)^n)$. As $\II_K(X)=\II_L(X)\cap K[\x]=\II_L(X\cap (\kbar^\sqbullet)^n)\cap K[\x]=\II_K(X\cap(\kbar^\sqbullet)^n)$, we deduce $\dim_L(X)=\dim_K(X\cap(\kbar^\sqbullet)^n)=\dim_K(X)$, as required. 
\end{proof}

We will further extend Theorem \ref{dimension} to all $K$-semialgebraic sets in Theorem \ref{edimsa}.

\vspace{1em}
Our next purpose is to exhibit examples of extensions of fields $L|K$ and $K$-algebraic sets $X\subset L^n$ such that the dimensions $\dim_L(X)$ and $\dim_K(X)$ are different. As we have already proved in Corollary \ref{cor:dimLleqdimK}, the inequality $\dim_L(X)\leq\dim_K(X)$ always holds. We will provide examples in which the previous inequality is strict. Before doing so, we need a diophantine preface. Let $E$ be a number field, that is, a finite extension of~$\Q$ and let $\ove$ be its algebraic closure. Let $F$ be a homogeneous polynomial in $E[\x_0,\x_1,\x_2]$ that has degree $d\geq4$ and is \textit{nonsingular in $\ove$}, that is, the set of common zeros of $ \frac{\partial F}{\partial\x_0},\frac{\partial F}{\partial\x_1},\frac{\partial F}{\partial\x_2}$ in $\ove^3$ is just the origin. A~main example is $F=\x_0^d+\x_1^d+\x_2^d$. Denote $\PP\ZZ_\ove(F)$ the vanishing set of $F$ in the projective plane $\PP^2(\ove)$ and observe that $\PP\ZZ_\ove(F)$ is a nonsingular curve of genus $\frac{(d-1)(d-2)}{2}\geq2$.

In the celebrated paper~\cite{fa83}, Faltings solved Mordell's conjecture proving that: \textit{The curve $\PP\ZZ_E(F)\subset\PP^2(E)$ has a finite number of points}. In \cite[Thm.3, p.205]{fwgss}, Faltings extends this result as follows: \textit{The same finiteness assertion holds when $E$ is an arbitrary finitely generated extension of $\Q$}.

\begin{prop}\label{faltings}
Let $f$ be a polynomial in $\Q[\x_1,\x_2]$ of degree $d\geq4$ and let $F\in\Q[\x_0,\x_1,\x_2]=\Q[\x_1,\x_2][\x_0]$ be the homogeneization $F:=f(\frac{\x_1}{\x_0},\frac{\x_2}{\x_0})\x_0^d$ of $f$. Suppose that $f$ is not homogeneous in $\Q[\x_1,\x_2]$ and $F$ is nonsingular in $\qbar$ (for instance, $f=1+\x_1^d+\x_2^d$ and $F=\x_0^d+\x_1^d+\x_2^d$). Consider $F$ as an element of the polynomial ring $\Q(\x_1,\x_2)[\x_0]$.

We have:
\begin{itemize}
\item[$(\mr{i})$] $F$ is irreducible in $\Q(\x_1,\x_2)[\x_0]$ and the field $L:=\Q(\x_1,\x_2)[\x_0]/((F)\Q(\x_1,\x_2)[\x_0])$ is a finitely generated extension of $\Q$.
\item[$(\mr{ii})$] If $X\subset L^2$ is the $\Q$-algebraic set $X:=\ZZ_L(f)\subset L^2$, then $\dim_L(X)=0$ and $\dim_\Q(X)=1$.
\end{itemize}
\end{prop}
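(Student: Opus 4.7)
The plan is to verify (i) by a direct irreducibility argument together with Gauss's lemma, to construct an explicit affine point $p \in X$ using the class of $\x_0$ modulo $F$, to obtain $\dim_L(X)=0$ from Faltings' theorem for finitely generated extensions of $\Q$, and to deduce $\dim_\Q(X)=1$ from a homogenization--dehomogenization calculation showing $\II_\Q(\{p\}) = (f)$.

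For (i), I first observe that any nontrivial factorization, or non-square-free decomposition, of $F$ in $\qbar[\x_0,\x_1,\x_2]$ would, by Bezout applied in $\PP^2(\qbar)$, produce a common zero of $F$ and all three of its partial derivatives, contradicting the nonsingularity hypothesis. Hence $F$ is irreducible in $\qbar[\x_0,\x_1,\x_2]$, and a fortiori in $\Q[\x_0,\x_1,\x_2]$. The hypothesis that $f$ is not homogeneous guarantees that $F$ has positive degree in $\x_0$, so Gauss's lemma applied in the UFD $\Q[\x_1,\x_2][\x_0]$ yields the irreducibility of $F$ as an element of $\Q(\x_1,\x_2)[\x_0]$. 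Thus $L$ is a simple algebraic extension of the finitely generated field $\Q(\x_1,\x_2)$, and hence is itself finitely generated over $\Q$.

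For (ii), let $\alpha \in L$ denote the class of $\x_0$. Since $F(0,\x_1,\x_2)$ equals the degree-$d$ homogeneous component of $f$, which is nonzero because $\deg f = d$, we get $\alpha \neq 0$ in $L$, and the identity $F(\alpha,\x_1,\x_2) = \alpha^d f(\x_1/\alpha,\x_2/\alpha) = 0$ yields the distinguished point $p := (\x_1/\alpha,\x_2/\alpha) \in X$. The affine embedding $(y_1,y_2)\mapsto[1:y_1:y_2]$ injects $X$ into the smooth projective plane curve $\PP\ZZ_L(F) \subset \PP^2(L)$ of genus $(d-1)(d-2)/2 \geq 3$; Faltings' theorem \cite[Thm.3, p.205]{fwgss}, in the formulation valid for finitely generated extensions of $\Q$, then implies that this projective curve has only finitely many $L$-points, so $X$ is finite and $\dim_L(X) = 0$.

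To see $\dim_\Q(X) = 1$, I first note that $f$ is irreducible in $\Q[\x_1,\x_2]$: any nontrivial factorization $f = gh$ would homogenize to a nontrivial factorization $F = GH$, contradicting (i). Hence $(f) \subset \Q[\x_1,\x_2]$ is a prime ideal of height $1$, and $(f) \subset \II_\Q(X) \subset \II_\Q(\{p\})$; it therefore suffices to prove the reverse inclusion $\II_\Q(\{p\}) \subset (f)$. Given $g \in \Q[\x_1,\x_2]$ of degree $e$ with homogenization $G$, the relation $g(p) = \alpha^{-e} G(\alpha,\x_1,\x_2)$ together with $\alpha \neq 0$ shows that $g(p)=0$ forces $G(\alpha,\x_1,\x_2) = 0$; since by (i) $F$ is the minimal polynomial of $\alpha$ over $\Q(\x_1,\x_2)$, this yields $F \mid G$ in $\Q(\x_1,\x_2)[\x_0]$. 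Setting $\x_0=1$ gives $g = fh$ with $h \in \Q(\x_1,\x_2)$, and because $f$ is prime in the UFD $\Q[\x_1,\x_2]$ and divides $g$ in its fraction field, it divides $g$ in $\Q[\x_1,\x_2]$ itself. The delicate step is this final descent from $\Q(\x_1,\x_2)$ to $\Q[\x_1,\x_2]$, whose validity rests entirely on the irreducibility of $f$ established via (i); everything else is a clean application of Gauss's lemma plus a single invocation of Faltings' theorem.
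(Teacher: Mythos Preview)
Your argument for (i) and for $\dim_L(X)=0$ matches the paper's closely. For $\dim_\Q(X)=1$, however, you take a genuinely different and more algebraic route: the paper argues by contradiction that if $f\nmid g$ in $\Q[\x_1,\x_2]$ then $F\nmid G$ in $\ol{L}[\x_0,\x_1,\x_2]$, so by Bezout in $\PP^2(\ol{L})$ the common zeros of $F$ and $G$ form a finite set contained in $\PP^2(\qbar)$, contradicting the fact that the transcendental point $[1,p]$ lies among them. Your approach instead exploits directly that $F$ is the minimal polynomial of $\alpha$ over $\Q(\x_1,\x_2)$, which is cleaner and avoids both the passage to $\ol{L}$ and the second invocation of Bezout.

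There is, however, a genuine error in your final descent step. From $F\mid G$ in $\Q(\x_1,\x_2)[\x_0]$ you set $\x_0=1$, obtain $g=fh$ with $h\in\Q(\x_1,\x_2)$, and then claim that because $f$ is prime in the UFD $\Q[\x_1,\x_2]$ and ``divides $g$ in its fraction field'' it must divide $g$ in $\Q[\x_1,\x_2]$. Divisibility in a field is vacuous, so this inference is invalid as stated; for instance $\x_1$ divides $1$ in $\Q(\x_1,\x_2)$. The correct fix is to apply Gauss's lemma \emph{before} dehomogenizing: since $F$ is irreducible in $\Q[\x_1,\x_2][\x_0]$ with positive $\x_0$-degree, it is primitive there, so $F\mid G$ in $\Q(\x_1,\x_2)[\x_0]$ with $G\in\Q[\x_1,\x_2][\x_0]$ descends to $F\mid G$ in $\Q[\x_1,\x_2][\x_0]$; only then does setting $\x_0=1$ yield $f\mid g$ in $\Q[\x_1,\x_2]$. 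With this repair your argument goes through.
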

\begin{proof}
As $F$ is nonsingular in $\qbar$, by Bezout's theorem, $F$ is irreducible in $\qbar[\x_0,\x_1,\x_2]$ and consequently in $\Q[\x_1,\x_2][\x_0]$. In particular, $f$ is irreducible in $\Q[\x_1,\x_2]$. The fact that $f$ is not homogeneous in $\Q[\x_1,\x_2]$ implies that the indeterminate $\x_0$ appears in the expression of $F$, that is, $F$ has positive degree with respect to~$\x_0$. This assures that $F$ is also irreducible as a polynomial in $\Q[\x_1,\x_2][\x_0]$. By Gauss'~lemma, $F$ is irreducible as a polynomial in $\Q(\x_1,\x_2)[\x_0]$ as well. Let $\pi:\Q(\x_1,\x_2)[\x_0]\to L$ be the natural projection and let $\x_i':=\pi(\x_i)$ for each $i\in\{0,1,2\}$. Observe that $L=\Q(\x_0',\x_1',\x_2')$ is a finitely generated field that extends~$\Q$. Moreover, $\x_0'\neq0$ in $L$ and the point $p:=(\x_1'(\x_0')^{-1},\x_2'(\x_0')^{-1})\in L^2$ belongs to $X=\ZZ_L(f)$.

Identify $L^2$ with $\{[x_0,x_1,x_2]\in\PP^2(L): x_0=1\}$ via the chart $(x_1,x_2)\mapsto[1,x_1,x_2]$.

Let $\ol{L}$ be the algebraic closure of $L$. As $\ol{L}|\qbar$ is an extension of algebraically closed fields, the `extension of coefficients' procedure assures that $\ZZ_{\ol{L}}(\frac{\partial F}{\partial\x_0},\frac{\partial F}{\partial\x_1},\frac{\partial F}{\partial\x_2})=(\ZZ_\qbar(\frac{\partial F}{\partial\x_0},\frac{\partial F}{\partial\x_1},\frac{\partial F}{\partial\x_2}))_{\ol{L}}=(\{(0,0,0)\})_{\ol{L}}=\{(0,0,0)\}$. Thus, $F$ is also nonsingular in $\ol{L}$ and irreducible in $\ol{L}[\x_0,\x_1,\x_2]$. In particular, we can apply the mentioned Faltings' theorem \cite[Thm.3, p.205]{fwgss}, obtaining that the set $\PP\ZZ_L(F)\subset\PP^2(L)$ is finite. Thus, the set $X=\ZZ_L(f)=\PP\ZZ_L(F)\cap L^2$ is also finite and $\dim_L(X)=0$.

Observe that the point $[p,1]$ belongs to $\PP^2(\ol{L})\setminus\PP^2(\qbar)$.

Let us show that $\dim_\Q(X)=1$ by proving that $\II_\Q(X)=(f)\Q[\x_1,\x_2]$. Let $g\in\II_\Q(X)$ and let $G$ be its homogeneization in $\Q[\x_0,\x_1,\x_2]$. Suppose that $f$ does not divide $g$ in $\Q[\x_1,\x_2]$. Then $F$ does not divide $G$ in $\Q[\x_0,\x_1,\x_2]$. By Lemma \ref{k0}, $F$ does not divide $G$ in $\ol{L}[\x_0,\x_1,\x_2]$ as well. As both $\qbar\subset\ol{L}$ are algebraically closed fields and $F,G\in\qbar[\x_0,\x_1,\x_2]\subset\ol{L}[\x_0,\x_1,\x_2]$, the common zero set $\PP\ZZ_{\ol{L}}(F,G)$ of $F$ and $G$ in $\PP^2(\ol{L})$ is by Bezout's theorem a finite set (constituted by $d\cdot\deg(G)$ points counted with their intersection multiplicity and) contained in $\PP^2(\qbar)$. As $p\in X\subset\zcl_{\ol{L}}(X)=\ZZ_{\ol{L}}(\II_{\ol{L}}(X))\subset\ZZ_{\ol{L}}(f,g)$, it follows that $[1,p]\in\PP\ZZ_{\ol{L}}(F,G)\subset\PP^2(\qbar)$, which is a contradiction. This proves that $f$ divides $g$ in $\Q[\x_1,\x_2]$. Thus, $\II_\Q(X)=(f)\Q[\x_1,\x_2]$ and $\dim_\Q(X)=2-\mr{ht}((f)\Q[\x_1,\x_2])=1$, as required.
\end{proof}

\subsection{Complexification}\label{fe}

\emph{Throughout this subsection, $R$ is a real closed field, $\ii:=\sqrt{-1}$ and $C:=R[\ii]$ is the algebraic closure of $R$.} In this case, $G:=G(C:R)$ is a group of order~$2$ generated by conjugation involution $\varphi$ that fixes $R$ and maps $\ii$ to $-\ii$, that is, $\varphi:C\to C,\ x+\ii y\mapsto x-\ii y$. Define
$$
\varphi_n:C^n\to C^n,\ (x_1+\ii y_1,\ldots,x_n+\ii y_n)\mapsto(x_1-\ii y_1,\ldots,x_n-\ii y_n).
$$
Algorithm \ref{gc} together with Theorem \ref{thm:gc0} allow us to revisit the complexification of algebraic subsets of $R^n$.

\begin{lem}\label{conjugation}
If $\gta$ is an ideal of $C[\x]$, then
\begin{align}\label{eq:zzC}
\ZZ_C(\gta\cap R[\x])&=\ZZ_C(\gta)\cup\varphi_n(\ZZ_C(\gta))=\zcl_{C^n}^R(\ZZ_C(\gta)),\\
\label{eq:zzR}
\ZZ_R(\gta\cap R[\x])&=\ZZ_C(\gta)\cap R^n.
\end{align}
\end{lem}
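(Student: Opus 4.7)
The plan is to reduce both identities to a direct application of Algorithm \ref{gc} and Theorem \ref{thm:gc0} specialized to the extension $C|R$, i.e.\ with $K:=R$ and $\kbar=C$. This base extension is the simplest possible interesting case, because it is finite Galois of degree $2$ with Galois group $G=G(C:R)=\{\mathrm{id}_C,\varphi\}$, so we may choose $E:=C$ in step (1) of the algorithm and thereby obtain $G'=G$ with only two elements. This makes the Galois completion very concrete.

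Since $C[\x]$ is Noetherian, pick generators $g_1,\ldots,g_r\in\gta$, so that $X:=\ZZ_C(\gta)=\ZZ_C(g_1,\ldots,g_r)$ is a $C$-algebraic (hence $\kbar$-algebraic, since $\kbar=C$) subset of $C^n$, to which Algorithm \ref{gc} applies. Taking $E:=C$ in step (2) is legitimate because $C|R$ is itself a finite Galois extension. Step (3) then yields $Z^{\mathrm{id}_C}=X$ and, by the identity $\psi_n(\ZZ_C(g_1,\ldots,g_r))=\ZZ_C(\widehat{\psi}(g_1),\ldots,\widehat{\psi}(g_r))$ of Theorem \ref{thm:gc0}(i) applied with $\psi=\varphi$, also $Z^{\varphi}=\ZZ_C(g_1^{\varphi},\ldots,g_r^{\varphi})=\varphi_n(X)$. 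Thus the Galois completion produced by Algorithm \ref{gc} is $T=X\cup\varphi_n(X)$.

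Now Theorem \ref{thm:gc0}(v) provides two distinct descriptions of $T$: on the one hand $T=\zcl_{C^n}^R(X)$, and on the other hand $T=\ZZ_C((g_1,\ldots,g_r)C[\x]\cap R[\x])$. Since we chose the $g_i$ to generate $\gta$ itself, the latter ideal is exactly $\gta\cap R[\x]$. Matching these two descriptions with $T=X\cup\varphi_n(X)=\ZZ_C(\gta)\cup\varphi_n(\ZZ_C(\gta))$ yields \eqref{eq:zzC}.

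For \eqref{eq:zzR}, the definition of $\ZZ_R$ gives $\ZZ_R(\gta\cap R[\x])=\ZZ_C(\gta\cap R[\x])\cap R^n$. Intersect both sides of \eqref{eq:zzC} with $R^n$: because $\varphi_n$ fixes $R^n$ pointwise, a point $x\in R^n$ lies in $\varphi_n(\ZZ_C(\gta))$ if and only if $\varphi_n(x)=x\in\ZZ_C(\gta)$, so $\varphi_n(\ZZ_C(\gta))\cap R^n=\ZZ_C(\gta)\cap R^n$, and \eqref{eq:zzR} follows at once. There is essentially no obstacle here; the only point requiring attention is the verification that the (in general auxiliary) finite Galois field $E$ required by Algorithm \ref{gc} may be taken to equal $C$ in the present situation, so that Theorem \ref{thm:gc0}(v) directly identifies $\ZZ_C(\gta\cap R[\x])$ with the Galois completion $X\cup\varphi_n(X)$.
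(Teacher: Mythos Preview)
Your proof is correct and follows essentially the same approach as the paper: both obtain \eqref{eq:zzC} by applying Theorem~\ref{thm:gc0} (specifically parts $(\mr{ii})$ and $(\mr{v})$) to the extension $C|R$ with $E=C$ and $G'=\{\mathrm{id}_C,\varphi\}$, and then derive \eqref{eq:zzR} by intersecting with $R^n$ and using that $\varphi_n$ fixes $R^n$ pointwise. Your explicit verification that one may take $E=C$ in Algorithm~\ref{gc} and your identification $(g_1,\ldots,g_r)C[\x]\cap R[\x]=\gta\cap R[\x]$ are the right details to check.
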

\begin{proof}
Equality \eqref{eq:zzC} is an immediate consequence of Theorem \ref{thm:gc0}$(\mr{ii})(\mr{v})$. In addition,
\begin{align*}
\ZZ_R(\gta\cap R[\x])&=\ZZ_C(\gta\cap R[\x])\cap R^n=(\ZZ_C(\gta)\cup\varphi_n(\ZZ_C(\gta)))\cap R^n\\
&=(\ZZ_C(\gta)\cap R^n)\cup\varphi_n(\ZZ_C(\gta)\cap R^n)=\ZZ_C(\gta)\cap R^n.
\end{align*}
This proves \eqref{eq:zzR} and completes the proof.
\end{proof}

If $S\subset R^n$ is an algebraic set, its Zariski closure in $C^n$ is called the \emph{complexification of $S$}. Recall that, if $T$ is an algebraic subset of $C^n$, then $T\cap R^n$ is an algebraic subset of $R^n$ by Corollary \ref{cor-cap}. In addition, Proposition \ref{prop:zar}$(\mr{ii})$ assures that $\zcl_{C^n}(S)\cap R^n=S$ for each algebraic set $S\subset R^n$.

Recall that an ideal $I$ of $R[\x]$ is said to be \emph{real} if, for every finite sequence of polynomials $p_1,\ldots,p_\ell$ in $R[\x]$ such that $\sum_{j=1}^\ell p_j^2\in I$, we have $p_j\in I$ for each $j\in\{1,\ldots,\ell\}$, see \cite[Def.4.1.3]{bcr}.

\begin{prop}\label{rc}
Let $T\subset C^n$ be an algebraic set and let $S:=T\cap R^n$. The following conditions are equivalent.
\begin{itemize}
\item[$(\mr{i})$] $T$ is the complexification of $S$.
\item[$(\mr{ii})$] $\II_C(T)=\II_R(S)C[\x]$.
\item[$(\mr{iii})$] $\II_R(T)=\II_R(S)$.
\item[$(\mr{iv})$] $\II_R(T)$ is a real ideal of $R[\x]$.
\end{itemize}
Moreover, each of the above equivalent conditions implies the following condition $(\mr{v})$ and they are equivalent to~$(\mr{v})$ when $T\subset C^n$ is irreducible:
\begin{itemize}
\item[$(\mr{v})$] $\dim_C(T)=\dim_R(S)$.
\end{itemize}
\end{prop}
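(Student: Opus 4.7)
My plan is to establish the equivalence of (i)--(iv) via the cycle (i)$\Rightarrow$(ii)$\Rightarrow$(iii)$\Rightarrow$(iv)$\Rightarrow$(i), then derive (v) from (ii)--(iii), and verify the converse ``(v)$\Rightarrow$(i) assuming $T$ is irreducible''. The first three implications are short: (i)$\Rightarrow$(ii) is Proposition \ref{prop:zar}(i) applied to $Y:=S\subset R^n$ with the extension $C|R$; (ii)$\Rightarrow$(iii) follows by intersecting $\II_C(T)=\II_R(S)C[\x]$ with $R[\x]$ and using Corollary \ref{k} to obtain $\II_R(T)=\II_C(T)\cap R[\x]=\II_R(S)C[\x]\cap R[\x]=\II_R(S)$; (iii)$\Rightarrow$(iv) is elementary because $R$ is real, so any $\sum_{j=1}^\ell p_j^2\in\II_R(S)$ with $p_j\in R[\x]$ forces $\sum_j p_j(x)^2=0$ in $R$ for every $x\in S\subset R^n$, whence each $p_j(x)=0$ and $p_j\in\II_R(S)=\II_R(T)$.

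The main obstacle is the implication (iv)$\Rightarrow$(i), where the behavior genuinely differs from the complex case and Hilbert's Nullstellensatz is replaced by the Real Nullstellensatz \cite[Thm.4.1.4]{bcr}. Starting from $\gta:=\II_C(T)$, Lemma \ref{conjugation} yields $\ZZ_R(\II_R(T))=\ZZ_R(\II_C(T)\cap R[\x])=\ZZ_C(\II_C(T))\cap R^n=T\cap R^n=S$. Since $\II_R(T)$ is real by assumption, the Real Nullstellensatz gives $\II_R(T)=\II_R(\ZZ_R(\II_R(T)))=\II_R(S)$, recovering (iii). Putting $T':=\zcl_{C^n}(S)\subset T$ and applying Proposition \ref{prop:zar}(i), $\II_C(T')=\II_R(S)C[\x]=\II_R(T)C[\x]\subset\II_C(T)$, hence $T=\ZZ_C(\II_C(T))\subset\ZZ_C(\II_C(T'))=T'$; the reverse inclusion is trivial, so $T=T'$, which is (i). The hard point is thus bridging the purely algebraic condition (iv) with the geometric one (i): over $C|R$, unlike in the complex-over-complex setting, radicality of $\II_C(T)$ alone is not sufficient and one truly needs $\II_R(T)$ to be real.

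Finally, for ``(i)--(iv) $\Rightarrow$ (v)'', I combine (ii) with Corollary \ref{dimkl} applied to $T\subset C^n$ as an $R$-algebraic set (since $\II_C(T)=\II_R(T)C[\x]$), obtaining $\dim_C(T)=\dim_R(T)$, and then use (iii) to conclude $\dim_R(T)=\dim_R(S)$. For the converse when $T\subset C^n$ is irreducible, I set $T':=\zcl_{C^n}(S)\subset T$; Proposition \ref{prop:zar}(iii)(iv) shows that $T'\subset C^n$ is irreducible and $\dim_C(T')=\dim_R(S)$, which equals $\dim_C(T)$ by (v). Since $T'$ is a Zariski closed subset of the irreducible algebraic set $T$ of the same dimension, Lemma \ref{dimirred} (applied over $L=C$) forces $T'=T$, giving (i).
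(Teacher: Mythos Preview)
Your proof is correct and follows essentially the same route as the paper. The only variation is in closing the cycle at (iv): the paper proves (iv)$\Rightarrow$(ii) directly by writing generators $g_j=a_j+\ii b_j$ of $\II_C(T)$ and using $a_j^2+b_j^2\in\II_R(T)$ real to force $a_j,b_j\in\II_R(S)$, whereas you go (iv)$\Rightarrow$(iii)$\Rightarrow$(i) via the inclusion $\II_R(T)C[\x]\subset\II_C(T)$ and a Zariski-closure comparison; both are equally short. One minor slip: in the final paragraph you cite Proposition~\ref{prop:zar}(iii) for ``$T'$ irreducible'', but that would require $S$ irreducible, which is not assumed --- this is harmless, since your use of Lemma~\ref{dimirred} only needs $T$ irreducible.
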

\begin{proof}
Implications $(\mr{i})\Longrightarrow(\mr{ii})$ and $(\mr{ii})\Longrightarrow(\mr{iii})$ follow immediately from Proposition \ref{prop:zar}$(\mr{i})$ and Corollary \ref{k}, respectively. 
Let us prove implications $(\mr{iii})\Longleftrightarrow(\mr{iv})\Longrightarrow(\mr{ii})\Longrightarrow(\mr{i})$.

$(\mr{iii})\Longleftrightarrow(\mr{iv})\ $ As $T$ is Zariski closed in $C^n$ and $\II_R(T)=\II_C(T)\cap R[\x]$, equation \eqref{eq:zzR} implies that $\ZZ_R(\II_R(T))=S$. Thus, by the Real Nullstellensatz \cite[Thm.4.1.4]{bcr}, $\II_R(T)$ is real if and only if it coincides with $\II_R(S)$. 

$(\mr{iv})\Longrightarrow(\mr{ii})\ $ Assume that $\II_R(T)$ is real, so it coincides with $\II_R(S)$. Let $\{g_1,\ldots,g_r\}$ be a system of generators of $\II_C(T)$ in $C[\x]$. Write $g_i=a_i+\ii b_i$ for some $a_i,b_i\in R[\x]$ and set $g_i':=a_i-\ii b_i$. Observe that $g_i'g_i=a_i^2+b_i^2\in\II_C(T)\cap R[\x]=\II_R(T)$, so $a_i,b_i\in \II_R(T)=\II_R(S)$. Thus, $a_1,b_1,\ldots,a_r,b_r\in \II_R(S)$ are generators of $\II_C(T)$ in $C[\x]$, so $\II_C(T)\subset \II_R(S)C[\x]$. On the other hand, $\II_C(T)\supset \II_R(T)=\II_R(S)$, so $\II_C(T)\supset \II_R(S)C[\x]$. Thus, $\II_C(T)=\II_R(S)C[\x]$.

$(\mr{ii})\Longrightarrow(\mr{i})\ $ By Proposition \ref{prop:zar}$(\mr{i})$, $\zcl_{C^n}(S)=\ZZ_C(\II_R(S))$. Thus, if $\II_C(T)=\II_R(S)C[\x]$, then $T=\ZZ_C(\II_C(T))=\ZZ_C(\II_R(S))=\zcl_{C^n}(S)$.

Implication $(\mr{i})\Longrightarrow(\mr{v})$ is true by Proposition \ref{prop:zar}$(\mr{iv})$ (even if $T\subset C^n$ is reducible). Let us complete the proof.

$(\mr{v})\Longrightarrow(\mr{i})\ $ Suppose that $T\subset C^n$ is irreducible and $\dim_C(T)=\dim_R(S)$. Set $Z:=\zcl_{C^n}(S)$. Observe that $Z\subset T$, because $S\subset T$. Proposition \ref{prop:zar}$(\mr{iv})$ assures that $\dim_C(Z)=\dim_R(S)$, so $\dim_C(Z)=\dim_C(T)$. As $T\subset C^n$ is irreducible, we conclude $Z=T$, as required.
\end{proof}

\begin{cor}\label{cor:ii-zz}
Let $T\subset C^n$ be an algebraic set and let $S:=T\cap R^n$. Then:
\begin{itemize}
\item[$(\mr{i})$] $\ZZ_R(\II_R(T))=\ZZ_R(\II_R(S))=S$.
\item[$(\mr{ii})$] If the ideal $\II_R(T)$ of $R[\x]$ is real, then $\II_R(T)=\II_R(S)$ and $\dim_C(T)=\dim_R(S)$.
\item[$(\mr{ii}')$] If the ideal $\II_R(T)$ of $R[\x]$ is non-real, then $\II_R(T)\subsetneqq\II_R(S)$. If in addition $T\subset C^n$ is irreducible, then $\dim_C(T)>\dim_R(S)$.
\end{itemize}
\end{cor}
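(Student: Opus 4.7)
The plan is to deduce everything from Lemma \ref{conjugation} and Proposition \ref{rc}, which already contains the hard work.

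For item $(\mr{i})$, I would observe that $\II_R(T)=\II_C(T)\cap R[\x]$ and apply equation \eqref{eq:zzR} of Lemma \ref{conjugation} with $\gta:=\II_C(T)$. Since $T$ is Zariski closed in $C^n$, $T=\ZZ_C(\II_C(T))$, and so
\[
\ZZ_R(\II_R(T))=\ZZ_R(\II_C(T)\cap R[\x])=\ZZ_C(\II_C(T))\cap R^n=T\cap R^n=S.
\]
The second equality $\ZZ_R(\II_R(S))=S$ is the fact that every real algebraic set is the zero set of its ideal of vanishing polynomials, which follows from the Real Nullstellensatz \cite[Thm.4.1.4]{bcr} (and the inclusion $S\subset\ZZ_R(\II_R(S))\subset T\cap R^n=S$ coming from what we have just proven).

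For item $(\mr{ii})$, the hypothesis is exactly condition $(\mr{iv})$ of Proposition \ref{rc}. The chain of equivalences $(\mr{iv})\Longleftrightarrow(\mr{iii})\Longleftrightarrow(\mr{ii})\Longleftrightarrow(\mr{i})$ in that proposition gives immediately $\II_R(T)=\II_R(S)$ and tells us that $T$ is the complexification of $S$; the implication $(\mr{i})\Longrightarrow(\mr{v})$ of the same proposition then yields $\dim_C(T)=\dim_R(S)$.

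For item $(\mr{ii}')$, assume $\II_R(T)$ is non-real. Since $S\subset T$, we have $\II_R(T)\subset\II_R(S)$. The equality $\II_R(T)=\II_R(S)$ would be condition $(\mr{iii})$ of Proposition \ref{rc}, which is equivalent to $(\mr{iv})$; since $(\mr{iv})$ fails, the inclusion must be strict, so $\II_R(T)\subsetneqq\II_R(S)$. If in addition $T\subset C^n$ is irreducible, then by the last assertion of Proposition \ref{rc} the condition $(\mr{v})$, $\dim_C(T)=\dim_R(S)$, is equivalent to $(\mr{i})$, which is false here; so $\dim_C(T)\neq\dim_R(S)$. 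To see the inequality goes the right way, note that $\zcl_{C^n}(S)\subset T$ and, by Proposition \ref{prop:zar}$(\mr{iv})$, $\dim_C(\zcl_{C^n}(S))=\dim_R(S)$, so $\dim_R(S)\leq\dim_C(T)$; combined with the non-equality, we conclude $\dim_C(T)>\dim_R(S)$.

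The only delicate point I anticipate is making sure the chain of references to Proposition \ref{rc} is logically unambiguous (in particular that $(\mr{v})\Longrightarrow(\mr{i})$ is invoked only under the irreducibility hypothesis, as in its statement); otherwise, the corollary is essentially a bookkeeping consequence of the proposition and of Lemma \ref{conjugation}.
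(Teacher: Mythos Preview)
Your proposal is correct and follows essentially the same approach as the paper: item $(\mr{i})$ via equation \eqref{eq:zzR} of Lemma \ref{conjugation}, and items $(\mr{ii})$ and $(\mr{ii}')$ via the equivalences of Proposition \ref{rc}. The only cosmetic difference is that for the inequality $\dim_R(S)\leq\dim_C(T)$ the paper cites Corollary \ref{<=dim}$(\mr{i})$ whereas you unfold that corollary into its underlying ingredient, Proposition \ref{prop:zar}$(\mr{iv})$.
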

\begin{proof}
As $\II_R(T)=\II_C(T)\cap R[\x]$, $T$ is by hypothesis an algebraic subset of $C^n$ and $S$ is by Corollary \ref{cor-cap} an algebraic subset of $R^n$, item $(\mr{i})$ follows from \eqref{eq:zzR}. Equivalence $(\mr{iii})\Longleftrightarrow(\mr{iv})$ of Proposition \ref{rc} assures that $\II_R(T)=\II_R(S)$ if $\II_R(T)$ is real, and $\II_R(T)\subsetneqq\II_R(S)$ if $\II_R(T)$ is non-real. If $\II_R(T)=\II_R(S)$, then $T$ is the complexification of $S$ and $\dim_C(T)=\dim_R(S)$ by Proposition \ref{prop:zar}$(\mr{iv})$. If $T\subset C^n$ is irreducible and $\II_R(T)\subsetneqq\II_R(S)$, that is, $\II_R(T)$ is non-real, then Corollary \ref{<=dim}$(\mr{i})$ and equivalence $(\mr{iv})\Longleftrightarrow(\mr{v})$ of Proposition \ref{rc} imply that $\dim_C(T)>\dim_R(S)$, as required. 
\end{proof}

Let us present a generalization of Corollary \ref{cor:ii-zz}$(\mr{i})$ we will use in the proof of Proposition~\ref{S_0}.

\begin{lem}\label{lem:ts}
Let $R|K$ be a field extension, let $\kr$ be the algebraic closure of $K$ in $R$, let $\kbar=\kr[\ii]$ be the algebraic closure of $K$ in $C$, let $T\subset C^n$ be a $\kbar$-algebraic set and let $S:=T\cap R^n$. Then $\ZZ_R(\II_\kr(T))=\ZZ_R(\II_\kr(S))=S$. In particular, $S\subset R^n$ is a $\kr$-algebraic~set.
\end{lem}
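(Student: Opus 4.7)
The plan is to apply Theorem \ref{thm:gc0} to the $\kbar$-algebraic set $T\subset C^n$ with ground field chosen to be $\kr$ rather than $K$. The key observation making this choice particularly convenient is that $\kr$, being the algebraic closure of $K$ inside the real closed field $R$, is itself real closed, so $\kbar=\kr[\ii]$ is the algebraic closure of $\kr$ in $C$ and $[\kbar:\kr]=2$. Hence the finite Galois subextension required by step (2) of Algorithm \ref{gc} can be taken to be $E:=\kbar$ itself, and the corresponding Galois group $G':=G(\kbar:\kr)=\{e,\ol{\varphi}\}$ consists only of the identity and the restriction $\ol\varphi$ of the usual conjugation.

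Next I would choose generators $g_1,\ldots,g_r\in\kbar[\x]$ with $T=\ZZ_C(g_1,\ldots,g_r)$ and take $\Phi_{\ol\varphi}$ to be the full conjugation $\varphi:C\to C$, $x+\ii y\mapsto x-\ii y$ (which fixes $R$, and hence $\kr$, so restricts to $\ol\varphi$ on $\kbar$). Then Theorem \ref{thm:gc0}(i)(ii)(v) yields
\begin{equation*}
\ZZ_C(\II_\kr(T))=\zcl^{\kr}_{C^n}(T)=T\cup Z^{\ol\varphi}=T\cup\varphi_n(T).
\end{equation*}
Intersecting this identity with $R^n$ is the decisive step: since $\varphi_n$ fixes $R^n$ pointwise, a point $y\in R^n$ lies in $\varphi_n(T)$ iff $y=\varphi_n(y)\in\varphi_n(T)$ iff $y\in T$, so $\varphi_n(T)\cap R^n=T\cap R^n=S$. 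Therefore
\begin{equation*}
\ZZ_R(\II_\kr(T))=\ZZ_C(\II_\kr(T))\cap R^n=(T\cup\varphi_n(T))\cap R^n=S.
\end{equation*}

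Finally, the other equality follows formally from the inclusion $S\subset T$, which gives $\II_\kr(T)\subset\II_\kr(S)$ and hence $S\subset\ZZ_R(\II_\kr(S))\subset\ZZ_R(\II_\kr(T))=S$. The $\kr$-algebraicity of $S\subset R^n$ then comes for free, since $\II_\kr(T)\subset\kr[\x]$ exhibits $S$ as the zero set in $R^n$ of a family of polynomials with coefficients in $\kr$.

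I do not expect any substantial obstacle: the whole argument is essentially bookkeeping on top of Theorem \ref{thm:gc0}. The only point worth stressing is that the present setting — where the ground field is not $K$ but its real closure $\kr$ in $R$ — makes the Galois group of the relevant algebraic extension reduce to the order-two group generated by conjugation, so that the $\kr$-Zariski closure of any $\kbar$-algebraic subset of $C^n$ is just the union with its conjugate, exactly the feature that forces the $R^n$-intersection to collapse onto $S$.
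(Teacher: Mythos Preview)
Your proof is correct and takes a genuinely different route from the paper's. The paper argues directly and elementarily: writing generators $g_j=a_j+\ii b_j$ with $a_j,b_j\in\kr[\x]$ gives $S=\ZZ_R(a_1,b_1,\ldots,a_s,b_s)$, hence $\kr$-algebraicity at once; for the nontrivial inclusion $\ZZ_R(\II_\kr(T))\subset S$ it picks, for any $x\in R^n\setminus T$, a polynomial $f=a+\ii b\in\II_\kbar(T)$ with $f(x)\neq0$ and observes that $a^2+b^2=(a+\ii b)(a-\ii b)\in\II_\kbar(T)\cap\kr[\x]=\II_\kr(T)$ does not vanish at $x$. You instead invoke the Galois-completion machinery of Theorem \ref{thm:gc0} with ground field $\kr$, where the Galois group collapses to $\{e,\ol\varphi\}$ and the $\kr$-Zariski closure of $T$ in $C^n$ becomes $T\cup\varphi_n(T)$ --- in effect a $C|\kr$ analogue of Lemma \ref{conjugation}. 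Both arguments are short; the paper's is self-contained and avoids any appeal to earlier structural results, while yours situates the lemma cleanly as a special case of the general theory and makes the role of conjugation explicit.
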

\begin{proof}
Let $g_1,\ldots,g_s\in\kbar[\x]$ be such that $T=\ZZ_C(g_1,\ldots,g_s)$. Write $g_j=a_j+\ii b_j$ with $a_j,b_j\in\kr[\x]$. As $S=\ZZ_R(a_1,b_1,\ldots,a_s,b_s)$, it follows that $S\subset R^n$ is $\kr$-algebraic, so $S=\ZZ_R(\II_\kr(S))\subset\ZZ_R(\II_\kr(T))$. It remains to show that $\ZZ_R(\II_\kr(T))\subset S$. Let $x\in R^n\setminus S=R^n\setminus T$. As $T\subset C^n$ is $\kbar$-algebraic and $x\not\in T$, there exists $f\in\II_\kbar(T)$ such that $f(x)\neq0$. Write $f=a+\ii b$ with $a,b\in\kr[\x]$ and observe that $h=a^2+b^2=(a+\ii b)(a-\ii b)\in\II_{\kbar}(T)\cap\kr[\x]=\II_{\kr}(T)$. As $a(x),b(x)\in R$ and $a(x)+\ii b(x)=f(x)\neq0$, we deduce $h(x)=a(x)^2+b(x)^2\neq0$, so $x\not\in\ZZ_R(\II_\kr(T))$, as required.
\end{proof}

\subsection{The projective case and the $L|K$-elimination theory}
\label{subsec:proj} \emph{Along this subsection, $L|K$ is an extension of fields such that $L$ is either algebraically closed or real closed.}

Here we extend the notion of $K$-algebraic set and some related concepts from the affine to the projective case. We will follow Mumford's strategy \cite[Sections~2A-2C]{mumford}.

Denote $L[\x_0,\x]:=L[\x_0,\x_1,\ldots,\x_n]$ and define $L[\x_0,\x]_\sfh:=\{f\in L[\x_0,\x]:\text{$f$ is homogeneous}\}$. Consider $K[\x_0,\x]$ as a subset of $L[\x_0,\x]$, and $K[\x_0,\x]_\sfh$ as a subset of $L[\x_0,\x]_\sfh$. Let $\PP^n(L)$ be the usual projective $n$-space over $L$ and let $[x_0,x]=[x_0,x_1,\ldots,x_n]$ be its homogeneous coordinates. Given sets $F\subset L[\x_0,\x]_\sfh$ and $S\subset \PP^n(L)$, define:
\begin{align*}
\PP\ZZ_L(F)&:=\{[x_0,x]\in\PP^n(L): f(x_0,x)=0,\ \forall f\in F\},\\
\PP\II_K(S)&:=\big(\{f\in K[\x_0,\x]_\sfh: f(x_0,x)=0,\ \forall [x_0,x]\in S\}\big)K[\x_0,\x],
\end{align*}
where as usual $(G)K[\x_0,\x]$ denotes the ideal of $K[\x_0,\x]$ generated by a given subset $G$ of $K[\x_0,\x]$. If $F=\{f_1,\ldots,f_s\}\subset L[\x_0,\x]_\sfh$, we set $\PP\ZZ_L(f_1,\ldots,f_s):=\PP\ZZ_L(F)$. Recall that an ideal $\mk{I}$ of $K[\x_0,\x]$ is said to be homogeneous if the following is true: given any $g\in\mk{I}$, if we write $g=\sum_{i=0}^dg_i$ with $d=\deg(g)$ and $g_i\in L[\x_0,\x]_\sfh$ of degree $i$, then each $g_i$ belongs to $\mk{I}$. By Noetherianity, $\mk{I}$ is homogeneous if and only if it admits a finite system of homogeneous generators in $K[\x_0,\x]$. If $\mk{I}$ is a homogeneous ideal of $K[\x_0,\x]$, we define $\PP\ZZ_L(\mk{I})$ as the set $\PP\ZZ_L(\mk{I}\cap K[\x_0,\x]_\sfh)$. Evidently, $\PP\II_K(S)$ is a homogeneous ideal of $K[\x_0,\x]$.

\begin{defn}\label{def:proj-K-alg}
Let $X$ be a subset of $\PP^n(L)$. We say that $X$ is a \emph{$K$-algebraic subset of $\PP^n(L)$}, or $X\subset\PP^n(L)$ is a \emph{$K$-algebraic set}, if $X=\PP\ZZ_L(F)$ for some $F\subset K[\x_0,\x]_\sfh$. $\sqbullet$
\end{defn}

Observe that $X\subset\PP^n(L)$ is $K$-algebraic if and only if $X=\PP\ZZ_L(\PP\II_K(X))$. If $K=L$, an $L$-algebraic subset of $\PP^n(L)$ is a usual algebraic subset of $\PP^n(L)$. As in the affine case, the family of all $K$-algebraic subsets of $\PP^n(L)$ provides a topology on $\PP^n(L)$ coarser than the usual Zariski topology on $\PP^n(L)$. In particular, such a topology is Noetherian.

\begin{defn}\label{def:proj-K-zar}
The \emph{$K$-Zariski topology of $\PP^n(L)$} is the Noetherian topology of $\PP^n(L)$ whose closed sets are the $K$-algebraic subsets of $\PP^n(L)$. We call the open sets of this topology \emph{$K$-Zariski open} subsets of $\PP^n(L)$ and its closed sets \emph{$K$-Zariski closed} subsets of $\PP^n(L)$.

Equip $\PP^n(L)$ with the $K$-Zariski topology. Given a subset $S$ of $\PP^n(L)$, we say that $S$ is \emph{$K$-Zariski locally closed} if it is locally closed in $\PP^n(L)$, that is, $S=S_1\setminus S_2$ for some closed subsets $S_1$ and $S_2$ of $\PP^n(L)$. We also say that $S$ is \emph{$K$-constructible} if it is a Boolean combination of closed subsets of $\PP^n(L)$ or, equivalently, if $S$ is the disjoint union of finitely many $K$-Zariski locally closed subsets of $\PP^n(L)$. 

We call the set $\zcl_{\PP^n(L)}^K(S):=\PP\ZZ_L(\PP\II_K(S))$ the \emph{$K$-Zariski closure of $S$ (in~$\PP^n(L)$)}, that is, the closure of $S$ in $\PP^n(L)$. $\sqbullet$
\end{defn}

As in the affine case, making use of the $K$-Zariski topology of $\PP^n(L)$, one can introduce the concepts of $K$-irreducible $K$-algebraic subset of $\PP^n(L)$ and $K$-irreducible components of a $K$-algebraic subset of $\PP^n(L)$.

For each $i\in\{0,\ldots,n\}$, define the projective hyperplane $H^n_i:=\{[x_0,x]\in\PP^n(L):x_i=0\}$ and its complement $U^n_i:=\PP^n(L)\setminus H^n_i$ in $\PP^n(L)$, and denote $\theta^n_i:U^n_i\to L^n$ the corresponding affine chart
$$\textstyle
\theta^n_i([x_0,x]):=\big(\frac{x_0}{x_i},\ldots,\frac{x_{i-1}}{x_i},\frac{x_{i+1}}{x_i},\ldots,\frac{x_n}{x_i}\big).
$$
The set $\{U^n_i\}_{i=0}^n$ is a $K$-Zariski open cover of $\PP^n(L)$ and $(\theta^n_i)^{-1}(x)=[x_1,\ldots,x_i,1,x_{i+1},\ldots,x_n]$.

\emph{Equip $L^n$ and $\PP^n(L)$ with their respective $K$-Zariski topologies and each $U^n_i$ with the relative topology induced by the $K$-Zariski topology of $\PP^n(L)$.} If $f\in K[\x_0,\x]_\sfh$, then $\theta^n_i(U^n_i\cap\PP\ZZ_L(f))=\ZZ_L(f(\x_1,\ldots,\x_i,1,\x_{i+1},\ldots,\x_n))$, so $\theta^n_i$ is a closed map.

Consider any $p\in L[\x]$ and write $p=\sum_{\ell=0}^ep_\ell$ with $e=\deg(p)$ and $p_\ell\in L[\x]_\sfh$ of degree $\ell$. Denote $p^{\sfh,i}(\x_0,\x)\in K[\x_0,\x]=K[\x_0,\x_1,\ldots,\x_n]$ the $i^{\mr{th}}$-homogenization of $p$, that is,
$$\textstyle
p^{\sfh,i}(\x_0,\x):=\sum_{\ell=0}^e\x_i^{e-\ell}p_\ell(\x_0,\ldots,\x_{i-1},\x_{i+1},\ldots,\x_n).
$$
For short, we set $p^\sfh:=p^{\sfh,0}$ and we simply call $p^\sfh$ \emph{homogenization of $p$}. As $p^{\sfh,i}(x_0,x)=x_i^ep\big(\frac{x_0}{x_i},\ldots,\frac{x_{i-1}}{x_i},\frac{x_{i+1}}{x_i},\ldots,\frac{x_n}{x_i}\big)$ for all $(x_0,x)\in L^{n+1}$ with $x_i\neq0$, we deduce $(\theta^n_i)^{-1}(\ZZ_L(p))=U^n_i\cap\PP\ZZ_L(p^{\sfh,i})$, so $(\theta^n_i)^{-1}$ is also a closed map. It follows that each affine chart $\theta^n_i$ is a homeo\-morphism and hence a set $S\subset\PP^n(L)$ is $K$-algebraic if and only if each $\theta^n_i(U^n_i\cap X)\subset L^n$ is $K$-algebraic. Similarly, if a set $S\subset\PP^n(L)$ is $K$-Zariski locally closed ($K$-constructible) if and only if each $\theta^n_i(U^n_i\cap X)\subset L^n$ is $K$-Zariski locally closed ($K$-constructible, respectively).

\begin{defn}\label{def:proj-K-dim}
Given a subset $S$ of $\PP^n(L)$, we define the \emph{$K$-dimension $\dim_K(S)$ of $S$ (in $\PP^n(L)$)} by setting $\dim_K(S):=\max_{i\in\{0,\ldots,n\}}\{\dim_K(\theta^n_i(U^n_i\cap S))\}$, where $\dim_K(\theta^n_i(U^n_i\cap S))$ is the Krull dimension of the ring $K[\x]/\II_K(\theta^n_i(U^n_i\cap S))$ as in Definition \ref{def:K-dim}. $\sqbullet$
\end{defn}

\begin{remarks}\label{rem265}
$(\mr{i})$ Let $i,j\in\{0,\ldots,n\}$ with $i\neq j$. Suppose that $i<j$. Then $A^n_{ij}:=\theta^n_i(U^n_i\cap U^n_j)$ is equal to the $K$-Zariski open set $\{x_j\neq0\}$ of $L^n$, $B^n_{ij}:=\theta^n_j(U^n_i\cap U^n_j)$ is equal to the $K$-Zariski open set $\{x_{i+1}\neq0\}$ of $L^n$ and the transition map induced by $\theta^n_i$ and $\theta^n_j$, that is,
$$\textstyle
A^n_{ij}\to B^n_{ij},\quad x\mapsto\theta^n_j((\theta^n_i)^{-1}(x))=\big(\frac{x_1}{x_j},\ldots,\frac{x_i}{x_j},\frac{1}{x_j},\frac{x_{i+1}}{x_j},\ldots,\frac{x_{j-1}}{x_j},\frac{x_{j+1}}{x_j},\ldots,\frac{x_n}{x_j}\big),
$$
is a $K$-biregular isomorphism. Similar considerations can be repeated when $i>j$.

$(\mr{ii})$ In Definition \ref{sub-alg-reg} below, given a $K$-algebraic set $X\subset L^n$, we will introduce the notion of $L|K$-local ring $\reg^{L|K}_{X,a}$ of $X$ at each of its points $a$. Such a notion is local with respect to the $K$-Zariski topology of $L^n$ and it is invariant under $K$-biregular isomorphisms, up to ring isomorphisms. In Proposition \ref{prop:XY}$(\mr{iii})$ below, we will see that $\dim_K(X)=\max_{a \in X}\{\dim(\reg^{L|K}_{X,a})\}$.

$(\mr{iii})$ Let $S\subset\PP^n(L)$ be a $K$-constructible set. The preceding items $(\mr{i})$ and $(\mr{ii})$ describe the $K$-Zariski local nature of the notion of $\dim_K(S)$ introduced in Definition \ref{def:proj-K-dim}. In addition, by Remark \ref{dime}, $\dim_K(S)$ coincides with the usual ($L$-)dimension $\dim_L(S)$ of $S$ viewed as an $L$-constructible subset of $\PP^n(L)$. $\sqbullet$
\end{remarks}

Assertion $(\mr{iii})$ of the preceding remark justifies the following notation.

\begin{notation}
If $S\subset\PP^n(L)$ is a $K$-constructible set, we simply say that $S$ has \emph{dimension $\dim(S)$}, where $\dim(S):=\dim_K(S)=\dim_L(S)$.
\end{notation}

\emph{In what follows, we identify $U^n_0=\PP^n(L)\setminus H^n_0$ and $L^n$ via the affine chart $\theta^n_0$. In this way, each subset of $L^n$ is also a subset of $\PP^n(L)$ and $L^n$ equipped with its $K$-Zariski topology is a topological subspace of $\PP^n(L)$ equipped with its $K$-Zariski topology.} Observe that $L^n$ is a $K$-Zariski open subset of $\PP^n(L)$. Moreover, a subset $S$ of $L^n$ is $K$-constructible in $L^n$ in the sense of Definition \ref{def:K-constructible} if and only if it is $K$-constructible in $\PP^n(L)$ in the sense of Definition~\ref{def:proj-K-zar}.

A simple but useful result is the following.

\begin{lem}\label{lem:266}
Let $X\subset L^n$ be a $K$-algebraic set and let $\overline{X}:=\zcl^K_{\PP^n(L)}(X)$ be its $K$-Zariski closure in $\PP^n(L)$. Denote $\mk{J}$ the ideal of $K[\x_0,\x]$ generated by the homogenizations of all polynomials in $\II_K(X)$, that is, $\mk{J}:=(\{g^\sfh\}_{g\in\II_K(X)})K[\x_0,\x]$. Then $\PP\II_K(\overline{X})=\mk{J}$ and $\overline{X}=\PP\ZZ_L(\mk{J})$.

In addition, there exist a finite system of generators $\{g_1,\ldots,g_s\}$ of $\II_K(X)$ in $K[\x]$ such that $\PP\II_K(\overline{X})=(g_1^\sfh,\ldots,g_s^\sfh)L[\x_0,\x]$ and $\overline{X}=\PP\ZZ_L(g_1^\sfh,\ldots,g_s^\sfh)$.
\end{lem}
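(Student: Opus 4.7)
The plan is to reduce the lemma to the affine Hilbert basis theorem plus the classical trick that homogeneous polynomials vanishing on an affine set are controlled by their dehomogenizations. The core equality to establish is $\mathfrak{J}=\PP\II_K(X)$; everything else follows.

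\textbf{Step 1 (the main content): show $\mathfrak{J}=\PP\II_K(X)$.} The inclusion $\mathfrak{J}\subset\PP\II_K(X)$ is direct: for any $g\in\II_K(X)$ and any $x=(x_1,\dots,x_n)\in X$ identified with $[1,x_1,\dots,x_n]\in U^n_0$, one has $g^\sfh(1,x)=g(x)=0$, so $g^\sfh\in\PP\II_K(X)$, hence $\mathfrak{J}\subset\PP\II_K(X)$. For the reverse, take a homogeneous $f\in K[\x_0,\x]_\sfh$ of degree $d$ vanishing on $X$. Write $f=\sum_{k=0}^{d}\x_0^{d-k}f_k(\x)$ with $f_k\in K[\x]_\sfh$ of degree $k$, set $g(\x):=f(1,\x)=\sum_{k=0}^{d}f_k(\x)$ and let $e:=\deg g$, so $f_k=0$ for $k>e$. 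For every $x\in X$ one has $g(x)=f(1,x)=0$, so $g\in\II_K(X)$. A direct computation gives $g^\sfh(\x_0,\x)=\sum_{k=0}^{e}\x_0^{e-k}f_k(\x)$, whence
\[
f(\x_0,\x)=\x_0^{d-e}\,g^\sfh(\x_0,\x)\in\mathfrak{J}.
\]
This proves $\PP\II_K(X)\subset\mathfrak{J}$, and hence the equality.

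\textbf{Step 2: identify $\PP\II_K(\overline X)$ and compute $\overline X$.} By definition, $\overline X=\PP\ZZ_L(\PP\II_K(X))$. From $X\subset\overline X$ we get $\PP\II_K(\overline X)\subset\PP\II_K(X)$, while $\overline X=\PP\ZZ_L(\PP\II_K(X))$ yields the opposite inclusion, so $\PP\II_K(\overline X)=\PP\II_K(X)=\mathfrak{J}$ by Step 1. Consequently $\overline X=\PP\ZZ_L(\PP\II_K(\overline X))=\PP\ZZ_L(\mathfrak{J})$, which gives both equalities in the first part.

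\textbf{Step 3: construction of the generators $g_1,\dots,g_s$.} Using Noetherianity of $K[\x]$, fix a finite generating system $f_1,\dots,f_r$ of $\II_K(X)$. Using Noetherianity of $K[\x_0,\x]$, the ideal $\mathfrak{J}$, being generated by the set $\{g^\sfh:g\in\II_K(X)\}$, admits a finite generating subset of this form: there exist $h_1,\dots,h_t\in\II_K(X)$ with $\mathfrak{J}=(h_1^\sfh,\dots,h_t^\sfh)K[\x_0,\x]$. Setting $\{g_1,\dots,g_s\}:=\{f_1,\dots,f_r,h_1,\dots,h_t\}$, the $g_i$ generate $\II_K(X)$ in $K[\x]$ (because the $f_j$ already do) and their homogenizations $g_i^\sfh$ generate $\mathfrak{J}$ in $K[\x_0,\x]$ (because the $h_j^\sfh$ already do). Substituting into Step~2 gives
\[
\PP\II_K(\overline X)=(g_1^\sfh,\dots,g_s^\sfh)K[\x_0,\x],\qquad \overline X=\PP\ZZ_L(g_1^\sfh,\dots,g_s^\sfh),
\]
finishing the proof. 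The only subtle point is the identity $f=\x_0^{d-e}g^\sfh$ in Step~1; once this is in hand the rest is bookkeeping, and notably no Gröbner-basis argument is needed because we enlarge the generating set of $\II_K(X)$ to accommodate a finite generating set of $\mathfrak{J}$.
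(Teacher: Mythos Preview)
Your proof is correct and, for Steps~1--2, follows essentially the same route as the paper: both establish the key identity $f=\x_0^{d-e}g^\sfh$ for homogeneous $f$ vanishing on $X$, and both conclude $\PP\II_K(\overline X)=\mathfrak J$ and $\overline X=\PP\ZZ_L(\mathfrak J)$ from it (the paper works directly with $\PP\II_K(\overline X)$, you detour through $\PP\II_K(X)$ and then identify the two, but this is cosmetic).

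Your Step~3 differs slightly from the paper's. The paper starts from a finite homogeneous generating set $\{G_1,\dots,G_s\}$ of $\mathfrak J$, strips off the maximal power of $\x_0$ dividing each $G_i$ (using that $X\cap H^n_0=\varnothing$ forces the stripped $G'_i$ to stay in $\mathfrak J$), sets $g_i:=G_i(1,\x)$ so that $g_i^\sfh=G_i$, and then \emph{proves} that these $g_i$ already generate $\II_K(X)$ by dehomogenizing the relation $f^\sfh=\sum H_iG_i$. Your construction instead enlarges an arbitrary generating set of $\II_K(X)$ by finitely many $h_j\in\II_K(X)$ whose homogenizations happen to generate $\mathfrak J$. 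Both arguments are valid; yours is shorter and avoids the $\x_0$-stripping, while the paper's gives the extra information that \emph{any} finite homogeneous generating set of $\mathfrak J$ (after normalization) dehomogenizes to a generating set of $\II_K(X)$.
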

\begin{proof}
A standard argument works. Let $g\in\II_K(X)$. As $g^\sfh(1,\x)=g(\x)$, $\PP\ZZ_L(g^\sfh)$ is a $K$-Zariski closed subset of $\PP^n(L)$ containing $X$, so $\overline{X}\subset\PP\ZZ_L(g^\sfh)$ and $\mk{J}\subset\PP\II_K(\overline{X})$. Let $F\in\PP\II_K(\overline{X})\cap L[\x_0,\x]_\sfh$ and define $f(\x):=F(1,\x)\in\II_K(X)$. As $F(\x_0,\x)=\x_0^\ell f^\sfh(\x_0,\x)$ for some $\ell\in\N$, we have $F\in\mk{J}$, so $\PP\II_K(\overline{X})\subset\mk{J}$. It follows that $\PP\II_K(\overline{X})=\mk{J}$ and hence $\overline{X}=\PP\ZZ_L(\PP\II_K(\overline{X}))=\PP\ZZ_L(\mk{J})$.

Let $\{G_1,\ldots,G_s\}$ be a finite system of homogeneous generators of $\PP\II_K(\overline{X})$ in $K[\x_0,\x]$. Let $\ell_i\in\N$ be the largest natural number such that the monomial $\x_0^{\ell_i}$ divides $G_i(\x_0,\x)$ in $K[\x_0,\x]$ and let $G'_i(\x_0,\x)\in K[\x_0,\x]_\sfh$ be the unique homogeneous polynomial such that $G_i(\x_0,\x)=\x_0^{\ell_i}G'_i(\x_0,\x)$. As $\PP\II_K(\overline{X})=\PP\II_K(X)$ and $X\cap H^n_0=\varnothing$, the homogeneous polynomial $G'_i$ belongs to $\PP\II_K(\overline{X})$. Therefore, we replace each $G_i$ by $G'_i$ and assume that $\ell_i=0$ for each $i\in\{1,\ldots,s\}$. Each polynomial $g_i(\x):=G_i(1,\x)$ belongs to $\II_K(X)$ and $g_i^\sfh=G_i$. It remains to show that $\II_K(X)=(g_1,\ldots,g_s)K[\x]$. If $f$ is any polynomial in $\II_K(X)$, then $f^\sfh\in\mk{J}=\PP\II_K(\overline{X})$ so $f^\sfh=\sum_{i=1}^sH_iG_i$ for some $H_i\in K[\x_0,\x]_\sfh$, so $f(\x)=f^\sfh(1,\x)=\sum_{i=1}^sH_i(1,\x)g_i(\x)$, as required. 
\end{proof}

\emph{Fix $m\in\N^*$.} Equip $L^n\times L^m=L^{n+m}$ with the $K$-Zariski topology of $L^{n+m}$. Define the $K$-Zariski topologies of $\PP^n(L)\times L^m$, $L^n\times\PP^m(L)$ and $\PP^n(L)\times\PP^m(L)$, and the corresponding concepts of $K$-constructible sets in the usual way as follows:
\begin{itemize}
\item Let $([\x_0,\x],\y)=([\x_0,\ldots,\x_n],(\y_1,\ldots,\y_m))$ be the coordinates of $\PP^n(L)\times L^m$. The closed sets of the $K$-Zariski topology of $\PP^n(L)\times L^m$ are the zero sets of families of polynomials in $K[\y][\x_0,\x]_\sfh$, that is, polynomials in the variables $(\x_0,\x,\y)$ with coefficients in $K$ that are homogeneous of some degree in the variables $(\x_0,\x)$. \emph{Equip $\PP^n(L)\times L^m$ with the $K$-Zariski topology.} Then the topological subspaces $\{U^n_i\times L^m\}_{i=0}^n$ of $\PP^n(L)\times L^m$ form an open cover of $\PP^n(L)\times L^m$ and each map $\theta^n_i\times\mr{id}_{L^m}:U^n_i\times L^m\to L^{n+m}$ is a homeomorphism. 
\item The $K$-Zariski topology of $L^n\times\PP^m(L)$ is similarly defined using coordinates $(\x,[\y_0,\y])=((\x_1,\ldots,\x_n),[\y_0,\ldots,\y_m])$ and polynomials in $K[\x][\y_0,\y]_\sfh$. \emph{Equip $L^n\times\PP^m(L)$ with the $K$-Zariski topology.} Again, the topological subspaces $\{L^n\times U^m_j\}_{j=0}^m$ of $L^n\times\PP^m(L)$ form an open cover of $L^n\times\PP^m(L)$ and each map $\mr{id}_{L^n}\times\theta^m_j:L^n\times U^m_j\to L^{n+m}$ is a homeomorphism.
\item Let $([\x_0,\x],[\y_0,\y])=([\x_0,\ldots,\x_n],[\y_0,\ldots,\y_m])$ be the coordinates of $\PP^n(L)\times\PP^m(L)$. The closed sets of the $K$-Zariski topology of $\PP^n(L)\times\PP^m(L)$ are the zero sets of families of polynomials in $K[\x_0,\x,\y_0,\y]$ that are homogeneous of some bi-degree separately in the variables $[\x_0,\x]$ and $[\y_0,\y]$. \emph{Equip $\PP^n(L)\times\PP^m(L)$ with the $K$-Zariski topology.} Once again, the topological subspaces $\{U^n_i\times U^m_j\}_{i=0,\ldots,n,\,j=0,\ldots,m}$ of $\PP^n(L)\times\PP^m(L)$ form an open cover of $\PP^n(L)\times\PP^m(L)$ and each map $\theta^n_i\times\theta^m_j:U^n_i\times U^m_j\to L^{n+m}$ is a homeomorphism.
\item Let $T$ be one of the three topological spaces $\PP^n(L)\times L^m$, $L^n\times\PP^m(L)$ and $\PP^n(L)\times\PP^m(L)$, and let $S$ be a subset of $T$. We say that: $S$ is \emph{$K$-Zariski locally closed} if it is locally closed in $T$, $S$ is \emph{$K$-constructible} if it is a Boolean combination of closed subsets of $T$ and $S$~is \emph{$K$-irreducible} if it is irreducible as a topological subspace of~$T$. If $S$ is closed in~$T$, then we also say that $S\subset T$ is \emph{$\Q$-algebraic}.
\item As $L^n$ was identified with $U^n_0$ via $\theta^n_0$ and $L^m$ with $U^m_0$ via $\theta^m_0$, the products $L^n\times\PP^m(L)$ and $\PP^n(L)\times L^m$ are topological subspaces of $\PP^n(L)\times\PP^m(L)$. In this way, each ($K$-constructible) subset of either $L^n\times\PP^m(L)$ or $\PP^n(L)\times L^m$ is also a ($K$-constructible) subset of $\PP^n(L)\times\PP^m(L)$.
\end{itemize}

\begin{defn}\label{def267}
Given a subset $S$ of $\PP^n(L)\times\PP^m(L)$, we define the \emph{$K$-dimension $\dim_K(S)$ of~$S$ (in $\PP^n(L)\times\PP^m(L)$)} by $\dim_K(S):=\max_{(i,j)\in\{0,\ldots,n\}\times\{0,\ldots,m\}}\{\dim_K((\theta^n_i\times\theta^m_j)(S\cap(U^n_i\times U^m_j)))\}$. $\sqbullet$
\end{defn}

\begin{remark}\label{rem267}
By Remarks \ref{rem265}$(\mr{i})$ and \ref{dime}, if $S\subset\PP^n(L)\times\PP^m(L)$ is $K$-constructible, then $\dim_K(S)$ coincides with the usual ($L$-)dimension $\dim_L(S)$ of $S$ viewed as a usual ($L$-)constructible subset of $\PP^n(L)\times\PP^m(L)$. $\sqbullet$
\end{remark}

The preceding remark justifies the following notation.

\begin{notation}
If $S\subset\PP^n(L)\times\PP^m(L)$ is a $K$-constructible set, we simply say that $S$ has \emph{dimension $\dim(S)$}, where $\dim(S):=\dim_K(S)=\dim_L(S)$. $\sqbullet$
\end{notation}

The next result is a $L|K$-version of the main theorem of elimination theory and Chevalley's projection theorem. 

\begin{thm}[$L|K$-elimination and Chevalley's $L|K$-projection theorems]\label{thm:K-elimination}
Let $L$ be an algebraically closed field and let $\rho:\PP^n(L)\times\PP^m(L)\to\PP^m(L)$ be the canonical projection onto the second factor. We have:
\begin{itemize}
\item[$(\mr{i})$] $\rho$ is a $K$-Zariski closed map, that is, if $S$ is a $K$-algebraic subset of $\PP^n(L)\times\PP^m(L)$, then $\rho(S)$ is a $K$-algebraic subset of $\PP^m(L)$. The same is true for the canonical projection $\PP^n(L)\times L^m\to L^m$.
\item[$(\mr{ii})$] $\rho$ maps $K$-constructible sets in $K$-constructible sets, that is, if $S$ is a $K$-constructible subset of $\PP^n(L)\times\PP^m(L)$, then $\rho(S)$ is a $K$-constructible subset of $\PP^m(L)$. The same is true for the canonical projection $\PP^n(L)\times L^m\to L^m$.
\end{itemize}
\end{thm}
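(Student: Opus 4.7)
The plan is to bootstrap both statements from the classical ($L|L$-)main theorem of elimination theory and from Chevalley's projection theorem, tracking the $K$-structure via Galois invariance. As a preliminary, I extend Corollary \ref{cor:invariance} to products of projective spaces: \emph{a $\kbar$-algebraic subset of $\PP^n(L)\times\PP^m(L)$ is $K$-algebraic if and only if it is $G$-invariant under the coordinate-wise action of $G:=G(L:K)$}, where $\kbar$ denotes the algebraic closure of $K$ in $L$. This follows routinely from the affine version of Corollary \ref{cor:invariance} applied to the standard cover $\{U^n_i\times U^m_j\}$, since $K$-algebraicity, $\kbar$-algebraicity and $G$-invariance are all local with respect to this cover (the charts $\theta^n_i\times\theta^m_j$ are $G$-equivariant, because $\psi([x])=[\psi(x)]$ preserves the affine pieces $U^n_i$).

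For (i), let $S\subset\PP^n(L)\times\PP^m(L)$ be $K$-algebraic, defined by bihomogeneous polynomials $F_1,\dots,F_r\in K[\x_0,\x,\y_0,\y]$. By the projective extension of Corollary \ref{cor:invariance}, $S$ is $G$-invariant, and since $\rho$ is projection onto coordinates (hence defined over $\Q\subset K$), $\rho$ commutes with the $G$-action and $\rho(S)$ is $G$-invariant too. The classical elimination theorem applied over the algebraically closed field $L$ gives that $\rho(S)$ is $L$-algebraic; applied over the algebraically closed field $\kbar$ to the same $F_i$, it yields bihomogeneous generators $G_1,\dots,G_s\in\kbar[\y_0,\y]$ of $\rho_\kbar(S_\kbar)\subset\PP^m(\kbar)$, where $S_\kbar:=\PP\ZZ_\kbar(F_1,\dots,F_r)$. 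The key step is then the identity $\rho(S)=\PP\ZZ_L(G_1,\dots,G_s)$: for any $y\in\PP^m(L)$, the condition $y\in\rho(S)$ translates by Hilbert's Nullstellensatz into the non-existence of $N\in\N$ with $(\x_0,\dots,\x_n)^N\subset(F_1(\cdot,y),\dots,F_r(\cdot,y))L[\x_0,\dots,\x_n]$, a criterion polynomial in $y$ and insensitive to which algebraically closed field contains the coefficients, so it is exactly what is encoded by the $G_j$. Hence $\rho(S)$ is $\kbar$-algebraic and, being $G$-invariant, it is $K$-algebraic by the projective extension of Corollary \ref{cor:invariance}. The variant $\PP^n(L)\times L^m\to L^m$ reduces to the projective case by intersecting with the chart $U^m_0\subset\PP^m(L)$.

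For (ii), I argue by Noetherian induction on the $K$-Zariski closed subsets of $\PP^m(L)$. Writing a $K$-constructible set as a finite disjoint union of $K$-Zariski locally closed sets reduces the problem to a single $U=C\setminus D$ with $C\supsetneq D$ both $K$-algebraic. By (i), the $K$-Zariski closure $C':=\zcl^K_{\PP^m(L)}(\rho(U))\subset\rho(C)$ is $K$-algebraic. Adapting Chevalley's classical argument to the $K$-Zariski topology (decompose $C'$ into its $K$-irreducible components and, on each component, pick generators of the coordinate ring over $K[C']$ and compute the leading coefficients of their minimal polynomials over the fraction field), one produces a non-empty $K$-Zariski open subset $V\subset C'$ contained in $\rho(U)$; all intermediate objects remain $K$-algebraic because every operation is carried out inside $K$-algebras. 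Then $\rho(U)=V\sqcup\rho(U\cap\rho^{-1}(C'\setminus V))$ with $C'\setminus V\subsetneq C'$ $K$-algebraic, and Noetherian induction completes the argument. The main obstacle I anticipate is the identity $\rho(S)=\PP\ZZ_L(G_1,\dots,G_s)$ in (i): the essential point, which makes the algebraically closed hypothesis on both $L$ and $\kbar$ indispensable, is that the ideal-theoretic criterion for non-emptiness of the projective fiber is polynomial in $y$ and transparent under base change between algebraically closed fields, so that $\kbar$-elimination detects exactly the $L$-points of $\rho(S)$. Once this base-change transparency is secured, Galois invariance delivers $K$-algebraicity in (i), and the Chevalley induction for (ii) is then formal.
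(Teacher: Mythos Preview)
For part~(i) your argument is correct but takes a different route from the paper. The paper observes directly that, for each $d$, the rank condition on the matrix $T^{(d)}(q)$ encoding whether $(\x_0,\dots,\x_n)^d\subset(F_i(\cdot,q))L[\x_0,\x]$ has entries in $K[\y]$ (since each $F_i$ does), so $L^m\setminus\rho(S)=\bigcup_d\Omega_d$ is $K$-Zariski open immediately. You make essentially the same observation (``a criterion polynomial in $y$ and insensitive to which algebraically closed field contains the coefficients'') but use it only to identify $\rho(S)$ with the $L$-zero set of the $\kbar$-eliminants $G_j$, and then descend from $\kbar$ to $K$ via the projective extension of Corollary~\ref{cor:invariance}. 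This works, but the Galois step is redundant: the criterion you invoke already has coefficients in $K$, not merely in $\kbar$.

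For part~(ii) there is a gap. Your Noetherian-induction shell matches the paper's, but the core step---producing a non-empty $K$-Zariski open $V\subset C'$ contained in $\rho(U)$---is not covered by ``compute the leading coefficients of their minimal polynomials over the fraction field''. That phrase presupposes that every generator of $K[C]$ over $K[C']$ is algebraic over $\qf(K[C'])$, i.e., that the projection is generically finite; when the generic fibre is positive-dimensional some generator is transcendental and there is no minimal polynomial. The paper handles this by first reducing (inductively) to $n=1$, so the source lies in $\PP^1(L)\times L^m$, and then splitting on the $K$-Zariski closure $\widehat P$: either $\dim_K\widehat P=\dim_K Q$ (generically finite, where $Q\setminus\rho(\partial P)$ works via~(i)), or $\widehat P=\PP^1(L)\times Q$. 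In the second case the paper slices by $\{t^*\}\times L^m$ for some $t^*\in K$, whose existence uses that the infinite field $K$ is $L$-Zariski dense in $L$ (Remark~\ref{rem:pitfall1}); this density is the one genuinely $L|K$-specific ingredient. Your algebraic approach can be salvaged by replacing ``minimal polynomials'' with generic freeness of $K[C]$ over $K[C']$ and then tensoring up to $L$, but as written it does not cover the positive-fibre-dimension case.
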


We postpone the proof to Appendix \ref{appendix:c}.1. In Remark \ref{product-project-spaces}, we extend Theorem \ref{thm:K-elimination} to the case of all finite products of proje\-ctive spaces over $L$.

We will use Theorem \ref{thm:K-elimination} in Subsection \ref{subsec:proj2}.

\section{Real $K$-algebraic sets}\label{s2}

{\it Along this section, $R$ is a real closed field and $C:=R[\ii]$ is its algebraic closure. As usual, for each algebraic set $S\subset R^n$, the dimension $\dim(S)$ of $S$ is the natural number $\dim_R(S)$, that is, $\dim(S):=\dim_R(S)$. We will use Remark \emph{\ref{dime}} freely.

Let $K$ be an ordered subfield of $R$. Denote $\kbar\subset C$ the algebraic closure of $K$ and $\kr\subset R$ the real closure of $K$, so $\kbar=\kr[\ii]$ and $R\cap\kbar=\kr$. Consider the extensions of fields $R|\kr|K$ and $C|\kbar|K$. A~crucial case concerns $K=\Q$. If such is the case, $\qbar$ is the field of algebraic numbers, whereas $\qr$ is the field of real algebraic numbers.

By Definition \ref{def:K-alg}, a set $Y\subset R^n$ is a \emph{$K$-algebraic set} if $Y=\ZZ_R(F)$ for some $F\subset K[\x]$.}

\subsection{Complex and real Galois completions. $K$-bad points}

\subsubsection{Complex and real Galois completions of a real algebraic set}

We are interested in studying the $K$-Zariski closure of an algebraic set $Y\subset R^n$. By Lemma \ref{big}, we focus on the case where $Y\subset R^n$ is a $\kr$-algebraic set. Let $Z:=\zcl_{C^n}(Y)$ be the complexification of $Y$. By Corollary \ref{inter}$(\mr{ii})$ and Proposition \ref{rc}$(\mr{ii})$, we have $\II_R(Y)=\II_\kr(Y)R[\x]$ and $\II_C(Z)=\II_R(Y)C[\x]=(\II_\kr(Y)R[\x])C[\x]=\II_\kr(Y)C[\x]$, so
\begin{equation}\label{equaz}
\II_C(Z)=\II_\kr(Y)C[\x].
\end{equation}
Thus, $Z\subset C^n$ is $\kbar$-algebraic ($\kr$-algebraic indeed) and we can apply Algorithm \ref{gc} to $X=Z$. 

\begin{defn}
Let $Y\subset R^n$ be a $\kr$-algebraic set and let $Z\subset C^n$ be the complexification of~$Y$. Denote $T\subset C^n$ the Galois completion of $Z$ and define $T^r:=T\cap R^n$. We call $T\subset C^n$ the \emph{complex Galois completion of $Y\subset R^n$} and $T^r\subset R^n$ the \emph{real Galois completion of $Y\subset R^n$ (with respect to the extension of fields $C|K$)}. $\sqbullet$ 
\end{defn}

We show next that $T$ is the $K$-Zariski closure of $Y$ in $C^n$, $T^r$ is the $K$-Zariski closure of $Y$ in $R^n$ and their main properties. We apply Algorithm \ref{gc} to $X=Z$ and keep the notations used in this algorithm and in \eqref{psi}\&\eqref{hatpsi}. We set $G:=G(C:K)$. Recall that, given any ideal $\gta$ of $R[\x]$, the real radical $\sqrt[\mr{r}]{\gta}$ of $\gta$ is the ideal of $R[\x]$ defined by
$$
\sqrt[\mr{r}]{\gta}:=\big\{f\in R[\x]:\text{$f^{2m}+p_1^2+\cdots +p_\ell^2\in\gta$ for some $m\in\N$ and $p_1,\ldots,p_\ell\in R[\x]$}\big\}.
$$

\begin{thm}[Galois completions and $K$-Zariski closures]\label{thm:gc}
Let $Y\subset R^n$ be a $\kr$-algebraic set and let $Z\subset C^n$ be the complexification of $Y$, that is, $Z:=\zcl_{C^n}(Y)$. Let $T\subset C^n$ and $T^r\subset R^n$ be the complex and real Galois completions of $Y\subset R^n$. Let $E|K$ be a finite Galois subextension of $\kbar|K$ that contains all the coefficients of finitely many polynomials $g_1,\ldots,g_r\in\kbar[\x]$ such that $Z=\ZZ_C(g_1,\ldots,g_r)$ (for instance, we can choose by \eqref{equaz} a finite system of generators $\{g_1,\ldots,g_r\}$ of $\II_\kr(Y)$ in $\kr[\x]$) and denote $G':=G(E:K)$.

We have:
\begin{itemize}
\item[$(\mr{i})$] $T=\bigcup_{\psi\in G}\psi_n(Z)$, $T^r=\bigcup_{\psi\in G}(\psi_n(Z)\cap R^n)$ and, for each $\psi\in G$, it holds
$$
\II_C(\psi_n(Z))=\widehat{\psi}(\II_C(Z))
$$
and
$$
\dim(\psi_n(Z)\cap R^n))\leq\dim_C(\psi_n(Z))=\dim_C(Z)=\dim(Y).
$$
\item[$(\mr{i}')$] $T=\bigcup_{\sigma\in G'}Z^\sigma$, $T^r=\bigcup_{\sigma\in G'}(Z^\sigma\cap R^n)$ and, for each $\sigma\in G'$, it holds
$$
\II_C(Z^\sigma)=\widehat{\Phi}_\sigma(\II_C(Z))
$$
and
$$
\dim(Z^\sigma\cap R^n)\leq\dim_C(Z^\sigma)=\dim_C(Z)=\dim(Y),
$$
where $\Phi_\sigma:C\to C$ is a fixed automorphism such that $\Phi_\sigma|_E=\sigma$ (see Lemma \emph{\ref{extension}}) and $\widehat{\Phi}_\sigma:C[\x]\to C[\x]$ is the corresponding ring automorphism, see \eqref{hatpsi}. In particular, $T\subset C^n$ and $T^r\subset R^n$ are algebraic sets and $\dim_C(T)=\dim(T^r)=\dim(Y)$.
\item[$(\mr{ii})$] Given any $\sigma\in G'$, we have that $Z^\sigma\subset C^n$ is irreducible if and only if $Y\subset R^n$ is $\kr$-irreducible (or, equivalently, if $Y\subset R^n$ is irreducible).
\item[$(\mr{iii})$] Let ${\mathfrak H}\subset E[\x]$ be the set of all products of the form $\prod_{\sigma\in G'}h_\sigma$, where $h_\sigma\in\{g_1^\sigma,\ldots,g_r^\sigma\}$ for each $\sigma\in G'$. For each $h\in{\mathfrak H}$, define 
$$
P_h(\t):=\prod_{\tau\in G'}(\t-h^\tau)=\t^d+\sum_{j=1}^d(-1)^jq_{hj}\t^{d-j}\in E[\x][\t],
$$
where $d$ is the order of $G'$. Set ${\mathfrak G}:=\{q_{hj}\}_{h\in{\mathfrak H},\,j\in\{1,\ldots,d\}}$. Then ${\mathfrak G}$ is a subset of $K[\x]$ and $T=\ZZ_C({\mathfrak G})$.
\item[$(\mr{iv})$] $T=\zcl_{C^n}^K(Y)$ and $T^r=\zcl_{R^n}^K(Y)$. In particular, $T\subset C^n$ and $T^r\subset R^n$ are $K$-algebraic sets, and $T=\zcl_{C^n}^K(T^r)$.
\item[$(\mr{v})$] $\II_K(T)=\II_K(T^r)=\II_K(Y)=\sqrt{{\mathfrak G}K[\x]}$, where $\sqrt{{\mathfrak G}K[\x]}$ is the radical of ${\mathfrak G}K[\x]$ in $K[\x]$. Moreover, $\II_C(T)=\II_K(Y)C[\x]$ and $\II_R(T^r)=\sqrt[\mr{r}]{\II_K(Y)R[\x]}=\sqrt[\mr{r}]{{\mathfrak G}R[\x]}$.
\item[$(\mr{vi})$] $\zcl_{C^n}(T^r)\subset T$ and $\II_R(T)=\II_K(T^r)R[\x]$. Moreover, if $G'^*$ is the subset of $G'$ of all $\sigma$ such that $\II_R(Z^\sigma)$ is a non-real ideal of $R[\x]$ and $\gtb:=\bigcap_{\sigma\in G'^*}\II_R(Z^\sigma)$ (where $\gtb:=R[\x]$ if $G'^*=\varnothing$), then $\II_R(T)=\II_R(T^r)\cap \gtb$, the ideal $\gtb$ of $R[\x]$ is radical and $\ZZ_R(\gtb)=\bigcup_{\sigma\in G'^*}(Z^\sigma \cap R^n)\subset T^r$. If in addition the algebraic set $Y\subset R^n$ is $\kr$-irreducible, then $\dim(Z^\sigma\cap R^n)<\dim(T^r)$ for each $\sigma\in G'^*$ and $\dim(\ZZ_R(\gtb))<\dim(T^r)$.
\end{itemize}
\end{thm}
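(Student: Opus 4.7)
The plan is to reduce most claims to the already-proven Theorem \ref{thm:gc0} applied to the complexification $Z=\zcl_{C^n}(Y)$, and then descend to $R^n$ by intersecting with $R^n$ and invoking the Real Nullstellensatz and the complexification results of Subsection \ref{fe}. First I would observe that by \eqref{equaz} the set $Z\subset C^n$ is $\kr$-algebraic, hence $\kbar$-algebraic, so Algorithm \ref{gc} applies with the stated choice of $g_1,\ldots,g_r$ and $E|K$. Theorem \ref{thm:gc0} then delivers directly: the two descriptions $T=\bigcup_{\psi\in G}\psi_n(Z)=\bigcup_{\sigma\in G'}Z^\sigma$ of (i)(i$'$), the conjugation-of-ideal identities, the irreducibility equivalence (ii), the set $\mathfrak{G}\subset K[\x]$ of (iii) with $T=\ZZ_C(\mathfrak{G})$, the equalities $T=\zcl_{C^n}^K(Z)=\zcl_{C^n}^K(Y)$ for the complex half of (iv), together with the complex half of (v): $\II_K(T)=\II_K(Y)=\sqrt{\mathfrak{G}K[\x]}$ and $\II_C(T)=\II_K(Y)C[\x]$. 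The dimensional identities $\dim_C(Z^\sigma)=\dim_C(Z)=\dim(Y)$ come from Theorem \ref{thm:gc0}(ii$'$) and Proposition \ref{rc}(v), while the inequality $\dim(\psi_n(Z)\cap R^n)\leq\dim_C(\psi_n(Z))$ is Remark \ref{rem:dimLleqdimK} applied to the extension $C|R$.

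Next I would intersect with $R^n$ throughout: $T^r=T\cap R^n=\bigcup_{\psi\in G}(\psi_n(Z)\cap R^n)=\bigcup_{\sigma\in G'}(Z^\sigma\cap R^n)$, completing (i)(i$'$). For the real half of (iv), since $T=\ZZ_C(\II_K(Y))$ one gets $T^r=\ZZ_R(\II_K(Y))$, which is the $K$-Zariski closure of $Y$ in $R^n$; this simultaneously yields $\II_K(T^r)=\II_K(Y)$ and, combined with $Y\subset T^r\subset T$, the identity $T=\zcl_{C^n}^K(T^r)$. The equality $\dim(T^r)=\dim(Y)$ follows from $Y=Z^e\cap R^n\subset T^r$ (lower bound) and the preceding dimensional inequality (upper bound). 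For the remaining real-ideal part of (v), the identity $T^r=\ZZ_R(\II_K(Y)R[\x])=\ZZ_R(\mathfrak{G}R[\x])$ together with the Real Nullstellensatz \cite[Thm.4.1.4]{bcr} gives $\II_R(T^r)=\sqrt[\mr{r}]{\II_K(Y)R[\x]}=\sqrt[\mr{r}]{\mathfrak{G}R[\x]}$.

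Part (vi) is where the main work lies and will be the principal obstacle, because it is the only place where the $C$- versus $R$-ideals of the individual components $Z^\sigma$ must be handled separately. The inclusion $\zcl_{C^n}(T^r)\subset T$ is immediate since $T^r\subset T$ and $T$ is Zariski closed in $C^n$. The formula $\II_R(T)=\II_K(T^r)R[\x]$ follows by intersecting $\II_C(T)=\II_K(Y)C[\x]$ with $R[\x]$ and applying Corollary \ref{k} to the basis $\{1,\ii\}$ of $C|R$, using $\II_K(T^r)=\II_K(Y)$. For the decomposition, I would write
\[
\II_R(T)=\bigcap_{\sigma\in G'}\II_R(Z^\sigma),\qquad \II_R(T^r)=\bigcap_{\sigma\in G'}\II_R(Z^\sigma\cap R^n),
\]
and split both intersections by membership in $G'^*$. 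For $\sigma\notin G'^*$, Proposition \ref{rc} (equivalence (iii)$\Leftrightarrow$(iv)) identifies $\II_R(Z^\sigma)=\II_R(Z^\sigma\cap R^n)$; a short set-theoretic check (using $\II_R(T^r)\subset\II_R(Z^\sigma\cap R^n)$ for every $\sigma$) then yields $\II_R(T)=\II_R(T^r)\cap\gtb$. Radicality of $\gtb$ follows because each $\II_R(Z^\sigma)=\II_C(Z^\sigma)\cap R[\x]$ is radical and radical ideals are closed under intersection; the equality $\ZZ_R(\gtb)=\bigcup_{\sigma\in G'^*}(Z^\sigma\cap R^n)$ uses $\ZZ_R(\II_R(Z^\sigma))=Z^\sigma\cap R^n$ from Corollary \ref{cor:ii-zz}(i). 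Finally, when $Y\subset R^n$ is $\kr$-irreducible, item (ii) makes every $Z^\sigma$ irreducible, so for each $\sigma\in G'^*$ the non-reality of $\II_R(Z^\sigma)$ combined with Corollary \ref{cor:ii-zz}(ii$'$) forces $\dim(Z^\sigma\cap R^n)<\dim_C(Z^\sigma)=\dim(Y)=\dim(T^r)$; taking the finite union preserves the strict inequality $\dim(\ZZ_R(\gtb))<\dim(T^r)$.
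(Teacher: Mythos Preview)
Your plan is correct and matches the paper's proof almost step for step: reduce to Theorem~\ref{thm:gc0} for everything concerning $T$, intersect with $R^n$ for $T^r$, and handle (vi) by splitting $G'$ into $G'^*$ and its complement via Proposition~\ref{rc} and Corollary~\ref{cor:ii-zz}. Two small fixes: the inequality $\dim(\psi_n(Z)\cap R^n)\leq\dim_C(\psi_n(Z))$ is Corollary~\ref{<=dim}(i), not Remark~\ref{rem:dimLleqdimK} (that remark gives $\dim_C\leq\dim_R$, the wrong direction); and for (ii) you also need Proposition~\ref{prop:zar}(iii) and Corollary~\ref{inter}(iii) to pass between irreducibility of $Z$ and ($\kr$-)irreducibility of $Y$, since Theorem~\ref{thm:gc0}(ii$'$) only links $Z^\sigma$ to $Z$.
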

\begin{proof}
$(\mr{i})\,\&\,(\mr{i}')$ By Theorem \ref{thm:gc0}$(\mr{i})(\mr{ii})(\mr{ii}')(\mr{vi})$, we know that $T=\bigcup_{\sigma\in G'}Z^\sigma=\bigcup_{\psi\in G}\psi_n(Z)$, $\II_C(\psi_n(Z))=\widehat{\psi}(\II_C(Z))$, $\II_C(Z^\sigma)=\widehat{\Phi}_\sigma(\II_C(Z))$ and $\dim_C(\psi_n(Z))=\dim_C(Z^\sigma)=\dim_C(Z)=\dim_C(T)$ for each $\psi\in G$ and $\sigma\in G'$. Observe that $T^r=T\cap R^n=\bigcup_{\sigma\in G'}(Z^\sigma\cap R^n)=\bigcup_{\psi\in G}(\psi_n(Z)\cap R^n)$. By Proposition \ref{prop:zar}$(\mr{iv})$ and Corollary \ref{<=dim}$(\mr{i})$, we have: $\dim(\psi_n(Z)\cap R^n)\leq\dim_C(\psi_n(Z))=\dim_C(Z)=\dim(Y)$ and $\dim(Z^\sigma\cap R^n)\leq\dim_C(Z^\sigma)=\dim_C(Z)=\dim(Y)$. Observe that, if $e$ is the identity of $G'$, by Proposition \ref{prop:zar}$(\mr{ii})$, $Z^e\cap R^n=Z\cap R^n=Y$. It follows that $T=\bigcup_{\sigma\in G'}Z^\sigma\subset C^n$ and $T^r=\bigcup_{\sigma\in G'}(Z^\sigma\cap R^n)\subset R^n$ are algebraic sets and
$$\textstyle
\dim(T^r)=\max_{\sigma\in G'}\{\dim(Z^\sigma\cap R^n)\}=\dim(Y)=\dim_C(Z)=\dim_C(T).
$$

$(\mr{ii})$ By Proposition \ref{prop:zar}$(\mr{iii})$ and Corollary \ref{inter}$(\mr{iii})$, $Z\subset C^n$ is irreducible if and only if $Y\subset R^n$ is irreducible or, equivalently, if $Y\subset R^n$ is $\kr$-irreducible. In addition, $Z^\sigma\subset C^n$ is irreducible for $\sigma\in G'$ if and only if so is $Z\subset C^n$ by Theorem \ref{thm:gc0}$(\mr{ii}')$.

$(\mr{iii})$ This item follows from Theorem \ref{thm:gc0}$(\mr{iii})(\mr{iv})$.

$(\mr{iv})$ By Theorem \ref{thm:gc0}$(\mr{v})$, we have 
$$
T=\zcl_{C^n}^K(Z)=\zcl_{C^n}^K(\zcl_{C^n}(Y))=\zcl_{C^n}^K(Y).
$$
As $T=\ZZ_C(\II_K(Y))$, it holds: $T^r=T\cap R^n=\ZZ_C(\II_K(Y))\cap R^n=\ZZ_R(\II_K(Y))=\zcl_{R^n}^K(Y)$. As $Y\subset T^r\subset T=\zcl_{C^n}^K(Y)$, we deduce $\zcl_{C^n}^K(T^r)=T$.

$(\mr{v})$ As $T=\zcl_{C^n}^K(Y)$ and $T^r=\zcl_{R^n}^K(Y)$, we deduce $\II_K(T)=\II_K(Y)=\II_K(T^r)$. By Theorem \ref{thm:gc0}$(\mr{vi})$, we have $\II_K(T)=\sqrt{{\mathfrak G}K[\x]}$ and $\II_C(T)=\II_K(T)C[\x]=\II_K(Y)C[\x]$. In particular, $\II_K(Y)=\sqrt{{\mathfrak G}K[\x]}$. As $T^r=\ZZ_R(\II_K(Y))$, the Real Nullstellensatz \cite[Cor.4.1.8]{bcr} implies $\II_R(T^r)=\sqrt[\mr{r}]{\II_K(Y)R[\x]}=\sqrt[\mr{r}]{\gtd R[\x]}$, where $\gtd:=\sqrt{{\mathfrak G}K[\x]}$. As $\gtd$ is a radical ideal of $K[\x]$, Lemma \ref{radical} assures that $\gtd R[\x]$ is a radical ideal of $R[\x]$. As ${\mathfrak G}R[\x]\subset\gtd R[\x]\subset\sqrt{{\mathfrak G}R[\x]}$, it follows that $\gtd R[\x]=\sqrt{{\mathfrak G}R[\x]}$, so $\II_R(T^r)=\sqrt[\mr{r}]{\sqrt{{\mathfrak G}R[\x]}}=\sqrt[\mr{r}]{{\mathfrak G}R[\x]}$.

$(\mr{vi})$ As $T\subset C^n$ is an algebraic set that contains $T^r$, it holds $\zcl_{C^n}(T^r)\subset T$. Now the equality $\II_C(T)=\II_K(T^r)C[\x]$ proven in $(\mr{v})$ and Corollary \ref{k} imply
$$
\II_R(T)=\II_C(T)\cap R[x]=\II_K(T^r)C[\x]\cap R[x]=(\II_K(T^r)R[\x])C[\x]\cap R[x]=\II_K(T^r)R[\x].
$$
Observe that the ideal $\gtb$ of $R[\x]$ is radical, because each $\II_R(Z^\sigma)$ is radical. By Corollary \ref{cor:ii-zz}$(\mr{i})(\mr{ii})$, we have
$$\textstyle
\ZZ_R(\gtb)=\bigcup_{\sigma\in G'^*}\ZZ_R(\II_R(Z^\sigma))=\bigcup_{\sigma\in G'^*}(Z^\sigma \cap R^n)\subset T\cap R^n=T^r
$$
and $\II_R(Z^\sigma)=\II_R(Z^\sigma\cap R^n)$ for each $\sigma\in G'\setminus G'^*$. As $\II_R(Z^\sigma)\subset\II_R(Z^\sigma\cap R^n)$ for each $\sigma\in G'^*$, we deduce:
\begin{align*}
\gtb\cap\II_R(T^r)&\textstyle=\gtb\cap\II_R(\bigcup_{\sigma\in G'}(Z^\sigma\cap R^n))=\gtb\cap\bigcap_{\sigma\in G'}\II_R(Z^\sigma\cap R^n)\\
&\textstyle=\bigcap_{\sigma\in G'^*}\II_R(Z^\sigma)\cap\big(\bigcap_{\sigma\in G'^*}\II_R(Z^\sigma\cap R^n)\cap\bigcap_{\sigma\in G'\setminus G'^*}\II_R(Z^\sigma\cap R^n)\big)\\
&\textstyle=\big(\bigcap_{\sigma\in G'^*}\II_R(Z^\sigma)\cap\bigcap_{\sigma\in G'^*}\II_R(Z^\sigma\cap R^n)\big)\cap\bigcap_{\sigma\in G\setminus G'^*}\II_R(Z^\sigma\cap R^n)\\
&\textstyle=\bigcap_{\sigma\in G'^*}\II_R(Z^\sigma)\cap\bigcap_{\sigma\in G'\setminus G'^*}\II_R(Z^\sigma)\\
&\textstyle=\bigcap_{\sigma\in G'}\II_R(Z^\sigma)=\II_R(\bigcup_{\sigma\in G'}Z^\sigma)=\II_R(T).
\end{align*}

Finally, suppose that $Y\subset R^n$ is $\kr$-irreducible. By $(\mr{ii})$, each $Z^\sigma\subset C^n$ is irreducible. Thus, Corollary \ref{cor:ii-zz}$(\mr{ii}')$ and item $(\mr{i}')$ assure that $\dim(Z^\sigma\cap R^n)<\dim_C(Z^\sigma)=\dim(T^r)$ for each $\sigma\in G'^*$. It follows that $\dim(\ZZ_R(\gtb))=\max_{\sigma\in G'^*}\{\dim(Z^\sigma \cap R^n)\}<\dim(T^r)$, as required.
\end{proof}

\begin{remarks}\label{rem:ref}
(i) In the preceding item $(\mr{vi})$, the inclusion $\zcl_{C^n}(T^r)\subset T$ may be strict, that is, it may happen that the complexification of $T^r$ is strictly contained in $T$, see Examples \ref{exa:gc}.

(ii) It may happen that $\dim(\ZZ_R(\gtb))=\dim(T^r)$ if $Y\subset R^n$ is not $\kr$-irreducible. Consider $Y:=\ZZ_R(\x_2(\x_2-\sqrt[4]{2}\x_1+\sqrt{2}))\subset R^2$, which has dimension $1$. We have
\begin{align*}
T&=\ZZ_C(\x_2(\x_2-\sqrt[4]{2}\x_1+\sqrt{2})(\x_2+\sqrt[4]{2}\x_1+\sqrt{2})(\x_2-\sqrt[4]{2}\ii\x_1-\sqrt{2})(\x_2+\sqrt[4]{2}\ii\x_1-\sqrt{2})),\\
T^r&=\ZZ_R(\x_2(\x_2-\sqrt[4]{2}\x_1+\sqrt{2})(\x_2+\sqrt[4]{2}\x_1+\sqrt{2}))\cup\{(0,\sqrt{2})\}.
\end{align*}
Consequently,
\begin{align*}
\II_R(T)&=\x_2(\x_2-\sqrt[4]{2}\x_1+\sqrt{2})(\x_2+\sqrt[4]{2}\x_1+\sqrt{2})(\x_2-\sqrt[4]{2}\ii\x_1-\sqrt{2})(\x_2+\sqrt[4]{2}\ii\x_1-\sqrt{2})R[\x_1,\x_2],\\
\II_R(T^r)&=(\x_2(\x_2-\sqrt[4]{2}\x_1+\sqrt{2})(\x_2+\sqrt[4]{2}\x_1+\sqrt{2}))R[\x_1,\x_2]\cap(\x_1,\x_2-\sqrt{2})R[\x_1,\x_2],\\
\gtb&=(\x_2((\x_2-\sqrt{2})^2+\sqrt{2}\x_1^2))R[\x_1,\x_2]
\end{align*}
and $\ZZ_R(\gtb)=\ZZ_R(\x_2)\cup\{(0,\sqrt{2})\}$, which has dimension $1$. $\sqbullet$
\end{remarks}

\begin{defn}
Let $Y$ be a subset of $R^n$ and let $Z:=\zcl_{C^n}(Y)\subset C^n$. We say the set $Y\subset R^n$ is \emph{$G$-stable} if $\psi_n(Z)\cap R^n\subset Y$ for each $\psi\in G$, where $G:=G(C:K)$. $\sqbullet$
\end{defn}

Theorem \ref{thm:gc} has the following immediate consequence:

\begin{cor}[Real $K$-algebraicity and $G$-stability]\label{cor:rk-algebraicity}
A $\kr$-algebraic subset of $R^n$ is $K$-alge\-braic if and only if it is $G$-stable.
\end{cor}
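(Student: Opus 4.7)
The plan is to reduce the statement directly to Theorem \ref{thm:gc}. Let $Y \subset R^n$ be a $\kr$-algebraic set and let $Z := \zcl_{C^n}(Y)$ be its complexification. By Theorem \ref{thm:gc}$(\mr{i})(\mr{iv})$, the real Galois completion of $Y$ can be described both intrinsically and as a $K$-Zariski closure:
\begin{equation*}
T^r = \bigcup_{\psi \in G}(\psi_n(Z) \cap R^n) = \zcl_{R^n}^K(Y).
\end{equation*}
In particular, $T^r \subset R^n$ is $K$-algebraic, and it is the smallest $K$-algebraic subset of $R^n$ containing $Y$.

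Next, I would note that $Y \subset T^r$ always holds. Indeed, if $e$ denotes the identity of $G$, then $e_n = \id_{C^n}$, so $e_n(Z) \cap R^n = Z \cap R^n$, and by Proposition \ref{prop:zar}$(\mr{ii})$ applied to the algebraic set $Y \subset R^n$ (viewed as a subset of $\kr^n$ via $Y = Y \cap R^n$, using that $Y$ is $\kr$-algebraic and $Z$ is its Zariski closure in $C^n$), we obtain $Z \cap R^n = Y$. Hence the inclusion $Y \subset T^r$ is automatic; the content of the corollary is encoded in the reverse inclusion.

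Combining these two observations:
\begin{equation*}
Y \text{ is $K$-algebraic} \iff Y = \zcl_{R^n}^K(Y) = T^r \iff T^r \subset Y \iff \psi_n(Z) \cap R^n \subset Y \ \forall\, \psi \in G,
\end{equation*}
and the last condition is precisely the $G$-stability of $Y$. No substantive obstacle is expected in this argument, since all the hard work has been done in Theorem \ref{thm:gc}; the only point that requires a moment of care is justifying $Z \cap R^n = Y$, which follows from the standard fact that the complexification of a real algebraic set recovers the set upon intersecting back with $R^n$ (Proposition \ref{prop:zar}$(\mr{ii})$).
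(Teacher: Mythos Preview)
Your proof is correct and is precisely the ``immediate consequence'' the paper has in mind: the paper gives no explicit proof, simply noting that the corollary follows from Theorem~\ref{thm:gc}, and your argument spells out exactly that deduction via $T^r=\zcl_{R^n}^K(Y)=\bigcup_{\psi\in G}(\psi_n(Z)\cap R^n)$. One minor remark: the justification of $Z\cap R^n=Y$ is slightly overcomplicated---Proposition~\ref{prop:zar}$(\mr{ii})$ applies directly to the extension $C|R$ with $Y$ an ($R$-)algebraic subset of $R^n$, so the $\kr$-algebraicity is not needed at that step (it is only needed to invoke Theorem~\ref{thm:gc}).
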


We will apply the latter corollary in Examples \ref{432}$(\mr{v})$.

\subsubsection{Simultaneous Galois completions}\label{gcsequence} Let $(Y_1,\ldots,Y_s)$ be a $s$-tuple of $\kr$-algebraic subsets of~$R^n$. We associate to $(Y_1,\ldots,Y_s)$ the $s$-tuple $(T_1,\ldots,T_s)$ of their complex Galois completions and the $s$-tuple $(T_1^r,\ldots,T_s^r)$ of their real Galois completions. By Theorem \ref{thm:gc}$(\mr{iv})$, $T_i$ is the $K$-Zariski closure of $Y_i$ in $C^n$ and $T_i^r$ the $K$-Zariski closure of $Y$ in $R^n$. To compute $(T_1,\ldots,T_s)$, we apply simultaneously Algorithm \ref{gc} to the complexifications $Z_i\subset C^n$ of all the $Y_i$. Recall that the crucial point is to consider a finite Galois subextension $E|K$ of $\kbar|K$ that contains all the coefficients of polynomials $g_{i1},\ldots,g_{ir_i}$ in $\kbar[\x]$ such that $\ZZ_C(g_{i1},\ldots,g_{ir_i})=Z_i$ for each $i\in\{1,\ldots,s\}$. For instance, we can choose by \eqref{equaz} generators $g_{i1},\ldots,g_{ir_i}$ of $\II_\kr(Y_i)$ in $\kr[\x]$. Once this is done, we set $(T_1^r,\ldots,T_s^r):=(T_1\cap R^n,\ldots,T_s\cap R^n)$. We call $(T_1,\ldots,T_s)$ the \emph{complex Galois completion of $(Y_1,\ldots,Y_s)$} and $(T^r_1,\ldots,T^r_s)$ the \emph{real Galois completion of $(Y_1,\ldots,Y_s)$ (with respect to the extension of fields $C|K$)}.

\subsubsection{Galois presentation of a real $K$-algebraic set and $K$-bad points}
Let $X\subset R^n$ be a $K$-algebraic set. Similarly to what we have done in Subsection \ref{gp0}, we want to find a `minimal' algebraic set $Y\subset R^n$ whose $K$-Zariski closure is $X$. The particular properties of real algebraic sets force us to make a more sophisticated discussion than the one of Subsection \ref{gp0}.
 
\begin{lem}\label{lem:gp}
Let $X\subset R^n$ be a $K$-irreducible $K$-algebraic set of dimension $d$ and let $Y\subset R^n$ be an $R$-irreducible component of $X$ of dimension $d$. We have:
\begin{itemize}
\item[$(\mr{i})$] $Y\subset R^n$ is a $\kr$-irreducible component of $X$.
\item[$(\mr{ii})$] Let $Z\subset C^n$ be the complexification of $Y$. Apply Algorithm \emph{\ref{gc}} to $Z\subset C^n$ and obtain: a finite Galois group $G'$, a family $\{Z^\sigma\}_{\sigma\in G'}$ of algebraic subsets of $C^n$ and the complex and real Galois completions $T$ and $T^r$ of $Y\subset R^n$. Then $T^r=\bigcup_{\sigma\in G'}(Z^\sigma\cap R^n)=X=\zcl_{R^n}^K(Y)$ and $T=\bigcup_{\sigma\in G'}Z^\sigma=\zcl^K_{C^n}(X)=\zcl_{C^n}^K(Y)$.
\item[$(\mr{ii}')$] $T\subset C^n$ is $K$-irreducible.
\item[$(\mr{iii})$] The family $\{Z^\sigma\}_{\sigma\in G'}$ coincides with that of all $C$-irreducible components of $T\subset C^n$. Moreover, $\dim_C(Z^\sigma)=d$ for each $\sigma\in G'$.
\item[$(\mr{iii}')$] If $G'^*$ is the set of all $\sigma\in G'$ such that $\dim(Z^\sigma\cap R^n)<d$, then $\{Z^\sigma\cap R^n\}_{\sigma\in G'\setminus G'^*}$ coincides with the family of all $R$-irreducible components of $X\subset R^n$ of dimension $d$.
\item[$(\mr{iv})$] $Z^\sigma\subset C^n$ is a $\kbar$-algebraic set and $Z^\sigma\cap R^n\subset R^n$ is a $\kr$-algebraic set for each $\sigma\in G'$. 
\end{itemize}
\end{lem}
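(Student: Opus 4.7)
The plan is to treat the six items roughly in the stated order, using the complex analogue Lemma \ref{lem:gp0} as a template and the complex/real bridge supplied by Proposition \ref{rc}, Proposition \ref{prop:zar}, Corollary \ref{inter} and the subfield-dimension invariance (Theorem \ref{dimension}). Item (i) is immediate: since $K\subset\kr$, the set $X$ is also $\kr$-algebraic, and Corollary \ref{inter}(iii), applied to the extension of real closed fields $R|\kr$, identifies the $R$-irreducible components of $X$ with its $\kr$-irreducible components, so $Y$ is $\kr$-irreducible. For (iv) I would choose the polynomials $g_1,\ldots,g_r$ in step (1) of Algorithm \ref{gc} to be a finite system of generators of $\II_\kr(Y)$ in $\kr[\x]$ (available by \eqref{equaz}); then for each $\sigma\in G'\subset G(\kbar:K)$ the coefficients of $g_i^\sigma$ still lie in $\kbar$, so $Z^\sigma=\ZZ_C(g_1^\sigma,\ldots,g_r^\sigma)$ is $\kbar$-algebraic, and writing $g_i^\sigma=a_i+\ii b_i$ with $a_i,b_i\in\kr[\x]$ gives $Z^\sigma\cap R^n=\ZZ_R(a_1,b_1,\ldots,a_r,b_r)$, hence $\kr$-algebraic.

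For (ii), Theorem \ref{thm:gc}(i$'$),(iv) tells me that $T=\zcl^K_{C^n}(Y)$, $T^r=\zcl^K_{R^n}(Y)$ and $\dim(T^r)=\dim_C(T)=\dim(Y)=d$. The inclusion $Y\subset X$ together with the $K$-algebraicity of $X$ gives $T^r\subset X$; by the subfield-dimension invariance, $\dim_K(T^r)=\dim(T^r)=d=\dim(X)=\dim_K(X)$, so Lemma \ref{dimirred} applied to the $K$-irreducible set $X$ forces $T^r=X$, whence $T=\zcl^K_{C^n}(T^r)=\zcl^K_{C^n}(X)$. Item (ii$'$) follows at once because $\II_K(T)=\II_K(X)$ is a prime ideal of $K[\x]$ (Lemma \ref{lem:prime}). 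For (iii), Proposition \ref{prop:zar}(iii),(iv) applied to $C|R$ with $Y$ as the base set shows that $Z=\zcl_{C^n}(Y)$ is $C$-irreducible of dimension $d=\dim_C(T)$, hence a $C$-irreducible component of the $K$-irreducible complex $K$-algebraic set $T$; Lemma \ref{lem:gp0}(iii) applied to the pair $(T,Z)$ then yields (iii).

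The main obstacle, where the genuinely real phenomenon enters, is (iii$'$). For the forward inclusion, let $\sigma\in G'\setminus G'^*$, i.e.\ $\dim(Z^\sigma\cap R^n)=d=\dim_C(Z^\sigma)$. By (iii), $Z^\sigma$ is $C$-irreducible, so Proposition \ref{rc}(v)$\Rightarrow$(i) shows that $Z^\sigma$ is the complexification of $Z^\sigma\cap R^n$, and then Proposition \ref{prop:zar}(iii$'$) makes $Z^\sigma\cap R^n$ irreducible in $R^n$. Since $Z^\sigma\cap R^n\subset T^r=X$ has the maximal dimension $d$, it is contained in some $R$-irreducible component of $X$ of dimension $d$, and the $R|R$ case of Lemma \ref{dimirred} forces the two to coincide. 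For the reverse inclusion, let $W$ be an $R$-irreducible component of $X$ of dimension $d$ and set $Z_W:=\zcl_{C^n}(W)$; Proposition \ref{prop:zar}(iii),(iv) applied to $C|R$ with $Y=W$ makes $Z_W$ a $C$-irreducible algebraic subset of $C^n$ of dimension $d$, and $W\subset X\subset T$ gives $Z_W\subset T$. Thus $Z_W$ is a $C$-irreducible component of $T$, so (iii) produces $\sigma\in G'$ with $Z_W=Z^\sigma$; finally Proposition \ref{prop:zar}(ii) applied to $C|R$ with $Y=W$ yields $W=Z_W\cap R^n=Z^\sigma\cap R^n$, which has dimension $d$, forcing $\sigma\notin G'^*$.
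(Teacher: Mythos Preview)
Your proof is correct and follows essentially the same route as the paper's. The only minor variation is in $(\mr{iii}')$: the paper uses Corollary~\ref{<=dim}$(\mr{ii})$ together with the decomposition $X=\bigcup_{\sigma\notin G'^*}(Z^\sigma\cap R^n)\cup\bigcup_{\sigma\in G'^*}(Z^\sigma\cap R^n)$ to read off the $d$-dimensional $R$-irreducible components at once, whereas you verify the two inclusions separately via Propositions~\ref{rc} and~\ref{prop:zar}; and for $(\mr{iii})$ the paper appeals directly to Theorem~\ref{thm:gc}$(\mr{i}')(\mr{ii})$ rather than routing through Lemma~\ref{lem:gp0}.
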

\begin{proof}
$(\mr{i})$ As $X\subset R^n$ is also $\kr$-algebraic, $Y\subset R^n$ is by Corollary \ref{inter}$(\mr{iii})$ a $\kr$-irreducible component of $X$. In particular, $Z\subset C^n$ is $\kbar$-algebraic and we can apply Algorithm \ref{gc} to $Z$.

$(\mr{ii})$ By Theorem \ref{thm:gc}$(\mr{i}')(\mr{iv})$, $T^r=\zcl_{R^n}^K(Y)\subset\zcl_{R^n}^K(X)=X$ and $d=\dim(Y)=\dim(T^r)$. As $T^r\subset R^n$ and $X\subset R^n$ are $K$-algebraic, Theorem \ref{dimension} assures that $\dim_K(T^r)=\dim(T^r)=d=\dim(X)=\dim_K(X)$. By Lemma \ref{dimirred}, $T^r=X$, because $X\subset R^n$ is a $K$-irreducible $K$-algebraic set. Using again Theorem \ref{thm:gc}$(\mr{i}')(\mr{iv})$, we have $X=T^r=\bigcup_{\sigma\in G'}(Z^\sigma\cap R^n)$ and $T=\bigcup_{\sigma\in G'}Z^\sigma=\zcl_{C^n}^K(Y)=\zcl^K_{C^n}(T^r)=\zcl^K_{C^n}(X)$.

$(\mr{ii}')$ As $T=\zcl_{C^n}^K(X)$, the ideal $\II_K(T)$ coincides with $\II_K(X)$, which is a prime ideal of $K[\x]$ by Lemma \ref{lem:prime}. Thus, Lemma \ref{lem:prime} implies that $T\subset C^n$ is $K$-irreducible.

$(\mr{iii})$ By Theorem \ref{thm:gc}$(\mr{i}')(\mr{ii})$, $T=\bigcup_{\sigma\in G'}Z^\sigma$ and all the $Z^\sigma$ are irreducible algebraic subsets of $C^n$ of the same (complex) dimension $d$. It follows immediately that $\{Z^\sigma\}_{\sigma\in G'}$ is the family of all $C$-irreducible components of $T\subset C^n$.

$(\mr{iii}')$ By Theorem \ref{thm:gc}$(\mr{i}')$ and Corollary \ref{<=dim}$(\mr{ii})$, $Z^\sigma\cap R^n\subset R^n$ has dimension $d$ and is irreducible for each $\sigma\in G'\setminus G'^*$. Now it is enough to observe that $X=T^r=\bigcup_{\sigma\in G'\setminus G'^*}(Z^\sigma\cap R^n)\cup\bigcup_{\sigma\in G'^*}(Z^\sigma\cap R^n)$ and $\dim(\bigcup_{\sigma\in G'^*}(Z^\sigma\cap R^n))<d$.

$(\mr{iv})$ Choose $\sigma\in G'$. By step {\it(3)} of Algorithm \ref{gc}, we have $Z^\sigma=\ZZ_C(h_1,\ldots,h_r)$ for some polynomials $h_1,\ldots,h_r$ in $\kbar[\x]$, so $Z^\sigma\subset C^n$ is $\kbar$-algebraic. For each $j\in\{1,\ldots,r\}$, denote $a_j$ and $b_j$ the unique polynomials in $\kr[\x]$ such that $h_j=a_j+\ii b_j$. As $Z^\sigma\cap R^n=\ZZ_R(a_1,b_1,\ldots,a_r,b_r)$, the set $Z^\sigma\cap R^n\subset R^n$ is $\kr$-algebraic, as required. 
\end{proof}

The preceding lemma allows to introduce the concept of Galois presentation of a real $K$-algebraic set.

\begin{defn}\label{def:gp}
Let $X\subset R^n$ be a $K$-algebraic set, let $(X_1,\ldots,X_s)$ be the $K$-irreducible components of $X$ listed in some order, and let $d_i:=\dim(X_i)$ for each $i\in\{1,\ldots,s\}$. Choose an $R$-irreducible component $Y_i$ of $X_i$ of dimension $d_i$ and denote $Z_i\subset C^n$ the complexification of $Y_i$ for each $i\in\{1,\ldots,s\}$, apply Algorithm \ref{gc} simultaneously to $(Z_1,\ldots,Z_s)$ and obtain: a finite Galois group $G'$ and finite families $\{Z_1^\sigma\}_{\sigma\in G'},\ldots,\{Z_s^\sigma\}_{\sigma\in G'}$ of algebraic subsets of $C^n$. We call the tuple
$$
(Y_1,\ldots,Y_s;G';\{Z_1^\sigma\}_{\sigma\in G'},\ldots,\{Z_s^\sigma\}_{\sigma\in G'})
$$
a \emph{Galois presentation of $X\subset R^n$} and $(Y_1,\ldots,Y_s)$ the \emph{start} of such a presentation. For short, we say that $X=\bigcup_{i=1}^s\bigcup_{\sigma\in G'}(Z_i^\sigma\cap R^n)$ is a Galois presentation of $X$ with start $(Y_1,\ldots,Y_s):=(Z_1^e\cap R^n,\ldots,Z_s^e\cap R^n)$, where $e$ is the identity of $G'$.

If in addition $X\subset R^n$ is $K$-irreducible (that is, $s=1$) and $(Y;G';\{Z^\sigma\}_{\sigma\in G'})$ is a Galois presentation of $X\subset R^n$ with start $Y$, then for short we say that $X=\bigcup_{\sigma\in G'}(Z^\sigma\cap R^n)$ is a Galois presentation of $X\subset R^n$ with start $Y:=Z^e\cap R^n$. $\sqbullet$
\end{defn}

The following result is a version of Lemma \ref{lem:gp} for real $K$-algebraic sets non necessarily $K$-irreducible.

\begin{lem}\label{lem:gp-reducible}
Let $X\subset R^n$ be a $K$-algebraic set of dimension $d$, let $X_1,\ldots,X_s$ be the $K$-irreducible components of $X$, let $Y_i$ be an $R$-irreducible component of $X_i$ of dimension $\dim(X_i)$ for each $i\in\{1,\ldots,s\}$, and let $(Y_1,\ldots,Y_s;G';\{Z_1^\sigma\}_{\sigma\in G'},\ldots,\{Z_s^\sigma\}_{\sigma\in G'})$ be a Galois presentation of $X\subset R^n$. Define: $T_i:=\bigcup_{\sigma\in G'}Z^\sigma_i\subset C^n$ and $T^r_i:=T_i\cap R^n=\bigcup_{\sigma\in G'}(Z^\sigma_i\cap R^n)\subset R^n$ for each $i\in\{1,\ldots,s\}$, $T:=\bigcup_{i=1}^sT_i\subset C^n$ and $T^r:=T\cap R^n=\bigcup_{i=1}^sT^r_i\subset R^n$. We have:
\begin{itemize}
\item[$(\mr{i})$] For each $i\in\{1,\ldots,s\}$, it holds: $T_i$ is the complex Galois completion of $Y_i\subset R^n$ and coincides with $\zcl_{C^n}^K(X_i)$, $T^r_i$ is the real Galois completion of $Y_i\subset R^n$ and coincides with $X_i$, $\{Z^\sigma_i\}_{\sigma\in G'}$ coincides with the family of all $C$-irreducible components of $T_i$, $\dim_C(Z^\sigma_i)=\dim(X_i)$, $Z^\sigma_i\subset C^n$ is $\kbar$-algebraic and $Z^\sigma_i\cap R^n\subset R^n$ is $\kr$-algebraic for each $\sigma\in G'$.
\item[$(\mr{ii})$] The set $T$ is the complex Galois completion of $\bigcup_{i=1}^sY_i\subset R^n$ and coincides with $\zcl_{C^n}^K(X)$. The set $T^r$ is the real Galois completion of $\bigcup_{i=1}^sY_i\subset R^n$ and coincides with $X$.
\item[$(\mr{ii}')$] $T_1,\ldots,T_s$ are the $K$-irreducible components of $T\subset C^n$.
\item[$(\mr{iii})$] $Z^\sigma_i\not\subset Z^\tau_j$ for all $i,j\in\{1,\ldots,s\}$ with $i\neq j$ and for all $\sigma,\tau\in G'$. Thus, $\{Z^\sigma_i\}_{i\in\{1,\ldots,s\},\sigma\in G'}$ is the family of the $C$-irreducible components of $T$.
\item[$(\mr{iii}')$] For each $i\in\{1,\ldots,s\}$, let $G'^*_i$ be the set of all $\sigma\in G'$ such that $\dim(Z_i^\sigma\cap R^n)<\dim(X_i)$. If $i,j\in\{1,\ldots,s\}$ with $i\neq j$, $\sigma\in G'\setminus G'^*_i$ and $\tau\in G'$, then $Z^\sigma_i\cap R^n\not\subset Z^\tau_j\cap R^n$. Hence, the family $\{Z_i^\sigma\cap R^n\}_{i\in\{1,\ldots,s\},\sigma\in G'\setminus G'^*_i}$ is contained in (and possibly coincides with) the family of the $R$-irreducible components of $X$.
\end{itemize}
\end{lem}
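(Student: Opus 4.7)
My plan is to derive each item in order, leveraging the $K$-irreducible case already settled in Lemma \ref{lem:gp} and the general Galois-completion machinery of Theorem \ref{thm:gc0} and Theorem \ref{thm:gc}.

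For $(\mr{i})$, I would simply apply Lemma \ref{lem:gp} to each pair $(X_i,Y_i)$: the hypotheses are exactly those of that lemma, and the simultaneous version of Algorithm \ref{gc} described in Subsection \ref{gcsequence} guarantees that a common finite Galois subextension $E|K$ works for all the complexifications $Z_i=\zcl_{C^n}(Y_i)$ at once, so the group $G'$ is the same for every index $i$. Item $(\mr{ii})$ is then immediate from $(\mr{i})$: since the $K$-Zariski closure distributes over finite unions, $T=\bigcup_{i=1}^sT_i=\bigcup_{i=1}^s\zcl^K_{C^n}(X_i)=\zcl^K_{C^n}(\bigcup_{i=1}^s X_i)=\zcl^K_{C^n}(X)$; intersecting with $R^n$ gives $T^r=\bigcup_{i=1}^sX_i=X$. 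The characterizations of $T$ and $T^r$ as the complex and real Galois completions of $\bigcup_{i=1}^sY_i$ follow from Theorem \ref{thm:gc} applied to this union.

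For $(\mr{ii}')$, each $T_i\subset C^n$ is $K$-irreducible by Lemma \ref{lem:gp}$(\mr{ii}')$. It remains to check that $T_i\not\subset T_j$ whenever $i\neq j$. If $T_i\subset T_j$, intersecting with $R^n$ and using $T_\ell^r=X_\ell$ from $(\mr{i})$ would give $X_i\subset X_j$, contradicting that $X_1,\dots,X_s$ are the (pairwise non-contained) $K$-irreducible components of $X$ furnished by Lemma \ref{lem:irred}. For $(\mr{iii})$, fix $i\neq j$ and $\sigma,\tau\in G'$. Since $\{Z_i^\rho\}_{\rho\in G'}$ is the full Galois orbit arising from $Z_i$, re-running Algorithm \ref{gc} with the starting algebraic set $Z_i^\sigma$ (in place of $Z_i$) produces the \emph{same} orbit and hence the same completion $T_i$, so Theorem \ref{thm:gc0}$(\mr{v})$ yields $\zcl^K_{C^n}(Z_i^\sigma)=T_i$. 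If $Z_i^\sigma\subset Z_j^\tau$, then taking $K$-Zariski closure and using $Z_j^\tau\subset T_j$ gives $T_i=\zcl^K_{C^n}(Z_i^\sigma)\subset\zcl^K_{C^n}(Z_j^\tau)\subset T_j$, which contradicts $(\mr{ii}')$. That $\{Z_i^\sigma\}_{i,\sigma}$ exhausts the $C$-irreducible components of $T$ then follows by combining this non-containment with $(\mr{i})$, noting that each $Z_i^\sigma$ is $C$-irreducible and $T=\bigcup_{i,\sigma}Z_i^\sigma$.

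Finally, for $(\mr{iii}')$, suppose $i\neq j$, $\sigma\in G'\setminus G'^*_i$, $\tau\in G'$. By definition of $G'^*_i$, one has $\dim(Z_i^\sigma\cap R^n)=\dim(X_i)=\dim_C(Z_i^\sigma)$; since $Z_i^\sigma\subset C^n$ is irreducible by $(\mr{i})$ and Lemma \ref{lem:gp}$(\mr{iii})$, the equivalence $(\mr{i})\Longleftrightarrow(\mr{v})$ of Proposition \ref{rc} tells us that $Z_i^\sigma$ is the complexification of $Z_i^\sigma\cap R^n$, i.e.\ $Z_i^\sigma=\zcl_{C^n}(Z_i^\sigma\cap R^n)$. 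Hence a hypothetical inclusion $Z_i^\sigma\cap R^n\subset Z_j^\tau\cap R^n$ would force
$$
Z_i^\sigma=\zcl_{C^n}(Z_i^\sigma\cap R^n)\subset\zcl_{C^n}(Z_j^\tau\cap R^n)\subset Z_j^\tau,
$$
contradicting $(\mr{iii})$. The consequence that $\{Z_i^\sigma\cap R^n\}_{i,\,\sigma\in G'\setminus G'^*_i}$ sits inside the family of $R$-irreducible components of $X$ then follows from Lemma \ref{lem:gp}$(\mr{iii}')$ together with this non-containment across different indices $i$. The only mildly delicate step is $(\mr{iii})$, where one must recognize that every element of the Galois orbit generates the \emph{same} $K$-Zariski closure; once that is observed, the remaining arguments reduce to bookkeeping with the already-established $K$-irreducible case.
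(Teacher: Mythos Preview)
Your proof is correct and follows essentially the same route as the paper. The only cosmetic differences are that for $(\mr{iii})$ the paper invokes Corollary \ref{cho} (whose proof is precisely the argument $\zcl^K_{C^n}(Z_i^\sigma)=T_i$ that you spell out), and for $(\mr{iii}')$ the paper cites Corollary \ref{<=dim}$(\mr{ii})$ rather than Proposition \ref{rc} to obtain $Z_i^\sigma=\zcl_{C^n}(Z_i^\sigma\cap R^n)$; both references yield the same fact.
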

\begin{proof}
$(\mr{i})$ This item follows immediately from Lemma \ref{lem:gp} applied to each $X_i$.

$(\mr{ii})$ By $(\mr{i})$ and Theorem \ref{thm:gc}$(\mr{iv})$, we have $T_i=\zcl_{C^n}^K(Y_i)=\zcl_{C^n}^K(X_i)$ and $T^r_i=\zcl_{R^n}^K(Y_i)=X_i$ for each $i\in\{1,\ldots,s\}$, so $T=\bigcup_{i=1}^s\zcl_{C^n}^K(Y_i)=\zcl_{C^n}^K(\bigcup_{i=1}^sY_i)$, $T=\bigcup_{i=1}^s\zcl_{C^n}^K(X_i)=\zcl_{C^n}^K(\bigcup_{i=1}^sX_i)=\zcl_{C^n}^K(X)$, $T^r=\bigcup_{i=1}^s\zcl_{R^n}^K(Y_i)=\zcl_{R^n}^K(\bigcup_{i=1}^sY_i)$ and $T^r=\bigcup_{i=1}^sX_i=X$. In particular, $T=\zcl_{C^n}^K(\bigcup_{i=1}^sY_i)$ and $T^r=\zcl_{R^n}^K(\bigcup_{i=1}^sY_i)$ so Theorem \ref{thm:gc}$(\mr{i})(\mr{iv})$ assures that $T$ and $T^r$ are the complex and real Galois completions of $\bigcup_{i=1}^sY_i$, respectively. 

$(\mr{ii}')$\&$(\mr{iii})$ By Lemma \ref{lem:gp}$(\mr{ii}')$, each $T_i\subset C^n$ is $K$-irreducible. Observe that $T_i\not\subset T_j$ for each $i,j\in\{1,\ldots,s\}$ with $i\neq j$. Otherwise, by $(\mr{i})$, $X_i=T^r_i=T_i\cap R^n\subset T_j\cap R^n=T^r_j=X_j$ for some indexes $i,j$ with $i\neq j$, which is a contradiction. This proves $(\mr{ii}')$. By $(\mr{i})$, $\{Z^\sigma_i\}_{\sigma\in G'}$ is the family of all $C$-irreducible components of $T_i$ for each $i\in\{1,\ldots,s\}$, so $(\mr{iii})$ follows immediately from Corollary \ref{cho} applied to $T$.

$(\mr{iii}')$ Let $i\in\{1,\ldots,s\}$ and let $\sigma\in G'\setminus G'^*_i$. By Corollary \ref{<=dim}, we know that $Z_i^\sigma\cap R^n\subset R^n$ has dimension $\dim(X_i)=\max_{\sigma'\in G'}\dim{(Z_i^{\sigma'}\cap R^n)}$ and is irreducible, and $\zcl_{C^n}(Z_i^\sigma\cap R^n)=Z_i^\sigma$. As $X=\bigcup_{j=1}^s\bigcup_{\tau\in G'}(Z_j^\tau\cap R^n)$, it is enough to show that $Z_i^\sigma \cap R^n\not\subset Z_j^\tau\cap R^n$ for all $j\in \{1,\ldots,s\}\setminus\{i\}$ and $\tau\in G'$. If this would be false, then $Z_i^\sigma \cap R^n\subset Z_j^\tau\cap R^n$ for some $j\in \{1,\ldots,s\}\setminus\{i\}$ and $\tau\in G'$, so $Z_i^\sigma=\zcl_{C^n}(Z_i^\sigma \cap R^n)\subset \zcl_{C_n}(Z_j^\tau\cap R^n)\subset Z_j^\tau$, contradicting $(\mr{iii})$.
\end{proof}

We now introduce the concepts of bad set and bad point of a real $K$-algebraic set.

\begin{defns}\label{def:bad-points}
Let $X\subset R^n$ be a $K$-algebraic set of dimension $d$, let $X_1,\ldots,X_s$ be the $K$-irreducible components of $X$, and let $I$ be the subset of $\{1,\ldots,s\}$ constituted by all indices $i$ such that $\dim(X_i)=d$. For each index $i\in I$, let $T_i:=\zcl_{C^n}^K(X_i)$, let $\{T_{i1},\ldots,T_{is_i}\}$ be the family of all $C$-irreducible components of $T_i$, let $J_i$ be the subset of $\{1,\ldots,s_i\}$ of all indices $j$ such that $\dim(T_{ij}\cap R^n)<d$, and let $B_i:=\bigcup_{j\in J_i}(T_{ij}\cap R^n)$. By Lemma \ref{lem:gp-reducible}$(\mr{i})(\mr{ii})(\mr{iii})$, $\{T_{ij}\}_{i\in I,j\in\{1,\ldots,s_i\}}$ is the family of all $C$-irreducible components of $\zcl^K_{C^n}(X)\subset C^n$ of $C$-dimension~$d$.

We define the \emph{$K$-bad set $B_K(X)$ of $X$} as follows:
$$
B_K(X):=\bigcup_{i\in I}B_i\cup\bigcup_{i\in\{1,\ldots,s\}\setminus I}X_i\,,
$$
where $\bigcup_{i\in\{1,\ldots,s\}\setminus I}X_i:=\varnothing$ if $I=\{1,\ldots,s\}$. The points of $B_K(X)$ are the \emph{$K$-bad points of $X$}. $\sqbullet$
\end{defns}

\begin{remark}\label{rem317}
We keep the notations of the previous definition. Consider a Galois presentation $(Y_1,\ldots,Y_s;G';\{Z_1^\sigma\}_{\sigma\in G'},\ldots,\{Z_s^\sigma\}_{\sigma\in G'})$ of the $K$-algebraic set $X\subset R^n$. By Lemma \ref{lem:gp-reducible}$(\mr{i})$, each $X_i$ is the union of the family $\{Z_i^\sigma\cap R^n\}_{\sigma\in G'}$ and, given any $i\in I$, we have that $\{Z_i^\sigma\}_{\sigma\in G'}=\{T_{i1},\ldots,T_{is_i}\}$, and $B_i$ is the union of the family $\{Z_i^\sigma\cap R^n\}_{\sigma\in G'^*_i}$, where $G'^*_i$ is the subset of $G'$ of all $\sigma$ such that $\dim(Z_i^\sigma \cap R^n)<d$. Recall that, by Lemma \ref{lem:gp-reducible}$(\mr{ii})(\mr{iii})$, $\{Z^\sigma_i\}_{i\in\{1,\ldots,s\},\sigma\in G'}$ is the family of the $C$-irreducible components of $\zcl_{C^n}^K(X)\subset C^n$. If $\widetilde{I}$ denotes the subset of $\{1,\ldots,s\}\times G'$ of all pairs $(i,\sigma)$ such that $\dim(Z^\sigma_i\cap R^n)<d$, we have
$$\textstyle
B_K(X)=\bigcup_{i\in I}\bigcup_{\sigma\in G'^*_i}(Z^\sigma_i\cap R^n)\cup\bigcup_{i\in\{1,\ldots,s\}\setminus I}\bigcup_{\sigma\in G'}(Z^\sigma_i\cap R^n)=\bigcup_{(i,\sigma)\in\widetilde{I}}(Z^\sigma_i\cap R^n)\,.
$$

Lemma \ref{lem:gp-reducible}$(\mr{i})$ implies that each set $Z^\sigma_i\cap R^n\subset R^n$ is $\kr$-algebraic, so $B_K(X)\subset R^n$ is $\kr$-algebraic and has dimension $<\dim(X)$. $\sqbullet$
\end{remark}

\begin{example}
Let $K:=\Q$. Consider the $\Q$-irreducible $\Q$-algebraic set $X:=\ZZ_R(\x^3-2)=\{\sqrt[3]{2}\}\subset R$. The polynomial $\x^3-2\in\Q[\x]$ is irreducible and its roots in $C=R[\ii]$ are $\sqrt[3]{2}$, $\sqrt[3]{2}(\frac{1}{2}+\frac{\sqrt{3}}{2}\ii)$ and $\sqrt[3]{2}(\frac{1}{2}-\frac{\sqrt{3}}{2}\ii)$. Consequently, $T:=\big\{\sqrt[3]{2},\sqrt[3]{2}(\frac{1}{2}+\frac{\sqrt{3}}{2}\ii),\sqrt[3]{2}(\frac{1}{2}-\frac{\sqrt{3}}{2}\ii)\big\}$ is the complex Galois completion $\zcl^\Q_C(X)$ of $X\subset R$. The $C$-irreducible components of $T$ are $T_1:=\{\sqrt[3]{2}\}$, $T_2:=\big\{\sqrt[3]{2}(\frac{1}{2}+\frac{\sqrt{3}}{2}\ii)\big\}$ and $T_3:=\big\{\sqrt[3]{2}(\frac{1}{2}-\frac{\sqrt{3}}{2}\ii)\big\}$. As $\dim(X)=0$, we have $B_\Q(X)=\big(\big\{\sqrt[3]{2}(\frac{1}{2}+\frac{\sqrt{3}}{2}\ii)\big\}\cap R\big)\cup\big(\big\{\sqrt[3]{2}(\frac{1}{2}-\frac{\sqrt{3}}{2}\ii)\big\}\cap R\big)=\varnothing$. $\sqbullet$
\end{example}

In Examples \ref{exa:gc}, we will compute explicitly the complex and real Galois completions of some $\kr$-algebraic sets, together with the $K$-bad sets of the real Galois completions.

\subsubsection{Real clustering phenomenon} The next result extends Theorem \ref{thm:238} to the real case.

\begin{thm}[{$R|E|K$-clustering phenomenon}] \label{thm:R238}
Let $X\subset R^n$ be a $K$-irreducible $K$-algebraic set of dimension $d$, let $Y_1,\ldots,Y_s$ be the $R$-irreducible components of $X\subset R^n$ of dimension $d$, let $E$ be an ordered subfield of $R$ containing $K$, and let $W_1,\ldots,W_t$ be the $E$-irreducible components of $X\subset R^n$ of dimension $d$. Then $t\leq s$ and there exists a partition of $\{1,\ldots,s\}=J_1\sqcup\cdots\sqcup J_t$ such that $\{Y_j:j\in J_i\}$ is the family of $R$-irreducible components of $W_i\subset R^n$ of dimension $d$ for each $i\in\{1,\ldots,t\}$.
\end{thm}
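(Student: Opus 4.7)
The plan is to give a direct proof, somewhat shorter than the complex case of Theorem \ref{thm:238}: in the real setting we can appeal to the dimension invariance of Theorem \ref{dimension} and to the fact that every $E$-algebraic subset of $R^n$ is automatically $R$-algebraic (since the $E$-Zariski topology of $R^n$ is coarser than the $R$-Zariski topology). The main tools will be Lemma \ref{inclusion} (an irreducible set contained in a finite union of closed sets lies in one of them), Lemma \ref{dimirred} (a proper closed subset of an irreducible set has strictly smaller dimension), and Theorem \ref{dimension}. Note that, in contrast with the complex clustering theorem, a $K$-irreducible $K$-algebraic subset of $R^n$ may well have $R$-irreducible components of different dimensions, which is the reason the statement restricts to components of dimension $d$.

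The key claim is that each $Y_j$ is contained in exactly one $W_i$. For existence, decompose $X=W_1\cup\cdots\cup W_t\cup W'$, where $W'\subset X$ is the union of the $E$-irreducible components of $X$ of dimension strictly less than $d$. Each of these pieces is $E$-algebraic and hence $R$-algebraic, so Lemma \ref{inclusion} applied to the $R$-irreducible set $Y_j$ forces either $Y_j\subset W_i$ for some $i$ or $Y_j\subset W'$; the second possibility would give $\dim(Y_j)\leq\dim(W')<d$ by Theorem \ref{dimension}, contradicting $\dim(Y_j)=d$. For uniqueness, if $Y_j\subset W_i\cap W_{i'}$ with $i\neq i'$, then $W_i\cap W_{i'}$ is $E$-algebraic and strictly contained in the $E$-irreducible set $W_i$, so $\dim_E(W_i\cap W_{i'})<d$ by Lemma \ref{dimirred}, hence $\dim_R(W_i\cap W_{i'})<d$ by Theorem \ref{dimension}, contradicting $\dim(Y_j)=d$ once again.

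Next I would verify that, for each $i\in\{1,\ldots,t\}$, the $R$-irreducible components of $W_i$ of dimension $d$ are exactly the $Y_j$'s contained in $W_i$. On the one hand, if $Y_j\subset W_i$, then $Y_j$ is an $R$-irreducible closed subset of $W_i$ of maximal dimension $d=\dim(W_i)$, so it must be one of the $R$-irreducible components of $W_i$ of dimension $d$. Conversely, any $R$-irreducible component $V$ of $W_i$ of dimension $d$ is an $R$-irreducible closed subset of $X$ of dimension $d=\dim(X)$, so applying Lemma \ref{inclusion} to $V$ and the decomposition of $X$ into its $R$-irreducible components together with a dimension argument yields $V\subset Y_j$ for some $j$, and then $V=Y_j$ by maximality of dimension. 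Finally, setting $J_i:=\{j\in\{1,\ldots,s\}:Y_j\subset W_i\}$, the previous claims show that $\{J_i\}_{i=1}^t$ is a partition of $\{1,\ldots,s\}$ (pairwise disjoint by uniqueness, covering by existence), each $J_i$ being non-empty since $W_i$ has at least one $R$-irreducible component of dimension $d=\dim(W_i)$; this yields $t\leq s$. The main conceptual care is purely bookkeeping: ensuring that Theorem \ref{dimension} is invoked with the correct coefficient field at each step, since $E$-algebraicity and $R$-algebraicity are used simultaneously, and that the $E$-algebraic subsets of $R^n$ are interpreted as $R$-algebraic subsets when applying Lemma \ref{inclusion}.
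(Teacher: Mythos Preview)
Your proof is correct and takes a genuinely different, more elementary route than the paper. The paper's argument passes through the complex side: it forms the $K$-Zariski closure $T:=\zcl_{C^n}^K(X)\subset C^n$, applies the complex clustering Theorem \ref{thm:238} to the $C$- and $E$-irreducible components of $T$, and then intersects everything with $R^n$, using Corollary \ref{<=dim} and Proposition \ref{rc} to decide which intersections have dimension $d$, which are irreducible, and why the resulting pieces coincide with the $Y_j$'s and $W_i$'s. Your argument bypasses all of this by working directly in $R^n$ with Lemmas \ref{inclusion} and \ref{dimirred} and Theorem \ref{dimension}, exploiting only that the $E$-Zariski topology is coarser than the $R$-Zariski topology.

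Two remarks worth recording. First, your proof never actually uses the $K$-irreducibility of $X$; it only needs that $X$ is $K$-algebraic (hence both $E$- and $R$-algebraic), so your argument yields a slightly more general statement than the one in the paper. Second, your one compressed step --- ``$Y_j$ has maximal dimension $d=\dim(W_i)$, so it must be one of the $R$-irreducible components of $W_i$'' --- is correct, but if you want to be explicit: Lemma \ref{inclusion} puts $Y_j$ inside some $R$-irreducible component $V_l$ of $W_i$, and then Lemma \ref{dimirred} (applied over $R$) forces $Y_j=V_l$ since $\dim_R(Y_j)=d\geq\dim_R(V_l)$. The paper's approach, by contrast, buys you explicit information about how the $E$-irreducible components of $X\subset R^n$ arise as real traces of the $E$-irreducible components of the complex Galois completion $T\subset C^n$, which is useful elsewhere in the paper; your approach buys brevity and generality.
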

\begin{proof}
Let $T:=\zcl_{C^n}^K(X)\subset C^n$ and let $Y_1^C,\ldots,Y_{s'}^C$ be the $C$-irreducible components of $T\subset C^n$. Applying Lemma \ref{lem:gp}$(\mr{ii})(\mr{ii}')(\mr{iii})$ to an $R$-irreducible component of $X\subset R^n$ of dimension $d$, we deduce that $T\cap R^n=X$, $T\subset C^n$ is $K$-irreducible and $\dim_C(Y_j^C)=d$ for each $j\in\{1,\ldots,s'\}$. Let $W_1^C,\ldots,W_{t'}^C$ be the $E$-irreducible components of $T\subset C^n$. By Theorem \ref{thm:238}, $t'\leq s'$ and there exists a partition of $\{1,\ldots,s'\}=J'_1\sqcup\cdots\sqcup J'_{t'}$ such that $W_i^C=\bigcup_{j\in J'_i}Y_j^C$ is the decomposition of $W_i^C$ into its $C$-irreducible components for each $i\in\{1,\ldots,t'\}$. Define $Y_j:=Y_j^C\cap R^n$ for each $j\in\{1,\ldots,s'\}$ and $W_i:=W_i^C\cap R^n=\bigcup_{j\in J'_i}Y_j$ for each $i\in\{1,\ldots,t'\}$. 

By Corollary \ref{<=dim}$(\mr{i})$, we have $\dim(Y_j)\leq d$ for each $j\in\{1,\ldots,s'\}$. Rearranging the indices $j$ if necessary, we can assume that there exists $s\in\{1,\ldots,s'\}$ such that $\dim(Y_j)=d$ for each $j\in\{1,\ldots,s\}$ and $\dim(Y_j)<d$ otherwise. Set $Y^*:=\bigcup_{j=s+1}^{s'}Y_j$ if $s<s'$ and $Y^*:=\varnothing$ if $s=s'$. Evidently, $Y^*\subset R^n$ is an algebraic set of dimension $<d$. By Corollary \ref{<=dim}$(\mr{ii})$, if $j\in\{1,\ldots,s\}$, then $Y_j\subset R^n$ is irreducible (of dimension $d$) and $\zcl_{C^n}(Y_j)=Y_j^C$. The latter equality implies that $Y_j\not\subset Y_{j'}$ for all $j'\in\{1,\ldots,s\}\setminus\{j\}$, otherwise $Y_j^C\subset Y_{j'}^C$, which is a contradiction. As $X=T\cap R^n=Y^*\cup\bigcup_{j=1}^sY_j$ and $\dim(Y^*)<d$, we deduce that $Y_1,\ldots,Y_s$ are the $R$-irreducible components of $X\subset R^n$ of dimension $d$. 

Observe that $\dim(W_i)=\max_{j\in J'_i}\dim(Y_j)\leq d$ for each $i\in\{1,\ldots,t'\}$. Rearranging the indices $i$ if necessary, we can assume that there exists $t\in\{1,\ldots,t'\}$ such that $\dim(W_i)=d$ for each $i\in\{1,\ldots,t\}$ and $\dim(W_i)<d$ otherwise. Define $W^*:=\bigcup_{i=t+1}^{t'}W_i$ if $t<t'$ and $W^*:=\varnothing$ if $t=t'$. Evidently, $W^*\subset R^n$ is $E$-algebraic and $\dim(W^*)<d$. Choose $i\in\{1,\ldots,t\}$. Define $J_i:=J'_i\cap\{1,\ldots,s\}=\{j\in J'_i:\dim(Y_j)=d\}$. As $W_i=\bigcup_{j\in J'_i}Y_j$ and $\dim(W_i)=d$, it follows that $J_i\neq\varnothing$. By Proposition \ref{rc}$(\mr{v})$, we have $\dim_C(\zcl_{C^n}(W_i))=\dim(W_i)=d$. As $\zcl_{C^n}(W_i)\subset\zcl_{C^n}^E(W_i)\subset W_i^C$, we deduce $\dim_C(\zcl_{C^n}^E(W_i))=d=\dim_C(W_i^C)$, so $\zcl_{C^n}^E(W_i)=W_i^C$ by Lemma \ref{dimirred}. The latter equality assures that $\II_E(W_i)=\II_E(W_i^C)$, which is a prime ideal of $E[\x]$. Thus, $W_i\subset R^n$ is $E$-irreducible by Lemma \ref{lem:prime}. In addition, $W_i\not\subset W_{i'}$ for all $i,i'\in\{1,\ldots,t\}$ with $i\neq i'$, otherwise $W_i^C\subset W_{i'}^C$, which is a contradiction. As $X=T\cap R^n=W^*\cup\bigcup_{i=1}^tW_i$ and $\dim(W^*)<d$, we deduce that $W_1,\ldots,W_t$ are the $E$-irreducible components of $X\subset R^n$ of dimension $d$. Finally, observe that: $W_i=\bigcup_{j\in J_i}Y_j\cup\bigcup_{j\in J'_i\setminus J_i}Y_j$, $Y_j\subset R^n$ is irreducible and of dimension $d$ for each $j\in J_i$, $Y_j\not\subset Y_{j'}$ for all $j,j'\in J_i$ with $j\neq j'$ and $\dim(\bigcup_{j\in J'_i\setminus J_i}Y_j)<d$. Thus, $\{Y_j\}_{j\in J_i}$ is the family of irreducible components of $W_i\subset R^n$ of dimension $d$, as required.
\end{proof}

\subsection{Real $K$-geometric polynomials and hypersurfaces}\label{rKgh}
Consider a polynomial $g\in\kr[\x]$ and set $Y:=\ZZ_R(g)\subset R^n$. We want to compute the complex and real Galois completions of $Y\subset R^n$ using only the polynomial $g$, as we have done in Algorithm \ref{gcpol} for the complex case. Equation \eqref{equaz} and Theorem \ref{thm:gc} suggest to determine when $\II_\kr(Y)=(g)\kr[\x]$, that is, $\II_\kr(\ZZ_R(g))=(g)\kr[\x]$. More generally, we wonder when a polynomial $f\in K[\x]$ has the property $\II_K(\ZZ_R(f))=(f)K[\x]$. If we change $R$ by an algebraically closed field $L$, this problem is trivial because $C|K$-Nullstellensatz (Corollary \ref{kreliablec}) assures that each radical ideal $\gta$ of $K[\x]$ satisfies $\II_K(\ZZ_L(\gta))=\gta$, so each square-free polynomial $f\in K[\x]$ satisfies $\II_K(\ZZ_L(f))=(f)K[\x]$. In the real closed case, the situation is different and multifaceted. We begin with the following definitions.

\begin{defns} \label{321}
Let $f\in K[\x]$. We say that $f$ is \emph{$K$-geometric in $R^n$} if $\II_K(\ZZ_R(f))=(f)K[\x]$. A polynomial is \emph{geometric in $R^n$} if it is $R$-geometric in $R^n$.

A set $X\subset R^n$ is a \emph{$K$-geometric (algebraic) hypersurface} of $R^n$ if $X=\ZZ_R(f)$ for some $K$-geometric polynomial $f$ in $R^n$ or, equivalently, if $X\subset R^n$ is $K$-algebraic and $\II_K(X)$ is a principal ideal of $K[\x]$. A subset of $R^n$ is a \emph{geometric (algebraic) hypersurface} of $R^n$ if it is a $R$-geometric hypersurface of $R^n$. $\sqbullet$
\end{defns}

As zero ideals are always radical, if a polynomial $f\in K[\x]$ is $K$-geometric in $R^n$, then $f$ is square-free. In addition, if $g$ is also $K$-geometric in $R^n$ and $\ZZ_R(f)=\ZZ_R(g)$, then $f$ and $g$ are associated. Observe that $f:=\x_1^2+1$ is a square-free polynomial in $K[\x]$, which is not $K$-geometric in~$R^n$. 

\begin{lem}\label{univ}
$(\mr{i})$ Let $L|R$ be an extension of real closed fields and let $f\in K[\x]$. Then $f$ is $K$-geometric in $R^n$ if and only if $f$ is $K$-geometric in $L^n$. In particular, if $K=R$, the polynomial $f$ is geometric in $R^n$ if and only if it is $R$-geometric in $L^n$.

$(\mr{ii})$ Let $R|E|K$ be an extension of fields. If $g\in K[\x]\subset E[\x]$ is $E$-geometric in $R^n$, then $g$ is also $K$-geometric in $R^n$.
\end{lem}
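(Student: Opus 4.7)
For part $(\mr{i})$, the strategy is to use the `extension of coefficients' procedure for real closed fields. First, I would apply Proposition \ref{extension-zar} to the extension of real closed fields $L|R$ and the algebraic set $Y := \ZZ_R(f) \subset R^n$. This yields $Y_L = \ZZ_L(f)$ and the identity $\II_L(\ZZ_L(f)) = \II_R(\ZZ_R(f))\,L[\x]$. Next, intersecting with $K[\x]$ and invoking Corollary \ref{k} applied to the ideal $\II_R(\ZZ_R(f))$ of $R[\x]$ with respect to the extension $L|R$ (which gives $(\II_R(\ZZ_R(f))L[\x]) \cap R[\x] = \II_R(\ZZ_R(f))$), I would deduce the chain
\[
\II_K(\ZZ_L(f)) = \II_L(\ZZ_L(f)) \cap K[\x] = \bigl(\II_R(\ZZ_R(f))L[\x]\bigr) \cap K[\x] = \II_R(\ZZ_R(f)) \cap K[\x] = \II_K(\ZZ_R(f)),
\]
using that $K[\x] \subset R[\x] \subset L[\x]$. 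Once this equality is established, the defining equations $\II_K(\ZZ_R(f)) = (f)K[\x]$ and $\II_K(\ZZ_L(f)) = (f)K[\x]$ become equivalent, which settles $(\mr{i})$. The second assertion follows by taking $K := R$.

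For part $(\mr{ii})$, the argument is purely algebraic and does not need the hypothesis that $E$ is real closed. By definition,
\[
\II_K(\ZZ_R(g)) = \II_E(\ZZ_R(g)) \cap K[\x],
\]
and by hypothesis $\II_E(\ZZ_R(g)) = (g)E[\x]$, so it suffices to verify that $(g)E[\x] \cap K[\x] = (g)K[\x]$. I would obtain this by applying Corollary \ref{k} to the ideal $\gta := (g)K[\x]$ of $K[\x]$ and the extension $E|K$: the single element $g$ generates both $\gta$ in $K[\x]$ and $\gta E[\x] = (g)E[\x]$ in $E[\x]$, whence the corollary yields $(g)E[\x] \cap K[\x] = \gta = (g)K[\x]$. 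This gives $\II_K(\ZZ_R(g)) = (g)K[\x]$, i.e.\ $g$ is $K$-geometric in $R^n$.

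No substantial obstacle is expected: the proof reduces to a careful bookkeeping of the intersections $(\gta F'[\x]) \cap F[\x]$ for intermediate fields $F \subset F'$, which is exactly what Corollary \ref{k} handles. The only point requiring attention is that in $(\mr{i})$ one uses Proposition \ref{extension-zar} in the `non-standard' direction, taking $R$ (rather than $K$) as the base field, which is legitimate precisely because $L|R$ is an extension of real closed fields.
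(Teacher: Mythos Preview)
Your proposal is correct and follows essentially the same approach as the paper. For $(\mr{i})$ the paper is marginally more direct, reading off $\II_R(\ZZ_L(f))=\II_R(\ZZ_R(f))$ straight from Proposition~\ref{extension-zar}$(\mr{ii})$ and then intersecting with $K[\x]$, whereas you pass through $\II_L$ and descend via Corollary~\ref{k}; for $(\mr{ii})$ the two proofs are identical.
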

\begin{proof}
$(\mr{i})$As $\ZZ_L(f)=(\ZZ_R(f))_L$, Proposition \ref{extension-zar}$(\mr{ii})$ assures that $\II_R(\ZZ_L(f))=\II_R(\ZZ_R(f))$ so $\II_K(\ZZ_L(f))=\II_R(\ZZ_L(f))\cap K[\x]=\II_R(\ZZ_R(f))\cap K[\x]=\II_K(\ZZ_R(f))$.

$(\mr{ii})$ This item follows immediately from Corollary \ref{k}:
$$
\II_{K}(\ZZ_R(g))=\II_E(\ZZ_R(g))\cap K[\x]=(g)E[\x]\cap K[\x]=((g)K[\x])E[\x]\cap K[\x]=(g)K[\x],
$$
as required.
\end{proof}

\begin{remark}\label{2210}
The converse of Lemma \ref{univ}$(\mr{ii})$ is false in general: there are extensions of fields $R|E|K$ and polynomials $g$ in $K[\x]$, which are $K$-geometric in $R^n$, but not $E$-geometric in $R^n$. See Corollary \ref{corro} for a result in the opposite direction.

A simple counterexample in the case $R|E|K=R|R|\Q$ is the following. Set $g:=\x_1^3-2\in\Q[\x_1]$. We have: $\II_\Q(\ZZ_R(g))=\II_\Q(\{\sqrt[3]{2}\})=(g)\Q[\x_1]$, $\II_R(\ZZ_R(g))=\II_R(\{\sqrt[3]{2}\})=(\x_1-\sqrt[3]{2})R[\x_1]\neq (g)R[\x_1]$, so $g$ is $\Q$-geometric in $R$, but not ($R$-)geometric in $R$.

In Examples \ref{exa:gc}, we will see two additional counterexamples in the case $R|E|K=R|R|\Q$. More precisely, we will construct two polynomials $g^\bullet,q^\bullet\in\Q[\x]:=\Q[\x_1,\x_2,\x_3]$ with the following properties:
\begin{itemize}
\item $g^\bullet$ is $\Q$-geometric but not ($R$-)geometric in $R^3$, so we have $\II_R(\ZZ_R(g^\bullet))\neq(g^\bullet)R[\x]=\II_\Q(\ZZ_R(g^\bullet))R[\x]$. In addition, $g^\bullet$ is irreducible in $\Q[\x]$ and reducible in $R[\x]$, and $\ZZ_R(g^\bullet)$ is a $\Q$-irreducible $\Q$-geometric hypersurface of $R^3$, but not a ($R$-)geometric hypersurface of $R^3$.
\item $q^\bullet$ is $\Q$-geometric but not ($R$-)geometric in $R^3$, so we have $\II_R(\ZZ_R(q^\bullet))\neq(q^\bullet)R[\x]=\II_\Q(\ZZ_R(q^\bullet))R[\x]$. In addition, $q^\bullet$ is reducible in $\Q[\x]$ and $\ZZ_R(q^\bullet)$ is a $\Q$-reducible $\Q$-geometric hypersurface of $R^3$. Actually, $\ZZ_R(q^\bullet)$ is also a ($R$-)geometric hypersurface of $R^3$, because there exists a ($R$-)geometric polynomial $Q$ in $R^3$ that divides $q^\bullet$ and has the same zero set of $q^\bullet$ in $R^3$. $\sqbullet$
\end{itemize}
\end{remark}

A polynomial $f\in R[\x]$ {\em changes sign in $R^n$} if there exist $x,y\in R^n$ such that $f(x)f(y)<0$ and we say that $f$ {\em has a nonsingular zero in $R^n$} if there exists $x\in\ZZ_R(f)$ such that $\nabla f(x)=\big(\frac{\partial f}{\partial \x_1}(x),\ldots,\frac{\partial f}{\partial \x_n}(x)\big)\neq0$.

\begin{prop}[$K$-geometric polynomials]\label{prop:hyper}
Let $f\in K[\x]$ be a square-free polynomial and let $f=f_1\cdots f_r$ be its factorization in $K[\x]$. The following conditions are equivalent.
\begin{itemize}
\item[$(\mr{i})$] $f$ is $K$-geometric in $R^n$.
\item[$(\mr{i}')$] $\sqrt[r]{(f)R[\x]}\cap K[\x]=(f)K[\x]$.
\item[$(\mr{ii})$] $f_i$ is $K$-geometric in $R^n$ for each $i\in\{1,\ldots,r\}$.
\item[$(\mr{iii})$] $f_i$ has a nonsingular zero in $R^n$ for each $i\in\{1,\ldots,r\}$.
\item[$(\mr{iv})$] $f_i$ changes sign in $R^n$ for each $i\in\{1,\ldots,r\}$.
\item[$(\mr{v})$] $\dim(\ZZ_R(f_i))=n-1$ for each $i\in\{1,\ldots,r\}$.
\end{itemize}
 
If $f$ is $K$-geometric in $R^n$, then the sets $\ZZ_R(f_1),\ldots,\ZZ_R(f_r)$ are $K$-irreducible $K$-geometric hypersurfaces of $R^n$ and they are the $K$-irreducible components of the $K$-geometric hypersurface $\ZZ_R(f)$ of $R^n$.
\end{prop}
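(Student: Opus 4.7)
The natural strategy is to cycle through the equivalences by combining the Real Nullstellensatz on the ``big'' ring $R[\x]$ (to pin down $\II_R(\ZZ_R(f))$), the operation of intersecting with $K[\x]$ (via Corollary \ref{k}), and the unique factorization in the UFDs $K[\x]$ and $R[\x]$. I will establish the chain in four groups: first $(\mr{i})\Leftrightarrow(\mr{i}')$, then $(\mr{i})\Leftrightarrow(\mr{ii})$, then $(\mr{ii})\Leftrightarrow(\mr{v})$, and finally $(\mr{v})\Leftrightarrow(\mr{iv})\Leftrightarrow(\mr{iii})$, leaving the last paragraph of the statement as a quick consequence of the previous equivalences.

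For $(\mr{i})\Leftrightarrow(\mr{i}')$: the Real Nullstellensatz \cite[Thm.4.1.4]{bcr} applied to $(f)R[\x]$ gives $\II_R(\ZZ_R(f))=\sqrt[\mr{r}]{(f)R[\x]}$, and intersecting with $K[\x]$ yields $\II_K(\ZZ_R(f))=\sqrt[\mr{r}]{(f)R[\x]}\cap K[\x]$, so the two conditions are literally the same. For $(\mr{i})\Leftrightarrow(\mr{ii})$: the decomposition $\ZZ_R(f)=\bigcup_{i=1}^r\ZZ_R(f_i)$ yields $\II_K(\ZZ_R(f))=\bigcap_{i=1}^r\II_K(\ZZ_R(f_i))$. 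Each $(f_i)K[\x]\subset\II_K(\ZZ_R(f_i))$ and, since $f_i$ is irreducible in the UFD $K[\x]$, the ideal $(f_i)K[\x]$ is prime of height one; consequently $\II_K(\ZZ_R(f_i))$ is either $(f_i)K[\x]$ or the whole of $K[\x]$ (the latter precisely when $\ZZ_R(f_i)=\varnothing$). Using that the $f_i$ are pairwise coprime, the intersection of the non-trivial $\II_K(\ZZ_R(f_i))$ equals the principal ideal generated by the product of the corresponding $f_i$'s, and the identity $\II_K(\ZZ_R(f))=(f)K[\x]$ is equivalent to no $f_i$ being discarded, i.e., to each $f_i$ being $K$-geometric. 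Finally $(\mr{ii})\Leftrightarrow(\mr{v})$: if $f_i$ is $K$-geometric, \eqref{eisenbud} and Theorem \ref{dimension} give $\dim(\ZZ_R(f_i))=n-\mr{ht}((f_i)K[\x])=n-1$; conversely, if $\dim(\ZZ_R(f_i))=n-1$ then $\II_K(\ZZ_R(f_i))$ is a prime ideal of height one containing the principal prime $(f_i)K[\x]$, so they coincide.

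For $(\mr{iii})\Leftrightarrow(\mr{iv})\Leftrightarrow(\mr{v})$ the idea is to pass to the factorization of $f_i$ in $R[\x]$. By Remark \ref{squaresquarefree}, $f_i$ is square-free in $R[\x]$, so we may write $f_i=g_1\cdots g_s$ with $g_1,\dots,g_s$ pairwise distinct irreducible polynomials in the UFD $R[\x]$; hence $\ZZ_R(g_j)\cap\ZZ_R(g_k)$ has dimension $\leq n-2$ whenever $j\neq k$. The classical Real Nullstellensatz applied to each irreducible $g_j\in R[\x]$ gives the chain of equivalences: $g_j$ changes sign $\Leftrightarrow$ $g_j$ has a nonsingular zero in $R^n$ $\Leftrightarrow$ $\dim(\ZZ_R(g_j))=n-1$ $\Leftrightarrow$ $(g_j)R[\x]$ is a real ideal. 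Each of conditions (iii), (iv), (v) for $f_i$ is then equivalent to the corresponding property holding for at least one factor $g_j$: for (v) this is obvious since $\ZZ_R(f_i)=\bigcup_j\ZZ_R(g_j)$; for (iv) one observes that if every $g_j$ has constant sign then so does their product, whereas if some $g_j$ does change sign one selects a nonsingular zero $a$ of $g_j$ outside $\bigcup_{k\neq j}\ZZ_R(g_k)$ (which is possible by the dimension bound on pairwise intersections) and checks from $\nabla f_i(a)=\big(\prod_{k\neq j}g_k(a)\big)\nabla g_j(a)\neq0$ that $f_i$ changes sign locally and has a nonsingular zero at $a$; this simultaneously yields (iii).

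The main technical point is the last step, where the factorization of $f_i$ in $R[\x]$ may genuinely split the $K$-irreducible hypersurface $\ZZ_R(f_i)$ into $R$-irreducible pieces of smaller behaviour, and one must carefully pick a ``generic'' nonsingular real point of one of these pieces to transfer the sign-change and nonsingularity statements back to $f_i$; the coprimeness-in-the-UFD-$R[\x]$ argument is what makes this transfer work. Once all five conditions are equivalent, the concluding claim follows at once: under $(\mr{ii})$, each $(f_i)K[\x]=\II_K(\ZZ_R(f_i))$ is a prime ideal, so by Lemma \ref{lem:prime} each $\ZZ_R(f_i)$ is $K$-irreducible and is a $K$-geometric hypersurface by definition; and the inclusion $\ZZ_R(f_i)\subset\ZZ_R(f_j)$ for $i\neq j$ would give $f_i\mid f_j$ in $K[\x]$, contradicting distinct irreducibility, so the $\ZZ_R(f_i)$ are precisely the $K$-irreducible components of $\ZZ_R(f)$.
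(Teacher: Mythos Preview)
Your argument for $(\mr{i})\Rightarrow(\mr{ii})$ contains a genuine gap. You claim that because $(f_i)K[\x]$ is a height-one prime, the ideal $\II_K(\ZZ_R(f_i))\supset(f_i)K[\x]$ must be either $(f_i)K[\x]$ itself or all of $K[\x]$. This dichotomy is false for $n\geq 2$: take $f_i=\x_1^2+\x_2^2\in\Q[\x_1,\x_2]$, which is irreducible over $\Q$; then $\ZZ_R(f_i)=\{(0,0)\}$ and $\II_\Q(\ZZ_R(f_i))=(\x_1,\x_2)\Q[\x_1,\x_2]$, a height-two prime that is neither $(f_i)$ nor the unit ideal. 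Since this dichotomy is the only arrow you have leaving $(\mr{i})$ (apart from $(\mr{i}')$), your cycle of equivalences does not close. The paper fixes this by the contrapositive: if some $f_i$ is not $K$-geometric, pick $g\in\II_K(\ZZ_R(f_i))\setminus(f_i)K[\x]$; then $g\cdot\prod_{j\neq i}f_j\in\II_K(\ZZ_R(f))\setminus(f)K[\x]$, so $(\mr{i})$ fails.

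There is also a smaller imprecision in your $(\mr{v})\Rightarrow(\mr{ii})$: you assert that $\II_K(\ZZ_R(f_i))$ ``is a prime ideal of height one,'' but primality is not a priori true. What \eqref{eisenbud} and Theorem \ref{dimension} give is $\hgt(\II_K(\ZZ_R(f_i)))=1$; the paper then picks a minimal prime $\gtp_j$ of this radical ideal with $\hgt(\gtp_j)=1$, observes $(f_i)K[\x]\subset\gtp_j$ with both prime of height one, concludes $(f_i)K[\x]=\gtp_j\supset\II_K(\ZZ_R(f_i))$, and hence equality. Finally, your detour through the factorization of $f_i$ in $R[\x]$ for $(\mr{iii})\Leftrightarrow(\mr{iv})\Leftrightarrow(\mr{v})$ is correct but heavier than needed: the paper handles these implications directly on $f_i$ (e.g.\ $(\mr{ii})\Rightarrow(\mr{iii})$ because $\nabla f_i\equiv0$ on $\ZZ_R(f_i)$ would force $f_i\mid\tfrac{\partial f_i}{\partial\x_j}$; $(\mr{iii})\Rightarrow(\mr{iv})$ by expanding along $t\mapsto f_i(x+t\nabla f_i(x))$; $(\mr{iv})\Rightarrow(\mr{v})$ via \cite[Lem.4.5.2]{bcr}), avoiding the generic-point selection.
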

\begin{proof}
$(\mr{i})\Longleftrightarrow(\mr{i}')\ $ $\II_R(\ZZ_R(f))=\sqrt[r]{(f)R[\x]}$ by the Real Nullstellensatz \cite[Cor.4.1.8]{bcr}. As $\II_K(\ZZ_R(f))=\II_R(\ZZ_R(f))\cap K[\x]$, $f$ is $K$-geometric in $R^n$ if and only if $\sqrt[r]{(f)R[\x]}\cap K[\x]=(f)K[\x]$.

$(\mr{i})\Longrightarrow(\mr{ii})\ $ Suppose there exists $i\in\{1,\ldots,r\}$ such that $\II_K(\ZZ_R(f_i))\supsetneqq(f_i)K[\x]$, that is, there exists $g\in K[\x]$ vanishing on $\ZZ_R(f_i)$, which is not divisible by $f_i$. Thus, the polynomial $f_1\cdots f_{i-1}gf_{i+1}\cdots f_r$ belongs to $\II_K(\ZZ_R(f))\setminus(f)K[\x]$, which is impossible by $(\mr{i})$.

$(\mr{ii})\Longrightarrow(\mr{i})\ $ We have $\II_K(\ZZ_R(f))=\bigcap_{i=1}^r\II_K(\ZZ_R(f_i))=\bigcap_{i=1}^r(f_i)K[\x]=(f)K[\x]$.

Let us fix an index $i\in\{1,\ldots,r\}$.

$(\mr{ii})\Longrightarrow(\mr{iii})\ $ If $\nabla f_i$ vanishes identically on $\ZZ_R(f_i)$, then $f_i$ divides each partial derivative $\frac{\partial f_i}{\partial \x_j}$, so $\frac{\partial f_i}{\partial\x_j}=0$ for $j=1,\ldots,n$. Thus, $f_i$ is constant, which is impossible, because $f_i$ is irreducible in $K[\x]$.

$(\mr{iii})\Longrightarrow(\mr{iv})\ $ If $\nabla f_i(x)\neq0$ for some $x\in\ZZ_R(f_i)$, the polynomial function $\phi:R\to R$, defined by $f(t):=f_i(x+t\nabla f_i(x))$, vanishes at $0$ and is strictly increasing locally at $0$, because $\phi'(0)=\sum_{j=1}^n\big(\frac{\partial f_i}{\partial\x_j}(x)\big)^2>0$.

$(\mr{iv})\Longrightarrow(\mr{v})\ $ Use \cite[Lem.4.5.2]{bcr}.

$(\mr{v})\Longrightarrow(\mr{ii})\ $ Let $\gta:=\II_K(\ZZ_R(f_i))$. Observe that $n-1=\dim_K(\ZZ_R(f_i))=\dim(K[\x]/\gta)$. As $\gta$ is a radical ideal of $K[\x]$, there exists a finite family of prime ideals $\gtp_1,\ldots,\gtp_t$ of $K[\x]$ such that $\gta=\gtp_1\cap\ldots\cap\gtp_t$. Thus, $n-1=\dim(K[\x]/\gta)=\dim(K[\x]/\gtp_j)$ for some $j\in\{1,\ldots,t\}$. By \eqref{eisenbud}, $\mr{ht}(\gtp_j)=\dim(K[\x])-\dim(K[\x]/\gtp_j)=n-(n-1)=1$. As $\mr{ht}((f_i)K[\x])=1=\mr{ht}(\gtp_j)$, $(f_i)K[\x]\subset\gta\subset\gtp_j$ and $(f_i)K[\x]$ is a prime ideal of $K[\x]$, it follows $(f_i)K[\x]=\gta=\gtp_j$, so $(f_i)K[\x]=\II_K(\ZZ_R(f_i))$. Thus, $f_i$ is $K$-geometric in $R^n$.

Let us prove the last part of the statement. Assume that $f=f_1\cdots f_r$ is $K$-geometric in $R^n$. Thus, each $f_i$ is $K$-geometric in $R^n$, so $\ZZ_R(f_i)$ is a $K$-geometric hypersurface of $R^n$. As $\II_K(\ZZ_R(f_i))=(f_i)K[\x]$ is a prime ideal of $K[\x]$, Lemma \ref{lem:prime} implies that $\ZZ_R(f_i)\subset R^n$ is $K$-irreducible. In addition, $\ZZ_R(f_i)\not\subset\ZZ_R(f_j)$ for all $i,j\in\{1,\ldots,r\}$ with $i\neq j$ because, otherwise, $f_j\in\II_K(\ZZ_R(f_i))=(f_i)K[\x]$ for some $i\neq j$, so $f_i$ divides $f_j$, which is a contradiction. 
\end{proof}

A first immediate consequence of the preceding result is the following:

\begin{cor}[Characterization of $K$-geometric polynomials in $R^n$]\label{cor:char-geometric}
A polynomial $f\in K[\x]$ is $K$-geometric in $R^n$ if and only if it is square-free in $K[\x]$ and its factorization $f=f_1\cdots f_r$ in $K[\x]$ satisfies one of the five equivalent conditions $(\mr{ii})$, $(\mr{iii})$, $(\mr{iv})$ and $(\mr{v})$ stated in Proposition $\ref{prop:hyper}$.
\end{cor}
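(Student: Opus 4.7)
\medskip

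The plan is to derive the corollary directly from Proposition~\ref{prop:hyper}, adding only the observation that $K$-geometricity forces square-freeness. First I would handle the forward implication: if $f\in K[\x]$ is $K$-geometric in $R^n$, then by definition $\II_K(\ZZ_R(f))=(f)K[\x]$. Since vanishing ideals are radical, the principal ideal $(f)K[\x]$ must be radical in $K[\x]$, which (as $K$ has characteristic zero, so $K[\x]$ is a UFD with no inseparability issues) forces $f$ to be square-free. Once square-freeness is in hand, Proposition~\ref{prop:hyper} applies to the factorization $f=f_1\cdots f_r$ in $K[\x]$ and, via the equivalence $(\mr{i})\Longleftrightarrow(\mr{ii})\Longleftrightarrow(\mr{iii})\Longleftrightarrow(\mr{iv})\Longleftrightarrow(\mr{v})$ proved there, each of the conditions $(\mr{ii})$--$(\mr{v})$ holds.

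Conversely, I would argue: if $f$ is square-free and one (hence all) of conditions $(\mr{ii})$--$(\mr{v})$ is satisfied by its factorization $f=f_1\cdots f_r$, then Proposition~\ref{prop:hyper} immediately yields condition $(\mr{i})$, namely $\II_K(\ZZ_R(f))=(f)K[\x]$, which is precisely the definition of $f$ being $K$-geometric in $R^n$. Thus the corollary follows with no further computation.

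I do not anticipate any genuine obstacle here, since all the substantive work is already done in Proposition~\ref{prop:hyper}. The only point that deserves a brief written justification is the reduction from ``$K$-geometric'' to ``square-free'', which rests on the elementary fact that a principal ideal $(f)K[\x]$ in a UFD of characteristic zero is radical if and only if $f$ is square-free; I would spell this out in one line to make the corollary self-contained.
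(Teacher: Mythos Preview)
Your proposal is correct and matches the paper's approach exactly: the paper states this corollary as an immediate consequence of Proposition~\ref{prop:hyper}, having already noted just after Definitions~\ref{321} that $K$-geometricity forces square-freeness (since zero ideals are radical). Your write-up simply makes explicit the two-line argument the paper leaves implicit.
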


\begin{remarks}\label{irre}
$(\mr{i})$ Let $R|E$ be an extension of real closed fields, let $f\in E[\x]$ and let $f=f_1^{m_1}\cdots f_r^{m_r}$ be the factorization of $f$ as a polynomial in $E[\x]$. Then $f=f_1^{m_1}\cdots f_r^{m_r}$ is also the factorization of $f$ as a polynomial in $R[\x]$. This is a direct consequence of Tarski-Seidenberg`s principle. A~sketch of the proof is the following. As $E[\x]$ and $R[\x]$ are UFD, it is enough to check the following assertion: if $f$ is irreducible as a polynomial of $E[\x]$, then $f$ is also irreducible as a polynomial in $R[\x]$. Let $d$ be the degree of $f$, let $L=R$ or $L=E$, let $L^N$ be the affine space that parametrizes of all coefficients of polynomials in $L[\x]$ of degree $\leq d$, and let $I^L$ the subset of $L^N$ of all coefficients of irreducible polynomials in $L[\x]$ of degree $\leq d$. The definition of irreducible polynomial and Tarski-Seidenberg's principle imply that $I^L\subset L^N$ is semialgebraic for $L=R$ and $L=E$, and $I^R$ is the extension of coefficients of $I^E\subset E^N$ to $R$, that is, $I^R=(I^E)_R\subset R^N$. Thus, we have $f\in I^E=I^R\cap E^N\subset I^R$, as required.

$(\mr{ii})$ Let $L|F$ be an extension of algebraically closed fields, let $f\in F[\x]$ and let $f=f_1^{m_1}\cdots f_r^{m_r}$ be the factorization of $f$ as a polynomial in $F[\x]$. Then, repeating the previous argument, but changing `semialgebraic' by `constructible', one deduces that $f=f_1^{m_1}\cdots f_r^{m_r}$ is also the factorization of $f$ as a polynomial in $L[\x]$. $\sqbullet$
\end{remarks}

Combining Corollary \ref{cor:char-geometric} with Remarks \ref{irre}$(\mr{i})$, we deduce:

\begin{cor}\label{corro}
If $f\in K[\x]$ and the ordered subfield $K$ of $R$ is real closed, then $f$ is $K$-geometric in $R^n$ if and only if $f$ is geometric in $R^n$.
\end{cor}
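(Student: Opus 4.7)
The plan is to reduce the equivalence to a tautology via Corollary~\ref{cor:char-geometric} (the intrinsic characterization of $K$-geometric polynomials) and Remarks~\ref{irre}$(\mr{i})$ (invariance of factorization under extensions of real closed fields). The key observation is that both notions — $K$-geometric in $R^n$ and $R$-geometric in $R^n$ — are, up to a square-free condition, governed by geometric conditions on the zero sets of the irreducible factors in $R^n$ that do not depend on the ground subfield of coefficients. So the issue is only to make the factorizations of $f$ over $K$ and over $R$ agree.

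First I would invoke Remarks~\ref{irre}$(\mr{i})$ applied to the extension of real closed fields $R|K$: this says that the factorization $f = u f_1 \cdots f_r$ of $f$ in $K[\x]$ (with $u \in K^*$ and the $f_i$ irreducible in $K[\x]$) is simultaneously a factorization of $f$ in $R[\x]$, with the $f_i$ still irreducible and still pairwise non-associated over $R$. In particular, this would show that $f$ is square-free in $K[\x]$ if and only if it is square-free in $R[\x]$. Then I would apply Corollary~\ref{cor:char-geometric} with coefficient subfield $K$, and again with coefficient subfield $R$: the criterion in both cases is `square-free, and every irreducible factor $f_i$ has $\dim(\ZZ_R(f_i)) = n-1$' (or any of the other four equivalent conditions of Proposition~\ref{prop:hyper}). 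Since the irreducible factors are the same $f_1,\ldots,f_r$ in both cases, the two criteria coincide, and the biconditional follows.

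There is essentially no obstacle here: the work was done in Corollary~\ref{cor:char-geometric} and in Remarks~\ref{irre}$(\mr{i})$. The only thing to be slightly careful about is the direction of Remarks~\ref{irre}$(\mr{i})$: it guarantees that $K$-irreducible factors stay $R$-irreducible, which rules out the \emph{a priori} possibility that some $f_i \in K[\x]$ could split further in $R[\x]$ into factors whose zero sets in $R^n$ have smaller dimension, which would otherwise allow $f$ to be $K$-geometric without being $R$-geometric.
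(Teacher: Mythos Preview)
Your proof is correct and takes essentially the same approach as the paper: both use Remarks~\ref{irre}$(\mr{i})$ to identify the factorizations over $K$ and $R$, then apply Corollary~\ref{cor:char-geometric} to reduce both notions to the same dimension/sign-change condition on the common irreducible factors. The only cosmetic difference is that the paper dispatches the implication ``geometric $\Rightarrow$ $K$-geometric'' separately via Lemma~\ref{univ}$(\mr{ii})$ and then proves only the converse, whereas you handle both directions symmetrically.
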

\begin{proof}
By Lemma \ref{univ}$(\mr{ii})$, it is enough to show that, if $f$ is $K$-geometric in $R^n$, then $f$ is also ($R$-)geometric in $R^n$. By Corollary \ref{cor:char-geometric}, $f$ is square-free in $K[\x]$ and, if $f=f_1\cdots f_r$ is the factorization in $K[\x]$, then each $f_i$ satisfies, for example, condition $(\mr{iv})$ of Proposition \ref{prop:hyper}, that is, $f_i$ changes sign in $R^n$ (which does not depend on $K$). Remarks \ref{irre}$(\mr{i})$ assures that $f=f_1\cdots f_r$ is also the factorization of $f$ in $R[\x]$, so Corollary \ref{cor:char-geometric} also implies that $f$ is ($R$-)geometric in $R^n$.
\end{proof}

The next result is another consequence of Propostion \ref{prop:hyper}.

\begin{prop}[Characterization of $K$-geometric hypersurfaces of $R^n$]\label{3111}
A $K$-algebraic set $X\subset R^n$ is a $K$-geometric hypersurface of $R^n$ if and only if the dimension of each $K$-irreducible component of $X$ is $n-1$.
\end{prop}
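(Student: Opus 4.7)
The plan is to prove the two implications separately, using Proposition~\ref{prop:hyper} for the ``only if'' direction and the fact that $K[\x]$ is a UFD for the ``if'' direction.

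For the direct implication, I would start from the hypothesis that $X=\ZZ_R(f)$ for some $K$-geometric polynomial $f\in K[\x]$. By Corollary~\ref{cor:char-geometric}, $f$ is square-free in $K[\x]$; write its factorization as $f=f_1\cdots f_r$. The final assertion of Proposition~\ref{prop:hyper} then identifies $\ZZ_R(f_1),\ldots,\ZZ_R(f_r)$ as exactly the $K$-irreducible components of $X$, and the implication $(\mr{ii})\Longrightarrow(\mr{v})$ of the same proposition yields $\dim(\ZZ_R(f_i))=n-1$ for every $i$, which closes this direction.

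For the converse, let $X_1,\ldots,X_s$ be the $K$-irreducible components of $X$ and assume $\dim(X_i)=n-1$ for all $i$. By Lemma~\ref{lem:prime}, each $\II_K(X_i)$ is a prime ideal of $K[\x]$, and by equation~\eqref{eisenbud} it has height $n-\dim_K(X_i)=1$. Since $K[\x]$ is a unique factorization domain, every prime ideal of height one is principal, so $\II_K(X_i)=(f_i)K[\x]$ for some irreducible polynomial $f_i\in K[\x]$. The $f_1,\ldots,f_s$ are pairwise non-associated: otherwise $\II_K(X_i)=\II_K(X_j)$ for some $i\neq j$, forcing $X_i=X_j$, against the definition of $K$-irreducible components. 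Setting $f:=f_1\cdots f_s$, we obtain
\[
\II_K(X)=\bigcap_{i=1}^s\II_K(X_i)=\bigcap_{i=1}^s(f_i)K[\x]=(f)K[\x],
\]
which is principal, so $X$ is a $K$-geometric hypersurface of $R^n$ by the equivalent formulation of this notion given in Definitions~\ref{321}.

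The only substantive step is translating the dimension hypothesis into the height-one condition via \eqref{eisenbud}, after which the principality of $\II_K(X_i)$ is immediate from the UFD property of $K[\x]$; no real obstacle is expected, since everything else is a direct application of Proposition~\ref{prop:hyper} together with the uniqueness of $K$-irreducible decompositions from Lemma~\ref{lem:irred}.
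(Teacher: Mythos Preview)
Your proof is correct. The ``only if'' direction matches the paper's exactly. For the ``if'' direction you take a slightly different, more algebraic route than the paper does.

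The paper argues as follows: for each $K$-irreducible component $X_i$, pick an irreducible polynomial $f_i\in\II_K(X_i)$, observe that $X_i\subset\ZZ_R(f_i)\subsetneqq R^n$ forces $\dim(\ZZ_R(f_i))=n-1$, so by Proposition~\ref{prop:hyper} the set $\ZZ_R(f_i)$ is a $K$-irreducible $K$-geometric hypersurface; then Lemma~\ref{dimirred} and Theorem~\ref{dimension} give $X_i=\ZZ_R(f_i)$, and finally implication $(\mr{v})\Longrightarrow(\mr{i})$ of Proposition~\ref{prop:hyper} shows that $f:=f_1\cdots f_r$ is $K$-geometric. Your argument instead goes straight to the algebraic heart of the matter: $\II_K(X_i)$ is a height-one prime in the UFD $K[\x]$, hence principal, and then the equivalent formulation in Definitions~\ref{321} finishes immediately. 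Your route is shorter and avoids the detour through $\ZZ_R(f_i)$ and the dimension-comparison Lemma~\ref{dimirred}; the paper's route is more self-contained in that it does not invoke the height-one-prime-is-principal fact (which is standard but not stated elsewhere in the paper) and stays within the geometric toolkit already developed in Proposition~\ref{prop:hyper}. One small point: when you write $\hgt(\II_K(X_i))=n-\dim_K(X_i)=1$, you are silently using $\dim_K(X_i)=\dim(X_i)$, which is Theorem~\ref{dimension} (the paper's convention in this section is to use this freely, so it is fine, but worth being aware of).
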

\begin{proof}
The `only if' implication follows immediately from Proposition \ref{prop:hyper}. Let us prove the `if' implication. Suppose $X_1,\ldots,X_r$ are the $K$-irreducible components of $X$ and $\dim(X_i)=n-1$ for each $i\in\{1,\ldots,r\}$. As $\II_K(X_i)$ is a nonzero prime ideal of $K[\x]$, there exists an irreducible polynomial $f_i\in\II_K(X_i)$, so $X_i\subset\ZZ_R(f_i)\subsetneqq R^n$. It follows that $n-1=\dim(X_i)\leq\dim(\ZZ_R(f_i))\leq n-1$. Thus, $\dim(\ZZ_R(f_i))=n-1$. By Proposition \ref{prop:hyper}, $\ZZ_R(f_i)$ is a $K$-irreducible $K$-geometric hypersurface of $R^n$. By Theorem \ref{dimension} and Lemma \ref{dimirred}, we have $\dim_K(X_i)=n-1=\dim_K(\ZZ_R(f_i))$ and $X_i=\ZZ_R(f_i)$. Define $f:=f_1\cdots f_r\in K[\x]$. Observe that $\ZZ_R(f)=X$ and the polynomials $f_i$ and $f_j$ are non-associated if $i\neq j$ because, otherwise, $X_i=\ZZ_R(f_i)=\ZZ_R(f_j)=X_j$, which is a contradiction. Thus, $f=f_1\cdots f_r$ is the factorization of $f$ in $K[\x]$. By implication $(\mr{v})\Longrightarrow(\mr{i})$ in Proposition \ref{prop:hyper}, $f$ is a $K$-geometric polynomial in $R^n$, so $X=\ZZ_R(f)$ is a $K$-geometric hypersurface of $R^n$, as required.
\end{proof}

\begin{remark}\label{krkr}
Let $g\in\kr[\x]$ be a $\kr$-geometric polynomial in $R^n$, let $g=g_1\cdots g_r$ be the factorization of $g$ in $\kr[\x]$ (recall that $g$ is square-free) and let $Z\subset C^n$ be the complexification of $\ZZ_R(g)\subset R^n$. \emph{Then $\II_C(Z)=(g)C[\x]$, $Z=\ZZ_C(g)$ and $g=g_1\cdots g_r$ is also the factorization of $g$ in $\kbar[\x]$ and in $C[\x]$. In particular, $g$ is square-free in $\kbar[\x]$ and in $C[\x]$ (see also Remark \ref{squaresquarefree}). Moreover, $g$ is irreducible in $\kr[\x]$ if and only if it is irreducible in $\kbar[\x]$ or, equivalently, if it is irreducible in $C[\x]$.}

Let us prove these assertions. By \eqref{equaz}, $\II_C(Z)=\II_\kr(\ZZ_R(g))C[\x]=(g)C[\x]$, so $Z=\ZZ_C(g)$. Let $i\in\{1,\ldots,r\}$. By Remarks \ref{irre}$(\mr{ii})$, we only need to show that $g_i$ is irreducible as a polynomial in $\kbar[\x]$. By Lemma \ref{univ} and Proposition \ref{prop:hyper}, $g_i\in\kr[\x]$ is geometric in $(\kr)^n$ and $\ZZ_\kr(g_i)\subset (\kr)^n$ is irreducible. Let $V_i\subset\kbar^n$ be the complexification of $\ZZ_\kr(g_i)\subset(\kr)^n$. By Proposition \ref{prop:zar}$(\mr{i})(\mr{iii})$, $V_i\subset\kbar^n$ is irreducible and the ideal $\II_\kbar(V_i)=\II_\kr(\ZZ_\kr(g_i))\kbar[\x]=(g_i)\kbar[\x]$ of $\kbar[\x]$ is prime. Thus, $g_i$ is irreducible in $\kbar[\x]$, as required. $\sqbullet$
\end{remark}

Let $g\in\kr[\x]$ be a $\kr$-geometric polynomial in $R^n$ and let $g=g_1\cdots g_r$ be its factorization in $\kr[\x]$. As $g\in\kbar[\x]$, we can speak about the Galois completion $g^\bullet$ of $g$ (see Definition~\ref{gbullet}). To compute $g^\bullet$, as $g=g_1\cdots g_r$ is also the factorization of $g$ in $\kbar[\x]$ by Remark \ref{krkr}, we can apply Algorithm \ref{gcpol} to $g$ choosing in step {\it(1)} a finite Galois extension $E|K$ such that $E$ contains all the coefficients of $g_1,\ldots,g_r$ (and hence of the coefficients of $g$). Now, an imme\-diate consequence of Theorem \ref{thm:gc}, Remarks \ref{welld-sfree}$\,$\&$\,$\ref{gcpolr}$\,$\&$\,$\ref{irre}$\,$\&$\,$\ref{krkr} and \eqref{22}\&\eqref{23}\&\eqref{22b} is the following.

\begin{cor}\label{3113}
Let $g\in\kr[\x]$ be a $\kr$-geometric polynomial in $R^n$ and let $g^\bullet\in K[\x]$ be the Galois completion of $g$. We have:
\begin{itemize}
\item[$(\mr{i})$] $\ZZ_C(g^\bullet)=\zcl_{C^n}^K(\ZZ_R(g))$ and $\ZZ_R(g^\bullet)=\zcl_{R^n}^K(\ZZ_R(g))$ are the complex and real Galois completions of $\ZZ_R(g)\subset R^n$. In addition, we have $\ZZ_C(g^\bullet)=\zcl_{C^n}^K(\ZZ_R(g^\bullet))$.
\item[$(\mr{ii})$] $\II_K(\ZZ_R(g))=\II_K(\ZZ_C(g^\bullet))=\II_K(\ZZ_R(g^\bullet))=(g^\bullet)K[\x]$. In particular, the latter equality assures that $g^\bullet$ is a $K$-geometric polynomial in $R^n$.
\item[$(\mr{ii}')$] $\II_C(\ZZ_C(g^\bullet))=(g^\bullet)C[\x]$ so $g^\bullet$ is square-free as a polynomial in $C[\x]$ (see also Remark \ref{squaresquarefree}).
\item[$(\mr{iii})$] $\zcl_{C^n}(\ZZ_R(g^\bullet))\subset \ZZ_C(g^\bullet)$ and $\II_R(\ZZ_C(g^\bullet))=\II_K(\ZZ_R(g^\bullet))R[\x]$.
\item[$(\mr{iv})$] If $g=g_1\cdots g_r$ is the factorization of $g$ in $\kr[\x]$ and $g^\bullet=f_1\cdots f_s$ is the factorization of $g^\bullet$ in $K[\x]$, then there exists a surjective map $\eta:\{1,\ldots,r\}\to\{1,\ldots,s\}$ such that $f_{\eta(j)}$ is the Galois completion of $g_j$ for each $j\in\{1,\ldots,r\}$.
\item[$(\mr{iv}')$] If $E|K$ is a finite Galois extension that contains all the coefficients of $g_1,\ldots,g_r$ and we set $G':=G(E:K)$ and $\textstyle g^*:=\prod_{\sigma\in G'}g^\sigma\in K[\x]$, then $ g^*=u\prod_{j=1}^rf_{\eta(j)}^{\ell_j}\,$ for some $u\in K^*$ and $\ell_j\in\N^*$.
\item[$(\mr{iv}'')$] If $g$ is irreducible in $\kr[\x]$ (so $r=1$), then $g^\bullet$ is irreducible in $K[\x]$ and $g^*=u(g^\bullet)^\ell$ for some $u\in K^*$ and $\ell\in\N^*$.
\item[$(\mr{v})$] Suppose again that $g$ is irreducible in $\kr[\x]$ (and so in $\kbar[\x]$), consider again a finite Galois extension $E|K$ that contains all the coefficients of $g$ and set $G':=G(E:K)$. Let $\mc{F}$ be a subset of $G'$ such that $\{g^\sigma:\sigma\in\mc{F}\}=\{g^\sigma:\sigma\in G'\}$ and $g^\sigma$ and $g^\tau$ are non-associated in $E[\x]$ for each $\sigma,\tau\in\mc{F}$ with $\sigma\neq\tau$. Then $g^\bullet=v\prod_{\sigma\in\mc{F}}g^\sigma$ is the factorization of $g^\bullet$ in $\kbar[\x]$ and in $C[\x]$, for some $v\in\kbar^*$.
\end{itemize}
\end{cor}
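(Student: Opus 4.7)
The plan is to collect each assertion as a consequence of Algorithm \ref{gcpol} applied to $g$, once we have identified the complexification $Z$ of $\ZZ_R(g)\subset R^n$ with $\ZZ_C(g)\subset C^n$. By Remark \ref{krkr}, the $\kr$-geometricity of $g$ gives $\II_C(Z)=(g)C[\x]$ and $Z=\ZZ_C(g)$; moreover the factorization $g=g_1\cdots g_r$ in $\kr[\x]$ is also the factorization in $\kbar[\x]$ and in $C[\x]$. Choosing a finite Galois extension $E|K$ containing the coefficients of all the $g_j$, I can feed $g$ into Algorithm \ref{gcpol} and simultaneously apply Theorem \ref{thm:gc} to $Y:=\ZZ_R(g)$ with the $g_j$'s as its finite set of $\kr$-generators.

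With this setup, items $(\mr{i})$--$(\mr{iii})$ are formal consequences. For $(\mr{i})$, equation \eqref{23} gives $\ZZ_C(g^\bullet)=\zcl_{C^n}^K(\ZZ_C(g))=\zcl_{C^n}^K(Z)$, which is the complex Galois completion of $Y$ by definition and Theorem \ref{thm:gc}(iv); intersecting with $R^n$ yields the real Galois completion, which equals $\ZZ_R(g^\bullet)$. The final equality of $(\mr{i})$ follows from the chain $Y\subset \ZZ_R(g^\bullet)\subset \ZZ_C(g^\bullet)=\zcl_{C^n}^K(Y)$. For $(\mr{ii})$, by $(\mr{i})$ the ideals $\II_K(\ZZ_C(g))$, $\II_K(\ZZ_C(g^\bullet))$, $\II_K(\ZZ_R(g^\bullet))$ and $\II_K(\ZZ_R(g))$ all coincide (using Theorem \ref{thm:gc}(v) for the last three), and \eqref{22} identifies the common value with $(g^\bullet)K[\x]$. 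Item $(\mr{ii}')$ is \eqref{22b}. Item $(\mr{iii})$ is Theorem \ref{thm:gc}(vi) specialized to $T=\ZZ_C(g^\bullet)$ and $T^r=\ZZ_R(g^\bullet)$.

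Items $(\mr{iv})$, $(\mr{iv}')$ and $(\mr{iv}'')$ are direct translations of Remark \ref{gcpolr}: the factorization of $g$ is the same in $\kr[\x]$ and in $\kbar[\x]$ by Remark \ref{krkr} and $g$ is square-free, so the exponents $\alpha_j$ in Remark \ref{gcpolr}(ii) are all equal to $1$; Remark \ref{gcpolr}(ii) then produces the surjection $\eta$ and the formula $g^*=u\prod_{j=1}^r f_{\eta(j)}^{\ell_j}$, while Remark \ref{gcpolr}(i) covers the irreducible case $r=1$.

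The main obstacle is $(\mr{v})$, the explicit factorization of $g^\bullet$ in $\kbar[\x]$. I plan to proceed as follows. Assume $g$ is irreducible in $\kr[\x]$, so by Remark \ref{krkr} it is irreducible in $\kbar[\x]$. Each $\sigma\in G'$ extends via Lemma \ref{extension} to an automorphism $\Phi_\sigma$ of $C$; since $\Phi_\sigma$ fixes $K$ it restricts to an automorphism of $\kbar$, and the induced ring automorphism of $\kbar[\x]$ sends $g$ to $g^\sigma$, so every $g^\sigma$ is irreducible in $\kbar[\x]$. Grouping the product $g^*=\prod_{\sigma\in G'}g^\sigma$ by association classes in $\kbar[\x]$, and noting that $\mc{F}$ as in the statement furnishes a complete set of representatives (any associating constant between $g^\sigma$ and $g^\tau$ lies automatically in $E^*$), we obtain $g^*=u_1\prod_{\sigma\in\mc{F}}(g^\sigma)^{n_\sigma}$ in $\kbar[\x]$ for some $u_1\in\kbar^*$ and positive integers $n_\sigma$. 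On the other hand, $(\mr{iv}'')$ gives $g^*=u(g^\bullet)^\ell$ in $K[\x]$, and $g^\bullet$ is square-free in $\kbar[\x]$ (either from $(\mr{ii}')$ combined with Remark \ref{squaresquarefree}, or directly from the construction of $g^\bullet$ in Algorithm \ref{gcpol}). Unique factorization in the UFD $\kbar[\x]$ then forces $n_\sigma=\ell$ for every $\sigma\in\mc{F}$ and $g^\bullet=v\prod_{\sigma\in\mc{F}}g^\sigma$ for some $v\in\kbar^*$; this is the sought factorization in $\kbar[\x]$. Finally, Remarks \ref{irre}(ii) transports this factorization unchanged to $C[\x]$.
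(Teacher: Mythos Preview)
Your proof is correct and follows essentially the same approach as the paper, which simply states that the corollary is an immediate consequence of Theorem~\ref{thm:gc}, Remarks~\ref{welld-sfree}, \ref{gcpolr}, \ref{irre}, \ref{krkr} and equations \eqref{22}, \eqref{23}, \eqref{22b}. You have correctly identified which of these references govern each item and spelled out the details, including a clean argument for item~$(\mr{v})$ via unique factorization in $\kbar[\x]$ combined with the square-freeness of $g^\bullet$.
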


We refer the reader to Corollary \ref{3113b} for an additional property of $g^\bullet$.

Now, we compute explicitly the Galois completions of two $\qr$-geometric polynomials $g$ and $p$ in $R^3$ by means of Algorithm \ref{gcpol}.

\begin{examples}\label{exa:gc}
$(\mr{i})$ Let $\qr[\x]:=\qr[\x_1,\x_2,\x_3]$ and let $g:=\x_1+\sqrt{2}\x_2+\sqrt[4]{2}\x_3\in\qr[\x]$. By Proposition \ref{prop:hyper}, $g$ is $\qr$-geometric in $R^3$. Let us compute the Galois completion $g^\bullet$ of $g$. Let $N:=\Q(\sqrt{2},\sqrt[4]{2})=\Q(\sqrt[4]{2})$, let $E|\Q$ be the Galois closure of $N|\Q$ and let $G':=G(E:\Q)$. Observe that $\t^4-2$ is the minimal polynomial of $\sqrt[4]{2}$ over $\Q$ and $\{\sqrt[4]{2},\ii\sqrt[4]{2},-\sqrt[4]{2},-\ii\sqrt[4]{2}\}$ is the set of its roots, where $\ii:=\sqrt{-1}$. It follows that $E=\Q(\sqrt[4]{2},\ii\sqrt[4]{2})=\Q(\sqrt[4]{2},\ii)=N(\ii)$ and $[E:\Q]=[E:N][N:\Q]=2\cdot4=8$, so $G'$ is a group of order $8$ and in fact it is the dihedral group $D_4$. The elements of $G'$ are the automorphisms $\sigma_{ab}$ of $E$, where $a\in\{0,1,2,3\}$ and $b\in\{0,1\}$, such that $\sigma_{ab}(\sqrt[4]{2})=\ii^a\sqrt[4]{2}$ and $\sigma_{ab}(\ii)=(-1)^b\ii$. As a consequence, $\sigma_{ab}(\sqrt{2})=\sigma_{ab}((\sqrt[4]{2})^2)=(\sigma_{ab}(\sqrt[4]{2}))^2=(-1)^a\sqrt{2}$. We have:
\begin{itemize}
\item $g^{\sigma_{00}}=g^{\sigma_{01}}=g=\x_1+\sqrt{2}\x_2+\sqrt[4]{2}\x_3$,
\item $g^{\sigma_{10}}=g^{\sigma_{11}}=\x_1-\sqrt{2}\x_2+\ii\sqrt[4]{2}\x_3$,
\item $g^{\sigma_{20}}=g^{\sigma_{21}}=\x_1+\sqrt{2}\x_2-\sqrt[4]{2}\x_3$,
\item $g^{\sigma_{30}}=g^{\sigma_{31}}=\x_1-\sqrt{2}\x_2-\ii\sqrt[4]{2}\x_3$.
\end{itemize}

Define $g^*:=\prod_{\sigma\in G'}g^\sigma\in\Q[\x]$. It holds
\begin{align*}
g^*\textstyle=\prod_{b=0}^1\prod_{a=0}^3g^{\sigma_{ab}}=\big(\prod_{a=0}^3g^{\sigma_{a0}}\big)^2=(\x_1^4-4\x_1^2\x_2^2+8\x_1\x_2\x_3^2+4\x_2^4-2\x_3^4)^2.
\end{align*}

Let $h:=\x_1^4-4\x_1^2\x_2^2+8\x_1\x_2\x_3^2+4\x_2^4-2\x_3^4\in\Q[\x]$ and let $g^\bullet\in\Q[\x]$ be a Galois completion of $g$. By Corollary \ref{3113}$(\mr{iv}'')$, $g^\bullet$ is irreducible in $\Q[\x]$ and $h^2=g^*=u(g^\bullet)^\ell$ for some $u\in\Q^*$ and $\ell\in\N^*$, so $h=u'(g^\bullet)^m$ for some $u'\in\Q^*$ and $m\in\N^*$. As $m|\deg(h)=4$, we have that $m\in\{1,2,4\}$. Observe that $m$ cannot be an even number. Otherwise, $h$ would be a square in $\Q[\x]$ up to a nonzero multiplicative constant. Thus, the coefficients of $h$ corresponding to monomials $\x_1^4$ and $\x_3^4$ should have the same sign, which is false. This proves that $m=1$, so $h=u'g^\bullet$. Renaming $u'g^\bullet$ as $g^\bullet$, we can assume that $g^\bullet=h$, so
$$
g^\bullet=\x_1^4-4\x_1^2\x_2^2+8\x_1\x_2\x_3^2+4\x_2^4-2\x_3^4\in\Q[\x]
$$
is the Galois completion of $g$.

Define: $Y:=\ZZ_R(g)$, $X:=\ZZ_R(g^\bullet)$, $X^C:=\ZZ_C(g^\bullet)$ and $X_a:=\ZZ_R(g^{\sigma_{a0}})$ and $X_a^C:=\ZZ_C(g^{\sigma_{a0}})$ for each $a\in\{0,1,2,3\}$. By Corollary \ref{3113}$(\mr{i})(\mr{ii})$, $X$ is the real Galois completion of $Y\subset R^3$, $X^C$ is the complex Galois completion of $Y\subset R^3$, and $g^\bullet$ is a $\Q$-geometric polynomial in~$R^3$. By Proposition \ref{prop:hyper}, $X$ is a $\Q$-irreducible $\Q$-geometric hypersurface of $R^3$.

As $g^\bullet=\prod_{a=0}^3g^{\sigma_{a0}}$, $X^C$ is an algebraic hypersurface of $C^3$ whose irreducible components are $\{X_a^C\}_{a=0}^3$ and $X=\bigcup_{a=0}^3X_a$, where
\begin{itemize}
\item $X_0=X_0^C\cap R^3=\{x\in R^3:x_1+\sqrt{2}x_2+\sqrt[4]{2}x_3=0\}=Y$,
\item $X_1=X_1^C\cap R^3=\{x\in R^3:x_1-\sqrt{2}x_2+\ii\sqrt[4]{2}x_3=0\}=\{x\in R^3:x_1-\sqrt{2}x_2=0,x_3=0\}$,
\item $X_2=X_2^C\cap R^3=\{x\in R^3:x_1+\sqrt{2}x_2-\sqrt[4]{2}x_3=0\}$,
\item $X_3=X_3^C\cap R^3=\{x\in R^3:x_1-\sqrt{2}x_2-\ii\sqrt[4]{2}x_3=0\}=\{x\in R^3:x_1-\sqrt{2}x_2=0,x_3=0\}$.
\end{itemize}

It follows that $X=X_0\cup X_1\cup X_2$ and, as an algebraic subset of $R^3$, $X$ has three irreducible components: the two planes $X_0$ and $X_2$ of $R^3$, and the line $X_1=X_3$ of $R^3$. As $\dim(X_1)=1\neq2$, by Proposition \ref{3111}, $X$ is not a $R$-geometric hypersurface of $R^3$ and so $g^\bullet$ is not $R$-geometric in $R^3$. As $g^\bullet$ is $\Q$-geometric but not $R$-geometric in $R^3$, we have $\II_R(\ZZ_R(g^\bullet))\neq(g^\bullet)R[\x]=\II_\Q(\ZZ_R(g^\bullet))R[\x]$. Moreover, the complexification of $X$ is strictly contained in $X^C$, because $\zcl_{C^3}(X_1)\subsetneqq X_1^C$ and $\zcl_{C^3}(X_1)\subsetneqq X_3^C$, so $\zcl_{C^3}(X)=X_0^C\cup\zcl_{C^3}(X_1)\cup X_2^C\subsetneqq\bigcup_{a=0}^3X_a^C=X^C$.

By Remark \ref{rem317}, the $\Q$-bad set $B_\Q(X)$ of $X$ is equal to the line $X_1$ of $R^3$. See Figure \ref{im:poly2}.

\begin{figure}[!ht]
\begin{center}
\begin{tikzpicture}[scale=0.32]

\draw[fill=blue!60,opacity=0.30,draw] (-6,-2.5) -- (-8,-6.5) -- (-4.58,-5.1) -- (-3,-8) -- (9,-3) -- (6,2.5) -- (8,6.5) -- (4.58,5.1) -- (3,8) -- (-9,3) -- (-6,-2.5);

\draw[blue,line width=0.01mm,dashed] (-6,-2.5) -- (6,2.5) -- (8,6.5) -- (-4,1.5) -- (-6,-2.5);
\draw[blue,line width=0.01mm,dashed] (-6,-2.5) -- (6,2.5) -- (4,-1.5) -- (-8,-6.5) -- (-6,-2.5);
\draw[blue,line width=0.01mm,dashed] (-6,-2.5) -- (-3,-8) -- (9,-3) -- (6,2.5) -- (-6,-2.5);
\draw[blue,line width=0.01mm,dashed] (-6,-2.5) -- (-9,3) -- (3,8) -- (6,2.5) -- (-6,-2.5);

\draw[blue,line width=0.05mm] (6,2.5) -- (9,-3) -- (-3,-8) -- (-9,3) -- (3,8) -- (4.58,5.1);
\draw[blue,line width=0.05mm] (6,2.5) -- (8,6.5) -- (-4,1.5) -- (-6,-2.5) -- (0,0);
\draw[blue,line width=0.05mm] (-6,-2.5) -- (-8,-6.5) -- (-4.6,-5.085);
\draw[blue,line width=0.05mm] (-6,-2.5) -- (6,2.5);

\draw[blue,line width=0.35mm] (-4.15,4.98) -- (-5.5,6.6);
\draw[blue,line width=0.15mm,dashed] (0,0) -- (-4.2,5.04);
\draw[blue,line width=0.35mm] (5.5,-6.6) -- (0,0);

\draw[line width=0.25mm] (-9.5,0) -- (-7.35,0);
\draw[line width=0.25mm,dashed] (-7.1,0) -- (3.375,0);
\draw[->,line width=0.25mm] (0,0) -- (9,0);
\draw[line width=0.25mm] (0,-9) -- (0,-6.75);
\draw[line width=0.25mm,dashed] (0,-7.8) -- (0,3);
\draw[->,line width=0.25mm] (0,3.15) -- (0,9.6);
\draw[line width=0.25mm] (7,7) -- (5.45,5.45);
\draw[line width=0.25mm,dashed] (5.22,5.22) -- (0,0);
\draw[->,line width=0.25mm] (0,0) -- (-7,-7);

\draw (-7,-7.5) node{\small$\!\!\!\x_1$};
\draw (9.45,-0.75) node{\small$\;\,\x_2$};
\draw (0.75,9.6) node{\small$\;\;\,\x_3$};
\draw (3.9,8.7) node{\small\color{blue}$X_0$};
\draw (8.9,7.1) node{\small\color{blue}$X_2$};
\draw (6,-7.45) node{\small\color{blue}$X_1$};
\draw (8.9,7.1) node{\small\color{blue}$X_2$};
\draw (-9,5.2) node{\color{blue}$X$};
\end{tikzpicture}
\end{center}
\caption{The real Galois completion $X$ of $Y=X_0$ and its three $R$-irreducible components: the planes $X_0,X_2$ and the line $X_1$ of $R^3$. The set $X\subset R^3$ coincides with the $\Q$-Zariski closure of $Y$ and is $\Q$-irreducible.}
\label{im:poly2}
\end{figure}
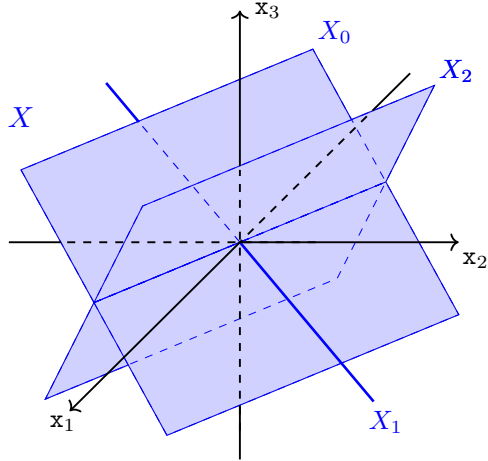


$(\mr{ii})$ Let $g:=\x_1+\sqrt{2}\x_2+\sqrt[4]{2}\x_3\in\qr[\x]:=\Q[\x_1,\x_2,\x_3]$ be the polynomial of the preceding item~$(\mr{i})$. Consider the polynomials $p:=\x_1+\sqrt{2}\x_2+\x_3\in\qr[\x]$ and $q:=gp$. By Proposition \ref{prop:hyper}, $q$ is $\qr$-geometric in $R^3$. Let us compute the Galois completion $q^\bullet$ of $q$. We keep the notations of item (i). For each $a\in\{0,1,2,3\}$ and for each $b\in\{0,1\}$, we have
$$
p^{\sigma_{ab}}=\x_1+(-1)^a\sqrt{2}\x_2+\x_3.
$$

Define the polynomials $p^\bullet:=p^{\sigma_{00}}p^{\sigma_{10}}\in\qr[\x]$ and $q^*:=\prod_{\sigma\in G'}q^\sigma\in\Q[\x]$. It holds
$$
p^\bullet=\x_1^2+2\x_1\x_3-2\x_2^2+\x_3^2\in\Q[\x],
$$
\begin{align*}
q^*&\textstyle=(\prod_{\sigma\in G'}g^\sigma)(\prod_{\sigma\in G'}p^\sigma)=(g^\bullet)^2(\prod_{a=0}^3p^{\sigma_{a0}})^2=(g^\bullet)^2(p^\bullet)^4\\
&=(\x_1^4-4\x_1^2\x_2^2+8\x_1\x_2\x_3^2+4\x_2^4-2\x_3^4)^2(\x_1^2+
2\x_1\x_3-2\x_2^2+\x_3^2)^4,
\end{align*}
where the latter product is the factorization of $q^*$ in $\Q[\x]$. By Algorithm \ref{gcpol}{\it(2)(3)}, the polynomial
$$
q^\bullet:=g^\bullet p^\bullet=(\x_1^4-4\x_1^2\x_2^2+8\x_1\x_2\x_3^2+4\x_2^4-2\x_3^4)(\x_1^2+2\x_1\x_3-2\x_2^2+\x_3^2)\in\Q[\x]
$$
is the Galois completion of $q$. Observe that $q^\bullet=g^\bullet p^\bullet$ is the factorization of $q^\bullet$ in $\Q[\x]$.

Define: $W:=\ZZ_R(p)$, $U:=\ZZ_R(p^\bullet)$, $U_a:=\ZZ_R(p^{\sigma_{a0}})$ and $U_a^C:=\ZZ_C(p^{\sigma_{a0}})$ for each $a\in\{0,1\}$, $V:=\ZZ_R(q)=Y\cup W$, $S:=\ZZ_R(q^\bullet)=X\cup U$ and $S^C:=\ZZ_C(q^\bullet)$. Observe that the line $X_1=X_3$ is contained in the plane $U_1=\{x\in R^3:x_1-\sqrt{2}x_2+x_3=0\}$ of $R^3$. We have:
\begin{itemize}
\item $U$ is the real Galois completion of $W\subset R^3$, is a $\Q$-geometric and $R$-geometric hypersurface of $R^3$ and is $\Q$-irreducible but $R$-reducible with the two planes $U_0=W$ and $U_1$ of $R^2$ as its $R$-irreducible components.
\item $S$ is the real Galois completion of $V\subset R^3$, which is a $\Q$-geometric hypersurface of $R^3$ having $X$ and $U$ as $\Q$-irreducible components. Moreover, $S=X_0\cup X_2\cup U_0\cup U_1$, so it is a $R$-geometric hypersurface of $R^3$ having the four distinct planes $X_0$, $X_2$, $U_0$ and $U_1$ as $R$-irreducible components. See Figure \ref{im:poly3}.

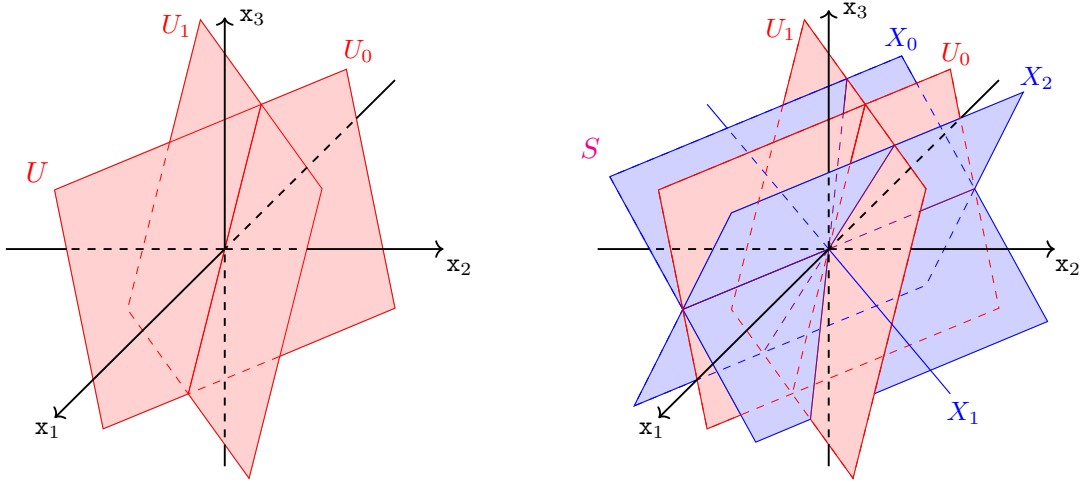
\begin{figure}[ht]
\begin{minipage}[b]{0.475\linewidth}
\centering
\begin{tikzpicture}[scale=0.32]

\draw[fill=red!60,opacity=0.30,draw] (-5,-7.45) -- (-1.5,-6) -- (1,-9.5) -- (2.25,-4.45) -- (7,-2.45) -- (5,7.45) -- (1.5,6) -- (-1,9.5) -- (-2.25,4.45) -- (-7,2.45) -- (-5,-7.45);

\draw[red,line width=0.05mm] (-7,2.45) -- (5,7.45) -- (7,-2.45) -- (2.25,-4.45);
\draw[red,line width=0.05mm,dashed] (2.25,-4.45) -- (-1.5,-6);
\draw[red,line width=0.05mm] (-1.5,-6) -- (-5,-7.45) -- (-7,2.45);
\draw[red,line width=0.05mm] (1.5,6) -- (-1.5,-6) -- (1,-9.5) -- (4,2.5) -- (1.5,6);
\draw[red,line width=0.05mm] (1.5,6) -- (-1,9.5) -- (-2.275,4.4);
\draw[red,dashed,line width=0.05mm] (-2.275,4.4) -- (-4,-2.5) -- (-1.5,-6);
\draw[red,line width=0.05mm] (1.5,6) -- (-1.5,-6);

\draw[line width=0.25mm,dashed] (0,0) -- (3.175,0);

\draw[line width=0.25mm] (-9,0) -- (-6.5,0);
\draw[line width=0.25mm,dashed] (-6.3,0) -- (0,0);
\draw[->,line width=0.25mm] (3.375,0) -- (9,0);
\draw[line width=0.25mm] (0,-9) -- (0,-8.1);
\draw[line width=0.25mm,dashed] (0,-7.8) -- (0,0);
\draw[->,line width=0.25mm] (0,0) -- (0,9.6);
\draw[line width=0.25mm] (7,7) -- (5.41,5.41);
\draw[line width=0.25mm,dashed] (5.25,5.25) -- (0,0);
\draw[->,line width=0.25mm] (0,0) -- (-7,-7);

\draw (-7,-7.5) node{\small$\!\!\!\x_1$};
\draw (9.25,-0.75) node{\small$\;\;\,\x_2$};
\draw (0.75,9.6) node{\small$\;\;\,\x_3$};
\draw (-1.8,9.25) node{\small\color{red}$\!\!U_1$};
\draw (5.5,8.2) node{\small\color{red}$U_0$};
\draw (-7.7,3.2) node{\color{red}$U$};

\end{tikzpicture}

\end{minipage}
\begin{minipage}[b]{0.475\linewidth}
\centering

\begin{tikzpicture}[scale=0.32]

\draw[fill=blue!60,opacity=0.30,draw] (-3,-8) -- (-0.75,-7.05) -- (0,0) -- (-6,-2.5) -- (-3,-8);
\draw[fill=blue!60,opacity=0.30,draw] (0,0) -- (-6,-2.5) -- (-4,1.5) -- (2.7,4.30) -- (0,0);
\draw[fill=blue!60,opacity=0.30,draw] (6,2.5) -- (9,-3) -- (1.86,-6) -- (3.75,1.58) -- (6,2.5);
\draw[fill=blue!60,opacity=0.30,draw] (6,2.5) -- (3.75,1.58) -- (4,2.5) -- (2.7,4.30) -- (8,6.5) -- (6,2.5);
\draw[fill=blue!60,opacity=0.30,draw] (-6,-2.5) -- (-8,-6.5) -- (-5.415,-5.415) -- (-6,-2.5);
\draw[fill=blue!60,opacity=0.30,draw] (0.75,7.05) -- (-9,3) -- (-7.4,0) -- (-6,-2.5) -- (-7,2.45) -- (0.6,5.6) -- (0.75,7.05);
\draw[fill=blue!60,opacity=0.30,draw] (0.75,7.05) -- (1.5,6) -- (3.62,6.875) -- (3,8) -- (0.75,7.05);

\draw[fill=red!60,opacity=0.30,draw] (-0.75,-7.05) -- (0,0) -- (2.7,4.30) -- (4,2.5) -- (1,-9.5) -- (-0.75,-7.05);
\draw[fill=red!60,opacity=0.30,draw] (-6,-2.5) -- (-5,-7.45) -- (-3.61,-6.875) -- (-6,-2.5);
\draw[fill=red!60,opacity=0.30,draw] (-6,-2.5) -- (-7,2.45) -- (1.5,6) -- (0.9,3.54) -- (-4,1.5) -- (-6,-2.5);
\draw[fill=red!60,opacity=0.30,draw] (1.5,6) -- (0.75,7.05) -- (0.6,5.6) -- (1.5,6);
\draw[fill=red!60,opacity=0.30,draw] (0.75,7.05) -- (-1,9.5) -- (-1.875,5.975) -- (0.75,7.05);
\draw[fill=red!60,opacity=0.30,draw] (1.5,6) -- (0.9,3.54) -- (2.7,4.30) -- (1.5,6);
\draw[fill=red!60,opacity=0.30,draw] (1.5,6) -- (2.7,4.30) -- (5.4,5.4) -- (5,7.45) -- (1.5,6);

\draw[blue,line width=0.01mm,dashed] (6,2.5) -- (4,-1.5) -- (-8,-6.5) -- (-6,-2.5);
\draw[blue,line width=0.01mm,dashed] (-6,-2.5) -- (-9,3) -- (3,8) -- (6,2.5);

\draw[blue,line width=0.05mm] (6,2.5) -- (9,-3) -- (1.86,-6);
\draw[blue,line width=0.05mm] (-0.75,-7.05) -- (-3,-8) -- (-9,3) -- (3,8) -- (3.65,6.81);
\draw[blue,line width=0.05mm] (6,2.5) -- (8,6.5) -- (-4,1.5) -- (-6,-2.5) -- (0,0);
\draw[blue,line width=0.05mm] (-6,-2.5) -- (-8,-6.5) -- (-5.415,-5.415);
\draw[violet,line width=0.07mm,dashed] (4,1.666) -- (0,0);

\draw[red,line width=0.01mm,dashed] (-6,-2.5) -- (-7,2.45) -- (5,7.45) -- (6,2.5);
\draw[red,line width=0.05mm,dashed] (-6,-2.5) -- (-5,-7.45) -- (7,-2.45) -- (6,2.5);
\draw[red,line width=0.01mm,dashed] (1.5,6) -- (-1.5,-6) -- (1,-9.5) -- (4,2.5) -- (1.5,6);
\draw[red,line width=0.01mm,dashed] (1.5,6) -- (-1,9.5) -- (-4,-2.5) -- (-1.5,-6);

\draw[red,line width=0.05mm] (0.887,3.55) -- (1.5,6) -- (4,2.5) -- (1,-9.5) -- (-0.75,-7.05);
\draw[red,line width=0.05mm] (5.4,5.4) -- (5,7.45) -- (-7,2.45) -- (-5,-7.45) -- (-3.6,-6.87);
\draw[red,line width=0.05mm] (1.5,6) -- (-1,9.5) -- (-1.875,6);
\draw[red,line width=0.05mm] (0.887,3.55) -- (1.5,6);

\draw[blue,line width=0.15mm] (-4.17,5.002) -- (-5,6);
\draw[blue,line width=0.10mm,dashed] (0,0) -- (-4.2,5.04);
\draw[blue,line width=0.15mm] (5,-6) -- (0,0);

\draw[violet,line width=0.05mm] (2.7,4.30) -- (0,0);
\draw[violet,line width=0.01mm,dashed] (0,0) -- (-2.7,-4.30);
\draw[violet,line width=0.01mm,dashed] (0.75,7.05) -- (0,0);
\draw[violet,line width=0.05mm] (0,0) -- (-0.75,-7.05);
\draw[violet,line width=0.05mm] (0.6,5.6) -- (0.75,7.05);
\draw[violet,line width=0.05mm] (-6,-2.5) -- (0,0);
\draw[violet,line width=0.05mm] (4,1.666) -- (6,2.5);

\draw[line width=0.25mm] (-9.5,0) -- (-7.38,0);
\draw[line width=0.25mm,dashed] (-7.1,0) -- (0,0);
\draw[line width=0.25mm,dashed] (0,0) -- (3.175,0);
\draw[->,line width=0.25mm] (3.375,0) -- (9.3,0);
\draw[line width=0.25mm] (0,-9) -- (0,-8.08);
\draw[line width=0.25mm,dashed] (0,-7.8) -- (0,0);
\draw[line width=0.25mm,dashed] (0,0) -- (0,3.175);
\draw[->,line width=0.25mm] (0,3.175) -- (0,9.9);
\draw[line width=0.25mm] (7,7) -- (5.45,5.45);
\draw[line width=0.25mm,dashed] (5.25,5.25) -- (0,0);
\draw[->,line width=0.25mm] (0,0) -- (-7,-7);

\draw (-7,-7.5) node{\small$\!\!\!\x_1$};
\draw (9.45,-0.75) node{\small$\;\;\,\x_2$};
\draw (0.5,9.9) node{\small$\;\;\;\;\x_3$};
\draw (3,8.7) node{\small\color{blue}$X_0$};
\draw (5.55,-6.8) node{\small\color{blue}$X_1$};
\draw (8.25,7.1) node{\small\color{blue}$\;\;X_2$};
\draw (-1.7,9.1) node{\small\color{red}$\!\!\!U_1$};
\draw (5.25,8.1) node{\small\color{red}$U_0$};
\draw (-9.8,4.2) node{\color{magenta}$S$};

\end{tikzpicture}
\end{minipage}
\caption{The real Galois completion $U$ of $W=U_0$ on the left and the real Galois completion $S$ of $V=X_0\cup U_0$ on the right.}
\label{im:poly3}
\end{figure}

\item By Corollary \ref{3113}$(\mr{ii})$, $q^\bullet$ is $\Q$-geometric in $R^3$. Define the polynomials $P,Q\in R[\x]$ as
$$
P:=g^{\sigma_{10}}g^{\sigma_{30}}=(\x_1-\sqrt{2}\x_2)^2+\sqrt{2}x_3^2
$$
and
$$
Q:=g^{\sigma_{00}}g^{\sigma_{20}}p^{\sigma_{00}}p^{\sigma_{10}}=(\x_1^2+2\sqrt{2}\x_1\x_2+2\x_2^2-\sqrt{2}\x_3^2)(\x_1^2+2\x_1\x_3-2\x_2^2+\x_3^2).
$$
Observe that $q^\bullet=Pg^{\sigma_{00}}g^{\sigma_{20}}p^{\sigma_{00}}p^{\sigma_{10}}$ and $Q=g^{\sigma_{00}}g^{\sigma_{20}}p^{\sigma_{00}}p^{\sigma_{10}}$ are the factorizations of $q^\bullet$ and $Q$ in $R[\x]$, respectively. Moreover, $\ZZ_R(P)=X_1\subsetneqq U_1=\ZZ_R(p^{\sigma_{10}})$. As $\dim(X_1)=1\neq2$, by Proposition \ref{prop:hyper}, $q^\bullet$ is not $R$-geometric in $R^3$. However, $Q$ is $R$-geometric in $R^3$ and $\II_R(S)=(Q)R[\x]$, so $S=\ZZ_R(Q)$ is a $R$-geometric hypersurface of $R^3$.
\item $S^C$ is the complex Galois completion of $V\subset R^3$ and it is an algebraic hypersurface of $C^3$ having the six complex planes $X_0^C$, $X_1^C$, $X_2^C$, $X_3^C$, $U_0^C$ and $U_1^C$ of $C^3$ as irreducible components.
\end{itemize}

The complexification of $S$ is strictly contained in $S^C$, because $\zcl_{C^3}(S)=X_0^C\cup X_2^C\cup U_0^C\cup U_1^C\subsetneqq S^C$. Moreover, $S$ contains an embedded component of the real part of $S^C$ in the sense that the intersections of $R^3$ with the irreducible components $X_1^C$ and $X_3^C$ of $S^C$ are both equal to the line $X_1=X_3$, which is strictly contained in the irreducible component $U_1$ of $S$.

As $\ZZ_R(p^\bullet)$ has no $\Q$-bad points, we have $B_\Q(S)=B_\Q(X)=X_1$. $\sqbullet$
\end{examples}

The next result concerns the complex Galois completion and the $K$-bad set of a $K$-irreducible $K$-geometric hypersurface.

\begin{prop}[$K$-irreducible $K$-geometric hypersurfaces]\label{irre-sf}
Let $f\in K[\x]$ be a polynomial that is irreducible in $K[\x]$ and $K$-geometric in $R^n$, and let $f=f_1\cdots f_r$ be the factorization of $f$ in $\kr[\x]$ (which is square-free by Remark \ref{squaresquarefree}). We have:
\begin{itemize}
\item[$(\mr{i})$] The set $I$ of all indices $i\in\{1,\ldots,r\}$ such that $f_i$ is $\kr$-geometric in $R^n$ is non-empty. For each $i\in I$, the Galois completion of $f_i$ coincides with $f$ (up to a multiplicative constant in $K^*$). 
\item[$(\mr{ii})$] $\ZZ_C(f)=\zcl^K_{C^n}(\ZZ_R(f))$.
\item[$(\mr{iii})$] If $J$ is the set of all indices $j\in\{1,\ldots,r\}$ such that $\dim(Z_R(f_j))<n-1$, then $J=\{1,\ldots,r\}\setminus I$ and $B_K(\ZZ_R(f))=\bigcup_{j\in J}Z_R(f_j)$.
\end{itemize}
\end{prop}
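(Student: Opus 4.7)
The plan is to prove (ii) first by a direct computation, then (i) by comparing two $K$-Zariski closures via Lemma \ref{lem:gp}, and finally (iii) through an analysis of the irreducible factorization of $f$ in $C[\x]$ together with the description of $B_K$ given in Remark \ref{rem317}. Throughout, set $X:=\ZZ_R(f)$ and note that, because $f$ is irreducible in $K[\x]$ and $\II_K(X)=(f)K[\x]$ by the $K$-geometric hypothesis, $X\subset R^n$ is $K$-irreducible by Lemma \ref{lem:prime}, and Proposition \ref{prop:hyper} gives $\dim(X)=n-1$.

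Part (ii) is immediate: $\zcl^K_{C^n}(X)=\ZZ_C(\II_K(X))=\ZZ_C((f)K[\x])=\ZZ_C(f)$. For part (i), since $\ZZ_R(f)=\bigcup_i\ZZ_R(f_i)$ has dimension $n-1$, at least one $f_i$ satisfies $\dim(\ZZ_R(f_i))=n-1$; as $f_i$ is irreducible in $\kr[\x]$, Proposition \ref{prop:hyper} applied to the one-term factorization of $f_i$ makes this equivalent to $f_i$ being $\kr$-geometric in $R^n$, so $I\neq\varnothing$. Fix $i\in I$. Corollary \ref{corro} upgrades $\kr$-geometricity to $R$-geometricity, and Remarks \ref{irre}(i) keeps $f_i$ irreducible in $R[\x]$; hence $\ZZ_R(f_i)\subset R^n$ is $R$-irreducible of dimension $n-1$ and is an $R$-irreducible component of $X$ of dimension $\dim(X)$. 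Lemma \ref{lem:gp}(ii) then yields $\zcl^K_{C^n}(\ZZ_R(f_i))=\zcl^K_{C^n}(X)$, which equals $\ZZ_C(f)$ by (ii). On the other hand, Corollary \ref{3113}(i)(ii) gives $\zcl^K_{C^n}(\ZZ_R(f_i))=\ZZ_C(f_i^\bullet)$ with $\II_K(\ZZ_C(f_i^\bullet))=(f_i^\bullet)K[\x]$. Since $\II_K(\ZZ_C(f))=\II_K(X)=(f)K[\x]$, we conclude $(f_i^\bullet)K[\x]=(f)K[\x]$, so $f_i^\bullet$ and $f$ are associated in $K[\x]$.

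For part (iii), the identity $J=\{1,\ldots,r\}\setminus I$ follows from the same equivalence used above: $f_k$ irreducible in $\kr[\x]$ is $\kr$-geometric in $R^n$ if and only if $\dim(\ZZ_R(f_k))=n-1$, and this dimension is always at most $n-1$ because $\ZZ_R(f_k)\subset X$. To identify $B_K(X)$, invoke Corollary \ref{3113}(ii') on the relation $f_i^\bullet=cf$ established in (i) to conclude that $f$ is square-free in $C[\x]$; hence $\II_C(\ZZ_C(f))=(f)C[\x]$, and the $C$-irreducible components of $\zcl^K_{C^n}(X)=\ZZ_C(f)$ are exactly the zero sets of the distinct irreducible factors of $f$ in $C[\x]$. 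The factors $f_1,\ldots,f_r\in\kr[\x]$ are pairwise coprime in $\kr[\x]$, hence in $\kbar[\x]$ (a B\'ezout identity in $\kr[\x]$ persists in $\kbar[\x]$), and Remarks \ref{irre}(ii) says the factorization of $f_k$ over $\kbar$ and over $C$ coincide; so each irreducible factor $h$ of $f$ in $C[\x]$ divides exactly one $f_k$. By Remark \ref{krkr}, for $k\in I$ the polynomial $f_k$ is already irreducible in $C[\x]$, so its unique $C[\x]$-factor is $f_k$ itself and $\ZZ_C(f_k)\cap R^n=\ZZ_R(f_k)$ has dimension $n-1$. For $k\in J$, writing $f_k=\prod_lh_{k,l}$ as its $C[\x]$-factorization gives $\ZZ_R(f_k)=\bigcup_l(\ZZ_C(h_{k,l})\cap R^n)$ of dimension $<n-1$, forcing $\dim(\ZZ_C(h_{k,l})\cap R^n)<n-1$ for every $l$. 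Applying Remark \ref{rem317}, the $K$-bad contributions are exactly the sets $\ZZ_C(h)\cap R^n$ where $h$ divides some $f_j$ with $j\in J$, and their union equals $\bigcup_{j\in J}\ZZ_R(f_j)$.

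The main subtlety is the last step, coordinating the $\kr[\x]$-factorization (which is the hypothesis data) with the finer $C[\x]$-factorization (which controls $B_K(X)$ via Definition \ref{def:bad-points}). The two ingredients that make this clean are Remark \ref{krkr}, ensuring $f_i$ for $i\in I$ remains irreducible all the way to $C[\x]$, and the pairwise coprimality of the $f_k$ over $\kbar$, which prevents any $C[\x]$-irreducible factor from straddling the $I$/$J$ dichotomy; without these, one could not cleanly separate the maximal-dimensional $R$-traces from the bad ones.
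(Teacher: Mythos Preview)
Your proof is correct and follows essentially the same approach as the paper, with only minor organizational differences: you prove (ii) first by the direct one-line computation $\zcl^K_{C^n}(X)=\ZZ_C(\II_K(X))=\ZZ_C(f)$, whereas the paper derives (ii) from (i) via Corollary~\ref{3113}(i); and for (i) the paper uses the slightly shorter divisibility argument $f\in\II_K(\ZZ_R(f_i))=(f_i^\bullet)K[\x]$ (so $f_i^\bullet\mid f$ and irreducibility of $f$ finishes it) in place of your appeal to Lemma~\ref{lem:gp}(ii). For (iii) the paper carries out an explicit case split on whether each $f_j$ remains irreducible in $C[\x]$ or factors as $g_j\overline{g_j}$, while your coprimality argument handles both cases uniformly; either route leads to the same application of Remark~\ref{rem317}.
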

\begin{proof}
By Proposition \ref{prop:hyper}, $\dim(\ZZ_R(f))=n-1$ and $I$ coincides with the set of all indices $i\in\{1,\ldots,r\}$ such that $\dim(\ZZ_R(f_i))=n-1$ (so $J=\{1,\ldots,r\}\setminus I$). As $\ZZ_R(f)=\bigcup_{i=1}^r\ZZ_R(f_i)$, it follows that $I\neq\varnothing$. Let $i\in I$ and let $f_i^\bullet\in K[\x]$ be the Galois completion of $f_i$. By Corollary \ref{3113}$(\mr{ii})$, $f\in\II_K(\ZZ_R(f_i))=(f_i^\bullet)K[\x]$ so $f_i^\bullet$ divides $f$ in $K[\x]$. As $f$ is irreducible in $K[\x]$, we deduce $f=f_i^\bullet$ (up to a multiplicative constant in $K^*$). By Corollary \ref{3113}$(\mr{i})$, we have $\ZZ_C(f)=\ZZ_C(f_i^\bullet)=\zcl^K_{C^n}(\ZZ_R(f_i^\bullet))=\zcl^K_{C^n}(\ZZ_R(f))$. This proves $(\mr{i})$ and $(\mr{ii})$.

Let us prove $(\mr{iii})$. By Remark \ref{irre}$(\mr{i})$, $f=f_1\cdots f_r$ is the factorization of $f$ in $R[\x]$. By Remark \ref{krkr}, $f_i$ is irreducible in $C[\x]$ for each $i\in I$. Rearranging the indices, if necessary, we can assume that there exists $s,t\in\{1,\ldots,r\}$ such that $t\leq s$, $I=\{1,\ldots,t\}$, the polynomials $f_1,\ldots,f_s$ are irreducible in $C[\x]$ and $f_{s+1},\ldots,f_r$ are reducible in $C[\x]$, where the latter polynomials are omitted if $s=r$. Thus, $J=\{t+1,\ldots,r\}$ if $t<r$ and $J=\varnothing$ if $t=s=r$. As the Galois group $G(C,R)\cong\Z_2$ is generated by the complex conjugation $\alpha+\ii\beta\mapsto\ol{\alpha+\ii\beta}:=\alpha-\ii\beta$ for $\alpha,\beta\in R$, if $j>s$, then the irreducible polynomial $f_j\in R[\x]$ factorizes in $C[\x]$ as follows: $f_j=g_j\ol{g_j}$, where $g_j=\sum_\nu a_\nu\x^\nu$ is an irreducible polynomial in $C[\x]$, $\ol{g_j}:=\sum_\nu\ol{a_\nu}\x^\nu$ and $g_j$ and $\ol{g_j}$ are non-associated. Thus, $f=f_1\cdots f_sg_{s+1}\ol{g_{s+1}}\cdots g_r\ol{g_r}$ is the factorization of $f$ in $C[\x]$ and $\ZZ_C(f_1),\ldots,\ZZ_C(f_s)$, $\ZZ_C(g_{s+1})$, $\ZZ_C(\ol{g_{s+1}}),\cdots,\ZZ_C(g_r)$, $\ZZ_C(\ol{g_r})$ are the $C$-irreducible components of $\ZZ_C(f)\subset C^n$. 

Suppose $s<r$, pick $j\in\{s+1,\ldots,r\}$ and write $g_j=a_j+\ii b_j$ with $a_j,b_j\in R[\x]$. As $f_j=g_j\ol{g_j}=a_j^2+b_j^2$, we have $\ZZ_R(f_j)=\ZZ_R(a_j,b_j)=\ZZ_C(g_j)\cap R^n=\ZZ_C(\ol{g_j})\cap R^n=\ZZ_C(g_j,\ol{g_j})\cap R^n$, which has dimension $\leq n-2$, because $g_j,\ol{g_j}$ are irreducible and they are non-associated. Then, by Remark \ref{rem317}, it holds:
\begin{align*}
B_K(\ZZ_R(f))&\textstyle=\bigcup_{j=t+1}^s(\ZZ_C(f_i)\cap R^n)\cup\bigcup_{j=s+1}^r(\ZZ_C(g_j)\cap R^n)\cup\bigcup_{j=s+1}^r(\ZZ_C(\overline{g_j})\cap R^n)\\
&\textstyle=\bigcup_{j=t+1}^s\ZZ_R(f_j)\cup\bigcup_{j=s+1}^r\ZZ_R(f_j)=\bigcup_{j\in J}\ZZ_R(f_j).
\end{align*}
This proves $(\mr{iii})$ and completes the proof.
\end{proof}

We will explicitly apply Proposition \ref{irre-sf} in Examples \ref{432}$(\mr{i})$.

\subsection{Zero ideals of real $K$-algebraic sets.}
Equation \eqref{equaz}, Theorem \ref{thm:gc} and Lemma \ref{lem:gp} allow to compute the zero ideal $\II_K(X)$ of a $K$-algebraic set $X\subset R^n$.

\begin{alg}\label{gc2}
The algorithm works as follows:
\begin{itemize}
\item[(0)] Start with a $K$-algebraic set $X\subset R^n$.
\item[(1)] Decompose $X\subset R^n$ as the union of its $K$-irreducible components $X_1,\ldots,X_s$. Choose a $\kr$-irreducible component $Y_i$ of $X_i$ of dimension $\dim(X_i)$ and a finite system of generators $\{g_{i1},\ldots,g_{ir_i}\}$ of $\II_\kr(Y_i)$ in $\kr[\x]$ for each $i\in\{1,\ldots,s\}$.
\item[(2)] Choose a finite Galois extension $E|K$ that contains all the coefficients of polynomials $g_{i1},\ldots,g_{ir_i}$ for each $i\in\{1,\ldots,s\}$. Set $G':=G(E:K)$.
\item[(3)] For each $i\in\{1,\ldots,s\}$, compute the sets ${\mathfrak H}_i\subset E[\x]$ and afterwards ${\mathfrak G}_i\subset K[\x]$ as in Theorem \emph{\ref{thm:gc}$(\mr{iii})$} using $G'$ and the polynomials $g_{i1},\ldots,g_{ir_i}$.
\item[(4)] $\II_K(X_i)=\sqrt{{\mathfrak G}_iK[\x]}$ for each $i\in\{1,\ldots,s\}$, and $\II_K(X)=\bigcap_{i=1}^s\II_K(X_i)$.
\end{itemize}
\end{alg}

\begin{remark}[{$K$-geometric hypersurfaces}]
Assume now that the $K$-algebraic set $X\subset R^n$ we are considering is a $K$-geometric hypersurface of $R^n$ or, equivalently, $\II_K(X)=(g^\bullet)R[\x]$ for some $g^\bullet\in K[\x]$. By Proposition \ref{3111}, this is equivalent to say that each $K$-irreducible component $X_i$ of $X$ has dimension $n-1$. In this situation, to compute $\II_K(X)$, we can also proceed as follows. For each $i\in\{1,\ldots,s\}$, choose a $\kr$-irreducible (or equivalently $R$-irreducible) component $Y_i$ of $X_i\subset R^n$ of dimension $n-1$, and an irreducible polynomial $g_i\in\kr[\x]$ such that $\II_R(Y_i)=(g_i)R[\x]$ (hence $Y_i=\ZZ_R(g_i)$). Then, compute the Galois completion $g_i^\bullet\in K[\x]$ of $g_i$ via Algorithm \ref{gcpol}. By Lemma \ref{lem:gp} and Corollary \ref{3113}$(\mr{i})(\mr{ii})(\mr{iv}'')$, we have $X_i=\zcl^K_{R^n}(Y_i)$, $\II_K(X_i)=(g_i^\bullet)K[\x]$ and $g_i^\bullet$ is irreducible in $K[\x]$ for each $i\in\{1,\ldots,s\}$. Thus, $g_i^\bullet$ and $g_j^\bullet$ are non-associated for all $i,j\in\{1,\ldots,s\}$ with $i\neq j$, and $\II_K(X)=\bigcap_{i=1}^s\II_K(X_i)=(g^\bullet)K[\x]$, where $g^\bullet:=g_1^\bullet\cdots g_s^\bullet\in K[\x]$. Observe that $\II_K(\ZZ_R(g_1\cdots g_s))=(g^\bullet)K[\x]$, because $\ZZ_R(g_1\cdots g_s)=\bigcup_{i=1}^sY_i$ and $X=\zcl^K_{R^n}(\bigcup_{i=1}^sY_i)$. It follows that: $g^\bullet$ is the Galois completion of the product polynomial $g_1\cdots g_s\in\kr[\x]$ and $g^\bullet=g_1^\bullet\cdots g_s^\bullet$ is the factorization of $g^\bullet$ in $K[\x]$. $\sqbullet$
\end{remark}

\subsection{$K$-reliability and real algebraic sets defined over $K$}\label{rasdok}
{\it Recall that $R$ is a real closed field, $C:=R[\ii]$ is its algebraic closure, and $K$ is an arbitrary ordered subfield of $R$, endowed with the ordering induced by that of~$R$. Denote $\kbar\subset C$ the algebraic closure of $K$, and $\kr\subset R$ the real closure of $K$, so $\kbar=\kr[\ii]$ and $R\cap\kbar=\kr$.

Although the ordered subfield $K$ of $R$ is arbitrarily chosen, here we are mainly interested in the case where $K$ is not real closed, that is, $K\neq\kr$, so the `extension of coefficients' procedure is not available. The main example to keep in mind is $K=\Q$. Observe that the field $\Q$ has a unique ordering, the usual one. 

Given any algebraic set $S\subset R^n$, we set $\dim(S):=\dim_R(S)$ (see Remark $\ref{dime}$)}.

By Corollaries \ref{kreliablec} and \ref{inter}$(\mr{ii})$, if either $S$ is a $K$-algebraic subset of $C^n$ or $X$ is a $K$-algebraic subset of $R^n$ and $K\subset R$ is real closed, then $S\subset C^n$ and $X\subset R^n$ are `defined over $K$' in the sense that $\II_C(S)=\II_K(S)C[\x]$ and $\II_R(X)=\II_K(X)R[\x]$. If $K\subset R$ is not real closed, the situation is not so favorable, as the following simple example shows: if we set $K:=\Q$ and $X:=\ZZ_R(\x_1^3-2)=\{\sqrt[3]{2}\}\subset R$, then $\II_K(X)=(\x_1^3-2)K[\x_1]$ and $\II_R(X)=(\x_1-\sqrt[3]{2})R[\x_1]$, so $\II_R(X)\neq\II_K(X)R[\x_1]$.

In this subsection, we will study the $K$-algebraic subsets $X$ of $R^n$ that satisfy the equality $\II_R(X)=\II_K(X)R[\x]$. We begin introducing the concept of reliable family of polynomials in $K[\x]$. First, we need a preliminary notion.

\begin{defn}
If $F$ is an ordered field, we say that \emph{$F$ contains $K$} if $F|K$ is an extension of fields and the ordering of $F$ extends the one of $K$. $\sqbullet$
\end{defn}

By assumption, the real closed field $R$ and its ordered subfield $\kr$ both contain $K$. Moreover, if an ordered field $F$ contains $K$, then $F$ also contains $\kr$. 

\begin{defn}
Let $\{f_1,\ldots,f_r\}$ be a (finite) subset of $K[\x]$. The family $\{f_1,\ldots,f_r\}$ of polynomials in $K[\x]$ is \emph{$K$-reliable} if, for each real closed field $F$ that contains $K$, we have
$$
\II_F(\ZZ_F(f_1,\ldots,f_r))=(f_1,\ldots,f_r)F[\x].
$$
In case $K=\Q$, the $\Q$-reliable family $\{f_1,\ldots,f_r\}$ will be called only \emph{reliable}.

If $f\in K[\x]$, we say that $f$ is \emph{$K$-reliable} if so is the corresponding singleton $\{f\}$. We say that $f$ is \emph{reliable} if it is $\Q$-reliable. $\sqbullet$
\end{defn}

Observe that a polynomial $f\in K[\x]$ is $K$-reliable if and only if it is geometric in $F^n$ for each real closed field $F$ that contains $K$ (see Definition \ref{321}). Moreover, as each real closed field contains $\Q$, a family $\{f_1,\ldots,f_r\}\subset\Q[\x]$ is reliable if and only if it is geometric in $F^n$ for each real closed field $F$. Thus, if the family $\{f_1,\ldots,f_r\}\subset\Q[\x]$ is reliable, the same family is $K$-reliable for every ordered field $K$ (that always contains $\Q$).

It is worth noting that, combining Lemma \ref{univ} and Corollary \ref{corro}, we immediately deduce:

\begin{cor}\label{cor243}
A polynomial in $K[\x]$ is $K$-reliable if and only if it is geometric in $(\kr)^n$ or, equivalently, if it is geometric in $F^n$ for at least one real closed field $F$ that contains $K$.
\end{cor}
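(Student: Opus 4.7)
The strategy is to show that, for \emph{any} real closed field $F$ containing $K$, being geometric in $F^n$ is equivalent to being geometric in $(\kr)^n$. Once this uniform equivalence is established, the three formulations in the statement (``for all $F$'', ``for $F=\kr$'', ``for at least one $F$'') are trivially equivalent.

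First I would fix a real closed field $F$ containing $K$ as an ordered subfield, and observe that, by the existence and uniqueness (up to unique $K$-isomorphism) of the real closure of the ordered field $K$, the field $\kr$ embeds into $F$ as an ordered extension of $K$; hence we may assume $K\subseteq\kr\subseteq F$, with $F|\kr$ an extension of real closed fields. Let $f\in K[\x]\subseteq\kr[\x]\subseteq F[\x]$ be the polynomial under consideration.

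Next, I would chain together the two cited results as follows. Applying Lemma~\ref{univ}$(\mr{i})$ to the extension $F|\kr$ of real closed fields, viewed with $\kr$ itself as the distinguished subfield of coefficients, gives
\[
f\text{ is $\kr$-geometric in }(\kr)^n\iff f\text{ is $\kr$-geometric in }F^n.
\]
Since $\kr$ is real closed, Corollary~\ref{corro} applied with ``$K$''${}=\kr$ and ``$R$''${}=\kr$ yields that $f$ is $\kr$-geometric in $(\kr)^n$ if and only if $f$ is geometric in $(\kr)^n$; and applied with ``$K$''${}=\kr$ and ``$R$''${}=F$ (which is legitimate because $\kr\subseteq F$ with $F$ real closed), it yields that $f$ is $\kr$-geometric in $F^n$ if and only if $f$ is geometric in $F^n$. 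Combining the three biconditionals gives the desired uniform equivalence
\[
f\text{ is geometric in }(\kr)^n\iff f\text{ is geometric in }F^n,
\]
valid for every real closed field $F$ containing $K$.

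Finally, I would close the argument by spelling out the three implications. If $f$ is $K$-reliable, then $f$ is geometric in $F^n$ for every real closed $F\supseteq K$, and in particular for $F=\kr$, so $f$ is geometric in $(\kr)^n$, which in turn implies $f$ is geometric in $F^n$ for \emph{at least one} such $F$. Conversely, if $f$ is geometric in some real closed $F_0\supseteq K$, then by the uniform equivalence applied to $F_0$, $f$ is geometric in $(\kr)^n$; applying the equivalence again in the opposite direction to an arbitrary real closed $F\supseteq K$ gives that $f$ is geometric in $F^n$, i.e.\ $f$ is $K$-reliable. There is essentially no obstacle here: the only point requiring a moment's care is the tacit identification $\kr\subseteq F$, which must be justified from the uniqueness of the real closure of the ordered field~$K$ rather than treated as automatic.
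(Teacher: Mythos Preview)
Your proof is correct and follows exactly the approach the paper indicates: the paper states this corollary without proof, noting only that it follows ``combining Lemma~\ref{univ} and Corollary~\ref{corro}'', and your argument is precisely the natural unpacking of that hint. Your care in justifying the identification $\kr\subseteq F$ via uniqueness of the real closure is appropriate and is the only point that deserves comment.
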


If in the previous corollary we choose $F=R$, we obtain:
\begin{cor}\label{cor243'}
A polynomial $f\in K[\x]$ is $K$-reliable if and only if it is geometric in $R^n$, that is, $\II_R(\ZZ_R(f))=(f)R[\x]$.
\end{cor}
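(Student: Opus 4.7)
The plan is to derive this corollary as an immediate specialization of Corollary~\ref{cor243} to the particular real closed field $F = R$. Since $R$ contains $K$ as an ordered subfield by the standing assumption of the subsection, $R$ is an admissible choice in both the definition of $K$-reliability and in the quantifier ``at least one real closed field $F$ containing $K$'' appearing in Corollary~\ref{cor243}.

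For the ``only if'' direction, I would simply unwind the definition of $K$-reliability: if $f \in K[\x]$ is $K$-reliable, then by definition $\II_F(\ZZ_F(f)) = (f)F[\x]$ for every real closed field $F$ containing $K$; taking $F := R$ yields $\II_R(\ZZ_R(f)) = (f)R[\x]$, which is exactly the statement that $f$ is geometric in $R^n$ in the sense of Definitions~\ref{321}.

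For the ``if'' direction, I would invoke Corollary~\ref{cor243}, which asserts that $K$-reliability is equivalent to being geometric in $F^n$ for at least one real closed field $F$ containing $K$. Since $R$ itself contains $K$ by hypothesis, the assumption that $f$ is geometric in $R^n$ (i.e., $\II_R(\ZZ_R(f)) = (f)R[\x]$) furnishes such an~$F$, namely $F = R$, and hence $f$ is $K$-reliable.

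There is no real obstacle here: both implications are essentially bookkeeping once Corollary~\ref{cor243} is available. The substantive content — the non-trivial fact that being geometric over a single large enough real closed field already forces geometricity over every real closed field extension of $K$ — is already packaged into Corollary~\ref{cor243} (which in turn rests on Lemma~\ref{univ}$(\mr{i})$ transferring $K$-geometricity between real closed field extensions, plus Corollary~\ref{corro} identifying $K$-geometric with geometric when the base is already real closed). Thus the present corollary is just the convenient reformulation obtained by specializing the ``at least one $F$'' clause to the ambient real closed field~$R$ of the subsection.
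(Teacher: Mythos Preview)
Your proof is correct and takes essentially the same approach as the paper, which simply states ``If in the previous corollary we choose $F=R$, we obtain'' the result. Your write-up merely makes explicit the two directions that the paper leaves as a one-line specialization of Corollary~\ref{cor243}.
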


The next result generalizes the latter corollary to all finite family of polynomials in $K[\x]$. 

\begin{prop}\label{clue}
Let $\{f_1,\ldots,f_r\}$ be a subset of $K[\x]$. The following conditions are equivalent:
\begin{itemize}
\item[$(\mr{i})$] The family $\{f_1,\ldots,f_r\}$ is $K$-reliable.
\item[$(\mr{i}')$] There exists a real closed field $F$ that contains $K$ such that 
$$
\II_F(\ZZ_F(f_1,\ldots,f_r))=(f_1,\ldots,f_r)F[\x].
$$
\item[$(\mr{ii})$] For each real closed field $F$ that contains $K$, the ideal $(f_1,\ldots,f_r)F[\x]$ of $F[\x]$ is real or, equivalently, $(f_1,\ldots,f_r)F[\x]=\II_F(X)$ for some algebraic set $X\subset F^n$.
\item[$(\mr{ii}')$] There exists a real closed field $F$ that contains $K$ such that the ideal $(f_1,\ldots,f_r)F[\x]$ of $F[\x]$ is real or, equivalently, $(f_1,\ldots,f_r)F[\x]=\II_F(X)$ for some algebraic set $X\subset F^n$.
\end{itemize}
\end{prop}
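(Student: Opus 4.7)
The plan is to reduce the four conditions to a single intrinsic condition on the ideal $(f_1,\ldots,f_r)\kr[\x]$ over the real closure $\kr$ of $K$, thereby making the independence from the particular real closed field $F\supset K$ manifest.

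First, the implications $(\mr{i})\Rightarrow(\mr{i}')$ and $(\mr{ii})\Rightarrow(\mr{ii}')$ are immediate by specialization. The equivalences $(\mr{i})\Leftrightarrow(\mr{ii})$ and $(\mr{i}')\Leftrightarrow(\mr{ii}')$ follow from a direct application of the Real Nullstellensatz \cite[Thm.4.1.4]{bcr}: for an ideal $\gta$ of $F[\x]$ with $F$ real closed, the equality $\II_F(\ZZ_F(\gta))=\gta$ holds if and only if $\gta$ is real. Setting $\gta:=(f_1,\ldots,f_r)F[\x]$ yields both equivalences at once, and also the fact that $\gta=\II_F(X)$ for some algebraic $X\subset F^n$ is equivalent to $\gta$ being real (with $X=\ZZ_F(\gta)$).

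The only substantive implication is therefore $(\mr{ii}')\Rightarrow(\mr{ii})$, and the key observation is the following. Since $\kr$ is the real closure of the ordered field $K$, every real closed field $F$ containing $K$ (as an ordered subfield) admits a unique order-preserving $K$-embedding of $\kr$ into $F$, so we may regard $\kr\subset F$ and the extension $F|\kr$ is an extension of real closed fields. Set $Y:=\ZZ_\kr(f_1,\ldots,f_r)\subset\kr^n$ and consider its extension of coefficients $Y_F:=\ZZ_F(f_1,\ldots,f_r)$. By Proposition \ref{extension-zar}$(\mr{i})(\mr{ii})$, we have
\[
\II_F(Y_F)=\II_\kr(Y)F[\x].
\]
Thus $(f_1,\ldots,f_r)F[\x]$ is real (i.e., coincides with $\II_F(Y_F)$) if and only if $(f_1,\ldots,f_r)F[\x]=\II_\kr(Y)F[\x]$, and by Corollary \ref{k} (intersecting with $\kr[\x]$) this is equivalent to $(f_1,\ldots,f_r)\kr[\x]=\II_\kr(Y)$, i.e.\ to the realness of $(f_1,\ldots,f_r)\kr[\x]$.

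Since this last condition is intrinsic to $K$ and does not depend on the ambient real closed field $F$, we conclude that $(\mr{ii}')$ is equivalent to the realness of $(f_1,\ldots,f_r)\kr[\x]$, which in turn implies $(\mr{ii})$. The main (and essentially only) obstacle is to justify the embedding $\kr\hookrightarrow F$ and the consequent applicability of the extension of coefficients machinery of Proposition \ref{extension-zar}; once this reduction to $\kr$ is in place, the equivalences collapse to a single application of the Real Nullstellensatz and Corollary \ref{k}.
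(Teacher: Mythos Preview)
Your proof is correct and, in fact, more streamlined than the paper's. Both arguments reduce the problem to the single intrinsic condition $(f_1,\ldots,f_r)\kr[\x]=\II_\kr(\ZZ_\kr(f_1,\ldots,f_r))$ over the real closure $\kr$, but they reach and leave that condition by different routes. The paper first establishes this $\kr$-identity from the hypothesis on a specific $F$ via extension of coefficients (exactly as you would), but then, to pass from the $\kr$-identity to an arbitrary real closed $F_1\supset K$, it detours through the algebraic closures $\kbar$ and $C_1:=F_1[\ii]$, using the complexification of $\ZZ_\kr(f_1,\ldots,f_r)$ together with Corollary~\ref{kreliablec}. You bypass that detour entirely: since $F_1|\kr$ is an extension of real closed fields, Proposition~\ref{extension-zar}$(\mr{ii})$ gives $\II_{F_1}(\ZZ_{F_1}(f_1,\ldots,f_r))=\II_\kr(\ZZ_\kr(f_1,\ldots,f_r))F_1[\x]$ directly, and Corollary~\ref{k} finishes the biconditional with $\kr[\x]$. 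Your approach buys brevity and keeps the argument entirely within the real-closed framework; the paper's route, while longer, makes the complex Galois-completion machinery (Theorem~\ref{thm:gc0}, Corollary~\ref{kreliablec}) visibly relevant, which fits the paper's broader narrative but is not logically necessary here.
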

\begin{proof}
Equivalences $(\mr{i})\Longleftrightarrow(\mr{ii})$ and $(\mr{i}')\Longleftrightarrow(\mr{ii}')$ are straightforward but we include next a brief explanation. The equivalence of the conditions `the ideal $(f_1,\ldots,f_r)F[\x]$ of $F[\x]$ is real' and `$(f_1,\ldots,f_r)F[\x]=\II_F(X)$ for some algebraic set $X\subset F^n$' is an immediate consequence of Real Nullstellensatz \cite[Thm.4.1.4]{bcr}. Now, if $\II_F(\ZZ_F(f_1,\ldots,f_r))=(f_1,\ldots,f_r)F[\x]$, define $X:=\ZZ_F(f_1,\ldots,f_r)$. If $\II_F(X)=(f_1,\ldots,f_r)F[\x]$ with $X\subset F^n$ algebraic, it is enough to observe that $X=\ZZ_F(f_1,\ldots,f_r)$.

Implication $(\mr{i})\Longrightarrow(\mr{i}')$ is evident. Let us prove the converse implication $(\mr{i}')\Longrightarrow(\mr{i})$.

Let $F$ be a real closed field that contains $K$ such that $\II_F(\ZZ_F(f_1,\ldots,f_r))=(f_1,\ldots,f_r)F[\x]$. Observe that $F$ also contains $\kr$. Let us show that
\begin{equation}\label{equa}
\II_\kr(\ZZ_\kr(f_1,\ldots,f_r))=(f_1,\ldots,f_r)\kr[\x].
\end{equation}
We only have to check the inclusion `$\subset$', as the converse inclusion always holds. Let $f\in \II_\kr(\ZZ_{\kr}(f_1,\ldots,f_r))$, that is, $\ZZ_\kr(f_1,\ldots,f_r)\subset \ZZ_\kr(f)$. We extend coefficients to $F$ and obtain
$$
\ZZ_F(f_1,\ldots,f_r)=(\ZZ_{\kr}(f_1,\ldots,f_r))_F\subset(\ZZ_{\kr}(f))_F=\ZZ_F(f),
$$
so $f\in \II_F(\ZZ_F(f_1,\ldots,f_r))=(f_1,\ldots,f_r)F[\x]$. Thus, by Corollary~\ref{k},
$$
f\in(f_1,\ldots,f_r)F[\x]\cap\kr[\x]
=(f_1,\ldots,f_r)\kr[\x].
$$
We conclude $\II_{\kr}(\ZZ_{\kr}(f_1,\ldots,f_r))\subset(f_1,\ldots,f_r)\kr[\x]$, so \eqref{equa} is proved. 

By \eqref{equa} and Proposition \ref{prop:zar}$(\mr{i})$, we have
\begin{equation}\label{equa2}
\zcl_{\kbarn}(\ZZ_\kr(f_1,\ldots,f_r))=\ZZ_\kbar(f_1,\ldots,f_r).
\end{equation}

Let now $F_1$ be a real closed field that contains $K$. Let us check: $\II_{F_1}(\ZZ_{F_1}(f_1,\ldots,f_r))=(f_1,\ldots,f_r)F_1[\x]$. 

Let $C_1:=F_1[\ii]$ be the algebraic closure of $F_1$. Observe that $\kbar\subset C_1$. By \eqref{equa2}, Corollary \ref{LK-zar} and Proposition \ref{extension-zar}$(\mr{i})$, we have
\begin{align*}
\zcl_{C_1^n}(\ZZ_\kbar(f_1,\ldots,f_r))&=\zcl_{C_1^n}(\zcl_{\kbarn}(\ZZ_\kr(f_1,\ldots,f_r)))=\zcl_{C_1^n}(\ZZ_\kr(f_1,\ldots,f_r)),\\
\zcl_{C_1^n}(\ZZ_\kbar(f_1,\ldots,f_r))&=(\ZZ_\kbar(f_1,\ldots,f_r))_{C_1}=\ZZ_{C_1}(f_1,\ldots,f_r).
\end{align*}
Consequently,
\begin{equation}\label{equa4}
\zcl_{C_1^n}(\ZZ_\kbar(f_1,\ldots,f_r))=\zcl_{C_1^n}(\ZZ_\kr(f_1,\ldots,f_r))=\ZZ_{C_1}(f_1,\ldots,f_r).
\end{equation}

Let us prove that
\begin{equation}\label{equa3}
\II_{C_1}(\ZZ_{\kbar}(f_1,\ldots,f_r))=(f_1,\ldots,f_r)C_1[\x].
\end{equation}
By \eqref{equa}, $(f_1,\ldots,f_r)\kr[\x]$ is a radical ideal of $\kr[\x]$, so \eqref{equa4} and Corollary \ref{kreliablec} assure that
$$
\II_{C_1}(\ZZ_\kbar(f_1,\ldots,f_r))=\II_{C_1}(\ZZ_{C_1}(f_1,\ldots,f_r))=((f_1,\ldots,f_r)\kr[\x])C_1[\x]=(f_1,\ldots,f_r)C_1[\x]
$$
and \eqref{equa3} is proved.

Let $f\in \II_{F_1}(\ZZ_{F_1}(f_1,\ldots,f_r))$. Then
$$
\ZZ_\kr(f_1,\ldots,f_r)\subset(\ZZ_\kr(f_1,\ldots,f_r))_{F_1}=\ZZ_{F_1}(f_1,\ldots,f_r)\subset \ZZ_{F_1}(f)\subset \ZZ_{C_1}(f).
$$
By \eqref{equa4}, we deduce $\ZZ_{C_1}(f_1,\ldots,f_r)\subset \ZZ_{C_1}(f)$, that is, $f\in\II_{C_1}(\ZZ_{C_1}(f_1,\ldots,f_r))=(f_1,\ldots,f_r)C_1[\x]$ by \eqref{equa3}. By Corollary \ref{k}, we deduce
$$
f\in(f_1,\ldots,f_r)C_1[\x]\cap F_1[\x]=(f_1,\ldots,f_r)F_1[\x],
$$
so $\II_{F_1}(\ZZ_{F_1}(f_1,\ldots,f_r))=(f_1,\ldots,f_r)F_1[\x]$.

Consequently, the family $\{f_1,\ldots,f_r\}$ is $K$-reliable, as required.
\end{proof}

An immediate consequence of Proposition \ref{clue} when $K=\Q$ is the following:

\begin{cor}
Let $\{f_1,\ldots,f_r\}$ be a subset of $\Q[\x]$. The following conditions are equivalent.
\begin{itemize}
\item[$(\mr{i})$] The family $\{f_1,\ldots,f_r\}$ is reliable.
\item[$(\mr{ii})$] $\II_\qr(\ZZ_\qr(f_1,\ldots,f_r))=(f_1,\ldots,f_r)\qr[\x]$.
\item[$(\mr{iii})$] $\II_\R(\ZZ_\R(f_1,\ldots,f_r))=(f_1,\ldots,f_r)\R[\x]$.
\end{itemize}
\end{cor}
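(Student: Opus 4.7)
The plan is to derive this corollary as an immediate specialization of Proposition \ref{clue} to the case $K=\Q$, together with the observation that both $\qr$ and $\R$ are real closed fields that contain $\Q$ (every real closed field contains $\Q$, which has only one ordering extending trivially). Thus conditions $(\mr{ii})$ and $(\mr{iii})$ of the corollary are both instances of condition $(\mr{i}')$ of Proposition \ref{clue}, namely the assertion that there \emph{exists} a real closed field $F\supset\Q$ for which $\II_F(\ZZ_F(f_1,\ldots,f_r))=(f_1,\ldots,f_r)F[\x]$.

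Concretely, I would argue as follows. By the very definition, the family $\{f_1,\ldots,f_r\}\subset\Q[\x]$ is reliable exactly when it is $\Q$-reliable, which is condition $(\mr{i})$ of Proposition \ref{clue} with $K=\Q$. Now apply the equivalence $(\mr{i})\Longleftrightarrow(\mr{i}')$ of Proposition \ref{clue} (with $K=\Q$):
\begin{itemize}
\item Taking the particular choice $F=\qr$, which is a real closed field containing $\Q$, the property $\II_\qr(\ZZ_\qr(f_1,\ldots,f_r))=(f_1,\ldots,f_r)\qr[\x]$ is an instance of $(\mr{i}')$. Hence $(\mr{ii})$ of the corollary implies $(\mr{i})$, and conversely $(\mr{i})$ (applied with $F=\qr$ in the definition of $\Q$-reliable) implies $(\mr{ii})$.
\item Taking instead $F=\R$, the same reasoning shows $(\mr{i})\Longleftrightarrow(\mr{iii})$.
\end{itemize}
Combining the two equivalences, $(\mr{i})$, $(\mr{ii})$ and $(\mr{iii})$ are pairwise equivalent.

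There is essentially no obstacle here: the proof is a direct citation of Proposition \ref{clue} with $K=\Q$ and $F\in\{\qr,\R\}$. The only point worth emphasizing is that the equivalence $(\mr{i})\Longleftrightarrow(\mr{i}')$ in Proposition \ref{clue} is the non-trivial content (the passage from one real closed field to every real closed field, via the `extension of coefficients' step performed through the algebraic closures in the proof of that proposition), and the corollary just specializes this to the canonical choices $F=\qr$ and $F=\R$.
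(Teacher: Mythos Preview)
Your proposal is correct and matches the paper's approach exactly: the paper states this corollary as ``an immediate consequence of Proposition \ref{clue} when $K=\Q$'' and gives no further proof, which is precisely your argument of specializing $(\mr{i})\Longleftrightarrow(\mr{i}')$ to $F=\qr$ and $F=\R$.
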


Let us now look at the relationship between the notions of $K$-geometric and $K$-reliable polynomials. 

\begin{lem}\label{no-news}
$(\mr{i})$ If a polynomial in $K[\x]$ is $K$-reliable, then it is also $K$-geometric in~$R^n$.

$(\mr{ii})$ If the ordered subfield $K$ of $R$ is real closed, then a polynomial in $K[\x]$ is $K$-reliable if and only if it is $K$-geometric in $R^n$.
\end{lem}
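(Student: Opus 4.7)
For part $(\mr{i})$, the plan is very direct: the definition of $K$-reliability applied to the specific real closed field $F = R$ (which contains $K$ by our standing hypothesis) immediately yields $\II_R(\ZZ_R(f)) = (f)R[\x]$. To pass from this to $K$-geometricity in $R^n$, I would intersect with $K[\x]$ and invoke Corollary~\ref{k}, which tells us that $(f)R[\x] \cap K[\x] = (f)K[\x]$. Combining with the trivial identity $\II_K(\ZZ_R(f)) = \II_R(\ZZ_R(f)) \cap K[\x]$, this gives $\II_K(\ZZ_R(f)) = (f)K[\x]$, as desired. This step is essentially formal; the only substantive input is Corollary~\ref{k}, i.e., that an ideal extended and contracted between $K[\x]$ and $R[\x]$ returns to itself.

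For part $(\mr{ii})$, one direction is just $(\mr{i})$, so I only need the converse under the additional assumption that $K$ is real closed. Here I plan to chain together two already-proved equivalences. First, by Corollary~\ref{corro}, when the ordered subfield $K$ of $R$ is itself real closed, a polynomial in $K[\x]$ is $K$-geometric in $R^n$ if and only if it is ($R$-)geometric in $R^n$, so our hypothesis yields $\II_R(\ZZ_R(f)) = (f)R[\x]$. Second, by Corollary~\ref{cor243'}, this last identity is precisely the criterion for $K$-reliability. Composing these two equivalences closes the loop.

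There is no real obstacle: both halves reduce to direct applications of results already established (Corollary~\ref{k}, Corollary~\ref{corro}, Corollary~\ref{cor243'}), and the internal logic is essentially an intersection calculation plus a chain of prior equivalences. The mild subtlety, if any, is simply to remember that the ``$K$-geometric'' condition is about the vanishing ideal in $K[\x]$ of the real zero set $\ZZ_R(f)$, whereas ``geometric in $R^n$'' (the content of Corollary~\ref{cor243'}) refers to the full ideal $\II_R(\ZZ_R(f))$ in $R[\x]$; Corollary~\ref{k} is exactly the bridge that lets one move between the two when the polynomial $f$ already has coefficients in $K$.
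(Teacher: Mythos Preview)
Your proof is correct and essentially the same as the paper's: for $(\mr{ii})$ you cite exactly Corollaries~\ref{corro} and~\ref{cor243'} as the paper does, and for $(\mr{i})$ you unroll what the paper packages as a citation of Lemma~\ref{univ}$(\mr{ii})$ (whose proof is precisely your intersection-with-$K[\x]$ step via Corollary~\ref{k}).
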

\begin{proof}
$(\mr{i})$ If $f\in K[\x]$ is $K$-reliable, then $f$ is geometric in $R^n$ by definition, and hence $K$-geometric in $R^n$ by Lemma \ref{univ}$(\mr{ii})$.

$(\mr{ii})$ Straightforward from Corollaries \ref{corro} and \ref{cor243'}.
\end{proof}

Under appropriate hypotheses, item $(\mr{ii})$ of the preceding result extends to the case where $K$ is not real closed. 

\begin{lem}\label{remrem}
Suppose that the ordered subfield $K$ of $R$ is not real closed. If $f$ is a square-free polynomial in $K[\x]$ such that its factorization $f=f_1\cdots f_r$ in $K[\x]$ is also its factori\-zation in $R[\x]$ (that is, each $f_i\in K[\x]$ is irreducible as a polynomial in $R[\x]$, and the $f_i$ are non-associated in $K[\x]$), then $f$ is $K$-reliable if and only if it is $K$-geometric in $R^n$.

In particular, if $f\in K[\x]$ is irreducible as a polynomial in $R[\x]$, then $f$ is $K$-reliable if and only if it is $K$-geometric in $R^n$.
\end{lem}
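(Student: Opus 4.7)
The plan is to reduce the equivalence to the characterization of $K$-geometric polynomials given by Corollary \ref{cor:char-geometric} combined with Corollary \ref{cor243'}. One direction is free: by Lemma \ref{no-news}$(\mr{i})$, any $K$-reliable polynomial is $K$-geometric in $R^n$. For the converse, I would use Corollary \ref{cor243'}, which says a polynomial in $K[\x]$ is $K$-reliable precisely when it is geometric in $R^n$ (i.e.\ $R$-geometric in $R^n$). So it is enough to show that, under the stated hypothesis, $K$-geometricity in $R^n$ upgrades to $R$-geometricity in $R^n$.

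First I would note that the condition ``$f_i$ changes sign in $R^n$'' appearing in Proposition \ref{prop:hyper}$(\mr{iv})$ is a statement about the values of $f_i$ on points of $R^n$ and therefore is independent of whether $f_i$ is considered as a polynomial in $K[\x]$ or in $R[\x]$. Now assume $f$ is $K$-geometric in $R^n$. By Corollary \ref{cor:char-geometric} applied over $K$, the polynomial $f$ is square-free in $K[\x]$ and each factor $f_i$ in its factorization $f=f_1\cdots f_r$ in $K[\x]$ changes sign in $R^n$. By hypothesis, $f=f_1\cdots f_r$ is also the factorization of $f$ in $R[\x]$ (in particular $f$ is square-free in $R[\x]$, the $f_i$ are pairwise non-associated in $R[\x]$, and each $f_i$ is irreducible in $R[\x]$). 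Thus condition $(\mr{iv})$ of Proposition \ref{prop:hyper} is satisfied for $f$ viewed in $R[\x]$, and Corollary \ref{cor:char-geometric} applied over $R$ gives that $f$ is $R$-geometric in $R^n$. Then Corollary \ref{cor243'} yields $K$-reliability.

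For the ``in particular'' assertion, it suffices to observe that if $f\in K[\x]$ is irreducible in $R[\x]$, then $f$ is a fortiori irreducible in $K[\x]$ (any factorization in $K[\x]$ is a factorization in $R[\x]$), so the factorizations in $K[\x]$ and in $R[\x]$ coincide with $r=1$, and irreducibility in the characteristic zero domain $K[\x]$ entails square-freeness. The main assertion therefore applies.

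The only point requiring care — rather than being a genuine obstacle — is the independence of the sign-change condition from the ground field; once this is acknowledged, the proof is a direct combination of Corollaries \ref{cor:char-geometric}, \ref{cor243'}, and Lemma \ref{no-news}$(\mr{i})$, with no additional machinery.
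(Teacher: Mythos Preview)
Your proof is correct and follows essentially the same approach as the paper: both directions use Lemma \ref{no-news}$(\mr{i})$ and Corollary \ref{cor243'}, and the key step is to pass from $K$-geometricity to $R$-geometricity via Corollary \ref{cor:char-geometric}, exploiting that the factorization is the same over $K$ and over $R$. The only cosmetic difference is that the paper invokes condition $(\mr{v})$ (the dimension condition $\dim(\ZZ_R(f_i))=n-1$) of Proposition \ref{prop:hyper} rather than your condition $(\mr{iv})$ (sign change), but these are equivalent and equally independent of the ground field, so the arguments are interchangeable.
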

\begin{proof}
The `only if' implication is true by Lemma \ref{no-news}$(\mr{i})$. Suppose that $f\in K[\x]$ is $K$-geometric in $R^n$. We have to show that $f$ is $K$-reliable. By Corollary \ref{cor:char-geometric}, each $f_i$ satisfies condition $(\mr{v})$ of Proposition \ref{prop:hyper}, that is, $\dim(\ZZ_R(f_i))=n-1$. As $f$ is a square-free polynomial as a polynomial in $R[\x]$ and $f=f_1\cdots f_r$ is its factorization in $R[\x]$, by the mentioned condition~$(\mr{v})$, Corollary \ref{cor:char-geometric} also implies that $f$ is ($R$-)geometric in $R^n$. By Corollary \ref{cor243'}, we deduce that $f$ is $K$-reliable.
\end{proof}

We provide some examples of $K$-geometric polynomials, which are not $K$-reliable. By Lemma \ref{no-news}$(\mr{ii})$, if $K$ is real closed, a polynomial in $K[\x]$ is $K$-reliable if and only if it is $K$-geometric in $R^n$. Thus, we consider the case $K:=\Q$. 

\begin{examples}\label{rem:polyn-reliable}
$(\mr{i})$ As we have already seen previously, the polynomial $g=\x_1^3-2\in\Q[\x_1]$ is $\Q$-geometric in $R$, but not reliable, because $\II_\Q(\ZZ_R(g))=\II_\Q(\{\sqrt[3]{2}\})=(g)\Q[\x_1]$ and $\II_R(\ZZ_R(g))=\II_R(\{\sqrt[3]{2}\})=(\x_1-\sqrt[3]{2})R[\x_1]\neq (g)R[\x_1]$.

A similar example is $G:=\x_1^3-2\x_2^3\in\Q[\x]:=\Q[\x_1,\x_2]$. As $G$ is irreducible in $\Q[\x]$ and $G(-1,0)G(1,0)=-1<0$, Proposition \ref{prop:hyper} implies that $G$ is $\Q$-geometric in $R^2$. Let us show that $G$ is not geometric in $(\qr)^n$ and therefore not reliable either. Set $G_1:=\x_1-\sqrt[3]{2}\x_2\in\qr[\x]$ and $G_2:=\x_1^2+\sqrt[3]{2}\x_1\x_2+\sqrt[3]{4}\x_2^2\in\qr[\x]$. It is immediate to verify that $G=G_1G_2$ is the factorization of $G$ in $\qr[\x]$. As $\ZZ_\qr(G_2)=\{(0,0)\}$, we have $\dim_\qr(\ZZ_\qr(G_2))=0< 2-1$. By Proposition \ref{prop:hyper}, $G$ is not geometric in $(\qr)^n$.

$(\mr{ii})$ The polynomials $g^\bullet$ and $q^\bullet=g^\bullet p^\bullet\in\Q[\x_1,\x_2,\x_3]$ of Examples \ref{exa:gc} are $\Q$-geometric but not geometric in $R^3$, so they are not reliable. Recall that:
\begin{itemize}
\item $X=\ZZ_R(g^\bullet)$ is a $\Q$-irreducible $\Q$-geometric hypersurface of $R^3$ but not a $R$-geometric hypersurface of $R^3$, having two planes $X_0$ and $X_2$, and a line $X_1$ of $R^3$ as irreducible components.
\item $S=\ZZ_R(q^\bullet)=X\cup\ZZ_R(p^\bullet)$ is a $\Q$-geometric and $R$-geometric hypersurface of~$R^3$, having $X$ and $\ZZ_R(p^\bullet)$ as $\Q$-irreducible components. Moreover, $\ZZ_R(p^\bullet)$ has two planes $U_0$ and $U_1$ of $R^3$ as irreducible components, and the planes $X_0$, $X_2$, $U_0$ and $U_1$ are the irreducible components of $S$ (because these planes are pairwise distinct and $U_1$ contains $X_1$). $\sqbullet$
\end{itemize}
\end{examples}

The following definition is a special case of Definition 3 on page 30 of \cite{to2}, due to Tognoli.

\begin{defn}\label{def:overK}
Let $X\subset R^n$ be an algebraic set. We say that $X\subset R^n$ is \emph{defined over~$K$} if $\II_R(X)=\II_K(X)R[\x]$. Moreover, we say that $X$ is a \emph{geometric hypersurface of $R^n$ defined over~$K$} if it is a geometric hypersurface of $R^n$ (in the sense of Definition \ref{321}), and the algebraic set $X\subset R^n$ is defined over $K$. $\sqbullet$
\end{defn}
 
\begin{remarks}\label{exa:2410}
$(\mr{i})$ If an algebraic set $X\subset R^n$ is defined over $K$, then it is also $K$-algebraic, because $X=\ZZ_R(\II_R(X))=\ZZ_R(\II_K(X))$.

$(\mr{i}')$ If an algebraic set $X\subset R^n$ is defined over $K$ and $R|E|K$ is an extension of fields, then $X\subset R^n$ is also an algebraic set defined over $E$, because $\II_R(X)=\II_K(X)R[\x]\subset\II_E(X)R[\x]\subset\II_R(X)$ so $\II_R(X)=\II_E(X)R[\x]$.

$(\mr{ii})$ If a polynomial $f\in K[\x]$ is $K$-reliable (hence $K$-geometric in $R^n$ by Proposition \ref{no-news}$(\mr{i})$), then the algebraic set $\ZZ_R(f)\subset R^n$ is defined over $K$, indeed it is a geometric hypersurface of $R^n$ defined over $K$. See Proposition \ref{hyperdefK} for the converse implication.

$(\mr{ii}')$ If a polynomial $f\in K[\x]$ is $K$-geometric in $R^n$ but not ($R$-)geometric in $R^n$ (or equiva\-lently, not $K$-reliable), then the algebraic set $\ZZ_R(f)\subset R^n$ is not defined over $K$. Otherwise, $\II_R(\ZZ_R(f))=\II_K(\ZZ_R(f))R[\x]=((f)K[\x])R[\x]=(f)R[\x]$ and hence $f$ would be geometric in $R^n$ and $K$-reliable by Corollary \ref{cor243'}.

$(\mr{iii})$ As we have already mentioned, by Corollary \ref{inter}$(\mr{ii})$, if the ordered subfield $K$ of $R$ is real closed, then a subset of $R^n$ is $K$-algebraic if and only if it is defined over $K$.

$(\mr{iii}')$ If $K$ is not real closed, for instance $K=\Q$, there exist $K$-algebraic sets that are not defined over $K$. The polynomials $g$, $G$, $g^\bullet$ and $q^\bullet$ considered in Examples \ref{rem:polyn-reliable} are $\Q$-geometric but not reliable. Thus, by $(\mr{ii}')$, their zero sets are $\Q$-algebraic sets that are not defined over $\Q$. More precisely, we have:
\begin{itemize}
\item The singleton $\{\sqrt[3]{2}\}=\ZZ_R(g)\subset R$, the line $\{\x_1-\sqrt[3]{2}\x_2=0\}=\ZZ_R(G)\subset R^2$ and the union of four planes $\ZZ_R(q^\bullet)\subset R^3$ are examples of $\Q$-geometric and geometric hypersurfaces that are not defined over $\Q$.
\item $\ZZ_R(g^\bullet)$ is a $\Q$-geometric hypersurface of $R^3$, which is not a geometric hypersurface of $R^3$ and it is not defined over $\Q$.
\end{itemize}

Other examples of $\Q$-algebraic sets not defined over $\Q$ will be described in Examples \ref{354}.

$(\mr{iv})$ Let $k\geq3$, let $G_k:=\x_1^{2k}+\x_2^{2k}-2^k\in\Q[\x]:=\Q[\x_1,\x_2]$ and let $F_k:=\ZZ_R(G_k)$ be the corresponding Fermat curve in $R^2$. As $G_k$ is irreducible in $R[\x]$ and $G_k(0,0)G_k(2,0)<0$, by Proposition \ref{prop:hyper}, $G_k$ is geometric in $R^2$. By Corollary \ref{cor243'}, we know that $G_k$ is reliable, so $F_k\subset R^2$ is defined over~$\Q$. Thus, the Fermat curve $F_k\subset R^2$ is an example of irreducible geometric hypersurface of $R^2$ defined over $\Q$ without rational points. $\sqbullet$
\end{remarks}

\begin{prop}\label{hyperdefK}
A set $X\subset R^n$ is a geometric hypersurface of $R^n$ defined over $K$ if and only if there exists a $K$-reliable polynomial $f\in K[\x]$ such that $X=\ZZ_R(f)$ or, equivalently, if $X\subset R^n$ is an algebraic set and $\II_R(X)$ is a principal ideal of $R[\x]$ generated by a polynomial in $K[\x]$.

In particular, a geometric hypersurface of $R^n$ defined over $K$ is also a $K$-geometric hypersurface of $R^n$ (the converse of the latter assertion is false, see Remarks \ref{exa:2410}$(\mr{iii}')$).
\end{prop}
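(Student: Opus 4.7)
I plan to prove both directions separately, first noting that the equivalence of the two formulations on the right-hand side (``there exists a $K$-reliable $f\in K[\x]$ with $X=\ZZ_R(f)$'' versus ``$X$ is an algebraic set and $\II_R(X)$ is principal in $R[\x]$ generated by a polynomial in $K[\x]$'') is essentially tautological. Indeed, a polynomial $f\in K[\x]$ is $K$-reliable if and only if $\II_R(\ZZ_R(f))=(f)R[\x]$ by Corollary \ref{cor243'}, and this last equation says precisely that $\II_R(X)=(f)R[\x]$ when $X=\ZZ_R(f)$. So once one direction is proved, the other version of the statement will follow without additional work.

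For the easy direction $(\Leftarrow)$, I would start from $X=\ZZ_R(f)$ with $f\in K[\x]$ $K$-reliable. Corollary \ref{cor243'} gives $\II_R(X)=(f)R[\x]$, so $X$ is a geometric hypersurface. Corollary \ref{k} then yields $\II_K(X)=(f)R[\x]\cap K[\x]=(f)K[\x]$, so $\II_R(X)=(f)R[\x]=\II_K(X)R[\x]$ and $X$ is defined over $K$.

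The substance of the proof lies in the $(\Rightarrow)$ direction. Assume $X$ is a geometric hypersurface defined over $K$, so $\II_R(X)=(g)R[\x]$ for some geometric $g\in R[\x]$ and $\II_R(X)=\II_K(X)R[\x]$. My goal is to show that $\II_K(X)$ is principal in $K[\x]$; the generator will then be $K$-reliable automatically. By Proposition \ref{3111}, principality of $\II_K(X)$ is equivalent to the statement that every $K$-irreducible component of $X$ has dimension $n-1$. Let $X=X_1\cup\cdots\cup X_t$ be the $K$-irreducible decomposition and let $g=g_1\cdots g_r$ be the factorization of $g$ in $R[\x]$; applying the last assertion of Proposition \ref{prop:hyper} with base field $R$ to the geometric polynomial $g$, the sets $\ZZ_R(g_l)$ are precisely the $R$-irreducible components of $X$, each of dimension $n-1$. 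Each $\ZZ_R(g_l)$, being $R$-irreducible and contained in $\bigcup_i X_i$, must lie inside some $X_{i(l)}$, and comparing dimensions forces $\dim X_{i(l)}=n-1$. If some $X_{i_0}$ had dimension $<n-1$, then $i(l)\neq i_0$ for all $l$, so
\[
X_{i_0}\subset X=\bigcup_l\ZZ_R(g_l)\subset\bigcup_{i\neq i_0}X_i,
\]
contradicting the minimality of the $K$-irreducible decomposition. Thus every $X_i$ has dimension $n-1$, Proposition \ref{3111} gives $\II_K(X)=(f)K[\x]$ for some (automatically $K$-geometric) $f\in K[\x]$ -- establishing the final ``In particular'' clause -- and $X=\ZZ_R(\II_K(X))=\ZZ_R(f)$. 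The identity $(f)R[\x]=\II_K(X)R[\x]=\II_R(X)$ shows that $f$ is geometric in $R^n$, hence $K$-reliable by Corollary \ref{cor243'}.

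The main obstacle is the dimensional rigidity argument: one must rule out $K$-irreducible components of $X$ of dimension $<n-1$ by confronting the $K$-irreducible decomposition with the $R$-irreducible decomposition into geometric hypersurfaces of dimension $n-1$ coming from the factorization of $g$ in $R[\x]$. Once this is in place, Proposition \ref{3111} converts the dimensional statement into principality of $\II_K(X)$, and the extension-of-coefficients identity from Corollary \ref{k} upgrades the resulting $K$-geometric generator to a $K$-reliable one.
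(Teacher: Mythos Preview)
Your proof is correct and follows essentially the same approach as the paper: both establish the $(\Rightarrow)$ direction by showing that every $K$-irreducible component of $X$ has dimension $n-1$ (so that Proposition \ref{3111} applies), via the interplay between the $R$-irreducible and $K$-irreducible decompositions of $X$. Your contradiction argument (an $X_{i_0}$ of dimension $<n-1$ would be redundant since all $R$-irreducible components $\ZZ_R(g_l)$ land in other $X_i$'s) is a slightly more streamlined phrasing of the paper's argument, which instead shows that each $K$-irreducible component contains at least one $R$-irreducible component of $X$.
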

\begin{proof}
First, observe that the conditions ``there exists a $K$-reliable polynomial $f\in K[\x]$ such that $X=\ZZ_R(f)$'' and ``$X\subset R^n$ is an algebraic set and $\II_R(X)$ is a principal ideal of $R[\x]$ generated by a polynomial in $K[\x]$'' are equivalent by Corollary \ref{cor243'}. By Remark \ref{exa:2410}$(\mr{ii})$, it is enough to show the `only if' implication. Suppose that $X$ is a geometric hypersurface of $R^n$ defined over $K$, then a $K$-algebraic subset of $R^n$. Let $X_1,\ldots,X_s$ be the ($R$-)irreducible components of $X\subset R^n$. As $X$ is a geometric hypersurface of $R^n$, by Proposition \ref{prop:hyper}, each $X_i$ has dimension $n-1$.

Let us follow an argument used in the proof of Lemma \ref{lem:238}. Let $W_1,\ldots,W_t$ be the $K$-irreducible components of $X\subset R^n$. For each $i\in\{1,\ldots,t\}$, let $W_{i,1},\ldots,W_{i,T_i}$ be the $(R\text{-})$irreducible components of $W_i\subset R^n$. As $\bigcup_{i=1}^t\bigcup_{\ell=1}^{T_i}W_{i,\ell}=X$, by the uniqueness of the decomposition into irreducible components of $X=\bigcup_{j=1}^sX_j\subset R^n$, we have that $\{X_1,\ldots,X_s\}\subset\bigcup_{i=1}^t\{W_{i,1},\ldots,W_{i,T_i}\}$. Let us prove that, for each $i\in\{1,\ldots,t\}$, there exist $j_i\in\{1,\ldots,s\}$ and $\ell_i\in\{1,\ldots,T_i\}$ such that $W_{i,\ell_i}=X_{j_i}$. Otherwise, there exists $I\in\{1,\ldots,t\}$ such that $\{X_1,\ldots,X_s\}\cap\{W_{I,1},\ldots,W_{I,t_I}\}=\varnothing$, so $\{X_1,\ldots,X_s\}\subset\bigcup_{i\in\{1,\ldots,t\}\setminus\{I\}}\{W_{i,1},\ldots,W_{i,T_i}\}$, so 
$
\textstyle
X=\bigcup_{j=1}^sX_j\subset\bigcup_{i\in\{1,\ldots,t\}\setminus\{I\}}\bigcup_{\ell=1}^{T_i}W_{i,\ell}=\bigcup_{i\in\{1,\ldots,t\}\setminus\{I\}}W_i\subsetneqq X,
$
which is a contradiction.

Consequently, for each $i\in\{1,\ldots,t\}$, it holds $\dim(W_i)\geq\dim(W_{i,\ell_i})=\dim(X_{j_i})=n-1$, so $\dim(W_i)=n-1$. By Proposition \ref{3111}, we deduce that $X$ is a $K$-geometric hypersurface of $R^n$. Thus, $X=\ZZ_R(f)$ and $\II_K(X)=(f)K[\x]$ for some polynomial $f\in K[\x]$. As $X\subset R^n$ is defined over~$K$, we have $\II_R(X)=\II_K(X)R[\x]=((f)K[\x])R[\x]=(f)R[\x]$, so $f$ is geometric in $R^n$. By Corollary \ref{cor243'}, $f\in K[\x]$ is $K$-reliable, as required.
\end{proof}

The next result provides some characterizations of real algebraic sets defined over $K$ that generalize the previous proposition.

\begin{thm}\label{thm:48}
Let $X\subset R^n$ be a $K$-algebraic set. The following conditions are equivalent:
\begin{itemize}
\item[$(\mr{i})$] $X\subset R^n$ is defined over $K$.
\item[$(\mr{ii})$] $\II_R(X)=(f_1,\ldots,f_r)R[\x]$ for some $f_1,\ldots,f_r\in K[\x]$.
\item[$(\mr{iii})$] The complex Galois completion $T$ of $X\subset R^n$ coincides with the complexification of $X\subset R^n$, that is, $T=\zcl_{C^n}^K(X)=\zcl_{C^n}(X)$.
\item[$(\mr{iii}')$] $\II_R(X)=\II_R(T)$.
\item[$(\mr{iv})$] There exists a $K$-reliable family $\{f_1,\ldots,f_r\}\subset K[\x]$ such that $X=\ZZ_R(f_1,\ldots,f_r)$.
\item[$(\mr{v})$] $X\cap\krn\subset\krn$ is defined over $K$.
\end{itemize}
\end{thm}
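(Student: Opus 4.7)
The plan is to show the equivalences in a hub-and-spoke pattern around (i), exploiting that $T := \zcl_{C^n}^K(X)$ and the complexification $Z := \zcl_{C^n}(X)$ can be compared ideal-theoretically through Theorem \ref{thm:gc}(v) and Corollary \ref{k}.

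First, I would dispatch (i) $\Leftrightarrow$ (ii) and (i) $\Leftrightarrow$ (iv) essentially for free. Taking any finite system $\{f_1,\ldots,f_r\}$ of generators of $\II_K(X)$ in $K[\x]$, (i) says exactly that these generate $\II_R(X)$ in $R[\x]$, which is (ii). Conversely, if $\II_R(X)=(f_1,\ldots,f_r)R[\x]$ with $f_i\in K[\x]$, then each $f_i\in\II_R(X)\cap K[\x]=\II_K(X)$, so $(f_1,\ldots,f_r)R[\x]\subset\II_K(X)R[\x]\subset\II_R(X)=(f_1,\ldots,f_r)R[\x]$. For (iv): under (i), for a generating set $\{f_1,\ldots,f_r\}$ of $\II_K(X)$ we have $X=\ZZ_R(f_1,\ldots,f_r)$ and $\II_R(\ZZ_R(f_1,\ldots,f_r))=(f_1,\ldots,f_r)R[\x]$, so Proposition \ref{clue} with $F=R$ upgrades reliability at $R$ to $K$-reliability; conversely, reliability of $\{f_1,\ldots,f_r\}$ at $R$ is exactly (ii).

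Next, I would handle (i) $\Leftrightarrow$ (iii'). Apply Theorem \ref{thm:gc}(v) with $Y:=X$ to get $\II_C(T)=\II_K(X)C[\x]$. Setting $\gtb:=\II_K(X)R[\x]$ in $R[\x]$, we have $\II_C(T)=\gtb C[\x]$, and Corollary \ref{k} (with $L=C$, $K=R$) gives
\[
\II_R(T)=\II_C(T)\cap R[\x]=\gtb C[\x]\cap R[\x]=\gtb=\II_K(X)R[\x].
\]
Hence (iii') $\II_R(X)=\II_R(T)$ is literally (i). For (iii) $\Leftrightarrow$ (iii'), recall $Z\subset T$ by Theorem \ref{thm:gc}(vi), and $\II_R(Z)=\II_R(X)$ by Proposition \ref{prop:zar}(i) (case $K=R$, $L=C$, $Y=X$). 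So (iii) $T=Z$ trivially gives (iii'). Conversely, if $\II_R(T)=\II_R(X)$, then $\II_R(T)$ is a real ideal, so by Proposition \ref{rc} the set $T$ is the complexification of $T\cap R^n$; but Theorem \ref{thm:gc}(iv) identifies $T\cap R^n=T^r=\zcl_{R^n}^K(X)=X$ (because $X$ is $K$-algebraic), giving $T=\zcl_{C^n}(X)=Z$, which is (iii).

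Finally, for (i) $\Leftrightarrow$ (v), the key is that the extension $R|\kr$ is one of real closed fields, so Corollary \ref{inter}(ii) applies: with $Y:=X\cap\krn$, we have $\II_R(X)=\II_\kr(X)R[\x]$ and $\II_\kr(X)=\II_\kr(Y)$, whence $\II_K(X)=\II_\kr(X)\cap K[\x]=\II_\kr(Y)\cap K[\x]=\II_K(Y)$. Then
\[
\II_R(X)=\II_K(X)R[\x]\iff\II_\kr(Y)R[\x]=\II_K(Y)R[\x],
\]
and applying Corollary \ref{k} once more (now with $L=R$, $K=\kr$, $\gta=\II_K(Y)\kr[\x]$) to intersect both sides with $\kr[\x]$, this is equivalent to $\II_\kr(Y)=\II_K(Y)\kr[\x]$, which is exactly (v). The principal technical obstacle throughout is keeping the three coefficient rings $K[\x]\subset\kr[\x]\subset R[\x]$ (and their extensions to $C[\x]$) straight under ideal extension/contraction; the argument for (iii') $\Rightarrow$ (iii) is the only non-bookkeeping step, and it rests essentially on the Real Nullstellensatz packaged inside Proposition \ref{rc}.
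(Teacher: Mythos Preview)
Your proof is correct and follows essentially the same route as the paper for the equivalences among $(\mr{i})$, $(\mr{ii})$, $(\mr{iii})$, $(\mr{iii}')$, and $(\mr{iv})$: both arguments pivot on the identity $\II_R(T)=\II_K(X)R[\x]$ (you derive it from Theorem~\ref{thm:gc}(v) and Corollary~\ref{k}, the paper from Theorem~\ref{thm:gc}(vi) directly) and then invoke Proposition~\ref{rc} to pass between $(\mr{iii})$ and $(\mr{iii}')$.

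The one genuine difference is your handling of $(\mr{i})\Leftrightarrow(\mr{v})$. The paper routes this through $(\mr{iv})$: if $X$ is defined over $K$ then $X=\ZZ_R(f_1,\ldots,f_r)$ for a $K$-reliable family, hence $X\cap\krn=\ZZ_{\kr}(f_1,\ldots,f_r)$ inherits the same reliable family; and conversely any $K$-reliable description of $X\cap\krn$ extends to $X=(X\cap\krn)_R$. You instead work purely ideal-theoretically: using Corollary~\ref{inter}(ii) for the extension $R|\kr$ of real closed fields to get $\II_R(X)=\II_{\kr}(Y)R[\x]$ and $\II_K(X)=\II_K(Y)$, and then Corollary~\ref{k} to contract the equality $\II_{\kr}(Y)R[\x]=\II_K(Y)R[\x]$ down to $\II_{\kr}(Y)=\II_K(Y)\kr[\x]$. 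Your route is slightly more elementary in that it bypasses the notion of $K$-reliability for this step, while the paper's route has the small advantage of showing concretely that the \emph{same} reliable family witnesses both $(\mr{iv})$ for $X$ and for $X\cap\krn$.
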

\begin{proof}
$(\mr{i})\Longrightarrow(\mr{ii})$ This follows immediately from the Noetherianity of $K[\x]$: it is enough to consider a finite system $\{f_1,\ldots,f_r\}$ of generators of $\II_K(X)$ in $K[\x]$.

$(\mr{ii})\Longrightarrow(\mr{i})$ Assume that $\II_R(X)=(f_1,\ldots,f_r)R[\x]$ for some $f_1,\ldots,f_r\in K[\x]$. By Corollary \ref{k}, we have:
$$
\II_K(X)=\II_R(X)\cap K[\x]=(f_1,\ldots,f_r)R[\x]\cap K[\x]=(f_1,\ldots,f_r)K[\x].
$$
Consequently, 
$$
\II_K(X)R[\x]=((f_1,\ldots,f_r)K[\x])R[\x]=(f_1,\ldots,f_r)R[\x]=\II_R(X).
$$

$(\mr{i})\Longleftrightarrow(\mr{iii})\Longleftrightarrow(\mr{iii}')$ Let $T^r:=T\cap R^n$ be the real Galois completion of $X\subset R^n$. Theorem \ref{thm:gc}$(\mr{iv})$ assures that $T^r=\zcl_{R^n}^K(X)=X$. By Theorem \ref{thm:gc}$(\mr{vi})$, $\II_R(T)=\II_K(T^r)R[\x]=\II_K(X)R[\x]$. By equivalence $(\mr{i})\Longleftrightarrow(\mr{iii})$ of Proposition \ref{rc}, $T$ is the complexification of $T^r=X\subset R^n$ if and only if $\II_R(X)=\II_R(T)$. Thus, $\II_R(X)=\II_K(X)R[\x]$ if and only if $\II_R(X)=\II_R(T)$ or, equivalently, if $T=\zcl_{C^n}(X)$.

$(\mr{ii})\Longrightarrow(\mr{iv})$ This follows immediately from implication $(\mr{ii}')\Longrightarrow(\mr{i})$ of Proposition \ref{clue}.

$(\mr{iv})\Longrightarrow(\mr{ii})$ If $X=\ZZ_R(f_1,\ldots,f_r)$ for some $K$-reliable family $\{f_1,\ldots,f_r\}\subset K[\x]$, then $\II_R(X)=\II_R(\ZZ_R(f_1,\ldots,f_r))=(f_1,\ldots,f_r)R[\x]$.

$(\mr{iv})\Longrightarrow(\mr{v})$ Observe that $X\cap\krn=\ZZ_\kr(f_1,\ldots,f_r)$ and $\{f_1,\ldots,f_r\}\subset K[\x]$ is a $K$-reliable family. Then $X\cap\krn\subset\krn$ is defined over $K$ by the previous implication $(\mr{iv})\Longrightarrow(\mr{i})$.

$(\mr{v})\Longrightarrow(\mr{iv})$ Assume that $X':=X\cap\krn\subset\krn$ is defined over $K$ or, equivalently, $X'=\ZZ_\kr(f_1,\ldots,f_r)$ for some $K$-reliable family $\{f_1,\ldots,f_r\}\subset K[\x]$. As $X=(X')_R=(\ZZ_\kr(f_1,\ldots,f_r))_R=\ZZ_R(f_1,\ldots,f_r)$, we obtain $(\mr{iv})$.
\end{proof}

\begin{cor}
Let $\{f_1,\ldots,f_r\}\subset K[\x]$ be a $K$-reliable family. Then $\ZZ_C(f_1,\ldots,f_r)=\zcl_{C^n}(\ZZ_R(f_1,\ldots,f_r))=\zcl_{C^n}^K(\ZZ_R(f_1,\ldots,f_r))$ and $\II_C(\ZZ_C(f_1,\ldots,f_r))=(f_1,\ldots,f_r)C[\x]$.
\end{cor}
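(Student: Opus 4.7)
Set $X:=\ZZ_R(f_1,\ldots,f_r)\subset R^n$. The plan is to show first that the complexification of $X$ already coincides with $\ZZ_C(f_1,\ldots,f_r)$ and has $(f_1,\ldots,f_r)C[\x]$ as its zero ideal, and then to deduce that the complex Galois completion collapses to this same set.

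First I would use $K$-reliability, applied to the real closed field $R$ itself, to obtain the key identity $\II_R(X)=(f_1,\ldots,f_r)R[\x]$. In particular $\II_R(X)$ is generated by polynomials in $K[\x]$, so by the implication $(\mr{ii})\Longrightarrow(\mr{i})$ of Theorem~\ref{thm:48}, the algebraic set $X\subset R^n$ is defined over $K$. Next I would apply Proposition~\ref{prop:zar}$(\mr{i})$ to the algebraic set $X\subset R^n$ (with the extension of fields $C|R$), which yields
\[
\II_C(\zcl_{C^n}(X))=\II_R(X)\,C[\x]=(f_1,\ldots,f_r)C[\x].
\]
Since $\zcl_{C^n}(X)$ is Zariski closed in $C^n$, this equality forces
\[
\zcl_{C^n}(X)=\ZZ_C(\II_C(\zcl_{C^n}(X)))=\ZZ_C((f_1,\ldots,f_r)C[\x])=\ZZ_C(f_1,\ldots,f_r),
\]
and simultaneously gives $\II_C(\ZZ_C(f_1,\ldots,f_r))=(f_1,\ldots,f_r)C[\x]$, which is the second assertion of the statement.

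Finally, since $X\subset R^n$ is defined over $K$, the equivalence $(\mr{i})\Longleftrightarrow(\mr{iii})$ of Theorem~\ref{thm:48} tells us that the complex Galois completion $T=\zcl_{C^n}^K(X)$ coincides with the complexification $\zcl_{C^n}(X)$. Combining this with the previous paragraph, one obtains the chain
\[
\ZZ_C(f_1,\ldots,f_r)=\zcl_{C^n}(X)=\zcl_{C^n}^K(X),
\]
which is precisely the first claim. No serious obstacle is expected: the whole argument is a direct assembly of Proposition~\ref{prop:zar}$(\mr{i})$ with the characterisation of real algebraic sets defined over $K$ provided by Theorem~\ref{thm:48}, once $K$-reliability has been converted into the generation statement for $\II_R(X)$.
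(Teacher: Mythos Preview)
Your proof is correct and follows essentially the same approach as the paper: both use $K$-reliability to get $\II_R(X)=(f_1,\ldots,f_r)R[\x]$, then combine Theorem~\ref{thm:48} (to identify $\zcl_{C^n}^K(X)$ with $\zcl_{C^n}(X)$) with the complexification identity $\II_C(\zcl_{C^n}(X))=\II_R(X)C[\x]$. The only cosmetic difference is that you cite Proposition~\ref{prop:zar}$(\mr{i})$ directly for the latter identity, whereas the paper invokes Proposition~\ref{rc} (whose implication $(\mr{i})\Rightarrow(\mr{ii})$ is itself derived from Proposition~\ref{prop:zar}$(\mr{i})$).
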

\begin{proof}
Let $X:=\ZZ_R(f_1,\ldots,f_r)\subset R^n$ and $T:=\zcl_{C^n}^K(X)\subset C^n$. Observe that $\II_R(X)=(f_1,\ldots,f_r)R[\x]$, because the family $\{f_1,\ldots,f_r\}\subset K[\x]$ is $K$-reliable. By Theorem \ref{thm:48} and Proposition \ref{rc}, we have $T=\zcl_{C^n}(X)$ and $\II_C(T)=\II_R(X)C[\x]=(f_1,\ldots,f_r)C[\x]$, so $T=\ZZ_C(f_1,\ldots,f_r)$, as required. 
\end{proof}

We conclude this part with a result that will be used in Subsection \ref{s51}.

\begin{lem}\label{lem:xLn}
Let $X\subset R^n$ be an algebraic set and let $X_C:=\zcl_{C^n}(X)\subset C^n$ be the complexification of $X$. The following conditions are equivalent:
\begin{itemize}
\item[$(\mr{i})$] $X\subset R^n$ is $R$-irreducible and defined over $K$.
\item[$(\mr{ii})$] $X_C\subset C^n$ is $C$-irreducible and $\II_C(X_C)=\II_K(X)C[\x]$.
\item[$(\mr{ii}')$] $X_C\subset C^n$ is $C$-irreducible and $K$-algebraic.
\end{itemize}
\end{lem}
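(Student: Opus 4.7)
The plan is to prove the cycle $(\mr{i})\Longrightarrow(\mr{ii})\Longrightarrow(\mr{ii}')\Longrightarrow(\mr{i})$, exploiting Propositions \ref{prop:zar} and \ref{rc} for the first two arrows and Theorem \ref{thm:48} for the closing one.

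For $(\mr{i})\Longrightarrow(\mr{ii})$, the $C$-irreducibility of $X_C$ will be a direct application of Proposition \ref{prop:zar}$(\mr{iii})$ to the extension of fields $C|R$: the $R$-irreducibility of $Y:=X\subset R^n$ forces $\zcl_{C^n}(X)=X_C\subset C^n$ to be $C$-irreducible. For the ideal equality, I would first note that by Proposition \ref{rc} (the implication $(\mr{i})\Longrightarrow(\mr{ii})$ there, since $X_C$ is the complexification of $X$) one has $\II_C(X_C)=\II_R(X)C[\x]$; then the hypothesis that $X\subset R^n$ is defined over $K$ gives $\II_R(X)=\II_K(X)R[\x]$, so $\II_C(X_C)=\II_K(X)R[\x]\cdot C[\x]=\II_K(X)C[\x]$.

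The implication $(\mr{ii})\Longrightarrow(\mr{ii}')$ is essentially a one-line argument: from $\II_C(X_C)=\II_K(X)C[\x]$ one obtains $X_C=\ZZ_C(\II_C(X_C))=\ZZ_C(\II_K(X)C[\x])=\ZZ_C(\II_K(X))$, so $X_C\subset C^n$ is $K$-algebraic.

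The delicate step is $(\mr{ii}')\Longrightarrow(\mr{i})$. For the $R$-irreducibility of $X$, I would invoke Proposition \ref{rc}$(\mr{iii})$ applied to $T:=X_C$ and $S:=X_C\cap R^n$: since $X_C$ is the complexification of $X$ and, by Proposition \ref{prop:zar}$(\mr{ii})$, $X_C\cap R^n=X$, the proposition gives $\II_R(X_C)=\II_R(X)$. But $\II_R(X_C)=\II_C(X_C)\cap R[\x]$ is prime as the trace on a subring of the prime ideal $\II_C(X_C)$, hence so is $\II_R(X)$, and $X\subset R^n$ is $R$-irreducible by Lemma \ref{lem:prime}. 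For the property that $X\subset R^n$ is defined over $K$, I plan to use the characterization $(\mr{i})\Longleftrightarrow(\mr{iii})$ of Theorem \ref{thm:48}: it suffices to check that the complex Galois completion $T:=\zcl_{C^n}^K(X)$ coincides with the complexification $X_C=\zcl_{C^n}(X)$. The inclusion $X_C\subset T$ is automatic since every $K$-algebraic subset of $C^n$ is $C$-algebraic, and $T$ is a $C$-algebraic set containing $X$. The reverse inclusion $T\subset X_C$ follows from the fact that $X_C$ is itself $K$-algebraic by hypothesis $(\mr{ii}')$ and contains $X$, so it contains the $K$-Zariski closure $T$ of $X$ in $C^n$. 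Thus $T=X_C$ and Theorem \ref{thm:48} yields that $X\subset R^n$ is defined over $K$.

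The main potential obstacle I foresee is the last step: guaranteeing that the two closures $T$ and $X_C$ agree requires care in distinguishing the $K$-Zariski and $C$-Zariski topologies on $C^n$, but the observation that the $K$-Zariski topology is coarser than the $C$-Zariski topology makes both inclusions immediate once $X_C$ is known to be $K$-algebraic. No further calculations should be needed.
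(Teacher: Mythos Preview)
Your argument is correct. One small point to make explicit in the step $(\mr{ii}')\Longrightarrow(\mr{i})$: Theorem \ref{thm:48} is stated for $K$-algebraic sets $X\subset R^n$, so before invoking it you should observe that $X$ is $K$-algebraic; this is immediate since $X_C$ is $K$-algebraic by $(\mr{ii}')$ and $X=X_C\cap R^n$ (Proposition \ref{prop:zar}$(\mr{ii})$).

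The paper's route is slightly different for the closing implication. Instead of jumping directly from $(\mr{ii}')$ to $(\mr{i})$ via Theorem \ref{thm:48}, the paper first proves $(\mr{ii}')\Longrightarrow(\mr{ii})$ by an elementary argument: the $C|K$-Nullstellensatz (Corollary \ref{kreliablec}) gives $\II_C(X_C)=\II_K(X_C)C[\x]$, and the chain $X\subset X_C\subset\zcl_{C^n}^K(X)$ forces $\II_K(X_C)=\II_K(X)$. Then $(\mr{ii})\Longrightarrow(\mr{i})$ is obtained by intersecting $\II_C(X_C)=\II_K(X)C[\x]$ with $R[\x]$ via Corollary \ref{k}, yielding $\II_R(X)=\II_R(X_C)=\II_K(X)R[\x]$ directly. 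Your approach trades this hands-on computation for the packaged statement of Theorem \ref{thm:48}; the paper's approach is more self-contained (it avoids the Galois-completion machinery behind Theorem \ref{thm:48}), while yours is shorter once that theorem is available.
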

\begin{proof}
$(\mr{i})\Longrightarrow(\mr{ii})$ By Proposition \ref{prop:zar}$(\mr{i})(\mr{iii})$, $X_C\subset C^n$ is $C$-irreducible and $\II_C(X_C)=\II_R(X)C[\x]=(\II_K(X)R[\x])C[\x]=\II_K(X)C[\x]$.

$(\mr{ii})\Longrightarrow(\mr{ii}')$ This holds, because $X_C=\ZZ_C(\II_K(X))\subset C^n$.

$(\mr{ii}')\Longrightarrow(\mr{ii})$ By Corollary \ref{kreliablec}, we know that $\II_C(X_C)=\II_K(X_C)C[\x]$. As $X\subset X_C\subset\zcl_{C^n}^K(X)$, we deduce $\II_K(X_C)=\II_K(X)$, so $\II_C(X_C)=\II_K(X)C[\x]$. 

$(\mr{ii})\Longrightarrow(\mr{i})$ By Proposition \ref{prop:zar}$(\mr{iii})$ and implication $(\mr{i})\Longrightarrow(\mr{iii})$ of Proposition \ref{rc}, we have that $X\subset R^n$ is $R$-irreducible and $\II_R(X)=\II_R(X_C)$. By Corollary \ref{k}, we deduce $\II_R(X)=\II_R(X_C)=\II_C(X_C)\cap R[\x]=\II_K(X)C[\x]\cap R[\x]=(\II_K(X)R[\x])C[\x]\cap R[\x]=\II_K(X)R[\x]$, so $X\subset R^n$ is defined over $K$, as required.
\end{proof}

\subsection{Detecting real algebraic sets defined over $K$ via Galois completions} \label{rasdok1} It is natural to ask whether it is possible to determine when a real $K$-algebraic set is a real algebraic~set defined over $K$, making use of computational tools such as the complex and real Galois completions (Algorithm \ref{gc} and Theorem \ref{thm:gc}). We have a complete affirmative answer.

We consider first the $K$-irreducible case.

\begin{thm}\label{irredq}
Let $X\subset R^n$ be a $K$-irreducible $K$-algebraic set of dimension $d$ and let $X=\bigcup_{\sigma\in G'}(Z^\sigma\cap R^n)$ be a Galois presentation of $X$ (see Definition {\em\ref{def:gp}}). The following conditions are equivalent:
\begin{itemize}
\item[$(\mr{i})$] $X\subset R^n$ is defined over $K$.
\item[$(\mr{ii})$] $\II_R(Z^\sigma)$ is a real ideal of $R[\x]$ for each $\sigma\in G'$.
\item[$(\mr{iii})$] $Z^\sigma\subset C^n$ is the complexification of $Z^\sigma\cap R^n$ for each $\sigma\in G'$.
\item[$(\mr{iv})$] $\dim(Z^\sigma\cap R^n)=\dim_C(Z^\sigma)=d\,$ for each $\sigma\in G'$. 
\end{itemize}
\end{thm}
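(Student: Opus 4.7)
The plan is to establish the circle of implications by leveraging Proposition \ref{rc} for each individual $Z^\sigma$ and Theorem \ref{thm:48} for the global condition, connecting them via the fact that $\{Z^\sigma\}_{\sigma\in G'}$ is the family of $C$-irreducible components of the complex Galois completion $T:=\zcl_{C^n}^K(X)$.

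First, I would record the structural input: by Lemma \ref{lem:gp}(ii)(iii), $T=\bigcup_{\sigma\in G'}Z^\sigma$ and each $Z^\sigma\subset C^n$ is a $C$-irreducible component of $T$ of $C$-dimension $d$. In particular no $Z^\sigma$ is contained in another. With this in hand, the equivalences $(\mr{ii})\Longleftrightarrow(\mr{iii})\Longleftrightarrow(\mr{iv})$ are immediate: apply Proposition \ref{rc} to the irreducible algebraic set $Z^\sigma\subset C^n$ with real part $Z^\sigma\cap R^n$; since $Z^\sigma$ is $C$-irreducible, Proposition \ref{rc} shows that the realness of $\II_R(Z^\sigma)$, the equality $Z^\sigma=\zcl_{C^n}(Z^\sigma\cap R^n)$, and the dimensional equality $\dim(Z^\sigma\cap R^n)=\dim_C(Z^\sigma)=d$ are pairwise equivalent, and these have to hold simultaneously for every $\sigma$.

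Next I would prove $(\mr{i})\Longleftrightarrow(\mr{iii})$. By Theorem \ref{thm:48} applied to $X$, condition (i) is equivalent to $T=\zcl_{C^n}(X)$. Since $X=\bigcup_{\sigma\in G'}(Z^\sigma\cap R^n)$, one has $\zcl_{C^n}(X)=\bigcup_{\sigma\in G'}\zcl_{C^n}(Z^\sigma\cap R^n)$, and each summand is contained in the corresponding $Z^\sigma$. If (iii) holds, these containments are equalities and $\zcl_{C^n}(X)=\bigcup_\sigma Z^\sigma=T$, giving (i). Conversely, if (i) holds, fix $\sigma$ and use $C$-irreducibility of $Z^\sigma$ together with $Z^\sigma\subset T=\bigcup_\tau\zcl_{C^n}(Z^\tau\cap R^n)\subset\bigcup_\tau Z^\tau$: there exists $\tau$ with $Z^\sigma\subset\zcl_{C^n}(Z^\tau\cap R^n)\subset Z^\tau$, and since the $Z^\sigma$ are the (pairwise non-included) $C$-irreducible components of $T$, necessarily $\sigma=\tau$, so $Z^\sigma=\zcl_{C^n}(Z^\sigma\cap R^n)$ and (iii) follows.

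The main obstacle, in my view, is the direction $(\mr{i})\Longrightarrow(\mr{iii})$: one must pass from the global ideal-theoretic statement $\II_R(X)=\II_K(X)R[\x]$ to the separate geometric statement for each irreducible piece $Z^\sigma$. Theorem \ref{thm:48} makes this bridge transparent by translating (i) into the single equality $T=\zcl_{C^n}(X)$, after which the irreducible component argument above is purely topological. An alternative packaging, which I might mention as a shortcut, is to combine Theorem \ref{thm:gc}(vi) (where the ideal $\gtb=\bigcap_{\sigma\in G'^*}\II_R(Z^\sigma)$ measures the discrepancy $\II_R(T)\subset\II_R(X)$) with Corollary \ref{cor:ii-zz}(ii$'$): in the $K$-irreducible case, $G'^*=\varnothing$ is equivalent to $\II_R(T)=\II_K(X)R[\x]$ being real, which is exactly (ii). This gives a second, independent route to $(\mr{i})\Longleftrightarrow(\mr{ii})$ and confirms the circle.
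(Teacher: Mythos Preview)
Your proof is correct, and your route for the crucial direction $(\mr{i})\Longrightarrow(\mr{iii})$ is genuinely different from the paper's. The paper proves $(\mr{i})\Longrightarrow(\mr{ii})$ ideal-theoretically: from $\II_R(X)=\II_R(T)=\bigcap_{\sigma}\II_R(Z^\sigma)$ and the realness of $\II_R(X)$, it invokes \cite[Lem.4.1.5]{bcr} to conclude that the minimal primes among the $\II_R(Z^\sigma)$ are real, and then argues by contradiction (via a containment-plus-equal-dimension step) that every $\II_R(Z^\sigma)$ must be real. Your argument instead stays on the geometric side: Theorem~\ref{thm:48} converts $(\mr{i})$ into the single equality $T=\zcl_{C^n}(X)=\bigcup_\tau\zcl_{C^n}(Z^\tau\cap R^n)$, and then the irreducibility of each $Z^\sigma$ together with Lemma~\ref{inclusion} forces $Z^\sigma\subset\zcl_{C^n}(Z^\tau\cap R^n)\subset Z^\tau$ for some $\tau$, hence $Z^\sigma=Z^\tau$ by the irreducible-component structure. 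This is cleaner and avoids the detour through real radicals and minimal primes; the paper's argument, on the other hand, is what gets reused verbatim in the reducible case (Theorem~\ref{defined}), where the component bookkeeping is heavier. One small imprecision: you write ``necessarily $\sigma=\tau$'', but the family $\{Z^\sigma\}_{\sigma\in G'}$ may have repetitions, so only $Z^\sigma=Z^\tau$ as sets follows; this is harmless since that is all you use. Your ``alternative packaging'' via $G'^*$ and $\gtb$ is essentially a sketch of the paper's own argument rather than an independent confirmation.
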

\begin{proof}
Let $Y$ be the start of the Galois presentation $X=\bigcup_{\sigma\in G'}(Z^\sigma\cap R^n)$ of $X$, let $T$ be the complex Galois completion of $Y\subset R^n$ and let $T^r:=T\cap R^n$ be the real Galois completion of $Y\subset R^n$. Recall that, by Lemma \ref{lem:gp}$(\mr{ii})$, $T=\zcl_{C^n}^K(X)$ and $T^r=X$.

$(\mr{i})\Longrightarrow(\mr{ii})$ Suppose $X\subset R^n$ is defined over $K$. By Theorem \ref{thm:48}, $\II_R(X)=\II_R(T)$ and $T\subset C^n$ is the complexification of $X$. As $T=\bigcup_{\sigma\in G'}Z^\sigma$,
\begin{equation}\label{II}
\textstyle
\II_R(X)=\bigcap_{\sigma\in G'}\II_R(Z^\sigma).
\end{equation}
As $Y\subset R^n$ is irreducible, Proposition \ref{prop:zar}$(\mr{iii})$ assures that the complexification $Z\subset C^n$ of $Y$ is irreducible as well, that is, $\II_C(Z)$ is a prime ideal of $C[\x]$. Consequently, by Theorem \ref{thm:gc}$(\mr{ii})$, each ideal $\II_C(Z^\sigma)$ of $C[\x]$ is prime, so each ideal $\II_R(Z^\sigma)=\II_C(Z^\sigma)\cap R[\x]$ of $R[\x]$ is prime as well. By \eqref{II} and \cite[Prop.1.11.ii)]{am}, the minimal prime ideals of $\II_R(X)$ in $R[\x]$ is a subfamily of $\{\II_R(Z^\sigma):\sigma\in G'\}$. As $\II_R(X)$ is a real ideal of $R[\x]$, by \cite[Lem.4.1.5]{bcr}, each minimal prime ideal associated to $\II_R(X)$ is also real. Define
$$
F_0:=\big\{\sigma\in G': \text{$\II_R(Z^\sigma)$ is a real ideal of $R[\x]$}\big\},
$$
and choose a subset $F$ of $F_0$ such that $\{\II_R(Z^\sigma):\sigma\in F\}=\{\II_R(Z^\sigma):\sigma\in F_0\}$ and $\II_R(Z^\sigma)\neq\II_R(Z^\tau)$ for all $\sigma,\tau\in F$ with $\sigma\neq\tau$.
In particular,
$$\textstyle
\II_R(X)=\bigcap_{\sigma\in F}\II_R(Z^\sigma).
$$
If $\II_R(Z^\tau)$ is not a real ideal of $R[\x]$ for some $\tau\in G'$, then it is not a minimal prime ideal of $\II_R(X)$ and
$$\textstyle
\bigcap_{\sigma\in F}\II_R(Z^\sigma)\subset\II_R(Z^\tau).
$$
By \cite[Prop.1.11]{am}, there exists $\sigma\in F$ such that $\II_R(Z^\sigma)\subset\II_R(Z^\tau)$. Thus, by implications $(\mr{iv})\Longrightarrow(\mr{iii})\Longrightarrow(\mr{ii})$ of Proposition \ref{rc}, we deduce that
$$
\II_C(Z^\sigma)=\II_R(Z^\sigma)C[\x]\subset\II_R(Z^\tau)C[\x]\subset\II_C(Z^\tau),
$$
so $Z^\tau\subset Z^\sigma$. By Theorem \ref{thm:gc}$(\mr{i}')$, we know that $\dim_C(Z^\tau)=d=\dim_C(Z^\sigma)$. As $Z^\sigma\subset C^n$ is irreducible, we have $Z^\tau=Z^\sigma$, which is a contradiction because $\II_R(Z^\sigma)$ is a real ideal of $R[\x]$, whereas $\II_R(Z^\tau)$ is not.

This means that $F_0=G'$ and $\II_R(Z^\sigma)$ is a real prime ideal of $R[\x]$ for each $\sigma\in G'$.

$(\mr{ii})\Longleftrightarrow(\mr{iii})\Longleftrightarrow(\mr{iv})$ As $Z^\sigma\subset C^n$ is irreducible and $\dim_C(Z^\sigma)=d$, these equivalences follow directly from equivalences $(\mr{i})\Longleftrightarrow(\mr{iv})\Longleftrightarrow(\mr{v})$ of Proposition \ref{rc}.

$(\mr{iv})\Longrightarrow(\mr{i})$ Suppose next $\dim(Z^\sigma\cap R^n)=d=\dim_C(Z^\sigma)$ for each $\sigma\in G'$. Implication $(\mr{v})\Longrightarrow(\mr{iii})$ of Proposition \ref{rc} assures that $\II_R(Z^\sigma\cap R^n)=\II_R(Z^\sigma)$ for each $\sigma\in G'$. As $X=\bigcup_{\sigma\in G'}(Z^\sigma\cap R^n)$,
we deduce 
$$\textstyle
\II_R(X)=\bigcap_{\sigma\in G'}\II_R(Z^\sigma\cap R^n)=\bigcap_{\sigma\in G'}\II_R(Z^\sigma)=\II_R(T).
$$
Using Theorem \ref{thm:gc}$(\mr{vi})$, we have $\II_R(T)=\II_K(T^r)R[\x]=\II_K(X)R[\x]$, so $\II_R(X)=\II_K(X)R[\x]$, as required.
\end{proof}

Next result reduces to the $K$-irreducible case the problem of determining which real $K$-algebraic sets are real algebraic sets defined over $K$.

\begin{thm}\label{defined}
Let $X\subset R^n$ be a $K$-algebraic set and let $X_1,\ldots,X_s$ be the $K$-irreducible components of $X$. Then $X\subset R^n$ is defined over $K$ if and only if each $X_i\subset R^n$ is defined over~$K$.
\end{thm}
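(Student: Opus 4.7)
The plan is to handle the two directions separately. For the ``if'' direction, if each $\II_R(X_i)=\II_K(X_i)R[\x]$, then from $X=\bigcup_{i=1}^sX_i$ we have $\II_R(X)=\bigcap_{i=1}^s\II_R(X_i)$ and $\II_K(X)=\bigcap_{i=1}^s\II_K(X_i)$, so the equality $\II_R(X)=\II_K(X)R[\x]$ will follow immediately from Corollary~\ref{lem:a} applied with $L=R$, which states $\bigcap_i(\gta_iR[\x])=(\bigcap_i\gta_i)R[\x]$ for any ideals $\gta_i$ of $K[\x]$.

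For the converse, my plan is to exploit the Galois-completion characterization given by the equivalence (i)$\Leftrightarrow$(iii) of Theorem~\ref{thm:48}: $X$ is defined over $K$ iff $T:=\zcl_{C^n}^K(X)=\zcl_{C^n}(X)$, and analogously for each $X_i$ with $T_i:=\zcl_{C^n}^K(X_i)$. Assume $T=\zcl_{C^n}(X)$; I will show $T_{i_0}=\zcl_{C^n}(X_{i_0})$ for every $i_0$. Fix a Galois presentation so that $T=\bigcup_{i,\sigma}Z_i^\sigma$, where by Lemma~\ref{lem:gp-reducible} the sets $\{Z_i^\sigma\}_{i,\sigma}$ are exactly the $C$-irreducible components of $T$, each of $C$-dimension $d_i:=\dim(X_i)$, and they satisfy the crucial incomparability $Z_i^\sigma\not\subset Z_j^\tau$ whenever $i\neq j$. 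On the other side, writing $\{Y_{i,k}\}_k$ for the $R$-irreducible components of $X_i$ and $W_{i,k}:=\zcl_{C^n}(Y_{i,k})$, Proposition~\ref{prop:zar}(iii) gives $\zcl_{C^n}(X)=\bigcup_{i,k}W_{i,k}$ with each $W_{i,k}$ being $C$-irreducible.

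The matching argument I plan to run is as follows. Fix $(i,\sigma)$. From $Z_i^\sigma=Z_i^\sigma\cap T=\bigcup_{i',k}(Z_i^\sigma\cap W_{i',k})$ and the $C$-irreducibility of $Z_i^\sigma$, I extract $(i_1,k_1)$ with $Z_i^\sigma\subset W_{i_1,k_1}\subset\zcl_{C^n}(X_{i_1})\subset T_{i_1}$. Since $Z_i^\sigma$ is $C$-irreducible and $T_{i_1}$ is the union of its $C$-irreducible components $\{Z_{i_1}^\tau\}_\tau$, it is contained in some $Z_{i_1}^\tau$; the incomparability from Lemma~\ref{lem:gp-reducible}(iii) then forces $i_1=i$. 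Combining $Z_i^\sigma\subset W_{i,k_1}$ with $\dim_CW_{i,k_1}=\dim(Y_{i,k_1})\leq d_i=\dim_CZ_i^\sigma$ and the $C$-irreducibility of both, I obtain $Z_i^\sigma=W_{i,k_1}$, so that $Z_i^\sigma\cap R^n\supset Y_{i,k_1}$ has dimension $d_i$. Using Corollary~\ref{<=dim} to conclude $\dim(Z_i^\sigma\cap R^n)=d_i$, and varying $\sigma\in G'$, Theorem~\ref{irredq}(iv)$\Rightarrow$(i) will yield that $X_{i_0}$ is defined over $K$.

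The main obstacle I expect is the matching step that identifies the index $i_1$ with $i$: one must prevent a $C$-irreducible component of $T_{i_0}$ from being swallowed by a different $T_{i'}$, and this is precisely where the incomparability provided in Lemma~\ref{lem:gp-reducible}(iii) of the sets $Z_i^\sigma$ across different $i$ is essential. Once this is secured, the remainder is a straightforward dimension chase through Theorem~\ref{irredq}.
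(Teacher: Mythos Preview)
Your proof is correct. The ``if'' direction is identical to the paper's. For the ``only if'' direction you and the paper both take a Galois presentation, both use the incomparability $Z_i^\sigma\not\subset Z_j^\tau$ for $i\neq j$ from Lemma~\ref{lem:gp-reducible}(iii) as the decisive step, and both finish via Theorem~\ref{irredq}; however, the routes to that point are genuinely different. The paper works \emph{algebraically}: from $\II_R(X)=\II_R(T)=\bigcap_{i,\sigma}\II_R(Z_i^\sigma)$ and the fact that the minimal primes of the real ideal $\II_R(X)$ are real, it shows by contradiction that every $\II_R(Z_i^\sigma)$ is real (a non-real one would be forced to contain a real one, giving $Z_j^\tau\subset Z_i^\sigma$ and then $i=j$ by incomparability, contradiction), and concludes via condition~(ii) of Theorem~\ref{irredq}. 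You work \emph{geometrically}: from $T=\zcl_{C^n}(X)=\bigcup_{i',k}W_{i',k}$ you trap each $Z_i^\sigma$ inside some $W_{i_1,k_1}$, use incomparability to force $i_1=i$, and then a dimension squeeze gives $Z_i^\sigma=W_{i,k_1}$, whence $\dim(Z_i^\sigma\cap R^n)=d_i$ and condition~(iv) of Theorem~\ref{irredq} applies. Your argument avoids the real-ideal machinery and is slightly more direct; the paper's argument avoids the detour through the $R$-irreducible components $Y_{i,k}$ of the $X_i$. Both are clean and of comparable length.
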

\begin{proof}
We prove first the `if' implication. Suppose that each $X_i\subset R^n$ is defined over $K$, that is, $\II_R(X_i)=\II_K(X_i)R[\x]=\II_K(X_i)\otimes_KR$. We have by \cite[Ch2.\S7.Cor.\ to Prop.14, p.306]{b}: 
\begin{align*}
\II_R(X)&\textstyle=\bigcap_{i=1}^s\II_R(X_i)=\bigcap_{i=1}^s\big(\II_K(X_i)\otimes_KR\big)\\
&\textstyle=\big(\bigcap_{i=1}^s\II_K(X_i)\big)\otimes_KR=\II_K(X)\otimes_KR=\II_K(X)R[\x].
\end{align*}
Alternatively, we can prove the previous fact as follows. By Corollary \ref{lem:a}, we have
$$
\II_R(X)=\bigcap_{i=1}^s\II_R(X_i)=\bigcap_{i=1}^s\II_K(X_i)R[\x]=\Big(\bigcap_{i=1}^s\II_K(X_i)\Big)R[\x]=\II_K(X)R[\x].
$$

Let us prove the `only if' implication. We adapt to the present situation the argument already used to prove implication $(\mr{i})\Longrightarrow(\mr{ii})$ of Theorem \ref{irredq}. Assume that $X\subset R^n$ is defined over~$K$, that is, $\II_R(X)=\II_K(X)R[\x]$. Let $X=\bigcup_{i=1}^s\bigcup_{\sigma\in G'}(Z_i^\sigma\cap R^n)$ be a Galois presentation of $X$ with start $(Y_1,\ldots,Y_s)$ (see Definition \ref{def:gp}), where each $Y_i$ is an $R$-irreducible component of $X_i\subset R^n$ of dimension $\dim(X_i)$ for each $i\in\{1,\ldots,s\}$. By Lemma \ref{lem:gp-reducible}, if $T$ and $T^r$ are the complex and real Galois completions of $\bigcup_{i=1}^sY_i\subset R^n$, then
\begin{align*}
&\textstyle T=\bigcup_{i=1}^s T_i=\bigcup_{i=1}^s\bigcup_{\sigma\in G'}Z_i^\sigma,\\
&\textstyle T^r=\bigcup_{i=1}^s T^r_i=\bigcup_{i=1}^s X_i=X,
\end{align*}
where $T_i$ and $T_i^r=X_i$ are the complex and real Galois completions of $Y_i\subset R^n$. By Theorem \ref{thm:gc}$(\mr{vi})$, $\II_R(T)=\II_K(T^r)R[\x]=\II_K(X)R[\x]$. As $\II_R(X)=\II_K(X)R[\x]$, we deduce $\II_R(X)=\II_R(T)$, so
$$\textstyle
\II_R(X)=\bigcap_{i=1}^s\bigcap_{\sigma\in G'}\II_R(Z_i^\sigma).
$$

As each $Y_i\subset R^n$ is irreducible, each $Z_i^\sigma\subset{C^n}$ is irreducible too and each ideal $\II_C(Z_i^\sigma)$ of $C[\x]$ is prime, so each ideal $\II_R(Z_i^\sigma)=\II_C(Z_i^\sigma)\cap R[\x]$ of $R[\x]$ is prime as well. As $\II_R(X)$ is a real ideal of $R[\x]$, each minimal prime ideal associated to $\II_R(X)$ is also real. Define
$$
F_0:=\big\{(i,\sigma)\in\{1,\ldots,s\}\times G'\,:\,\text{$\II_R(Z_i^\sigma)$ is a real ideal of $R[\x]$}\big\},
$$
and choose a subset $F$ of $F_0$ such that $\{\II_R(Z_i^\sigma)\,:\,(i,\sigma)\in F\}=\{\II_R(Z_i^\sigma)\,:\,(i,\sigma)\in F_0\}$ and $\II_R(Z_i^\sigma)\neq\II_R(Z_j^\tau)$ for each $(i,\sigma),(j,\tau)\in F$ with $(i,\sigma)\neq(j,\tau)$. In particular,
\begin{equation}\label{equa37}\textstyle
\II_R(X)=\bigcap_{(i,\sigma)\in F}\II_R(Z_i^\sigma).
\end{equation}
If $\II_R(Z_j^\tau)$ is not a real ideal of $R[\x]$ for some $(j,\tau)\in\{1,\ldots,s\}\times G'$, then it is not a minimal prime ideal of $\II_R(X)$ and
$$\textstyle
\bigcap_{(i,\sigma)\in F}\II_R(Z_i^\sigma)\subset\II_R(Z_j^\tau).
$$
Thus, by \cite[Prop.1.11]{am}, there exists $(i,\sigma)\in F$ such that $\II_R(Z_i^\sigma)\subset\II_R(Z_j^\tau)$, so Proposition \ref{rc} implies
$$
\II_C(Z_i^\sigma)=\II_R(Z_i^\sigma)C[\x]\subset\II_R(Z_j^\tau)C[\x]\subset\II_C(Z_j^\tau),
$$
so $Z_j^\tau\subset Z_i^\sigma$. By Lemma \ref{lem:gp-reducible}$(\mr{iii})$, we deduce $i=j$, so $Z_i^\tau\subset Z_i^\sigma$. As $\dim_C(Z_i^\tau)=\dim(X_i)=\dim_C(Z_i^\sigma)$, we have $Z_i^\tau= Z_i^\sigma$, which is a contradiction because $\II_R(Z_i^\sigma)$ is a real ideal of $R[\x]$, whereas $\II_R(Z_i^\tau)$ is not. 

Consequently, $F_0=\{1,\ldots,s\}\times G'$, that is, $\II_R(Z_i^\sigma)$ is a real prime ideal of $R[\x]$ for each $(i,\sigma)\in\{1,\ldots,s\}\times G'$. By implication $(\mr{ii})\Longrightarrow(\mr{i})$ of Theorem \ref{irredq}, each $X_i\subset R^n$ is defined over $K$, as required.
\end{proof}

The latter two theorems have the following immediate consequence. 

\begin{thm}\label{dq}
Let $X\subset R^n$ be a $K$-algebraic set and let $X=\bigcup_{i=1}^s\bigcup_{\sigma\in G'}(Z_i^\sigma\cap R^n)$ be a Galois presentation of $X$. The following conditions are equivalent:
\begin{itemize}
\item[$(\mr{i})$] $X\subset R^n$ is defined over $K$.
\item[$(\mr{ii})$] $\II_R(Z_i^\sigma)$ is a real ideal of $R[\x]$ for each $i\in\{1,\ldots,s\}$ and $\sigma\in G'$.
\item[$(\mr{iii})$] $Z_i^\sigma\subset C^n$ is the complexification of $Z_i^\sigma\cap R^n$ for each $i\in\{1,\ldots,s\}$ and $\sigma\in G'$.
\item[$(\mr{iv})$] $\dim(Z_i^\sigma\cap R^n)=\dim_C(Z_i^\sigma)$ for each $i\in\{1,\ldots,s\}$ and $\sigma\in G'$.
\end{itemize}
\end{thm}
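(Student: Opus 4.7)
The plan is to deduce the theorem directly by combining the reduction to $K$-irreducible components (Theorem \ref{defined}) with the analysis of the $K$-irreducible case already carried out in Theorem \ref{irredq}. First I would observe that, by Lemma \ref{lem:gp-reducible}(i), for each fixed $i \in \{1,\ldots,s\}$ the subfamily $\{Z_i^\sigma\}_{\sigma\in G'}$ arising from the Galois presentation of $X$ is exactly the family produced by a Galois presentation of the $K$-irreducible component $X_i \subset R^n$, with $T_i^r = \bigcup_{\sigma\in G'}(Z_i^\sigma \cap R^n) = X_i$ and $T_i = \bigcup_{\sigma\in G'} Z_i^\sigma = \zcl_{C^n}^K(X_i)$. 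Moreover, each $Z_i^\sigma \subset C^n$ is $C$-irreducible of dimension $d_i := \dim(X_i)$.

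For the implication (i) $\Longrightarrow$ (ii), by Theorem \ref{defined} the hypothesis that $X \subset R^n$ is defined over $K$ forces each $X_i \subset R^n$ to be defined over $K$. Applying implication (i) $\Longrightarrow$ (ii) of Theorem \ref{irredq} to each $X_i$ with its Galois presentation $X_i = \bigcup_{\sigma\in G'}(Z_i^\sigma \cap R^n)$, we obtain that $\II_R(Z_i^\sigma)$ is a real ideal of $R[\x]$ for every $i \in \{1,\ldots,s\}$ and every $\sigma\in G'$. This yields (ii).

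The equivalences (ii) $\Longleftrightarrow$ (iii) $\Longleftrightarrow$ (iv) are entirely pointwise in $(i,\sigma)$: since each $Z_i^\sigma \subset C^n$ is irreducible of $C$-dimension $d_i$, the chain of equivalences follows directly from $(\mr{iii}) \Longleftrightarrow (\mr{iv}) \Longleftrightarrow (\mr{v})$ in Proposition \ref{rc} applied to the irreducible algebraic set $T := Z_i^\sigma$ with $S := Z_i^\sigma \cap R^n$. No new argument is needed here.

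Finally, for (iv) $\Longrightarrow$ (i) I would again argue component by component. Fixing $i$, condition (iv) restricted to the indices $(i,\sigma)$ for $\sigma\in G'$ is precisely condition (iv) of Theorem \ref{irredq} for $X_i$ with its Galois presentation, so implication (iv) $\Longrightarrow$ (i) of Theorem \ref{irredq} gives that each $X_i \subset R^n$ is defined over $K$. Then the `if' direction of Theorem \ref{defined} assures that $X = \bigcup_{i=1}^s X_i \subset R^n$ is defined over $K$, as required. I do not expect any serious obstacle: the theorem is essentially the union of Theorem \ref{irredq} (the $K$-irreducible case) and Theorem \ref{defined} (the reduction to $K$-irreducible components), once one notices that a Galois presentation of $X$ restricts, for each fixed index $i$, to a Galois presentation of $X_i$ (this uses the simultaneous version of Algorithm \ref{gc} described in Subsection \ref{gcsequence}, which is precisely what the definition of Galois presentation of a $K$-reducible set encodes).
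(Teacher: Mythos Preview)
Your proposal is correct and follows exactly the approach the paper intends: the paper states Theorem \ref{dq} as an ``immediate consequence'' of Theorems \ref{irredq} and \ref{defined}, and you have simply spelled out this deduction explicitly, using Lemma \ref{lem:gp-reducible}(i) to see that the Galois presentation of $X$ restricts to one of each $X_i$, and Proposition \ref{rc} for the pointwise equivalences (ii)$\Longleftrightarrow$(iii)$\Longleftrightarrow$(iv).
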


\begin{remark}
Recall that, by Lemma \ref{lem:gp-reducible}$(\mr{i})(\mr{ii})(\mr{iii})$, $\{Z^\sigma_i\}_{\sigma\in G'}$ is the family of all $C$-irreducible components of $\zcl_{C^n}^K(X_i)\subset C^n$ and $\{Z^\sigma_i\}_{i\in\{1,\ldots,s\},\sigma\in G'}$ is the family of the $C$-irreducible components of $\zcl^K_{C^n}(X)\subset C^n$. $\sqbullet$ 
\end{remark}

Another consequence is the following.

\begin{cor}\label{cor:2261}
Let $X\subset R^n$ be a $K$-algebraic set of dimension $d$, let $X_1,\ldots,X_s$ be the $K$-irreducible components of $X$, let $I$ be the subset of $\{1,\ldots,s\}$ constituted by all indices $i$ such that $\dim(X_i)=d$, and let $B_K(X)$ be the $K$-bad set of $X\subset R^n$ (see Definition $\ref{def:bad-points}$). If $X\subset R^n$ is defined over $K$, then
$$
B_K(X)=\bigcup_{i\in\{1,\ldots,s\}\setminus I}X_i.
$$

If in particular the $K$-irreducible components of $X\subset R^n$ have all the same dimension (which happens for instance if $X$ is $K$-irreducible), then $B_K(X)$ is empty.
\end{cor}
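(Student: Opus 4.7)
The plan is to reduce to the $K$-irreducible case via Theorem \ref{defined} and then apply Theorem \ref{irredq} to each $K$-irreducible component of top dimension. Recall that by Definition \ref{def:bad-points},
$$
B_K(X)=\bigcup_{i\in I}B_i\cup\bigcup_{i\in\{1,\ldots,s\}\setminus I}X_i,
$$
so it suffices to prove that $B_i=\varnothing$ for every $i\in I$.

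First I would observe that, assuming $X\subset R^n$ is defined over $K$, Theorem \ref{defined} implies that each $K$-irreducible component $X_i\subset R^n$ is defined over $K$. Now fix $i\in I$, so $\dim(X_i)=d$. Let $X_i=\bigcup_{\sigma\in G'}(Z_i^\sigma\cap R^n)$ be a Galois presentation of $X_i$, as in Definition \ref{def:gp}. By Lemma \ref{lem:gp}$(\mr{ii})(\mr{iii})$, the family $\{Z_i^\sigma\}_{\sigma\in G'}$ coincides with the family $\{T_{i1},\ldots,T_{is_i}\}$ of all $C$-irreducible components of $T_i=\zcl^K_{C^n}(X_i)$, and each $Z_i^\sigma$ has complex dimension $d$.

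The key step is then to apply Theorem \ref{irredq} to the $K$-irreducible $K$-algebraic set $X_i$: since $X_i\subset R^n$ is defined over $K$, equivalence $(\mr{i})\Longleftrightarrow(\mr{iv})$ of that theorem yields $\dim(Z_i^\sigma\cap R^n)=\dim_C(Z_i^\sigma)=d$ for every $\sigma\in G'$. Translating this back through the identification $\{Z_i^\sigma\}_{\sigma\in G'}=\{T_{i1},\ldots,T_{is_i}\}$, we conclude that $\dim(T_{ij}\cap R^n)=d$ for every $j\in\{1,\ldots,s_i\}$. Hence $J_i=\varnothing$ and $B_i=\varnothing$. This holds for every $i\in I$, whence $B_K(X)=\bigcup_{i\in\{1,\ldots,s\}\setminus I}X_i$ as claimed.

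For the final assertion, if all the $K$-irreducible components of $X$ have the same dimension, then necessarily $I=\{1,\ldots,s\}$, so the index set $\{1,\ldots,s\}\setminus I$ is empty and $B_K(X)=\varnothing$. The $K$-irreducible case is a special instance with $s=1$. I do not anticipate any real obstacle here: the work has already been done in Theorems \ref{defined} and \ref{irredq} together with the Galois-presentation machinery of Lemma \ref{lem:gp}, and the present statement is essentially a bookkeeping corollary that matches the definition of $B_K(X)$ with the dimension criterion of Theorem \ref{irredq}.
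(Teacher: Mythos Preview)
Your proof is correct and follows essentially the same approach as the paper's: apply Theorem \ref{defined} to pass to each $K$-irreducible component, then use equivalence $(\mr{i})\Longleftrightarrow(\mr{iv})$ of Theorem \ref{irredq} on each $X_i$ with $i\in I$ to conclude $B_i=\varnothing$. The paper's version is slightly terser, citing Remark \ref{rem317} rather than unwinding Lemma \ref{lem:gp} explicitly, but the logic is identical.
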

\begin{proof}
By Theorem \ref{defined}, each $X_i\subset R^n$ is defined over $K$. By Remark \ref{rem317} and equivalence $(\mr{i})\Longleftrightarrow(\mr{iv})$ of Theorem \ref{irredq}, we have that $B_K(X_i)=\varnothing$ for each $i\in I$, so $B_K(X)=\bigcup_{i\in I}B_K(X_i)\cup\bigcup_{i\in\{1,\ldots,s\}\setminus I}X_i=\bigcup_{i\in\{1,\ldots,s\}\setminus I}X_i$, as required.
\end{proof}

\begin{remarks}\label{rem:252}
$(\mr{i})$ If $R|K$ is an extension of real closed fields and if $X\subset R^n$ is a $K$-algebraic set of dimension $d$, then Corollary \ref{inter}$(\mr{ii})$ assures that $X\subset R^n$ is defined over $K$, so Corollary \ref{cor:2261} implies that $B_K(X)$ is contained in the union of all the $K$-irreducible components of $X$ of dimension $<d$. If in particular the $K$-irreducible components of $X\subset R^n$ have all the same dimension, then $B_K(X)$ is empty by Corollary \ref{cor:2261}.

$(\mr{ii})$ The converse of Corollary \ref{cor:2261} is false in general, even in the $K$-irreducible case. Indeed, the singleton $\{\sqrt[3]{2}\}\subset R$ is $\Q$-irreducible $\Q$-algebraic, has empty $\Q$-bad set, but it is not defined over $\Q$. $\sqbullet$ 
\end{remarks}

\subsection{Underlying real structures.}\label{urs}
\emph{Recall that $C|R|K$ is a given extension of fields, where $R$ is a real closed field, $C=R[\ii]$ is the algebraic closure of $R$ and $K$ is an ordered subfield of $R$.}

Let us fix some notations. Write a point $z=(z_1,\ldots,z_n)$ of $C^n$ as $z=x+\ii y$, where $x=(x_1,\ldots,x_n),y=(y_1,\ldots,y_n)\in R^n$. This allows us to identify $C^n$ with $R^n\times R^n=R^{2n}$. Define $\ol{w}:=\alpha-\ii\beta$ for all $w=\alpha+\ii\beta\in C$ with $\alpha,\beta\in R$, and set $\ol{z}:=(\ol{z_1},\ldots,\ol{z_n})=x-\ii y\in C^n$. Denote $\varphi:C\to C$ the usual conjugation involution $\varphi(w):=\ol{w}$, whose fixed field is $R$.

Define the indeterminates $\x:=(\x_1,\ldots,\x_n)$, $\y:=(\y_1,\ldots,\y_n)$, $\z:=\x+\ii\y=(\z_1,\ldots,\z_n)$ where $\z_k:=\x_k+\ii\y_k$, and $\ol{\z}:=\x-\ii\y=(\ol{\z_1},\ldots,\ol{\z_n})$ where $\ol{\z_k}:=\x_k-\ii\y_k$. As $\x=\frac{1}{2}(\z+\ol{\z})$ and $\y=-\frac{\ii}{2}(\z-\ol{\z})$, the ring homomorphism $C[\x,\y]\to C[\z,\ol{\z}]$, sending each $\x_k$ to $\frac{1}{2}(\z_k+\ol{\z_k})$ and each $\y_k$ to $-\frac{\ii}{2}(\z_k-\ol{\z_k})$, is actually a ring isomorphism. Thus, we can identify $C[\x,\y]$ with $C[\z,\ol{\z}]$, and consequently $R[\x,\y]$ with a subring of $C[\z,\ol{\z}]$. Observe that $C[\z]$ is also a subring of $C[\z,\ol{\z}]$. Define the ring automorphism $\ol{\varphi}:C[\z,\ol{\z}]\to C[\z,\ol\z]$ as
$$
\textstyle
\ol{\varphi}(\sum_{\nu,\mu}a_{\nu,\mu}\z^\nu\ol{\z}^\mu):=\sum_{\nu,\mu}\varphi(a_{\nu,\mu})\ol{\z}^\nu\z^\mu=\sum_{\nu,\mu}\ol{a_{\nu,\mu}}\z^\mu\ol{\z}^\nu,
$$
which is an involution. As an $R$-vector space, $C[\z,\ol{\z}]$ decomposes as the direct sum of its $R$-vector subspaces $R[\x,\y]$ and $\ii R[\x,\y]$, that is, $C[\z,\ol{\z}]=R[\x,\y]\oplus\ii R[\x,\y]$. More precisely, each $f\in C[\z,\ol{\z}]$ can be uniquely written as $f=a+\ii b$, where $a,b\in R[\x,\y]$ can be computed via $\ol{\varphi}$ as follows:
\begin{equation}\label{qqq}
\textstyle
a:=\frac{1}{2}(f+\ol{\varphi}(f))
\quad\text{ and }\quad
b:=-\frac{\ii}{2}(f-\ol{\varphi}(f)).
\end{equation}
Observe that $R[\x,\y]$ is the fixed point set of $\ol{\varphi}$.

\begin{defns}[Underlying real structure]
Let $X\subset C^n$ be an algebraic set. Define $X^R:=\{(x,y)\in R^n\times R^n=R^{2n}: x+\ii y\in X\}$. We call $X^R\subset R^{2n}$ the \emph{underlying real structure of~$X$}. Given an ideal $\gta$ of $C[\z]$, we define the ideal $\gta^R:=(\gta\cup\ol{\varphi}(\gta))C[\z,\ol{\z}]\cap R[\x,\y]$ of $R[\x,\y]$. $\sqbullet$
\end{defns}

The following result allows us to understand ($K[\ii]$-)algebraic subsets of $C^n$ as ($K$-)algebraic subsets of $R^{2n}$.

\begin{lem}\label{zeroset}
Let $X\subset C^n$ be an algebraic set and let $\gta$ be an ideal of $C[\z]$ such that $X=\ZZ_C(\gta)$. Then $X^R\subset R^{2n}$ is an algebraic set such that $X^R=\ZZ_R(\gta^R)$. In particular, if $X\subset C^n$ is $K[\ii]$-algebraic, then $X^R\subset R^{2n}$ is $K$-algebraic. 
\end{lem}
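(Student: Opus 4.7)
The plan is to prove the two set-theoretic inclusions $X^R\subset\ZZ_R(\gta^R)$ and $\ZZ_R(\gta^R)\subset X^R$ separately, using the decomposition formulas \eqref{qqq} to pass between $\gta$ and $\gta^R$, and then to upgrade the result to the $K$-algebraic statement by producing an explicit $K$-generating set of $\gta^R$.

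First I would fix the evaluation bookkeeping: under the identification $C[\z,\ol{\z}]=C[\x,\y]$, for any $(x,y)\in R^{2n}$ corresponding to $z=x+\ii y\in C^n$, evaluating $\z$ at $(x,y)$ yields $z$ and evaluating $\ol{\z}$ at $(x,y)$ yields $\ol{z}$. Consequently, for $f\in C[\z]$, $(\ol{\varphi}(f))(x,y)=\ol{f(z)}$, while for any $g\in R[\x,\y]$, $g(x,y)\in R$. For the inclusion $X^R\subset\ZZ_R(\gta^R)$, pick $(x,y)\in X^R$, so $f(z)=0=\ol{f(z)}$ for every $f\in\gta$. Any $g\in\gta^R$ lies in $(\gta\cup\ol{\varphi}(\gta))C[\z,\ol{\z}]$, hence evaluates to a combination of the values $f(z)$ and $\ol{f(z)}$ with $f\in\gta$, all of which vanish; thus $g(x,y)=0$.

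For the converse inclusion, take $(x,y)\in\ZZ_R(\gta^R)$ and any $f\in\gta$. The key move is to apply \eqref{qqq} inside $C[\z,\ol{\z}]$: write $f=a+\ii b$ with $a:=\tfrac12(f+\ol\varphi(f))$ and $b:=-\tfrac{\ii}{2}(f-\ol\varphi(f))$ in $R[\x,\y]$. Each of $2a$ and $2b$ lies in $(\gta\cup\ol{\varphi}(\gta))C[\z,\ol{\z}]$ and is fixed by $\ol{\varphi}$, so both belong to $\gta^R$ and therefore vanish at $(x,y)$. Evaluating $f=a+\ii b$ at $(x,y)$ gives $f(z)=a(x,y)+\ii b(x,y)=0$, proving $z\in\ZZ_C(\gta)=X$, i.e.\ $(x,y)\in X^R$.

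For the $K[\ii]$-algebraic conclusion, choose a finite generating set $\{f_1,\ldots,f_r\}\subset K[\ii][\z]$ of $\gta$. Since $K$ contains $\tfrac12$ and $K[\ii]$ contains $\ii$, the isomorphism $C[\z,\ol{\z}]\cong C[\x,\y]$ restricts to $K[\ii][\z,\ol{\z}]\cong K[\ii][\x,\y]$, and the decomposition $K[\ii][\x,\y]=K[\x,\y]\oplus\ii K[\x,\y]$ together with $R[\x,\y]\cap K[\ii][\x,\y]=K[\x,\y]$ shows that writing $f_i=a_i+\ii b_i$ via \eqref{qqq} produces $a_i,b_i\in K[\x,\y]$. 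I then claim that $\gta^R$ is generated in $R[\x,\y]$ by $\{a_1,b_1,\ldots,a_r,b_r\}$: any $g\in\gta^R$ admits an expression $g=\sum_i(h_i f_i+k_i\ol{\varphi}(f_i))$ with $h_i,k_i\in C[\x,\y]$, which via $f_i=a_i+\ii b_i$ and $\ol{\varphi}(f_i)=a_i-\ii b_i$ rewrites as $g=\sum_i(p_i a_i+q_i b_i)$ with $p_i,q_i\in C[\x,\y]$; splitting $p_i,q_i$ into real and imaginary parts and using that $g\in R[\x,\y]$ kills the imaginary summand, so $g$ is an $R[\x,\y]$-combination of the $a_i,b_i$. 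Hence $X^R=\ZZ_R(a_1,b_1,\ldots,a_r,b_r)$ is $K$-algebraic.

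The routine inclusions are immediate; the only genuinely delicate point is the last one — verifying that the $K[\ii]$-generators of $\gta$ really do yield $K$-generators of $\gta^R$ after passing through $\ol{\varphi}$ and the real/imaginary splitting — and this is where all three identifications (the ring isomorphism $C[\z,\ol{\z}]=C[\x,\y]$, the fixed-point description of $R[\x,\y]$, and the decomposition \eqref{qqq}) must be used simultaneously.
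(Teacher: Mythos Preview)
Your proof is correct and follows essentially the same approach as the paper: both arguments hinge on the decomposition $f=a+\ii b$ from \eqref{qqq}, the observation that the real and imaginary parts $a_k,b_k$ of a generating set of $\gta$ generate $\gta^R$ in $R[\x,\y]$ (via the real/imaginary splitting of the $C[\x,\y]$-coefficients), and the fact that for $(x,y)\in R^{2n}$ one has $g_k(x+\ii y)=0$ iff $a_k(x,y)=b_k(x,y)=0$. The only cosmetic difference is that the paper fixes generators at the outset and reads off $X^R=\ZZ_R(\gta^R)$ from a single chain of equalities, whereas you prove the two inclusions element-wise first and postpone the generating-set claim to the $K[\ii]$-algebraic part.
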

\begin{proof}
Let $\{g_1,\ldots,g_r\}\subset C[\z]$ be a system of generators of $\gta$ in $C[\z]$. Write $g_k=a_k+\ii b_k$, where $a_k,b_k\in R[\x,\y]$ are defined as in \eqref{qqq} by $a_k:=\frac{1}{2}(g_k+\ol{\varphi}(g_k))$ and $b_k:=-\frac{\ii}{2}(g_k-\ol{\varphi}(g_k))$. The family of polynomials $\{a_1,\ldots,a_r,b_1,\ldots,b_r\}$ constitutes both a system of generators of $(\gta\cup\ol{\varphi}(\gta))C[\z,\ol{\z}]$ in $C[\z,\ol{\z}]$ and a subset of $\gta^R$.

Let us see that $\{a_1,\ldots,a_r,b_1,\ldots,b_r\}$ is also a system of generators of $\gta^R$ in $R[\x,\y]$. If $f\in\gta^R$, then $f\in(\gta\cup\ol{\varphi}(\gta))C[\z,\ol{\z}]$, so $f=\sum_{k=1}^r(u_{k,1}+\ii u_{k,2})a_k+\sum_{k=1}^r(v_{k,1}+\ii v_{k,2})b_k$ for some $u_{k,1},u_{k,2},v_{k,1},v_{k,2}\in R[\x,\y]$. As $f\in R[\x,\y]$, we have $f=\sum_{k=1}^ru_{k,1}a_k+\sum_{k=1}^rv_{k,1}b_k$.

It holds:
\begin{equation*}
\begin{split}
X&=\{x+\ii y\in C^n:g_1(x+\ii y)=0,\ldots,g_r(x+\ii y)=0\}\\
&=\{x+\ii y\in C^n:a_1(x,y)+\ii b_1(x,y)=0,\ldots,a_r(x,y)+\ii b_r(x,y)=0\}\\
&=\{x+\ii y\in C^n:a_1(x,y)=0,\ldots,a_r(x,y)=0,b_1(x,y)=0,\ldots,b_r(x,y)=0\},
\end{split}
\end{equation*}
so $X^R=\{(x,y)\in R^{2n}:x+\ii y\in X\}=\ZZ_R(\gta^R)$. Observe that, if $g_1,\ldots,g_r\in K[\ii][\z]$, then $a_1,\ldots,a_r,b_1,\ldots,b_r\in K[\x,\y]$, as required.
\end{proof}

\begin{remark}\label{olvarphi}
Let $\gta$ be an ideal of $C[\z]$ and let $\{g_1,\ldots,g_r\}\subset C[\z]$ be a system of generators of $\gta$ in $C[\z]$. Write each $g_k=a_k+\ii b_k$ with $a_k,b_k\in R[\x,\y]$. The preceding proof implies that $\{a_1,\ldots,a_r,b_1,\ldots,b_r\}$ is both a system of generators of $(\gta\cup\ol{\varphi}(\gta))C[\z,\ol{\z}]$ in $C[\z,\ol{\z}]$ and of $\gta^R$ in $R[\x,\y]$. $\sqbullet$ 
\end{remark}

Let $X\subset C^n$ be a $K[\ii]$-algebraic set and let $X^R\subset R^{2n}$ be its underlying real stru\-cture, which is a $K$-algebraic subset of $R^{2n}$ by the previous lemma. As $C$ is algebraically closed, Corollary \ref{kreliablec} implies that $\II_C(X)=\II_K(X)C[\z]$. It is now natural to ask whether $\II_R(X^R)=\II_K(X^R)R[\x,\y]$, that is, $X^R\subset R^{2n}$ is an algebraic set defined over $K$. We will see in Examples \ref{354} that even in very simple cases the answer is negative. Something similar happens with the concept of coherence: there exist complex analytic sets (which are always coherent) whose corresponding underlying real structures are not coherent \cite[Prop.III.2.15]{gmt}.

Before presenting the mentioned examples, we need to study some basic properties of the underlying real structure of a complex algebraic set.

Denote $L$ either $R$ or $C$ for a while. Let $Y\subset L^n$ be an algebraic set of dimension $d$ and let $p\in Y$. Recall that $p$ is a \emph{nonsingular point of $Y$ of dimension $d$} if $L[\x]_{\gtn_p}/\II_L(X)L[\x]_{\gtn_p}$ is a regular local ring of dimension $d$. Here $\gtn_p$ denotes the maximal ideal of $L[\x]$ associated to $p\in L^n$. Consider a system of generators $\{f_1,\ldots,f_r\}$ of $\II_L(X)$ in $L[\x]$. By the Jacobian criterion, $p$~is a nonsingular point of $Y$ of dimension $d$ if and only if the Jacobian matrix $G:=\big(\frac{\partial f_k}{\partial\x_j}(p)\big)_{k=1,\ldots,r,\,j=1,\ldots,n}$ has rank $n-d$, that is, ${\rm rk}(G)=n-d$ (see \cite[Ch.I.\S.5]{ha} for $L=C$ and \cite[Prop.3.3.10]{bcr} for $L=R$).

\begin{thm}\label{urs0}
Let $X\subset C^n$ be an algebraic set of dimension $d$. We have:
\begin{itemize}
\item[$(\mr{i})$] $\dim_R(X^R)=2d$.
\item[$(\mr{ii})$] Let $p\in X$. Write $p=p_1+\ii p_2$ with $p_1,p_2\in R^n$. Then $p$ is a nonsingular point of $X\subset C^n$ of dimension $d$ if and only if $(p_1,p_2)$ is a nonsingular point of $X^R\subset R^{2n}$ of dimension $2d$.
\item[$(\mr{iii})$] If $X\subset C^n$ is $K[\ii]$-algebraic, then there exist $f_1,\ldots,f_s\in K[\x,\y]$ such that $\II_C(X)^R=(f_1,\ldots,f_s)R[\x,\y]$.
\item[$(\mr{iii}')$] If $X\subset C^n$ is ($C$-)irreducible, then $X^R\subset R^{2n}$ is ($R$-)irreducible and $\II_R(X^R)=\II_C(X)^R$. If in addition $\{a_1+\ii b_1,\ldots,a_r+\ii b_r\}$ is a system of generators of the ideal $\II_C(X)$ in $C[\z]$ with $a_1,b_1,\ldots,a_r,b_r\in R[\x,\y]$, then $\{a_1,\ldots,a_r,b_1,\ldots,b_r\}$ is a system of generators of $\II_R(X^R)$ in $R[\x,\y]$.
\item[$(\mr{iii}'')$] If $X\subset C^n$ is ($C$-)irreducible and $K[\ii]$-algebraic, then the algebraic set $X^R\subset R^{2n}$ is defined over $K$. 
\end{itemize}
\end{thm}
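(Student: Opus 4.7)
First I would prove (iii) as a warm-up: since $X\subset C^n$ is $K[\ii]$-algebraic, the $C|K[\ii]$-Nullstellensatz (Corollary \ref{kreliablec}) gives $\II_C(X)=\II_{K[\ii]}(X)C[\z]$, so I can pick finite generators $g_1,\dots,g_r$ of $\II_{K[\ii]}(X)$ in $K[\ii][\z]$ and decompose each $g_k=a_k+\ii b_k$ with $a_k,b_k\in K[\x,\y]$. By Remark \ref{olvarphi}, the family $\{a_k,b_k\}$ generates $\II_C(X)^R$ in $R[\x,\y]$, giving the desired $f_j\in K[\x,\y]$.

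The core of the argument is the following algebraic lemma: \emph{if $X\subset C^n$ is $C$-irreducible of dimension $d$, then $\II_C(X)^R$ is a prime ideal of $R[\x,\y]$ with Krull quotient dimension $2d$.} To prove it, I identify $C[\z,\ol{\z}]\cong C[\z]\otimes_C C[\ol{\z}]$ and note, via Remark \ref{olvarphi}, that $\II_C(X)^R C[\z,\ol{\z}]=\II_C(X)C[\z,\ol{\z}]+\ol{\varphi}(\II_C(X))C[\z,\ol{\z}]$. Consequently
\begin{equation*}
C[\z,\ol{\z}]\bigl/\II_C(X)^R C[\z,\ol{\z}]\cong\bigl(C[\z]/\II_C(X)\bigr)\otimes_C\bigl(C[\ol{\z}]/\ol{\varphi}(\II_C(X))\bigr),
\end{equation*}
and both tensor factors are finitely generated $C$-domains of Krull dimension $d$. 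Since $C$ is algebraically closed of characteristic zero, the tensor product is a domain of Krull dimension $2d$; hence the extended ideal is prime in $C[\z,\ol{\z}]$, and both primality and dimension descend to $\II_C(X)^R$ in $R[\x,\y]$ through the faithfully flat extension $R[\x,\y]\hookrightarrow C[\z,\ol{\z}]$.

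The remaining items are then organized around the Cauchy--Riemann identity: if $g=a+\ii b\in C[\z]$ with $a,b\in R[\x,\y]$, holomorphicity forces $\partial a/\partial\x_j=\partial b/\partial\y_j$ and $\partial a/\partial\y_j=-\partial b/\partial\x_j$, so the real Jacobian of $\{a_k,b_k\}$ at $(p_1,p_2)$ is the real-matrix realization of the complex Jacobian of $\{g_k\}$ at $p=p_1+\ii p_2$, and therefore has real rank exactly $2\,\mr{rk}_C(\nabla g_k(p))$. Combine this with the lemma as follows, first in the $C$-irreducible case. The lemma yields $\dim_R(X^R)\leq 2d$ through $\II_C(X)^R\subset\II_R(X^R)$; conversely, a smooth point $p\in\Reg(X)$ produces, via the Jacobian identity and the usual Jacobian criterion (\cite[Prop.3.3.10]{bcr}), a point $(p_1,p_2)\in X^R$ lying on a component of real dimension $2d$, whence $\dim_R(X^R)=2d$, proving (i) in this case. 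Since $\II_C(X)^R\subset\II_R(X^R)$ and both have height $2n-2d$ in the Noetherian ring $R[\x,\y]$, the prime ideal $\II_C(X)^R$ must be the unique minimal prime of the radical ideal $\II_R(X^R)$, forcing $\II_C(X)^R=\II_R(X^R)$; this establishes (iii'), and its generators assertion follows directly from Remark \ref{olvarphi}. With $\II_R(X^R)=\II_C(X)^R$ in hand, (ii) reduces to comparing the real and complex Jacobian criteria at $p$ and $(p_1,p_2)$, which is exactly the Cauchy--Riemann identity. The general case of (i) and (ii) follows by decomposing $X=\bigcup X_i$ into $C$-irreducible components: each $X_i^R$ is $R$-irreducible of dimension $2\dim_C(X_i)$ by (iii'), and since $X_i\not\subset X_j$ implies $X_i^R\not\subset X_j^R$, these are the $R$-irreducible components of $X^R$; nonsingularity of $p$ of dimension $d$ in $X$ means $p$ lies in a unique component $X_i$ of dimension $d$ and is nonsingular there, which by the irreducible case corresponds exactly to the analogous condition on $(p_1,p_2)\in X^R$. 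Finally, (iii'') is immediate: combining (iii') with (iii) gives $\II_R(X^R)=(f_1,\dots,f_s)R[\x,\y]$ with $f_j\in K[\x,\y]$, so implication (ii)$\Longrightarrow$(i) of Theorem \ref{thm:48} yields that $X^R\subset R^{2n}$ is defined over $K$.

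The principal obstacle is the algebraic lemma, specifically the classical fact that the tensor product of two finitely generated domains over an algebraically closed field of characteristic zero is again a domain of dimension equal to the sum of their dimensions; once this is invoked, the Cauchy--Riemann computation bridges the two Jacobian criteria in a uniform way, and the Noetherian height-counting argument that forces $\II_C(X)^R=\II_R(X^R)$ in the irreducible case falls out cleanly.
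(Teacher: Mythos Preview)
Your proposal is correct and follows essentially the same strategy as the paper: the tensor-product identification $C[\z,\ol\z]/\gta^R C[\z,\ol\z]\cong (C[\z]/\gta)\otimes_C(C[\ol\z]/\ol\varphi(\gta))$ (yielding primality and Krull dimension $2d$ of $\II_C(X)^R$ in the irreducible case via the integrality of tensor products of domains over an algebraically closed field), the Cauchy--Riemann rank-doubling $\mathrm{rk}(B)=2\,\mathrm{rk}(G)$ for the Jacobian comparison, and the height argument forcing $\II_C(X)^R=\II_R(X^R)$. The only organizational difference is that you reduce the general case of $(\mr{i})$ and $(\mr{ii})$ to the irreducible case via the $R$-irreducible decomposition $X^R=\bigcup_i X_i^R$, whereas the paper proves $(\mr{i})$ directly for arbitrary $X$ (the tensor dimension formula \cite[Ex.13.13]{e} applies without irreducibility) and invokes irreducibility only for $(\mr{iii}')$; both routes are valid and the key ingredients coincide.
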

\begin{proof}
Before proving the statement, we include a preliminary discussion to ease the proof.

Write $\gta:=\II_C(X)$ and let $\{g_1,\ldots,g_r\}\subset C[\z]$ be a system of generators of $\gta$ in $C[\z]$. Write each $g_k=a_k+\ii b_k$ with $a_k,b_k\in R[\x,\y]$. By Remark \ref{olvarphi}, the set $\{a_1,\ldots,a_r,b_1,\ldots,b_r\}$ is both a system of generators of $(\gta\cup\ol{\varphi}(\gta))C[\z,\ol{\z}]$ in $C[\z,\ol{\z}]$ and of $\gta^R$ in $R[\x,\y]$. By Lemma \ref{zeroset} and the Real Nullstellensatz, it holds $\II_R(X^R)=\sqrt[r]{\gta^R}$. 

Let $p=p_1+\ii p_2$ be a point of $X\subset C^n$ and let $(p_1,p_2)$ be the corresponding point of $X^R\subset R^{2n}$. Recall that $\frac{\partial}{\partial\z_j}=\frac{1}{2}\big(\frac{\partial}{\partial\x_j}-\ii\frac{\partial}{\partial\y_j}\big)$ and $\frac{\partial}{\partial\ol\z_j}=\frac{1}{2}\big(\frac{\partial}{\partial\x_j}+\ii\frac{\partial}{\partial\y_j}\big)$. A direct computation shows that $\frac{\partial\ol{\varphi}(g_k)}{\partial\ol{\z_j}}=\ol{\varphi}(\frac{\partial g_k}{\partial\z_j})$ and $\frac{\partial\ol{\varphi}(g_k)}{\partial\z_j}=\ol{\varphi}(\frac{\partial g_k}{\partial\ol{\z_j}})$, so $\frac{\partial\ol{\varphi}(g_k)}{\partial\ol{\z_j}}(p)=\varphi\big(\frac{\partial g_k}{\partial\z_j}(p)\big)$ and $\frac{\partial\ol{\varphi}(g_k)}{\partial\z_j}(p)=\varphi(\frac{\partial g_k}{\partial\ol{\z_j}}(p))$. By Cauchy-Riemann's conditions, we have $\frac{\partial g_k}{\partial\ol{\z_j}}=0$, $\frac{\partial\ol{\varphi}(g_k)}{\partial\z_j}=0$ and $\frac{\partial g_k}{\partial\z_j}=\frac{\partial a_k}{\partial\x_j}-\ii\frac{\partial a_k}{\partial\y_j}=\frac{\partial b_k}{\partial\y_j}+\ii\frac{\partial b_k}{\partial\x_j}$. Consider the following $(r\times n)$-matrices $G$ and $G_\varphi$, $(2r\times 2n)$-matrices $A$ and $B$, $(2r\times 2r)$-matrix $M$ and $(2n\times 2n)$-matrix $N$:
\begin{align*}
G&\textstyle:=\big(\frac{\partial g_k}{\partial\z_j}(p)\big)_{k=1,\ldots,r,\,j=1,\ldots,n}\,,\quad G_\varphi:=\Big(\varphi\big(\frac{\partial g_k}{\partial\z_j}(p)\big)\Big)_{k=1,\ldots,r,\,j=1,\ldots,n}\,,\\
A&:=\left(\begin{array}{c|c}
\frac{\partial g_k}{\partial\z_j}(p)&\frac{\partial g_k}{\partial\ol{\z}_j}(p)\\
\hline
\frac{\partial\ol{\varphi}(g_k)}{\partial\z_j}(p)&\frac{\partial\ol{\varphi}(g_k)}{\partial\ol{\z}_j}(p)
\end{array}\right)_{k=1,\ldots,r,\ j=1,\ldots,n}=
\left(\begin{array}{c|c}
G&0\\
\hline
0&G_\varphi\end{array}\right)\,,\\
B&:=\left(\begin{array}{c|c}
\frac{\partial a_k}{\partial\x_j}(p_1,p_2)&\frac{\partial a_k}{\partial\y_j}(p_1,p_2)\\
\hline
\frac{\partial b_k}{\partial\x_j}(p_1,p_2)&\frac{\partial b_k}{\partial\y_j}(p_1,p_2)
\end{array}\right)_{k=1,\ldots,r,\ j=1,\ldots,n}\,,\\
M&:=\left(\begin{array}{c|c}
I_r&\ii I_r\\
\hline
I_r&-\ii I_r
\end{array}\right)\quad\text{ and }\quad N:=\frac{1}{2}\left(\begin{array}{c|c}
I_n&I_n\\
\hline
-\ii I_n&\ii I_n
\end{array}\right),
\end{align*}
where $I_r$ and $I_n$ are the $(r\times r)$- and $(n\times n)$-identity matrices, respectively. Observe that $B$ is the Jacobian matrix at $(p_1,p_2)\in R^{2n}$ of the polynomial map
$$
R^{2n}\to R^{2r},\; (x,y)\mapsto(a_1(x,y),\ldots,a_r(x,y),b_1(x,y),\ldots,b_r(x,y))
$$
and $G$ is the Jacobian matrix at $p\in C^n$ of the polynomial map $C^n\to C^n:z\mapsto(g_1(z),\ldots,g_r(z))$. The matrices $N$ and $M$ are invertible and $MBN=A$. Consequently, ${\rm rk}(B)={\rm rk}(A)={\rm rk}(G)+{\rm rk}(G_\varphi)=2\,{\rm rk}(G)$, so
\begin{equation}\label{vvv}
{\rm rk}(B)=2\,{\rm rk}(G).
\end{equation}

$(\mr{i})$ By \eqref{eisenbud}, $\hgt(\gta)=n-\dim_C(X)=n-d$ and $\dim_R(X^R)=2n-\hgt(\sqrt[r]{\gta^R})$. Thus, to prove that $\dim_R(X^R)=2d$, we have to check: $\hgt(\sqrt[r]{\gta^R})=2\hgt(\gta)$.

By \cite[Ex.13.13]{e}, it holds
\begin{equation}\label{step1}
\dim(C[\z]/\gta\otimes_CC[\ol{\z}]/\ol{\varphi}(\gta))=\dim(C[\z]/\gta)+\dim(C[\ol{\z}]/\ol{\varphi}(\gta))=2\dim(C[\z]/\gta)=2d.
\end{equation}

As $C[\z]\otimes_CC[\ol{\z}]\cong C[\z,\ol{\z}]$, we deduce by \cite[Ex.1.3]{l}
\begin{equation*}
C[\z]/\gta\otimes_CC[\ol{\z}]/\ol{\varphi}(\gta)\cong C[\z,\ol{\z}]/((\gta\cup\ol{\varphi}(\gta))C[\z,\ol{\z}]).
\end{equation*}

As $C[\z,\ol{\z}]\cong C[\x,\y]$ and $C=R[\ii]$, we also deduce
\begin{align}
(R[\x,\y]/\gta^R)[\ii]&\cong(R[\x,\y]/\gta^R)\otimes_RC\cong C[\x,\y]/(\gta^RC[\x,\y])\nonumber\\
\label{step3}
&\cong C[\z,\ol{\z}]/(\gta^RC[\z,\ol{\z}])=C[\z,\ol{\z}]/((\gta\cup\ol{\varphi}(\gta))C[\z,\ol{\z}])\\
&\cong C[\z]/\gta\otimes_CC[\ol{\z}]/\ol{\varphi}(\gta)\nonumber.
\end{align}

By \eqref{step1} and \eqref{step3}, we conclude that $\dim((R[\x,\y]/\gta^R)[\ii])=2d$ and
$$
2n-\hgt(\gta^R)=\dim(R[\x,\y]/\gta^R)=\dim((R[\x,\y]/\gta^R)[\ii])=2d=2n-2\hgt(\gta),
$$
so $\hgt(\gta^R)=2\hgt(\gta)=2n-2d$. Thus, it remains to check: $\hgt(\sqrt[r]{\gta^R})=\hgt(\gta^R)$.

Suppose that $p=p_1+\ii p_2$ is a nonsingular point of $X\subset C^n$ of dimension $d$. By the Jacobian criterion, we have ${\rm rk}(G)=n-d$. Rearranging the indices if necessary, we can assume that the first $(n-d)$-rows of the matrix $G$ are linearly independent in $C^n$. Thus, there exists a Zariski open neighborhood $U$ of $p$ in $C^n$ such that
\begin{equation}\label{xr}
\ZZ_C(g_1,\ldots,g_{n-d})\cap U=X\cap U.
\end{equation}

Define $U^R:=\{(x,y)\in R^{2n}: x+\ii y\in U\}$. By Lemma \ref{zeroset}, $U^R$ is a Zariski open neighborhood of $(p_1,p_2)$ in $R^{2n}$. Equation \eqref{xr} implies that
$$
X^R\cap U^R=\{(x,y)\in U^R: a_1(x,y)=0,\ldots,a_{n-d}(x,y)=0,b_1(x,y)=0,\ldots,b_{n-d}(x,y)=0\}.
$$
Repeating the argument used to prove \eqref{vvv}, we deduce that the rank of the Jacobian matrix at $(p_1,p_2)\in R^{2n}$ of the polynomial map
$$
R^{2n}\to R^{2n-2d},\; (x,y)\mapsto(a_1(x,y),\ldots,a_{n-d}(x,y),b_1(x,y),\ldots,b_{n-d}(x,y))
$$
is equal to $2n-2d$. By \cite[Prop.3.3.10]{bcr}, $(p_1,p_2)\in X^R$ is a nonsingular point of dimension~$2d$. Thus, $\dim_R(X^R)\geq 2d$, so $\hgt(\gta^R)\leq\hgt(\sqrt[r]{\gta^R})=2n-\dim_R(X^R)\leq 2n-2d=\hgt(\gta^R)$ so $\hgt(\sqrt[r]{\gta^R})=\hgt(\gta^R)$, as required.

$(\mr{ii})$ This item follows immediately from $(\mr{i})$, the Jacobian criterion and \eqref{vvv}. 

$(\mr{iii})$ Suppose that $X\subset C^n$ is $K[\ii]$-algebraic. Let $\{c_1+\ii d_1,\ldots,c_h+\ii d_h\}$ be a system of generators of $\II_{K[\ii]}(X)$ in $K[\ii][\z]$ with $c_1,d_1,\ldots,c_h,d_h\in K[\x,\y]$. By Corollary \ref{kreliablec} and Remark \ref{olvarphi}, we have $\II_C(X)=\II_{K[\ii]}(X)C[\z]=(c_1+\ii d_1,\ldots,c_h+\ii d_h)C[\z]$ and $\II_C(X)^R=(c_1,\ldots,c_h,d_1,\ldots,d_h)R[\x,\y]$. It is now sufficient to set $s:=2h$ and $f_1:=c_1,\ldots,f_h:=c_h$, $f_{h+1}:=d_1,\ldots,f_{2h}:=d_h$.

$(\mr{iii}')$ Suppose that $X\subset C^n$ is irreducible or, equivalently, $\gta=\II_C(X)$ is a prime ideal of $C[\z]$. Observe that the quotient field $\qf(C[\z]/\gta)$ is separable over $C$ (because it has characteristic $0$) and the relative separable algebraic closure of $C$ in $\qf(C[\z]/\gta)$ is $C$ (because $C$ is algebraically closed). By \cite[Ch.V.\S17, Cor.to Prop.1]{b}, the tensor product $\qf(C[\z]/\gta)\otimes_C\qf(C[\ol{\z}]/\ol{\varphi}(\gta))$ is an integral domain. As by \eqref{step3}
$$
R[\x,\y]/\gta^R\hookrightarrow(R[\x,\y]/\gta^R)[\ii]\cong C[\z]/\gta\otimes_CC[\ol{\z}]/\ol{\varphi}(\gta)\hookrightarrow\qf(C[\z]/\gta)\otimes_C\qf(C[\ol{\z}]/\ol{\varphi}(\gta)),
$$
we conclude that $R[\x,\y]/\gta^R$ is also an integral domain, so the ideal $\gta^R$ of $R[\x,\y]$ is prime. We have just proven that $\II_R(X^R)=\sqrt[r]{\gta^R}$ and $\hgt(\gta^R)=\hgt(\sqrt[r]{\gta^R})$. As $\gta^R$ is prime, we deduce that $\gta^R=\sqrt[r]{\gta^R}=\II_R(X^R)$. By Remark \ref{olvarphi}, $\{a_1,b_1,\ldots,a_r,b_r\}$ is a system of generators of $\gta^R=\II_R(X^R)$ in $R[\x,\y]$.

$(\mr{iii}'')$ Suppose that $X\subset C^n$ is both irreducible and $K[\ii]$-algebraic. By $(\mr{iii})$ and $(\mr{iii}')$, we have $\II_R(X^R)=\II_C(X)^R=(f_1,\ldots,f_s)R[\x,\y]$ for some $f_1,\ldots,f_s\in K[\x,\y]$. By implication $(\mr{ii})\Longrightarrow(\mr{i})$ of Theorem \ref{thm:48}, we deduce that $X^R\subset R^{2n}$ is defined over $K$, as required.
\end{proof}

We are ready to present two examples of reducible algebraic curves of $C^2$, the first singular and the second nonsingular, which are $\Q$-irreducible $\Q$-algebraic but whose underlying real structures are not defined over $\Q$. We will use Algorithm \ref{gc}, Theorem \ref{thm:gc}, simultaneous Galois completions (as described in Subsection \ref{gcsequence}) and Lemma \ref{lem:gp-reducible} freely.

The following examples also show that the statements $(\mr{iii}')$ and $(\mr{iii}'')$ of Theorem \ref{urs0} are not true in general for reducible algebraic set $X\subset C^n$.

\begin{examples}\label{354}
$(\mr{i})$ Let $X\subset C^2$ be the algebraic curve defined by
$$
X:=\{(z_1,z_2)\in C^2:z_2^3-2z_1^3=0\}.
$$
Observe that $X$ is singular at the origin $O$ of $C^2$ and $\II_C(X)=(\z_2^3-2\z_1^3)C[\z_1,\z_2]$. In addition, $X\subset C^2$ is $\Q$-algebraic and $\Q$-irreducible. Define $w:=-\frac{1}{2}+\ii\frac{\sqrt{3}}{2}\in C$ and the polynomials $g_1,g_2,g_3\in C[\z_1,\z_2]$ as follows:
$$
g_1:=\z_2-\sqrt[3]{2}\z_1,\qquad g_2:=\z_2-\sqrt[3]{2}w\z_1,\qquad g_3:=\z_2-\sqrt[3]{2}w^2\z_1.
$$

We have $X=X_1\cup X_2\cup X_3$, where $X_i:=\{(z_1,z_2)\in C^2:\, g_i(z_1,z_2)=0\}$.

Observe that $X_1,X_2,X_3$ are the ($C$-)irreducible components of $X\subset C^2$ and each $X_i$ has complex dimension equal to $1$. Write $\z_k:=\x_k+\ii\y_k$ for each $k\in\{1,2\}$. For each $i\in\{1,2,3\}$, let $a_i,b_i\in R[\x_1,\x_2,\y_1,\y_2]$ be such that $g_i=a_i+\ii b_i$.

Consider the $\Q$-algebraic set $X^R\subset R^4$. One can check that
$$
X^R=\{(x_1,x_2,y_1,y_2)\in R^4:\, x_2^3-3x_2y_2^2-2x_1^3+6x_1y_1^2=0,y_2^3-3x_2^2y_2-2y_1^3+6x_1^2y_1=0\}
$$
and
\begin{align*}
&a_1=\x_2-\sqrt[3]{2}\x_1, \qquad b_1=\y_2-\sqrt[3]{2}\y_1,\\
&a_2=\x_2+\tfrac{\sqrt[3]{2}}{2}\x_1+\tfrac{\sqrt[3]{2}\sqrt{3}}{2}\y_1,\qquad b_2=\y_2-\tfrac{\sqrt[3]{2}\sqrt{3}}{2}\x_1+\tfrac{\sqrt[3]{2}}{2}\y_1,\\
&a_3=\x_2+\tfrac{\sqrt[3]{2}}{2}\x_1-\tfrac{\sqrt[3]{2}\sqrt{3}}{2}\y_1,\qquad b_3=\y_2+\tfrac{\sqrt[3]{2}\sqrt{3}}{2}\x_1+\tfrac{\sqrt[3]{2}}{2}\y_1.
\end{align*}

Observe that $X^R=X_1^R\cup X_2^R\cup X_3^R$, where $X_i^R\subset R^4$ is the underlying real structure of $X_i\subset C^2$. We have:
$$
X_i^R:=\{(x_1,x_2,y_1,y_2)\in R^4:\, a_i(x_1,x_2,y_1,y_2)=0,\,b_i(x_1,x_2,y_1,y_2)=0\}.
$$
By Theorem \ref{urs0}$(\mr{i})(\mr{iii}')$, each $X_i^R\subset R^4$ is irreducible and has real dimension $2$. In particular, $X_1^R$, $X_2^R$ and $X_3^R$ are the irreducible components of $X^R\subset R^4$. 

Define $Z_i:=\zcl_{C^4}(X_i^R)$ for each $i\in\{1,2,3\}$. Using again Theorem \ref{urs0}$(\mr{iii}')$, we know that $\{a_i,b_i\}$ is a system of generators of $\II_R(X_i^R)$ in $R[\x_1,\x_2,\y_1,\y_2]$, so it is also a system of generators of $\II_C(Z_i)$ in $C[\x_1,\x_2,\y_1,\y_2]$ by Proposition \ref{prop:zar}$(\mr{i})$. In particular, we have:
\begin{equation}\label{zi}
a_i,b_i\in\qbar[\x_1,\x_2,\y_1,\y_2]
\quad\text{and}\quad
Z_i=\ZZ_C(a_i,b_i)\subset C^4.
\end{equation}

By \eqref{zi}, we can compute the simultaneous Galois completions of $(X_1^R,X_2^R,X_3^R)$, making use of the polynomials $a_1,b_1,a_2,b_2,a_3,b_3$.

Consider the Galois extension $E|\Q$, where $E:=\Q(\sqrt[3]{2},\sqrt{3},\ii)$. Observe that $E$ contains all the coefficients of $a_1,b_1,a_2,b_2,a_3,b_3$. The Galois group $G':=G(E:\Q)$ is isomorphic to the dihedral group $D_6$ and its elements are the automorphisms $\sigma_{a,b,c}:E\to E$ given by $\sigma_{a,b,c}(\sqrt[3]{2})=\sqrt[3]{2}w^a$, $\sigma_{a,b,c}(\sqrt{3})=(-1)^b\sqrt{3}$ and $\sigma_{a,b,c}(\ii)=(-1)^c\ii$ for each $a\in\{0,1,2\}$ and $b,c\in\{0,1\}$. It follows that $\zcl_{C^4}^\Q(X_1^R)=Z_{10}\cup Z_{11}\cup Z_{12}$, where
$$
Z_{1a}:=\{(x_1,x_2,y_1,y_2)\in C^4:\, x_2-\sqrt[3]{2}w^ax_1=0,y_2-\sqrt[3]{2}w^ay_1=0\}.
$$
Thus, $Z_{10}=Z_1$. Moreover, $\zcl_{C^4}^\Q(X_k^R)=\bigcup_{a=0}^2\bigcup_{b=0}^1Z_{kab}$ for each $k\in\{2,3\}$, where
\begin{align*}
Z_{2ab}:=\big\{(x_1,x_2,y_1,y_2)\in C^4:\, &x_2+\tfrac{\sqrt[3]{2}w^a}{2}x_1+\tfrac{\sqrt[3]{2}w^a(-1)^b\sqrt{3}}{2}y_1=0,\\
&y_2-\tfrac{\sqrt[3]{2}w^a(-1)^b\sqrt{3}}{2}x_1+\tfrac{\sqrt[3]{2}w^a}{2}y_1=0\big\},\\
Z_{3ab}:=\big\{(x_1,x_2,y_1,y_2)\in C^4:\, &x_2+\tfrac{\sqrt[3]{2}w^a}{2}x_1-\tfrac{\sqrt[3]{2}w^a(-1)^b\sqrt{3}}{2}y_1=0,\\
&y_2+\tfrac{\sqrt[3]{2}w^a(-1)^b\sqrt{3}}{2}x_1+\tfrac{\sqrt[3]{2}w^a}{2}y_1=0\big\}.
\end{align*}
Observe that $Z_{200}=Z_{301}=Z_2$, $Z_{210}=Z_{311}$, $Z_{220}=Z_{321}$, $Z_{201}=Z_{300}=Z_3$, $Z_{211}=Z_{310}$ and $Z_{221}=Z_{320}$, so $\zcl_{C^4}^\Q(X_2^R)=\zcl_{C^4}^\Q(X_3^R)$ and $\zcl_{C^4}^\Q(X^R)=\zcl_{C^4}^\Q(X_1^R)\cup\zcl_{C^4}^\Q(X_2^R)=\bigcup_{a=0}^2Z_{1a}\cup\bigcup_{a=0}^2\bigcup_{b=0}^1Z_{2ab}$. The sets $Z_{10}$, $Z_{11}$, $Z_{12}$, $Z_{200}$, $Z_{210}$, $Z_{220}$, $Z_{201}$, $Z_{211}$ and $Z_{221}$ are pairwise distinct and none of these sets is included into another. In addition, we have:
\begin{align*}
&Z_{10}\cap R^4=X_1^R,\qquad Z_{200}\cap R^4=X_2^R, \qquad Z_{201}\cap R^4=X_3^R
\end{align*}
and
$$
Z_{1a}\cap R^4=Z_{2ab}\cap R^4=\{O\}
$$
for each $a\in\{1,2\}$ and $b\in\{0,1\}$. Recall that $O$ denotes the origin of $R^4=C^2$.

As $O\in X_1^R\cap X_2^R\cap X_3^R$, we deduce $\zcl_{C^4}^\Q(X_1^R)\cap R^4=X_1^R$ and $\zcl_{C^4}^\Q(X_2^R)\cap R^4=X_2^R\cup X_3^R$, so $X_1^R$ and $X_2^R\cup X_3^R$ are the $\Q$-irreducible components of $X^R\subset R^4$.

Observe that $\dim(Z_{1a}\cap R^4)=0<2=\dim_C(Z_{1a})=\dim(X_1^R)$ and $\dim(Z_{2ab}\cap R^4)=0<2=\dim_C(Z_{2ab})=\dim(X_2^R\cup X_3^R)$ if $a\in\{1,2\}$ and $b\in\{0,1\}$. By the equivalence $(\mr{i})\Longleftrightarrow(\mr{iv})$ of Theorem \ref{dq}, we conclude that $X^R\subset R^4$ is not an algebraic set defined over $\Q$. The same conclusion follows also from equivalence $(\mr{i})\Longleftrightarrow(\mr{iii})$ of Theorem \ref{thm:48} and the fact that $\zcl^\Q_{C^4}(X^R)=\bigcup_{a=0}^2Z_{1a}\cup\bigcup_{a=0}^2\bigcup_{b=0}^1Z_{2ab}\supsetneqq Z_{10}\cup Z_{200}\cup Z_{201}=\zcl_{C^4}(X^R)$.

Observe that the algebraic curve $X\subset C^2$ is an example of reducible complex algebraic set that does not satisfy the statements $(\mr{iii}')$ and $(\mr{iii}'')$ of Theorem \ref{urs0} in the sense that: $X\subset C^2$ is $\Q[\ii]$-algebraic (indeed, $\Q$-algebraic) but $X^R\subset R^4$ is not defined over $\Q$ and $\II_R(X^R)\neq\II_C(X)^R$ (or, better, $\II_R(X^R)=\sqrt[r]{\II_C(X)^R}\supsetneqq\II_C(X)^R$ by Lemma \ref{zeroset}). To prove the latter strict inclusion, it is enough to observe that, if $\II_R(X^R)=\II_C(X)^R$, then $X^R\subset R^4$ would be defined over $\Q$ by Theorem \ref{urs0}$(\mr{iii})$ and implication $(\mr{ii})\Longrightarrow(\mr{i})$ of Theorem \ref{thm:48}.

$(\mr{ii})$ Let $V\subset C^2$ be the $\Q$-irreducible $\Q$-algebraic set defined by
$$
V:=\{(z_1,z_2)\in C^2:(z_1^2+z_2^2)^3-2=0\}.
$$
Observe that $V$ is nonsingular, as an algebraic curve of $ C^2$. Let us proceed as in the previous example. Consider the following polynomials $h_1,h_2,h_3\in C[\z_1,\z_2]$:
$$
h_1:=\z_1^2+\z_2^2-\sqrt[3]{2},\qquad h_2:=\z_1^2+\z_2^2-\sqrt[3]{2}w,\qquad h_3:=\z_1^2+\z_2^2-\sqrt[3]{2}w^2,
$$
where $w:=-\frac{1}{2}+\ii\frac{\sqrt{3}}{2}$ as above. Set $V_i:=\{(z_1,z_2)\in C^2:\, h_i(z_1,z_2)=0\}$ for each $i\in\{1,2,3\}$. Evidently, $V=V_1\cup V_2\cup V_3$ is the decomposition of $V\subset\C^2$ into its irreducible components and each $V_i$ has complex dimension~$1$. Write $h_i=c_i+\ii d_i$ with $c_i,d_i\in R[\x_1,\x_2,\y_1,\y_2]$. Explicit computations provides:

\begin{align*}
&c_1=\x_1^2-\y_1^2+\x_2^2-\y_2^2-\sqrt[3]{2}, \qquad d_1=2\x_1\y_1+2\x_2\y_2,\\
&c_2=\x_1^2-\y_1^2+\x_2^2-\y_2^2+\tfrac{\sqrt[3]{2}}{2},\qquad d_2=2\x_1\y_1+2\x_2\y_2-\tfrac{\sqrt[3]{2}\sqrt{3}}{2},\\
&c_3=\x_1^2-\y_1^2+\x_2^2-\y_2^2+\tfrac{\sqrt[3]{2}}{2},\qquad d_3=2\x_1\y_1+2\x_2\y_2+\tfrac{\sqrt[3]{2}\sqrt{3}}{2}.
\end{align*}

Consider the $\Q$-algebraic set $V^R\subset R^4$. For each $i\in\{1,2,3\}$, define $V_i^R:=\ZZ_R(c_i,d_i)\subset R^4$ and $\Zz_i:=\zcl_{C^4}(V_i^R)$. The sets $V_1^R$, $V_2^R$ and $V_3^R$ are the irreducible components of $V^R\subset R^4$, and have real dimension $2$. By Theorem \ref{urs0}$(\mr{iii}')$ and Proposition \ref{prop:zar}$(\mr{i})$, we have that $\II_C(\Zz_i)$ is generated by $c_i,d_i$ in $C[\x_1,\x_2,\y_1,\y_2]$, so $\Zz_i=\ZZ_C(c_i,d_i)\subset C^4$. In addition, $c_i,d_i$ belong to $\qbar[\x_1,\x_2,\y_1,\y_2]$. Thus, we can compute the simultaneous Galois completions of $(V_1^R,V_2^R,V_3^R)$, making use of the polynomials $c_1,d_1,c_2,d_2,c_3,d_3$.

Consider again the Galois extension $E|\Q$, where $E:=\Q(\sqrt[3]{2},\sqrt{3},\ii)$ contains all the coefficients of $c_1,d_1,c_2,d_2,c_3,d_3$. We have: $\zcl_{C^4}^\Q(V_1^R)=\bigcup_{a=0}^2\Zz_{1a}$ and $\zcl_{C^4}^\Q(V_k^R)=\bigcup_{a=0}^2\bigcup_{b=0}^1\Zz_{kab}$ for each $k\in\{2,3\}$, where
\begin{align*}
\Zz_{1a}&:=\{(x_1,x_2,y_1,y_2)\in C^4:\, x_1^2-y_1^2+x_2^2-y_2^2-\sqrt[3]{2}w^a=x_1y_1+x_2y_2=0\},\\
\Zz_{2ab}&:=\big\{(x_1,x_2,y_1,y_2)\in C^4:\,x_1^2-y_1^2+x_2^2-y_2^2+\tfrac{\sqrt[3]{2}w^a}{2}=2x_1y_1+2x_2y_2-\tfrac{\sqrt[3]{2}w^a(-1)^b\sqrt{3}}{2}=0\big\},\\
\Zz_{3ab}&:=\big\{(x_1,x_2,y_1,y_2)\in C^4:\,x_1^2-y_1^2+x_2^2-y_2^2+\tfrac{\sqrt[3]{2}w^a}{2}=2x_1y_1+2x_2y_2+\tfrac{\sqrt[3]{2}w^a(-1)^b\sqrt{3}}{2}=0\big\},
\end{align*}
for each $a\in\{0,1,2\}$ and $b\in\{0,1\}$. Observe that $\Zz_{2a0}=\Zz_{3a1}$ and $\Zz_{2a1}=\Zz_{3a0}$, so $\zcl_{C^4}^\Q(V_2^R)=\zcl_{C^4}^\Q(V_3^R)$ and $\zcl_{C^4}^\Q(V^R)=\zcl_{C^4}^\Q(V_1^R)\cup\zcl_{C^4}^\Q(V_2^R)=\bigcup_{a=0}^2\Zz_{1a}\cup\bigcup_{a=0}^2\bigcup_{b=0}^1\Zz_{2ab}$. Moreover, the algebraic sets $\Zz_{1a}$ for $a\in\{0,1,2\}$ and $\Zz_{2ab}$ for $a\in\{0,1,2\},b\in\{0,1\}$ are pairwise distinct and none of these sets is included into another. In addition, $\Zz_{10}\cap R^4=V_1^R$, $\Zz_{200}\cap R^4=V_2^R$, $\Zz_{201}\cap R^4=V_3^R$ and $\Zz_{1a}\cap R^4=\Zz_{2ab}\cap R^4=\varnothing$ for each $a\in\{1,2\}$ and $b\in\{0,1\}$. It follows that $\zcl_{C^4}^\Q(V_1^R)\cap R^4=V_1^R$ and $\zcl_{C^4}^\Q(V_2^R)\cap R^4=V_2^R\cup V_3^R$ (because $\zcl_{C^4}^\Q(V_2^R)=\zcl_{C^4}^\Q(V_3^R)$), so $V_1^R$ and $V_2^R\cup V_3^R$ are the $\Q$-irreducible components of $V^R\subset R^4$. As $\zcl^\Q_{C^4}(V^R)=\bigcup_{a=0}^2\Zz_{1a}\cup\bigcup_{a=0}^2\bigcup_{b=0}^1\Zz_{2ab}\supsetneqq\Zz_{10}\cup\Zz_{200}\cup\Zz_{201}=\zcl_{C^4}(V^R)$, equivalence $(\mr{i})\Longleftrightarrow(\mr{iii})$ of Theorem \ref{thm:48} implies that the algebraic set $V^R\subset R^4$ is not defined over $\Q$.

The algebraic curve $V\subset C^2$ provides an example of a reducible nonsingular complex algebraic set that does not satisfy the statements $(\mr{iii}')$ and $(\mr{iii}'')$ of Theorem \ref{urs0}. $\sqbullet$
\end{examples}

\section{$E|K$-local notions in $L^n$}\label{s3}

\emph{Along this section, $L|E|K$ is an extension of fields such that $L$ is either algebraically closed or real closed. Denote $\ove^\sqbullet$ the algebraic closure of $E$ in $L$. If $L$ is algebraically closed, then $\ove^\sqbullet=\ol{E}$ is the algebraic closure of $E$. If $L$ is real closed, then we assume that both $E$ and $K$ are endowed with the ordering inherited from the one of $L$ (that is, both are ordered subfields of $L$), so $\ove^\sqbullet=\ove^r$ is the real closure of $E$. Evidently, $\ove^\sqbullet=L$ if $E=L$.}

\emph{Although $E$ is chosen arbitrarily with the property $K\subset E\subset L$, the extreme cases $E=L$ and $E=K$ are of particular importance, as we shall see in Subsections \ref{subsec:L|K-E|K}, \ref{subsec:R|R-R|K}, \ref{subsec:proj2} and \ref{nash-tognoli-Q}}.

\emph{We will work with the extensions of fields $L|\ove^\sqbullet|E|K$. We will use Remark \emph{\ref{dime}} freely, and write $\dim(S):=\dim_L(S)$ for each algebraic set $S\subset L^n$}.

Let $a=(a_1,\ldots,a_n)\in L^n$, let ${\rm ev}_a:E[\x]\to L,\ f\mapsto f(a)$ be the evaluation homomorphism, let $E[a]:=E[a_1,\ldots,a_n]$ be the image of ${\rm ev}_a$, let $E(a):=E(a_1,\ldots,a_n)$ be the quotient field of the integral domain $E[a]$ and let $\gtn_a:=\II_E(\{a\})=\{f\in E[\x]:f(a)=0\}$ be the kernel of ${\rm ev}_a$. Observe that $\gtn_a$ is a prime ideal of $E[\x]$, and $\gtn_a=(0)$ if and only if $a_1,\ldots,a_n$ are algebraically independent over $E$. Moreover, $\gtn_a$ is maximal if and only if $E[a]\cong E[\x]/\gtn_a$ has dimension~$0$. By \cite[(14.G) Cor.1, p.91]{m}, the latter assertion is equivalent to the fact that $E(a)$ has transcendence degree $0$ over $E$, that is, $a_1,\ldots,a_n$ are algebraic over $E$ or, equivalently, $a\in(\ove^\sqbullet)^n$. Thus, we have:
\begin{equation}\label{maximal-e}
\text{\it $\gtn_a$ is a maximal ideal of $E[\x]$ if and only if $a\in(\ove^\sqbullet)^n$.}
\end{equation}

In what follows, we identify $E[\x]$ with a subring of $E[\x]_{\gtn_a}$ via the embedding $f\mapsto\frac{f}{1}$.

\subsection{Local rings, localizations and completions}
\label{subsec:local}

Next we focus on the points of $(\ove^\sqbullet)^n$.

\begin{defn}[$E|K$-local rings]\label{sub-alg-reg}
Let $X\subset L^n$ be a $K$-algebraic set and let $a\in X\cap(\ove^\sqbullet)^n$. We define the \emph{$E|K$-local ring $\reg^{E|K}_{X,a}$ of $X$ at $a$} as
$$
\reg^{E|K}_{X,a}:=E[\x]_{\gtn_a}/(\II_K(X)E[\x]_{\gtn_a}),
$$
where $\gtn_a:=\II_E(\{a\})=\{f\in E[\x]:f(a)=0\}$.

If $E=L$, we also introduce the following simplified notation that we will use a few times later: we define the \emph{$K$-local ring $\reg^K_{X,a}$ of $X$ at $a\in X$} as $\reg^K_{X,a}:=\reg^{L|K}_{X,a}=L[\x]_{\gtn_a}/(\II_K(X)L[\x]_{\gtn_a})$, where $\gtn_a:=\{f\in L[\x]:f(a)=0\}$. $\sqbullet$
\end{defn}

\begin{remark}\label{loc-bcr}
If $L=E=K$ (so $K$ is either an algebraically closed field or a real closed field), then $\reg^K_{X,a}$ is the usual local ring $\reg_{X,a}$ of the algebraic set $X$ of $L^n$ at $a$ for each $a\in X$. $\sqbullet$
\end{remark}

The notion of $E|K$-local ring is invariant under the action of certain Galois groups.

\begin{lem}\label{inv-E|K}
Let $X\subset L^n$ be a $K$-algebraic set and let $a=(a_1,\ldots,a_n)\in X\cap(\ove^\sqbullet)^n$. We~have:
\begin{itemize}
\item[$(\mr{i})$] Suppose that $L$ is an algebraically closed field, so $\ove^\sqbullet=\ove$. Let $E'|E$ be a finite Galois subextension of $\ove|E$ that contains $a_1,\ldots,a_n$, let $G':=G(E':E)$ and let $a^\sigma:=(\sigma(a_1),\ldots,\sigma(a_n))\in\ove^n$ for each $\sigma\in G'$. Then
$$\textstyle
\zcl^E_{L^n}(\{a\})=\bigcup_{\psi\in G(L:E)}\{\psi_n(a)\}=\bigcup_{\sigma\in G'}\{a^\sigma\}\subset X\cap\ove^n,
$$
where $\psi_n:L^n\to L^n$ is defined as in \eqref{psi}. In addition, $\zcl^E_{L^n}(\{a\})$ is a finite set,
$$
\text{$\II_E(\{b\})=\II_E(\{a\})\,$ and $\,\reg^{E|K}_{X,b}=\reg^{E|K}_{X,a}\,$ for each $\,b\in \zcl^E_{L^n}(\{a\})$.}
$$
\item[$(\mr{ii})$] Suppose that $L$ is a real closed field, so $\ove^\sqbullet=\ove^r$. Let $E'|E$ be a finite Galois subextension of $\ove^r[\ii]|E$ that contains $a_1,\ldots,a_n$, let $G':=G(E':E)$ and let $a^\sigma:=(\sigma(a_1),\ldots,\sigma(a_n))\in(\ove^r[\ii])^n$ for each $\sigma\in G'$. Then
 $$\textstyle
 \zcl^E_{L^n}(\{a\})=L^n\cap\bigcup_{\psi\in G(L[\ii]:E)}\{\psi_n(a)\}=L^n\cap\bigcup_{\sigma\in G'}\{a^\sigma\}\subset X\cap(\ove^r)^n,
 $$
 where $\psi_n:L[\ii]^n\to L[\ii]^n$ is defined as in \eqref{psi}. In addition, $\zcl^E_{L^n}(\{a\})$ is a finite set,
 $$
 \text{$\II_E(\{b\})=\II_E(\{a\})\,$ and $\,\reg^{E|K}_{X,b}=\reg^{E|K}_{X,a}\,$ for each $\,b\in \zcl^E_{L^n}(\{a\})$.}
 $$
\end{itemize}
\end{lem}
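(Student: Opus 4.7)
The plan is to apply the Galois-completion machinery (Algorithm \ref{gc}, Theorem \ref{thm:gc0} and Theorem \ref{thm:gc}) directly to the singleton $\{a\}$, viewed as a zero-dimensional algebraic set. Since $a\in(\ove^\sqbullet)^n$, the set $\{a\}$ is $\ove^\sqbullet$-algebraic, and the obvious generators $g_i:=\x_i-a_i$ have all their coefficients in the finite Galois subextension $E'|E$ fixed in the statement. For $\sigma\in G'$ this yields $g_i^\sigma=\x_i-\sigma(a_i)$, and hence $Z^\sigma=\{a^\sigma\}$. So the whole content of the lemma should fall out by feeding $\{a\}$ into the two previously established Galois-completion theorems.

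In case $(\mr{i})$ with $L$ algebraically closed (so $\ove^\sqbullet=\ove$), I apply Theorem \ref{thm:gc0} to the extension $L|E$ in place of $C|K$: items $(\mr{ii})$ and $(\mr{v})$ there identify $\zcl^E_{L^n}(\{a\})$ simultaneously with $\bigcup_{\psi\in G(L:E)}\{\psi_n(a)\}$ and with $\bigcup_{\sigma\in G'}\{a^\sigma\}$. In case $(\mr{ii})$ with $L$ real closed (so $\ove^\sqbullet=\ove^r$), I instead apply Theorem \ref{thm:gc} to the $\ove^r$-algebraic set $Y:=\{a\}\subset L^n$; its complexification is again $\{a\}\subset L[\ii]^n$, the generators $g_i$ lie in $\ove^r[\x]\subset\ove^r[\ii][\x]$, and items $(\mr{i})$, $(\mr{i}')$ and $(\mr{iv})$ of that theorem give $\zcl^E_{L^n}(\{a\})=T\cap L^n$ with $T=\bigcup_{\psi\in G(L[\ii]:E)}\{\psi_n(a)\}=\bigcup_{\sigma\in G'}\{a^\sigma\}$. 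In both cases the finiteness of $G'$ immediately yields finiteness of $\zcl^E_{L^n}(\{a\})$.

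For the inclusion $\zcl^E_{L^n}(\{a\})\subset X\cap(\ove^\sqbullet)^n$, I argue directly: any $f\in\II_K(X)\subset E[\x]$ satisfies $f(a)=0$, hence $f\in\II_E(\{a\})$, and therefore $f$ vanishes on $\ZZ_L(\II_E(\{a\}))=\zcl^E_{L^n}(\{a\})$; this yields $\zcl^E_{L^n}(\{a\})\subset\ZZ_L(\II_K(X))=X$. The containment in $(\ove^\sqbullet)^n$ is clear from the explicit description, since each point of the set has coordinates algebraic over $E$ and belongs to $L^n$, hence to $(\ove^\sqbullet)^n$.

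Finally, for the invariance $\gtn_b=\gtn_a$ and $\reg^{E|K}_{X,b}=\reg^{E|K}_{X,a}$: from $b\in\zcl^E_{L^n}(\{a\})=\ZZ_L(\II_E(\{a\}))$ I immediately obtain $\II_E(\{a\})\subset\II_E(\{b\})$; since both $a$ and $b$ lie in $(\ove^\sqbullet)^n$, both ideals are maximal in $E[\x]$ by \eqref{maximal-e}, so $\gtn_b=\gtn_a$, and the definition of $\reg^{E|K}_{X,\cdot}$ then gives the desired equality of local rings. The only mildly delicate point I anticipate is to remember, in case $(\mr{ii})$, that the relevant Galois subextension $E'|E$ must be chosen inside $\ove^r[\ii]=\ove$ rather than inside $\ove^r$, so that the complex-side Theorem \ref{thm:gc} applies verbatim to the singleton; once this is noted, everything else is a straightforward specialization of the general Galois-completion framework to the zero-dimensional set $\{a\}$.
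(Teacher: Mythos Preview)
Your proposal is correct and follows essentially the same route as the paper: both apply the Galois-completion theorems (Theorem~\ref{thm:gc0} in case~$(\mr{i})$, Theorem~\ref{thm:gc} in case~$(\mr{ii})$) to the singleton $\{a\}=\ZZ_L(\x_1-a_1,\ldots,\x_n-a_n)$, reading off the description of $\zcl^E_{L^n}(\{a\})$ and its finiteness directly.

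The only noteworthy difference is your argument for $\II_E(\{b\})=\II_E(\{a\})$. The paper observes that $\{a\}$ is $\ove^\sqbullet$-irreducible, deduces via Lemma~\ref{lem:irreducibility} that $\zcl^E_{L^n}(\{a\})$ is $E$-irreducible, and then invokes Lemma~\ref{lem:gp0}$(\mr{ii})$ (resp.\ Lemma~\ref{lem:gp}$(\mr{ii})$) to conclude $\zcl^E_{L^n}(\{b\})=\zcl^E_{L^n}(\{a\})$. Your route is more elementary: from $b\in\ZZ_L(\II_E(\{a\}))$ you get $\II_E(\{a\})\subset\II_E(\{b\})$, and since both are maximal in $E[\x]$ by~\eqref{maximal-e} (as $a,b\in(\ove^\sqbullet)^n$), they coincide. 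This bypasses the irreducibility and Galois-presentation lemmas entirely and is a cleaner way to close the argument.
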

\begin{proof}
$(\mr{i})$ Set $g_i(\x):=\x_i-a_i\in\ove[\x]$ for each $i\in\{1,\ldots,n\}$. Choose a finite Galois subextension $E'|E$ of $\ove|E$ that contains $a_1,\ldots,a_n$ (so all the coefficients of $g_1,\ldots,g_n$), define $G':=G(E':E)$ and apply Algorithm \ref{gc} to $\{a\}=\ZZ_L(g_1,\ldots,g_n)$. By Theorem \ref{thm:gc0}$(\mr{i})(\mr{ii})(\mr{v})$, we know that
$$\textstyle
\zcl^E_{L^n}(\{a\})=\bigcup_{\psi\in G(L:E)}\{\psi_n(a)\}=\bigcup_{\sigma\in G'}\{a^\sigma\}\subset (E')^n\subset\ove^n,
$$
which is a finite set. As $X\subset L^n$ is $K$-algebraic, it is also $E$-algebraic, so $\zcl^E_{L^n}(\{a\})\subset X$.

Pick any $b\in \zcl^E_{L^n}(\{a\})$. As $\{a\}\subset L^n$ is an $\ove$-irreducible $\ove$-algebraic set, by Lemma \ref{lem:irreducibility} (applied to $Y:=\{a\}\subset L^n$ with the extension of fields $L|\ove|E$), we deduce that $\zcl^E_{L^n}(\{a\})\subset L^n$ is $E$-irreducible. Lemma \ref{lem:gp0}$(\mr{ii})$ now implies that $\zcl^E_{L^n}(\{b\})=\zcl^E_{L^n}(\{a\})$ or, equivalently, $\II_E(\{b\})=\II_E(\{a\})$. Consequently, $\reg^{E|K}_{X,b}=\reg^{E|K}_{X,a}$.

$(\mr{ii})$ Let us adapt the previous part of the proof to the present situation. Define $g_i(\x):=\x_i-a_i\in\ove^r[\x]$ for each $i\in\{1,\ldots,n\}$, and observe that $\{g_1,\ldots,g_n\}$ is a system of generators of $\II_{\ove^r}(\{a\})$ in $\ove^r[\x]$. Choose a finite Galois subextension $E'|E$ of $\ove^r[\ii]|E$ that contains $a_1,\ldots,a_n$ (so all the coefficients of $g_1,\ldots,g_n$) and set $G':=G(E':E)$. By Theorem \ref{thm:gc}$(\mr{i})(\mr{i}')(\mr{iv})$,
$$\textstyle
 \zcl^E_{L^n}(\{a\})=L^n\cap\bigcup_{\psi\in G(L[\ii]:E)}\{\psi_n(a)\}=L^n\cap\bigcup_{\sigma\in G'}\{a^\sigma\}\subset L^n\cap(\ove^r[\ii])^n=(\ove^r)^n.
$$
As $X\subset L^n$ is $E$-algebraic, we have $\zcl^E_{L^n}(\{a\})\subset X$.

Let $b\in \zcl^E_{L^n}(\{a\})$. Observe that $\{a\}\subset L^n$ is an $\ove^r$-irreducible $\ove^r$-algebraic set. By Lemma \ref{lem:irreducibility} (applied to $Y:=\{a\}\subset L^n$ with the extension of fields $L|\ove^r|E$), $\zcl^E_{L^n}(\{a\})\subset L^n$ is $E$-irreducible. Lemma \ref{lem:gp}$(\mr{ii})$ implies that $\zcl^E_{L^n}(\{b\})=\zcl^E_{L^n}(\{a\})$ or, equivalently, $\II_E(\{b\})=\II_E(\{a\})$. Consequently, $\reg^{E|K}_{X,b}=\reg^{E|K}_{X,a}$.
\end{proof}

\emph{Pick $a\in(\ove^\sqbullet)^n$, so $E(a)=E[a]$}. If $P(\t)=\sum_{i=0}^db_i\t^i$ is a polynomial in $E[a][t]$, then there exists $f\in E[\x,\t]=E[\x_1,\ldots,\x_n,\t]$ such that $P(\t)=f(a,\t)$ in $E(a)[\t]$ and $\deg(P)$ equals the degree $\deg_{\t}(f)$ of $f$ with respect to $\t$: it is enough to choose polynomials $B_i\in E[\x]$ such that $B_i(a)=b_i$ and define $f:=\sum_{i=0}^dB_i\t^i$.

Let us construct a suitable minimal system of generators of $\gtn_a=\II_E(\{a\})$ in $E[\x]$ (see also \cite[\S3]{con}).

\begin{lem}\label{lem:gtn_a}
Let $a=(a_1,\ldots,a_n)\in(\ove^\sqbullet)^n$. For each $i\in\{1,\ldots,n\}$, denote $E_i$ the field $E(a_1,\ldots,a_{i-1})$ (where $E_1:=E$) and let $P_i\in E_i[\t]$ be the minimal polynomial of $a_i$ over $E_i$. Choose polynomials $f_i\in E[\x_1,\ldots,\x_i]\subset E[\x]$ such that $f_1(\t)=P_1(\t)$ and $f_i$ is monic with respect to $\x_i$ and $f_i(a_1,\ldots,a_{i-1},\t)=P_i(\t)$ for $i\geq2$ (so $\deg(P_i)=\deg_{\x_i}(f)$). Then $\gtn_a=(f_1,\ldots,f_n)E[\x]$.
\end{lem}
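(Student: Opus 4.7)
The inclusion $(f_1,\ldots,f_n)E[\x]\subset\gtn_a$ is immediate: each $f_i$ was chosen so that $f_i(a_1,\ldots,a_i)=P_i(a_i)=0$, hence $f_i(a)=0$, so $f_i\in\gtn_a$. The substance of the proof lies in establishing the reverse inclusion, and I plan to do this by reducing an arbitrary $g\in\gtn_a$ to a canonical normal form via iterated Euclidean division and then showing the remainder must vanish.

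The first step is to set up the normal form. Write $d_i:=\deg_{\x_i}(f_i)=\deg(P_i)=[E_{i+1}:E_i]$. Since $f_n$ is monic in $\x_n$ over the subring $E[\x_1,\ldots,\x_{n-1}]$, Euclidean division in $E[\x_1,\ldots,\x_{n-1}][\x_n]$ gives $g=q_nf_n+r_n$ with $\deg_{\x_n}(r_n)<d_n$. Expanding $r_n=\sum_{j=0}^{d_n-1}r_{n,j}(\x_1,\ldots,\x_{n-1})\x_n^j$ and dividing each coefficient $r_{n,j}$ by $f_{n-1}$ in $E[\x_1,\ldots,\x_{n-2}][\x_{n-1}]$, I obtain $r_n=q_{n-1}f_{n-1}+r_{n-1}$ where $r_{n-1}$ has $\deg_{\x_{n-1}}(r_{n-1})<d_{n-1}$ while still satisfying $\deg_{\x_n}(r_{n-1})<d_n$ (the latter is preserved because the division affects only coefficients, not the $\x_n$-degree). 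Iterating downward through $f_{n-2},\ldots,f_1$ yields an expression $g=\sum_{i=1}^nq_if_i+r$ with $r\in E[\x]$ satisfying $\deg_{\x_i}(r)<d_i$ for every $i\in\{1,\ldots,n\}$.

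Assume now that $g\in\gtn_a$. Since every $f_i$ vanishes at $a$, we have $r(a)=g(a)=0$. Writing $r=\sum_\alpha c_\alpha\x^\alpha$ where $\alpha=(\alpha_1,\ldots,\alpha_n)$ ranges over multi-indices with $0\leq\alpha_i<d_i$ and $c_\alpha\in E$, the relation $r(a)=0$ becomes an $E$-linear dependence among the monomials $\{a^\alpha\}$. The main step is therefore to show these monomials are $E$-linearly independent, which I plan to do by the tower law: the set $\{a_i^j:0\leq j<d_i\}$ is an $E_i$-basis of $E_{i+1}=E_i(a_i)=E_i[a_i]$ by minimality of $P_i$, so by the standard multiplicative property of bases in a tower of finite extensions, the products $\{a^\alpha:0\leq\alpha_i<d_i\}$ form an $E$-basis of $E(a_1,\ldots,a_n)=E[a]$. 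Hence all $c_\alpha=0$, so $r=0$ and $g\in(f_1,\ldots,f_n)E[\x]$.

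The one subtlety — the step I would double-check most carefully — is that the $\x_n$-degree bound (and, inductively, the bounds on $\deg_{\x_j}$ for $j>i$) is not spoiled when performing the division by $f_i$, whose coefficients lie in $E[\x_1,\ldots,\x_{i-1}]$ and therefore commute with any power of $\x_j$ for $j>i$; this guarantees the normal form simultaneously satisfies all $n$ degree inequalities. Once that bookkeeping is verified, the tower-basis argument closes the proof with no further difficulty.
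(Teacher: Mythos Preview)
Your proof is correct and uses essentially the same idea as the paper: Euclidean division by the monic polynomials $f_i$ together with the degree bounds coming from the minimal polynomials $P_i$. The only difference is presentational---the paper proceeds by induction on $n$ (dividing once by $f_n$ and applying the inductive hypothesis to the coefficients $b_k(\x')$), whereas you unwind the induction into an iterated division yielding a single normal form and then invoke the tower basis of $E[a]$ over $E$ directly; the paper's version sidesteps the degree-preservation bookkeeping you flagged, but the content is the same.
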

\begin{proof}
The inclusion `$\supset$' is clear. To prove the converse inclusion, we proceed by induction on $n$. If $n=1$, then $P_1$ generates $\gtn_a$, because $P_1$ is the minimal polynomial of $a=(a_1)$ over~$E$. Assume the result true for $n-1\geq1$ and let us check that it is also true for $n$. Set $m:=\deg(P_n)=\deg_{\x_n}(f_n)$, $\x':=(\x_1,\ldots,\x_{n-1})$ and $a':=(a_1,\ldots,a_{n-1})$. Pick $f\in\gtn_a$, consider $f$ and $f_n$ as polynomials in $E[\x'][\x_n]$ and divide $f$ by $f_n$ obtaining $f=qf_n+r$, where $q,r\in E[\x'][\x_n]$ are polynomials such that $\deg_{\x_n}(r)<m$. Write $r=\sum_{k=0}^{m-1}b_k(\x')\x_n^k$. We have
$$\textstyle
0=f(a)=r(a)=\sum_{k=0}^{m-1}b_k(a')a_n^k.
$$
As $P_n$ is the minimal polynomial of $a_n$ over $E_n$ and $\deg_{\x_n}(r)<m=\deg(P_n)$, we deduce $r(a',\x_n)=0$, so $b_k(a')=0$ for all $k\in\{0,\ldots,m-1\}$. By induction hypothesis, each $b_k$ belongs to $\gtn_{a'}=(f_1,\ldots,f_{n-1})E[\x']$, so $r=\sum_{k=0}^{m-1}b_k(\x')\x_n^k\in(f_1,\ldots,f_{n-1})E[\x]$ and $f=qf_n+r\in(f_1,\ldots,f_{n-1},f_n)E[\x]$. Consequently, $\gtn_a=(f_1,\ldots,f_n)E[\x]$, as required.
\end{proof}

\begin{remark}\label{F}
The $(n\times n)$-matrix $F:=\big(\frac{\partial f_i}{\partial\x_j}(a)\big)_{i,j=1,\ldots,n}$ with coefficients in the field $E[a]$ is invertible. Indeed, $F$ is a lower triangular matrix whose $i$-th diagonal element is equal to $\frac{\partial f_i}{\partial\x_i}(a)=\frac{\partial P_i}{\partial\t}(a_i)\neq0$. $\sqbullet$ 
\end{remark}

\begin{cor}\label{regular}
If $a\in(\ove^\sqbullet)^n$, then $E[\x]_{\gtn_a}$ is a regular local ring of dimension~$n$, and the polynomials $f_1,\ldots,f_n\in E[\x]$ defined in the statement of Lemma \ref{lem:gtn_a} form a regular system of parameters of $E[\x]_{\gtn_a}$.
\end{cor}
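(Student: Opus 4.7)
The plan is to verify the two defining properties of a regular local ring for $A:=E[\x]_{\gtn_a}$: namely that $\dim(A)=n$ and that its maximal ideal $\gtn_aA$ is generated by $n$ elements. The second property is immediate from Lemma \ref{lem:gtn_a}: since $\gtn_a=(f_1,\ldots,f_n)E[\x]$, we have $\gtn_aA=(f_1,\ldots,f_n)A$, a system of $n$ generators. So all the work lies in showing $\dim(A)=\hgt(\gtn_a)=n$.

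For each $k\in\{0,1,\ldots,n\}$, set $\gtp_k:=(f_1,\ldots,f_k)E[\x]$, with the convention $\gtp_0:=(0)$, so that $\gtp_n=\gtn_a$. I will prove by induction on $k$ that there is a ring isomorphism
$$
E[\x]/\gtp_k\;\cong\;E_{k+1}[\x_{k+1},\ldots,\x_n],
$$
where $E_{k+1}=E(a_1,\ldots,a_k)$ (with the conventions $E_1=E$ and $E_{n+1}[\,]=E(a)=E[a]$), sending $\x_i\mapsto a_i$ for $i\leq k$ and $\x_j\mapsto \x_j$ for $j>k$. The base case $k=0$ is trivial. For the inductive step, assuming the isomorphism at level $k$, the image of the generator $f_{k+1}(\x_1,\ldots,\x_{k+1})$ in $E_{k+1}[\x_{k+1},\ldots,\x_n]$ is $f_{k+1}(a_1,\ldots,a_k,\x_{k+1})=P_{k+1}(\x_{k+1})$, which is the minimal polynomial of $a_{k+1}$ over $E_{k+1}$ by the choice of the $f_i$ in Lemma \ref{lem:gtn_a}. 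Quotienting $E_{k+1}[\x_{k+1},\ldots,\x_n]$ by $(P_{k+1}(\x_{k+1}))$ gives $(E_{k+1}[\x_{k+1}]/(P_{k+1}))[\x_{k+2},\ldots,\x_n]=E_{k+2}[\x_{k+2},\ldots,\x_n]$, completing the induction.

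Two consequences follow at once. First, every quotient $E[\x]/\gtp_k$ is a polynomial ring over a field, hence an integral domain, so every $\gtp_k$ is prime. Second, the image of $f_{k+1}$ in $E[\x]/\gtp_k$ is the nonzero polynomial $P_{k+1}(\x_{k+1})$, so the inclusion $\gtp_k\subsetneq\gtp_{k+1}$ is strict for every $k$. Therefore $\gtp_0\subsetneq\gtp_1\subsetneq\cdots\subsetneq\gtp_n=\gtn_a$ is a strict chain of primes of length $n$, giving $\hgt(\gtn_a)\geq n$. For the reverse inequality, apply \eqref{eisenbud} to the singleton $\{a\}$ over $E$: since $a\in(\ove^\sqbullet)^n$, the ideal $\gtn_a$ is maximal by \eqref{maximal-e}, so $\dim(E[\x]/\gtn_a)=0$ and hence $\hgt(\gtn_a)=n-0=n$. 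Therefore $\dim(A)=n$, which together with the generation of $\gtn_aA$ by $f_1,\ldots,f_n$ shows that $A$ is a regular local ring of dimension $n$ and that $(f_1,\ldots,f_n)$ is a regular system of parameters. The only mildly delicate point in this argument is the inductive identification of the quotients $E[\x]/\gtp_k$; everything else is bookkeeping with the primality of $\gtp_k$ and the dimension formula \eqref{eisenbud}.
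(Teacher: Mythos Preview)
Your proof is correct and follows essentially the same approach as the paper: both arguments define the chain $\gtp_k:=(f_1,\ldots,f_k)E[\x]$, identify $E[\x]/\gtp_k$ with the polynomial ring $E(a_1,\ldots,a_k)[\x_{k+1},\ldots,\x_n]$ (the paper does this via the evaluation map $\varphi_k$ with kernel $\gtp_k$, you do it by an explicit induction), and conclude that the $\gtp_k$ form a strict chain of primes of length $n$. The only cosmetic difference is that the paper handles the upper bound $\hgt(\gtn_a)\leq n$ by citing \cite[\S11]{am} (Krull's generalized principal ideal theorem), while you invoke the dimension formula \eqref{eisenbud} together with \eqref{maximal-e}.
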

\begin{proof}
As $\gtn_a$ is generated by the $n$ elements $f_1,\ldots,f_n$ by Lemma \ref{lem:gtn_a}, it is enough to show that $\hgt(\gtn_a)\geq n$ (see \cite[\S11]{am}). For each $k\in\{1,\ldots,n\}$, define $\gtp_k:=(f_1,\ldots,f_k)E[\x]$. As $(0)\subsetneqq\gtp_1\subsetneqq\cdots\subsetneqq\gtp_n$, to finish, it is enough to show that each $\gtp_k$ is a prime ideal of $E[\x]$. This is evident for $k=n$, because $\gtp_n=\gtn_a$.
 
Write $a=(a_1,\ldots,a_n)\in(\ove^\sqbullet)^n$. Suppose $n\geq2$, choose $k\in\{1,\ldots,n-1\}$ and denote $a':=(a_1,\ldots,a_k)$, $\x':=(\x_1,\ldots,\x_k)$ and $\x'':=(\x_{k+1},\ldots,\x_n)$. Consider the evaluation homomorphism $\varphi_k:E[\x]\to E[a'][\x''],\ f\mapsto f(a',\x'')$. As $E[a'][\x'']$ is an integral domain and $\varphi_k$ is surjective, it is enough to prove that $\ker(\varphi_k)=\gtp_k$. The inclusion `$\supset$' is clear. To show the converse, pick $f\in\ker(\varphi_k)$ and write $f=\sum_\nu b_\nu(\x'')^\nu$, where $b_\nu\in E[\x']$. As $f\in\ker(\varphi_k)$, we have $0=f(a',\x'')=\sum_\nu b_\nu(a')(\x'')^\nu$, so $b_\nu(a')=0$ for each $\nu$. Using Lemma \ref{lem:gtn_a} again, we deduce $b_\nu\in(f_1,\ldots,f_k)E[\x']\subset\gtp_k$, whence $f\in\gtp_k$, as required.
\end{proof} 

A usual tool to deal with regular local rings is completion. Let us analyze what happens in our setting.

\begin{cor}[Completion]\label{c}
Let $a\in(\ove^\sqbullet)^n$ and let $\gtn_a:=\II_E(\{a\})\subset E[\x]$. We have:
\begin{itemize}
\item[$(\mr{i})$] Let $\gtN_a:=\gtn_a E[\x]_{\gtn_a}$ and $\kappa:=E[\x]_{\gtn_a}/\gtN_a$ be the maximal ideal and the residue field of $E[\x]_{\gtn_a}$, respectively. Then the map $\psi:\kappa\to E[a]$, defined by $\psi\big(\frac{f}{g}+\gtN_a\big):=f(a)(g(a))^{-1}$, is a field isomorphism.
\item[$(\mr{ii})$] The completion $\widehat{E[\x]_{\gtn_a}}$ of $E[\x]_{\gtn_a}$ is isomorphic to $E[a][[\x]]$. In fact, if $\{f_1,\ldots,f_n\}$ is a minimal system of generators of $\gtn_a$ in $E[\x]$, then there exists an isomorphism $\varphi:\widehat{E[\x]_{\gtn_a}}\to E[a][[\x]]$ such that $\varphi(f_i)=\x_i$ for each $i\in\{1,\ldots,n\}$.
\end{itemize}
\end{cor}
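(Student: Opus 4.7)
The plan is to handle (i) directly from the universal property of localization, and then to obtain (ii) by applying Cohen's structure theorem to the regular local ring $E[\x]_{\gtn_a}$ produced by Corollary \ref{regular}.

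For (i), first observe that the evaluation homomorphism $\mr{ev}_a:E[\x]\to E[a]$, $f\mapsto f(a)$, is surjective with kernel $\gtn_a$. Since $a\in(\ove^\sqbullet)^n$, by \eqref{maximal-e} the ideal $\gtn_a$ is maximal in $E[\x]$, so $E[a]\cong E[\x]/\gtn_a$ is a field. In particular, every $g\in E[\x]\setminus\gtn_a$ satisfies $g(a)\neq0$ and is therefore invertible in $E[a]$. By the universal property of localization, $\mr{ev}_a$ extends uniquely to a ring homomorphism $\widetilde{\mr{ev}}_a:E[\x]_{\gtn_a}\to E[a]$, $\tfrac{f}{g}\mapsto f(a)(g(a))^{-1}$, whose kernel is visibly $\gtN_a$ and whose surjectivity is inherited from $\mr{ev}_a$. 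Passing to the quotient yields the announced field isomorphism $\psi$.

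For (ii), Corollary \ref{regular} tells us that $(A,\gtN_a):=(E[\x]_{\gtn_a},\gtN_a)$ is a regular local ring of dimension $n$ whose maximal ideal is generated by the regular system of parameters $f_1,\ldots,f_n$. The completion $\widehat{A}$ is then a complete Noetherian equicharacteristic regular local ring of dimension $n$ with residue field canonically identified with $E[a]$ via $\psi$. Since $E$ has characteristic zero (hence $E[a]|E$ is separable), Cohen's structure theorem provides a coefficient field $k\subset\widehat{A}$ that is mapped isomorphically onto the residue field; using $\psi^{-1}$ we identify $k$ with $E[a]$, obtaining an embedding $E[a]\hookrightarrow\widehat{A}$. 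Because $f_1,\ldots,f_n\in\widehat{\gtN_a}$ and $\widehat{A}$ is $\widehat{\gtN_a}$-adically complete, the universal property of formal power series yields a unique continuous $E[a]$-algebra homomorphism $\Phi:E[a][[\x]]\to\widehat{A}$ sending $\x_i\mapsto f_i$ for each $i$.

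It remains to verify that $\Phi$ is an isomorphism; then $\varphi:=\Phi^{-1}$ is the desired map. Both $E[a][[\x]]$ and $\widehat{A}$ are complete local Noetherian rings of dimension $n$ and residue field $E[a]$, and $\Phi$ is a local homomorphism. The induced map on associated graded rings sends $\x_i$ to the class of $f_i$ in $\widehat{\gtN_a}/\widehat{\gtN_a}^{\,2}$; since $(f_1,\ldots,f_n)$ is a regular system of parameters, these classes form an $E[a]$-basis of $\widehat{\gtN_a}/\widehat{\gtN_a}^{\,2}$, so $\mr{gr}(\Phi):E[a][\x]\to\mr{gr}_{\widehat{\gtN_a}}(\widehat{A})$ is a surjective $E[a]$-algebra homomorphism between graded polynomial algebras of the same dimension $n$ (the latter by regularity of $\widehat{A}$), hence an isomorphism. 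A standard completeness argument (or \cite[\S 10]{am}) then upgrades $\mr{gr}(\Phi)$ being an isomorphism to $\Phi$ being an isomorphism. The main subtlety of the argument is the invocation of Cohen's structure theorem: the field $E[a]$ does \emph{not} embed into $A$ itself when $a\notin E^n$, and only appears as a coefficient field after completing, which is precisely the content ensured by the equicharacteristic zero hypothesis.
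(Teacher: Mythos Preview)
Your proof is correct and follows essentially the same approach as the paper: part (i) is identical, and for part (ii) both you and the paper invoke Corollary \ref{regular} to get a regular local ring of dimension $n$ and then apply Cohen's structure theorem. The only difference is that the paper cites Cohen's theorem directly (as \cite[Thm.7.7]{e}) to obtain the isomorphism $\widehat{E[\x]_{\gtn_a}}\cong E[a][[\x]]$ sending $f_i\mapsto\x_i$, whereas you unpack this by constructing $\Phi$ via the universal property of power series and verifying it is an isomorphism through the associated graded; your extra remark about $E[a]$ not embedding before completion is a nice observation the paper leaves implicit.
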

\begin{proof}
$(\mr{i})$ By \eqref{maximal-e}, $\gtn_a$ is a maximal ideal of $E[\x]$ and the residue class ring $E[\x]/\gtn_a\cong E[a]$ is a field. Consider the evaluation homomorphism $E[\x]_{\gtn_a}\to E[a], \frac{f}{g}\mapsto f(a)(g(a))^{-1}$, which is well-defined because $E[a]$ is a field and surjective because $E[\x]\subset E[\x]_{\gtn_a}$. Its kernel is $\gtN_a$ so $\psi$ is a field isomorphism.

$(\mr{ii})$ As $E[\x]_{\gtn_a}$ is by Corollary \ref{regular} a regular local ring of dimension $n$, its completion $\widehat{E[\x]_{\gtn_a}}$ is also a regular local ring of dimension $n$ (see \cite[(24.D), p.175]{m}). As $E\subset E[\x]_{\gtn_a}$, Cohen's structure theorem (of completion \cite[Thm.7.7]{e}) states that $\widehat{E[\x]_{\gtn_a}}$ is isomorphic to $k[[\x]]\cong E[a][[\x]]$ via an isomorphism that maps $f_i$ to $\x_i$ for each $i\in\{1,\ldots,n\}$.
\end{proof}

\emph{Fix $a\in(\ove^\sqbullet)^n$ and an ideal $\gta$ of $K[\x]$. Suppose that $\gta\subset\gtn_a=\II_E(\{a\})$, that is, each polynomial in $\gta$ vanishes at $a$}. From the set-theoretic point of view, the latter inclusion makes sense because $K[\x]\subset E[\x]$, so both $\gta$ and $\gtn_a$ are subsets of $E[\x]$.

\emph{Define $A:=E[\x]_{\gtn_a}/(\gta E[\x]_{\gtn_a})$}. Let us analyze the structure of its completion $\widehat{A}$. To that end, we introduce first some preliminary notations.

Let $\varphi:\widehat{E[\x]_{\gtn_a}}\to E[a][[\x]]$ be an isomorphism, whose existence is assured by Corollary \ref{c}$(\mr{ii})$. Write $a\in(\ove^\sqbullet)^n$ explicitly as $a=(a_1,\ldots,a_n)$. Let $\eta:E[a][\x]\to E[a][\x]$ be the ring isomorphism that fixes $E[a]$ and maps each $\x_i$ to $\x_i+a_i$. Let $\gtQ_a:=(\x_1-a_1,\ldots,\x_n-a_n)E[a][\x]$ be the maximal ideal of $E[a][\x]$ associated to $a$, and $\gtQ_0:=(\x_1,\ldots,\x_n)E[a][\x]$ be the maximal ideal of $E[a][\x]$ associated to the origin. Fix an $E$-isomorphism $\psi:\widehat{E[\x]_{\gtQ_a}}\to E[a][[\x]]$, whose existence is assured by Corollary \ref{c}(ii). Observe that $\varphi|_{E[\x]}=\psi|_{E[\x]}$.

\begin{lem}[Structure of completion]\label{completion}
Let $g_1,\ldots,g_s\in\gta$ be polynomials that generate $\gta$ in $K[\x]$ $($and hence $\gta E[\x]_{\gtn_a}$ in $E[\x]_{\gtn_a})$. Define $h_i(\x):=\eta(g_i(\x))=g_i(\x+a)\in E[a][\x]$ for each $i\in\{1,\ldots,s\}$. Define 
$$
B:=E[a][\x]_{\gtQ_a}/((g_1,\ldots,g_s)E[a][\x]_{\gtQ_a})\quad\text{and}\quad C:=E[a][\x]_{\gtQ_0}/((h_1,\ldots,h_s)E[a][\x]_{\gtQ_0}).
$$
Recall that $A:=E[\x]_{\gtn_a}/(\gta E[\x]_{\gtn_a})$ and denote $\widehat{B}$ and $\widehat{C}$ the completions of $B$ and $C$. We have:
\begin{itemize}
\item[$(\mr{i})$] $\widehat{A}\cong E[a][[\x]]/(\varphi(\gta)E[a][[\x]])=E[a][[\x]]/((\varphi(g_1),\ldots,\varphi(g_s))E[a][[\x]])$.
\item[$(\mr{ii})$] $\widehat{B}\cong E[a][[\x]]/((\varphi(g_1),\ldots,\varphi(g_s))E[a][[\x]])$.
\item[$(\mr{iii})$] $\widehat{C}\cong E[a][[\x]]/((h_1,\ldots,h_s)E[a][[\x]])$.
\item[$(\mr{iv})$] $\widehat{A}\cong\widehat{B}\cong\widehat{C}$.
\end{itemize}
\end{lem}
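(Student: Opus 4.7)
The plan is to invoke in each of the three identifications the standard fact from commutative algebra that, for a Noetherian local ring $(R,\gtm)$ and an ideal $I\subseteq\gtm$, completion commutes with quotient, i.e.\ $\widehat{R/I}\cong\widehat{R}/I\widehat{R}$ (Atiyah--Macdonald, Prop.~10.15). We then transport the resulting completed quotient through the power-series identifications provided by Corollary~\ref{c}$(\mr{ii})$.

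For $(\mr{i})$, apply this fact to the Noetherian local ring $E[\x]_{\gtn_a}$ with ideal $\gta E[\x]_{\gtn_a}\subseteq\gtn_a E[\x]_{\gtn_a}$, obtaining $\widehat{A}\cong\widehat{E[\x]_{\gtn_a}}\big/\gta\widehat{E[\x]_{\gtn_a}}$. Since $g_1,\ldots,g_s$ generate $\gta$ in $K[\x]$, they also generate $\gta\widehat{E[\x]_{\gtn_a}}$. Applying $\varphi$ from Corollary~\ref{c}$(\mr{ii})$ sends this isomorphically onto $E[a][[\x]]/\varphi(\gta)E[a][[\x]]=E[a][[\x]]/(\varphi(g_1),\ldots,\varphi(g_s))E[a][[\x]]$, which is exactly $(\mr{i})$.

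For $(\mr{ii})$ and $(\mr{iii})$ I would repeat the same recipe, replacing $E[\x]_{\gtn_a}$ by $E[a][\x]$ localized at $\gtQ_a$, respectively $\gtQ_0$. Part $(\mr{ii})$ uses the $E$-isomorphism $\psi$: the compatibility $\varphi|_{E[\x]}=\psi|_{E[\x]}$ forces $\psi(g_i)=\varphi(g_i)$ for each $i$, because each $g_i\in K[\x]\subset E[\x]$, and this gives the stated form of $\widehat{B}$. For $(\mr{iii})$, Lemma~\ref{lem:gtn_a} applied to $0\in E[a]^n$ with base field $E[a]$ shows that $\x_1,\ldots,\x_n$ is a regular system of parameters of $E[a][\x]_{\gtQ_0}$, and Corollary~\ref{c}$(\mr{ii})$ then yields an $E[a]$-isomorphism $\widehat{E[a][\x]_{\gtQ_0}}\to E[a][[\x]]$ which is the canonical inclusion on polynomials and therefore sends each $h_i$ to itself.

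Finally, for $(\mr{iv})$ the isomorphism $\widehat{A}\cong\widehat{B}$ is immediate from $(\mr{i})$ and $(\mr{ii})$, since both have been presented as the same explicit quotient of $E[a][[\x]]$. The isomorphism $\widehat{B}\cong\widehat{C}$ instead comes from the translation $\eta$: because $\eta(\x_i-a_i)=\x_i$, the map $\eta$ carries $\gtQ_a$ onto $\gtQ_0$ and $g_i$ onto $h_i=\eta(g_i)$, hence descends to a ring isomorphism $B\cong C$ which passes to completions. The main bookkeeping issue, though not technically deep, is verifying the identification $\psi(g_i)=\varphi(g_i)$ on which the whole chain in $(\mr{iv})$ hinges; this rests crucially on the earlier remark that $\psi$ can be chosen to agree with $\varphi$ on the common subring $E[\x]$, so that the two \emph{a priori} different presentations in $(\mr{i})$ and $(\mr{ii})$ coincide on the nose.
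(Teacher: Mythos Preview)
Your proposal is correct and follows essentially the same approach as the paper: both invoke Atiyah--Macdonald 10.12--10.15 to commute completion with the quotient, transport through the power-series identifications $\varphi$ and $\psi$ of Corollary~\ref{c}$(\mr{ii})$, use $\eta$ to identify $B$ and $C$, and rely on $\varphi|_{E[\x]}=\psi|_{E[\x]}$ to match the presentations in $(\mr{i})$ and $(\mr{ii})$. You even flag the one delicate point (the agreement $\psi(g_i)=\varphi(g_i)$) that the paper records as the prior observation $\varphi|_{E[\x]}=\psi|_{E[\x]}$.
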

\begin{proof}
By \cite[10.12-10.15]{am} applied to $0\to\gta E[\x]_{\gtn_a}\to E[\x]_{\gtn_a}\to E[\x]_{\gtn_a}/\gta E[\x]_{\gtn_a}\to0$, we have $\widehat{A}\cong\widehat{E[\x]_{\gtn_a}}/(\gta\widehat{E[\x]_{\gtn_a}})$, so using the isomorphism $\varphi$, we deduce $(\mr{i})$. The proof of $(\mr{ii})$ is analogous using the $E$-isomorphism $\psi$ in this case. To prove $(\mr{iii})$, observe first that $h_i(0)=g_i(a)=0$ for each $i\in\{1,\ldots,s\}$. Next we use the system of generators $\{\x_1,\ldots,\x_n\}$ of $\gtQ_0$ to construct an isomorphism between the completion $\widehat{E[a][\x]_{\gtQ_0}}$ and $E[a][[\x]]$. Then we proceed analogously to the proofs of $(\mr{i})$ and $(\mr{ii})$ using \cite[10.12-10.15]{am}. 

As $\eta$ induces an isomorphism between $B$ and $C$, we conclude from (i)-(iii) and using that $\varphi|_{E[\x]}=\psi|_{E[\x]}$ that $\widehat{A}\cong\widehat{B}\cong\widehat{C}$, as required. 
\end{proof}

\begin{remarks}
We keep the notations already introduced in Lemma \ref{completion}.

$(\mr{i})$ Since $\gtQ_a\cap E[\x]=\gtn_a$, the map $E[\x]_{\gtn_a}\to E[a][\x]_{\gtQ_a}$, $\frac{f}{g}\mapsto\frac{f}{g}$ is a well-defined injective homomorphism, so we can identify $E[\x]_{\gtn_a}$ with a subring of $E[a][\x]_{\gtQ_a}$.

$(\mr{ii})$ If $\gta$ is a radical ideal of $K[\x]$, then $\gta E[a][\x]_{\gtQ_a}\cap E[\x]_{\gtn_a}=\gta E[\x]_{\gtn_a}$. The inclusion `$\supset$' is clear. Let us check the converse inclusion. Let $\frac{f}{g}\in\gta E[a][\x]_{\gtQ_a}\cap E[\x]_{\gtn_a}$, where $f,g\in E[\x]$ and $g\not\in\gtn_a$. By Lemma \ref{radical}, $\gta E[\x]$ is a radical ideal of $E[\x]$, so there exist prime ideals $\gtp_1,\ldots,\gtp_r$ of $E[\x]$ such that $\gta E[\x]=\gtp_1\cap\cdots\cap\gtp_r$. We want to prove that, after multiplying $f$ by a suitable polynomial of $E[\x]\setminus\gtn_a$, we can assume that $f\in\gta E[\x]$ or, equivalently, that $f\in\gtp_i$ for all $i\in\{1,\ldots,r\}$. Suppose that $f\not\in\gtp_i$ for some $i\in\{1,\ldots,r\}$ and $\gtp_i\not\subset\gtn_a$. Then we take $p_i\in\gtp_i\setminus\gtn_a$, so $gp_i\in E[\x]\setminus\gtn_a$, $fp_i\in\gtp_i$ and $\frac{fp_i}{gp_i}=\frac{f}{g}$. Thus, substituting $f$ by $fp_i$, we can assume that $f\in\gtp_i$. We can repeat the same procedure with all the indices $i\in\{1,\ldots,r\}$ such that $f\not\in\gtp_i$ and $\gtp_i\not\subset\gtn_a$. Suppose there exists an index $i\in\{1,\ldots,r\}$ such that $f\not\in\gtp_i$ and $\gtp_i\subset\gtn_a$. Let us prove that this cannot happen. As $\frac{f}{g}\in\gta E[a][\x]_{\gtQ_a}\subset\gtp_iE[a][\x]_{\gtQ_a}$, there exists $P\in E[a][\x]\setminus\gtQ_a$ such that $Pf\in\gtp_iE[a][\x]$. Consider the algebraic closure $\ol{E}$ of $E$ and observe that $\ol{E}$ contains $E[a]$. Define $X:=\ZZ_{\ol{E}}(\gtp_iE[a][\x])=\ZZ_{\ol{E}}(\gtp_i)$. By Corollary \ref{kreliablec}, $\II_{\ol{E}}(X)=\gtp_i\ol{E}[\x]$ and $\II_E(X)=\gtp_i$, so $X\subset\ol{E}^n$ is an $E$-irreducible $E$-algebraic set. Let $X_1,\ldots,X_s$ be the $\ol{E}$-irreducible components of~$X$. As $a\in X$, rearranging the indices if necessary, we can assume that $a\in X_1$. As $P(a)\neq0$, we have $P\not\in\II_{\ol{E}}(X_1)$. As $Pf\in\gtp_iE[a][\x]\subset\gtp_i\ol{E}[\x]=\II_{\ol{E}}(X)\subset\II_{\ol{E}}(X_1)$ and $P\not\in\II_{\ol{E}}(X_1)$, we deduce that $f\in\II_{\ol{E}}(X_1)\cap E[\x]$, because $\II_{\ol{E}}(X_1)$ is a prime ideal of $\ol{E}[\x]$. By Lemma \ref{lem:gp0}, we know that $X=\zcl^E_{\ol{E}^n}(X_1)$, so $f\in\II_E(X)=\gtp_i$, which is a contradiction. Consequently, we can assume that $f\in\bigcap_{i=1}^r\gtp_i=\gta E[\x]$, so $\frac{f}{g}\in\gta E[\x]_{\gtn_a}$, as required.

$(\mr{iii})$ If $\gta$ is a radical ideal of $K[\x]$, then $(\mr{i})$ and $(\mr{ii})$ assure that the map $E[\x]_{\gtn_a}/\gta E[\x]_{\gtn_a}\to E[a][\x]_{\gtQ_a}/\gta E[a][\x]_{\gtQ_a}$, $\frac{f}{g}+\gta E[\x]_{\gtn_a}\mapsto\frac{f}{g}+\gta E[a][\x]_{\gtQ_a}$ is a well-defined injective homomorphism.~$\sqbullet$
\end{remarks}

\subsection{Zariski tangent spaces} 
Let us introduce the notion of $E|K$-Zariski tangent space.

\begin{defn}[$E|K$-Zariski tangent space]\label{ek-Zar-tang}
Let $X\subset L^n$ be a $K$-algebraic set and let $a\in X\cap(\ove^\sqbullet)^n$. We define the \textit{$E|K$-Zariski tangent space $T^{E|K}_a(X)$ of $X$ at $a$} as the following $E[a]$-vector subspace of $E[a]^n$:
\begin{equation}\label{eq:E|K-Zariskitangspace}
T^{E|K}_a(X):=\big\{v\in E[a]^n:\langle\nabla g(a),v\rangle=0\,\text{ for all $g\in\II_K(X)$}\big\},
\end{equation}
where $\langle\nabla g(a),v\rangle:=\sum_{j=1}^n\frac{\partial g}{\partial \x_j}(a)v_j$ is the usual matrix product between $\nabla g(a)$ and the column vector $v=(v_1,\ldots,v_n)^T\in E[a]^n$ (recall that the exponent $^T$ indicates the transpose operation).

If $E=L$, we also introduce the following simplified notation that we will use a few times later: we define the \emph{$K$-Zariski tangent space $T^K_a(X)$ of $X$ at $a\in X$} as $T^K_a(X):=T^{L|K}_a(X)$. $\sqbullet$
\end{defn}

\begin{remarks}\label{tang-bcr}
Let $X\subset L^n$ be a $K$-algebraic set. Recall that $X\subset L^n$ is also ($L$-)algebraic. 

$(\mr{i})$ If $a\in X\cap(\ove^\sqbullet)^n$ and $\{g_1,\ldots,g_s\}$ is a system of generators of $\II_K(X)$ in $K[\x]$, then
$$
T^{E|K}_a(X):=\big\{v\in E[a]^n:\langle\nabla g_1(a),v\rangle=0,\ldots,\langle\nabla g_s(a),v\rangle=0\big\}.
$$

$(\mr{i}')$ The nature of $T^{E|K}_a(X)$ is $K$-Zariski local in the following sense: if $U$ is a $K$-Zariski open neighborhood of $a$ in $X\subset L^n$, then $v\in E[a]^n$ belongs to $T^{E|K}_a(X)$ if and only if $\langle\nabla g(a),v\rangle=0$ for all $g\in\II_K(U)$. Indeed, if $g\in\II_K(U)$, then there exists $h\in K[\x]$ such that $h(a)\neq0$ and $\ZZ_L(h)\supset X\setminus U$, so $hg\in\II_K(X)$ and $\nabla(hg)(a)=h(a)\nabla g(a)$.

$(\mr{ii})$ $T^{E|K}_a(X)=T^{L|K}_a(X)\cap E[a]^n$ for each $a\in X\cap(\ove^\sqbullet)^n$.

$(\mr{iii})$ For each $a\in X$, the $L$-Zariski tangent space $T^L_a(X)=T^{L|L}_a(X)$ of $X$ at $a$ coincides with the usual Zariski tangent space $T_a(X)$ of the algebraic set $X\subset L^n$ at $a$, see \cite[Ch.I.\S.5]{ha} and \cite[Section 3.3]{bcr}. As $\II_K(X)\subset\II_L(X)$, we deduce that $T_a(X)=T^L_a(X)$ is a $L$-vector subspace of $T^K_a(X)=T^{L|K}_a(X)$ for each $a\in X$.

If $L$ is an algebraically closed field, then Corollary \ref{kreliablec} assures that $\II_L(X)=\II_K(X)L[\x]$, so $T_a(X)=T^L_a(X)=T^K_a(X)$ for each $a\in X$, and $T^{K|K}_a(X)=T_a(X)\cap K[a]^n$ for each $a\in X\cap(\kbar^\sqbullet)^n$. If $L$ is a real closed field $R$ and $a\in X$, the inclusion $T_a(X)=T^R_a(X)\subset T^K_a(X)$ may be strict.

For instance, if $K=\Q$, $X$ is the line $\ZZ_R(\x_1-\sqrt[3]{2}\x_2)=\ZZ_R(\x_1^3-2\x_2^3)$ of $R^2$ and $a:=(0,0)$, then $T_a(X)=T^R_a(X)=X\subsetneqq R^2=T^\Q_a(X)$. Moreover, $T^{\Q|\Q}_a(X)=T^\Q_a(X)\cap\Q^2=\Q^2$. $\sqbullet$
\end{remarks}

Let $a\in(\ove^\sqbullet)^n$ and let $\gtn_a:=\II_E(\{a\})\subset E[\x]$. Consider the local ring $E[\x]_{\gtn_a}$, its maximal ideal $\gtN_a:=\gtn_a E[\x]_{\gtn_a}$ and its residue field $k:=E[\x]_{\gtn_a}/\gtN_a$. By Corollary \ref{c}$(\mr{i})$, we know that $k$ is isomorphic to $E[a]$ via the field isomorphism $\frac{f}{g}+\gtN_a\mapsto f(a)(g(a))^{-1}$. Consequently, we can understand the $k$-vector space $\gtN_a/\gtN_a^2$ as an $E[a]$-vector space. It is enough to define the scalar multiplication $\ast:E[a]\times\gtN_a/\gtN_a^2\to\gtN_a/\gtN_a^2$ as follows: given any $b\in E[a]$ and $\frac{p}{q}+\gtN_a^2\in\gtN_a/\gtN_a^2$, choose $f\in E[\x]$ in such a way that $b=f(a)$, and set 
\begin{equation}\label{eq:star}
\textstyle
b\ast(\frac{p}{q}+\gtN_a^2):=\frac{fp}{q}+\gtN_a^2.
\end{equation}

Suppose that $a\in X\cap(\ove^\sqbullet)^n$, consider the $E|K$-local ring $\reg^{E|K}_{X,a}=E[\x]_{\gtn_a}/(\II_K(X)E[\x]_{\gtn_a})$ of $X$ at $a$, denote $\gtM_a:=\gtN_a/(\II_K(X)E[\x]_{\gtn_a})$ its maximal ideal and recall that its residue field $\kappa:=\reg^{E|K}_{X,a}/\gtM_a$ is isomorphic to $E[a]$ (via the field isomorphism $(\frac{f}{g}+\II_K(X)E[\x]_{\gtn_a})+\gtM_a\mapsto f(a)(g(a))^{-1}$). As above, we interpret the $\kappa$-vector space $\gtM_a/\gtM_a^2$ as an $E[a]$-vector space by means of the following scalar multiplication, which we again denote $\ast$:
$$\textstyle
b\ast\big((\frac{p}{q}+\II_K(X)E[\x]_{\gtn_a})+\gtM_a^2\big)\mapsto (\frac{fp}{q}+\II_K(X)E[\x]_{\gtn_a})+\gtM_a^2
$$
if $b\in E[a]$ and $f$ is a polynomial in $E[\x]$ such that $b=f(a)$.

\begin{lem}\label{451}
Let $X\subset L^n$ be a $K$-algebraic set, let $a\in X\cap(\ove^\sqbullet)^n$ and let $\gtM_a$ be the maximal ideal of $\reg^{E|K}_{X,a}$. Denote $(T^{E|K}_a(X))^\wedge$ the dual of the $E[a]$-vector space $T^{E|K}_a(X)$. Then the map $\Psi_a:\gtM_a/\gtM_a^2\to(T^{E|K}_a(X))^\wedge$, defined by
$$
\textstyle
\Psi_a\big((\frac{p}{q}+\II_K(X)E[\x]_{\gtn_a})+\gtM_a^2\big):=\big(T^{E|K}_a(X)\ni v\mapsto q(a)^{-1}\langle\nabla p(a),v\rangle\in E[a]\big),
$$
is a well-defined $E[a]$-linear isomorphism.
\end{lem}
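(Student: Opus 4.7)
\smallskip

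The plan is to prove the lemma in three steps, reducing it to the regularity of the ambient local ring $E[\x]_{\gtn_a}$ together with the Jacobian computation of Remark \ref{F}. First I would handle well-definedness and $E[a]$-linearity of $\Psi_a$ via the ``derivation at $a$'' viewpoint. Define the map $\partial_a:E[\x]_{\gtn_a}\to(E[a]^n)^\wedge$ by the quotient rule
\[
\partial_a(p/q)(v):=q(a)^{-1}\langle\nabla p(a),v\rangle-p(a)\,q(a)^{-2}\langle\nabla q(a),v\rangle,
\]
which is a well-defined $E$-derivation of $E[\x]_{\gtn_a}$ valued in $E[a]^n$ (viewed as an $E[\x]_{\gtn_a}$-module through $\mathrm{ev}_a$). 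On $\gtN_a$ the second term vanishes, so the formula reduces to the one defining $\Psi_a$. The Leibniz rule $\partial_a(\alpha\beta)=\alpha(a)\partial_a(\beta)+\beta(a)\partial_a(\alpha)$ immediately gives $\partial_a(\gtN_a^2)=0$, and for any $g\in\II_K(X)$ and any $\alpha\in E[\x]_{\gtn_a}$ we have $\partial_a(g\alpha)(v)=\alpha(a)\langle\nabla g(a),v\rangle=0$ whenever $v\in T^{E|K}_a(X)$. Hence restriction of $\partial_a$ to $T^{E|K}_a(X)$ descends to an $E[a]$-linear map $\Psi_a:\gtM_a/\gtM_a^2\to(T^{E|K}_a(X))^\wedge$ of the claimed form.

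Next I would treat the auxiliary map $\Psi_0:\gtN_a/\gtN_a^2\to(E[a]^n)^\wedge$ defined by the same formula, and show it is an $E[a]$-linear isomorphism. By Corollary \ref{regular}, $E[\x]_{\gtn_a}$ is a regular local ring of dimension $n$ and the polynomials $f_1,\ldots,f_n$ of Lemma \ref{lem:gtn_a} form a regular system of parameters; in particular the classes $\bar f_1,\ldots,\bar f_n$ are an $E[a]$-basis of $\gtN_a/\gtN_a^2$. The matrix of $\Psi_0$ in this basis and in the dual of the standard basis of $E[a]^n$ is exactly the Jacobian $F=\big(\tfrac{\partial f_i}{\partial\x_j}(a)\big)_{i,j}$, which is lower triangular with nonzero diagonal and hence invertible by Remark \ref{F}. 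So $\Psi_0$ is an $E[a]$-linear isomorphism between $n$-dimensional $E[a]$-vector spaces.

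Finally I would deduce the lemma by showing that $\Psi_0$ carries the subspace $(\II_K(X)E[\x]_{\gtn_a}+\gtN_a^2)/\gtN_a^2$ of $\gtN_a/\gtN_a^2$ isomorphically onto the annihilator $\mathrm{ann}(T^{E|K}_a(X))\subset(E[a]^n)^\wedge$. On the target side, elementary linear algebra identifies the annihilator with the $E[a]$-span of the linear forms $v\mapsto\langle\nabla g(a),v\rangle$ for $g\in\II_K(X)$, which is precisely $\Psi_0(\overline{\II_K(X)})$. On the source side, one shows that modulo $\gtN_a^2$ one has $\overline{gh/r}=(h(a)/r(a))\ast\bar g$ for every $g\in\II_K(X)$ and $h/r\in E[\x]_{\gtn_a}$, because $gh/r-(h(a)/r(a))g=g\cdot(h/r-h(a)/r(a))$ lies in $g\gtN_a\subset\gtN_a^2$; this identifies the image of $\II_K(X)E[\x]_{\gtn_a}+\gtN_a^2$ in $\gtN_a/\gtN_a^2$ with $\mathrm{span}_{E[a]}\{\bar g:g\in\II_K(X)\}$. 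Passing to quotients on both sides and using the canonical isomorphism $(E[a]^n)^\wedge/\mathrm{ann}(T^{E|K}_a(X))\cong(T^{E|K}_a(X))^\wedge$, $\Psi_0$ induces the desired isomorphism $\Psi_a$. The step I expect to require most care is the bookkeeping in the last paragraph, namely the identification of the image of $\II_K(X)E[\x]_{\gtn_a}+\gtN_a^2$ modulo $\gtN_a^2$ with $\mathrm{span}_{E[a]}\{\bar g:g\in\II_K(X)\}$, since it is the point where the $E[a]$-module structure introduced via the scalar action \eqref{eq:star} has to be reconciled with the ring-theoretic expansion of products $g\cdot(h/r)$.
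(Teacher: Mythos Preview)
Your proposal is correct and follows essentially the same approach as the paper: both arguments hinge on showing that the map $\gtN_a/\gtN_a^2\to(E[a]^n)^\wedge$ is an isomorphism via the regular system of parameters $f_1,\ldots,f_n$ and the invertibility of the Jacobian in Remark~\ref{F}, and then identifying the subspace $(\II_K(X)E[\x]_{\gtn_a}+\gtN_a^2)/\gtN_a^2$ with the annihilator of $T^{E|K}_a(X)$ before passing to quotients. The only cosmetic difference is that you handle well-definedness via the derivation $\partial_a$ and descend directly, whereas the paper factors $\Psi_a$ explicitly as a composition $\Theta\circ\xi^{-1}$ of two auxiliary isomorphisms; the underlying linear-algebra computations are identical.
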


We postpone the proof to Appendix \ref{appendix:c}.2.

\subsection{Regular maps and differentials}

Consider again $a\in(\ove^\sqbullet)^n$ and the maximal ideal $\gtn_a=\II_E(\{a\})$ of $E[\x]$. For each $i\in\{1,\ldots,n\}$, we define the function $\frac{\partial}{\partial\x_i}\big|_a:E[\x]_{\gtn_a}\to E[a]$ by
$$
\textstyle
\frac{\partial}{\partial\x_i}\big|_a\big(\frac{p}{q}\big):=q(a)^{-2}\big(q(a)\frac{\partial p}{\partial\x_i}(a)-p(a)\frac{\partial q}{\partial\x_i}(a)\big).
$$
This function is a well-defined abelian homomorphism. Given $m\in\N^*$, we denote $\mc{M}_{m,n}(E[a])$ the set of all $(m\times n)$-matrices with coefficients in the field $E[a]$ and $J_a:(E[\x]_{\gtn_a})^m\to\mc{M}_{m,n}(E[a])$ the abelian homomorphism
$$
\textstyle
J_a\big(\frac{p_1}{q_1},\ldots,\frac{p_m}{q_m}\big):=\big(\frac{\partial}{\partial\x_j}\big|_a\big(\frac{p_i}{q_i}\big)\big)_{i=1,\ldots,m,\,j=1,\ldots,n}\,.
$$
Observe that the $i^{\mr{th}}$-row of $J_a(\frac{p_1}{q_1},\ldots,\frac{p_m}{q_m}\big)$ is equal to $q_i(a)^{-2}\big(q_i(a)\nabla p_i(a)-p_i(a)\nabla q_i(a)\big)$. Therefore, given a $K$-algebraic set $X\subset L^n$ and assuming that $a\in X\cap(\ove^\sqbullet)^n$, it holds:
\begin{equation}\label{eq:TEK}
\textstyle
J_a\big(\frac{p_1}{q_1},\ldots,\frac{p_m}{q_m}\big)v=0\quad\text{if $p_1,\ldots,p_m\in\II_K(X)$ and $v\in T^{E|K}_a(X)$}.
\end{equation}

The next definition is a reformulation of Definition 4 on page 30 of \cite{to2}.

\begin{defn}\label{def:331}
Let $X$ be a subset of $L^n$, let $Y$ be a subset of $L^m$ and let $f:X\to Y$ be a map. Given a point $a\in X$, we say that $f$ is \emph{$K$-regular at $a$} if there exist a $K$-Zariski open neighborhood $U$ of $a$ in $L^n$ and polynomials $p_1,\ldots,p_m,q_1,\ldots,q_m\in K[\x]$ such that $q_1(x)\neq0,\ldots,q_m(x)\neq0$ and $f(x)=\big(\frac{p_1(x)}{q_1(x)},\ldots,\frac{p_m(x)}{q_m(x)}\big)$ for each $x\in X\cap U$. If this is true, we write: \emph{$f=\big(\frac{p_1}{q_1},\ldots,\frac{p_m}{q_m}\big)$ on $X\cap U$}. We say that $f$ is \emph{$K$-regular} if $f$ is $K$-regular at each point of $X$. We say that $f$ is a \emph{$K$-biregular isomorphism} if $f$ is bijective and both $f$ and $f^{-1}$ are $K$-regular.

We denote $\reg^K(X,Y)$ the set of all $K$-regular maps from $X\subset L^n$ to $Y\subset L^m$. $\sqbullet$
\end{defn}

\begin{remark}\label{rem:reg}
In the above definition, by reducing all fractions to a common denominator and shrinking $U$ around $a$ if necessary, we can assume that all polynomials $q_i$ are equal to the same polynomial $q\in K[\x]$ and $U=L^n\setminus \ZZ_L(q)$. $\sqbullet$
\end{remark}

In Definition \ref{def:331-projection}, we will extend the affine notion of $K$-regular map given earlier to the projective setting.

\textit{Suppose that the sets $X\subset L^n$ and $Y\subset L^m$ are $K$-algebraic, the map $f:X\to Y$ is $K$-regular at a point $a\in X\cap(\ove^\sqbullet)^n$ and $f=\big(\frac{p_1}{q_1},\ldots,\frac{p_m}{q_m}\big)$ on some $K$-Zariski open neighborhood of $a$ in $X\subset L^n$. Consider $v\in T^{E|K}_a(X)$}. By \eqref{eq:TEK}, the vector $J_a\big(\frac{p_1}{q_1},\ldots,\frac{p_m}{q_m}\big)v$ of $E[a]^m$ depends only on $f$ and not on the chosen polynomials $p_1,\ldots,p_m,q_1,\ldots,q_m\in K[\x]$ that describe it $K$-Zariski locally at $a$. Thus, we define $J_a(f)v:=J_a\big(\frac{p_1}{q_1},\ldots,\frac{p_m}{q_m}\big)v$. In addition, if $h\in K[\y]:=K[\y_1,\ldots,\y_m]$, a direct (polynomial) computation provides
\begin{equation}\label{composition-jacobians}
\textstyle
\langle\nabla h(f(a)),J_a(f)v\rangle=J_a\big(h\circ f\big)v\;\text{ in $E[a]$},
\end{equation}
where $h\circ f:X\to L$ is the function defined by $(h\circ f)(x):=h(f(x))$ for each $x\in X$, which is $K$-regular at $a$. Thus, by \eqref{eq:TEK}, we have
\begin{equation}\label{nabla}
\textstyle
\langle\nabla h(f(a)),J_a(f)v\rangle=0 \quad \text{if $h\in\II_K(Y)$ and $v\in T^{E|K}_a(X)$.}
\end{equation}

Observe that $f(a)\in Y\cap(\ove^\sqbullet)^m$. In addition, $E[f(a)]$ is a subfield of $E[a]$, which is in general different to $E[a]$, so the vector $J_a(f)v$ of $E[a]^m$ may not belong to the vector subspace $T^{E|K}_{f(a)}(Y)$ of $E[f(a)]^m$, as the following example shows. 

\begin{remark}
Let $E=K:=\Q$, let $g:=\x_1-\x_2^2\in\Q[\x_1,\x_2]$, let $X:=\ZZ_L(g)\subset L^2$, let $Y:=L$, let $f:X\to Y$ be the $\Q$-regular map $f(x_1,x_2):=x_1$ and let $a:=(\sqrt{2},\sqrt[4]{2})\in X\cap(\qbar^\sqbullet)^2$. We~have:
\begin{itemize}
\item $f(a)=\sqrt{2}$ so $\Q[f(a)]=\Q[\sqrt{2}]\subsetneqq\Q[\sqrt[4]{2}]=\Q[a]$;
\item $\II_\Q(X)=(g)\Q[\x_1,\x_2]$ by Corollary \ref{kreliablec} when $L$ is algebraically closed and by Proposition \ref{prop:hyper} when $L$ is real closed;
\item $\nabla g(a)=(1,-2\sqrt[4]{2})$, $T^{\Q|\Q}_a(X)=\{v\in\Q[\sqrt[4]{2}]^2:\langle\nabla g(a),v\rangle=0\}=\{(v_1,v_2)^T\in\Q[\sqrt[4]{2}]^2:v_1-2v_2\sqrt[4]{2}=0\}$, and $T^{\Q|\Q}_{f(a)}(Y)=\Q[\sqrt{2}]$;
\item $v:=(2\sqrt[4]{2},1)^T\in T^{\Q|\Q}_a(X)$, but $J_a(f)=J_a\big(\frac{\x_1}{1}\big)v=2\sqrt[4]{2}\not\in\Q[\sqrt{2}]=T^{\Q|\Q}_{f(a)}(Y)$. $\sqbullet$
\end{itemize}
\end{remark}

If $E[a]=E[f(a)]$ (or, equivalently, $a\in E[f(a)]$), equation \eqref{nabla} assures that $J_a(f)v\in T^{E|K}_{f(a)}(Y)$ for each $v\in T^{E|K}_a(X)$. The last assertion allows us to define the $E|K$-differential of a $K$-regular map.

\begin{defn}\label{def:differential}
Let $X\subset L^n$ and $Y\subset L^m$ be $K$-algebraic sets, let $a\in X\cap(\ove^\sqbullet)^n$ and let $f:X\to Y$ be a map, which is $K$-regular at $a$. If $E[a]=E[f(a)]$, then we define the \emph{$E|K$-differential $d^{E|K}_af:T^{E|K}_a(X)\to T^{E|K}_{f(a)}(Y)$ of $f$ at $a$} as the following $E[a]$-linear map:
$$\textstyle
d^{E|K}_af(v):=J_a(f)v
$$
for each $v\in T^{E|K}_a(X)$.

If $E=L$, we also introduce the following simplified notation that we will use a few times later: we define the \emph{$K$-differential $d^K_a f:T^K_a(X)\to T^K_{f(a)}(Y)$ of $f$ at $a\in X$} as $d^K_a f:=d^{L|K}_a f$. $\sqbullet$
\end{defn}

\begin{remarks}\label{rem:435}
$(\mr{i})$ The equality $E[a]=E[f(a)]$ holds if and only if $a\in E[f(a)]$, that is, if there exist polynomials $h_1,\ldots,h_n\in E[\y]$ such that $a=(h_1,\ldots,h_n)(f(a))$. This is always satisfied if $E=L$, because $L[a]=L[f(a)]=L$.

$(\mr{ii})$ For each $a\in X$, the $L$-differential $d^L_a f:T^L_a(X)\to T^L_{f(a)}(Y)$ of $f$ at $a$ is well-defined and coincides with the usual differential $d_a f:T_a(X)\to T_{f(a)}(Y)$ of the map $f:X\to Y$ between the $(L)$-algebraic sets $X\subset L^n$ and $Y\subset L^m$, which is ($L$-)regular at $a$. We refer the reader to \cite[Ch.2.\S1.3]{sha} for a detailed study of the usual differential.
 $\sqbullet$ 
\end{remarks}

Consider again a map $f:X\to Y$ between $K$-algebraic sets $X\subset L^n$ and $Y\subset L^m$, and a point $a\in X\cap(\ove^\sqbullet)^n$ such that $f$ is $K$-regular at $a$. Set $b:=f(a)\in Y\cap(\ove^\sqbullet)^m$. Let $\gtM_a:=\gtn_aE[\x]_{\gtn_a}/(\II_K(X)E[\x]_{\gtn_a})$ be the maximal ideal of $\reg^{E|K}_{X,a}$ and let $\gtM_b:=\gtn_bE[\y]_{\gtn_b}/(\II_K(Y)E[\y]_{\gtn_b})$ be the maximal ideal of $\reg^{E|K}_{Y,b}$, where $\gtn_a:=\II_E(\{a\})=\{\ell\in E[\x]:\ell(a)=0\}$, $E[\y]:=E[\y_1,\ldots,\y_m]$, and $\gtn_b:=\II_E(\{b\})=\{\ell\in E[\y]:\ell(b)=0\}$.

Given an element $\frac{h}{k}$ of $E[\y]_{\gtn_b}$, we define the element $\frac{h}{k}\circ f$ of $E[\x]_{\gtn_a}$ in the usual way as follows. First, write $f=\big(\frac{p_1}{q},\ldots,\frac{p_m}{q}\big)$ with a common denominator $q$ on some $K$-Zariski open neighborhood of $a$ in $X\subset L^n$, and $h=\sum_{i=0}^dh_i$ and $k=\sum_{i=0}^ek_i$, where $h_i,k_i\in K[\y]$ are homogeneous polynomials of degree $i$, $h_d\neq0$ and $k_e\neq0$. Then define
\begin{equation*}
\textstyle
\frac{h}{k}\circ f:=\frac{q(\x)^e\sum_{i=0}^dh_i(p_1(\x),\ldots,p_m(\x))q(\x)^{d-i}}{q(\x)^d\sum_{i=0}^ek_i(p_1(\x),\ldots,p_m(\x))q(\x)^{e-i}}.
\end{equation*}

Define the pullback homomorphism $f^*:\reg^{E|K}_{Y,b}\to\reg^{E|K}_{X,a}$ and the pullback homomorphism $f^{**}:\gtM_b/\gtM_b^2\to\gtM_a/\gtM_a^2$ induced by $f$ as follows:
\begin{align}\label{eqf*}
\textstyle
f^*(\frac{h}{k}+\II_K(Y)E[\y]_{\gtn_b})&\textstyle:=\frac{h}{k}\circ f+\II_K(X)E[\x]_{\gtn_a},\\
\label{eqf**}\textstyle
f^{**}\big((\frac{h}{k}+\II_K(Y)E[\y]_{\gtn_b})+\gtM_b^2\big)&\textstyle:=(\frac{h}{k}\circ f+\II_K(X)E[\x]_{\gtn_a})+\gtM_a^2.
\end{align}

If $E[a]=E[b]$, we equip $\gtM_a/\gtM_a^2$ and $\gtM_b/\gtM_b^2$ with their structures of $E[a]$-vector space and $f^{**}$ is an $E[a]$-linear map.

From Definition \ref{def:differential} and equality \eqref{composition-jacobians}, we immediately deduce the following result:

\begin{lem}\label{455}
Let $X\subset L^n$ and $Y\subset L^m$ be $K$-algebraic sets, let $f\in\reg^K(X,Y)$, let $a\in X\cap(\ove^\sqbullet)^n$, let $b:=f(a)\in Y\cap(\ove^\sqbullet)^m$, let $\gtM_a$ be the maximal ideal of $\reg^{E|K}_{X,a}$ and let $\gtM_b$ be the maximal ideal of $\reg^{E|K}_{Y,b}$. Define the $E[a]$-linear isomorphism $\Psi_a:\gtM_a/\gtM_a^2\to(T^{E|K}_a(X))^\wedge$ and the $E[b]$-linear isomorphism $\Psi_b:\gtM_b/\gtM_b^2\to(T^{E|K}_b(Y))^\wedge$ as in the statement of Lemma \ref{451}. Then, if $E[a]=E[b]$, the following diagram of $E[a]$-linear maps makes sense and is commutative:
\begin{center}
\begin{tikzpicture}

\node (A) at (0,0) [] {$(T^{E|K}_b(Y))^\wedge$}; 

\node (B) at (4,0) [] {$(T^{E|K}_a(X))^\wedge$}; 

\node (A2) at (0,-2) [] {$\gtM_b/\gtM_b^2$}; 

\node (B2) at (4,-2) [] {$\gtM_a/\gtM_a^2$}; 

\draw [->] (A) --node[above, ]{\scriptsize $(d^{E|K}_a f)^\wedge$} (B);

\draw [->] (A2) --node[left, ]{\scriptsize $\Psi_b$} (A);

\draw [->] (B2) --node[right, ]{\scriptsize $\Psi_a$} (B);

\draw [->] (A2) --node[above, ]{\scriptsize $f^{**}$} (B2);

\node at (2,-.9) [] {$\circlearrowleft$};

\node at (.2,-1) [] {\scriptsize $\cong$};

\node at (3.8,-1) [] {\scriptsize $\cong$};
\end{tikzpicture}
\end{center}
\end{lem}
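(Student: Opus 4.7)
The strategy is a direct verification: fix a representative of an arbitrary class in $\gtM_b/\gtM_b^2$, chase it around the diagram, and show both paths yield the same linear functional on $T^{E|K}_a(X)$. All four arrows are $E[a]$-linear (using $E[a]=E[b]$), so it suffices to work with a single class. Pick $\xi=\bigl(\tfrac{h}{k}+\II_K(Y)E[\y]_{\gtn_b}\bigr)+\gtM_b^2$ with $h\in\gtn_b$ and $k\in E[\y]\setminus\gtn_b$, and fix $v\in T^{E|K}_a(X)$. Using Remark \ref{rem:reg}, write $f=\bigl(\tfrac{p_1}{q},\ldots,\tfrac{p_m}{q}\bigr)$ on a $K$-Zariski open neighborhood of $a$, with common denominator $q\in K[\x]$, $q(a)\neq0$, and $p_j(a)=q(a)b_j$ for each $j$.

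\emph{The right-hand path.} By the definition of $\Psi_b$ in Lemma \ref{451}, applying $\Psi_b$ to $\xi$ and evaluating at $d^{E|K}_af(v)=J_a(f)v$ gives
\begin{equation*}
\bigl((d^{E|K}_af)^\wedge\circ\Psi_b\bigr)(\xi)(v)=k(b)^{-1}\langle\nabla h(b),J_a(f)v\rangle,
\end{equation*}
where $(J_a(f)v)_j=q(a)^{-2}\bigl(q(a)\langle\nabla p_j(a),v\rangle-p_j(a)\langle\nabla q(a),v\rangle\bigr)$.

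\emph{The left-hand path.} Write $h=\sum_{i=0}^d h_i$ and $k=\sum_{i=0}^e k_i$ as sums of homogeneous components, and set $P:=\sum_ih_i(p_1,\ldots,p_m)q^{d-i}$ and $Q:=\sum_ik_i(p_1,\ldots,p_m)q^{e-i}$, so that $\tfrac{h}{k}\circ f=\tfrac{q^eP}{q^dQ}$ by the formula following \eqref{eqf*}. Using $p_j(a)=q(a)b_j$ and the homogeneity of each $h_i,k_i$, one computes $P(a)=q(a)^dh(b)=0$ and $Q(a)=q(a)^ek(b)\neq0$; this confirms that $f^{**}(\xi)$ is a well-defined class in $\gtM_a/\gtM_a^2$ and gives the denominator contribution $q(a)^d Q(a)=q(a)^{d+e}k(b)$ when $\Psi_a$ is applied. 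Since $P(a)=0$, Leibniz yields $\nabla(q^eP)(a)=q(a)^e\nabla P(a)$, and the key calculation is to show
\begin{equation*}
\langle\nabla P(a),v\rangle=q(a)^d\langle\nabla h(b),J_a(f)v\rangle.
\end{equation*}
This follows by differentiating $P$ term by term, using homogeneity $\frac{\partial h_i}{\partial\y_j}(q(a)b)=q(a)^{i-1}\frac{\partial h_i}{\partial\y_j}(b)$ and Euler's identity $\sum_i ih_i(b)=\langle b,\nabla h(b)\rangle$ to kill the $\nabla q(a)$ coefficient that arises from $h(b)=0$, and finally combining with $p_j(a)=q(a)b_j$ to recognize $q(a)\langle\nabla p_j(a),v\rangle-p_j(a)\langle\nabla q(a),v\rangle=q(a)^2(J_a(f)v)_j$.

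\emph{Conclusion and obstacle.} Putting the two computations together gives
\begin{equation*}
\bigl(\Psi_a\circ f^{**}\bigr)(\xi)(v)=\bigl(q(a)^{d+e}k(b)\bigr)^{-1}q(a)^e\cdot q(a)^d\langle\nabla h(b),J_a(f)v\rangle=k(b)^{-1}\langle\nabla h(b),J_a(f)v\rangle,
\end{equation*}
matching the right-hand path and establishing commutativity. The main obstacle is the bookkeeping in the intermediate step: correctly exploiting the homogeneous decomposition of $h$ and $k$ together with the relations $p_j(a)=q(a)b_j$ and $\frac{\partial h_i}{\partial\y_j}(q(a)b)=q(a)^{i-1}\frac{\partial h_i}{\partial\y_j}(b)$ to verify that the $\nabla q(a)$-terms combine (via Euler's identity applied to $h$ and the vanishing $h(b)=0$) into exactly the quotient-rule expression for $J_a(f)v$. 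Conceptually this is just the chain rule for rational functions, but it must be carried out in the purely algebraic $E|K$-setting without appealing to analytic derivatives.
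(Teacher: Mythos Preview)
Your proposal is correct and follows essentially the same route as the paper. The paper's proof is the single sentence ``From Definition~\ref{def:differential} and equality~\eqref{composition-jacobians}, we immediately deduce the following result'': in other words, the commutativity of the square is nothing but the chain rule $J_a(\phi\circ f)v=\langle\nabla h(b),J_a(f)v\rangle\cdot k(b)^{-1}$ for $\phi=\tfrac{h}{k}$, and the paper regards this as an immediate extension of the polynomial chain rule~\eqref{composition-jacobians}. Your argument makes this extension explicit, carrying out by hand the homogeneous-component and Euler-identity bookkeeping needed to pass from polynomial $h$ to rational $\tfrac{h}{k}$; the computation is accurate and the key identity $\langle\nabla P(a),v\rangle=q(a)^d\langle\nabla h(b),J_a(f)v\rangle$ is exactly what is needed.
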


We conclude this section with a result, which we will use in Subsection \ref{subsec:Whitney}: it states that, if $L$ is a real closed field, then each $K$-regular map can be expressed as a global fraction.

\begin{lem}\label{lem437}
Let $X$ be a subset of $L^n$, let $Y$ be a subset of $L^m$ and let $f\in\reg^K(X,Y)$. If $L$ is a real closed field, then there exist polynomials $p_1,\ldots,p_m,q\in K[\x]$ such that $X\cap \ZZ_L(q)=\varnothing$ and $f(x)=\big(\frac{p_1(x)}{q(x)},\ldots,\frac{p_m(x)}{q(x)}\big)$ for each $x\in X$.
\end{lem}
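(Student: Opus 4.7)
The strategy is the classical sum-of-squares gluing trick, made available by the hypothesis that $L$ is real closed.

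First I would reduce, via Remark \ref{rem:reg}, to the following local data: for each $a \in X$, choose polynomials $p_{a,1},\ldots,p_{a,m},q_a \in K[\x]$ with a common denominator such that, setting $U_a := L^n \setminus \ZZ_L(q_a)$, the point $a$ belongs to $U_a$ and $f = (p_{a,1}/q_a,\ldots,p_{a,m}/q_a)$ on $X \cap U_a$. The family $\{U_a \cap X\}_{a \in X}$ covers $X$ in the $K$-Zariski topology on $X$ induced from $L^n$. Since the $K$-Zariski topology on $L^n$ is Noetherian (Definition \ref{def:K-zar}) and Noetherianity passes to subspaces, we can extract a finite subcover $\{U_j \cap X\}_{j=1}^N$ with associated polynomials $p_{j,1},\ldots,p_{j,m},q_j \in K[\x]$.

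Next I would define, for $i \in \{1,\ldots,m\}$,
\[
q := \sum_{j=1}^N q_j^2 \in K[\x], \qquad p_i := \sum_{j=1}^N p_{j,i}\, q_j \in K[\x].
\]
Because $L$ is real closed and at every $x \in X$ at least one $q_j(x) \neq 0$, the sum of squares $q(x) = \sum_j q_j(x)^2$ is strictly positive, so $X \cap \ZZ_L(q) = \varnothing$. This is exactly the step that breaks down for a general field and crucially uses the real closedness of~$L$.

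Finally I would verify $p_i(x)/q(x) = f_i(x)$ pointwise on $X$. Fix $x \in X$, set $J(x) := \{j : q_j(x) \neq 0\}$ and pick any $k \in J(x)$. For $j \notin J(x)$ the term $p_{j,i}(x) q_j(x)$ vanishes, while for $j \in J(x)$ the identity $p_{j,i}/q_j = p_{k,i}/q_k$ on $X \cap U_j \cap U_k$ gives $p_{j,i}(x) = p_{k,i}(x) q_j(x)/q_k(x)$. Hence
\[
p_i(x) = \sum_{j \in J(x)} p_{k,i}(x)\,\frac{q_j(x)^2}{q_k(x)} = \frac{p_{k,i}(x)}{q_k(x)} \sum_{j \in J(x)} q_j(x)^2 = f_i(x)\, q(x),
\]
which is the desired global representation. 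The only genuine obstacle in the argument is the need to produce a single non-vanishing denominator on~$X$ from finitely many locally non-vanishing ones; this is precisely what the sum-of-squares construction supplies in the real closed setting, and the rest of the verification is a routine overlap computation.
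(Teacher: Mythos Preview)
Your proof is correct and follows essentially the same approach as the paper: both arguments use Noetherianity of the $K$-Zariski topology to extract a finite cover, then glue via the sum-of-squares denominator $q=\sum_j q_j^2$ and numerators $p_i=\sum_j p_{j,i}q_j$. The only cosmetic difference is that the paper first reduces to $m=1$, whereas you handle all components at once and spell out the overlap verification in more detail.
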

\begin{proof}
A standard argument works. Evidently, we can assume $m=1$ and $Y=L$. As $f:X\to L$ is $K$-regular, for each $a\in X$, there exist polynomials $p_a,q_a\in K[\x]$ such that $q_a(a)\neq0$ and $f(x)=\frac{p_a(x)}{q_a(x)}$ for all $x\in X\setminus\ZZ_L(q_a)$. The set $X$ equipped with the $K$-Zariski topology is Noetherian, hence compact. Thus, there exist $a_1,\ldots,a_\ell\in X$ such that $X=\bigcup_{i=1}^\ell(X\setminus\ZZ_L(q_{a_i}))$. If we define the polynomials $p,q\in K[\x]$ by $p:=\sum_{i=1}^\ell q_{a_i}p_{a_i}$ and $q:=\sum_{i=1}^\ell q_{a_i}^2$, then $X\cap\ZZ_L(q)=\varnothing$ and $\frac{p(x)}{q(x)}=f(x)$ for all $x\in X$, as required.
\end{proof}

\section{$E|K$-nonsingular and $E|K$-singular points of $K$-algebraic subsets of $L^n$}\label{s4}

In this section, as in the previous one, we assume that $L|E|K$ is an extension of fields such that $L$ is either algebraically closed or real closed, and $\ove^\sqbullet$ is the algebraic closure of $E$ in $L$. If $L$ is algebraically closed, then $\ove^\sqbullet=\ol{E}$ is the algebraic closure of $E$. If $L$ is real closed, then we assume that both $E$ and $K$ are ordered subfields of $L$, so $\ove^\sqbullet=\ove^r$ is the real closure of $E$. Evidently, $\ove^\sqbullet=L$ if $E=L$.

\emph{Although $E$ is chosen arbitrarily with the property $K\subset E\subset L$, the extreme cases $E=L$ and $E=K$ are of particular importance, as we shall see in Subsections \ref{subsec:L|K-E|K}, \ref{subsec:R|R-R|K}, \ref{subsec:proj2} and \ref{nash-tognoli-Q}.}

\emph{We will use Remark \emph{\ref{dime}} freely, and set $\dim(S):=\dim_L(S)$ for each algebraic set $S\subset L^n$.}

\subsection{Nonsingular and singular points. The Jacobian criterion}
We introduce the notions of $E|K$-nonsingular and $E|K$-singular points of a $K$-algebraic subset of $L^n$. We then prove a related Jacobian criterion.

Recall that, given a point $a\in L^n$, the ideal $\gtn_a:=\II_E(\{a\})=\{f\in E[\x]:f(a)=0\}$ of $E[\x]$ is by \eqref{maximal-e} maximal if and only if $a\in(\ove^\sqbullet)^n$. Moreover, if $a\in(\ove^\sqbullet)^n$, the subring $E[a]$ of $L$ coincides with the subfield $E(a)$ of $L$. For these reasons, we focus on the points in $(\ove^\sqbullet)^n$.

\begin{defn}[$E|K$-nonsingular points]\label{E|K-regular}
Let $X\subset L^n$ be a $K$-algebraic set, let $a\in X\cap(\ove^\sqbullet)^n$ and let $\reg^{E|K}_{X,a}=E[\x]_{\gtn_a}/(\II_K(X)E[\x]_{\gtn_a})$ be the $E|K$-local ring of $X$ at $a$. Set $d:=\dim(X)$.

We say that $a$ is a \emph{$E|K$-nonsingular point of $X$ of dimension~$e$} if $\reg^{E|K}_{X,a}$ is a regular local ring of dimension $e$. An \emph{$E|K$-nonsingular point of $X$} is a $E|K$-nonsingular point of $X$ of dimension $d$. We denote $\Reg^{E|K}(X,e)$ the set of all $E|K$-nonsingular points of~$X$ of dimension $e$, and $\Reg^{E|K}(X):=\Reg^{E|K}(X,d)$ the set of all $E|K$-nonsingular points of~$X$. We say that $\Reg^{E|K}(X)$ is the \emph{$E|K$-nonsingular locus of $X$}. We define the \emph{$E|K$-singular locus $\sing^{E|K}(X)$ of $X$} by $\sing^{E|K}(X):=(X\cap(\ove^\sqbullet)^n)\setminus\Reg^{E|K}(X)$ and we call a point of this set an \emph{$E|K$-singular point of $X$.}

If $E=L$, we also introduce the following simplified notations that we will use a few times later: we define a \emph{$K$-nonsingular point of $X$ (of dimension~$e$)} as an $L|K$-nonsingular point of $X$ (of dimension $e$), $\Reg^K(X,e):=\Reg^{L|K}(X,e)$, $\Reg^K(X):=\Reg^{L|K}(X)$, $\sing^K(X):=\Sing^{L|K}(X)$ and a \emph{$K$-singular point of $X$} as an $L|K$-singular point of~$X$. We say that the $K$-algebraic set $X\subset L^n$ is \emph{$K$-nonsingular} if each of its points is $K$-nonsingular, that is, $\Reg^K(X)=X$. $\sqbullet$
\end{defn}

\begin{remark}\label{regolare}
If $L=E=K$ (so $K$ is either an algebraically closed field or a real closed field), then a $K$-nonsingular point $a$ of the algebraic set $X\subset L^n$ of dimension $e$ is a usual nonsingular point of $X$ of dimension $e$ for each $a\in X$, $\Reg^K(X)$ is the usual set $\Reg(X)$ of nonsingular points of $X$ and $\sing^K(X)$ is the usual set $\sing(X)$ of singular points of $X$, see \cite[Ch.I.\S.5]{ha} and \cite[Sect.3.3]{bcr}. $\sqbullet$
\end{remark}

\begin{remark}[Galois group action invariance]
If $X\subset L^n$ is a $K$-algebraic set, the sets $\Reg^{E|K}(X,e)$ and the set $\sing^{E|K}(X)$ are invariant under the action of the Galois group $G(L:E)$ when $L$ is algebraically closed, whereas they are invariant under the action of $G(L[\ii]:E)$ when $L$ is real closed. This follows immediately from Lemma \ref{inv-E|K}. $\sqbullet$
\end{remark}

\begin{remark}[Easy comparison]\label{reg-bcr}
Consider again a $K$-algebraic set $X\subset L^n$. Under certain conditions on $L|E|K$, it is straightforward to compare nonsingular and singular loci of $X$.

Suppose that $\II_L(X)=\II_E(X)L[\x]$. This happens, for instance, if $L$ is algebraically closed (Corollary \ref{kreliablec}), if $L$ is a real closed field and $X\subset L^n$ is defined over $K$ (Definition \ref{def:overK} \& Remark \ref{exa:2410}$(\mr{i}')$) or $L|K$ is an extension of real closed fields (Corollary \ref{inter}$(\mr{ii})$), because in all the previous cases $\II_L(X)=\II_K(X)L[\x]$, so $\II_L(X)=\II_E(X)L[\x]$. By Definition~\ref{sub-alg-reg}, $\reg^{L|K}_{X,a}\cong\reg^{E|K}_{X,a}\otimes_EL$ for each $a\in X\cap(\ove^\sqbullet)^n$. By base change \cite[(33E), Lem.4, p.253]{m}, if $a\in X\cap(\ove^\sqbullet)^n$, the local ring $\reg^{L|K}_{X,a}$ is regular if and only if so is $\reg^{E|K}_{X,a}$. Therefore, $\Reg^{E|K}(X,e)=\Reg^{L|K}(X,e)\cap(\ove^\sqbullet)^n$ for each $e\in\N$.

In addition, we have $\reg_{X,a}=\reg^{L|L}_{X,a}=\reg^{L|E}_{X,a}$ for each $a\in X$. By Definition \ref{sub-alg-reg} and Remark \ref{regolare}, $\Reg^{L|L}(X,e)=\Reg^{L|E}(X,e)$ for each $e\in\N$, $\Reg(X)=\Reg^{L|L}(X)=\Reg^{L|E}(X)$ and $\Sing(X)=\Sing^{L|L}(X)=\Sing^{L|E}(X)$.

The reader observes that the previous `easy comparison' cases do not include the case of main interest of this paper, that is, when $L$ and $K$ are ordered fields, the ordering of $L$ extends the one of~$K$, $L$ is a real closed field but $K$ is not.

For comparison results in the main case, we refer to Subsections \ref{subsec:L|K-E|K} and \ref{subsec:R|R-R|K}. $\sqbullet$
\end{remark}

\begin{remark}
Let $X\subset L^n$ and $Y\subset L^m$ be $K$-algebraic sets. If 
$f:X\to Y$ is a $K$-biregular isomorphism (see Definition \ref{def:331}), then $\Reg^{E|K}(Y,e)=f(\Reg^{E|K}(X,e))$ for each $e\in\N$. In particular, $\Reg^{E|K}(Y)=f(\Reg^{E|K}(X))$ and $\Sing^{E|K}(Y)=f(\Sing^{E|K}(X))$. To prove this, it is enough to observe that $f(X\cap(\ove^\sqbullet)^n)=Y\cap(\ove^\sqbullet)^m$ and the pullback homomorphism $f^*:\reg^{E|K}_{Y,f(a)}\to\reg^{E|K}_{X,a}$ defined as in \eqref{eqf*} is a ring isomorphism. $\sqbullet$
\end{remark}

\emph{Pick $a\in(\ove^\sqbullet)^n$.} Let $s\in\N^*$ and let $\mc{M}_{s,n}(E[a])$ be the set of all $(s\times n)$-matrices with coefficients in the field $E[a]$. As in Subsection \ref{urs}, if $M\in\mc{M}_{s,n}(E[a])$, we denote ${\rm rk}(M)$ the rank of~$M$. Sometimes, for short, we will use the symbol ${\rm rk}\,M$ instead of ${\rm rk}(M)$.

\emph{Pick an ideal $\gta$ of $K[\x]$ such that $\gta\subset\gtn_a:=\II_E(\{a\})$.} We define the \emph{rank ${\rm rk}_a(\gta)$ of $\gta$ at $a$} as
\begin{equation}\label{eq:rk0}
{\rm rk}_a(\gta):={\rm rk}\Big(\frac{\partial g_i}{\partial\x_j}(a)\Big)_{i=1,\ldots,s,\, j=1,\ldots,n}\ ,
\end{equation}
where $\{g_1,\ldots,g_s\}$ is a system of generators of $\gta$ in $K[\x]$. As $\gta\subset\gtn_a$, the rank ${\rm rk}_a(\gta)$ is well-defined, that is, it does not depend on the chosen system of generators.

Let $\gtN_a:=\gtn_a E[\x]_{\gtn_a}$ be the maximal ideal of $E[\x]_{\gtn_a}$ and let $\gtN_a/\gtN_a^2$ be the $E[a]$-vector space, whose scalar multiplication was defined in \eqref{eq:star}. Observe that $(\gta E[\x]_{\gtn_a}+\gtN_a^2)/\gtN_a^2$ is an $E[a]$-vector subspace of $\gtN_a/\gtN_a^2$, so we can speak about the dimension $\dim_{E[a]}\big((\gta E[\x]_{\gtn_a}+\gtN_a^2)/\gtN_a^2\big)$ of $(\gta E[\x]_{\gtn_a}+\gtN_a^2)/\gtN_a^2$ as an $E[a]$-vector space.

\begin{lem}\label{rk}
If $a$ is a point of $(\ove^\sqbullet)^n$, $\gta$ is an ideal of $K[\x]$ such that $\gta\subset\gtn_a$ and $r:=\dim_{E[a]}\big((\gta E[\x]_{\gtn_a}+\gtN_a^2)/\gtN_a^2\big)$, then
\begin{equation}\label{eq:rk}
r={\rm rk}_a(\gta).
\end{equation}

In particular, if $\{g_1,\ldots,g_s\}$ is a system of generators of $\gta$ in $K[\x]$, then $r\leq\min\{s,n\}$.
\end{lem}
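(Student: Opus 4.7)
The plan is to reduce the equality $r = {\rm rk}_a(\gta)$ to an isomorphism between the cotangent space $\gtN_a/\gtN_a^2$ and $E[a]^n$ that sends the class of each generator $g_i$ to its gradient $\nabla g_i(a)$. Once this isomorphism is in place, the subspace $(\gta E[\x]_{\gtn_a} + \gtN_a^2)/\gtN_a^2$ will correspond to the column/row span of the Jacobian matrix defining ${\rm rk}_a(\gta)$, and the equality is immediate.

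First I would construct an $E[a]$-linear map $\Psi:\gtN_a/\gtN_a^2 \to E[a]^n$. Observe that, since $\gtn_a$ is prime in $E[\x]$, an element $p/q \in E[\x]_{\gtn_a}$ lies in $\gtN_a = \gtn_a E[\x]_{\gtn_a}$ if and only if $p \in \gtn_a$. For such a representative I would set
\[
\Psi\bigl(p/q + \gtN_a^2\bigr) := q(a)^{-1}\nabla p(a).
\]
To check this is well-defined I would use the Leibniz rule: if $p_1/q_1,\,p_2/q_2\in\gtN_a$, then $\nabla(p_1 p_2)(a) = p_1(a)\nabla p_2(a) + p_2(a)\nabla p_1(a) = 0$ since $p_i(a)=0$, so the kernel of $p/q\mapsto q(a)^{-1}\nabla p(a)$ contains $\gtN_a^2$. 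Linearity with respect to the scalar multiplication $\ast$ defined in \eqref{eq:star} is a short direct verification.

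Next I would show $\Psi$ is an $E[a]$-linear isomorphism. By Corollary \ref{regular}, the local ring $E[\x]_{\gtn_a}$ is regular of dimension $n$ with regular system of parameters $f_1,\dots,f_n$ from Lemma \ref{lem:gtn_a}, so $\gtN_a/\gtN_a^2$ has $E[a]$-dimension $n$ with basis $\{f_i + \gtN_a^2\}_{i=1}^n$. Under $\Psi$ these are sent to the rows $\nabla f_i(a)$ of the matrix $F$ of Remark \ref{F}, which is invertible. Hence $\Psi$ maps a basis to a basis and is an isomorphism.

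Finally, I would identify $(\gta E[\x]_{\gtn_a} + \gtN_a^2)/\gtN_a^2$ as the $E[a]$-span of the classes $g_i + \gtN_a^2$: given $\sum_i u_i g_i$ with $u_i \in E[\x]_{\gtn_a}$, write $u_i = u_i(a) + (u_i - u_i(a))$; since $u_i - u_i(a) \in \gtN_a$ and $g_i \in \gtN_a$, one has $(u_i-u_i(a))g_i \in \gtN_a^2$, so $\sum u_i g_i \equiv \sum u_i(a)\ast g_i \pmod{\gtN_a^2}$. Applying $\Psi$, this span corresponds to the $E[a]$-span of $\{\nabla g_1(a),\dots,\nabla g_s(a)\}$ in $E[a]^n$, whose dimension is precisely ${\rm rk}_a(\gta)$ by \eqref{eq:rk0}. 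Thus $r = {\rm rk}_a(\gta)$, and the bound $r \leq \min\{s,n\}$ is immediate from the fact that ${\rm rk}_a(\gta)$ is the rank of an $s\times n$ matrix (equivalently, from $r\leq s$ via the generating set and $r\leq n = \dim_{E[a]}\gtN_a/\gtN_a^2$).

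The proof is essentially routine once the isomorphism $\Psi$ is in place; the only point demanding care is the well-definedness of $\Psi$ at the level of the localization, i.e.\ checking that passing from $\gtn_a \subset E[\x]$ to $\gtN_a \subset E[\x]_{\gtn_a}$ preserves the formula $g\mapsto \nabla g(a)$ (up to the factor $q(a)^{-1}$) and that $\gtN_a^2$ is killed. Everything else is elementary linear algebra over the residue field $E[a]$.
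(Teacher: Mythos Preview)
Your proof is correct and takes essentially the same approach as the paper: both use the regular system of parameters $f_1,\dots,f_n$ from Corollary \ref{regular} as a basis of $\gtN_a/\gtN_a^2$, together with the invertibility of $F=(\partial f_i/\partial\x_j(a))$ from Remark \ref{F}, to identify the span of the classes $g_i+\gtN_a^2$ with the span of the gradients $\nabla g_i(a)$. The only difference is packaging: you build the isomorphism $\Psi:\gtN_a/\gtN_a^2\to E[a]^n$, $p/q+\gtN_a^2\mapsto q(a)^{-1}\nabla p(a)$ explicitly, whereas the paper writes $g_i=\sum_j h_{ij}f_j$ and deduces the matrix identity $G=HF$ with $H=(h_{ij}(a))$, so that ${\rm rk}(G)={\rm rk}(H)=r$.
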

\begin{proof}
Let $\{g_1,\ldots,g_s\}$ be a system of generators of $\gta$ in $K[\x]$ and let $f_1,\ldots,f_n\in E[\x]$ be the polynomials defined in the statement of Lemma \ref{lem:gtn_a}, which constitute a system of generators of $\gtn_a$. As each $g_i$ belongs to $\gtn_a$, by Lemma \ref{lem:gtn_a}, there exist $h_{i1},\ldots,h_{in}\in E[\x]$ such that $g_i=\sum_{j=1}^nh_{ij}f_j$. It follows that $\{g_1+\gtN_a^2,\ldots,g_s+\gtN_a^2\}$ is a system of generators of $(\gta E[\x]_{\gtn_a}+\gtN_a^2)/\gtN_a^2$ and $g_i+\gtN_a^2=\sum_{j=1}^n(h_{ij}f_j+\gtN_a^2)=\sum_{j=1}^nh_{ij}(a)\ast(f_j+\gtN_a^2)$. By Corollary \ref{regular}, $\{f_1,\ldots,f_n\}$ is a regular system of parameters of $E[\x]_{\gtn_a}$, so by \cite[Cor.2, p.302]{zs2}, $\{f_1+\gtN_a^2,\ldots,f_n+\gtN_a^2\}$ is an $E[a]$-vector basis of $\gtN_a/\gtN_a^2$. Define $H:=(h_{ij}(a))_{i=1,\ldots,s,\, j=1,\ldots,n}\in\mc{M}_{s,n}(E[a])$, $G:=\big(\frac{\partial g_i}{\partial\x_j}(a)\big)_{i=1,\ldots,s,\, j=1,\ldots,n}\in\mc{M}_{s,n}(E[a])$ and $F:=\big(\frac{\partial f_i}{\partial\x_j}(a)\big)_{i,j=1,\ldots,n}\in\mc{M}_{n,n}(E[a])$. As $\nabla g_i(a)=\sum_{j=1}^nh_{ij}(a)\nabla f_j(a)$, we deduce $G=HF$. By Remark \ref{F}, the matrix $F$ is invertible. Thus, ${\rm rk}_a(\gta)={\rm rk}(G)={\rm rk}(H)=\dim_{E[a]}((\gta E[\x]_{\gtn_a}+\gtN_a^2)/\gtN_a^2)=r$, as required.
\end{proof}

As $\dim(E[\x]_{\gtn_a})=n$ (by Corollary \ref{regular}) and $E[\x]_{\gtn_a}$ is a localization of a ring of polynomials, we have
\begin{equation} \label{eq:ht}
\hgt(\gta E[\x]_{\gtn_a})=n-\dim(E[\x]_{\gtn_a}/(\gta E[\x]_{\gtn_a})).
\end{equation}
Indeed, by \cite[Prop.3.11\;\&\;Ch.3,Ex.4]{am}, $\hgt(\gta E[\x]_{\gtn_a})=\hgt(\gta E[\x])$ and $\dim(E[\x]_{\gtn_a}/(\gta E[\x]_{\gtn_a}))=\dim((E[\x]/\gta E[\x])_{\gtn_a/\gta E[\x]})$. By \cite[Prop.3.11 \& Thm.11.25]{am}, we conclude
\begin{align*}
\dim((E[\x]/\gta E[\x])_{\gtn_a/\gta E[\x]})&=\hgt(\gtn_a/\gta E[\x])=\hgt(\gtn_a)-\hgt(\gta E[\x])\\
&=\dim(E[\x])-\hgt(\gta E[\x]_{\gtn_a})=n-\hgt(\gta E[\x]_{\gtn_a}),
\end{align*}
so equation \eqref{eq:ht} holds.

The next result is a $E|K$-version of a classical result: here the point is that we consider an ideal $\gta$ of $K[\x]$ and we analyze the regularity of the ring $E[\x]_{\gtn_a}/(\gta E[\x]_{\gtn_a})$ for a point $a\in(\ove^\sqbullet)^n$.

\begin{lem}\label{localregular}
Let $a\in(\ove^\sqbullet)^n$, let $\gta$ be an ideal of $K[\x]$ such that $\gta\subset\gtn_a$, let $r:={\rm rk}_a(\gta)$, and let $A:=E[\x]_{\gtn_a}/(\gta E[\x]_{\gtn_a})$. The following four conditions are equivalent.
\begin{itemize}
\item[$(\mr{i})$] The local ring $A$ is regular.
\item[$(\mr{ii})$] If $r=0$, then $\gta=(0)$. If $r>0$, then there exist $r$ elements $g_1,\ldots,g_r$ of $\gta$ such that $\gta E[\x]_{\gtn_a}=(g_1,\ldots,g_r)E[\x]_{\gtn_a}$.
\item[$(\mr{iii})$] $\dim(A)=n-r$.
\item[$(\mr{\mr{iv}})$] $\mr{ht}(\gta E[\x]_{\gtn_a})=r$.
\end{itemize}

In addition, if $A$ is not regular, then the minimal cardinality of a system of generators of $\gta E[\x]_{\gtn_a}$ in $E[\x]_{\gtn_a}$ is $>r$, $\dim(A)<n-r$ and $\hgt(\gta E[\x]_{\gtn_a})>r$.
\end{lem}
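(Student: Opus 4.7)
The plan is to reduce everything to the classical theory of regular local rings applied to $R:=E[\x]_{\gtn_a}$, which is regular local of dimension $n$ by Corollary \ref{regular}, with maximal ideal $\gtN_a$ and residue field isomorphic to $E[a]$ by Corollary \ref{c}$(\mr{i})$. Write $I:=\gta E[\x]_{\gtn_a}$, so that $A=R/I$ has maximal ideal $\gtM_a=\gtN_a/I$. The starting observation is Lemma \ref{rk}, which gives $\dim_{E[a]}((I+\gtN_a^2)/\gtN_a^2)=r$; combining it with the short exact sequence $0\to(I+\gtN_a^2)/\gtN_a^2\to\gtN_a/\gtN_a^2\to\gtM_a/\gtM_a^2\to0$ yields $\dim_{E[a]}(\gtM_a/\gtM_a^2)=n-r$. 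This equality, together with the identity $\dim(A)=n-\hgt(I)$ from \eqref{eq:ht}, is the engine behind all four equivalences.

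The equivalences $(\mr{i})\Leftrightarrow(\mr{iii})\Leftrightarrow(\mr{iv})$ will then be immediate: the general inequality $\dim(A)\leq\dim_{E[a]}(\gtM_a/\gtM_a^2)=n-r$ holds for any Noetherian local ring, with equality characterizing regularity, so $(\mr{i})\Leftrightarrow(\mr{iii})$ is just this characterization, while \eqref{eq:ht} translates $(\mr{iii})$ verbatim into $(\mr{iv})$. For $(\mr{ii})\Rightarrow(\mr{iv})$, Krull's height theorem forces $\hgt(I)\leq r$ when $I$ is generated by $r$ elements (or $\hgt(I)=0$ when $\gta=(0)$), which paired with the unconditional lower bound $\hgt(I)\geq r$ coming from the paragraph above yields $\hgt(I)=r$.

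The main obstacle is $(\mr{i})\Rightarrow(\mr{ii})$, where one must actually produce generators lying in $\gta$ itself, not merely in $R$. My plan is: using that every element of $I$ can be written as $g/q$ with $g\in\gta$ and $q(a)\neq0$, the image of $\gta$ already spans $(I+\gtN_a^2)/\gtN_a^2$, so I can pick $g_1,\dots,g_r\in\gta$ whose images there form an $E[a]$-basis. These $g_i$ have $E[a]$-linearly independent images in $\gtN_a/\gtN_a^2$, hence extend to a regular system of parameters of $R$, so $J:=(g_1,\dots,g_r)R$ is such that $R/J$ is a regular local domain of dimension $n-r$. The surjection $R/J\twoheadrightarrow A$ is between local rings of the same dimension $n-r$ (using $(\mr{iii})$); since $R/J$ is a domain, any nonzero ideal has positive height and would strictly drop the dimension, so the kernel $I/J$ must vanish, giving $\gta E[\x]_{\gtn_a}=(g_1,\dots,g_r)E[\x]_{\gtn_a}$. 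The degenerate case $r=0$ is handled separately: regularity of $A$ forces $\hgt(I)=0$, hence $I=(0)$ since $R$ is a domain, hence $\gta=(0)$.

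For the additional statement, the two strict inequalities follow at once from the characterization above, since $\dim(A)\leq n-r$ and $\hgt(I)\geq r$ are unconditional and fail to be equalities precisely when $A$ is not regular. As for the minimal cardinality $\mu$ of a system of generators of $I$ in $R$, the surjection $I/\gtN_a I\twoheadrightarrow(I+\gtN_a^2)/\gtN_a^2$ combined with Nakayama gives $\mu\geq r$ always; and if $\mu=r$ then Krull gives $\hgt(I)\leq r$, which paired with $\hgt(I)\geq r$ would force $\hgt(I)=r$ and hence, by $(\mr{iv})\Rightarrow(\mr{i})$, regularity of $A$, contradicting the hypothesis, so $\mu>r$.
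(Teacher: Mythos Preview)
Your proof is correct and takes a genuinely different (and more elementary) route than the paper for the implication $(\mr{iii})\Rightarrow(\mr{i})$. The paper proves this direction by passing to the completion $\widehat{A}\cong E[a][[\x]]/(h_1,\ldots,h_s)$ via Lemma~\ref{completion}, then applies Weierstrass preparation and division to normalize the $h_i$ and reach a contradiction. You bypass all of this by first computing $\dim_{E[a]}(\gtM_a/\gtM_a^2)=n-r$ from the short exact sequence $0\to(I+\gtN_a^2)/\gtN_a^2\to\gtN_a/\gtN_a^2\to\gtM_a/\gtM_a^2\to 0$ (using Lemma~\ref{rk} and Corollary~\ref{regular}); once this embedding dimension is known, $(\mr{i})\Leftrightarrow(\mr{iii})$ is literally the definition of a regular local ring, and the unconditional bounds $\dim(A)\le n-r$, $\hgt(I)\ge r$ drop out for free. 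For $(\mr{i})\Rightarrow(\mr{ii})$ both approaches are close in spirit---the paper invokes its tailored Theorem~\ref{thm:ZS2-Thm26}, while you argue directly that $R/(g_1,\ldots,g_r)$ is a regular local domain of the same dimension as $A$, forcing the kernel of $R/(g_1,\ldots,g_r)\twoheadrightarrow A$ to vanish. One minor imprecision: not every element of $I$ is of the form $g/q$ with $g\in\gta$ (only with $g\in\gta E[\x]$), but this is harmless since what you actually use---that the images of elements of $\gta$ span $(I+\gtN_a^2)/\gtN_a^2$---follows simply from the fact that a $K[\x]$-generating set of $\gta$ generates $I$ in $R$. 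Your argument for the ``in addition'' clause via Krull's height theorem is also cleaner than tracking through the equivalences.
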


We postpone the proof of the previous result to Appendix \ref{appendix:c}.3. As we shall see, such a proof will crucially use Theorem \ref{thm:ZS2-Thm26}, which is a specific version of a well-known characterization of regular local residue class rings \cite[Thm.26, p.303]{zs2}.

\vspace{3mm}
Recall that $L|E|K$ is a given extension of fields such that $L$ is either algebraically closed or real closed, and $\ove^\sqbullet$ is the algebraic closure of $E$ in $L$.

If $L$ is a real closed field, we can speak about \emph{the (unique) Euclidean topology of $L^n$}, that is, the topology induced by the ordering of $L$ (via the topological product if $n\geq2$). In this situation, $E$ inherits the ordering from $L$ and $\ove^\sqbullet$ coincides with the real closure $\ove^r$ of $E$. Therefore, the Euclidean topology of $(\ove^r)^n$ is equal to the relative topology inherited from the Euclidean topology of $L^n$, because the ordering of $\ove^r$ is the restriction to $\ove^r$ of the ordering of $L$.

In the algebraically closed case, we can also speak about Euclidean topology. In fact, if $L$ is algebraically closed, then $\ove^\sqbullet$ is the algebraic closure $\ove$ of $E$, and there exist an extension of real closed fields $F|R$ such that $F$ is a subfield of $L$, $R$ is a subfield of $\ove$, $L=F[\ii]$ and $\ove=R[\ii]$. Thus, we can equip $L^n$ with the Euclidean topology of $L^n=F^{2n}$ and $\ove^n$ with the Euclidean topology of $\ove^n=R^{2n}$. Evidently, the Euclidean topology of $\ove^n=R^{2n}$ is again the relative topology inherited from the Euclidean topology of $L^n=F^{2n}$. The extension $F|R$ is not uniquely determined by $L|\ove$, so $L^n$ and $\ove^n$ may have several different Euclidean topologies, all of which are finer than the $L$-Zariski and $\ove$-Zariski topologies, respectively. The reason is that, in each of these topologies, the singletons are closed and the polynomial functions $L^n=F^{2n}\to L=F^2$ and $\ove^n=R^{2n}\to\ove=R^2$ are continuous. For more details on Euclidean topologies of $L^n$ with $L$ algebraically closed, see Appendix~\ref{appendix}.

\emph{In the following, if $L$ is an algebraically closed field, we will assume that an extension of real closed fields $F|R$ has been fixed, together with the corresponding Euclidean topologies of $L^n$ and $\ove^n$ induced by the identifications $L^n=F^{2n}$ and $\ove^n=R^{2n}$. In this way, we can speak of ($F$-)semialgebraic subsets of $L^n=F^{2n}$ and of ($R$-)semialgebraic subsets of $\ove^n=R^{2n}$.}

\textit{Consider again a point $a\in(\ove^\sqbullet)^n$}. Recall that $\gtn_a$ denotes the maximal ideal $\{f\in E[\x]:f(a)=0\}$ of $E[\x]$. Define the prime ideal $\gtn_{K,a}:=\{f\in K[\x]:f(a)=0\}=\gtn_a\cap K[\x]$ of $K[\x]$ and identify 
$K[\x]_{\gtn_{K,a}}$ with a subring of $E[\x]_{\gtn_a}$ in the natural way.

The next result is a `localized' version of Corollary \ref{k}.

\begin{lem}\label{lem:222loc}
Let $\gta$ be an ideal of $K[\x]$ and let $a\in\ZZ_{\ove^\sqbullet}(\gta)\subset(\ove^\sqbullet)^n$. Let $g_1,\ldots,g_r\in K[\x]$ be generators of the ideal $\gta E[\x]_{\gtn_a}$ of $E[\x]_{\gtn_a}$. Then $g_1,\ldots,g_r$ are also generators of the ideal $\gta E[\x]_{\gtn_a}\cap K[\x]_{\gtn_{K,a}}$ of $K[\x]_{\gtn_{K,a}}$. In addition, we have:
\begin{equation}\label{equa518}
\gta E[\x]_{\gtn_a}\cap K[\x]_{\gtn_{K,a}}=\gta K[\x]_{\gtn_{K,a}}.
\end{equation}
\end{lem}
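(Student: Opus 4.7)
The plan is to imitate the global argument of Corollary~\ref{k}, working with a $K$-basis $\{u_j\}_{j\in J}$ of $E$ (with $u_{j_0}=1$) together with the uniqueness of expansion furnished by Lemma~\ref{k0}, but adding a small localization-aware twist at the end. First I would observe that the natural map $K[\x]_{\gtn_{K,a}}\hookrightarrow E[\x]_{\gtn_a}$ is injective (since $\gtn_{K,a}=\gtn_a\cap K[\x]$ and $E[\x]$ is an integral domain) and that the inclusions $(g_1,\ldots,g_r)K[\x]_{\gtn_{K,a}}\subset\gta K[\x]_{\gtn_{K,a}}\subset\gta E[\x]_{\gtn_a}\cap K[\x]_{\gtn_{K,a}}$ are immediate. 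The whole lemma then reduces to the single reverse inclusion $\gta E[\x]_{\gtn_a}\cap K[\x]_{\gtn_{K,a}}\subset(g_1,\ldots,g_r)K[\x]_{\gtn_{K,a}}$.

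For that reverse inclusion I would take $\alpha$ in the left-hand side, write $\alpha=f/h$ with $f,h\in K[\x]$ and $h(a)\neq 0$, and (since $1/h$ is a unit of $K[\x]_{\gtn_{K,a}}$) reduce to the claim: \emph{if $f\in K[\x]$ satisfies $f\in(g_1,\ldots,g_r)E[\x]_{\gtn_a}$, then there is $p\in K[\x]\setminus\gtn_{K,a}$ with $pf\in(g_1,\ldots,g_r)K[\x]$.} The hypothesis yields $b\in E[\x]\setminus\gtn_a$ and $c_1,\ldots,c_r\in E[\x]$ with $bf=\sum_{i=1}^r c_i g_i$. Applying Lemma~\ref{k0} to the expansions $b=\sum_j u_j b_j$ and $c_i=\sum_j u_j c_{i,j}$ (with $b_j,c_{i,j}\in K[\x]$) and matching coefficients via the uniqueness assertion of that lemma will produce a family of relations $b_j f=\sum_i c_{i,j} g_i\in(g_1,\ldots,g_r)K[\x]$, one for each $j\in J$.

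The heart of the argument is then the following Nakayama-flavored step. The set
$$
S:=\{b'\in K[\x]\,:\,b'f\in(g_1,\ldots,g_r)K[\x]\}
$$
is an ideal of $K[\x]$ containing every $b_j$; if it were contained in $\gtn_{K,a}$, every $b_j(a)$ would vanish and then $b(a)=\sum_j u_j b_j(a)=0$, contradicting $b\notin\gtn_a$. So $S\not\subset\gtn_{K,a}$, and any $p\in S\setminus\gtn_{K,a}$ fulfills the claim, giving $\alpha=(pf)/(ph)\in(g_1,\ldots,g_r)K[\x]_{\gtn_{K,a}}$.

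The step I expect to be the main obstacle is precisely this last passage from the \emph{family} of relations $\{b_j f\in(g_1,\ldots,g_r)K[\x]\}_{j\in J}$ to a \emph{single} such relation with a denominator that does not vanish at $a$. The obvious shortcut—imitating the global case by singling out the coordinate $b_{j_0}$ corresponding to $u_{j_0}=1$—does \emph{not} work: the condition $b(a)\neq 0$ in $E[a]$ does not force $b_{j_0}(a)\neq 0$, because the $K$-basis $\{u_j\}$ of $E$ may fail to remain $K$-linearly independent after evaluation at $a$ (the point $a$ can introduce $K$-linear relations among the $u_j$ inside $E[a]$). The ideal-theoretic detour above sidesteps this by using the $b_j$'s collectively, rather than individually, to detect the non-vanishing of $b$ at $a$.
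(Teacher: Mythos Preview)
Your proof is correct and follows essentially the same route as the paper's: expand the denominator $b$ and the coefficients $c_i$ along a $K$-basis $\{u_j\}$ of $E$ via Lemma~\ref{k0}, match coefficients to get $b_jf=\sum_i c_{i,j}g_i$ in $K[\x]$ for every $j$, and then exploit $b(a)\neq0$ to locate a suitable $K[\x]$-denominator not vanishing at $a$.

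The only difference is in this last step. Your ``ideal-theoretic detour'' via $S=\{b'\in K[\x]:b'f\in(g_1,\ldots,g_r)K[\x]\}$ is a slight overcomplication: the paper simply observes that $b(a)=\sum_j u_j b_j(a)\neq0$ forces $b_{j_0}(a)\neq0$ for \emph{some} index $j_0$ (not the one with $u_{j_0}=1$, but any index witnessing the non-vanishing), and then the single relation $b_{j_0}f=\sum_i c_{i,j_0}g_i$ with $b_{j_0}\notin\gtn_{K,a}$ already does the job. Your argument that $S\not\subset\gtn_{K,a}$ is exactly this observation, repackaged. Your diagnosis of why one cannot blindly take the index with $u_{j_0}=1$ is correct, and the paper avoids this pitfall in the same way you do---by letting $h_0(a)$ pick the index rather than fixing it in advance.
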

\begin{proof}
Let $\frac{f}{g}\in\gta E[\x]_{\gtn_a}\cap K[\x]_{\gtn_{K,a}}$ with $f,g\in K[\x]$ and $g(a)\neq0$. As $f=\frac{f}{g}g\in \gta E[\x]_{\gtn_a}$, there exist $h_0,\ldots,h_r\in E[\x]$ such that $h_0(a)\neq0$ and $h_0f=\sum_{i=1}^rh_ig_i$ in $E[\x]$. Let $\Bb:=\{u_j\}_{j\in J}$ be a basis of $E$ as a $K$-vector space. By Lemma \ref{k0}, we can write each $h_i$ as follows: $h_i=\sum_{j\in J}u_jh_{ij}$, where each $h_{ij}$ belongs to $K[\x]$ and only finitely many of these polynomials are non-zero. We have:
$$
\sum_{j\in J}u_j(h_{0j}f)=h_0f=\sum_{i=1}^rh_ig_i=\sum_{j\in J}u_j\Big(\sum_{i=1}^rh_{ij}g_i\Big).
$$
As $h_0(a)\neq0$, there exists an index $j_0\in J$ such that $h_{0j_0}(a)\neq0$. Consequently, by the uniqueness assertion in Lemma \ref{k0}, we deduce $h_{0j_0}f=\sum_{i=1}^rh_{ij_0}g_i$, so $f\in\gta K[\x]_{\gtn_{K,a}}$. It follows that $\frac{f}{g}\in\gta K[\x]_{\gtn_{K,a}}$ and $\gta E[\x]_{\gtn_a}\cap K[\x]_{\gtn_{K,a}}\subset\gta K[\x]_{\gtn_{K,a}}$. As the converse inclusion is clear, we conclude that equation \eqref{equa518} holds. As $\frac{f}{g}=\sum_{i=1}^r\frac{h_{ij_0}}{h_{0j_0}g}g_i$ and $\frac{h_{ij_0}}{h_{0j_0}g}\in K[\x]_{\gtn_{K,a}}$ for all $i\in\{1,\ldots,r\}$, we deduce that $g_1,\ldots,g_r$ are generators of the ideal $\gta E[\x]_{\gtn_a}\cap K[\x]_{\gtn_{K,a}}$ of $K[\x]_{\gtn_{K,a}}$, as required.
\end{proof}

A weak version of the previous result was first used in the proof \cite[Lem.2.15]{GS}.

To prove the following result, we adapt the strategy used in the proof of \cite[Prop.3.3.8 \& Prop.3.3.10]{bcr} to the present situation. 

\begin{thm}[$E|K$-Jacobian criterion]\label{Q-jacobian}
Let $a\in(\ove^\sqbullet)^n\subset L^n$, let $\gta$ be an ideal of $K[\x]$ such that $\gta\subset\gtn_a:=\{f\in E[\x]:f(a)=0\}$, and let $e\in\{0,\ldots,n\}$. If $e=n$, then the local ring $E[\x]_{\gtn_a}/(\gta E[\x]_{\gtn_a})$ is regular of dimension $e$ if and only if $\gta=(0)$. If $e<n$, then the following conditions are equivalent.
\begin{itemize}
\item[$(\mr{i})$] The local ring $E[\x]_{\gtn_a}/(\gta E[\x]_{\gtn_a})$ is regular of dimension $e$.
\item[$(\mr{ii})$] There exist polynomials $f_1,\ldots,f_{n-e}\in\gta$ and a $K$-Zariski open  neighborhood $U$ of $a$ in $(\ove^\sqbullet)^n$ such that
$$ 
{\rm rk}\Big(\frac{\partial f_i}{\partial\x_j}(a)\Big)_{i=1,\ldots,n-e,\, j=1,\ldots,n}=n-e
$$
and
$$
\ZZ_{\ove^\sqbullet}(\gta)\cap U=\ZZ_{\ove^\sqbullet}(f_1,\ldots,f_{n-e})\cap U.
$$
\item[$(\mr{ii}')$] There exist polynomials $f_1,\ldots,f_{n-e}\in\gta$ and an Euclidean open neighborhood $V$ of $a$ in $(\ove^\sqbullet)^n$ such that
$$ 
{\rm rk}\Big(\frac{\partial f_i}{\partial\x_j}(a)\Big)_{i=1,\ldots,n-e,\, j=1,\ldots,n}=n-e
$$
and
$$
\ZZ_{\ove^\sqbullet}(\gta)\cap V=\ZZ_{\ove^\sqbullet}(f_1,\ldots,f_{n-e})\cap V.
$$
\end{itemize}
\end{thm}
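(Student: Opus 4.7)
The case $e=n$ follows at once from Lemma \ref{localregular}: regularity of $A:=E[\x]_{\gtn_a}/(\gta E[\x]_{\gtn_a})$ with dimension $n$ is equivalent to $\hgt(\gta E[\x]_{\gtn_a})=0$, which, since $E[\x]_{\gtn_a}$ is a domain by Corollary \ref{regular}, amounts to $\gta=(0)$. For $e<n$ I would run the cycle $(\mr{i})\Rightarrow(\mr{ii})\Rightarrow(\mr{ii}')\Rightarrow(\mr{i})$.

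For $(\mr{i})\Rightarrow(\mr{ii})$, Lemma \ref{localregular} applied to a regular $A$ of dimension $e$ supplies the identity ${\rm rk}_a(\gta)=n-e$ together with polynomials $f_1,\ldots,f_{n-e}\in\gta$ generating $\gta E[\x]_{\gtn_a}$. Fixing generators $\{h_1,\ldots,h_s\}$ of $\gta$ in $K[\x]$, writing each $f_i$ as a $K[\x]$-combination of the $h_k$ and each $h_k$ as an $E[\x]_{\gtn_a}$-combination of the $f_i$, then taking gradients at $a$ (using $h_k(a)=f_i(a)=0$) shows that the two sets of gradients have the same $E[a]$-span, so the Jacobian of $(f_1,\ldots,f_{n-e})$ at $a$ has rank $n-e$. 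To produce $U$, Lemma \ref{lem:222loc} asserts that the $f_i$ also generate $\gta K[\x]_{\gtn_{K,a}}$, so one may write $s_k h_k=\sum_i r_{ki}f_i$ in $K[\x]$ with $s_k,r_{ki}\in K[\x]$ and $s_k(a)\neq 0$; taking $U:=\{x\in(\ove^\sqbullet)^n:\prod_k s_k(x)\neq 0\}$ gives $\ZZ_{\ove^\sqbullet}(\gta)\cap U=\ZZ_{\ove^\sqbullet}(f_1,\ldots,f_{n-e})\cap U$ by cancelling $s_k(x)\neq 0$ in one direction and using $f_i\in\gta$ in the other. The implication $(\mr{ii})\Rightarrow(\mr{ii}')$ is then immediate, as every $K$-Zariski open subset of $(\ove^\sqbullet)^n$ is Euclidean open (polynomials being Euclidean-continuous).

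The delicate step is $(\mr{ii}')\Rightarrow(\mr{i})$. Setting $\gtb:=(f_1,\ldots,f_{n-e})K[\x]\subset\gta$, the Jacobian rank hypothesis gives ${\rm rk}_a(\gtb)=n-e$, so Lemma \ref{localregular} makes $B:=E[\x]_{\gtn_a}/(\gtb E[\x]_{\gtn_a})$ a regular local ring of dimension $e$; in particular $\gtb E[\x]_{\gtn_a}$ is prime. Since $\gtb\subset\gta$, proving $(\mr{i})$ amounts to the reverse containment $\gta E[\x]_{\gtn_a}\subset\gtb E[\x]_{\gtn_a}$, which by faithful flatness of completion and Lemma \ref{completion} is equivalent to $\gta E[a][[y]]\subset\gtb E[a][[y]]$ where $y:=\x-a$. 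Writing $\tilde g(y):=g(y+a)$ and, after a linear change of coordinates, assuming the first $n-e$ columns of $\big(\partial\tilde f_i/\partial y_j(0)\big)$ are invertible, the formal implicit function theorem (valid in characteristic zero over the field $E[a]$) produces unique series $\phi=(\phi_1,\ldots,\phi_{n-e})\in(y'')E[a][[y'']]^{n-e}$, with $y'':=(y_{n-e+1},\ldots,y_n)$, satisfying $\tilde f_i(\phi(y''),y'')=0$; this induces an isomorphism $E[a][[y]]/(\tilde f_1,\ldots,\tilde f_{n-e})\cong E[a][[y'']]$ sending $\tilde h$ to $\tilde h(\phi(y''),y'')$ for each $h\in\gta$. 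To conclude that this series is zero, I would invoke the Nash implicit function theorem in the real closed case (see e.g.\ \cite[Cor.2.9.8]{bcr}) or its analytic analogue in the algebraically closed case: $\ZZ_{\ove^\sqbullet}(f_1,\ldots,f_{n-e})$ coincides, in a suitable Euclidean open neighborhood $W\subset V$ of $a$, with the graph of a Nash/analytic function $\psi:W''\to(\ove^\sqbullet)^{n-e}$ whose Taylor expansion at $a''$ agrees with $\phi$ by uniqueness of formal solutions. The hypothesis of $(\mr{ii}')$ then forces $\xi''\mapsto h(\psi(\xi''),\xi'')$ to vanish identically on $W''$, so its Taylor expansion $\tilde h(\phi(y''),y'')$ is zero in $E[a][[y'']]$, yielding $\tilde h\in\gtb E[a][[y]]$.

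The main obstacle is precisely this last step: bridging the Euclidean-local equality of zero loci provided by $(\mr{ii}')$ to the formal-algebraic containment of ideals in $E[a][[y]]$. The appropriate geometric implicit function theorem (Nash over real closed fields, analytic over algebraically closed fields of characteristic zero) is what identifies the Taylor expansion of the geometric parametrization $\psi$ with the purely algebraic formal solution $\phi$, and this identification is what converts vanishing of $h$ on an open set of points into vanishing of the formal power series $\tilde h(\phi(y''),y'')$ over $E[a]$.
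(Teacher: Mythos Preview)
Your argument is correct, and for $e=n$ and $(\mr{i})\Rightarrow(\mr{ii})\Rightarrow(\mr{ii}')$ it coincides with the paper's. The implication $(\mr{ii}')\Rightarrow(\mr{i})$, however, you prove by a genuinely different route.

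The paper does \emph{not} pass to completions. It uses the Nash (or complex Nash) inverse function theorem only to read off that the Euclidean local dimension of $\ZZ_{\ove^\sqbullet}(\gta)$ at $a$ equals $e$, and then runs a pure height argument: letting $X$ be the union of the $E$-irreducible components of $\ZZ_{\ove^\sqbullet}(\gta)$ through $a$, one has the chain
\[
(f_1,\ldots,f_{n-e})E[\x]_{\gtn_a}\ \subset\ \gta E[\x]_{\gtn_a}\ \subset\ \II_E(X)E[\x]_{\gtn_a},
\]
the left ideal is prime of height $n-e$ by Lemma~\ref{localregular}, and $\hgt(\II_E(X))\leq n-e$ because $\dim_{\ove^\sqbullet}(X)\geq e$ (Theorem~\ref{dimension} plus \eqref{eisenbud}). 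Squeezing heights forces $(f_1,\ldots,f_{n-e})E[\x]_{\gtn_a}=\gta E[\x]_{\gtn_a}$. No power series appear.

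Your route instead establishes the same ideal equality by completing and showing directly that every $\tilde h$ with $h\in\gta$ lies in $(\tilde f_1,\ldots,\tilde f_{n-e})E[a][[y]]$, via the formal implicit function $\phi$ and the identification of $\phi$ with the Taylor expansion of the Nash parametrization $\psi$. This is a valid and rather natural computation; the one point to keep explicit is that over an arbitrary real closed (or algebraically closed) field, the vanishing of the Nash function $\xi''\mapsto h(\psi(\xi''),\xi'')$ on a Euclidean open set does force all its formal derivatives at $a''$ to vanish (by transfer from $\R$, or by the algebraic definition of derivatives of Nash functions). What your approach buys is a concrete membership certificate in the completion; what the paper's height argument buys is that it never needs to compare the formal and geometric implicit solutions, using the Nash inverse function theorem only to extract a single integer (the local dimension).
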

\begin{proof}
The case $e=n$ follows immediately from Lemma \ref{localregular}. Suppose $e<n$.

$(\mr{i})\Longrightarrow(\mr{ii})$ Assume that $E[\x]_{\gtn_a}/(\gta E[\x]_{\gtn_a})$ is a regular local ring of dimension $e$. By Lemma \ref{localregular}, we know that ${\rm rk}_a(\gta)=n-e$ and there exist $f_1,\ldots,f_{n-e}\in\gta$ such that $\gta E[\x]_{\gtn_a}=(f_1,\ldots,f_{n-e})E[\x]_{\gtn_a}=((f_1,\ldots,f_{n-e})K[\x])E[\x]_{\gtn_a}$, so
$$
(\gta E[\x]_{\gtn_a}+\gtN_a^2)/\gtN_a^2=(((f_1,\ldots,f_{n-e})K[\x])E[\x]_{\gtn_a}+\gtN_a^2)/\gtN_a^2.
$$
By Lemma \ref{rk}, we deduce $n-e={\rm rk}_a(\gta)={\rm rk}_a((f_1,\ldots,f_{n-e})K[\x])$, so
$$
{\rm rk}\Big(\frac{\partial f_i}{\partial\x_j}(a)\Big)_{i=1,\ldots,n-e,\, j=1,\ldots,n}=n-e.
$$

Let $\{g_1,\ldots,g_s\}$ be a finite system of generators of $\gta$ in $K[\x]$ (and hence of $\gta E[\x]$ in $E[\x]$). As $\gta E[\x]_{\gtn_a}=(f_1,\ldots,f_{n-e})E[\x]_{\gtn_a}$, by Lemma \ref{lem:222loc}, there exist polynomials $\{k_{ti}\}_{t=1,\ldots,s,\, i=1,\ldots,n-e}\subset K[\x]$ and $h\in K[\x]\setminus\gtn_{K,a}$ such that $g_t=\sum_{i=1}^s\frac{k_{ti}}{h}f_i$ in $K[\x]_{\gtn_{K,a}}$ for all $t\in\{1,\ldots,s\}$. Consequently, $\gta K[\x]_h=(f_1,\ldots,f_{n-e})K[\x]_h$. Thus, the set $U:=(\ove^\sqbullet)^n\,\setminus\,\ZZ_{\ove^\sqbullet}(h)$ is an
$K$-Zariski open neighborhood of $a$ in $(\ove^\sqbullet)^n$ such that
$$
\textstyle
\ZZ_{\ove^\sqbullet}(\gta)\cap U=\ZZ_{\ove^\sqbullet}(f_1,\ldots,f_{n-e})\cap U.
$$

$(\mr{ii})\Longrightarrow(\mr{ii}')$ This implication follows immediately from the fact that the Euclidean topology of $(\ove^\sqbullet)^n$ is finer than the $K$-Zariski topology of $(\ove^\sqbullet)^n$.

$(\mr{ii}')\Longrightarrow(\mr{i})$ Let $f_1,\ldots,f_{n-e}$ and $V$ be as in $(\mr{ii}')$. Write $a:=(a_1,\ldots,a_n)$. Rearranging the indices if necessary, we may assume
$$
\textstyle
\det\big(\frac{\partial f_i}{\partial\x_j}(a)\big)_{i=1,\ldots,n-e,\,j=e+1,\ldots,n}\neq0,
$$
so the polynomial map 
$$
(\ove^\sqbullet)^n\to(\ove^\sqbullet)^n,\ x:=(x_1,\ldots,x_n)\mapsto(x_1-a_1,\ldots,x_e-a_e,f_1(x),\ldots,f_{n-e}(x))
$$
defines by the inverse function theorem a Nash diffeomorphism $\varphi:W\to U_0$ (either complex or real, depending on whether $L$ is algebraically closed or real closed) between a semialgebraic Euclidean open neighborhood $W\subset(\ove^\sqbullet)^n$ of the origin and a semialgebraic Euclidean open neighborhood $U_0$ of $a$ in $(\ove^\sqbullet)^n$ such that $U_0\subset V$. Thus, if $O$ is the origin of $(\ove^\sqbullet)^{n-e}$, then
$$
\varphi(((\ove^\sqbullet)^e\times\{O\})\cap W)=\ZZ_{\ove^\sqbullet}(f_1,\ldots,f_{n-e})\cap U_0=\ZZ_{\ove^\sqbullet}(\gta)\cap U_0
$$
so $e=\dim_{\ove^\sqbullet}(\ZZ_{\ove^\sqbullet}(\gta)\cap U_0)$. Let $X_1,\ldots,X_r$ be the $E$-irreducible components of $\ZZ_{\ove^\sqbullet}(\gta)$ that contains $a$ and let $X:=\bigcup_{i=1}^rX_i\subset(\ove^\sqbullet)^n$. Shrinking $U_0$ if necessary, we may assume that $\ZZ_{\ove^\sqbullet}(\gta)\cap U_0\subset X$, so $e=\dim_{\ove^\sqbullet}(\ZZ_{\ove^\sqbullet}(\gta)\cap U_0)\leq\dim_{\ove^\sqbullet}(X)$. By Theorem \ref{dimension} and \eqref{eisenbud}, we have $\dim_{\ove^\sqbullet}(X)=\dim_E(X)$ and
\begin{align*}
\hgt(\II_E(X))&=\dim(E[\x])-\dim(E[\x]/\II_E(X))=n-\dim_E(X)\\
&=n-\dim_{\ove^\sqbullet}(X)\leq n-\dim_{\ove^\sqbullet}(\ZZ_{\ove^\sqbullet}(\gta)\cap U_0)=n-e.
\end{align*}

Consequently,
\begin{equation}\label{1}
\hgt(\II_E(X))\leq n-e.
\end{equation}
Let $X_{r+1},\ldots,X_m$ be the $E$-irreducible components of $\ZZ_{\ove^\sqbullet}(\gta)$ that do not contain $a$ and let $\gtq_i:=\II_E(X_i)$ for each $i\in\{1,\ldots,m\}$. We have $\gtq_i\not\subset\gtn_a$ for each $i\in\{r+1,\ldots,m\}$, so $\gtq_iE[\x]_{\gtn_a}=E[\x]_{\gtn_a}$ for each $i\in\{r+1,\ldots,m\}$. Thus, by Lemma \ref{lem:a},
\begin{equation}\label{1.5}
\textstyle
\gta E[\x]_{\gtn_a}\subset\II_E(\ZZ_{\ove^\sqbullet}(\gta))E[\x]_{\gtn_a}=\bigcap_{i=1}^r(\gtq_iE[\x]_{\gtn_a})=\II_E(X)E[\x]_{\gtn_a}.
\end{equation}

In addition, $\II_E(X)=\bigcap_{i=1}^r\II_E(X_i)=\bigcap_{i=1}^r\gtq_i$, so
\begin{equation}\label{equa55b}
\textstyle
\hgt(\bigcap_{i=1}^r\gtq_i)=\hgt(\II_E(X))\leq n-e
\end{equation}

By hypothesis, the rank of $\big(\frac{\partial f_i}{\partial\x_j}(a)\big)_{i=1,\ldots,n-e,\, j=1,\ldots,n}$ is $n-e$. Equivalences $(\mr{i})\Longleftrightarrow(\mr{ii})\Longleftrightarrow(\mr{iv})$ of Lemma \ref{localregular} applied to the ideal $(f_1,\ldots,f_{n-e})K[\x]$ of $K[\x]$ assures that the ring $E[\x]_{\gtn_a}/((f_1,\ldots,f_{n-e})E[\x]_{\gtn_a})$ is regular of dimension $e$ and
\begin{equation}\label{2}
n-e=\hgt((f_1,\ldots,f_{n-e})E[\x]_{\gtn_a}).
\end{equation}
As all regular local rings are integral domains \cite[Cor.1, p.302]{zs2}, $(f_1,\ldots,f_{n-e})E[\x]_{\gtn_a}$ is a prime ideal of $E[\x]_{\gtn_a}$. By \eqref{1.5}, we have $(f_1,\ldots,f_{n-e})E[\x]_{\gtn_a}\subset\gta E[\x]_{\gtn_a}\subset\II_E(X)E[\x]_{\gtn_a}=\bigcap_{i=1}^r(\gtq_iE[\x]_{\gtn_a})$,
so
\begin{equation}\label{2.5}
\textstyle
\mr{ht}((f_1,\ldots,f_{n-e})E[\x]_{\gtn_a})\leq\mr{ht}(\gta E[\x]_{\gtn_a})\leq\min_{i\in\{1,\ldots,r\}}\{\mr{ht}(\gtq_iE[\x]_{\gtn_a})\}.
\end{equation}
As $\gtq_i\subset\gtn_a$ for each $i\in\{1,\ldots,r\}$, $\mr{ht}(\gtq_iE[\x]_{\gtn_a})=\mr{ht}(\gtq_i)$ for each $i\in\{1,\ldots,r\}$ by \cite[Cor.3.13]{am}.

Thus, by \eqref{1}, \eqref{equa55b}, \eqref{2} and \eqref{2.5},
\begin{align*}\label{123}
n-e&\textstyle=\hgt((f_1,\ldots,f_{n-e})E[\x]_{\gtn_a})\leq\mr{ht}(\gta E[\x]_{\gtn_a})\leq\min_{i\in\{1,\ldots,r\}}\{\mr{ht}(\gtq_iE[\x]_{\gtn_a})\}\\
&\textstyle=\min_{i\in\{1,\ldots,r\}}\{\mr{ht}(\gtq_i)\}=\mr{ht}(\gtq_1\cap\ldots\cap\gtq_r)=\mr{ht}(\II_E(X))\leq n-e.
\end{align*}
Consequently,
$$
\textstyle
\hgt((f_1,\ldots,f_{n-e})E[\x]_{\gtn_a})=\mr{ht}(\gta E[\x]_{\gtn_a})=n-e.
$$
As $(f_1,\ldots,f_{n-e})E[\x]_{\gtn_a}$ is a prime ideal of $E[\x]_{\gtn_a}$, we deduce $(f_1,\ldots,f_{n-e})E[\x]_{\gtn_a}=\gta E[\x]_{\gtn_a}$ and $A=E[\x]_{\gtn_a}/(\gta E[\x]_{\gtn_a})=E[\x]_{\gtn_a}/((f_1,\ldots,f_{n-e})E[\x]_{\gtn_a})$ is a regular local ring of dimension~$e$, as required.
\end{proof}

\begin{remark}\label{rmk-jc}
A consequence of the proof of implication $(\mr{ii}')\Longrightarrow(\mr{i})$ (setting $c:=n-e$) is that, if there exist polynomials $f_1,\ldots,f_c\in\gta$ such that ${\rm rk}\big(\frac{\partial f_i}{\partial\x_j}(a)\big)_{i=1,\ldots,c,\, j=1,\ldots,n}=c$ and
$\ZZ_{\ove^\sqbullet}(\gta)\cap V=\ZZ_{\ove^\sqbullet}(f_1,\ldots,f_c)\cap V$ for some Euclidean open neighborhood $V$ of $a$ in $(\ove^\sqbullet)^n$, then
$$
\hgt(\gta E[\x]_{\gtn_a})=c
\quad \text{and} \quad
\gta E[\x]_{\gtn_a}=(f_1,\ldots,f_c)E[\x]_{\gtn_a}. \; \text{ $\sqbullet$}
$$
\end{remark}

\subsection{Structure of nonsingular and singular loci}
Recall that we are working with the extension of fields $L|\ove^\sqbullet|E|K$, where $L$ is either algebraically closed or real closed, and $\ove^\sqbullet$ is the algebraic closure of $E$ in $L$. 

\begin{lem}\label{lem:integraldom}
Let $Y\subset(\ove^\sqbullet)^n$ be a $K$-algebraic set and let $a\in Y$. If $\reg^{E|K}_{Y,a}$ is an integral domain, then $a$ belongs only to one of the $K$-irreducible components of $Y\subset(\ove^\sqbullet)^n$.
\end{lem}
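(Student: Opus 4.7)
The plan is to reduce the assertion to the standard commutative algebra fact over $K[\x]$ by constructing an injective ring homomorphism from a localization-quotient of $K[\x]$ into $\reg^{E|K}_{Y,a}$, and then ruling out multiple components using the minimal prime decomposition on the $K[\x]$-side.

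First I would set $\gtn_{K,a}:=\II_K(\{a\})=\gtn_a\cap K[\x]$, let $Y_1,\ldots,Y_s$ be the $K$-irreducible components of $Y\subset(\ove^\sqbullet)^n$, and set $\gtp_i:=\II_K(Y_i)$; by Lemma \ref{lem:prime} each $\gtp_i$ is prime in $K[\x]$, and $\II_K(Y)=\bigcap_{i=1}^s\gtp_i$. Define $J:=\{i:a\in Y_i\}$, which is nonempty because $a\in Y$; note $\gtp_i\subset\gtn_{K,a}$ if and only if $i\in J$. Since localization commutes with finite intersections and $\gtp_i K[\x]_{\gtn_{K,a}}=K[\x]_{\gtn_{K,a}}$ for $i\notin J$, one obtains
$$
\II_K(Y)\,K[\x]_{\gtn_{K,a}}=\bigcap_{i\in J}\gtp_iK[\x]_{\gtn_{K,a}},
$$
an intersection of prime ideals that are pairwise incomparable (the incomparability of the $\gtp_i$ descends from the fact that no $Y_i$ contains any other $Y_j$).

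Next I would construct the injection. The inclusion $K[\x]\hookrightarrow E[\x]$ maps $K[\x]\setminus\gtn_{K,a}$ into $E[\x]\setminus\gtn_a$, yielding a ring homomorphism $K[\x]_{\gtn_{K,a}}\to E[\x]_{\gtn_a}$, injective because $E[\x]$ is a domain. Composing with the quotient by $\II_K(Y)E[\x]_{\gtn_a}$ gives $\varphi:K[\x]_{\gtn_{K,a}}\to\reg^{E|K}_{Y,a}$, whose kernel is
$$
\II_K(Y)\,E[\x]_{\gtn_a}\cap K[\x]_{\gtn_{K,a}}.
$$
Here is the crucial input: since $a\in Y=\ZZ_{\ove^\sqbullet}(\II_K(Y))$, Lemma \ref{lem:222loc} applied to $\gta=\II_K(Y)$ identifies this kernel with $\II_K(Y)K[\x]_{\gtn_{K,a}}$. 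Hence $\varphi$ factors as an injective ring homomorphism
$$
\iota:K[\x]_{\gtn_{K,a}}\big/\bigl(\II_K(Y)K[\x]_{\gtn_{K,a}}\bigr)\hookrightarrow\reg^{E|K}_{Y,a}.
$$

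Assuming $\reg^{E|K}_{Y,a}$ is an integral domain, the subring $\operatorname{Im}(\iota)$ is also an integral domain, so $\II_K(Y)K[\x]_{\gtn_{K,a}}$ is a prime ideal of $K[\x]_{\gtn_{K,a}}$. But the displayed decomposition above shows it is the intersection of $|J|$ pairwise incomparable prime ideals $\gtp_iK[\x]_{\gtn_{K,a}}$, which are therefore exactly the minimal primes over it. A prime ideal is its own unique minimal prime, forcing $|J|=1$. Hence $a$ belongs to only one $K$-irreducible component of $Y$, as claimed.

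The main obstacle is the construction of $\iota$: everything afterwards is routine minimal-prime bookkeeping, but the injectivity statement rests on the non-trivial equality $\II_K(Y)E[\x]_{\gtn_a}\cap K[\x]_{\gtn_{K,a}}=\II_K(Y)K[\x]_{\gtn_{K,a}}$ from Lemma \ref{lem:222loc}, itself a consequence of the $K$-basis expansion of Lemma \ref{k0}. Without that identification, one only obtains a homomorphism from $K[\x]_{\gtn_{K,a}}/(\II_K(Y)K[\x]_{\gtn_{K,a}})$ to $\reg^{E|K}_{Y,a}$ and cannot transfer the domain property backwards.
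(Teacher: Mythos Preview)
Your proof is correct. Both your argument and the paper's hinge on Lemma \ref{lem:222loc}, but the organization differs. The paper argues the contrapositive directly: assuming $a\in Y_i\cap Y_j$ with $i\neq j$, it picks $f\in\II_K(Y_i)$ not vanishing on $Y_j$ and $g\in\II_K(\bigcup_{k\neq i}Y_k)$ not vanishing on $Y_i$, so that $fg\in\II_K(Y)$; it then shows $f,g\notin\II_K(Y)E[\x]_{\gtn_a}$ by invoking Lemma \ref{lem:222loc} together with a $K$-Zariski density argument on the $K$-irreducible sets $Y_i,Y_j$. Your route is more structural: you package the use of Lemma \ref{lem:222loc} into the injectivity of $\iota$, transfer the domain property back to $K[\x]_{\gtn_{K,a}}/(\II_K(Y)K[\x]_{\gtn_{K,a}})$, and finish with the minimal-prime count. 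Your version avoids the explicit choice of $f,g$ and the density step, at the cost of appealing to the prime-ideal/localization bookkeeping; the paper's version is more hands-on and produces the zero divisors explicitly. Either way, the substantive content is the same identification $\II_K(Y)E[\x]_{\gtn_a}\cap K[\x]_{\gtn_{K,a}}=\II_K(Y)K[\x]_{\gtn_{K,a}}$.
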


\begin{proof}
Let $Y_1,\ldots,Y_s$ be the $K$-irreducible components of $Y\subset(\ove^\sqbullet)^n$. Suppose that $a\in Y_i\cap Y_j$ for some pair of indices $i,j\in\{1,\ldots,s\}$ with $i\neq j$. Let us show that $\reg^{E|K}_{Y,a}$ is not an integral domain.

Set $Y':=\bigcup_{k\in\{1,\ldots,s\}\setminus\{i\}}Y_k$. Pick $b_j\in Y_j\setminus Y_i$, $b_i\in Y_i\setminus Y'$ and $f,g\in K[\x]$ such that $f(b_j)\neq0$, $Y_i\subset\ZZ_{\ove^\sqbullet}(f)$, $g(b_i)\neq0$ and $Y'\subset\ZZ_{\ove^\sqbullet}(g)$. Consider $\alpha,\beta\in\reg^{E|K}_{Y,a}$ given by $\alpha:=f+\II_K(Y)E[\x]_{\gtn_a}$ and $\beta:=g+\II_K(Y)E[\x]_{\gtn_a}$. As $\alpha\beta=0$ in $\reg^{E|K}_{Y,a}$, it is enough to prove that both $\alpha\neq0$ and $\beta\neq0$, that is, $f,g\not\in\II_K(Y)E[\x]_{\gtn_a}$.

Suppose $f\in\II_K(Y)E[\x]_{\gtn_a}$. By Lemma \ref{lem:222loc}, $f\in\II_K(Y)E[\x]_{\gtn_a}\cap K[\x]_{\gtn_{K,a}}= \II_K(Y)K[\x]_{\gtn_{K,a}}$ so there exists $h\in K[\x]$ such that $h(a)\neq0$ and $hf\in\II_K(Y)$. Consequently,
$$
b_j\in Y_j\setminus\ZZ_{\ove^\sqbullet}(f)\subset\ZZ_{\ove^\sqbullet}(h).
$$
As $Y_j\setminus\ZZ_{\ove^\sqbullet}(f)$ is a non-empty $K$-Zariski open subset of the $K$-irreducible $K$-algebraic set $Y_j\subset(\ove^\sqbullet)^n$, it is $K$-Zariski dense in $Y_j$, that is, $Y_j\setminus\ZZ_{\ove^\sqbullet}(f)$ is dense in $Y_j$ with respect to its $K$-Zariski topology. Thus, $a\in Y_j\subset\ZZ_{\ove^\sqbullet}(h)$ so $h(a)=0$, which is a contradiction. This proves that $f\not\in\II_K(Y)E[\x]_{\gtn_a}$. An analogous argument shows (using now that $Y_i\subset(\ove^\sqbullet)^n$ is a $K$-irreducible $K$-algebraic set) that $g\not\in\II_K(Y)E[\x]_{\gtn_a}$, as required.
\end{proof}

The previous result is an improved version of \cite[Lem.2.15]{GS}.

By Corollary \ref{kreliablec} (the $C|K$-Nullstellensatz), all prime ideals $\gtp$ of $K[\x]$ have the zero pro\-perty with respect to~$\ove$ in the sense that $\II_K(\ZZ_\ove(\gtp))=\gtp$. If $L$ is a real closed field, then $E$ inherits the ordering from $L$, $\ove^\sqbullet$ is equal to the real closure $\ove^r$ of $E$, but not all the prime ideals $\gtp$ of $K[\x]$ have the zero property with respect to~$\ove^r$. Thus, it may happen that $\II_K(\ZZ_{\ove^r}(\gtp))$ is strictly larger than $\gtp$ for some prime ideal $\gtp$ of $K[\x]$. For instance, if $\gtp:=(\x_1^2+1)K[\x]$, then $\II_K(\ZZ_{\ove^r}(\gtp))=K[\x]\supsetneqq\gtp$.

The following result provides a characterization of all the prime ideals of $K[\x]$ that have the zero property with respect to $\ove^\sqbullet$, and describes important properties of the $E|K$-local rings of a $K$-irreducible $K$-algebraic subset of $(\ove^\sqbullet)^n$.

\begin{thm}\label{apl1}
Let $\gtp$ be a prime ideal of $K[\x]$ and let $Y:=\ZZ_{\ove^\sqbullet}(\gtp)\subset(\ove^\sqbullet)^n$. Denote $S$ the set of all points $a\in Y$ such that the local ring $E[\x]_{\gtn_a}/(\gtp E[\x]_{\gtn_a})$ is not regular. We have:
\begin{itemize}
\item[$(\mr{i})$] $\II_K(Y)=\gtp$ if and only if $S\subsetneqq Y$, that is, there exists $a\in Y$ such that the local ring $E[\x]_{\gtn_a}/(\gtp E[\x]_{\gtn_a})$ is regular.
\item[$(\mr{ii})$] Assume that $\II_K(Y)=\gtp$, so $Y\neq\varnothing$ and $E[\x]_{\gtn_a}/(\gtp E[\x]_{\gtn_a})=\reg^{E|K}_{Y,a}$ for each $a\in Y$. Define $c:=\max_{a\in Y}\{{\rm rk}_a(\gtp)\}$. Then we have:
 \begin{itemize}
\item[$\bullet$] $\dim(\reg^{E|K}_{Y,a})=\dim(Y)$ for each $a\in Y$,
\item[$\bullet$] $\dim(Y)=n-c$,
\item[$\bullet$] $Y\subset(\ove^\sqbullet)^n$ is a $K$-irreducible algebraic set and $S$ is a $K$-algebraic subset of $Y$ that we can describe as $S=\{a\in Y:{\rm rk}_a(\gtp)<c\}$ and $\dim(S)<\dim(Y)$.
\end{itemize}
\end{itemize}
\end{thm}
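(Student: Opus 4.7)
The plan is to establish (ii) modulo (i) via dimension and rank computations, and then to prove the two directions of (i) separately; the ``if'' direction of (i) is the technical core and splits according to whether $L$ is algebraically closed or real closed.

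For (ii), assuming $\II_K(Y)=\gtp$ and granting that (i) produces a regular point: by Lemma \ref{lem:prime} primality of $\gtp$ gives $K$-irreducibility of $Y$, and writing $d:=\dim(Y)$, Theorem \ref{dimension} together with \eqref{eisenbud} yields $d=\dim_K(Y)=n-\hgt(\gtp)$. The ideal $\gtp E[\x]$ is radical by Lemma \ref{radical}, and, since $K[\x]\to E[\x]$ is flat (as $E$ is free over $K$), each minimal prime $\gtq_i$ over $\gtp E[\x]$ satisfies $\gtq_i\cap K[\x]=\gtp$, whence $\hgt(\gtq_i)=n-d$ by the flat dimension formula. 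For any $a\in Y$, the minimal primes of $\gtp E[\x]_{\gtn_a}$ are precisely the $\gtq_i E[\x]_{\gtn_a}$ with $\gtq_i\subset\gtn_a$, all of height $n-d$; by catenarity of $E[\x]$ this gives $\dim(\reg^{E|K}_{Y,a})=d$. Lemma \ref{localregular} then forces ${\rm rk}_a(\gtp)\leq n-d$, with equality iff $\reg^{E|K}_{Y,a}$ is regular, so $c\leq n-d$; the existence of a regular point (provided by (i)) upgrades this to $c=n-d$, yielding $\dim(Y)=n-c$. The set $S$ is the zero locus on $Y$ of the $c\times c$ minors of the Jacobian of a finite generating system of $\gtp$ in $K[\x]$, hence is $K$-algebraic, and $S\subsetneqq Y$ together with $K$-irreducibility of $Y$ forces $\dim(S)<\dim(Y)$ via Lemma \ref{dimirred}.

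For the ``only if'' direction of (i), I would argue by contradiction: assume $\II_K(Y)=\gtp$ and $S=Y$. Then every $(n-d)\times(n-d)$ minor of the Jacobian of the generators of $\gtp$, being a polynomial in $K[\x]$, vanishes on $Y$ and thus lies in $\II_K(Y)=\gtp$. Let $C$ denote the algebraic closure of $\ove^\sqbullet$ (so $C=\ove^\sqbullet$ when $L$ is algebraically closed and $C=\ove^r[\ii]$ when $L$ is real closed). By the same flatness argument, $\gtp C[\x]$ has minimal primes all of height $n-d$, so $\ZZ_C(\gtp)\subset C^n$ is pure of complex dimension $d$ and its smooth locus is non-empty; at a smooth point $b$ the Jacobian of the generators has rank $n-d$, so some $(n-d)\times(n-d)$ minor does not vanish at $b$, contradicting the inclusion $\gtp\subset\II_C(\ZZ_C(\gtp))$ provided by Corollary \ref{kreliablec}.

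For the ``if'' direction of (i), take $a\in Y\setminus S$, so $E[\x]_{\gtn_a}/(\gtp E[\x]_{\gtn_a})$ is regular. Theorem \ref{Q-jacobian} together with Remark \ref{rmk-jc} furnishes $f_1,\ldots,f_c\in\gtp$ with linearly independent gradients at $a$ such that $\gtp E[\x]_{\gtn_a}=(f_1,\ldots,f_c)E[\x]_{\gtn_a}$, together with a Euclidean neighborhood $V$ of $a$ in $(\ove^\sqbullet)^n$ on which $Y$ is cut out by these equations. For $f\in\II_K(Y)$, Lemma \ref{lem:222loc} reduces the claim $f\in\gtp$ to $f\in\gtp E[\x]_{\gtn_a}$. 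In the algebraically closed case this is immediate from Corollary \ref{kreliablec} applied to the radical ideal $\gtp$. In the real closed case, the strategy is to extend $\{f_1,\ldots,f_c\}$ to a regular system of parameters of $E[\x]_{\gtn_a}$; by Corollary \ref{c}, $\widehat{E[\x]_{\gtn_a}}\cong E[a][[t_1,\ldots,t_n]]$ with $(f_1,\ldots,f_c)\mapsto(t_1,\ldots,t_c)$, and the vanishing of $f$ on the Nash parameterization of $Y\cap V$ in the $(t_{c+1},\ldots,t_n)$-coordinates (produced by the inverse function theorem) forces the image of $f$ in $E[a][[t_{c+1},\ldots,t_n]]$ to vanish, so $f\in(f_1,\ldots,f_c)\widehat{E[\x]_{\gtn_a}}\cap E[\x]_{\gtn_a}=\gtp E[\x]_{\gtn_a}$ by faithful flatness of the completion. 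The hard part will be this Taylor-vanishing step in the real closed case: one must verify that the formal expansion of $f$ in the parameters $(t_{c+1},\ldots,t_n)$ coincides with the Taylor expansion of the Nash pullback of $f$ along the inverse-function-theorem parameterization of $Y\cap V$, identifying two a priori different completions (algebraic versus Nash) and exploiting that $Y$, though only the real trace of a possibly larger complex set, is itself a smooth Nash submanifold near $a$ by the Jacobian criterion.
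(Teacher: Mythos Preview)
Your plan is sound and the pieces fit together without circularity, but the route you take for the ``if'' direction of $(\mr{i})$ in the real closed case is genuinely different from the paper's.

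For $(\mr{ii})$ and the ``only if'' half of $(\mr{i})$ your argument and the paper's are close cousins: you get the common height $n-d$ of the minimal primes of $\gtp E[\x]$ by flatness of $K[\x]\to E[\x]$, while the paper invokes Lemma~\ref{lem:238}; you locate a point of Jacobian rank $n-d$ by contradiction via a smooth point of $\ZZ_C(\gtp)$, while the paper quotes the generic-rank computation of \cite[Chap.~10,\,\S14,\,Thm.~1]{hp}. Both routes are fine.

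The substantive divergence is the ``if'' direction when $\ove^\sqbullet=\ove^r$. You attempt a \emph{direct local} argument: given a regular point $a$ and $f\in\II_K(Y)$, show $f\in\gtp E[\x]_{\gtn_a}$ by passing to the completion and matching the image of $f$ modulo $(f_1,\ldots,f_c)$ with the (vanishing) Taylor series of the Nash pullback of $f$ to the inverse-function-theorem chart on $Y\cap V$. This works, but the identification you flag as ``hard'' is indeed the crux: the abstract Cohen isomorphism of Corollary~\ref{c} does not by itself tell you where $f$ lands, and what you really need is the concrete translation description of Lemma~\ref{completion} (sending $g\mapsto g(\x+a)$) together with uniqueness in the formal implicit function theorem, so that the formal solutions $\hat\psi_i\in E[a][[\x'']]$ agree, after base change to $\ove^r$, with the Taylor series of the Nash $\psi_i$. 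The paper avoids this comparison entirely: from the regular point $a$ it extracts via Lemma~\ref{lem:222loc} first-order data ($p_1,\ldots,p_\ell$ generating $\gtp$ locally, with witnesses $h_k\in K[\x]$, $h_k(a)\neq0$), applies Tarski--Seidenberg to transfer these conditions to a point $b\in(\kr)^n$, and then quotes \cite[Prop.~3.3.10--3.3.11]{bcr} over $\kr$ to produce a Nash submanifold of dimension $n-\ell$ inside $Y\cap(\kr)^n$. This gives $\dim_{\kr}(Y\cap(\kr)^n)\geq n-\ell$, hence via Corollary~\ref{inter}$(\mr{iv})$ the height inequality $\hgt(\II_K(Y))\leq\ell=\hgt(\gtp)$, and primality of $\gtp$ finishes. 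Your route is self-contained and avoids the transfer principle; the paper's is a clean height bound that bypasses the formal/Nash comparison altogether.
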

\begin{proof} 

\emph{Assume $\II_K(Y)=\gtp$}. As $\gtp\subset K[\x]$ is prime, $Y\subset(\ove^\sqbullet)^n$ is non-empty and $K$-irreducible by Lemma \ref{lem:prime}. \emph{We claim that $\hgt(\gtp E[\x]_{\gtn_a})=\hgt(\gtp)$ for each $a\in Y$}.

Let $\ove$ be the algebraic closure of $E$ and let $T:=\zcl_{\ove^n}^K(Y)\subset\ove^n$. Evidently, $T=Y$ if $\ove^\sqbullet=\ove$. Observe that $\II_K(T)=\II_K(Y)=\gtp$, so $T\subset\ove^n$ is $K$-irreducible. Let $\gtp_1,\ldots,\gtp_t\subset E[\x]$ be the minimal prime ideals associated to $\gtp E[\x]$, and let $W_i:=\ZZ_\ove(\gtp_i)\subset\ove^n$ for each $i\in\{1,\ldots,t\}$. Lemma \ref{lem:238} implies that $\II_E(T)=\gtp E[\x]$, $W_1,\ldots,W_t$ are the $E$-irreducible components of $T\subset\ove^n$ and, for each $i\in\{1,\ldots,t\}$, $\II_E(W_i)=\gtp_i$ and $\hgt(\gtp_i)=\hgt(\gtp)$. Let $a\in Y\subset T$. Rearranging the indices if necessary, we can assume that $W_1,\ldots,W_u$ are the $E$-irreducible components of $T\subset\ove^n$ that contains $a$, for some $u\in\{1,\ldots,t\}$. Thus, $\gtp_i\subset\gtn_a$ if $i\in\{1,\ldots,u\}$ and $\gtp_i\not\subset\gtn_a$ otherwise. Define $\gtq_i:=\gtp_iE[\x]_{\gtn_a}$ for each $i\in\{1,\ldots,u\}$. By Lemma \ref{lem:a} and \cite[Cor.3.13]{am}, we have:
\begin{itemize}
\item $\gtp E[\x]_{\gtn_a}=(\gtp E[\x])E[\x]_{\gtn_a}=(\bigcap_{i=1}^t\gtp_i)E[\x]_{\gtn_a}=\bigcap_{i=1}^t(\gtp_iE[\x]_{\gtn_a})=\bigcap_{i=1}^u\gtq_i$,
\item $\gtq_1,\ldots,\gtq_u\subset E[\x]_{\gtn_a}$ are the minimal prime ideals associated to $\gtp E[\x]_{\gtn_a}$,
\item $\hgt(\gtq_i)=\hgt(\gtp_i)=\hgt(\gtp)$ for each $i\in\{1,\ldots,u\}$.
\end{itemize}
We deduce that $\hgt(\gtp E[\x]_{\gtn_a})=\min_{i\in\{1,\ldots,u\}}\{\hgt(\gtq_i)\}=\hgt(\gtp)$, as claimed.

Thus, $\dim(\reg^{E|K}_{Y,a})=\dim(Y)$ for each $a \in Y$. Indeed, by \eqref{eq:ht}, we have:
$$
\dim(\reg^{E|K}_{Y,a})=n-\hgt(\gtp E[\x]_{\gtn_a})=n-\hgt(\gtp)=\dim_K(Y)=\dim(Y).
$$

Let $\{g_1,\ldots,g_s\}$ be a system of generators of $\gtp$ in $K[\x]$. As $\gtp\subset K[\x]$ is prime and $K$ has characteristic zero, the rank $r$ of the matrix $\big(\frac{\partial g_i}{\partial\x_j}+\gtp\big)_{i=1,\ldots,s,\,j=1,\ldots,n}$ with coefficients in the field of fractions of $K[\x]/\gtp$ is by \cite[Chap.10, \S 14, Thm.1]{hp} equal to $n-\dim_K(Y)=n-\dim(Y)$. Observe that ${\rm rk}_a(\gtp)\leq r$ for each $a\in Y$ and there exist points $a\in Y$ such that ${\rm rk}_a(\gtp)=r$. We deduce that $c=r$, so $c=n-\dim(Y)$ and $\dim(\reg^{E|K}_{Y,a})=\dim(Y)=n-c$ for each $a\in Y$.

Lemma \ref{localregular} assures that $\reg^{E|K}_{Y,a}$ is not regular if and only if $n-c=\dim(\reg^{E|K}_{Y,a})<n-{\rm rk}_a(\gtp)$ or, equivalently, ${\rm rk}_a(\gtp)<c$. Thus, $S$ is a proper $K$-algebraic subset of $Y$ that coincides with the set $\{a\in Y:{\rm rk}_a(\gtp)<c\}$. By Lemma \ref{dimirred}, $\dim(S)<\dim(Y)$, so $S\subsetneq Y$.

If ($L$ is algebraically closed and hence) $\ove^\sqbullet=\ove$, then $\II_K(Y)=\gtp$, so in the following we assume: ($L$ is real closed,) {\em $\ove^\sqbullet=\ove^r$ and there exists $a\in Y=\ZZ_{\ove^r}(\gtp)\subset(\ove^r)^n$ such that $E[\x]_{\gtn_a}/(\gtp E[\x]_{\gtn_a})$ is a regular local ring, that is, $a\in Y\setminus S$.} 

Let $\ell:={\rm rk}_a(\gtp)$. As the local ring $E[\x]_{\gtn_a}/(\gtp E[\x]_{\gtn_a})$ is regular, by implication $(\mr{i})\Longrightarrow(\mr{ii})$ of Lemma \ref{localregular}, there exist $p_1,\ldots,p_\ell\in\gtp$ such that $\gtp E[\x]_{\gtn_a}=(p_1,\ldots,p_\ell)E[\x]_{\gtn_a}$. Complete $\{p_1,\ldots,p_\ell\}$ to a system of generators $\{p_1,\ldots,p_\ell,p_{\ell+1},\ldots,p_m\}$ of $\gtp$ in $K[\x]$, and define $J:=\{\ell+1,\ldots,m\}$ if $m>\ell$ and $J:=\varnothing$ if $m=\ell$. By Lemma \ref{lem:222loc}, $p_1,\ldots,p_\ell$ generate $\gtp E[\x]_{\gtn_a}\cap K[\x]_{\gtn_{K,a}}=\gtp K[\x]_{\gtn_{K,a}}$. Observe that $p_{\ell+1},\ldots,p_m\in \gtp K[\x]_{\gtn_{K,a}}$ if $m>\ell$. Thus, for each $k\in J$ and $i\in\{1,\ldots,\ell\}$, there exist $h_k,h_{ik}\in K[\x]$ such that $h_k(a)\neq0$ and
\begin{equation}\label{generators}
h_kp_k=\sum_{i=1}^\ell h_{ik}p_i.
\end{equation}

Let $\kr\subset\ove^r$ be the real closure of $K$. By Tarski-Seidenberg's principle, there exists a point $b\in\krn$ such that $p_1(b)=0,\ldots,p_m(b)=0$, ${\rm rk}_b(\gtp)=\ell$ and $h_k(b)\neq0$ for each $k\in J$. By \eqref{generators}, $\{p_1,\ldots,p_\ell\}$ is a system of generators of the ideal $\gtp K[\x]_{\gtn_b}$ of the ring $K[\x]_{\gtn_b}$. By implication $(\mr{ii})\Longrightarrow(\mr{i})$ of Lemma \ref{localregular}, $K[\x]_{\gtn_b}/(\gtp K[\x]_{\gtn_b})$ is a regular local ring of dimension $n-\ell$ and $\hgt(\gtp K[\x]_{\gtn_b})=\ell$. As $Y\cap\krn=\ZZ_\kr(p_1,\ldots,p_m)$, \eqref{generators} holds and $\ell={\rm rk}_b(\gtp)$, we can apply equivalence $(\mr{i})\Longleftrightarrow(\mr{iii})$ of \cite[Prop.3.3.10]{bcr}, obtaining that $b$ is a nonsingular point of $Y\cap(\ol{K}^r)^n$ of dimension $n-\ell$. Thus, by \cite[Prop.3.3.11]{bcr}, there exists a Nash submanifold $M$ of $(\ol{K}^r)^n$ of dimension $n-\ell$ such that $b\in M\subset Y\cap(\ol{K}^r)^n$. Corollary \ref{inter}$(\mr{iv})$ implies
$$
n-\ell=\dim(M)\leq\dim_{\kr}(Y\cap(\kr)^n)=\dim(Y)=\dim_K(Y)=n-\hgt(\II_K(Y))
$$
and consequently
$$
\hgt(\II_K(Y))\leq \ell=\hgt(\gtp K[\x]_{\gtn_b})=\hgt(\gtp).
$$
The latter equality follows from \cite[Cor.3.13]{am}, because $\gtp$ is a prime ideal of $K[\x]$ contained in $\gtn_b$. As $\gtp\subset\II_K(Y)$, we conclude $\mr{ht}(\gtp)=\mr{ht}(\II_K(Y))$. As $\gtp\subset K[\x]$ is prime, $\II_K(Y)=\gtp$, as required.
\end{proof}

We apply the previous results to study the concepts of nonsingular and singular points introduced in Definition \ref{E|K-regular}.

\begin{prop}\label{prop:XY}
Let $X\subset L^n$ be a $K$-algebraic set and let $X_1,\ldots,X_s$ be the $K$-irreducible components of $X$. Denote $Y:=X\cap(\ove^\sqbullet)^n$ and $Y_i:=X_i\cap(\ove^\sqbullet)^n$ for each $i\in\{1,\ldots,s\}$. We have:
\begin{itemize}
\item[$(\mr{i})$] $Y\subset(\ove^\sqbullet)^n$ is a $K$-algebraic set, $X=Y_L=\zcl_{L^n}(Y)=\zcl_{L^n}^{\ove^\sqbullet}(Y)$, $\II_{\ove^\sqbullet}(Y)=\II_{\ove^\sqbullet}(X)$ and $\dim(Y)=\dim(X)$. In particular, $X=\zcl_{L^n}^K(Y)$ and $\II_K(Y)=\II_K(X)$.
\item[$(\mr{ii})$] $Y_1,\ldots,Y_s$ are the $K$-irreducible components of $Y\subset(\ove^\sqbullet)^n$. In particular, $X\subset L^n$ is $K$-irreducible if and only if so is $Y\subset(\ove^\sqbullet)^n$. In addition, $X_i=(Y_i)_L=\zcl_{L^n}(Y_i)=\zcl_{L^n}^{\ove^\sqbullet}(Y_i)=\zcl_{L^n}^K(Y_i)$ and $\dim(Y_i)=\dim(X_i)$ for each $i\in\{1,\ldots,s\}$.
\item[$(\mr{iii})$] Let $a\in Y$ and let $I_a$ be the subset of $\{1,\ldots,s\}$ of all indices $i$ such that $a\in Y_i$. Then $\reg^{E|K}_{Y,a}=\reg^{E|K}_{X,a}$ and $\dim(\reg^{E|K}_{Y,a})=\max_{i\in I_a}\{\dim(Y_i)\}$. In particular, $\dim(X)=\max_{a\in Y}\{\dim(\reg^{E|K}_{X,a})\}$.
\item[$(\mr{iv})$] Let $a\in Y$ be such that $\reg^{E|K}_{X,a}$ is an integral domain. Then there exists an index $i\in\{1,\ldots,s\}$ such that $a\in X_i\setminus\bigcup_{j\in\{1,\ldots,s\}\setminus\{i\}}X_j$.
\item[$(\mr{v})$] A point $a\in Y$ is $E|K$-nonsingular of some dimension $e$ if and only if $a$ is a $E|K$-nonsingular point of $X$ of the same dimension $e$. Thus, $\Reg^{E|K}(Y,e)=\Reg^{E|K}(X,e)$ for each $e\in\N$, $\Reg^{E|K}(Y)=\Reg^{E|K}(X)$ and $\mr{Sing}^{E|K}(Y)=\mr{Sing}^{E|K}(X)$.
\end{itemize}
\end{prop}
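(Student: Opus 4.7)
The plan is to lift everything from the ambient $L^n$ down to $(\ove^\sqbullet)^n$ via the extension-of-coefficients procedure; this is available because, in both cases, $L|\ove^\sqbullet$ is either an extension of algebraically closed fields or of real closed fields, so Proposition \ref{extension-zar} applies. Once (i) and (ii) are established this way, the dimension statements in (iii) are handled by Theorem \ref{apl1}, (iv) reduces to Lemma \ref{lem:integraldom}, and (v) is immediate from the equality of local rings.

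For (i), I would choose a finite $F\subset K[\x]$ with $X=\ZZ_L(F)$ and notice that $Y=X\cap(\ove^\sqbullet)^n=\ZZ_{\ove^\sqbullet}(F)$ is $K$-algebraic in $(\ove^\sqbullet)^n$. Applying Proposition \ref{extension-zar} to $Y\subset(\ove^\sqbullet)^n$ with the extension $L|\ove^\sqbullet$ yields at once $Y_L=\zcl_{L^n}(Y)=\zcl_{L^n}^{\ove^\sqbullet}(Y)=X$, together with $\II_{\ove^\sqbullet}(Y)=\II_{\ove^\sqbullet}(X)$ and $\dim(Y)=\dim(X)$; intersecting the ideal equality with $K[\x]$ gives $\II_K(Y)=\II_K(X)$, and then $X=\zcl_{L^n}(Y)\subset\zcl_{L^n}^K(Y)\subset\zcl_{L^n}^K(X)=X$. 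Part (ii) follows by applying (i) separately to each $X_i$: we obtain the corresponding equalities, and $\II_K(Y_i)=\II_K(X_i)$ is prime by Lemma \ref{lem:prime}, so each $Y_i$ is $K$-irreducible; an inclusion $Y_i\subset Y_j$ for $i\neq j$ would force $X_i=\zcl_{L^n}^K(Y_i)\subset X_j$, which is impossible, so $Y_1,\ldots,Y_s$ are precisely the $K$-irreducible components of $Y$.

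For (iii), the equality $\reg^{E|K}_{Y,a}=\reg^{E|K}_{X,a}$ is immediate since $\II_K(Y)=\II_K(X)$. For the dimension formula, I would decompose $\II_K(Y)=\bigcap_{i=1}^s\II_K(Y_i)$, pass to $E[\x]_{\gtn_a}$ via Corollary \ref{lem:a}, and observe that for each $i\notin I_a$ a polynomial $h_i\in\II_K(Y_i)$ with $h_i(a)\neq0$ renders $\II_K(Y_i)E[\x]_{\gtn_a}$ the unit ideal, leaving $\II_K(Y)E[\x]_{\gtn_a}=\bigcap_{i\in I_a}\II_K(Y_i)E[\x]_{\gtn_a}$. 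A standard primary-decomposition argument in the Noetherian ring $E[\x]_{\gtn_a}$ then gives $\hgt\big(\bigcap_{i\in I_a}\II_K(Y_i)E[\x]_{\gtn_a}\big)=\min_{i\in I_a}\hgt\big(\II_K(Y_i)E[\x]_{\gtn_a}\big)$, and combining \eqref{eq:ht} with Theorem \ref{apl1}(ii) applied to the prime ideal $\gtp=\II_K(Y_i)$ (for which $Y_i=\ZZ_{\ove^\sqbullet}(\gtp)$ by (ii)) delivers $\dim(\reg^{E|K}_{Y_i,a})=\dim(Y_i)$ for every $a\in Y_i$. Putting these together yields $\dim(\reg^{E|K}_{Y,a})=\max_{i\in I_a}\dim(Y_i)$, and picking $a$ in a $Y_i$ of maximal dimension, avoiding the other components, recovers $\dim(X)=\max_{a\in Y}\dim(\reg^{E|K}_{X,a})$.

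Part (iv) is a direct consequence of Lemma \ref{lem:integraldom} applied to $Y\subset(\ove^\sqbullet)^n$: if $\reg^{E|K}_{X,a}=\reg^{E|K}_{Y,a}$ is an integral domain, then $a$ lies in a unique $K$-irreducible component $Y_i$ of $Y$, and the presence of $a$ in some $X_j$ with $j\neq i$ would give $a\in X_j\cap(\ove^\sqbullet)^n=Y_j$, a contradiction. Part (v) is then tautological, because $E|K$-nonsingularity of dimension $e$ depends only on the local ring $\reg^{E|K}_{X,a}=\reg^{E|K}_{Y,a}$, and both ambient sets $X\cap(\ove^\sqbullet)^n$ and $Y\cap(\ove^\sqbullet)^n$ coincide with $Y$. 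The main technical obstacle is the height-of-intersection step in (iii): one must confirm both that the minimal-primes-of-intersection argument survives the localization at $\gtn_a$, and that Theorem \ref{apl1}(ii) really yields $\hgt(\II_K(Y_i)E[\x]_{\gtn_a})=\hgt(\II_K(Y_i))$; both points are resolved within the Noetherian commutative algebra already developed earlier in the paper.
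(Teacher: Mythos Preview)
Your proposal is correct and follows essentially the same route as the paper: extension of coefficients for $L|\ove^\sqbullet$ (the paper cites Corollary \ref{inter} where you cite Proposition \ref{extension-zar}, which is equivalent here) for $(\mr{i})$ and $(\mr{ii})$, then Corollary \ref{lem:a} plus Theorem \ref{apl1}$(\mr{ii})$ and \eqref{eq:ht} for the height computation in $(\mr{iii})$, Lemma \ref{lem:integraldom} for $(\mr{iv})$, and the tautology from $\reg^{E|K}_{Y,a}=\reg^{E|K}_{X,a}$ and $\dim(Y)=\dim(X)$ for $(\mr{v})$. The only cosmetic difference is that the paper phrases the last step of $(\mr{iii})$ as $\dim(\reg^{E|K}_{Y,a})=\max_{i\in I_a}\{\dim(\reg^{E|K}_{Y_i,a})\}=\max_{i\in I_a}\{\dim(Y_i)\}$ rather than separately invoking $\hgt(\II_K(Y_i)E[\x]_{\gtn_a})=\hgt(\II_K(Y_i))$, but the content is identical.
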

\begin{proof}
$(\mr{i})$ Corollary \ref{inter}$(\mr{i})(\mr{ii})(\mr{iv})$ implies that $X=\zcl_{L^n}(Y)=\zcl_{L^n}^{\ove^\sqbullet}(Y)$, $\II_{\ove^\sqbullet}(Y)=\II_{\ove^\sqbullet}(X)$ and $\dim(X)=\dim_L(X)=\dim_{\ove^\sqbullet}(Y)=\dim(Y)$. As $K\subset\ove^\sqbullet$ and $X\subset L^n$ is $K$-algebraic, we have $X=\zcl_{L^n}^{\ove^\sqbullet}(Y)\subset\zcl_{L^n}^K(Y)\subset X$, so $X=\zcl_{L^n}^K(Y)$ and $\II_K(Y)=\II_K(X)$.

$(\mr{ii})$ Fix $i\in\{1,\ldots,s\}$. By $(\mr{i})$, we have $X_i=(Y_i)_L=\zcl_{L^n}(Y_i)=\zcl_{L^n}^{\ove^\sqbullet}(Y_i)=\zcl_{L^n}^K(Y_i)$, $\dim(Y_i)=\dim(X_i)$ and $\II_K(Y_i)=\II_K(X_i)$ is a prime ideal of $K[\x]$. By Lemma \ref{lem:prime}, $Y_i\subset(\ove^\sqbullet)^n$ is $K$-irreducible. Choose $j\in\{1,\ldots,s\}\setminus\{i\}$. Then $Y_i\not\subset Y_j$ because otherwise, $Y_i\subset Y_j$ and $X_i=\zcl_{L^n}(Y_i)\subset\zcl_{L^n}(Y_j)=X_j$, which is a contradiction. As $Y=\bigcup_{i=1}^sY_i$, the $Y_i$ are the $K$-irreducible components of $Y\subset(\ove^\sqbullet)^n$.

$(\mr{iii})$ Fix $a\in Y$. As $\II_K(Y)=\II_K(X)$, it follows: $\reg^{E|K}_{Y,a}=\reg^{E|K}_{X,a}$. If $Y\subset(\ove^\sqbullet)^n$ is $K$-irreducible, Theorem \ref{apl1}$(\mr{ii})$ assures that $\dim(\reg^{E|K}_{Y,a})=\dim(Y)$. 

Suppose $Y\subset(\ove^\sqbullet)^n$ is $K$-reducible with $K$-irreducible components $Y_1,\ldots,Y_s$. As $\II_K(Y)=\bigcap_{i=1}^s\II_K(Y_i)$, Lemma \ref{lem:a} implies
$$\textstyle
\II_K(Y)E[\x]_{\gtn_a}=\bigcap_{i=1}^s(\II_K(Y_i)E[\x]_{\gtn_a}).
$$
As $\II_K(Y_i)E[\x]_{\gtn_a}=E[\x]_{\gtn_a}$ for each $i\in\{1,\ldots,s\}\setminus I_a$, we have 
$$\textstyle
\II_K(Y)E[\x]_{\gtn_a}=\bigcap_{i\in I_a}(\II_K(Y_i)E[\x]_{\gtn_a}).
$$
We deduce: $\mr{ht}(\II_K(Y)E[\x]_{\gtn_a})=\min_{i\in I_a}\{\mr{ht}(\II_K(Y_i)E[\x]_{\gtn_a})\}$. By \eqref{eq:ht}, it follows that
\begin{align*}
\dim(\reg^{E|K}_{Y,a})&\textstyle=n-\mr{ht}(\II_K(Y)E[\x]_{\gtn_a})=\max_{i\in I_a}\{n-\mr{ht}(\II_K(Y_i)E[\x]_{\gtn_a})\}\\
&\textstyle=\max_{i\in I_a}\{\dim(\reg^{E|K}_{Y_i,a})\}=\max_{i\in I_a}\{\dim(Y_i)\}.
\end{align*}

$(\mr{iv})$ This follows from $(\mr{ii})$, equality $\reg^{E|K}_{Y,a}=\reg^{E|K}_{X,a}$ and Lemma \ref{lem:integraldom}.

$(\mr{v})$ This item is straightforward, because $\reg^{E|K}_{X,a}=\reg^{E|K}_{Y,a}$ for each $a\in Y$ and $\dim(X)=\dim(Y)$.
\end{proof}

Next three results deal with the structure of $E|K$-nonsingular loci of $K$-algebraic subsets of $L^n$.

\begin{prop}\label{29}
Let $X\subset L^n$ be a $K$-irreducible $K$-algebraic set of dimension $d$. We have:
\begin{itemize}
\item[$(\mr{i})$] If $a\in X\cap(\ove^\sqbullet)^n$ is a $E|K$-nonsingular point of $X$ of some dimension $e$, then $e=d$.
\item[$(\mr{ii})$] $\Sing^{E|K}(X)$ is a $K$-algebraic subset of $(\ove^\sqbullet)^n$ of dimension $<d$, so $\Reg^{E|K}(X)$ is a non-empty $K$-Zariski open subset of $X\cap(\ove^\sqbullet)^n\subset(\ove^\sqbullet)^n$. If $\{g_1,\ldots,g_s\}$ is a system of generators of $\II_K(X)$ in $K[\x]$, then
\begin{equation}\label{regge}
\textstyle
\Reg^{E|K}(X)=\big\{a\in X\cap(\ove^\sqbullet)^n: {\rm rk}\big(\frac{\partial g_i}{\partial\x_j}(a)\big)_{i=1,\ldots,s,\, j=1\ldots,n}=n-d\big\}.
\end{equation}
and
\begin{equation}\label{singe}
\textstyle
\Sing^{E|K}(X)=\big\{a\in X\cap(\ove^\sqbullet)^n: {\rm rk}\big(\frac{\partial g_i}{\partial\x_j}(a)\big)_{i=1,\ldots,s,\, j=1\ldots,n}<n-d\big\}.
\end{equation}

\item[$(\mr{iii})$] If $d<n$, then a point $a\in X\cap(\ove^\sqbullet)^n$ belongs to $\Reg^{E|K}(X)$ if and only if there exist $f_1,\ldots,f_{n-d}\in\II_K(X)$ and a $K$-Zariski open neighborhood $U$ of $a$ in $(\ove^\sqbullet)^n$ such that
$$\textstyle
\text{${\rm rk}\big(\frac{\partial f_i}{\partial\x_j}(a)\big)_{i=1,\ldots,n-d,\, j=1,\ldots,n}=n-d\;$ and $\;X\cap U=\ZZ_{\ove^\sqbullet}(f_1,\ldots,f_{n-d})\cap U$}.
$$
\item[$(\mr{iii}')$] If $d<n$, then a point $a\in X\cap(\ove^\sqbullet)^n$ belongs to $\Reg^{E|K}(X)$ if and only if there exist $f_1,\ldots,f_{n-d}\in\II_K(X)$ and an Euclidean open neighborhood $V$ of $a$ in $(\ove^\sqbullet)^n$ such that
 $$\textstyle
\text{${\rm rk}\big(\frac{\partial f_i}{\partial\x_j}(a)\big)_{i=1,\ldots,n-d,\, j=1,\ldots,n}=n-d\;$ and $\;X\cap V=\ZZ_{\ove^\sqbullet}(f_1,\ldots,f_{n-d})\cap V$.}
$$
\end{itemize}
\end{prop}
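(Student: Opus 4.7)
The plan is to reduce everything to the $K$-algebraic set $Y:=X\cap(\ove^\sqbullet)^n\subset(\ove^\sqbullet)^n$ and then invoke the two main technical tools already established: Theorem \ref{apl1} (which controls the Krull dimension of $\reg^{E|K}_{Y,a}$ and the locus of non-regularity via a Jacobian-rank formula) and Theorem \ref{Q-jacobian} (the $E|K$-Jacobian criterion). By Proposition \ref{prop:XY}$(\mr{i})(\mr{ii})(\mr{iii})(\mr{v})$, the set $Y\subset(\ove^\sqbullet)^n$ is $K$-irreducible of dimension $d$, one has $\II_K(Y)=\II_K(X)$ and $\reg^{E|K}_{Y,a}=\reg^{E|K}_{X,a}$ for each $a\in Y$, and the loci $\Reg^{E|K}(X,\bullet)$ and $\Sing^{E|K}(X)$ coincide with the corresponding ones of $Y\subset(\ove^\sqbullet)^n$. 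This dictionary will let us replace every statement about $X$ by its counterpart for the $K$-irreducible $K$-algebraic set $Y\subset(\ove^\sqbullet)^n$ with prime vanishing ideal $\gtp:=\II_K(X)$.

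For $(\mr{i})$, I would apply Theorem \ref{apl1}$(\mr{ii})$ to $\gtp=\II_K(X)=\II_K(Y)$: it yields $\dim(\reg^{E|K}_{Y,a})=\dim(Y)=d$ for every $a\in Y$. Since a regular local ring has Krull dimension equal to its regular dimension, an $E|K$-nonsingular point of $X$ of dimension $e$ forces $e=\dim(\reg^{E|K}_{X,a})=\dim(\reg^{E|K}_{Y,a})=d$. For $(\mr{ii})$, the same Theorem \ref{apl1}$(\mr{ii})$ identifies $\Sing^{E|K}(X)$ with the set $S=\{a\in Y:{\rm rk}_a(\gtp)<c\}$ where $c=n-\dim(Y)=n-d$, and asserts $\dim(S)<d$. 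Expanding ${\rm rk}_a(\gtp)$ via the definition \eqref{eq:rk0} in terms of a finite system of generators $\{g_1,\ldots,g_s\}$ of $\II_K(X)$ in $K[\x]$ yields formulas \eqref{regge} and \eqref{singe}. The $K$-algebraicity of $\Sing^{E|K}(X)$ in $(\ove^\sqbullet)^n$ is then immediate: the condition ``rank of the Jacobian $<n-d$'' is defined by the simultaneous vanishing at $a$ of all $(n-d)\times(n-d)$-minors of $\big(\frac{\partial g_i}{\partial\x_j}\big)_{i,j}$, which are polynomials in $K[\x]$, intersected with the $K$-algebraic set $Y\subset(\ove^\sqbullet)^n$. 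The non-emptiness and $K$-Zariski openness of $\Reg^{E|K}(X)$ in $Y$ then follow from $\dim(\Sing^{E|K}(X))<\dim(Y)$.

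For $(\mr{iii})$ and $(\mr{iii}')$, assume $d<n$ and apply the $E|K$-Jacobian criterion (Theorem \ref{Q-jacobian}) to the ideal $\gta:=\II_K(X)$ and to the dimension parameter $e:=d$ (which is forced by $(\mr{i})$): a point $a\in Y$ belongs to $\Reg^{E|K}(X)$ if and only if the local ring $E[\x]_{\gtn_a}/(\II_K(X)E[\x]_{\gtn_a})=\reg^{E|K}_{X,a}$ is regular of dimension $d$, and the latter condition is verbatim equivalent to conditions $(\mr{ii})$ and $(\mr{ii}')$ of Theorem \ref{Q-jacobian}, which are precisely $(\mr{iii})$ and $(\mr{iii}')$ in the present statement.

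No step poses a serious obstacle, since the hard work has already been done: Theorem \ref{apl1} supplies the simultaneous control of Krull dimension and singular locus via Jacobian rank for a prime ideal with the zero property, and Theorem \ref{Q-jacobian} supplies the local characterization. The only subtlety to handle carefully is the transfer between $X\subset L^n$ and $Y=X\cap(\ove^\sqbullet)^n\subset(\ove^\sqbullet)^n$, which is entirely covered by Proposition \ref{prop:XY}, and the trivial observation that $\Reg^{E|K}(X,e)=\varnothing$ for $e\neq d$ in the $K$-irreducible case (part $(\mr{i})$) is what licenses the use of Theorem \ref{Q-jacobian} with the specific parameter $e=d$ in parts $(\mr{iii})$ and $(\mr{iii}')$.
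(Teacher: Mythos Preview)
Your proposal is correct and follows essentially the same route as the paper's proof: reduce to $Y=X\cap(\ove^\sqbullet)^n$ via Proposition~\ref{prop:XY}, apply Theorem~\ref{apl1}$(\mr{ii})$ to the prime ideal $\gtp=\II_K(X)$ for parts $(\mr{i})$ and $(\mr{ii})$, and invoke the $E|K$-Jacobian criterion (Theorem~\ref{Q-jacobian}) for parts $(\mr{iii})$ and $(\mr{iii}')$. The paper's argument is slightly more compressed (it cites Lemma~\ref{lem:prime} explicitly for the primeness of $\gtp$ and Lemma~\ref{dimirred} for the dimension drop, whereas you absorb the latter into Theorem~\ref{apl1}$(\mr{ii})$), but the substance is identical.
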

\begin{proof}
By Lemma \ref{lem:prime}, the ideal $\gtp:=\II_K(X)$ of $K[\x]$ is prime. Thus, Proposition \ref{prop:XY}$(\mr{v})$ and Theorem \ref{apl1}$(\mr{ii})$ assure that $\max_{a\in X\cap(\ove^\sqbullet)^n}\{{\rm rk}_a(\gtp)\}=n-d$ and a point $a\in X\cap(\ove^\sqbullet)^n$ is a $E|K$-nonsingular point of $X$ of some dimension $e$ if and only if $e=d$ and ${\rm rk}_a(\gtp)=n-d$. In addition, $\Sing^{E|K}(X)=\{a\in X\cap(\ove^\sqbullet)^n: {\rm rk}_a(\gtp)<n-d\}$ is a $K$-algebraic subset of $X\cap(\ove^\sqbullet)^n$ and has of dimension $<d$ by Lemma \ref{dimirred}. This proves $(\mr{i})$ and $(\mr{ii})$. Items $(\mr{iii})$ and $(\mr{iii}')$ are immediate consequences of the $E|K$-Jacobian criterion Theorem \ref{Q-jacobian}.
\end{proof}

\begin{remark}\label{rem526}
We restate Proposition \ref{29}$(\mr{ii})$ in terms of $E|K$-Zariski tangent spaces. Indeed, $\dim_{E[a]}(T^{E|K}_a(X))=n-{\rm rk}\big(\frac{\partial g_i}{\partial\x_j}(a)\big)_{i=1,\ldots,s,\, j=1\ldots,n}$ for each $a\in X\cap(\ove^\sqbullet)^n$, so \eqref{regge} is equivalent to the equality
$$
\Reg^{E|K}(X)=\big\{a\in X\cap(\ove^\sqbullet)^n:\dim_{E[a]}(T^{E|K}_a(X))=\dim(X)\big\}
$$
and \eqref{singe} to the equality
$$
\Sing^{E|K}(X)=\big\{a\in X\cap(\ove^\sqbullet)^n:\dim_{E[a]}(T^{E|K}_a(X))>\dim(X)\big\}.\;\,\sqbullet
$$
\end{remark}

\begin{prop}\label{30}
Let $X\subset L^n$ be a $K$-algebraic set, let $X_1,\ldots,X_s$ be the $K$-irreducible components of $X$, let $a\in X\cap(\ove^\sqbullet)^n$ and let $e\in\{0,\ldots,n\}$. If $e=n$, then $a$ is a $E|K$-nonsingular point of $X$ of dimension $e$ if and only if $X=L^n$. If $e<n$, then the following conditions are equivalent.
\begin{itemize}
\item[$(\mr{i})$] $a$ is a $E|K$-nonsingular point of $X$ of dimension $e$.
\item[$(\mr{ii})$] There exists an index $i\in\{1,\ldots,s\}$ such that $a\in X_i\setminus\bigcup_{j\in\{1,\ldots,s\}\setminus\{i\}}X_j$, $a\in\Reg^{E|K}(X_i)$ and $\dim(X_i)=e$.
\item[$(\mr{iii})$] There exist $f_1,\ldots,f_{n-e}\in\II_K(X)$ and a $K$-Zariski open neighborhood $U$ of $a$ in $(\ove^\sqbullet)^n$ such that ${\rm rk}\big(\frac{\partial f_i}{\partial\x_j}(a)\big)_{i=1,\ldots,n-e,\, j=1,\ldots,n}=n-e$ and $X\cap U=\ZZ_{\ove^\sqbullet}(f_1,\ldots,f_{n-e})\cap U$.
\item[$(\mr{iii}')$] There exist $f_1,\ldots,f_{n-e}\in\II_K(X)$ and an Euclidean open neighborhood $V$ of $a$ in $(\ove^\sqbullet)^n$ such that ${\rm rk}\big(\frac{\partial f_i}{\partial\x_j}(a)\big)_{i=1,\ldots,n-e,\, j=1,\ldots,n}=n-e$ and $X\cap V=\ZZ_{\ove^\sqbullet}(f_1,\ldots,f_{n-e})\cap V$.
\end{itemize}
\end{prop}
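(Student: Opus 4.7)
My plan is to combine the $E|K$-Jacobian criterion (Theorem \ref{Q-jacobian}) with the local-ring description of Proposition \ref{prop:XY} and the $K$-irreducible case treated in Proposition \ref{29}, invoking Lemma \ref{lem:a} for a small local-ring computation. Setting $\gta:=\II_K(X)$, the inclusion $\gta\subset\gtn_a$ is automatic since $a\in X$, so Theorem \ref{Q-jacobian} applies. The case $e=n$ is immediate from the $e=n$ part of that theorem: $\reg^{E|K}_{X,a}$ is regular of dimension $n$ iff $\gta=(0)$, and since $X=\ZZ_L(\II_K(X))$ (because $X$ is $K$-algebraic), this is equivalent to $X=L^n$.

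For $e<n$, the equivalences (i) $\Leftrightarrow$ (iii) $\Leftrightarrow$ (iii') follow directly from the $E|K$-Jacobian criterion applied to $\gta=\II_K(X)$. The only point to verify is that Theorem \ref{Q-jacobian} produces zero sets of the form $\ZZ_{\ove^\sqbullet}(\gta)\cap U$ and $\ZZ_{\ove^\sqbullet}(f_1,\ldots,f_{n-e})\cap U$, whereas (iii) and (iii') are stated in terms of $X\cap U$ and $X\cap V$; these agree because $X$ is $K$-algebraic, so $\ZZ_{\ove^\sqbullet}(\II_K(X))=X\cap(\ove^\sqbullet)^n$, and $U$, $V$ are already subsets of $(\ove^\sqbullet)^n$.

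For (i) $\Leftrightarrow$ (ii) the key step is the local-ring identification $\reg^{E|K}_{X,a}=\reg^{E|K}_{X_i,a}$ whenever $a\in X_i\setminus\bigcup_{j\neq i}X_j$. To prove it I would write $\II_K(X)=\bigcap_{j=1}^s\II_K(X_j)$ and use Lemma \ref{lem:a} to get $\II_K(X)E[\x]_{\gtn_a}=\bigcap_{j=1}^s\II_K(X_j)E[\x]_{\gtn_a}$; for each $j\neq i$ the hypothesis $a\notin X_j$ gives some $g_j\in\II_K(X_j)$ with $g_j(a)\neq0$, hence $g_j\notin\gtn_a$ and $\II_K(X_j)E[\x]_{\gtn_a}=E[\x]_{\gtn_a}$, so only the $j=i$ factor survives. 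With this identification, (ii) $\Rightarrow$ (i) is immediate since $\reg^{E|K}_{X,a}=\reg^{E|K}_{X_i,a}$ is regular of dimension $\dim(X_i)=e$ by hypothesis. For (i) $\Rightarrow$ (ii), the ring $\reg^{E|K}_{X,a}$ is regular and hence an integral domain, so Proposition \ref{prop:XY}(iv) yields an index $i$ with $a\in X_i\setminus\bigcup_{j\neq i}X_j$; the identification then transfers $E|K$-regularity from $X$ at $a$ to $X_i$ at $a$, and Proposition \ref{29}(i) forces $e=\dim(X_i)$. The main (minor) obstacle throughout is keeping track of the identification between the $E|K$-local rings of $X$ and of its relevant $K$-irreducible component; everything else reduces to previously established results.
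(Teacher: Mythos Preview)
Your proposal is correct. The ingredients and the logic for $e=n$ and for $(\mathrm{i})\Leftrightarrow(\mathrm{ii})$ match the paper's exactly: regularity implies the local ring is a domain, then Proposition~\ref{prop:XY}$(\mathrm{iv})$ isolates a unique component $X_i$, and the local-ring identification $\reg^{E|K}_{X,a}=\reg^{E|K}_{X_i,a}$ (via Lemma~\ref{lem:a}) transfers regularity; Proposition~\ref{29}$(\mathrm{i})$ pins down $e=\dim(X_i)$.

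The one organizational difference is in how $(\mathrm{iii})$ is reached. You get $(\mathrm{i})\Leftrightarrow(\mathrm{iii})\Leftrightarrow(\mathrm{iii}')$ in one stroke by applying Theorem~\ref{Q-jacobian} directly to $\gta=\II_K(X)$, using that $\ZZ_{\ove^\sqbullet}(\II_K(X))\cap U=X\cap U$. The paper instead proves $(\mathrm{ii})\Rightarrow(\mathrm{iii})$: it first invokes Proposition~\ref{29}$(\mathrm{iii})$ on the component $X_i$ to obtain $g_1,\ldots,g_{n-e}\in\II_K(X_i)$, and then multiplies each $g_k$ by a polynomial $h\in K[\x]$ vanishing on $\bigcup_{j\neq i}X_j$ but not at $a$, so that $f_k:=g_kh\in\II_K(X)$. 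Your route is slightly cleaner here since it avoids this multiplication trick; the paper's route, on the other hand, makes the passage from $\II_K(X_i)$ to $\II_K(X)$ explicit, which is conceptually informative. Both are short and rest on the same underlying Jacobian criterion.
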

\begin{proof}
If $e=n$, the result follows from the first part of Theorem \ref{Q-jacobian}. Suppose $e<n$.

$(\mr{i})\Longrightarrow(\mr{ii})$ If $\reg^{E|K}_{X,a}$ is a regular local ring of dimension $e$, then $\reg^{E|K}_{X,a}$ is an integral domain (\cite[Cor.1, p.302]{zs2}), so $a$ belongs to only one $K$-irreducible component $X_i$ of $X\subset L^n$ by Proposition \ref{prop:XY}$(\mr{iv})$. As $\reg^{E|K}_{X,a}\cong\reg^{E|K}_{X_i,a}$, $a$ is a $E|K$-nonsingular point of $X_i\subset L^n$ of dimension~$e$. By Proposition \ref{29}$(\mr{i})$, it holds: $e=\dim(X_i)$, so $a\in\Reg^{E|K}(X_i)$.

$(\mr{ii})\Longrightarrow(\mr{iii})$ By Proposition \ref{29}$(\mr{iii})$ applied to $X_i\subset L^n$, there exist $g_1,\ldots,g_{n-e}\in\II_K(X_i)$ and a $K$-Zariski open neighborhood $U$ of $a$ in $(\ove^\sqbullet)^n$ such that ${\rm rk}\big(\frac{\partial g_i}{\partial\x_j}(a)\big)_{i=1,\ldots,n-e,\, j=1,\ldots,n}=n-e$ and $X_i\cap U=\ZZ_{\ove^\sqbullet}(g_1,\ldots,g_{n-e})\cap U$. Set $Z:=\bigcup_{j\in\{1,\ldots,s\}\setminus\{i\}}X_j$ and choose a polynomial $h\in K[\x]$ such that $a\not\in\ZZ_{\ove^\sqbullet}(h)$ and $Z\cap(\ove^\sqbullet)^n\subset\ZZ_{\ove^\sqbullet}(h)$. Substituting $U$ by $U\setminus\ZZ_{\ove^\sqbullet}(h)$, we can assume that $U\cap Z=\varnothing$. To finish, it is enough to define $f_i:=g_ih$ for each $i\in\{1,\ldots,n-e\}$.

Implication $(\mr{iii})\Longrightarrow(\mr{iii}')$ follows immediately from the fact that the Euclidean topology is finer than the $K$-Zariski topology of $(\ove^\sqbullet)^n$. Implication $(\mr{iii}')\Longrightarrow(\mr{i})$ is a straightforward consequence of Theorem \ref{Q-jacobian}.
\end{proof}

\begin{remarks}\label{rem528}
$(\mr{i})$ Let $X\subset L^n$ be a $K$-algebraic set, let $a\in X\cap(\ove^\sqbullet)^n$ and let $e\in\{0,\ldots,n-1\}$. Suppose that there exist $f_1,\ldots,f_{n-e}\in\II_K(X)$ and an Euclidean open neighborhood $V$ of $a$ in $(\ove^\sqbullet)^n$ such that ${\rm rk}\big(\frac{\partial f_i}{\partial\x_j}(a)\big)_{i=1,\ldots,n-e,\, j=1,\ldots,n}=n-e$ and $X\cap V=\ZZ_{\ove^\sqbullet}(f_1,\ldots,f_{n-e})\cap V$. By Proposition \ref{30}, Remark \ref{rem526} and the $K$-Zariski local nature in $R^n$ of the $E|K$-Zariski tangent space $T^{E|K}_a(X)$ (see Definition \ref{ek-Zar-tang} and Remarks \ref{tang-bcr}$(\mr{i}')$), we have:
$$
T^{E|K}_a(X)=\{v\in E[a]^n:\langle\nabla f_1(a),v\rangle=0,\ldots,\langle\nabla f_{n-e}(a),v\rangle=0\}.
$$
Indeed, if $X_1,\ldots,X_s$ are the $K$-irreducible components of $X\subset L^n$, then there exists a unique index $i\in\{1,\ldots,s\}$ such that $a\in\Reg^{E|K}(X_i)\setminus\bigcup_{j\in\{1,\ldots,s\}\setminus\{i\}}X_j$ and $\dim(X_i)=e$. Thus, $T^{E|K}_a(X)=T^{E|K}_a(X_i)$ is contained in $T^*:=\{v\in E[a]^n:\langle\nabla f_1(a),v\rangle=0,\ldots,\langle\nabla f_{n-e}(a),v\rangle=0\}$ and both $T^{E|K}_a(X)$ and $T^*$ have $E[a]$-vector dimension equal to $e$, so they coincide.

$(\mr{ii})$ If $X\subset L^n$ is a $K$-algebraic set and $a$ is a $E|K$-nonsingular point of $X$ of some dimension $e$, then $\dim_{E[a]}(T^{E|K}_a(X))=e$. This follows immediately from Proposition \ref{30}$(\mr{ii})$, the $K$-Zariski local nature of $T^{E|K}_a(X)$ (see Remarks \ref{tang-bcr}$(\mr{i}')$) and Remark \ref{rem526}.
 $\sqbullet$ 
\end{remarks}

\begin{cor}\label{30'}
Let $X\subset L^n$ be a $K$-algebraic set of dimension $d<n$, let $X_1,\ldots,X_s$ be the $K$-irre\-ducible components of $X$ and, for each $e\in\{0,\ldots,d\}$, let $I_e$ be the set of all indices $i\in\{1,\ldots,s\}$ such that $\dim(X_i)=e$. Set $Y_i:=X_i\cap(\ove^\sqbullet)^n$ for each $i\in\{1,\ldots,s\}$. We have:
\begin{itemize}
\item[$(\mr{i})$] $\Reg^{E|K}(X_i)\not\subset\bigcup_{j\in\{1,\ldots,s\}\setminus\{i\}}X_j\,$ for each $i\in\{1,\ldots,s\}$.
\item[$(\mr{ii})$] $\Reg^{E|K}(X,e)=\bigsqcup_{i\in I_e}\!\big(\Reg^{E|K}(X_i)\setminus\bigcup_{j\in\{1,\ldots,s\}\setminus\{i\}}X_j\big)$ for all $e\in\{0,\ldots,d\}$. In particular, $\Reg^{E|K}(X,e)\neq\varnothing$ if and only if $I_e\neq\varnothing$. 
\item[$(\mr{iii})$] $\mr{Sing}^{E|K}(X)$ is a $K$-algebraic subset of $(\ove^\sqbullet)^n$ of dimension $<d$, and $\Reg^{E|K}(X)$ is a non-empty $K$-Zariski open subset of $X\cap(\ove^\sqbullet)^n\subset(\ove^\sqbullet)^n$ such that
\begin{equation}\label{eq:regd}
\textstyle
\Reg^{E|K}(X)=\bigsqcup_{i\in I_d}\!\big(\Reg^{E|K}(X_i)\setminus\bigcup_{j\in\{1,\ldots,s\}\setminus\{i\}}X_j\big).
\end{equation}
In addition, it holds
\begin{equation}\label{eq:singd}
\textstyle
\Sing^{E|K}(X)=\bigcup_{i\in I_d}\Sing^{E|K}(X_i)\cup\bigcup_{i,j\in I_d,i\neq j}(Y_i\cap Y_j)\cup\bigcup_{i\in\{1,\ldots,s\}\setminus I_d}Y_i.
\end{equation}
\end{itemize}
\end{cor}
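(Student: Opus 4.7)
The plan is to deduce all three items by combining the characterization of $\Reg^{E|K}(X,e)$ in Proposition \ref{30} (applicable here because $e\le d<n$) with the irreducibility facts of Proposition \ref{prop:XY} and the $K$-irreducible case handled in Proposition \ref{29}. Throughout I would work inside $Y=X\cap(\ove^\sqbullet)^n=\bigcup_{i=1}^s Y_i$, so that each ``$\bigcup_{j\neq i}X_j$'' appearing in the statements is reinterpreted inside $(\ove^\sqbullet)^n$ as $\bigcup_{j\neq i}Y_j$, which is a $K$-algebraic subset of $(\ove^\sqbullet)^n$.

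For (i), I would argue by contradiction. Suppose $\Reg^{E|K}(X_i)\subset\bigcup_{j\neq i}X_j$. Since $\Reg^{E|K}(X_i)\subset(\ove^\sqbullet)^n$, the inclusion becomes $\Reg^{E|K}(X_i)\subset\bigcup_{j\neq i}Y_j$. By Proposition \ref{29}(ii), $\Reg^{E|K}(X_i)$ is a non-empty $K$-Zariski open subset of the $K$-irreducible set $Y_i$ (Proposition \ref{prop:XY}(ii)), hence $K$-Zariski dense in $Y_i$. Taking $K$-Zariski closures in $(\ove^\sqbullet)^n$ yields $Y_i\subset\bigcup_{j\neq i}Y_j$, contradicting that $Y_1,\ldots,Y_s$ are the $K$-irreducible components of $Y$. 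For (ii), Proposition \ref{30} (applied with $e<n$) says a point $a\in X\cap(\ove^\sqbullet)^n$ belongs to $\Reg^{E|K}(X,e)$ exactly when there is a (unique) index $i\in\{1,\ldots,s\}$ with $a\in\Reg^{E|K}(X_i)\setminus\bigcup_{j\neq i}X_j$ and $\dim(X_i)=e$, i.e.\ $i\in I_e$; this is precisely the asserted union. The union is disjoint, because if $a$ lay in the $i_1$-piece and the $i_2$-piece for $i_1\neq i_2$, then from the $i_2$-piece one has $a\in X_{i_2}\subset\bigcup_{j\neq i_1}X_j$, contradicting the defining condition of the $i_1$-piece. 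Non-emptiness of $\Reg^{E|K}(X,e)$ whenever $I_e\neq\varnothing$ now follows by picking any $i\in I_e$ and applying (i).

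For (iii), equation \eqref{eq:regd} is the specialization of (ii) to $e=d$, and $I_d\neq\varnothing$ (because $\dim(X)=d$) together with (i) gives $\Reg^{E|K}(X)\neq\varnothing$. To establish \eqref{eq:singd} I would compute $\Sing^{E|K}(X)=Y\setminus\Reg^{E|K}(X)$ by a short case analysis on the set of indices $\{i:a\in Y_i\}$ of a point $a\in Y$: if $a$ lies only in $Y_i$ with $i\in I_d$, then $a\notin\bigcup_{j\neq i}X_j$, so $a\notin\Reg^{E|K}(X)$ forces $a\in\Sing^{E|K}(X_i)$; if $a$ lies in $Y_i\cap Y_j$ with $i\neq j$ both in $I_d$, then $a$ fails every $\Reg^{E|K}(X_{i'})\setminus\bigcup_{k\neq i'}X_k$ because $a$ always lies in the removed union, so $a$ contributes to the second piece; the remaining points of $Y$ lie in $\bigcup_{i\notin I_d}Y_i$ and, by the same reasoning, they never belong to $\Reg^{E|K}(X)$, so they fill the third piece. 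The $K$-algebraicity of $\Sing^{E|K}(X)$ and the bound $\dim<d$ then follow term-by-term: $\Sing^{E|K}(X_i)$ for $i\in I_d$ is $K$-algebraic of dimension $<d$ by Proposition \ref{29}(ii); $Y_i\cap Y_j$ with $i\neq j$ both in $I_d$ is a proper $K$-algebraic subset of the $K$-irreducible set $Y_i$ of dimension $d$, so has dimension $<d$ by Lemma \ref{dimirred}; and each $Y_i$ with $i\notin I_d$ has $\dim(Y_i)=\dim(X_i)<d$ by Proposition \ref{prop:XY}(ii). The main point requiring care—and hence the step I would write out most explicitly—is the case-by-case verification of \eqref{eq:singd}, because it must correctly interleave the roles of $\Sing^{E|K}(X_i)$, the $K$-Zariski crossings $Y_i\cap Y_j$, and the lower-dimensional components $Y_i$ with $i\notin I_d$.
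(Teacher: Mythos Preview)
Your proposal is correct and follows essentially the same approach as the paper: both argue $(\mr{i})$ by contradiction using that $\Reg^{E|K}(X_i)$ is $K$-Zariski dense in $Y_i$ (Propositions \ref{prop:XY}$(\mr{ii})$ and \ref{29}$(\mr{ii})$), derive $(\mr{ii})$ from the equivalence $(\mr{i})\Longleftrightarrow(\mr{ii})$ of Proposition \ref{30}, and obtain $(\mr{iii})$ by specializing $(\mr{ii})$ to $e=d$, computing the complement, and bounding each piece via Proposition \ref{29}$(\mr{ii})$ and Lemma \ref{dimirred}. The only cosmetic differences are that the paper takes the $K$-Zariski closure in $L^n$ in $(\mr{i})$ (reaching $X_i\subset\bigcup_{j\neq i}X_j$) while you work in $(\ove^\sqbullet)^n$, and that your case analysis for \eqref{eq:singd} is spelled out in more detail than the paper's one-line deduction.
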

\begin{proof}
$(\mr{i})$ Suppose that $\Reg^{E|K}(X_i)\subset\bigcup_{j\in\{1,\ldots,s\}\setminus\{i\}}X_j$ for some $i\in\{1,\ldots,s\}$. By Propositions \ref{prop:XY}$(\mr{ii})$ and \ref{29}$(\mr{ii})$, $\Reg^{E|K}(X_i)$ is a non-empty $K$-Zariski open subset of the $K$-irreducible $K$-algebraic set $Y_i\subset(\ove^\sqbullet)^n$, and $\zcl_{L^n}^K(Y_i)=X_i$. As $\Reg^{E|K}(X_i)$ is $K$-Zariski dense in $Y_i\subset(\ove^\sqbullet)^n$, we deduce $X_i=\zcl_{L^n}^K(\Reg^{E|K}(X_i))\subset\bigcup_{j\in\{1,\ldots,s\}\setminus\{i\}}X_j$, which is a contradiction.

$(\mr{ii})$ This follows immediately from $(\mr{i})$ and equivalence $(\mr{i})\Longleftrightarrow(\mr{ii})$ of Proposition \ref{30}.

$(\mr{iii})$ Applying $(\mr{ii})$ with $e=d$, we obtain formulas \eqref{eq:regd}. By Proposition \ref{29}$(\mr{ii})$, the set $\Sing^{E|K}(X_i)\subset(\ove^\sqbullet)^n$ is $K$-algebraic and of dimension $<d$ for each $i\in I_d$. By Proposition \ref{prop:XY}$(\mr{i})(\mr{ii})$ and Lemma \ref{dimirred}, each set $Y_i\subset(\ove^\sqbullet)^n$ is $K$-algebraic, $\dim(Y_i\cap Y_j)<d$ for each $i,j\in I_d$ with $i\neq j$, and $\dim(Y_i)<d$ for each $i\in\{1,\ldots,s\}\setminus I_d$. Thus, keeping in mind that $\Sing^{E|K}(X)=(X\cap(\ove^\sqbullet)^n)\setminus\Reg^{E|K}(X)$, we deduce formula \eqref{eq:singd}.

By \eqref{eq:singd}, the set $\Sing^{E|K}(X)\subset(\ove^\sqbullet)^n$ is also $K$-algebraic and of dimension $<d$. In particular, $\Sing^{E|K}(X)$ is a proper subset of $X\cap(\ove^\sqbullet)^n$, so $\Reg^{E|K}(X)$ is a non-empty $K$-Zariski open subset of $X\cap(\ove^\sqbullet)^n\subset(\ove^\sqbullet)^n$, as required.
\end{proof}

The next result concerns the $K$-algebraicity of the difference between two appropriate $K$-algebraic sets, and generalizes both \cite[Prop.3.3.17]{bcr} and \cite[Prop.2.14]{GS}.

\begin{cor}\label{K-difference}
Let $X\subset L^n$ be a $K$-algebraic set of dimension $d$, and let $Z\subset L^n$ be a $K$-algebraic set such that $Z\subset X$. Suppose that one of the following two conditions is satisfied:
\begin{itemize}
\item[$(\mr{i})$] $\dim(\reg^{E|K}_{Z,a})=d$ and $\reg^{E|K}_{X,a}$ is an integral domain for each $a\in Z\cap(\ove^\sqbullet)^n$.
\item[$(\mr{ii})$] $\dim(Z)=d$ and $Z\cap(\ove^\sqbullet)^n=\Reg^{E|K}(Z)\subset\Reg^{E|K}(X)$.
\end{itemize} 
Then $X\setminus Z\subset L^n$ is a $K$-algebraic set. In addition, if $Z\cap(\ove^\sqbullet)^n\neq\Reg^{E|K}(X)$, then we have $\Reg^{E|K}(X\setminus Z)=\Reg^{E|K}(X)\setminus Z$.
\end{cor}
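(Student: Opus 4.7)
The plan is to reduce (ii) to (i) and then work entirely under (i). If (ii) holds, each $a\in Z\cap(\ove^\sqbullet)^n=\Reg^{E|K}(Z)\subset\Reg^{E|K}(X)$ has $\reg^{E|K}_{Z,a}$ regular of dimension $\dim(Z)=d$ (Proposition \ref{29}(i)) and $\reg^{E|K}_{X,a}$ regular; regular local rings being integral domains, (i) follows.

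The key step is to show that, under (i), every $K$-irreducible component of $Z$ coincides with a $K$-irreducible component of~$X$. Let $X_1,\dots,X_s$ and $Z_1,\dots,Z_t$ be those of $X$ and $Z$. Lemma \ref{inclusion} gives $Z_k\subset X_{\alpha(k)}$ for some $\alpha(k)\in\{1,\dots,s\}$. Using Proposition \ref{prop:XY}(i) (density of the $(\ove^\sqbullet)^n$-points of each $K$-algebraic set) and the fact that each $Z_k\cap Z_{k'}$ is a proper $K$-Zariski closed subset of $Z_k$, pick a generic point $a\in Z_k\cap(\ove^\sqbullet)^n\setminus\bigcup_{k'\neq k}Z_{k'}$. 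By Proposition \ref{prop:XY}(iii), $\dim(\reg^{E|K}_{Z,a})=\dim(Z_k)$, so $\dim(Z_k)=d$ by hypothesis. The integral-domain assumption on $\reg^{E|K}_{X,a}$ combined with Proposition \ref{prop:XY}(iv) forces $a$ to lie in exactly one component of $X$ (necessarily $X_{\alpha(k)}$), whence $\dim(\reg^{E|K}_{X,a})=\dim(X_{\alpha(k)})$; the canonical surjection $\reg^{E|K}_{X,a}\twoheadrightarrow\reg^{E|K}_{Z,a}$ between integral domains yields $\dim(X_{\alpha(k)})\geq d$, hence equality, and Lemma \ref{dimirred} forces $Z_k=X_{\alpha(k)}$. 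Setting $\mathcal{I}:=\{\alpha(1),\dots,\alpha(t)\}$ and $\mathcal{J}:=\{1,\dots,s\}\setminus\mathcal{I}$ gives $Z=\bigcup_{i\in\mathcal{I}}X_i$. The same integral-domain argument also yields $X_i\cap X_k=\varnothing$ whenever $i\in\mathcal{I}$ and $k\neq i$: otherwise Proposition \ref{prop:XY}(i) would produce $a\in X_i\cap X_k\cap(\ove^\sqbullet)^n\subset Z$, and $a$ would lie in two distinct components of $X$, contradicting Proposition \ref{prop:XY}(iv). Thus $X\setminus Z=\bigcup_{j\in\mathcal{J}}X_j$ is a $K$-algebraic set.

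For the last assertion, assume $Z\cap(\ove^\sqbullet)^n\neq\Reg^{E|K}(X)$. Under (ii), the inclusion $Z\cap(\ove^\sqbullet)^n\subset\Reg^{E|K}(X)$ is strict, so Proposition \ref{30}(ii) supplies $b\in\Reg^{E|K}(X)\setminus Z$ lying in a unique component $X_{j_0}$ of $X$ with $\dim(X_{j_0})=d$ and necessarily $j_0\in\mathcal{J}$; hence $\dim(X\setminus Z)=d$. Applying Corollary \ref{30'}(ii) with $e=d$ to both $X$ and $X\setminus Z$ (the latter having $K$-irreducible components $\{X_j\}_{j\in\mathcal{J}}$) and invoking the disjointness $X_i\cap X_j=\varnothing$ for $i\in\mathcal{I}$, $j\in\mathcal{J}$, one computes
\[
\Reg^{E|K}(X)\setminus Z=\bigsqcup_{\substack{j\in\mathcal{J}\\ \dim(X_j)=d}}\Big(\Reg^{E|K}(X_j)\setminus\bigcup_{j'\in\mathcal{J}\setminus\{j\}}X_{j'}\Big)=\Reg^{E|K}(X\setminus Z),
\]
as required. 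The main obstacle is the identification $Z_k=X_{\alpha(k)}$: it is where the purely ring-theoretic hypothesis (i) is converted into a global statement about irreducible components, via the joint use of Proposition \ref{prop:XY}(iii)(iv), the surjection of local rings, and Lemma \ref{dimirred}.
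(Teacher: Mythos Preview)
Your proof is correct and follows the same overall strategy as the paper's: reduce (ii) to (i), identify $Z$ with a union of top-dimensional $K$-irreducible components of $X$ that are disjoint from all the remaining components, and read off both conclusions from this decomposition. The paper organises the first part around the sets $Z_i:=X_i\cap Z$ rather than the $K$-irreducible components of $Z$ itself, but this is purely cosmetic and the dimension--comparison step (your surjection $\reg^{E|K}_{X,a}\twoheadrightarrow\reg^{E|K}_{Z,a}$ versus the paper's chain $d\leq\dim(Z_i)\leq\dim(X_i)\leq d$) reaches the same conclusion. One minor remark: the phrase ``between integral domains'' is superfluous, since the surjection alone gives the dimension inequality regardless of whether $\reg^{E|K}_{Z,a}$ is a domain. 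For the final assertion, your explicit restriction to condition (ii) is precisely what the paper does implicitly: it picks a point of $\Reg^{E|K}(X)\setminus Z$, whose existence is guaranteed by the strict inclusion $Z\cap(\ove^\sqbullet)^n\subsetneq\Reg^{E|K}(X)$ coming from (ii).
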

\begin{proof}
We show first that condition $(\mr{ii})$ implies condition $(\mr{i})$. Fix $a\in Z\cap(\ove^\sqbullet)^n$. By $(\mr{ii})$, $a$ belongs to both $\Reg^{E|K}(Z,d)$ and $\Reg^{E|K}(X,d)$, so $\reg^{E|K}_{Z,a}$ and $\reg^{E|K}_{X,a}$ are regular local rings of dimension $d$. In particular, $\dim(\reg^{E|K}_{Z,a})=d$ and $\reg^{E|K}_{X,a}$ is an integral domain, because so are all regular local rings (see \cite[Cor.1, p.302]{zs2}).

As $(\mr{ii})\Longrightarrow(\mr{i})$, we assume that $(\mr{i})$ is satisfied and prove that $X\setminus Z\subset L^n$ is $K$-algebraic.

If $Z=\varnothing$, the result is clear, so we suppose $Z\neq\varnothing$. Let $X_1,\ldots,X_s$ be the $K$-irreducible components of $X\subset L^n$. Define the $K$-algebraic set $Z_i\subset L^n$ as $Z_i:=X_i\cap Z$ for each $i\in\{1,\ldots,s\}$ and denote $I$ the subset of $\{1,\ldots,s\}$ constituted by all indices $i$ such that $Z_i\neq\varnothing$. Fix $i\in I$.

Let us prove: $Z_i\cap X_j=\varnothing$ for each $j\in\{1,\ldots,s\}\setminus\{i\}$. Otherwise, $Z_i\cap X_j\neq\varnothing$ for some $j\in\{1,\ldots,s\}\setminus\{i\}$. By Corollary \ref{inter}$(\mr{i})$, $Z_i\cap X_j\cap(\ove^\sqbullet)^n\neq\varnothing$. Choose a point $a$ in $Z_i\cap X_j\cap(\ove^\sqbullet)^n\subset X\cap(\ove^\sqbullet)^n$. As $Z_i\subset X_i$, we have $a\in X_i\cap X_j$. Proposition \ref{prop:XY}$(\mr{iv})$ implies that $\reg^{E|K}_{X,a}$ is not an integral domain, which contradicts $(\mr{i})$.

We now prove that $Z_i=X_i$. As $Z_i\neq\varnothing$, using again Corollary \ref{inter}$(\mr{i})$, we have $Z_i\cap(\ove^\sqbullet)^n\neq\varnothing$. Choose $b\in Z_i\cap(\ove^\sqbullet)^n$. As $b\in Z_i\setminus\bigcup_{j\in\{1,\ldots,s\}\setminus\{i\}}X_j=Z\setminus\bigcup_{j\in\{1,\ldots,s\}\setminus\{i\}}Z_j$, it follows that $\reg^{E|K}_{Z_i,b}\cong\reg^{E|K}_{Z,b}$. In particular, $\dim(\reg^{E|K}_{Z_i,b})=d$. By Proposition \ref{prop:XY}$(\mr{iii})$, we deduce $d\leq\dim(Z_i)\leq\dim(X_i)\leq\dim(X)=d$ so $\dim(Z_i)=d=\dim(X_i)$. As $Z_i\subset L^n$ is a $K$-algebraic set, $X_i\subset L^n$ is a $K$-irreducible $K$-algebraic set, $Z_i\subset X_i$ and $\dim(Z_i)=\dim(X_i)$, we conclude that $Z_i=X_i$ by Lemma \ref{dimirred}.

We have already proved $X_i\cap X_j=\varnothing$ if $i,j\in I$ and $i\neq j$. Thus, $Z=\bigcup_{i=1}^sZ_i=\bigsqcup_{i\in I}X_i$ and $X_i\cap\bigcup_{j\in \{1,\ldots,s\}\setminus I}X_j=\varnothing$ for each $i\in I$, so $X\setminus Z$ is the $K$-algebraic set $\bigcup_{j\in \{1,\ldots,s\}\setminus I}X_j\subset L^n$.

Finally, suppose $Z\cap(\ove^\sqbullet)^n\neq\Reg^{E|K}(X)$. Pick a point $p\in\Reg^{E|K}(X)\setminus(Z\cap(\ove^\sqbullet)^n)=\Reg^{E|K}(X)\cap(X\setminus Z)=\Reg^{E|K}(X)\cap\bigcup_{j\in\{1,\ldots,s\}\setminus I}X_j$. By Proposition \ref{30}$(\mr{ii})$, there exists a (unique) $i\in\{1,\ldots,s\}\setminus I$ such that $p\in X_i$ and $\dim(X_i)=d$, so $d=\dim(X_i)\leq\dim(X\setminus Z)\leq\dim(X)=d$ and $\dim(X\setminus Z)=d$. As $X_j$ for $j\in\{1,\ldots,s\}\setminus I$ are the $K$-irreducible components of $X\setminus Z\subset L^n$, by \eqref{eq:regd}, we deduce $\Reg^{E|K}(X)\setminus Z=\Reg^{E|K}(X,d)\setminus Z=\Reg^{E|K}(X\setminus Z,d)=\Reg^{E|K}(X\setminus Z)$, as required.
\end{proof}

\begin{remark}
In the previous statement, $E$ can be chosen arbitrarily among the fields such that $L|E|K$ is an extension of fields, for example $E=K$ or $E=L$. In particular, if $E=L$, $\dim(Z)=d$ and $Z=\Reg^{L|K}(Z)\subsetneqq\Reg^{L|K}(X)$, then $X\setminus Z\subset L^n$ is a $K$-algebraic set and $\Reg^{L|K}(X\setminus Z)=\Reg^{L|K}(X)\setminus Z$. $\sqbullet$
\end{remark}

We conclude this subsection with two applications: the first to real $K$-geometric polynomials and the second to real Galois completions of $\kr$-geometric polynomials (see Definitions \ref{321} and \ref{gbullet}, respectively).

\begin{cor}\label{cor:kpol}
Suppose that $L$ is a real closed field $R$. Let $f\in K[\x]$ be a $K$-geometric polynomial in $R^n$ and let $X:=\ZZ_R(f)$ be the corresponding $K$-geometric hypersurface of $R^n$. Then we have
$$
\Sing^{E|K}(X)=\{x\in X\cap(\ove^r)^n:\nabla f(x)=0\}
$$
or, equivalently,
$$
\Reg^{E|K}(X)=\{x\in X\cap(\ove^r)^n:\nabla f(x)\neq0\}.
$$
\end{cor}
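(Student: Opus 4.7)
The plan is to reduce to the hypersurface structure of $X$ and combine Proposition \ref{prop:hyper}, Proposition \ref{29}(ii) and Corollary \ref{30'}(iii), and then relate $\nabla f$ at a point to the gradients of the irreducible factors of $f$ via a product rule computation.

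First I would dispatch trivial cases: if $f$ is a non-zero constant, then $X=\varnothing$ and both sides are empty, so I may assume $f$ is non-constant. Since $f\in K[\x]$ is $K$-geometric in $R^n$, Proposition \ref{prop:hyper} gives a factorization $f=f_1\cdots f_r$ in $K[\x]$ in which each $f_i$ is irreducible and $K$-geometric in $R^n$, the polynomials $f_i,f_j$ are non-associated for $i\neq j$, and $X_i:=\ZZ_R(f_i)$ is a $K$-irreducible $K$-geometric hypersurface with $\dim(X_i)=n-1$. In particular the $X_i$ are the $K$-irreducible components of $X$ and $\dim(X)=n-1$.

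Second I would apply Corollary \ref{30'}(iii) with $d=n-1$ and $I_d=\{1,\ldots,r\}$, writing $Y_i:=X_i\cap(\ove^r)^n$, to obtain
$$
\Sing^{E|K}(X)=\bigcup_{i=1}^r\Sing^{E|K}(X_i)\cup\bigcup_{i\neq j}(Y_i\cap Y_j).
$$
For each $i$, since $\II_K(X_i)=(f_i)K[\x]$ is principal, the rank characterization of Proposition \ref{29}(ii) specializes to
$$
\Sing^{E|K}(X_i)=\{a\in Y_i:\nabla f_i(a)=0\}.
$$

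Third I would relate $\nabla f$ to the $\nabla f_i$ by the product rule: for $x\in X$ with $x\in X_i$, every summand of $\nabla f(x)=\sum_{k=1}^r\bigl(\prod_{j\neq k}f_j(x)\bigr)\nabla f_k(x)$ with $k\neq i$ contains the vanishing factor $f_i(x)$, so
$$
\nabla f(x)=\Bigl(\prod_{j\neq i}f_j(x)\Bigr)\nabla f_i(x).
$$
Then two cases: if $x$ lies in a unique component $X_i$, the coefficient $\prod_{j\neq i}f_j(x)$ is non-zero, so $\nabla f(x)=0$ iff $\nabla f_i(x)=0$; if $x\in X_i\cap X_j$ for some $i\neq j$, every coefficient $\prod_{k\neq\ell}f_k(x)$ contains at least one vanishing factor, so $\nabla f(x)=0$ automatically. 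These two cases show that $\{x\in X\cap(\ove^r)^n:\nabla f(x)=0\}$ equals the right-hand side of the displayed decomposition of $\Sing^{E|K}(X)$, yielding the claimed equality.

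The main subtlety is the second case above: one must check that points lying on several $X_i$ simultaneously belong to $\Sing^{E|K}(X)$ without needing any gradient vanishing condition on the individual $f_i$. This is handled cleanly by the intersection term $\bigcup_{i\neq j}(Y_i\cap Y_j)$ in Corollary \ref{30'}(iii), which matches exactly the locus where the product rule forces $\nabla f$ to vanish automatically. Beyond this bookkeeping the argument is a direct application of the previously established structural results.
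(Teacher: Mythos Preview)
Your proposal is correct and follows essentially the same approach as the paper: factor $f$ via Proposition~\ref{prop:hyper}, describe $\Sing^{E|K}(X)$ in terms of the $K$-irreducible components (you quote Corollary~\ref{30'}(iii), the paper uses Propositions~\ref{29}(ii) and~\ref{30}(ii) directly), and then match this description to the vanishing locus of $\nabla f$ by the product rule. The case analysis and bookkeeping are the same in both arguments.
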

\begin{proof}
Let $f=f_1\cdots f_s$ be the factorization of $f\in K[\x]$, which is square-free. Define $X_i:=\ZZ_R(f_i)\subset R^n$ for each $i\in\{1,\ldots,s\}$. By Proposition \ref{prop:hyper}, $X_1,\ldots,X_s$ are the $K$-irreducible components of $X\subset R^n$, and $\dim(X_i)=n-1$ and $\II_K(X_i)=(f_i)K[\x]$ for each $i\in\{1,\ldots,s\}$.

Let us prove that $\Reg^{E|K}(X)=\{x\in X\cap(\ove^r)^n:\nabla f(x)\neq0\}$. Pick $x\in\Reg^{E|K}(X)$. By Propositions \ref{30}$(\mr{ii})$ and \ref{29}$(\mr{ii})$, $x$ belongs to a unique $K$-irreducible component $X_j$ of $X$ and $\nabla f_j(x)\neq0$. Thus, if we set $g:=\prod_{i\in\{1,\ldots,s\}\setminus\{j\}}f_i$, then $f=f_jg$ and Leibniz's rule implies $\nabla f(x)=g(x)\nabla f_j(x)+f_j(x)\nabla g(x)=g(x)\nabla f_j(x)\neq0$. Consider now a point $x\in X\cap(\ove^r)^n$ such that $\nabla f(x)\neq0$. We have to show that $x\not\in\Sing^{E|K}(X)$. Suppose this is false, so $x\in\Sing^{E|K}(X)$. Using again Propositions \ref{29}$(\mr{ii})$ and \ref{30}$(\mr{ii})$, we have that either $x$ belongs to some $X_j$ and $\nabla f_j(x)=0$ (that is, $x\in\Sing^{E|K}(X_j)$) or $x$ belongs to some intersection $X_j\cap X_k$ for some $j\neq k$. In both cases, Leibniz's rule implies $\nabla f(x)=0$, which is a contradiction.
\end{proof}

\begin{cor}\label{3113b}
Suppose that $L=E$ is a real closed field $R$. Let $g\in\kr[\x]$ be a $\kr$-geometric polynomial in $R^n$ and let $g^\bullet\in K[\x]$ be the Galois completion of $g$. Consider the real Galois completion $\ZZ_R(g^\bullet)$ of $\ZZ_R(g)\subset R^n$. Then, in the statement of Corollary \ref{3113}, we can add the following item:
\begin{itemize}
\item[$(\mr{vi})$] $\Sing^{R|K}(\ZZ_R(g^\bullet))=\{x\in\ZZ_R(g^\bullet):\nabla g^\bullet(x)=0\}$.
\end{itemize}
\end{cor}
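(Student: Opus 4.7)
The plan is to derive this statement as a direct application of Corollary \ref{cor:kpol} to the polynomial $g^\bullet\in K[\x]$ itself. The key observation is that, although the input datum is the $\kr$-geometric polynomial $g\in\kr[\x]$, the object whose singular locus we need to describe is the zero set of its Galois completion $g^\bullet\in K[\x]$, so we should step away from $g$ and focus on what is already known about $g^\bullet$.

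First, I would recall that, by Corollary \ref{3113}(ii), the Galois completion $g^\bullet$ is a $K$-geometric polynomial in $R^n$, that is $\II_K(\ZZ_R(g^\bullet))=(g^\bullet)K[\x]$. This means $g^\bullet\in K[\x]$ satisfies exactly the hypotheses required by Corollary \ref{cor:kpol} for the field $K$, the real closed field $R$, and with $X:=\ZZ_R(g^\bullet)$ being the associated $K$-geometric hypersurface of $R^n$.

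Next, I would specialize Corollary \ref{cor:kpol} to the extension $L|E|K=R|R|K$, i.e.\ take $E=L=R$. In this case the algebraic closure of $E=R$ inside $L=R$ is just $\ove^r=R$, so $X\cap(\ove^r)^n$ collapses to $X=\ZZ_R(g^\bullet)$. Applying Corollary \ref{cor:kpol} in this form to $f:=g^\bullet$ yields
\[
\Sing^{R|K}(\ZZ_R(g^\bullet))=\{x\in\ZZ_R(g^\bullet):\nabla g^\bullet(x)=0\},
\]
which is exactly the claimed equality.

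There is no real obstacle here: the statement is essentially a bookkeeping corollary that packages the general Jacobian description of $\Sing^{E|K}$ for $K$-geometric hypersurfaces (Corollary \ref{cor:kpol}) with the fact that $g^\bullet$ is itself $K$-geometric in $R^n$ (Corollary \ref{3113}(ii)). The only mild point to flag is that it is $g^\bullet$, not $g$, that plays the role of the $K$-geometric polynomial $f$ in Corollary \ref{cor:kpol}; once this is observed and the specialization $E=L=R$ is made, the identification $X\cap(\ove^r)^n=X$ makes the result immediate.
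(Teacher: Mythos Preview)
Your proposal is correct and follows exactly the paper's approach: the paper's proof reads ``It follows immediately from Corollary \ref{3113}$(\mr{ii})$ and Corollary \ref{cor:kpol}.'' You have correctly identified that $g^\bullet$ is $K$-geometric by Corollary \ref{3113}$(\mr{ii})$, and that specializing Corollary \ref{cor:kpol} to $E=L=R$ (so that $\ove^r=R$ and $X\cap(\ove^r)^n=X$) gives the result.
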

\begin{proof}
It follows immediately from Corollary \ref{3113}$(\mr{ii})$ and Corollary \ref{cor:kpol}.
\end{proof}

\subsection{Comparison of $L|K$-nonsingular and $E|K$-nonsingular loci}\label{subsec:L|K-E|K} We now compare $L|K$-nonsingular and $E|K$-nonsingular loci of a $K$-algebraic subset of $L^n$ (see Remark \ref{reg-bcr}).

\begin{thm}
\label{L|K-E|K}
Let $X\subset L^n$ be a $K$-algebraic set. We have:
\begin{itemize}
\item[$(\mr{i})$] $\mr{Sing}^{L|K}(X)\subset L^n$ is the extension of coefficients of $\mr{Sing}^{E|K}(X)\subset(\ove^\sqbullet)^n$ {(from $\ove^\sqbullet$)} to $L$, that is,
 $$
 (\Sing^{E|K}(X))_L=\Sing^{L|K}(X).
 $$
 In particular, $\Sing^{E|K}(X)=\Sing^{L|K}(X)\cap(\ove^\sqbullet)^n$ and $\mr{Sing}^{L|K}(X)=\varnothing$ if and only if $\mr{Sing}^{E|K}(X)=\varnothing$.
\item[$(\mr{ii})$] $(\Reg^{E|K}(X,e))_L=\Reg^{L|K}(X,e)$ and $\Reg^{E|K}(X,e)=\Reg^{L|K}(X,e)\cap(\ove^\sqbullet)^n$ for each $e\in\N$. In particular, $(\Reg^{E|K}(X))_L=\Reg^{L|K}(X)$, $\Reg^{E|K}(X)=\Reg^{L|K}(X)\cap(\ove^\sqbullet)^n$ and $\Reg^{L|K}(X)=X$ if and only if $\Reg^{E|K}(X)=X\cap(\ove^\sqbullet)^n$.
\end{itemize}
\end{thm}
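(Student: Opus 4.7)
The plan is to first establish $(\mr{i})$ by producing a common $K$-algebraic description of $\Sing^{L|K}(X)\subset L^n$ and $\Sing^{E|K}(X)\subset(\ove^\sqbullet)^n$, and then to deduce $(\mr{ii})$ from $(\mr{i})$ via the decompositions of Corollary \ref{30'}.

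For $(\mr{i})$, let $X_1,\ldots,X_s$ be the $K$-irreducible components of $X$, set $d_i:=\dim(X_i)$, $d:=\dim(X)$, and let $I_d$ be the subset of indices with $d_i=d$. Choose, once and for all, a system of generators $\{g_{i1},\ldots,g_{is_i}\}\subset K[\x]$ of each $\II_K(X_i)$. Formula \eqref{singe} of Proposition \ref{29}$(\mr{ii})$ applied over $L$ realizes $\Sing^{L|K}(X_i)$ as the common zero set in $L^n$ of $\II_K(X_i)$ together with the $(n-d_i)\times(n-d_i)$ minors of the Jacobian $(\partial g_{ik}/\partial\x_j)_{k,j}$, and all these minors belong to $K[\x]$. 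Feeding this into formula \eqref{eq:singd} of Corollary \ref{30'}$(\mr{iii})$ exhibits $\Sing^{L|K}(X)\subset L^n$ as the zero set of an explicit finite family $\mc{F}\subset K[\x]$ (the union of the generators of all $\II_K(X_i)$ and $\II_K(X_j)$, the corresponding Jacobian minors for $i\in I_d$, products of generators from distinct $X_i,X_j$ with $i,j\in I_d$, $i\neq j$, and generators of $\II_K(X_i)$ for $i\notin I_d$). The very same formula \eqref{singe}, applied over $(\ove^\sqbullet)^n$, combined with \eqref{eq:singd}, shows that this same $\mc{F}$ cuts out $\Sing^{E|K}(X)$ inside $(\ove^\sqbullet)^n$. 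Therefore $\Sing^{E|K}(X)=\Sing^{L|K}(X)\cap(\ove^\sqbullet)^n$ and $\Sing^{L|K}(X)$ is a $K$-algebraic, hence $\ove^\sqbullet$-algebraic, subset of $L^n$. Since $L|\ove^\sqbullet$ is an extension of algebraically closed fields or of real closed fields, Corollary \ref{inter}$(\mr{i})$ gives $\Sing^{L|K}(X)=\zcl_{L^n}(\Sing^{L|K}(X)\cap(\ove^\sqbullet)^n)=(\Sing^{E|K}(X))_L$; the emptiness equivalence follows at once.

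For $(\mr{ii})$, I apply $(\mr{i})$ to each $K$-irreducible component $X_i$ to obtain $\Reg^{E|K}(X_i)=\Reg^{L|K}(X_i)\cap(\ove^\sqbullet)^n$. Using Corollary \ref{30'}$(\mr{ii})$ for both $L|K$ and $E|K$,
\[
\Reg^{L|K}(X,e)=\bigsqcup_{i\in I_e}\!\Big(\Reg^{L|K}(X_i)\setminus\bigcup_{j\neq i}X_j\Big),\qquad
\Reg^{E|K}(X,e)=\bigsqcup_{i\in I_e}\!\Big(\Reg^{E|K}(X_i)\setminus\bigcup_{j\neq i}(X_j\cap(\ove^\sqbullet)^n)\Big),
\]
intersecting the first identity with $(\ove^\sqbullet)^n$ and inserting the previous equality for each $\Reg^{L|K}(X_i)\cap(\ove^\sqbullet)^n$ yields $\Reg^{E|K}(X,e)=\Reg^{L|K}(X,e)\cap(\ove^\sqbullet)^n$. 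The identity $(\Reg^{E|K}(X,e))_L=\Reg^{L|K}(X,e)$ then follows because extension of coefficients from $\ove^\sqbullet$ to $L$ commutes with the finite Boolean combinations above (Proposition \ref{extension-zar} together with Tarski–Seidenberg's principle in the real case and its analog in the complex case): each $X_j\cap(\ove^\sqbullet)^n$ extends to $X_j$ by Corollary \ref{inter}$(\mr{i})$, and each $\Sing^{E|K}(X_i)$ extends to $\Sing^{L|K}(X_i)$ by part $(\mr{i})$ applied to $X_i$. The boundary case $e=n$ is immediate from the first part of Proposition \ref{30}, since either $X=L^n$ and everything is trivial, or $\Reg^{E|K}(X,n)=\Reg^{L|K}(X,n)=\varnothing$.

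The main obstacle is not conceptual but careful bookkeeping: one must ensure that the same finite family $\mc{F}\subset K[\x]$ really captures both singular loci pointwise (so that uniform choices of generators across all $K$-irreducible components are preserved when passing from $L$ to $(\ove^\sqbullet)^n$ and back), and one must justify that the Boolean operations used in $(\mr{ii})$ genuinely commute with coefficient extension in the mixed algebraically closed / real closed setting. Both hurdles are cleanly handled by the Jacobian formulas of Proposition \ref{29} and the coefficient-extension machinery already established in Subsection~\ref{rasdok}.
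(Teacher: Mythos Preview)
Your proposal is correct and follows essentially the same approach as the paper: both proofs use the Jacobian-minor description \eqref{singe} of $\Sing^{E|K}(X_i)$ for each $K$-irreducible component, plug these into the decomposition \eqref{eq:singd}, and then invoke extension of coefficients from $\ove^\sqbullet$ to $L$ (the paper does this piece by piece rather than via a single family $\mc{F}$, but the content is identical). Your parenthetical description of $\mc{F}$ is slightly garbled---for a \emph{union} of $K$-algebraic sets one needs products of generators across the different pieces, not a union of generator lists---but this is harmless since the argument only requires that each piece of \eqref{eq:singd} is cut out by polynomials in $K[\x]$, which you have established.
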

\begin{proof}
Let $d:=\dim(X)$, let $X_1,\ldots,X_s$ be the $K$-irreducible components of $X\subset L^n$ and let $I$ be the subset of $\{1,\ldots,s\}$ of all indices $k$ such that $\dim(X_k)=d$. For each $k\in I$, let $\{g_{k1},\ldots,g_{ks_k}\}$ be a system of generators of $\II_K(X_k)$ in $K[\x]$ and let $\{D_{k1},\ldots,D_{kt_k}\}\subset K[\x]$ be the set of the determinants of all $(n-d)\times(n-d)$-submatrices of $\big(\frac{\partial g_{ki}}{\partial\x_j}\big)_{i=1,\ldots,s_k,\; j=1,\ldots,n}$. Set $Y_k:=X_k\cap(\ove^\sqbullet)^n$ for each $k\in\{1,\ldots,s\}$. By Proposition \ref{prop:XY}$(\mr{ii})$, \eqref{singe} and \eqref{eq:singd}, we have: $X_k=(Y_k)_L$ for each $k\in\{1,\ldots,s\}$, $\Sing^{E|K}(X_k)\subset(\ove^\sqbullet)^n$ is a $K$-algebraic set such that
$$
\Sing^{E|K}(X_k)=Y_k\cap\ZZ_{\ove^\sqbullet}(D_{k1},\ldots,D_{kt_k})
$$
for each $k\in I$, and
$$
\textstyle
\Sing^{E|K}(X)=\bigcup_{k\in I}\mr{Sing}^{E|K}(X_k)\cup\bigcup_{k,h\in I,\,k\neq h}(Y_k\cap Y_h)\cup\bigcup_{k\in\{1,\ldots,s\}\setminus I}Y_k.
$$

Repeating the previous discussion with $E=L$, we obtain:
\begin{align*}
\Sing^{L|K}(X_k)&=X_k\cap\ZZ_L(D_{k1},\ldots,D_{kt_k})=(Y_k)_L\cap(\ZZ_{\ove^\sqbullet}(D_{k1},\ldots,D_{kt_k}))_L\\
&=\big(Y_k\cap\ZZ_{\ove^\sqbullet}(D_{k1},\ldots,D_{kt_k})\big)_L=(\Sing^{E|K}(X_k))_L
\end{align*}
for each $k\in I$, and
\begin{align*}
\Sing^{L|K}(X)&\textstyle=\bigcup_{k\in I}\mr{Sing}^{L|K}(X_k)\cup\bigcup_{k,h\in I,\,k\neq h}(X_k\cap X_h)\cup\bigcup_{k\in\{1,\ldots,s\}\setminus I}X_k\\
&\textstyle=\bigcup_{k\in I}(\Sing^{E|K}(X_k))_L\cup\bigcup_{k,h\in I,\,k\neq h}(Y_k\cap Y_h)_L\cup\bigcup_{k\in\{1,\ldots,s\}\setminus I}(Y_k)_L\\
&\textstyle=\left(\bigcup_{k\in I}\Sing^{E|K}(X_k)\cup\bigcup_{k,h\in I,\,k\neq h}(Y_k\cap Y_h)\cup\bigcup_{k\in\{1,\ldots,s\}\setminus I}Y_k\right)_L\\
&=(\Sing^{E|K}(X))_L.
\end{align*}

This proves $(\mr{i})$. Let us show $(\mr{ii})$. By $(\mr{i})$, we know that $(\Sing^{E|K}(X_k))_L=\Sing^{L|K}(X_k)$ for each $k\in\{1,\ldots,s\}$. Let $J:=\bigcup_{k=1}^s\{\dim(X_k)\}$. If $e\in\N\setminus J$, Corollary \ref{30'}$(\mr{ii})$ assures that $\Reg^{E|K}(X,e)=\Reg^{L|K}(X,e)=\varnothing$. If $e\in J$ and $I_e$ denotes the subset of $\{1,\ldots,s\}$ of all indices $k$ such that $\dim(X_k)=e$, then formula \eqref{eq:regd} implies that
$$\textstyle
\Reg^{E|K}(X,e)=\bigsqcup_{k\in I_e}\left(\big(Y_k\setminus\Sing^{E|K}(X_k)\big)\setminus\bigcup_{j\in\{1,\ldots,s\}\setminus\{k\}}Y_j\right)
$$
and
$$\textstyle
\Reg^{L|K}(X,e)=\bigsqcup_{k\in I_e}\left(\big(X_k\setminus\Sing^{L|K}(X_k)\big)\setminus\bigcup_{j\in\{1,\ldots,s\}\setminus\{k\}}X_j\right).
$$
Consequently,
$$\textstyle
(\Reg^{E|K}(X,e))_L=\bigsqcup_{k\in I_e}\left(\big((Y_k)_L\setminus(\Sing^{E|K}(X_k))_L\big)\setminus\bigcup_{j\in\{1,\ldots,s\}\setminus\{k\}}(Y_j)_L\right)=\Reg^{L|K}(X,e),
$$
as required.
\end{proof}

\subsection{Comparison of $L|K$-nonsingular and usual $L|L$-nonsingular loci: the real case}\label{subsec:R|R-R|K}
\emph{Recall that $L|K$ is a fixed extension of fields such that $L$ is either algebraically closed or real closed.}

Let $X\subset L^n$ be a $K$-algebraic set. Our next goal is to compare the usual nonsingular and singular loci $\Reg(X)=\Reg^{L|L}(X)$ and $\Sing(X)=\Sing^{L|L}(X)$ of $X$, considered as an algebraic subset of~$L^n$, with the $L|K$-nonsingular and $L|K$-singular loci $\Reg^{L|K}(X)$ and $\Sing^{L|K}(X)$ of $X$, considered as a $K$-algebraic subset of $L^n$.

This task is straightforward in the case where either $L$ is an algebraically closed field or $L$ is a real closed field and $X\subset L^n$ is defined over $K$ or $L|K$ is an extension of real closed fields, as we explained in Remark \ref{reg-bcr}. In particular, if $L=R$ is real closed and $X\subset R^n$ is defined over $K$, then $\Reg(X)=\Reg^{R|R}(X)$ is equal to $\Reg^{R|K}(X)$, and $\Sing(X)=\Sing^{R|R}(X)$ to $\Sing^{R|K}(X)$. Moreover, by Remark \ref{rem:252}$(\mr{i})$ and equation \eqref{eq:singd} (with $L=E=R$), we have that the $K$-bad set $B_K(X)$ of $X$ is contained in $\Sing(X)$ and is empty when $X\subset R^n$ is $K$-irreducible.

\emph{In the rest of this subsection, we assume that $L$ is a real closed field $R$, $K$ is an ordered subfield of $R$, and $C$ denotes the algebraic closure $R[\ii]$ of $R$}.

In the next result, we establish the relationship between $\Reg(X)$ and $\Reg^{R|K}(X)$, and between $\Sing(X)$ and $\Sing^{R|K}(X)$, in the case where $X\subset R^n$ is $K$-algebraic, but not defined over $K$. In particular, $K$ is not real closed by Corollary \ref{inter}$(\mr{ii})$. Recall that, by Definition \ref{E|K-regular}, $\Reg^{R|K}(X)$ and $\Sing^{R|K}(X)$ can also be denoted $\Reg^K(X)$ and $\Sing^K(X)$, respectively.

\begin{thm}
\label{regreg}
Let $X\subset R^n$ be a $K$-algebraic set of dimension $d$. Suppose that $X\subset R^n$ is not defined over $K$. We have:
\begin{itemize}
\item[$(\mr{i})$] $\Sing^{R|K}(X)$ is a $K$-algebraic subset of $R^n$ of dimension $<d$, $\Sing(X)$ and $B_K(X)$ are $\kr$-algebraic subsets of $R^n$ of dimension $<d$, and
 $$
 \Sing^{R|K}(X)=\Sing(X)\cup B_K(X).
 $$
\item[$(\mr{ii})$] $\Reg^{R|K}(X)$ is a non-empty $K$-Zariski open subset of $X$, $\Reg(X)$ is a non-empty $\kr$-Zariski open subset of $X$, and
 $$
 \Reg^{R|K}(X)=\Reg(X)\setminus B_K(X).
 $$
\end{itemize}
\end{thm}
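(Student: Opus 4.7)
The plan is to prove part (i) first; part (ii) is then a formal consequence obtained by complementation in $X$, since the $K$-Zariski openness of $\Reg^{R|K}(X)$ and the $\kr$-Zariski openness of $\Reg(X)$ follow from the algebraicity assertions in (i), and non-emptiness follows from the dimension assertions $\dim\Sing^{R|K}(X),\dim\Sing(X)<d:=\dim X$. The algebraicity/dimension claims in (i) are handled by material already developed: Corollary \ref{30'}$(\mr{iii})$ with $L=E=R$ makes $\Sing^{R|K}(X)\subset R^n$ $K$-algebraic of dimension $<d$; the easy comparison (Remark \ref{reg-bcr}) applied to the extension of real closed fields $R|\kr$ gives $\Sing(X)=\Sing^{R|\kr}(X)$, which Corollary \ref{30'}$(\mr{iii})$ with $K=E=\kr$ (using that $X$ is $\kr$-algebraic) exhibits as $\kr$-algebraic of dimension $<d$; and $B_K(X)\subset R^n$ is $\kr$-algebraic of dimension $<d$ by Remark \ref{rem317}. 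It remains to establish the set equality $\Sing^{R|K}(X)=\Sing(X)\cup B_K(X)$, which I would split into three inclusions.

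For $\Sing(X)\subseteq\Sing^{R|K}(X)$, equivalently $\Reg^{R|K}(X)\subseteq\Reg(X)$: given $a\in\Reg^{R|K}(X)$, Proposition \ref{30}$(\mr{iii}')$ provides $f_1,\dots,f_{n-d}\in\II_K(X)\subseteq\II_R(X)$ and an Euclidean neighborhood $V$ of $a$ with $X\cap V=\ZZ_R(f_1,\dots,f_{n-d})\cap V$ and Jacobian rank $n-d$, which simultaneously witnesses $a\in\Reg(X)=\Reg^{R|R}(X)$. For $B_K(X)\subseteq\Sing^{R|K}(X)$, I would argue by contradiction: suppose $a\in\Reg^{R|K}(X)\cap B_K(X)$. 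By Proposition \ref{30}$(\mr{ii})$ there is a unique $K$-irreducible component $X_i$ of $X$ with $\dim X_i=d$ and $a\in\Reg^{R|K}(X_i)$, so $i\in I$ (the indices of $d$-dimensional $K$-components). Fix a Galois presentation $X_i=\bigcup_{\sigma\in G'}(Z_i^\sigma\cap R^n)$, let $G'^*_i$ be the set of $\sigma$ with $\dim(Z_i^\sigma\cap R^n)<d$ (so $B_K(X_i)=\bigcup_{\sigma\in G'^*_i}(Z_i^\sigma\cap R^n)$), and choose $\sigma\in G'^*_i$ with $a\in Z_i^\sigma$. Select $f_1,\dots,f_{n-d}\in\II_K(X_i)$ realizing the Jacobian criterion at $a$ (Proposition \ref{29}$(\mr{iii}')$) on a Euclidean open $V\subseteq R^n$. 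Since each $f_j\in\II_K(X_i)\subseteq\II_C(\zcl_{C^n}^K(X_i))\subseteq\II_C(Z_i^\tau)$ for every $\tau\in G'$, all $Z_i^\tau$ sit globally in $\ZZ_C(f_1,\dots,f_{n-d})$; the Jacobian rank being $n-d$ over $R$, hence over $C$, the complex implicit function theorem realizes $\ZZ_C(f_1,\dots,f_{n-d})$ locally at $a$ as a smooth complex manifold of complex dimension $d$, hence analytically irreducible there. If a second $C$-irreducible component $Z_i^\tau\neq Z_i^\sigma$ also passes through $a$, both $Z_i^\sigma$ and $Z_i^\tau$ (each of dimension $d$) coincide with the whole $d$-dimensional irreducible germ at $a$; their Euclidean coincidence on a non-empty open set of each (Zariski dense by $C$-irreducibility) forces $Z_i^\sigma=Z_i^\tau$, absurd; if no other $Z_i^\tau$ passes through $a$, then locally $X_i\cap V\subseteq(Z_i^\sigma\cap R^n)\cap V$ has real dimension $<d$, contradicting that $X_i\cap V=\ZZ_R(f_1,\dots,f_{n-d})\cap V$ is a Nash submanifold of dimension $d$ by the real implicit function theorem \cite[Prop.3.3.6]{bcr}.

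For $\Reg(X)\setminus B_K(X)\subseteq\Reg^{R|K}(X)$, pick such an $a$. Corollary \ref{30'}$(\mr{iii})$ with $L=E=K=R$, combined with Lemma \ref{lem:gp-reducible}$(\mr{iii}')$, shows that $a$ lies in exactly one $R$-irreducible component $V=Z_i^\sigma\cap R^n$ of $X$, with $i\in I$ and $\sigma\notin G'^*_i$, and $a\in\Reg(V)$; a case analysis using $a\notin B_K(X)$ rules out $a\in X_{i'}$ for $i'\neq i$ and $a\in Z_i^{\sigma'}$ for $\sigma'\in G'\setminus\{\sigma\}$ (any such hypothesis would either place $a$ in $B_K(X)$ or produce a second $d$-dimensional $R$-irreducible component of $X$ through $a$). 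Hence in $C^n$ only the component $Z_i^\sigma$ of $\zcl_{C^n}^K(X_i)=\bigcup_\tau Z_i^\tau$ meets $a$; setting $\gtm:=\II_C(\{a\})\subseteq C[\x]$ and using $\II_C(\zcl_{C^n}^K(X_i))=\II_K(X_i)C[\x]$ from Theorem \ref{thm:gc}$(\mr{v})$ together with Lemma \ref{lem:a}, we obtain
$$
\II_K(X_i)\,C[\x]_\gtm=\II_C(\zcl_{C^n}^K(X_i))\,C[\x]_\gtm=\II_C(Z_i^\sigma)\,C[\x]_\gtm.
$$
Since $Z_i^\sigma$ is the complexification of $V$ (Proposition \ref{rc}) and $a\in\Reg(V)$, $Z_i^\sigma$ is smooth at $a$, so the gradients at $a$ of a finite generating set of $\II_C(Z_i^\sigma)$ span a $C$-subspace of $C^n$ of dimension $n-d$; the displayed ideal equality transports this conclusion (via the Lemma \ref{rk} formula for ${\rm rk}_a$, applied with $E=R$ after noting the ranks agree over $R$ and over $C$ because the generators of $\II_K(X_i)$ have real entries) to ${\rm rk}_a(\II_K(X_i))=n-d$. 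Together with $\dim\reg^{R|K}_{X_i,a}=d$ (Theorem \ref{apl1}$(\mr{ii})$), Lemma \ref{localregular} yields regularity of $\reg^{R|K}_{X_i,a}$, and Proposition \ref{30}$(\mr{ii})$ concludes $a\in\Reg^{R|K}(X)$.

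The main obstacle, common to the second and third inclusions, is the interplay between the $K$-ideal $\II_K(X_i)$ and the $C$-ideal $\II_C(Z_i^\sigma)$ of the unique $C$-component through $a$: one must exploit their \emph{local} coincidence at $a$ to transfer smoothness/regularity data among the complex analytic, real Nash, and $R|K$-algebraic settings, with analytic irreducibility at a smooth point of a complex algebraic variety playing the decisive role in both the implicit function argument producing the Nash manifold contradiction and the rank computation that upgrades smoothness of $Z_i^\sigma$ to regularity of $\reg^{R|K}_{X_i,a}$.
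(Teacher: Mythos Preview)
Your argument is correct and, for the hard inclusion $B_K(X)\subseteq\Sing^{R|K}(X)$, genuinely different from (and cleaner than) the paper's. A few remarks on the comparison and on phrasing.

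\textbf{On} $B_K(X)\subseteq\Sing^{R|K}(X)$\textbf{.} The paper (its Step~IV) argues purely over $R$: it shows that $\II_K(X)R[\x]_{\gtn_a}$ is a \emph{real} prime ideal of the right height (via the real radical and a chain of height comparisons), and then derives a contradiction by comparing it with the non-real $\II_R(Z^\tau_h)R[\x]_{\gtn_a}$. Your route is more geometric: the Jacobian-criterion polynomials $f_1,\dots,f_{n-d}\in\II_K(X_i)$ vanish on every $Z_i^\tau$, and since the complex local ring $C[\x]_{\gtm}/(f_1,\dots,f_{n-d})C[\x]_{\gtm}$ is regular of dimension $d$ (hence a domain), there is a \emph{unique} $C$-irreducible component of $\ZZ_C(f_1,\dots,f_{n-d})$ through $a$, which forces every $Z_i^\tau$ through $a$ to coincide with $Z_i^\sigma$; the dimension contradiction with $\dim(Z_i^\sigma\cap R^n)<d$ then finishes. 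This avoids the real-radical machinery entirely. Two small points: (a) phrase ``analytically irreducible'' algebraically (regular local $\Rightarrow$ domain), since you are over an arbitrary real closed $R$, not $\R$; the paper's framework (Lemma~\ref{localregular}, Theorem~\ref{Q-jacobian}, Appendix~\ref{appendix}) supports this; (b) write $Z_i^{\sigma'}\neq Z_i^\sigma$ rather than $\sigma'\neq\sigma$, since the family may have repetitions.

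\textbf{On} $\Reg(X)\setminus B_K(X)\subseteq\Reg^{R|K}(X)$\textbf{.} Here the paper (Step~III) is more direct than you: once one knows $a$ lies in a single $Z_i^\sigma$ among all $C$-components of $T=\zcl_{C^n}^K(X)$, the paper localizes \eqref{iqr} and \eqref{iqr2} at $\gtn_a$ over $R$ to get $\II_K(X)R[\x]_{\gtn_a}=\II_R(Z_i^\sigma)R[\x]_{\gtn_a}=\II_R(Z_i^\sigma\cap R^n)R[\x]_{\gtn_a}=\II_R(X)R[\x]_{\gtn_a}$, whence $\reg^{R|K}_{X,a}=\reg_{X,a}$ is regular. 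Your detour through $C$ and the rank formula is correct but longer; note that the case analysis you sketch (``$a\notin X_{i'}$ for $i'\neq i$ and $a\notin Z_i^{\sigma'}$'') is exactly the content of the paper's Step~III and relies on Lemma~\ref{lem:gp-reducible}$(\mr{iii})$ (to separate distinct $(i,\sigma)$ at the level of complexifications) together with Proposition~\ref{rc} (to identify $Z_i^\sigma$ with $\zcl_{C^n}(Z_i^\sigma\cap R^n)$ when $(i,\sigma)\in\mc F\setminus\mc F^*$).
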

\begin{proof}
By Corollary \ref{30'}$(\mr{iii})$, $\Sing^{R|K}(X)\subset R^n$ is a $K$-algebraic set of dimension $<d$ and $\Reg^{R|K}(X)=X\setminus\Sing^{R|K}(X)$ is a non-empty $K$-Zariski open subset of $X\subset R^n$.

Let us show that $\Sing(X)\subset R^n$ is a $\kr$-algebraic set of dimension $<d$.

Let $V_1,\ldots,V_t$ be the $R$-irreducible components of $X\subset R^n$. As $X\subset R^n$ is $\kr$-algebraic, Corollary \ref{inter}$(\mr{ii})(\mr{iii})$ implies that $V_1,\ldots,V_t$ are also the $\kr$-irreducible components of $X\subset R^n$ and $\II_R(V_h)=\II_\kr(V_h)R[\x]$ for each $h\in\{1,\ldots,t\}$. Let $H$ be the subset of $\{1,\ldots,t\}$ of all indices $h$ such that $\dim(V_h)=d$. For each $h\in H$, choose a system of generators $\{v_{h,1},\ldots,v_{h,t_h}\}\subset\kr[\x]$ of $\II_\kr(V_h)$ in $\kr[\x]$ (and therefore of $\II_R(V_h)$ in $R[\x]$). Thus, $\Sing(V_h)$ is the algebraic subset of $R^n$ of dimension $<d$ given by
\begin{equation}\label{sing1}
\textstyle\Sing(V_h)=\big\{a\in V_h:{\rm rk}\big(\frac{\partial v_{h,i}}{\partial\x_j}(a)\big)_{i=1,\ldots,t_h,\,j=1,\ldots,n}<n-d\big\}
\end{equation}
(this fact is well-known and included in Proposition \ref{29}$(\mr{ii})$),
and
\begin{equation}\label{sing2}
\textstyle\Sing(X)=\bigcup_{h\in H}\Sing(V_h)\cup\bigcup_{h,j\in H, h\neq j}(V_h\cap V_j)\cup\bigcup_{h\in\{1,\ldots,t\}\setminus H}V_h
\end{equation}
(this fact is well-known and included in Proposition \ref{29}$(\mr{i})$ and equivalence $(\mr{i})\Longleftrightarrow(\mr{ii})$ of Proposition \ref{30}). Equations \eqref{sing1}\&\eqref{sing2} and Lemma \ref{dimirred} imply immediately that $\Sing(X)\subset R^n$ is $\kr$-algebraic and of dimension $<d$. As a consequence, $\Reg(X)=X\setminus\Sing(X)$ is a non-empty $\kr$-Zariski open subset of $X\subset R^n$.

Let $X_1,\ldots,X_s$ be the $K$-irreducible components of $X\subset R^n$. Choose an $R$-irreducible component $Y_i$ of $X_i\subset R^n$ of dimension $\dim(X_i)$ for each $i\in\{1,\ldots,s\}$, and a Galois presentation $(Y_1,\ldots,Y_s;G';\{Z_1^\sigma\}_{\sigma\in G'},\ldots,\{Z_s^\sigma\}_{\sigma\in G'})$ of $X\subset R^n$, see Definition \ref{def:gp}. Denote $I$ the subset of $\{1,\ldots,s\}$ of all indices $i$ such that $\dim(X_i)=d$. For each $i\in I$, choose a subset $F_i$ of $G'$ such that $\{Z^\sigma_i: \sigma\in F_i\}=\{Z^\sigma_i: \sigma\in G'\}$ and $Z^\sigma_i\neq Z^\tau_i$ if $\sigma,\tau\in F_i$ and $\sigma\neq\tau$, and set $F^*_i:=\{\sigma\in F_i: \dim(Z^\sigma_i\cap R^n)<d\}$. Define:
$$\textstyle
\mc{F}:=\bigsqcup_{i\in I}(\{i\}\times F_i), \quad \mc{F}^*:=\bigsqcup_{i\in I}(\{i\}\times F^*_i) \quad\text{and}\quad \mc{G}:=(\{1,\ldots,s\}\setminus I)\times G'.
$$
By Lemma \ref{lem:gp-reducible}$(\mr{i})(\mr{iii})$, we have that $\dim_C(Z^\sigma_i)=d$ for each $(i,\sigma)\in\mc{F}$, and $Z^\sigma_i\neq Z^\tau_j$ for each $(i,\sigma),(j,\tau)\in\mc{F}$ with $(i,\sigma)\neq(j,\tau)$. In addition, by Lemma \ref{lem:gp-reducible}$(\mr{ii})$,
\begin{equation}\label{T}
\textstyle
T:=\bigcup_{(i,\sigma)\in\mc{F}}Z^\sigma_i\cup\bigcup_{(i,\sigma)\in\mc{G}}Z^\sigma_i
\end{equation}
is the complex Galois completion of $\bigcup_{i=1}^sY_i\subset R^n$ and the set $T^r:=T\cap R^n$ is the real Galois completion of $\bigcup_{i=1}^sY_i\subset R^n$, which coincides with~$X$. Thus,
\begin{equation}\label{X}
\textstyle
X=T^r=\bigcup_{(i,\sigma)\in\mc{F}}(Z^\sigma_i\cap R^n)\cup\bigcup_{(i,\sigma)\in\mc{G}}(Z^\sigma_i\cap R^n)\,.
\end{equation}
By Remark \ref{rem317}, we know that $B_K(X)\subset R^n$ is a $\kr$-algebraic set of dimension $<d$ and 
\begin{equation}\label{B}
\textstyle
B_K(X)=\bigcup_{(i,\sigma)\in\mc{F}^*}(Z^\sigma_i\cap R^n)\cup\bigcup_{(i,\sigma)\in\mc{G}}(Z^\sigma_i\cap R^n)\,.
\end{equation}

As $\Sing^{R|K}(X)=X\setminus\Reg^{R|K}(X)$ and $\Sing(X)=X\setminus\Reg(X)$, it remains to prove the equality: $\Reg^{R|K}(X)=\Reg(X)\setminus B_K(X)$.

The proof is conducted in several steps:

{\sc Step I.} {\em Zero ideals of Galois completions.} By Theorem \ref{thm:gc}$(\mr{vi})$, we have the equalities $\II_R(T)=\II_K(T^r)R[\x]=\II_K(X)R[\x]$. Thus, by \eqref{T} and \eqref{X},
\begin{equation}\label{iqr}
\textstyle
\II_K(X)R[\x]=\bigcap_{(i,\sigma)\in\mc{F}}\II_R(Z^\sigma_i)\cap\bigcap_{(i,\sigma)\in\mc{G}}\II_R(Z^\sigma_i)
\end{equation}
and
\begin{equation}\label{iqr2}
\textstyle
\II_R(X)=\bigcap_{(i,\sigma)\in\mc{F}}\II_R(Z^\sigma_i\cap R^n)\cap\bigcap_{(i,\sigma)\in\mc{G}}\II_R(Z^\sigma_i\cap R^n)\,.
\end{equation}

{\sc Step II.} {\em $R$-irreducible components of $X$ of maximal dimension.} By Lemma \ref{lem:gp-reducible}$(\mr{i})$, each $Z^\sigma_i\subset C^n$ is $C$-irreducible, so $\II_C(Z^\sigma_i)$ is a prime ideal of $C[\x]$ and $\II_R(Z^\sigma_i)=\II_C(Z^\sigma_i)\cap R[\x]$ is a prime ideal of $R[\x]$ as well. By Proposition \ref{rc}, $\II_R(Z^\sigma_i)\subsetneqq\II_R(Z^\sigma_i\cap R^n)$ for each $(i,\sigma)\in\mc{F}^*$. Moreover, if $(i,\sigma)\in\mc{F}\setminus\mc{F}^*$, then $\II_R (Z^\sigma_i\cap R^n)=\II_R(Z^\sigma_i)$ (so $\II_R (Z^\sigma_i\cap R^n)\subset R[\x]$ is prime), $Z^\sigma_i\cap R^n$ is an $R$-irreducible algebraic subset of $R^n$ of dimension $d$, and $\zcl_{C^n}(Z^\sigma_i\cap R^n)=Z^\sigma_i$. The latter equality implies that $Z^\sigma_i\cap R^n\neq Z^\tau_j\cap R^n$ if $(i,\sigma),(j,\tau)\in\mc{F}\setminus\mc{F}^*$ with $(i,\sigma)\neq(j,\tau)$. Observe that $\dim(Z^\sigma_i\cap R^n)<d$ for each $(i,\sigma)\in\mc{F}^*\cup\mc{G}$. Thus, by \eqref{X}, we deduce that $\{Z^\sigma_i\cap R^n\}_{(i,\sigma)\in\mc{F}\setminus\mc{F}^*}$ is the family of all $R$-irreducible components of $X\subset R^n$ of dimension~$d$.

{\sc Step III.} {\em The inclusion $\Reg(X)\setminus B_K(X)\subset\Reg^{R|K}(X)$.} Let $a\in\Reg(X)\setminus B_K(X)$. By \eqref{X} and \eqref{B}, there exists an index $(p,\xi)\in\mc{F}\setminus\mc{F}^*$ such that $a\in Z^\xi_p\cap R^n$. As $a\in\Reg(X)$ and $\{Z^\sigma_i\cap R^n\}_{(i,\sigma)\in\mc{F}\setminus\mc{F}^*}$ is the family of all $R$-irreducible components of $X\subset R^n$ of dimension $d=\dim(X)$, such an index $(p,\xi)\in\mc{F}\setminus\mc{F}_*$ is unique, that is, $a\not\in\bigcup_{(i,\sigma)\in(\mc{F}\setminus\mc{F}^*)\setminus\{(p,\xi)\}}(Z^\sigma_i\cap R^n)$. As $a\not\in B_K(X)$, we deduce by \eqref{B}
\begin{equation*}\label{aaa}\textstyle
a\in(Z^\xi_p\cap R^n)\setminus\bigcup_{(i,\sigma)\in(\mc{F}\cup\mc{G})\setminus\{(p,\xi)\}}(Z^\sigma_i\cap R^n)\,.
\end{equation*}
Let $(i,\sigma)\in(\mc{F}\cup\mc{G})\setminus\{(p,\xi)\}$. As $a\not\in Z^\sigma_i\cap R^n$ (hence $a\not\in Z^\sigma_i$, because $a\in\Reg(X)\subset R^n$), there exists a polynomial $h\in \II_C(Z^\sigma_i)$ such that $h(a)\neq0$. Let $h_1,h_2\in R[\x]$ be such that $h=h_1+\ii h_2$ and define $\ol{h}:=h_1-\ii h_2\in C[\x]$. As $a\in R^n$ and $h(a)=h_1(a)+\ii h_2(a)\neq0$, we have $\ol{h}(a)=h_1(a)-\ii h_2(a)\neq0$. It follows that $h\ol{h}\in \II_C(Z^\sigma_i)\cap R[\x]=\II_R(Z^\sigma_i)$ and $(h\ol{h})(a)\neq0$, so $\II_R(Z^\sigma_i)\not\subset\gtn_a:=\{f\in R[\x]:f(a)=0\}$. Consequently, $\II_R(Z^\sigma_i\cap R^n)\not\subset\gtn_a$ for each $(i,\sigma)\in(\mc{F}\cup\mc{G})\setminus\{(p,\xi)\}$. Recall that $\II_R(Z^\xi_p)=\II_R(Z^\xi_p\cap R^n)$, because $(p,\xi)\in\mc{F}\setminus\mc{F}^*$. By \eqref{iqr}, \eqref{iqr2} and Corollary \ref{lem:a}, we deduce
$$
\II_K(X)R[\x]_{\gtn_a}=\II_R(Z^\xi_p)R[\x]_{\gtn_a}=\II_R(Z^\xi_p\cap R^n)R[\x]_{\gtn_a}=\II_R(X)R[\x]_{\gtn_a}, 
$$
so $\reg^{R|K}_{X,a}=\reg_{X,a}$. As $\reg^{R|K}_{X,a}=\reg_{X,a}$ is a regular local ring of dimension $d$, we deduce $a\in\Reg^{R|K}(X)$.

{\sc Step IV.} {\em The inclusion $\Reg^{R|K}(X)\subset\Reg(X)\setminus B_K(X)$.} Pick a point $a\in\Reg^{R|K}(X)$. Implication $(\mr{i})\Longrightarrow(\mr{iii})$ of Proposition \ref{30} assures the existence of $f_1,\ldots,f_{n-d}\in\II_K(X)$ and a ($R$-)Zariski open neighborhood $U$ of $a$ in $R^n$ such that ${\rm rk}\big(\frac{\partial f_i}{\partial\x_j}(a)\big)_{i=1,\ldots,n-d,\, j=1,\ldots,n}=n-d$ and $X\cap U=\ZZ_R(f_1,\ldots,f_{n-d})\cap U$. As $f_1,\ldots,f_{n-d}\in\II_R(X)$, the converse implication $(\mr{iii})\Longrightarrow(\mr{i})$ of Proposition \ref{30} implies that $a\in\Reg(X)$. In addition, by implication $(\mr{i})\Longrightarrow(\mr{ii})$ of the same Proposition \ref{30}, we have $a\not\in\bigcup_{i\in\{1,\ldots,s\}\setminus I}X_i=\bigcup_{(i,\sigma)\in\mc{G}}(Z^\sigma_i\cap R^n)$. The latter equality holds by Lemma \ref{lem:gp}(iii) applied to each $X_i$ with $i\in\{1,\ldots,s\}\setminus I$. As $a\in\Reg^{R|K}(X)\subset R^n$, we deduce:
\begin{equation}\label{QQ}
\textstyle
a\not\in\bigcup_{(i,\sigma)\in\mc{G}}Z^\sigma_i\,.
\end{equation}

We have to show that $a\not\in B_K(X)$. By \eqref{B} and \eqref{QQ}, it is enough to prove
\begin{equation}\label{QQQQ}
\textstyle
a\not\in\bigcup_{(i,\sigma)\in\mc{F}^*}(Z^\sigma_i\cap R^n)\,.
\end{equation}

Suppose \eqref{QQQQ} is not true, that is, $a\in Z^\tau_h\cap R^n$ for some $h\in I$ and $\tau\in F^*_h$. As $\dim(Z^\tau_h\cap R^n)<d=\dim_C(Z^\tau_h)$, by Proposition \ref{rc}, the ideal $\II_R(Z^\tau_h)$ of $R[\x]$ is non-real. As $\II_R(Z^\tau_h)\subset\gtn_a$ and $\II_R(Z^\tau_h)\subset R[\x]$ is prime, we deduce that
\begin{equation}\label{eqeq}
\text{the ideal $\II_R(Z^\tau_h)R[\x]_{\gtn_a}$ of $R[\x]_{\gtn_a}$ is (prime but) non-real.}
\end{equation}

As the local ring $\reg^{R|K}_{X,a}=R[\x]_{\gtn_a}/(\II_K(X)R[\x]_{\gtn_a})$ is regular, it is an integral domain. Therefore, $\II_K(X)R[\x]_{\gtn_a}$ is a prime ideal of $R[\x]_{\gtn_a}$. Thus, there exists only one minimal prime ideal $\gtp$ of $R[\x]$ associated to $\II_K(X)R[\x]$ such that $\gtp\subset\gtn_a$, so 
\begin{equation}\label{4048}
\II_K(X)R[\x]_{\gtn_a}=\gtp R[\x]_{\gtn_a}.
\end{equation}

Let $\gtq_1,\ldots,\gtq_u$ be the minimal prime ideals of $R[\x]$ associated to $\sqrt[r]{\gtp}$. By \cite[Lem.4.1.5]{bcr}, each $\gtq_k$ is a real ideal of $R[\x]$ and $\sqrt[r]{\gtp}$ is a radical ideal of $R[\x]$. In particular, $\sqrt[r]{\gtp}=\bigcap_{k=1}^u\gtq_k\subset\gtn_a$. By \cite[Prop.1.11.ii)]{am}, we may assume that there exists $v\in\{1,\ldots,u\}$ such that $\gtq_1,\ldots,\gtq_v\subset\gtn_a$, and $\gtq_j\not\subset\gtn_a$ for each $j\in J:=\{v+1,\ldots,u\}$, where $J:=\varnothing$ when $v=u$. Rearranging the indices if necessary, we can also assume that $\hgt(\gtq_1)=\min\{\hgt(\gtq_1),\ldots,\hgt(\gtq_v)\}$. Consequently, each prime ideal $\gtq_kR[\x]_{\gtn_a}$ of $R[\x]_{\gtn_a}$ is real of the same height as $\gtq_k$ for each $k\in\{1,\ldots,v\}$ and $\sqrt[r]{\gtp}R[\x]_{\gtn_a}=\bigcap_{k=1}^v(\gtq_kR[\x]_{\gtn_a})$ by Corollary \ref{lem:a}. Therefore, it holds:
\begin{equation}\label{htrR}
\hgt(\sqrt[r]{\gtp}R[\x]_{\gtn_a})=\hgt(\gtq_1 R[\x]_{\gtn_a})=\hgt(\gtq_1).
\end{equation}
Moreover, replacing $U$ with the Zariski open neighborhood $U\setminus\bigcup_{j\in J}\ZZ_R(\gtq_j)$ of $a$ in $R^n$, we can assume that
\begin{equation}\label{eq:zzu}
\textstyle
\ZZ_R(\sqrt[r]{\gtp})\cap U=\ZZ_R\big(\bigcap_{k=1}^v\gtq_k\big)\cap U.
\end{equation}

By Remark \ref{rmk-jc} and \eqref{4048}, we have
\begin{equation}\label{eq:n-d}
n-d=\hgt(\II_K(X)R[\x]_{\gtn_a})=\hgt(\gtp R[\x]_{\gtn_a})
\end{equation}
and
$$
(f_1,\ldots,f_{n-d}) R[\x]_{\gtn_a}=\II_K(X)R[\x]_{\gtn_a}=\gtp R[\x]_{\gtn_a}.
$$
As $(f_1,\ldots,f_{n-d}) R[\x]_{\gtn_a}=\gtp R[\x]_{\gtn_a}$, shrinking $U$ around $a$ if necessary, we can assume
$$
\ZZ_R(f_1,\ldots,f_{n-d})\cap U=\ZZ_R(\gtp)\cap U=\ZZ_R(\sqrt[r]{\gtp})\cap U.
$$
By \cite[Prop.3.3.11]{bcr}, shrinking $U$ further around $a$ if necessary, we can also assume that $\ZZ_R(\sqrt[r]{\gtp})\cap U$ is a Nash submanifold of $U$ of dimension $d$. Combining this fact with \eqref{htrR}, \eqref{eq:zzu} and \eqref{eq:n-d}, we obtain: 
\begin{align*}
n-\hgt(\gtp R[\x]_{\gtn_a})&\textstyle=d=\dim(\ZZ_R(\sqrt[r]{\gtp})\cap U)=\dim(\ZZ_R(\bigcap_{k=1}^v\gtq_k)\cap U)\\
&\textstyle\leq\dim(\ZZ_R(\bigcap_{k=1}^v\gtq_k))=n-\hgt(\bigcap_{k=1}^v\gtq_k)=n-\hgt(\gtq_1)\\
&\textstyle =n-\hgt(\gtq_1R[\x]_{\gtn_a})=n-\hgt(\sqrt[r]{\gtp}R[\x]_{\gtn_a})\leq n-\hgt(\gtp R[\x]_{\gtn_a}),
\end{align*}
so $\hgt(\gtp R[\x]_{\gtn_a})=\hgt(\gtq_1R[\x]_{\gtn_a})$. As $\gtp R[\x]_{\gtn_a}$ is a prime ideal of $R[\x]_{\gtn_a}$ and $\gtp R[\x]_{\gtn_a}\subset\gtq_1R[\x]_{\gtn_a}$, it follows that $\II_K(X)R[\x]_{\gtn_a}=\gtp R[\x]_{\gtn_a}=\gtq_1R[\x]_{\gtn_a}$. Thus, the prime ideal $\II_K(X)R[\x]_{\gtn_a}$ of $R[\x]_{\gtn_a}$ is in addition real.

Set $\mc{F}_a:=\{(i,\sigma)\in\mc{F}:a\in Z^\sigma_i\}$ and observe that $(h,\tau)\in\mc{F}_a$. Using again \eqref{iqr}, \eqref{QQ} and Corollary \ref{lem:a}, we have $\bigcap_{(i,\sigma)\in\mc{F}_a}\II_R(Z^\sigma_i)R[\x]_{\gtn_a}=\II_K(X)R[\x]_{\gtn_a}$. As $\II_K(X)R[\x]_{\gtn_a}$ is a prime ideal of $R[\x]_{\gtn_a}$, by \cite[Prop.1.11.ii)]{am}, there exists $(i_0,\sigma_0)\in\mc{F}_a$ such that $\II_R(Z^{\sigma_0}_{i_0})R[\x]_{\gtn_a}=\II_K(X)R[\x]_{\gtn_a}$. As the ideal $\II_K(X)R[\x]_{\gtn_a}$ of $R[\x]_{\gtn_a}$ is real, whereas
$\II_R(Z^\tau_h)R[\x]_{\gtn_a}$ is not by \eqref{eqeq}, we deduce that $(i_0,\sigma_0)\neq(h,\tau)$ and
$$\textstyle
\II_R(Z^{\sigma_0}_{i_0})R[\x]_{\gtn_a}=\II_K(X)R[\x]_{\gtn_a}=\bigcap_{(i,\sigma)\in\mc{F}_a}\II_R(Z^\sigma_i)R[\x]_{\gtn_a}\subset\II_R(Z^\tau_h)R[\x]_{\gtn_a},
$$
so $\II_R(Z^{\sigma_0}_{i_0})R[\x]_{\gtn_a}\subsetneqq\II_R(Z^\tau_h)R[\x]_{\gtn_a}$. As $\II_R(Z^{\sigma_0}_{i_0})$ and $\II_R(Z^\tau_h)$ are prime ideals of $R[\x]$ contained in $\gtn_a$, $\II_R(Z^{\sigma_0}_{i_0})R[\x]_{\gtn_a}=\II_K(X)R[\x]_{\gtn_a}$ is a real ideal of $R[\x]_{\gtn_a}$ and $\II_R(Z^{\sigma_0}_{i_0})R[\x]_{\gtn_a}\subsetneqq\II_R(Z^\tau_h)R[\x]_{\gtn_a}$, it follows that $\II_R(Z^{\tau_0}_{i_0})$ is a real ideal of $R[\x]$ and $\II_R(Z^{\sigma_0}_{i_0})\subsetneqq \II_R(Z^\tau_h)$ as well. Thus, Proposition \ref{rc} implies
$$
\II_C(Z^{\sigma_0}_{i_0})=\II_R(Z^{\sigma_0}_{i_0})C[\x]\subset\II_R(Z^\tau_h)C[\x]\subset\II_C(Z^\tau_h),
$$
so $Z^\tau_h\subset Z^{\sigma_0}_{i_0}$. We know that $\dim_C(Z^\tau_h)=d=\dim_C(Z^{\sigma_0}_{i_0})$. As $Z^{\sigma_0}_{i_0}\subset C^n$ is $C$-irreducible, we have $Z^{\sigma_0}_{i_0}=Z^\tau_h$, which is a contradiction because $\II_R(Z^{\sigma_0}_{i_0})$ is a real ideal of $R[\x]$, whereas $\II_R(Z^\tau_h)$ is not. Consequently, $a\not\in B_K(X)$, as required.
\end{proof}

In the next five examples, we will see several behaviors of the sets $\Sing(X)$, $B_K(X)$ and, consequently, $\Sing^{R|K}(X)$ in the case $K=\Q$. In particular, the fifth example shows that Theorem \ref{regreg} is sharp in the following sense: there exist $\Q$-algebraic sets $X\subset R^n$ such that $\Sing(X)\subset R^n$ and $B_\Q(X)\subset R^n$ are $\qr$-algebraic, whereas they are not $\Q$-algebraic and their union $\Sing^{R|\Q}(X)=\Sing^\Q(X)\subset R^n$ is $\Q$-algebraic, as predicted by the aforementioned theorem.

\begin{examples}\label{432}
$(\mr{i})$ The singleton $X:=\{\sqrt[3]{2}\}\subset R$ is an example of $\Q$-irreducible $\Q$-algebraic set, not defined over $\Q$ such that 
$$
\Sing(X)=\varnothing, \qquad B_\Q(X)=\varnothing, \qquad \Sing^{R|\Q}(X)=\varnothing.
$$
$(\mr{ii})$ Let $G:=\x_1^3-2\x_2^3\in\Q[\x]:=\Q[\x_1,\x_2]$, let $g:=\x_1-\sqrt[3]{2}\x_2\in \qr[\x]$ and let $X:=\ZZ_R(G)=\ZZ_R(g)\subset R^2$. Observe that $G$ and $g$ are irreducible, the former in $\Q[\x]$ and the latter in $R[\x]$. By Proposition \ref{prop:hyper}, $G$ is $\Q$-geometric in $R^2$ and $g$ is ($R$-)geometric in $R^2$, that is, $\II_\Q(X)=(G)\Q[\x]$ and $\II_R(X)=(g)R[\x]$. Moreover, $X\subset R^2$ has dimension $1$ and is irreducible, so $\Q$-irreducible. By Corollary \ref{cor:kpol},
$$
\Sing^{R|\Q}(X)=\{x\in X:\nabla G(x)=0\}=\{(0,0)\},
$$
whereas
$$
\Sing(X)=\{x\in X:\nabla g(x)=0\}=\varnothing.
$$

Let $T:=\zcl_{C^2}^\Q(X)\subset C^2$. As $\II_\Q(T)=\II_\Q(X)=(G)\Q[\x]$, we have $T=\ZZ_C(G)$. Thus, if we define $w:=-\frac{1}{2}+\frac{\sqrt{3}}{2}\ii$ and $T_k:=\ZZ_C(\x_1-\sqrt[3]{2}w^k\x_2)\subset C^2$ for each $k\in\{0,1,2\}$, then $T_0$, $T_1$ and $T_2$ are the $C$-irreducible components of $T\subset C^2$. As $T_0\cap R^2=X$ and $T_1\cap R^2=T_2\cap R^2=\{(0,0)\}$, we deduce that $B_\Q(X)=\bigcup_{k=1}^2(T_k\cap R^2)=\{(0,0)\}$. The set $B_\Q(X)$ can also be computed using Proposition \ref{irre-sf}$(\mr{iii})$. Indeed, $G(\x)=g(\x)(\x_1^2+\sqrt[3]{2}\x_1\x_2+\sqrt[3]{4}\x_2^2)$ is the factorization of $f$ in~$\qr[\x]$, where $\ZZ_R(g)\subset R^2$ is a line and $\ZZ_R(\x_1^2+\sqrt[3]{2}\x_1\x_2+\sqrt[3]{4}\x_2^2)=\{(0,0)\}$. Consequently, $B_\Q(X)=\{(0,0)\}$. Summarizing,
$$
\Sing(X)=\varnothing, \qquad B_\Q(X)=\{(0,0)\}, \qquad \Sing^{R|\Q}(X)=\{(0,0)\}.
$$
Therefore, $X\subset R^2$ is nonsingular as an algebraic set, but it is singular as a $\Q$-algebraic set.

$(\mr{iii})$ Let $f:=(\x_1^2-\x_2^2)^3+2\x_3^6\in\Q[\x]:=\Q[\x_1,\x_2,\x_3]$, let $g:=\x_1^2-\x_2^2+\sqrt[3]{2}\x_3^2\in\qr[\x]$ and let $X:=\ZZ_R(f)=\ZZ_R(g)\subset R^3$. The polynomial $f$ is irreducible in $\Q[\x]$ and $\Q$-geometric in $R^3$, and the polynomial $g$ is irreducible in $R[\x]$ and geometric in $R^3$. Thus, $X\subset R^3$ is irreducible (therefore $\Q$-irreducible), $\dim(X)=2$ and it holds:
$$
\Sing^{R|\Q}(X)=\{x\in X:\nabla f(x)=0\}=\ZZ_R(\x_1^2-\x_2^2,\x_3)
$$
and
$$
\Sing(X)=\{x\in X:\nabla g(x)=0\}=\{(0,0,0)\}.
$$

Let $T:=\zcl_{C^3}^\Q(X)\subset C^3$. As $\II_\Q(T)=\II_\Q(X)=(f)\Q[\x]$ and $f=\prod_{k=0}^2(\x_1^2-\x_2^2+\sqrt[3]{2}w^k\x_3^2)$ is the factorization of $f$ in $C[\x]$, we deduce that $T_k:=\ZZ_C(\x_1^2-\x_2^2+\sqrt[3]{2}w^k\x_3^2)\subset C^3$ for $k\in\{0,1,2\}$ are the $C$-irreducible components of $T\subset C^3$. Observe that $T_0\cap R^3=X$, and $T_1\cap R^3=T_2\cap R^3=\ZZ_R(\x_1^2-\x_2^2,\x_3)$ has dimension~$1$, so $
B_\Q(X)=\bigcup_{k=1}^2(T_k\cap R^3)=\ZZ_R(\x_1^2-\x_2^2,\x_3)$. We have just proved that
$$
\Sing(X)=\{(0,0,0)\}, \qquad B_\Q(X)=\ZZ_R(\x_1^2-\x_2^2,\x_3), \qquad \Sing^{R|\Q}(X)=\ZZ_R(\x_1^2-\x_2^2,\x_3).
$$

$(\mr{iv})$ Let $g:=\x_1+\sqrt{2}\x_2+\sqrt[4]{2}\x_3\in\qr[\x]:=\qr[\x_1,\x_2,\x_3]$, let $X_0$ be the plane $\ZZ_R(g)$ of $R^3$ and let $X:=\zcl_{R^3}^\Q(X_0)$ be the real Galois completion of $X_0\subset R^3$. Consider the line $X_1$ and the plane $X_2$ of $R^3$ given by
$$
X_1:=\{x\in R^3:x_1-\sqrt{2}x_2=0,x_3=0\}, \qquad X_2:=\{x\in R^3:x_1+\sqrt{2}x_2-\sqrt[4]{2}x_3=0\}.
$$
Define also the line $X_1'$ of $R^3$ by
$$
X_1':=X_0\cap X_2=\{x\in R^3:x_1+\sqrt{2}x_2=0,x_3=0\}.
$$

In Examples \ref{exa:gc}$(\mr{i})$, we proved: $X$ is a $\Q$-irreducible $\Q$-algebraic hypersurface of $R^3$, $B_\Q(X)$ is equal to $X_1$ and $X$ is a reducible algebraic subset of $R^3$, whose irreducible components are the planes $X_0$, $X_2$ and the line $X_1$. We deduce that $\Sing(X)$ is the union of the lines $X_1$ and $X_1'$. By Theorem \ref{regreg}$(\mr{i})$, we have $\Sing^{R|\Q}(X)=\Sing(X)\cup B_\Q(X)=X_1\cup X_1'$. The latter equality can also been obtained using Corollary \ref{3113b}: $\Sing^{R|\Q}(X)=\{x\in X:\nabla g^\bullet(x)=0\}=X_1\cup X_1'$, where $g^\bullet$ is the Galois completion of $g$ (see Examples \ref{exa:gc}$(\mr{i})$ for the explicit expression of $g^\bullet$). Consequently,
$$
\Sing(X)=X_1\cup X_1', \qquad B_\Q(X)=X_1, \qquad \Sing^{R|\Q}(X)=X_1\cup X_1',
$$
see Figure \ref{im:poly2'}. Observe that $B_\Q(X)=X_1\subset R^3$ is $\qr$-algebraic, but not $\Q$-algebraic, because its $\Q$-Zariski closure in $R^3$ is $\ZZ_R(\x_1^2-2\x_2^2,\x_3)$, which in this case coincides with $\Sing(X)=\Sing^{R|\Q}(X)$.

\begin{figure}[!ht]
\begin{center}
\begin{tikzpicture}[scale=0.32]

\draw[fill=blue!60,opacity=0.30,draw] (-6,-2.5) -- (-8,-6.5) -- (-4.58,-5.1) -- (-3,-8) -- (9,-3) -- (6,2.5) -- (8,6.5) -- (4.58,5.1) -- (3,8) -- (-9,3) -- (-6,-2.5);

\draw[blue,line width=0.01mm,dashed] (-6,-2.5) -- (6,2.5) -- (8,6.5) -- (-4,1.5) -- (-6,-2.5);
\draw[blue,line width=0.01mm,dashed] (-6,-2.5) -- (6,2.5) -- (4,-1.5) -- (-8,-6.5) -- (-6,-2.5);
\draw[blue,line width=0.01mm,dashed] (-6,-2.5) -- (-3,-8) -- (9,-3) -- (6,2.5) -- (-6,-2.5);
\draw[blue,line width=0.01mm,dashed] (-6,-2.5) -- (-9,3) -- (3,8) -- (6,2.5) -- (-6,-2.5);

\draw[blue,line width=0.05mm] (6,2.5) -- (9,-3) -- (-3,-8) -- (-9,3) -- (3,8) -- (4.58,5.1);
\draw[blue,line width=0.05mm] (6,2.5) -- (8,6.5) -- (-4,1.5) -- (-6,-2.5) -- (0,0);
\draw[blue,line width=0.05mm] (-6,-2.5) -- (-8,-6.5) -- (-4.6,-5.085);
\draw[red,line width=0.35mm] (-8.4,-3.5) -- (8.4,3.5);

\draw[magenta,line width=0.35mm] (-4.15,4.98) -- (-5.5,6.6);
\draw[magenta,line width=0.35mm,dashed] (0,0) -- (-4.2,5.04);
\draw[magenta,line width=0.35mm] (5.5,-6.6) -- (0,0);

\draw[line width=0.25mm] (-9.5,0) -- (-7.35,0);
\draw[line width=0.25mm,dashed] (-7.1,0) -- (3.375,0);
\draw[->,line width=0.25mm] (0,0) -- (9,0);
\draw[line width=0.25mm] (0,-9) -- (0,-6.75);
\draw[line width=0.25mm,dashed] (0,-7.8) -- (0,3);
\draw[->,line width=0.25mm] (0,3.15) -- (0,9.6);
\draw[line width=0.25mm] (7,7) -- (5.45,5.45);
\draw[line width=0.25mm,dashed] (5.22,5.22) -- (0,0);
\draw[->,line width=0.25mm] (0,0) -- (-7,-7);

\draw (-7,-7.5) node{\small$\!\!\!\x_1$};
\draw (9.45,-0.75) node{\small$\;\,\x_2$};
\draw (0.75,9.6) node{\small$\;\;\,\x_3$};
\draw (3.9,8.7) node{\small\color{blue}$X_0$};
\draw (8.9,7.1) node{\small\color{blue}$X_2$};
\draw (8.6,-7.6) node{\small\color{blue}$X_1=B_\Q(X)$};
\draw (8.9,7.1) node{\small\color{blue}$X_2$};
\draw (-9,5.2) node{\color{blue}$X$};
\draw (-9.5,-4.4) node{\color{red}$X_1'$};
\end{tikzpicture}
\end{center}
\caption{The $\Q$-algebraic set $X:=\zcl_{R^3}^\Q(X_0)=X_0\cup X_1\cup X_2\subset R^3$ such that $\Sing(X)=X_1\cup X_1'$, $B_\Q(X)=X_1$ and $\Sing^{R|\Q}(X)=X_1\cup X_1'$.}
\label{im:poly2'}
\end{figure}
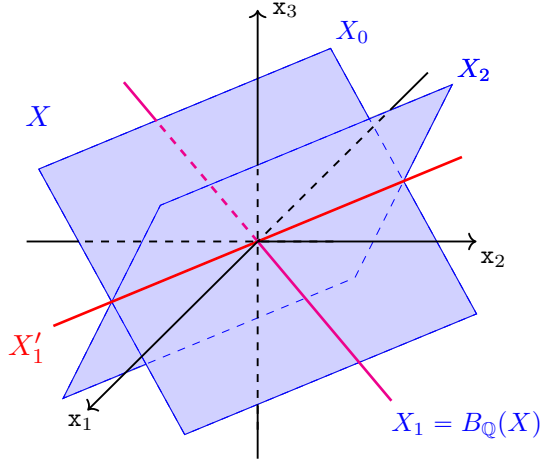

$(\mr{v})$ Consider again the polynomial $g\in\qr[\x]$, the planes $X_0=\ZZ_R(g)$ and $X_2$, the lines $X_1$ and $X_1'=X_0\cap X_2$ of $R^3$, and $X=\zcl_{R^3}^\Q(X_0)\subset R^3$ of the previous item.

Let $p,q\in\qr[\x]$ be the $\qr$-geometric polynomials in $R^3$ defined by
$$
p:=\x_1+\sqrt{2}\x_2+\x_3, \qquad q:=gp,
$$
let $U_0:=\ZZ_R(p)$ and let $U:=\zcl_{R^3}^\Q(U_0)$ and $S:=\zcl_{R^3}^\Q(X_0\cup U_0)=X\cup U$ be the real Galois completions of $U_0\subset R^3$ and $X_0\cup U_0=\ZZ_R(q)\subset R^3$, respectively. Consider the plane:
$$
U_1:=\{x\in R^3:x_1-\sqrt{2}x_2+x_3=0\}.
$$
In Examples \ref{exa:gc}$(\mr{ii})$, we proved: $U=U_0\cup U_1$, $X$ and $U$ are the two $\Q$-irreducible components of $S$, $S=X_0\cup X_2\cup U_0\cup U_1$ (because $S=X\cup U$, $X=X_0\cup X_2\cup X_1$ and $X_1\subset U_1$) and $B_\Q(S)=X_1$. As $S\subset R^3$ is the union of the four distinct planes $X_0$, $X_2$, $U_0$ and $U_1$, its singular locus $\Sing(S)$ is the union of the six intersections of pairs of these planes. Actually, $\Sing(S)$ is the union of the following four distinct lines $r_1,r_2,r_3,r_4$: 
\begin{align*}
X_0\cap X_2&=X_0\cap U_0=X_2\cap U_0=X_1'=:r_1,\\
U_0\cap U_1&=\{x\in R^3:x_2=0,x_1+x_3=0\}=:r_2,\\
X_0\cap U_1&=\{x\in R^3:x_1+\sqrt{2}x_2+\sqrt[4]{2}x_3=0,x_1-\sqrt{2}x_2+x_3=0\}=:r_3,\\
X_2\cap U_1&=\{x\in R^3:x_1+\sqrt{2}x_2-\sqrt[4]{2}x_3=0,x_1-\sqrt{2}x_2+x_3=0\}=:r_4,
\end{align*}

Denote $r_5$ the line $X_1$. Theorem \ref{regreg} implies that $\Sing^{R|\Q}(S)=\Sing(S)\cup B_\Q(S)$ is equal to the union of the five distinct lines $r_1,\ldots,r_5$. Again, the same equality can be obtained using Corollary \ref{3113b}: $\Sing^{R|\Q}(S)=\{x\in X:\nabla q^\bullet(x)=0\}=\bigcup_{i=1}^5r_i$, where $q^\bullet$ is the Galois completion of $q$ (see Examples \ref{exa:gc}$(\mr{ii})$ for the explicit expression of $q^\bullet$). Consequently,
$$
\Sing(S)=r_1\cup r_2\cup r_3\cup r_4, \qquad B_\Q(S)=r_5, \qquad \Sing^{R|\Q}(S)=r_1\cup r_2\cup r_3\cup r_4\cup r_5,
$$
see Figure \ref{im:poly2''}.

\begin{figure}[!ht]
\begin{center}
\begin{tikzpicture}[scale=0.32]

\draw[fill=blue!60,opacity=0.30,draw] (-3,-8) -- (-0.75,-7.05) -- (0,0) -- (-6,-2.5) -- (-3,-8);
\draw[fill=blue!60,opacity=0.30,draw] (0,0) -- (-6,-2.5) -- (-4,1.5) -- (2.7,4.30) -- (0,0);
\draw[fill=blue!60,opacity=0.30,draw] (6,2.5) -- (9,-3) -- (1.86,-6) -- (3.75,1.58) -- (6,2.5);
\draw[fill=blue!60,opacity=0.30,draw] (6,2.5) -- (3.75,1.58) -- (4,2.5) -- (2.7,4.30) -- (8,6.5) -- (6,2.5);
\draw[fill=blue!60,opacity=0.30,draw] (-6,-2.5) -- (-8,-6.5) -- (-5.415,-5.415) -- (-6,-2.5);
\draw[fill=blue!60,opacity=0.30,draw] (0.75,7.05) -- (-9,3) -- (-7.4,0) -- (-6,-2.5) -- (-7,2.45) -- (0.6,5.6) -- (0.75,7.05);
\draw[fill=blue!60,opacity=0.30,draw] (0.75,7.05) -- (1.5,6) -- (3.62,6.875) -- (3,8) -- (0.75,7.05);

\draw[fill=red!60,opacity=0.30,draw] (-0.75,-7.05) -- (0,0) -- (2.7,4.30) -- (4,2.5) -- (1,-9.5) -- (-0.75,-7.05);
\draw[fill=red!60,opacity=0.30,draw] (-6,-2.5) -- (-5,-7.45) -- (-3.61,-6.875) -- (-6,-2.5);
\draw[fill=red!60,opacity=0.30,draw] (-6,-2.5) -- (-7,2.45) -- (1.5,6) -- (0.9,3.54) -- (-4,1.5) -- (-6,-2.5);
\draw[fill=red!60,opacity=0.30,draw] (1.5,6) -- (0.75,7.05) -- (0.6,5.6) -- (1.5,6);
\draw[fill=red!60,opacity=0.30,draw] (0.75,7.05) -- (-1,9.5) -- (-1.875,5.975) -- (0.75,7.05);
\draw[fill=red!60,opacity=0.30,draw] (1.5,6) -- (0.9,3.54) -- (2.7,4.30) -- (1.5,6);
\draw[fill=red!60,opacity=0.30,draw] (1.5,6) -- (2.7,4.30) -- (5.4,5.4) -- (5,7.45) -- (1.5,6);

\draw[blue,line width=0.01mm,dashed] (6,2.5) -- (4,-1.5) -- (-8,-6.5) -- (-6,-2.5);
\draw[blue,line width=0.01mm,dashed] (-6,-2.5) -- (-9,3) -- (3,8) -- (6,2.5);

\draw[blue,line width=0.05mm] (6,2.5) -- (9,-3) -- (1.86,-6);
\draw[blue,line width=0.05mm] (-0.75,-7.05) -- (-3,-8) -- (-9,3) -- (3,8) -- (3.65,6.81);
\draw[blue,line width=0.05mm] (6,2.5) -- (8,6.5) -- (-4,1.5) -- (-6,-2.5) -- (0,0);
\draw[blue,line width=0.05mm] (-6,-2.5) -- (-8,-6.5) -- (-5.415,-5.415);
\draw[red,line width=0.35mm,dashed] (4.05,1.68) -- (0,0);

\draw[red,line width=0.01mm,dashed] (-6,-2.5) -- (-7,2.45) -- (5,7.45) -- (6,2.5);
\draw[red,line width=0.05mm,dashed] (-6,-2.5) -- (-5,-7.45) -- (7,-2.45) -- (6,2.5);
\draw[red,line width=0.35mm,dashed] (1.45,5.8) -- (-1.8,-7.2);
\draw[red,line width=0.01mm,dashed] (-1.5,-6) -- (1,-9.5) -- (4,2.5) -- (1.5,6);
\draw[red,line width=0.01mm,dashed] (1.5,6) -- (-1.5,-6) -- (1,-9.5) -- (4,2.5) -- (1.5,6);
\draw[red,line width=0.01mm,dashed] (1.5,6) -- (-1,9.5) -- (-4,-2.5) -- (-1.5,-6);

\draw[red,line width=0.05mm] (0.887,3.55) -- (1.5,6) -- (4,2.5) -- (1,-9.5) -- (-0.75,-7.05);
\draw[red,line width=0.05mm] (5.4,5.4) -- (5,7.45) -- (-7,2.45) -- (-5,-7.45) -- (-3.6,-6.87);
\draw[red,line width=0.05mm] (1.5,6) -- (-1,9.5) -- (-1.875,6);
\draw[red,line width=0.35mm] (0.887,3.55) -- (2.35,9.4);

\draw[blue,line width=0.35mm] (-4.17,5.002) -- (-5,6);
\draw[blue,line width=0.35mm,dashed] (0,0) -- (-4.2,5.04);
\draw[blue,line width=0.35mm] (5,-6) -- (0,0);

\draw[red,line width=0.35mm] (5.13,8.2) -- (0,0);
\draw[red,line width=0.35mm,dashed] (0,0) -- (-3.25,-5.2);
\draw[red,line width=0.35mm,dashed] (0.75,7.05) -- (0,0);
\draw[red,line width=0.35mm] (0,0) -- (-0.91,-8.5);
\draw[red,line width=0.35mm] (0.6,5.6) -- (1.06,10);
\draw[red,line width=0.35mm] (-8.2,-3.42) -- (0,0);
\draw[red,line width=0.35mm] (4.05,1.68) -- (8.2,3.42);

\draw[line width=0.25mm] (-9.5,0) -- (-7.38,0);
\draw[line width=0.25mm,dashed] (-7.1,0) -- (0,0);
\draw[line width=0.25mm,dashed] (0,0) -- (3.175,0);
\draw[->,line width=0.25mm] (3.375,0) -- (9.3,0);
\draw[line width=0.25mm] (0,-9) -- (0,-8.08);
\draw[line width=0.25mm,dashed] (0,-7.8) -- (0,0);
\draw[line width=0.25mm,dashed] (0,0) -- (0,3.175);
\draw[->,line width=0.25mm] (0,3.175) -- (0,9.9);
\draw[line width=0.25mm] (7,7) -- (5.45,5.45);
\draw[line width=0.25mm,dashed] (5.25,5.25) -- (0,0);
\draw[->,line width=0.25mm] (0,0) -- (-7,-7);

\draw (-6.8,-7.8) node{\small$\!\!\!\x_1$};
\draw (9.45,-0.75) node{\small$\;\;\,\x_2$};
\draw (-0.4,10.5) node{\small$\x_3$};
\draw (-2.2,-8.6) node{\small\color{blue}$X_0$};
\draw (9,-6.8) node{\small\color{blue}$X_1=B_\Q(S)=r_5$};
\draw (-8.6,-5.4) node{\small\color{blue}$X_2$};
\draw (1.7,-10.3) node{\small\color{red}$U_1$};
\draw (-4.5,-8.2) node{\small\color{red}$U_0$};
\draw (-9.8,4.2) node{\color{magenta}$S$};
\draw (1.4,10.5) node{\small\color{red}$r_3$};
\draw (2.9,9.8) node{\small\color{red}$r_2$};
\draw (5.7,8.6) node{\small\color{red}$r_4$};
\draw (10.15,4.05) node{\small\color{red}$X_1'=r_1$};
\end{tikzpicture}
\end{center}
\caption{The $\Q$-algebraic set $S:=\zcl_{R^3}^\Q(X_0\cup U_0)=X_0\cup X_2\cup U_0\cup U_1\subset R^3$ such that $\Sing(S)=r_1\cup r_2\cup r_3\cup r_4$, $B_\Q(S)=r_5$, and $\Sing^{R|\Q}(S)=r_1\cup r_2\cup r_3\cup r_4\cup r_5$.}
\label{im:poly2''}
\end{figure}
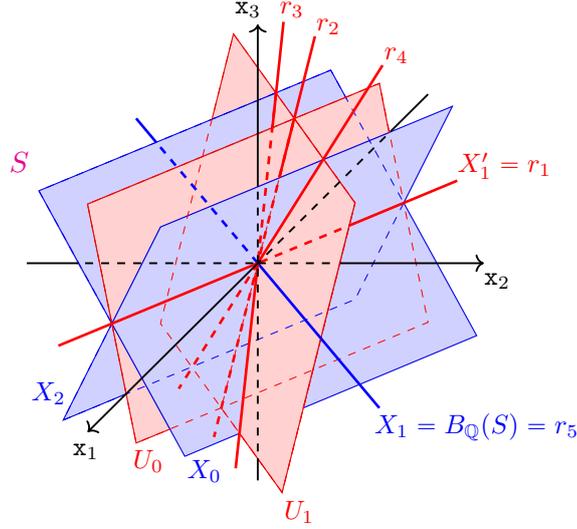

Observe that $B_\Q(S)=r_5\subset R^3$ is $\qr$-algebraic, but not $\Q$-algebraic. The same is true for $\Sing(X)=r_1\cup r_2\cup r_3\cup r_4$. To prove the latter two assertions, we can use Corollary \ref{cor:rk-algebraicity}. Define:
\begin{align*}
\mc{Z}&:=\zcl_{C^3}(r_1\cup r_2\cup r_3\cup r_4),\\
Z_1&:=\zcl_{C^3}(r_1)=\{x\in C^3:x_1+\sqrt{2}x_2=0,x_3=0\}\subset\mc{Z},\\
Z_5&:=\zcl_{C^3}(r_5)=\{x\in C^3:x_1-\sqrt{2}x_2=0,x_3=0\}.
\end{align*}
Let $E:=\Q(\sqrt[4]{2},\ii)$ and let $G':=G(E:\Q)=\{\sigma_{ab}\}_{a\in\{0,1,2,3\},b\in\{0,1\}}$ be as in Examples \ref{exa:gc}, so $\sigma_{ab}(\sqrt[4]{2})=\ii^a\sqrt[4]{2}$ and $\sigma_{ab}(\ii)=(-1)^b\ii$. Choose an automorphism in $G:=G(C:\Q)$ that extends $\sigma_{ab}$ (see Lemma \ref{extension}) and denote it again $\sigma_{ab}$. Define $(\sigma_{ab})_3$ and $\widehat{\sigma_{ab}}$ as in \eqref{psi} and \eqref{hatpsi}. Observe that $\sigma_{10}(\sqrt{2})=-\sqrt{2}$, so $\widehat{\sigma_{10}}(\x_1\pm\sqrt{2}\x_2)=\x_1\mp\sqrt{2}\x_2$ and $\widehat{\sigma_{10}}(\x_3)=\x_3$. By Theorem \ref{thm:gc0}$(\mr{i})$, we have:
\begin{align*}
(\sigma_{10})_3(Z_5)\cap R^3&=\{x\in R^3:\widehat{\sigma_{10}}(x_1-\sqrt{2}x_2)=0,\widehat{\sigma_{10}}(x_3)=0\}=r_1=Z_1\cap R^3,\\
(\sigma_{10})_3(Z_1)\cap R^3&=\{x\in R^3:\widehat{\sigma_{10}}(x_1+\sqrt{2}x_2)=0,\widehat{\sigma_{10}}(x_3)=0\}=r_5=Z_5\cap R^3.
\end{align*}
It follows that $(\sigma_{10})_3(Z_5)\cap R^3=r_1\not\subset r_5$ and $(\sigma_{10})_3(Z_1)\cap R^3=r_5\not\subset r_1\cup r_2\cup r_3\cup r_4$, so $(\sigma_{10})_3(\mc{Z})\cap R^3\not\subset r_1\cup r_2\cup r_3\cup r_4$ as well. We deduce that $B_\Q(S)=r_5$ and $\Sing(X)=r_1\cup r_2\cup r_3\cup r_4$ are both not $G$-stable, so they are not $\Q$-algebraic by Corollary \ref{cor:rk-algebraicity}.

Observe that the line $r_2$ of $R^2$ is $\Q$-algebraic and irreducible (so $\Q$-irreducible). By Theorem \ref{thm:gc}$(\mr{i}')(\mr{iv})$, we know that $\zcl_{R^3}^\Q(r_h)=\bigcup_{a\in\{0,1,2,3\},b\in\{0,1\}}(\sigma_{ab})_3(\zcl_{C^3}(r_h))$ for each $h\in\{1,3,4,5\}$, so $\zcl_{R^3}^\Q(r_1)=\zcl_{R^3}^\Q(r_5)=r_1\cup r_5$ and $\zcl_{R^3}^\Q(r_3)=\zcl_{R^3}^\Q(r_4)=r_3\cup r_4$. As the lines $r_1$ and $r_3$ of $R^2$ are irreducible (so $\qr$-irreducible), Lemma \ref{lem:irreducibility} assures that the $\Q$-algebraic sets $r_1\cup r_5\subset R^3$ and $r_3\cup r_4\subset R^3$ are $\Q$-irreducible. This proves that the sets $r_1\cup r_5$, $r_3\cup r_4$ and $r_2$ are the $\Q$-irreducible components of $\Sing^{R|\Q}(S)\subset R^3$. $\sqbullet$
\end{examples}

The next result is a criterion for a nonsingular point to be $K$-nonsingular, see Definitions \ref{ek-Zar-tang} and \ref{E|K-regular}. 

\begin{prop}
Let $X\subset R^n$ be a $K$-algebraic set and let $a\in\Reg(X)$. Then $a\in\Reg^K(X)$ if and only if $T^K_a(X)=T_a(X)$. 
\end{prop}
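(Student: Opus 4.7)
The inclusion $T_a(X) \subset T^K_a(X)$ is immediate from $\II_K(X) \subset \II_R(X)$ and formula \eqref{eq:E|K-Zariskitangspace}, since every gradient condition defining $T^K_a(X)$ is among those defining $T_a(X)$. Setting $d := \dim(X)$, the plan is to apply Remark \ref{rem528}$(\mr{ii})$ twice. With $E = L = R$, the hypothesis $a \in \Reg(X) = \Reg^{R|R}(X,d)$ gives $\dim_R T_a(X) = d$. With $L = R$ and $E = R$ applied to the $K$-algebraic set $X$, the assumption $a \in \Reg^K(X) = \Reg^{R|K}(X,d)$ gives $\dim_R T^K_a(X) = d$. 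The ``only if'' direction then follows at once: the inclusion $T_a(X) \subset T^K_a(X)$ between $R$-vector spaces of equal dimension $d$ must be an equality.

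For the ``if'' direction, the plan is to convert the tangent-space equality into a local coincidence of ideals by combining Lemma \ref{localregular} with a prime-height argument. Assuming $T^K_a(X) = T_a(X)$ gives $\dim_R T^K_a(X) = d$. I would fix a system of generators $\{f_1,\ldots,f_s\}$ of $\II_K(X)$ in $K[\x]$; by \eqref{eq:E|K-Zariskitangspace}, the Jacobian matrix $\bigl(\frac{\partial f_i}{\partial\x_j}(a)\bigr)$ has rank $n - d$, so I can select indices $i_1<\cdots<i_{n-d}$ with $\nabla f_{i_1}(a),\ldots,\nabla f_{i_{n-d}}(a)$ linearly independent over $R$. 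Setting $\gta := (f_{i_1},\ldots,f_{i_{n-d}})K[\x]$, one has $\gta \subset \II_K(X)$ and $\mr{rk}_a(\gta) = n - d$, so Lemma \ref{localregular}$(\mr{ii}) \Rightarrow (\mr{i}),(\mr{iv})$ (applied with $E = R$) will yield that $\gta R[\x]_{\gtn_a}$ is a prime ideal of $R[\x]_{\gtn_a}$ of height $n - d$ and that $R[\x]_{\gtn_a}/(\gta R[\x]_{\gtn_a})$ is regular of dimension~$d$. Meanwhile, $a \in \Reg(X)$ forces $\reg_{X,a} = R[\x]_{\gtn_a}/(\II_R(X)R[\x]_{\gtn_a})$ to be regular of dimension $d$, hence an integral domain, so $\II_R(X)R[\x]_{\gtn_a}$ is prime and, by \eqref{eq:ht}, of height $n - d$ as well. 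The chain
$$
\gta R[\x]_{\gtn_a}\ \subset\ \II_K(X)R[\x]_{\gtn_a}\ \subset\ \II_R(X)R[\x]_{\gtn_a}
$$
between two prime ideals of equal height $n - d$ in the Noetherian ring $R[\x]_{\gtn_a}$ then collapses to equality, since a strict inclusion of primes strictly raises height; hence $\reg^K_{X,a} = R[\x]_{\gtn_a}/(\gta R[\x]_{\gtn_a})$ is regular of dimension $d$, giving $a \in \Reg^{R|K}(X,d) = \Reg^K(X)$.

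The main obstacle is this last step: converting the rank condition on the gradients of the $f_i$'s at $a$ into the equality of the two \emph{distinct} ideals $\II_K(X)R[\x]_{\gtn_a}$ and $\II_R(X)R[\x]_{\gtn_a}$, rather than merely their inclusion. The trick is to introduce the smaller ideal $\gta R[\x]_{\gtn_a}$ whose primality and height are forced by Lemma \ref{localregular} from the rank condition, so that the Noetherian height inequality for strict chains of primes squeezes the sandwich into an equality.
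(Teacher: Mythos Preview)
Your proof is correct. The ``only if'' direction matches the paper exactly. For the ``if'' direction you take a genuinely different route.

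The paper's argument is geometric: having selected $f_1,\ldots,f_{n-d}\in\II_K(X)$ with independent gradients at $a$, it sets $X':=\ZZ_R(f_1,\ldots,f_{n-d})$ and invokes the Implicit Function Theorem \cite[Cor.2.9.8]{bcr} to find an Euclidean open $V$ on which both $X\cap V$ and $X'\cap V$ are semialgebraically connected Nash submanifolds of $R^n$ of dimension $d$, closed in $V$; the inclusion $X\cap V\subset X'\cap V$ then forces $X\cap V=X'\cap V$, and one concludes via the equivalence $(\mr{i})\Longleftrightarrow(\mr{iii}')$ of Proposition~\ref{30}. Your argument is purely commutative-algebraic: Lemma~\ref{localregular} already tells you that $\gta R[\x]_{\gtn_a}$ is prime of height $n-d$, and since $\II_R(X)R[\x]_{\gtn_a}$ is also prime of height $n-d$ (regularity of $\reg_{X,a}$ plus~\eqref{eq:ht}), the sandwich $\gta R[\x]_{\gtn_a}\subset\II_K(X)R[\x]_{\gtn_a}\subset\II_R(X)R[\x]_{\gtn_a}$ collapses by the elementary fact that in a Noetherian ring a strict inclusion of primes strictly raises height. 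Your approach sidesteps the semialgebraic machinery (IFT, Nash manifolds, connectedness) entirely and stays within the algebraic framework of Section~\ref{s4}; the paper's approach makes the local set-theoretic coincidence $X\cap V=\ZZ_R(f_1,\ldots,f_{n-d})\cap V$ explicit, which is closer in spirit to the Jacobian criterion formulation of Proposition~\ref{30}$(\mr{iii}')$.
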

\begin{proof}
Let $d:=\dim(X)$. If $d=n$, then $X=R^n$ and the statement is obvious. Suppose $d<n$.

If $a\in\Reg^K(X)$, then Remarks \ref{rem528}$(\mr{ii})$ implies that $\dim_{R}(T_a(X))=d=\dim_{R}(T^K_a(X))$. As $T^K_a(X)\supset T_a(X)$, we have $T^K_a(X)=T_a(X)$.

Suppose that $T^K_a(X)=T_a(X)$, so $\dim_R(T^K_a(X))=d$. Choose $f_1,\ldots,f_{n-d}\in\II_K(X)$ such that $T^K_a(X)=\{v\in R^n:\langle\nabla f_1(a),v\rangle=0,\ldots,\langle\nabla f_{n-d}(a),v\rangle=0\}$ and set $X':=\ZZ_R(f_1,\ldots,f_{n-d})\subset R^n$. Equip each subset of $R^n$ with the Euclidean topology. By the Implicit Function Theorem \cite[Cor.2.9.8]{bcr}, there exists an open neighborhood $V$ of $a$ in $R^n$ such that the intersections $X\cap V$ and $X'\cap V$ are semialgebraically connected Nash submanifolds of $R^n$ of the same dimension $d$ and also closed subsets of $V$. As $X\cap V\subset X'\cap V$, it follows that $X\cap V=X'\cap V$, so $a\in\Reg^K(X)$ by the equivalence $(\mr{i})\Longleftrightarrow(\mr{iii}')$ of Proposition \ref{30}.
\end{proof}

A $K=\Q$ version of the latter result was proved in \cite[Prop.2.13]{GS} by a slightly different argument.

We conclude this section by comparing the ideals $\II_K(X)R[\x]$ and $\II_R(X)$ of $R[\x]$ for each $K$-algebraic set $X\subset R^n$ off the $K$-bad set $B_K(X)$. Given $f\in R[\x]$, we denote $R[\x]_f$ the quotient ring of $R[\x]$ with respect to its multiplicative subset $\{f^n\}_{n\in\N}$.

\begin{prop}\label{S_0}
If $X\subset R^n$ is a $K$-algebraic set, 
then there exists $f\in\kr[\x]$ such that $\ZZ_R(f)=B_K(X)$ 
and $\II_K(X)R[\x]_f=\II_R(X)R[\x]_f$.
\end{prop}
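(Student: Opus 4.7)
The plan is to start from a Galois presentation $(Y_1,\ldots,Y_s;G';\{Z_1^\sigma\}_{\sigma\in G'},\ldots,\{Z_s^\sigma\}_{\sigma\in G'})$ of $X\subset R^n$ (Definition \ref{def:gp}) and to invoke the identities \eqref{iqr} and \eqref{iqr2} obtained in Step I of the proof of Theorem \ref{regreg}, namely
\begin{align*}
\II_K(X)R[\x]&=\bigcap_{(i,\sigma)\in\mc{F}\cup\mc{G}}\II_R(Z^\sigma_i),\\
\II_R(X)&=\bigcap_{(i,\sigma)\in\mc{F}\cup\mc{G}}\II_R(Z^\sigma_i\cap R^n),
\end{align*}
where $\mc{F},\mc{F}^*,\mc{G}$ are the index sets introduced there. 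By Remark \ref{rem317}, $B_K(X)=\bigcup_{(i,\sigma)\in\mc{F}^*\cup\mc{G}}(Z^\sigma_i\cap R^n)$, so the dichotomy ``$(i,\sigma)\in\mc{F}\setminus\mc{F}^*$'' versus ``$(i,\sigma)\in\mc{F}^*\cup\mc{G}$'' is precisely the good/bad split relative to the $K$-bad set.

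Next I would build $f$ as follows. For each bad index $(i,\sigma)\in\mc{F}^*\cup\mc{G}$, pick a finite system of generators $\{p_{i,\sigma,1},\ldots,p_{i,\sigma,t_{i,\sigma}}\}\subset\kr[\x]$ of the ideal $\II_\kr(Z^\sigma_i)$ of $\kr[\x]$ (this makes sense since, by Lemma \ref{lem:gp-reducible}$(\mr{i})$, $Z^\sigma_i\subset C^n$ is $\kbar$-algebraic), and set
$$
f_{i,\sigma}:=\sum_{j=1}^{t_{i,\sigma}}p_{i,\sigma,j}^2\in\kr[\x].
$$
Lemma \ref{lem:ts} gives $\ZZ_R(\II_\kr(Z^\sigma_i))=Z^\sigma_i\cap R^n$; since $\kr$ is a real closed field, vanishing of a sum of squares forces every summand to vanish, so $\ZZ_R(f_{i,\sigma})=Z^\sigma_i\cap R^n$. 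Moreover $f_{i,\sigma}\in\II_\kr(Z^\sigma_i)\subset\II_R(Z^\sigma_i)$. Define
$$
f:=\prod_{(i,\sigma)\in\mc{F}^*\cup\mc{G}}f_{i,\sigma}\in\kr[\x]
$$
(with the convention $f:=1$ if $\mc{F}^*\cup\mc{G}=\varnothing$). By construction $\ZZ_R(f)=\bigcup(Z^\sigma_i\cap R^n)=B_K(X)$, and $f\in\II_R(Z^\sigma_i)\cap\II_R(Z^\sigma_i\cap R^n)$ for every $(i,\sigma)\in\mc{F}^*\cup\mc{G}$ (the second membership follows from $Z^\sigma_i\cap R^n\subset\ZZ_R(f)$ together with the fact that $\II_R(Z^\sigma_i\cap R^n)$ is radical).

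Localizing \eqref{iqr} and \eqref{iqr2} at $f$, and using that localization commutes with finite intersections, the factors indexed by $\mc{F}^*\cup\mc{G}$ both become the unit ideal of $R[\x]_f$. What remains is
\begin{align*}
\II_K(X)R[\x]_f&=\bigcap_{(i,\sigma)\in\mc{F}\setminus\mc{F}^*}\II_R(Z^\sigma_i)R[\x]_f,\\
\II_R(X)R[\x]_f&=\bigcap_{(i,\sigma)\in\mc{F}\setminus\mc{F}^*}\II_R(Z^\sigma_i\cap R^n)R[\x]_f.
\end{align*}
For each $(i,\sigma)\in\mc{F}\setminus\mc{F}^*$ one has $\dim(Z^\sigma_i\cap R^n)=d=\dim_C(Z^\sigma_i)$, and the $C$-irreducibility of $Z^\sigma_i$ together with the equivalence $(\mr{i})\Longleftrightarrow(\mr{v})$ of Proposition \ref{rc} yields $\II_R(Z^\sigma_i)=\II_R(Z^\sigma_i\cap R^n)$. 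Hence the two intersections coincide, and $\II_K(X)R[\x]_f=\II_R(X)R[\x]_f$, as desired.

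The hard part will be the choice of $f$. A naive choice with merely $\ZZ_R(f)=B_K(X)$ (for instance a sum of squares of generators of $\II_\kr(B_K(X))$ in $\kr[\x]$) would automatically make the radical prime $\II_R(Z^\sigma_i\cap R^n)$ unit in $R[\x]_f$ for each bad $(i,\sigma)$, but it need not lie in the strictly smaller non-real prime $\II_R(Z^\sigma_i)$ (whose zero set in $C^n$ strictly contains the real trace when $(i,\sigma)\in\mc{F}^*$). The sum-of-squares construction above forces $f$ into every $\II_\kr(Z^\sigma_i)$ simultaneously, while Lemma \ref{lem:ts} pins $\ZZ_R(f)$ exactly to $B_K(X)$; this is precisely what is needed for both intersections to collapse onto the same good piece after localization.
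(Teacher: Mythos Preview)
Your proof is correct and follows essentially the same approach as the paper. The paper likewise works from the Galois presentation and the identities \eqref{iqr}, \eqref{iqr2}, chooses for each bad index $(i,\sigma)\in\mc{F}^*\cup\mc{G}$ a single polynomial $f_{i,\sigma}\in\II_\kr(Z^\sigma_i)$ with $\ZZ_R(f_{i,\sigma})=Z^\sigma_i\cap R^n$ (existence via Lemma \ref{lem:ts}, where your sum-of-squares construction makes the choice explicit), takes $f$ to be the product, and finishes with the same localization argument using $\II_R(Z^\sigma_i)=\II_R(Z^\sigma_i\cap R^n)$ on $\mc{F}\setminus\mc{F}^*$.
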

\begin{proof}
We keep the notations used in the proof of Theorem \ref{regreg}. Let $(i,\sigma)\in\mc{F}^*\cup\mc{G}$. By Lemmas \ref{lem:gp-reducible}$(\mr{i})$ and \ref{lem:ts}, we know that $Z_i^\sigma\subset C^n$ is $\kbar$-algebraic and $\ZZ_R(\II_\kr(Z_i^\sigma))=Z_i^\sigma\cap R^n$. Thus, there exists $f_{i,\sigma}\in\II_\kr(Z_i^\sigma)\subset\II_R(Z_i^\sigma)$ such that $\ZZ_R(f_{i,\sigma})=Z_i^\sigma\cap R^n$. Define $f:=\prod_{(i,\sigma)\in\mc{F}^*\cup\mc{G}}f_{i,\sigma}\in\kr[\x]$. Observe that $f\in\bigcap_{(i,\sigma)\in\mc{F}^*\cup\mc{G}}\II_R(Z_i^\sigma)\subset\bigcap_{(i,\sigma)\in\mc{F}^*\cup\mc{G}}\II_R(Z_i^\sigma\cap R^n)$ and $\ZZ_R(f)=B_K(X)$. As $\dim(B_K(X))<\dim(X)$ and $\dim(Z_i^\sigma\cap R^n)=\dim(X)$ for each $(i,\sigma)\in\mc{F}\setminus\mc{F}^*$, we deduce $f\not\in \II_R(Z_i^\sigma)$ for each $(i,\sigma)\in\mc{F}\setminus\mc{F}^*$. Recall that $\II_R(Z^\sigma_i)=\II_R (Z^\sigma_i\cap R^n)$ for each $(i,\sigma)\in\mc{F}\setminus\mc{F}^*$ (see \textsc{Step II} of the proof of Theorem \ref{regreg}). By \eqref{iqr}, \eqref{iqr2} and \cite[Cor.3.4.ii)]{am}, we have:
\begin{align*}
\II_K(X)R[\x]_f&\textstyle=\bigcap_{(i,\sigma)\in\mc{F}\cup\mc{G}}\II_R(Z_i^\sigma)R[\x]_f=\bigcap_{(i,\sigma)\in\mc{F}\setminus\mc{F}^*}\II_R(Z_i^\sigma) R[\x]_f\\
&\textstyle=\bigcap_{(i,\sigma)\in\mc{F}\setminus\mc{F}^*}\II_R(Z_i^\sigma\cap R^n)R[\x]_f=\bigcap_{(i,\sigma)\in\mc{F}\cup\mc{G}}\II_R(Z_i^\sigma\cap R^n)R[\x]_f\\
&=\II_R(X)R[\x]_f,
\end{align*}
as required.
\end{proof}

\begin{remarks}
Let $X\subset R^n$ be a $K$-algebraic set and let $f\in\kr[\x]$ be such that $\ZZ_R(f)=B_K(X)$ and $\II_K(X)R[\x]_f=\II_R(X)R[\x]_f$, as in the previous proposition.

$(\mr{i})$ Let $V_1,\ldots,V_h$ be the ($R$-)irreducible components of $X\subset R^n$ of dimension $\dim(X)$. We have that $\{\II_R(V_i)R[\x]_f\}_{i=1}^h$ is the family of minimal prime ideals of $R[\x]_f$ associated to the ideal $\II_K(X)R[\x]_f=\II_R(X)R[\x]_f$. Indeed, by the preceding proof and by \textsc{Step II} of the proof of Theorem \ref{regreg}, we know that $\II_K(X)R[\x]_f=\II_R(X)R[\x]_f=\bigcap_{(i,\sigma)\in\mc{F}\setminus\mc{F}^*}\II_R(Z_i^\sigma\cap R^n)R[\x]_f$ and $\{Z^\sigma_i\cap R^n\}_{(i,\sigma)\in\mc{F}\setminus\mc{F}^*}$ is the family of irreducible components of $X\subset R^n$ of dimension $\dim(X)$, that is, $\{Z^\sigma_i\cap R^n\}_{(i,\sigma)\in\mc{F}\setminus\mc{F}^*}=\{V_1,\ldots,V_h\}$.

$(\mr{ii})$ Suppose that $B_K(X)=\varnothing$, so $f$ never vanishes on $R^n$. Denote $\mc{R}(X)$ the usual ring of regular functions on the ($R$-)algebraic set $X\subset R^n$, that is, the functions $g:X\to R$ of the form $g(x)=p(x)q(x)^{-1}$ for all $x\in X$, where $p,q\in R[\x]$ and $q$ never vanishes on $X$ (see \cite[{Sec.3.2}]{bcr}). Then $R[\x]_f/(\II_K(X)R[\x]_f)=R[\x]_f/(\II_R(X)R[\x]_f)$ can be identified with a subring of $\mc{R}(X)$: it is enough to consider the natural ring embedding that maps $\frac{g}{f^m}+\II_R(X)R[\x]_f$ to the regular function $X\to R:x\mapsto g(x)f(x)^{-m}$. $\sqbullet$
\end{remarks}


\section{Applications of the theory: some examples}\label{s5}
In this section, we present four applications of the theory we have developed above.

\subsection{$K$-rational points of a real $K$-algebraic set and their $K$-Zariski density}\label{s51}
\emph{Let $R$ be a real closed field and let $K$ be an ordered subfield of $R$ (that is, $K$ is a subfield of $R$ endowed with the ordering induced by that of~$R$). Denote $C:=R[\ii]$ the algebraic closure of~$R$, $\kr\subset R$ the real closure of $K$ and $\kbar:=\kr[\ii]\subset C$ the algebraic closure of $K$. Assume that $K$ is not a real closed field, that is, $K\subsetneqq\kr$. We will use Remark \emph{\ref{dime}} freely, and set $\dim(S):=\dim_R(S)$ for each algebraic set $S\subset R^n$.}

Let us borrow a classical notion from diophantine geometry \cite[p.2]{lang}.

\begin{defn}
Let $X\subset R^n$ be a $K$-algebraic set. We say that a point $x$ of $X$ is a \emph{$K$-rational point of $X$} if it belongs to $X(K):=X\cap K^n$. $\sqbullet$
\end{defn}

In the present $R|K$-context, the main diophantine problem is to determine the set $X(K)$ of the $K$-rational points of a $K$-algebraic set $X\subset R^n$. Part of this problem involves estimating the size of $X(K)$ within $X$. A subset $S$ of $X$ is \textit{$K$-Zariski dense in $X$} if its $K$-Zariski closure in $R^n$ coincides with $X$, that is, $\zcl_{R^n}^K(S)=X$. The next result is a version of \cite[{Prop.C.5, $(\mr{i})\Longrightarrow(\mr{ii})$}]{GS}.

\begin{lem}\label{lem:enrico}
Let $X\subset R^n$ be a $K$-algebraic set. If $X(K)$ is $K$-Zariski dense in $X$, then $X\subset R^n$ is defined over $K$, that is, $\II_R(X)=\II_K(X)R[\x]$ (see Definition \ref{def:overK}). Equivalently, if $X\subset R^n$ is not defined over $K$, then $X(K)$ is not $K$-Zariski dense in $X$. 
\end{lem}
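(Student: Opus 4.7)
The plan is to reduce the statement to a direct application of Proposition~\ref{prop:zar}, via the intermediate object $X(K)=X\cap K^n$, which is an algebraic subset of $K^n$ (not merely a subset of $R^n$).

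First I would set $Y:=X(K)=X\cap K^n$ and observe that, since $X\subset R^n$ is $K$-algebraic, Corollary~\ref{cor-cap} guarantees that $Y\subset K^n$ is algebraic. Then I would consider its $R$-Zariski closure $X':=\zcl_{R^n}(Y)$. By Proposition~\ref{prop:zar}(i), $X'\subset R^n$ is a $K$-algebraic set satisfying $X'=\zcl_{R^n}^K(Y)$ together with
\[
\II_R(X')=\II_K(Y)R[\x]\quad\text{and}\quad \II_K(X')=\II_K(Y).
\]
Combining these two equalities yields $\II_R(X')=\II_K(X')R[\x]$, so $X'\subset R^n$ is defined over $K$ in the sense of Definition~\ref{def:overK}.

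Next I would use the hypothesis. The assumption that $X(K)$ is $K$-Zariski dense in $X$ means precisely that $\zcl_{R^n}^K(X(K))=X$ (since $X\subset R^n$ is itself $K$-Zariski closed, the $K$-Zariski closure of $X(K)$ inside $R^n$ equals $X$). Therefore
\[
X'=\zcl_{R^n}^K(Y)=\zcl_{R^n}^K(X(K))=X,
\]
and the equality $\II_R(X')=\II_K(X')R[\x]$ obtained in the previous step becomes $\II_R(X)=\II_K(X)R[\x]$. This is exactly the statement that $X\subset R^n$ is defined over $K$.

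The argument is essentially a bookkeeping exercise, so there is no real obstacle; the only subtle point to verify carefully is the identification of $\zcl_{R^n}(Y)$ with $\zcl_{R^n}^K(Y)$ (part of Proposition~\ref{prop:zar}(i)), which is what allows the passage from an $R$-Zariski closure to the $K$-Zariski closure appearing in the hypothesis. No appeal to the Galois-completion machinery of Section~\ref{s2} or to the equivalences of Theorem~\ref{thm:48} is needed, although the conclusion could alternatively be phrased via condition (ii) of Theorem~\ref{thm:48} by taking any finite set of generators of $\II_K(Y)$ in $K[\x]$.
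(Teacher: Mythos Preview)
Your proof is correct. It differs from the paper's in presentation rather than in substance: the paper argues directly by taking $f\in\II_R(X)$, expanding it over a $K$-basis of $R$ via Lemma~\ref{k0}, and using that each $K$-component vanishes on $X(K)$ and hence (by $K$-Zariski density) lies in $\II_K(X)$. You instead route through Proposition~\ref{prop:zar}, whose proof contains exactly that basis-decomposition argument; your extra observation that $X(K)\subset K^n$ is algebraic (via Corollary~\ref{cor-cap}) is what makes Proposition~\ref{prop:zar} applicable and lets you read off $\II_R(X')=\II_K(X')R[\x]$ in one stroke. The upshot is the same mechanism packaged more structurally, with the small bonus that your version highlights the role of $\zcl_{R^n}(X(K))=\zcl_{R^n}^K(X(K))$ already established in Proposition~\ref{prop:zar}(i).
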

\begin{proof}
We follow the proof of \cite[{Prop.C.5, $(\mr{i})\Longrightarrow(\mr{ii})$}]{GS}. Let $f \in\II_R(X)$ and let $\{u_j\}_{j\in J}$ be a basis of $R$ as a $K$-vector space. By Lemma \ref{k0}, the polynomial $f\in R[\x]$ can be written uniquely as $f=\sum_{j\in J}u_jf_j$, where $f_j\in K[\x]$ and only finitely many of the polynomials $f_j$ are nonzero. If $x\in X(K)$, then $0=f(x)=\sum_{j\in J}u_jf_j(x)$ and $f_j(x)\in K$ for all $j\in J$, so $f_j(x)=0$. As $X(K)$ is $K$-Zariski dense in $X$, each polynomial $f_j\in K[\x]$ belongs to $\II_K(X)$, so $f\in\II_K(X)R[\x]$. Consequently, $\II_R(X)=\II_K(X)\R[\x]$, as required. 
\end{proof}

\begin{remark}
Let $F_h\subset R^2$ be the Fermat curve $\ZZ_R(\x_1^{2h}+\x_2^{2h}-2^{h})$ with $h\geq3$. The previous result cannot be used to prove that $F_h(\Q)$ is not $\Q$-Zariski dense in $F_h$ and is therefore finite. Indeed, by Proposition \ref{prop:hyper}, the polynomial $\x_1^{2h}+\x_2^{2h}-2^{h}\in\Q[\x_1,\x_2]$ is both $\Q$-geometric and geometric in $R^2$ so $F_h\subset R^2$ is defined over $\Q$. $\sqbullet$ 
\end{remark}

\begin{remark}
If $K$ is a real closed subfield of $R$, then $X(K)$ is $K$-Zariski dense in $X\subset R^n$ for each $K$-algebraic set $X\subset R^n$. This is due to Corollary \ref{inter}$(\mr{i})$. $\sqbullet$
\end{remark}

The next two results concern the $K$-Zariski density of $X(K)$ in $X$ when $X\subset R^n$ is $K$-irreducible. In the first, we will use the concept of Galois presentation $X=\bigcup_{\sigma\in G'}(Z^\sigma\cap R^n)$ of $X\subset R^n$, see Definition $\ref{def:gp}$. Recall that, by Lemma \ref{lem:gp}$(\mr{ii})(\mr{iii})(\mr{iv})$, $\{Z^\sigma\}_{\sigma\in G'}$ is the family (with possible repetitions) of the $C$-irreducible components of the $K$-Zariski closure of $X$ in $C^n$, each $Z^\sigma\subset C^n$ is $\kbar$-algebraic and each $Z^\sigma\cap R^n\subset R^n$ is $\kr$-algebraic.

\begin{prop}\label{prop:diophantine1}
Let $X\subset R^n$ be a $K$-irreducible $K$-algebraic set. If $X=\bigcup_{\sigma\in G'}(Z^\sigma\cap R^n)$ is a Galois presentation of $X\subset R^n$, then
\begin{equation}\label{equa61}
\textstyle
X(K)\subset\bigcap_{\sigma\in G'}(Z^\sigma\cap R^n).
\end{equation}
If in addition $X\subset R^n$ is $R$-reducible (or, equivalently, $\kr$-reducible), then
$$
\dim_R(X(K))\leq\dim_K(X(K))<\dim(X).
$$ 
\end{prop}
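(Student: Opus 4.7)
The plan is to prove \eqref{equa61} first and then use it together with a proper-intersection argument to bound $\dim_K(X(K))$. For \eqref{equa61}, I would unpack the Galois presentation: let $Y$ be the start, pick a system of generators $\{g_1,\ldots,g_r\}\subset\kr[\x]$ of $\II_\kr(Y)$, and choose a finite Galois subextension $E|K$ of $\kbar|K$ containing every coefficient of the $g_i$, so that $G'=G(E:K)$ and $Z^\sigma=\ZZ_C(g_1^\sigma,\ldots,g_r^\sigma)$ by step $(3)$ of Algorithm~\ref{gc}. The key observation is the identity $g_i^\sigma(x)=\sigma(g_i(x))$, valid for every $x\in K^n$ and every $\sigma\in G'$: writing $g_i=\sum_\nu a_{i\nu}\x^\nu$, one has $g_i^\sigma(x)=\sum_\nu\sigma(a_{i\nu})x^\nu=\sigma\big(\sum_\nu a_{i\nu}x^\nu\big)$, since $\sigma$ fixes $K$ and hence every coordinate of $x$. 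Now fix $x\in X(K)$. Since $X=\bigcup_{\sigma\in G'}(Z^\sigma\cap R^n)$, there exists $\tau\in G'$ with $g_i^\tau(x)=0$ for all $i$; the identity turns this into $\tau(g_i(x))=0$, and since $\tau$ is an automorphism of $E$, $g_i(x)=0$ for each $i$. Applying the identity a second time with an arbitrary $\sigma\in G'$ yields $g_i^\sigma(x)=\sigma(g_i(x))=0$, so $x\in Z^\sigma$ for every $\sigma$. Because $x\in R^n$, this proves \eqref{equa61}.

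For the dimensional bound, the first step is to show that the $R$-reducibility of $X$ forces at least two distinct members in $\{Z^\sigma\}_{\sigma\in G'}$. If instead $Z^\sigma=Z^e=:Z$ for every $\sigma\in G'$, then the complex Galois completion $T=\bigcup_{\sigma\in G'}Z^\sigma$ of $Y$ equals $Z$, which by construction is the complexification of $Y$. Lemma~\ref{lem:gp}$(\mr{ii})$ gives $T\cap R^n=X$, while Proposition~\ref{prop:zar}$(\mr{ii})$, applied to $Y\subset R^n$ with the extension $C|R$, gives $Z\cap R^n=Y$. Therefore $X=Y$, and since $Y$ is $R$-irreducible by the very definition of the start of a Galois presentation, this contradicts the $R$-reducibility of $X$. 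Choose then $\sigma_1,\sigma_2\in G'$ with $Z^{\sigma_1}\neq Z^{\sigma_2}$: both are $C$-irreducible of $C$-dimension $d:=\dim(X)$ by Lemma~\ref{lem:gp}$(\mr{iii})$, so $Z^{\sigma_1}\cap Z^{\sigma_2}$ is a proper $C$-algebraic subset of the $C$-irreducible $Z^{\sigma_1}$ and Lemma~\ref{dimirred} forces $\dim_C(Z^{\sigma_1}\cap Z^{\sigma_2})<d$.

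The conclusion combines these pieces. Inclusion \eqref{equa61} together with the previous paragraph gives
\[
X(K)\subset\bigcap_{\sigma\in G'}(Z^\sigma\cap R^n)\subset(Z^{\sigma_1}\cap Z^{\sigma_2})\cap R^n.
\]
Each $Z^{\sigma_i}$ is $\kbar$-algebraic by Lemma~\ref{lem:gp}$(\mr{iv})$, so $Z^{\sigma_1}\cap Z^{\sigma_2}\subset C^n$ is $\kbar$-algebraic and, separating real and imaginary parts of a finite $\kbar$-defining system, $(Z^{\sigma_1}\cap Z^{\sigma_2})\cap R^n\subset R^n$ is $\kr$-algebraic. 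Corollary~\ref{<=dim}$(\mr{i})$ bounds its $R$-dimension by $\dim_C(Z^{\sigma_1}\cap Z^{\sigma_2})<d$, while Lemma~\ref{rkdim} identifies $\dim_K$ with $\dim_R$ on this $\kr$-algebraic set; the monotonicity of $\dim_K$ under set inclusion, together with Remark~\ref{rem:dimLleqdimK}, then yields $\dim_R(X(K))\leq\dim_K(X(K))<d$. The only step requiring real care is the dichotomy in the second paragraph: one has to exclude the possibility that $R$-reducibility is compatible with having a single $Z^\sigma$ appearing in the Galois presentation, and this is exactly where the interplay of Lemma~\ref{lem:gp}$(\mr{ii})$ with Proposition~\ref{prop:zar}$(\mr{ii})$ is essential.
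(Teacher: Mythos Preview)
Your proof is correct. The argument for \eqref{equa61} is essentially the paper's: both exploit that for $x\in K^n$ one has $g_i^\sigma(x)=\sigma(g_i(x))$ (the paper writes this as $\sigma'(g_i^\tau(x))=g_i^{\sigma'\tau}(x)$ and composes directly, whereas you pass through the intermediate conclusion $g_i(x)=0$), so there is no meaningful difference there.

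For the dimensional bound the two proofs genuinely diverge. The paper splits into two cases according to whether $G'^*=\{\sigma\in G':\dim(Z^\sigma\cap R^n)<d\}$ is empty: if $G'^*\neq\varnothing$ it simply picks $\xi\in G'^*$ and uses $X(K)\subset Z^\xi\cap R^n$ together with Theorem~\ref{thm:gc}$(\mr{i}')(\mr{iv})$; if $G'^*=\varnothing$ it intersects two distinct \emph{real} pieces $Z^{\sigma_1}\cap R^n$, $Z^{\sigma_2}\cap R^n$. Your route is uniform: you argue (via Lemma~\ref{lem:gp}$(\mr{ii})$ and Proposition~\ref{prop:zar}$(\mr{ii})$) that $R$-reducibility forces at least two distinct \emph{complex} sets $Z^{\sigma_1}\neq Z^{\sigma_2}$, intersect them in $C^n$, and descend to $R^n$. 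This avoids the case distinction and is slightly cleaner; the paper's Case~2, on the other hand, is a one-line shortcut when a low-dimensional real piece is already available. Both arguments rely on the same circle of results (Lemma~\ref{lem:gp}, Corollary~\ref{<=dim}, Lemma~\ref{rkdim}), so the difference is organizational rather than conceptual. One small remark: your choice of $g_i$ generating $\II_{\kr}(Y)$ is allowed but not required---the identity $g_i^\sigma(x)=\sigma(g_i(x))$ only needs $g_i\in E[\x]$ and $x\in K^n$.
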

\begin{proof}
Let $d:=\dim(X)$, let $e$ be the identity of $G'$ and let $Z:=Z^e\subset C^n$. Recall that $G'$ denotes the Galois group of a finite Galois sub\-extension $E|K$ of $\kbar|K$ that contains all the coefficients of polynomials $g_1,\ldots,g_r\in\kbar[\x]$ such that $Z=\ZZ_C(g_1,\ldots,g_r)\subset C^n$. Moreover, we have $Z^\sigma=\ZZ_C(g_1^\sigma,\ldots,g_r^\sigma)$ for each $\sigma\in G'$, where $g_i^\sigma:=\sum_\nu\sigma(a_{i\nu})\x^\nu\in E[\x]$ if $g_i:=\sum_\nu a_{i\nu}\x^\nu\in E[\x]$.

Pick $x=(x_1,\ldots,x_n)\in X(K)$. As $X(K)=\bigcup_{\sigma\in G'}(Z^\sigma\cap K^n)$, $x$ belongs to $Z^\tau\cap K^n$ for some $\tau\in G'$. Pick any $\sigma\in G'$ and set $\sigma':=\sigma\circ\tau^{-1}\in G'$. Observe that $\sigma'(x_i)=x_i$ for $i=1,\ldots,n$, so
$$
0=\sigma'(g_i^\tau(x))=g_i^{\sigma'\tau}(\sigma'(x_1),\ldots,\sigma'(x_n))=g_i^\sigma(x)
$$
for each $i\in\{1,\ldots,r\}$. Thus, $x\in Z^\sigma\cap R^n$ for each $\sigma\in G'$, so \eqref{equa61} holds.

Suppose now $X\subset R^n$ is $R$-reducible (or, equivalently, $\kr$-reducible by Corollary \ref{inter}$(\mr{iii})$). Define $X_K:=\zcl_{R^n}^K(X(K))\subset R^n$. Let $G'^*$ be the set of all $\sigma\in G'$ such that $\dim(Z^\sigma\cap R^n)<d$. We distinguish two cases.

\textsc{Case 1.} If $G'^*=\varnothing$, then $\{Z^\sigma\cap R^n\}_{\sigma\in G'}$ is by Lemma \ref{lem:gp}$(\mr{iii}')$ the family of all $R$-irreducible components of $X\subset R^n$, each of which has dimension $d$. As $X\subset R^n$ is $R$-reducible, there exist $\sigma_1,\sigma_2\in G'$ such that $Z^{\sigma_1}\cap R^n\neq Z^{\sigma_2}\cap R^n$. Define $V:=(Z^{\sigma_1}\cap R^n)\cap(Z^{\sigma_2}\cap R^n)\subset R^n$, which has dimension $<d$, and $V_K:=\zcl_{R^n}^K(V)\subset R^n$. As $V\subset R^n$ is a $\kr$-algebraic set, Theo\-rem \ref{thm:gc}$(\mr{i}')(\mr{iv})$ assures that $\dim(V_K)=\dim(V)<d$. By \eqref{equa61}, we have $X(K)\subset V$, so $X_K\subset V_K$ and $\dim_K(X(K))=\dim_K(X_K)\leq\dim_K(V_K)=\dim(V_K)<d$.

\textsc{Case 2.} Suppose now that $G'^*\neq\varnothing$. Choose $\xi\in G'^*$ and set $W:=Z^\xi\cap R^n$ and $W_K:=\zcl_{L^n}^K(W)$. As $X(K)\subset W$, we have $X_K\subset W_K$. Repeating the preceding argument, we deduce $\dim_K(X(K))=\dim_K(X_K)\leq\dim_K(W_K)=\dim(W_K)=\dim(W)<d$.

This proves that $\dim_K(X(K))<d$. By Remark \ref{rem:dimLleqdimK}, we also know that $\dim_R(X(K))\leq\dim_K(X(K))<d$, as required.
\end{proof}

\begin{prop}
Let $X\subset R^n$ be a $K$-irreducible $K$-algebraic set and let $X_C:=\zcl_{C^n}(X)\subset C^n$ be the complexification of $X$. If $X(K)$ is $K$-Zariski dense in $X$, then $X$ satisfies the following three equivalent conditions:
\begin{itemize}
\item[$(\mr{i})$] $X\subset R^n$ is $R$-irreducible and defined over $K$.
\item[$(\mr{ii})$] $X_C\subset C^n$ is $C$-irreducible and $\II_C(X_C)=\II_K(X)C[\x]$.
\item[$(\mr{ii}')$] $X_C\subset C^n$ is $C$-irreducible and $K$-algebraic.
\end{itemize}
\end{prop}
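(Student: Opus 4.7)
The equivalences between $(\mr{i})$, $(\mr{ii})$ and $(\mr{ii}')$ are already established in Lemma \ref{lem:xLn}, so the only substantive content is to prove that the hypothesis ``$X(K)$ is $K$-Zariski dense in $X$'' implies $(\mr{i})$. The plan is to combine two facts already in hand: Lemma \ref{lem:enrico} yields the ``defined over $K$'' part, while Proposition \ref{prop:diophantine1} yields the ``$R$-irreducible'' part by contraposition via a dimension count.

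First I would invoke Lemma \ref{lem:enrico}: the $K$-Zariski density of $X(K)$ in $X$ immediately gives $\II_R(X)=\II_K(X)R[\x]$, so $X\subset R^n$ is defined over $K$. It then suffices to verify that $X\subset R^n$ is $R$-irreducible, after which all three equivalent conditions follow from Lemma \ref{lem:xLn}.

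For $R$-irreducibility I would argue by contradiction. Suppose $X\subset R^n$ is $K$-irreducible but $R$-reducible. Then Proposition \ref{prop:diophantine1} applies and gives the strict inequality
\[
\dim_K(X(K))<\dim(X).
\]
On the other hand, the $K$-Zariski density hypothesis says $\zcl_{R^n}^K(X(K))=X$, whence $\II_K(X(K))=\II_K(\zcl_{R^n}^K(X(K)))=\II_K(X)$, and therefore $\dim_K(X(K))=\dim_K(X)$. By the subfield-dimension invariance (Theorem \ref{dimension}), $\dim_K(X)=\dim(X)$, so $\dim_K(X(K))=\dim(X)$, contradicting the previous strict inequality. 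Hence $X\subset R^n$ is $R$-irreducible, and combined with the first step we conclude that condition $(\mr{i})$ holds.

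There is no real obstacle here: the statement is essentially a clean corollary of Lemma \ref{lem:enrico}, Proposition \ref{prop:diophantine1}, Theorem \ref{dimension} and Lemma \ref{lem:xLn}, all of which are available at this point. The only minor care required is keeping track of the fact that $\dim_K$ and $\dim$ agree for $K$-algebraic subsets of $R^n$, which is precisely what Theorem \ref{dimension} provides.
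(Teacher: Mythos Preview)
Your proof is correct and follows essentially the same approach as the paper: invoke Lemma \ref{lem:xLn} for the equivalences, Lemma \ref{lem:enrico} for ``defined over $K$'', and Proposition \ref{prop:diophantine1} together with the dimension equality $\dim_K(X(K))=\dim_K(X)=\dim(X)$ (from $K$-Zariski density and Theorem \ref{dimension}) to rule out $R$-reducibility by contradiction.
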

\begin{proof}
Assume $X\subset R^n$ is a $K$-irreducible $K$-algebraic set such that $X(K)$ is $K$-Zariski dense in $X$. Let $d:=\dim(X)$. By Lemma \ref{lem:xLn}, the three conditions are equivalent. By Proposition \ref{prop:diophantine1}, $X\subset R^n$ is $R$-irreducible. Otherwise, $d=\dim_K(X)=\dim_K(X(K))<d$, which is a contradiction. By Lemma \ref{lem:enrico}, $X\subset R^n$ is defined over $K$.
\end{proof}

\subsection{Whitney regular $K$-algebraic stratifications}\label{subsec:Whitney}
\textit{Let $R$ be a real closed field and let $K$ be an ordered subfield of $R$. The main example to keep in mind is the one in which $K=\Q$. Endow $R^n$ with the Euclidean topology and each of its subsets with the relative topology.}

During the Spanish-Polish Mathematical Meeting held in 2023 in {\L}\'od\'z, we presented a first draft of this paper. On that occasion, Wies{\l}aw Paw{\l}ucki asked: (1) whether there exists a notion of stratification for $K$-algebraic subsets of $R^n$ that is natural in the context of $R|K$-algebraic geometry, and (2) whether each $K$-algebraic subset of $R^n$ admits such a stratification that is Whitney regular. We present below an affirmative solution to this problem.

\subsubsection{$K$-semialgebraic sets: some basic properties and the subfield-dimension invariance} 

Let~us introduce the concept of $K$-semialgebraic set. See also \cite[Ch.6.\S7]{abr}.

\begin{defn}
Let $S$ be a subset of $R^n$. We say that $S$ is a \emph{$K$-semialgebraic subset of $R^n$} or $S\subset R^n$ is a \emph{$K$-semialgebraic set} if it admits a description as a finite Boolean combination of polynomial equalities and inequalities with coefficients in $K$ or, equivalently, if $S=\bigcup_{i=1}^s\bigcap_{j=1}^{r_i}\{x\in R^n:f_{ij}\ast_{ij}0\}$ for some $s,r_1,\ldots,r_s\in\N^*$, $f_{ij}\in K[\x]$ and $\ast_{ij}\in\{>,=\}$.~$\sqbullet$
\end{defn}

The family of $K$-semialgebraic subsets of $R^n$ is the smallest family of subsets of $R^n$ containing all sets of the form $\{x\in R^n:f(x)>0\}$, where $f\in K[\x]$, and closed under taking finite intersections, finite unions and complements. Observe that an $R$-semialgebraic subset of $R^n$ is a usual semialgebraic subset of $R^n$.

Our next task is to generalize some basic results concerning the usual semialgebraic sets to the $K$-semialgebraic sets. To do this, we will follow Chapter 2 of \cite{bcr}.

The following result is straightforward.

\begin{lem}\label{form}
Each $K$-semialgebraic subset of $R^n$ can be written as a finite union of $K$-semialgebraic sets of the form $\{x\in R^n:f(x)=0,g_1(x)>0,\ldots,g_m(x)>0\}$ for some $m\in\N^*$ and $f,g_1,\ldots,g_m\in K[\x]$. We call these $K$-semialgebraic sets {\em basic $K$-semialgebraic sets}.
\end{lem}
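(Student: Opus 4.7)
The plan is to run the usual disjunctive-normal-form argument used in the classical semialgebraic setting (see \cite[Prop.2.1.8]{bcr}) while keeping track of the fact that, at every step, the polynomials involved stay in $K[\x]$.

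First I would reduce to sets without complementation. By definition, every $K$-semialgebraic subset of $R^n$ is obtained as a finite Boolean combination of \emph{atomic} sets of the form $\{x\in R^n:h(x)>0\}$ and $\{x\in R^n:h(x)=0\}$ with $h\in K[\x]$. Since $R$ is totally ordered, the complements of these atomic sets are
$$
R^n\setminus\{h>0\}=\{-h>0\}\cup\{h=0\},
\qquad
R^n\setminus\{h=0\}=\{h>0\}\cup\{-h>0\},
$$
both again finite Boolean combinations of atoms of the same type with polynomials in $K[\x]$. Thus, by applying De Morgan's laws, every $K$-semialgebraic set can be written using only finite unions and finite intersections of such atoms.

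Second, I would put the resulting expression in disjunctive normal form by distributing intersections over unions. This yields a presentation
$$
S=\bigcup_{i=1}^{s}\Big(\{f_{i,1}=0\}\cap\cdots\cap\{f_{i,k_i}=0\}\cap\{g_{i,1}>0\}\cap\cdots\cap\{g_{i,m_i}>0\}\Big),
$$
with all $f_{i,j},g_{i,j}\in K[\x]$. If the intersection corresponding to some $i$ contains no equality (respectively, no strict inequality), I insert the trivial constraint $0=0$ (respectively, $1>0$), which does not change the set and which uses only polynomials in $K[\x]$; in this way each piece has both $k_i\geq 1$ and $m_i\geq 1$.

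Finally, I would collapse the equalities in each piece into a single polynomial equation using that $R$ is real. Namely, since $R$ is formally real, for any $\alpha_1,\ldots,\alpha_{k_i}\in R$ one has $\alpha_1^2+\cdots+\alpha_{k_i}^2=0$ if and only if $\alpha_1=\cdots=\alpha_{k_i}=0$. Therefore, setting $f_i:=f_{i,1}^2+\cdots+f_{i,k_i}^2\in K[\x]$, we obtain
$$
\{f_{i,1}=0\}\cap\cdots\cap\{f_{i,k_i}=0\}=\{x\in R^n:f_i(x)=0\},
$$
and substituting in the above presentation gives the required decomposition of $S$ as a finite union of basic $K$-semialgebraic sets. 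The only delicate point is ensuring that the sum-of-squares trick is applicable, which is precisely where the real closedness (really, just the formal reality) of $R$ is used; beyond that, the argument is purely syntactic bookkeeping, so no substantial obstacle arises.
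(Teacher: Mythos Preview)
Your argument is correct and is exactly the standard disjunctive-normal-form argument the paper has in mind; the paper gives no proof beyond the remark that the result is ``straightforward''. Note that the paper's very definition of $K$-semialgebraic already records the equivalence with the form $\bigcup_{i}\bigcap_{j}\{f_{ij}\ast_{ij}0\}$, $\ast_{ij}\in\{>,=\}$, so strictly speaking only your final sum-of-squares step (together with inserting the trivial constraints $0=0$ and $1>0$) is needed to reach the statement of the lemma.
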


We adapt \cite[Thm.2.2.1]{bcr} to obtain the following result.

\begin{thm}[Projection of $K$-semialgebraic sets]\label{projsa}
Let $S\subset R^{n+1}$ be a $K$-semialgebraic set and let $\pi:R^{n+1}\to R^n$ be the projection onto the first $n$ coordinates. Then $\pi(S)$ is a $K$-semialgebraic subset of $R^n$.
\end{thm}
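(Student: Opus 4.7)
My plan is to follow the classical proof of Tarski--Seidenberg's projection theorem (see \cite[Thm.2.2.1]{bcr}) while keeping careful track of the subfield to which the coefficients of the defining polynomials belong. First, by Lemma \ref{form}, I will write $S=\bigcup_{i=1}^s T_i$ as a finite union of basic $K$-semialgebraic sets
$$
T_i=\{(x,y)\in R^n\times R:\,f_i(x,y)=0,\,g_{i1}(x,y)>0,\ldots,g_{ir_i}(x,y)>0\}
$$
with $f_i,g_{ij}\in K[\x,\y]$, where $\y$ denotes the last indeterminate (the one to be eliminated). Since $\pi(S)=\bigcup_{i=1}^s\pi(T_i)$ and the family of $K$-semialgebraic subsets of $R^n$ is closed under finite unions, it will be enough to show that $\pi(T_i)$ is $K$-semialgebraic for each fixed $i$; hence I will assume $S=T_1$ and drop the index.

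Next, I will regard $f:=f_1$ and $g_j:=g_{1j}$ as polynomials in $\y$ with coefficients in $R[\x]$. The classical proof of Tarski--Seidenberg produces, via a finite sequence of purely algebraic operations on these coefficients --- taking coefficients with respect to $\y$, differentiating with respect to $\y$, forming sums and products, and computing subresultants and resultants of two polynomials --- a Boolean combination of sign conditions on finitely many polynomials $h_1,\ldots,h_N\in R[\x]$ that is equivalent, for each $x\in R^n$, to the existence of $y\in R$ with $f(x,y)=0$ and $g_j(x,y)>0$ for all $j\in\{1,\ldots,r_1\}$. The cases $f\equiv0$ and $r_1=0$ will be handled by straightforward adaptations of the same procedure. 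All these operations preserve membership in the subring $K[\x]\subset R[\x]$, so in fact $h_1,\ldots,h_N\in K[\x]$; hence $\pi(S)$ is described by a Boolean combination of sign conditions on polynomials of $K[\x]$, and is therefore $K$-semialgebraic.

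The main obstacle will be the tedious bookkeeping needed to verify that every polynomial produced by the sign-determination procedure indeed has coefficients in $K$. Equivalently, and more conceptually, I can invoke quantifier elimination for the theory of real closed ordered fields in the language of ordered rings (see, e.g., \cite[Ch.5]{bcr}): the formula $\exists y\,\phi(x,y)$ that defines $\pi(S)$ in $R^n$, where $\phi$ is quantifier-free with parameters in $K$, is equivalent over $R$ to a quantifier-free formula $\psi(x)$ whose parameters are $\Z$-polynomial expressions in those of $\phi$, hence again in $K$; this would exhibit $\pi(S)$ as a $K$-semialgebraic subset of $R^n$ in one stroke.
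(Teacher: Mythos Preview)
Your proposal is correct and takes essentially the same approach as the paper: reduce to a basic $K$-semialgebraic set via Lemma~\ref{form}, then invoke the fact that the Tarski--Seidenberg elimination procedure produces a Boolean combination of sign conditions on polynomials whose coefficients lie in the same subfield $K$. The paper dispatches this second step in one line by citing \cite[Cor.~1.4.7]{bcr}, which already records that the output polynomials have coefficients that are integer polynomial expressions in the input coefficients; your more explicit tracking of the subresultant operations, or your alternative appeal to quantifier elimination with parameters in $K$, amounts to the same thing.
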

\begin{proof}
By Lemma \ref{form}, we can assume that $S$ is a basic $K$-semialgebraic set, that is, 
$$
S=\{(x,x_{n+1})\in R^n\times R=R^{n+1}: f(x,x_{n+1})=0,\ g_1(x,x_{n+1})>0,\ldots,g_m(x,x_{n+1})>0\}
$$
for some $m\in\N^*$ and $f,g_1,\ldots,g_m\in K[\x,\x_{n+1}]$. By \cite[Cor.1.4.7]{bcr}, there exists a Boolean combination ${\mathcal B}(\x)$ of polynomial equalities and inequalities in the indeterminates $\x=(\x_1,\ldots,\x_n)$ and coefficients in $K$ such that, for each $x\in R^n$, the system $f(x,\x_{n+1})=0,g_j(x,\x_{n+1})>0$ for $j=1,\ldots,m$ has a solution $x_{n+1}\in R$ if and only if ${\mathcal B}(x)$ holds in $R$. As the set of all $x\in R^n$ satisfying ${\mathcal B}(x)$ is $K$-semialgebraic, the result is proved.
\end{proof}

\begin{defn}
A \emph{first-order $K$-formula of the language of ordered fields with parameters in $R$} is a formula written with a finite number of conjunctions, disjunctions, negations, and universal or existential quantifiers on variables, starting from atomic $K$-formulas which are formulas of the kind $f(x)=0$ or $g(x)>0$, where $f$ and $g$ are polynomials in $K[\x]$. The free variables of a $K$-formula are those variables of the polynomials appearing in the formula, which are not quantified. $\sqbullet$
\end{defn}

By the very definition, each $K$-semialgebraic set is described by quantifier free first-order $K$-formulas of the language of ordered fields with parameters in $R$. The properties of stabi\-lity of $K$-semialgebraic sets under taking finite intersections, finite unions, complements and projections can be expressed as described in the next result, which is an adaptation of \cite[Prop.2.2.4]{bcr} (see also \cite[\S1]{jrs}).

\begin{lem}\label{firstorder}
Let $\Phi(\x)$ be a first-order $K$-formula of the language of ordered fields with parame\-ters in $R$ and with free variables $\x=(\x_1,\ldots,\x_n)$. Then $\{x\in R^n:\Phi(x)\}$ is a $K$-semialgebraic subset of $R^n$.
\end{lem}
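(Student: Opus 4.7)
The plan is to argue by structural induction on the first-order $K$-formula $\Phi$, using the closure properties of the family of $K$-semialgebraic subsets of $R^n$ together with the projection theorem (Theorem \ref{projsa}) as the only nontrivial input. This mirrors the standard derivation in the $R$-semialgebraic setting (as in \cite[Prop.2.2.4]{bcr}), adapted to the subfield $K$.

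First, I would handle the base case: if $\Phi(\x)$ is an atomic $K$-formula, i.e.\ of the form $f(x)=0$ or $g(x)>0$ with $f,g\in K[\x]$, then $\{x\in R^n:\Phi(x)\}$ is $K$-semialgebraic directly from the definition. Next, for the propositional connectives, I would use that, if $\Phi_1$ and $\Phi_2$ are first-order $K$-formulas with free variables among $\x=(\x_1,\ldots,\x_n)$ such that the solution sets $S_1,S_2\subset R^n$ are $K$-semialgebraic, then the solution sets of $\Phi_1\wedge\Phi_2$, $\Phi_1\vee\Phi_2$ and $\neg\Phi_1$ are respectively $S_1\cap S_2$, $S_1\cup S_2$ and $R^n\setminus S_1$, all of which are $K$-semialgebraic because the family of $K$-semialgebraic subsets of $R^n$ is by definition a Boolean algebra.

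The key step concerns the quantifiers. Suppose $\Phi(\x)=(\exists\x_{n+1})\,\Psi(\x,\x_{n+1})$, where, by the inductive hypothesis applied to $\Psi$ (viewed as a formula with free variables $(\x,\x_{n+1})$ ranging in $R^{n+1}$), the solution set $T:=\{(x,x_{n+1})\in R^{n+1}:\Psi(x,x_{n+1})\}$ is a $K$-semialgebraic subset of $R^{n+1}$. Then $\{x\in R^n:\Phi(x)\}=\pi(T)$, where $\pi:R^{n+1}\to R^n$ is the projection onto the first $n$ coordinates, and hence it is $K$-semialgebraic by Theorem \ref{projsa}. The universal quantifier case follows from this and from the propositional case, by rewriting $(\forall\x_{n+1})\,\Psi$ as $\neg\bigl((\exists\x_{n+1})\,\neg\Psi\bigr)$. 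An elementary induction on the number of logical symbols in $\Phi$ then yields the statement.

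The only nontrivial point is the treatment of the existential quantifier, which rests on Theorem \ref{projsa}; no further obstacle arises, since all the other steps merely invoke the definition of $K$-semialgebraic set.
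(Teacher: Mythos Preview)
Your proposal is correct and follows essentially the same approach as the paper's own proof: structural induction on the formula, handling atomic formulas and Boolean connectives by the definition of $K$-semialgebraic sets, reducing the existential quantifier to the projection theorem (Theorem \ref{projsa}), and treating the universal quantifier via $\neg\exists\neg$.
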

\begin{proof}
One can proceed by induction on the number of steps used to construct the $K$-formula, starting from atomic $K$-formulas. As the family of $K$-semialgebraic subsets of $R^n$ are stable under finite intersections, finite unions and complements, the cases in which $\Phi(\x)$ is a conjunction or a disjunction or a negation are trivial. If $\Phi(\x)$ is of the form $\exists\x_{n+1}\ \Psi(\x,\x_{n+1})$ and the set $S:=\{(x,x_{n+1})\in R^{n+1}:\Psi(x,x_{n+1})\}$ is $K$-semialgebraic, then $\{x\in R^n:\Phi(x)\}$ is the projection of $S$ onto the first $n$ coordinates, which is $K$-semialgebraic by Theorem \ref{projsa}. Finally, the case of universal quantifier reduces to that of existential quantifier, because ``$\forall\x_{n+1}\ldots$'' is equivalent to ``not $\exists\x_{n+1}$ not$\ldots$''.
\end{proof}

Let us give the definition of $K$-semialgebraic map.

\begin{defn}
We say that a map $f:S\to T$ between $K$-semialgebraic sets $S\subset R^n$ and $T\subset R^m$ is \emph{$K$-semialgebraic} if its graph is a $K$-semialgebraic subset of $R^{n+m}=R^n\times R^m$. $\sqbullet$
\end{defn}

We adapt the first part of \cite[Prop.2.2.7]{bcr} to obtain the following result.

\begin{prop}\label{prop625}
Let $S\subset R^n$ and $T\subset R^m$ be $K$-semialgebraic sets, and let $f:S\to T$ be a $K$-semialgebraic map. If $A\subset R^n$ is a $K$-semialgebraic set contained in $S$, then $f(A)\subset R^m$ is a $K$-semialgebraic set.
\end{prop}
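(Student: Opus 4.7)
The plan is to mimic the classical argument for semialgebraic maps (\cite[Prop.2.2.7]{bcr}), exploiting the fact that the foundational stability properties of $K$-semialgebraic sets have already been established: closure under finite Boolean combinations (by definition), under projection (Theorem \ref{projsa}), and under formation of sets defined by first-order $K$-formulas (Lemma \ref{firstorder}).

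The idea is to express the image $f(A)$ as the projection of a $K$-semialgebraic subset of $R^{n+m}$. Concretely, letting $\Gamma:=\{(x,y)\in R^n\times R^m:(x,y)\in\mathrm{graph}(f)\}\subset R^{n+m}$, which is $K$-semialgebraic by hypothesis, and observing that $A\times R^m\subset R^{n+m}$ is $K$-semialgebraic because $A$ is (the latter product is the intersection of the preimages of $A$ and $R^m$ under the canonical projections, and such preimages are cut out by the same polynomial equalities and inequalities that define $A$ and $R^m$), the intersection
\[
B:=\Gamma\cap(A\times R^m)\subset R^{n+m}
\]
is $K$-semialgebraic by the Boolean closure property. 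Then $f(A)=\rho(B)$, where $\rho:R^{n+m}\to R^m$ is the projection onto the last $m$ coordinates.

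To conclude, I would invoke Theorem \ref{projsa} $n$ times, projecting away one of the first $n$ coordinates at each step, to deduce that $\rho(B)=f(A)\subset R^m$ is $K$-semialgebraic. Alternatively, and more compactly, one can observe that
\[
f(A)=\{y\in R^m:\exists\,x_1,\ldots,x_n\in R,\;(x_1,\ldots,x_n)\in A\;\text{and}\;((x_1,\ldots,x_n),y)\in\Gamma\},
\]
which is the set of $y\in R^m$ satisfying a first-order $K$-formula with parameters in $R$ (whose atomic subformulas are the polynomial equalities and inequalities defining $A$ and $\Gamma$, combined by Boolean connectives and a block of $n$ existential quantifiers). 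Lemma \ref{firstorder} then immediately yields that $f(A)$ is $K$-semialgebraic.

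There is no serious obstacle here: all the work has been done in setting up Theorem \ref{projsa} and Lemma \ref{firstorder}. The only minor point to be careful about is the verification that $A\times R^m$ is $K$-semialgebraic; this is automatic from the Boolean representation of $A$ by inserting the additional coordinates as dummy variables, but it is worth mentioning explicitly so that the reduction to projection (or to Lemma \ref{firstorder}) is clean. The second formulation via first-order $K$-formulas is preferable because it handles the iterated projection in one shot.
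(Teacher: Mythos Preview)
Your proposal is correct and follows essentially the same approach as the paper: form $B=\Gamma\cap(A\times R^m)$ and project onto $R^m$, invoking Theorem \ref{projsa}. The paper's proof is just a terse one-liner of exactly this argument; your additional remarks about iterating the projection or using Lemma \ref{firstorder} are fine but not needed.
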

\begin{proof}
Let $\pi_2:R^{n+m}=R^n\times R^m\to R^m$ be the projection $\pi_2(x,y):=y$ and let $\Gamma\subset R^{n+m}$ be the graph of $f$. As $f(A)=\pi_2(\Gamma\cap(A\times R^m))$, the result follows from Theorem \ref{projsa}.
\end{proof}

If $S$ is a subset of $R^n$ and $x=(x_1,\ldots,x_n)$ is a vector of $R^n$, we denote $\cl_{R^n}(S)$ the (Euclidean) closure of $S$ in $R^n$ and we set $\|x\|:=\sqrt{x_1^2+\cdots+x_n^2}$.

We adapt \cite[Prop.2.2.2]{bcr} to obtain the following result.

\begin{prop}\label{clos}
If $S\subset R^n$ is $K$-semialgebraic, then $\cl_{R^n}(S)\subset R^n$ is also $K$-semialgebraic. 
\end{prop}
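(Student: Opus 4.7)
The plan is to describe $\cl_{R^n}(S)$ by a first-order $K$-formula of the language of ordered fields with parameters in $R$, and then invoke Lemma \ref{firstorder} to conclude that $\cl_{R^n}(S)$ is $K$-semialgebraic. This follows the strategy of the classical proof in \cite[Prop.2.2.2]{bcr}, which carries over to the $K$-semialgebraic setting because Lemma \ref{firstorder} (the $K$-version of Tarski-Seidenberg) is available.

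Concretely, first I would observe that, since $S\subset R^n$ is $K$-semialgebraic, Lemma \ref{form} guarantees a description of $S$ as a finite union of basic $K$-semialgebraic sets of the form $\{x\in R^n:f(x)=0,\,g_1(x)>0,\ldots,g_m(x)>0\}$ with $f,g_1,\ldots,g_m\in K[\x]$. This description is a quantifier-free first-order $K$-formula $\Psi(\y)$ with free variables $\y=(\y_1,\ldots,\y_n)$, such that $S=\{y\in R^n:\Psi(y)\}$. Next, I would write down the standard topological characterization of the closure by means of the Euclidean distance:
$$
\cl_{R^n}(S)=\Big\{x\in R^n:\forall\varepsilon\in R,\;\big(\varepsilon>0\Longrightarrow\exists y\in R^n,\;\Psi(y)\wedge\textstyle\sum_{i=1}^n(x_i-y_i)^2<\varepsilon^2\big)\Big\}.
$$
The condition inside the outer braces is a first-order $K$-formula $\Phi(\x)$ with free variables $\x=(\x_1,\ldots,\x_n)$: indeed, the polynomials $\sum_{i=1}^n(\x_i-\y_i)^2-\varepsilon^2$ and $\varepsilon$ belong to $K[\x,\y,\varepsilon]$ and $K[\varepsilon]$ respectively, and $\Psi(\y)$ is already a first-order $K$-formula, so the whole formula $\Phi(\x)$ is built from atomic $K$-formulas using conjunctions, disjunctions, negations (hidden in the implication) and the quantifiers $\forall\varepsilon$ and $\exists\y_1,\ldots,\exists\y_n$.

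Finally, applying Lemma \ref{firstorder} to $\Phi(\x)$, we conclude that $\cl_{R^n}(S)=\{x\in R^n:\Phi(x)\}$ is a $K$-semialgebraic subset of $R^n$. The only subtle point is verifying that the standard metric characterization of the closure is still valid in the real closed field $R$ equipped with its Euclidean topology; this holds because open balls with respect to $\|\cdot\|$ form a basis of the Euclidean topology of $R^n$, a fact that is purely topological and independent of whether $R=\mathbb{R}$ or not. No step is really an obstacle: the entire argument reduces to applying the Tarski-Seidenberg-type result Lemma \ref{firstorder}, which in this paper plays the role that the usual Tarski-Seidenberg principle plays in \cite{bcr}.
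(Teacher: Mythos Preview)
Your proof is correct and takes essentially the same approach as the paper: both use the characterization of $\cl_{R^n}(S)$ via ``for every $\varepsilon>0$ there is a point of $S$ at distance $<\varepsilon$,'' whose defining data lie in $K[\x]$. The only difference is presentational: you invoke Lemma~\ref{firstorder} directly on the quantified formula, whereas the paper unwinds the two quantifiers by hand into explicit projections (Theorem~\ref{projsa}) and complements, writing $\cl_{R^n}(S)=R^n\setminus\pi_2(H\setminus\pi_1(V))$ for suitable $K$-semialgebraic $V\subset R^{2n+1}$ and $H\subset R^{n+1}$.
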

\begin{proof}
Let $\pi_1:R^{2n+1}=R^n\times R^n \times R\to R^{n+1}=R^n\times R$ and $\pi_2:R^{n+1}=R^n\times R\to R^n$ be 
the projections $\pi_1(x,y,t):=(x,t)$ and $\pi_2(x,t):=x$. Define the $K$-semialgebraic sets $V\subset R^{2n+1}$ and $H\subset R^{n+1}$ as follows:
$$
V:=\{(x,y,t)\in R^{2n+1}:t>0,y\in S,\|y-x\|^2<t\}
$$
and
$$
H:=\{(x,t)\in R^{n+1}:t>0\}.
$$
It holds $\cl_{R^n}(S)=R^n\setminus(\pi_2(H\setminus\pi_1(V)))$, so $\cl_{R^n}(S)\subset R^n$ is $K$-semialgebraic by Proposition \ref{prop625}, as required.
\end{proof}

Let us introduce the concept of $K$-semialgebraically connected $K$-semialgebraic set.

\begin{defn}
Let $S\subset R^n$ be a $K$-semialgebraic set. We say that $S$ is \emph{$K$-semialgebraically connected} if there do not exist two $K$-semialgebraic sets $C_1\subset R^n$ and $C_2\subset R^n$ such that $C_1$ and $C_2$ are disjoint proper closed subsets of $S$ and $C_1\cup C_2=S$. $\sqbullet$ 
\end{defn}

If $K=R$, then $K$-semialgebraically connected $K$-semialgebraic sets coincide with the usual semialgebraically connected semialgebraic sets (see \cite[Def.2.4.2]{bcr}). If $S\subset R^n$ is a usual semialgebraic set, then \cite[Thm.2.4.4]{bcr} assures that $S$ is the disjoint union of finitely many semialgebraically connected semialgebraic sets, which are both closed and open in $S$ and are uniquely determined by $S$. They are called \emph{semialgebraically connected components of $S$}. If in addition $S\subset R^n$ is $K$-semialgebraic, then the semialgebraically connected components of $S$ are also $K$-semialgebraic by \cite[Main Thm, p.122]{jrs} (see also \cite[pp.168-169]{abr}). This result can be completed as follows.

\begin{prop}\label{K-semialg-connected}
Let $S\subset R^n$ be a $K$-semialgebraic set. We have:
\begin{itemize}
\item[$(\mr{i})$] Let $S_1,\ldots,S_\ell$ be the $R$-semialgebraically connected components of the set $S\subset R^n$, viewed as a usual $R$-semialgebraic set. Then each set $S_i\subset R^n$ is $K$-semialgebraic and $K$-semi\-algebraically connected. We say that $S_1,\ldots,S_\ell$ are the \emph{$K$-semialgebraically conne\-cted components of $S\subset R^n$}.
\item[$(\mr{i}')$] The $R$-semialgebraically connected components and the $K$-semialgebraically connected components of $S\subset R^n$ coincide.
\item[$(\mr{ii})$] $S\subset R^n$ is $R$-semialgebraically connected if and only if it is $K$-semialgebraically connected.
\end{itemize}
\end{prop}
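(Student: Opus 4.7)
The plan is to take the cited result from \cite{jrs} as the main input and then deduce the three assertions by elementary topological arguments. Specifically, the Main Theorem of \cite{jrs} ensures that the (usual) $R$-semialgebraically connected components $S_1,\ldots,S_\ell$ of a $K$-semialgebraic set $S\subset R^n$ are themselves $K$-semialgebraic. Combined with the tautological observation that every $K$-semialgebraic subset of $R^n$ is also $R$-semialgebraic, everything else reduces to standard manipulations with the defining property of semialgebraic connectedness. I would prove $(\mr{i})$ and $(\mr{ii})$ first, since these form the substance of the proposition, and then treat $(\mr{i}')$ as the identification of two a priori different descriptions of the components.

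For $(\mr{i})$, the $K$-semialgebraicity of each $S_i$ is the content of the JRS result mentioned above. To see that each $S_i$ is $K$-semialgebraically connected, I would argue by contradiction: if $S_i=C_1\cup C_2$ with $C_1,C_2$ disjoint proper $K$-semialgebraic closed subsets of $S_i$, then $C_1$ and $C_2$ are $R$-semialgebraic (every $K$-semialgebraic set is $R$-semialgebraic), disjoint, proper and closed in $S_i$, contradicting the fact that $S_i$ is an $R$-semialgebraically connected component of $S$ and in particular $R$-semialgebraically connected by \cite[Thm.2.4.4]{bcr}.

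For $(\mr{ii})$, the "only if" direction is immediate from the same observation that $K$-semialgebraic implies $R$-semialgebraic: any candidate decomposition of $S$ witnessing $K$-semialgebraic disconnectedness would also witness $R$-semialgebraic disconnectedness. For the "if" direction, I would assume $S$ is $K$-semialgebraically connected and prove that $\ell=1$; if $\ell\geq 2$, then $(\mr{i})$ together with \cite[Thm.2.4.4]{bcr} provides the decomposition $S=S_1\sqcup(S_2\cup\cdots\cup S_\ell)$ in which both pieces are $K$-semialgebraic, proper, non-empty, disjoint, and closed in $S$ (each $S_i$ is both closed and open in $S$ with respect to the Euclidean topology restricted to $S$), contradicting the $K$-semialgebraic connectedness of $S$.

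Finally, $(\mr{i}')$ is essentially a matter of comparing definitions: the pieces $S_i$ of $(\mr{i})$ have just been called the $K$-semialgebraically connected components, and they also coincide, by construction, with the $R$-semialgebraically connected components. I do not expect any real obstacle in the proof; the only non-trivial input is the cited theorem from \cite{jrs}, and once it is invoked the remaining arguments are purely formal consequences of the inclusion of the family of $K$-semialgebraic sets inside the family of $R$-semialgebraic sets.
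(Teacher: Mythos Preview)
Your proposal is correct and follows essentially the same approach as the paper: invoke \cite[Main Thm, p.122]{jrs} for the $K$-semialgebraicity of the $S_i$, use the trivial inclusion of $K$-semialgebraic sets among $R$-semialgebraic sets to get $R$-connected $\Rightarrow$ $K$-connected, and for the converse in $(\mr{ii})$ split $S$ along its $R$-components (now known to be $K$-semialgebraic) to contradict $K$-connectedness. The paper's proof is slightly terser but the logical content is identical.
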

\begin{proof}
$(\mr{i})$ As we said, each $S_i\subset R^n$ is $K$-semialgebraic by \cite[Main Thm, p.122]{jrs}. As $S_i\subset R^n$ is $R$-semialgebraically connected, it is also $K$-semialgebraically connected.

$(\mr{i}')$ follows immediately from $(\mr{i})$.

$(\mr{ii})$ The `only if' implication is evident. Let us prove the `if' implication. Suppose that $S\subset R^n$ is $K$-semialgebraically connected but not $R$-semialgebraically connected. Then $S\subset R^n$ has finitely many and at least two $R$-semialgebraically connected components, which are closed in $S$ and $K$-semialgebraic by $(\mr{i})$. Thus, $S\subset R^n$ is not $K$-semialgebraically connected, which is a contradi\-ction.
\end{proof}

\begin{remark}
Let $S\subset R^n$ be a $K$-semialgebraic set and let $S_1,\ldots,S_\ell$ be its $K$-semialgebraically connected components. The $K$-semialgebraically connectedness of each subset $S_i$ of $S$ is maxi\-mal in the following sense: if $C\subset R^n$ is a $K$-semialgebraically connected $K$-semialgebraic set such that $C\subset S$ and $C\cap S_i\neq\varnothing$ for some $i\in\{1,\ldots,\ell\}$, then $C\subset S_i$. Indeed, if $C\not\subset S_i$, then the sets $C_1:=C\cap S_i\subset R^n$ and $C_2:=C\cap\bigcup_{j\in\{1,\ldots,\ell\}\setminus\{i\}}S_j\subset R^n$ would be $K$-semialgebraic, disjoint, properly contained and closed in $C$ such that $C_1\cup C_2=C$. This is a contradiction, because $C\subset R^n$ is $K$-semialgebraically connected. $\sqbullet$
\end{remark}

In the next result, we see that the $K$-dimension and the $R$-dimension of a $K$-semialgebraic set in the sense of Definition \ref{def:K-dim} coincide.

\begin{thm}\label{edimsa}
If $S\subset R^n$ is a $K$-semialgebraic set, then $\dim_K(S)=\dim_R(S)$.
\end{thm}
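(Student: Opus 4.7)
The inequality $\dim_R(S) \leq \dim_K(S)$ is already known by Remark \ref{rem:dimLleqdimK}, so the strategy is to establish the reverse inequality $\dim_K(S) \leq \dim_R(S)$. The key is to show that the usual $R$-Zariski closure $Y := \zcl_{R^n}(S)$ is a $\kr$-algebraic subset of $R^n$: once this is done, Lemma \ref{rkdim} gives $\dim(Y) = \dim_K(Y)$, and since $\II_K(Y) = \II_R(Y) \cap K[\x] = \II_R(S) \cap K[\x] = \II_K(S)$, one concludes $\dim_R(S) = \dim(Y) = \dim_K(Y) = \dim_K(S)$.

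To prove that $Y$ is $\kr$-algebraic, I would use Lemma \ref{form} to write $S$ as a finite union of basic $K$-semialgebraic sets, so that $S = \{x \in R^n : \Phi(x)\}$ for a quantifier-free $K$-formula $\Phi$. Set $S_\kr := \{x \in \kr^n : \Phi(x)\}$, which equals $S \cap \kr^n$ (since $\Phi$ is quantifier-free and the ordering of $\kr$ is inherited from that of $R$), and let $Y_\kr := \zcl_{\kr^n}(S_\kr)$, an algebraic subset of $\kr^n$. By the extension-of-coefficients procedure for semialgebraic sets reviewed in Appendix \ref{appendix}, we have $(S_\kr)_R = S$. By Proposition \ref{extension-zar}, $(Y_\kr)_R$ is a $\kr$-algebraic subset of $R^n$ with $\II_R((Y_\kr)_R) = \II_\kr(Y_\kr) R[\x] = \II_\kr(S_\kr) R[\x]$. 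Since $(Y_\kr)_R \supset (S_\kr)_R = S$, one obtains $Y \subset (Y_\kr)_R$.

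For the reverse inclusion $(Y_\kr)_R \subset Y$, it suffices to prove $\II_R(S) \subset \II_\kr(S_\kr) R[\x]$, because then $Y = \ZZ_R(\II_R(S)) \supset \ZZ_R(\II_\kr(S_\kr) R[\x]) = (Y_\kr)_R$. Fix a basis $\{u_j\}_{j \in J}$ of $R$ as a $\kr$-vector space. Given $f \in \II_R(S)$, Lemma \ref{k0} (applied with $L=R$ and ground field $\kr$) provides a unique decomposition $f = \sum_{j \in J} u_j f_j$ with $f_j \in \kr[\x]$ and only finitely many $f_j$ nonzero. For each $x \in S_\kr \subset \kr^n$, every $f_j(x)$ lies in $\kr$, and the relation $0 = f(x) = \sum_j u_j f_j(x)$ together with the $\kr$-linear independence of $\{u_j\}$ forces $f_j(x) = 0$ for all $j$. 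Thus each $f_j \in \II_\kr(S_\kr)$, whence $f = \sum_j u_j f_j \in \II_\kr(S_\kr) R[\x]$, as required. The main delicate point I anticipate is the extension-of-coefficients identity $(S_\kr)_R = S$ for semialgebraic sets, which rests on Tarski-Seidenberg and is the reason the argument must pass through a specific quantifier-free description of $S$ rather than the set $S$ itself.
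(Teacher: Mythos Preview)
Your proposal is correct and follows essentially the same route as the paper: both pass to the real closure $\kr$, use extension of coefficients to identify $S=(S\cap(\kr)^n)_R$, show that $Y=\zcl_{R^n}(S)$ coincides with the extension $(Y_\kr)_R$ of $Y_\kr=\zcl_{(\kr)^n}(S\cap(\kr)^n)$ and is therefore $\kr$-algebraic, and then invoke Lemma~\ref{rkdim}. Your version is in fact slightly more direct in two places: you establish $(Y_\kr)_R\subset Y$ by a hands-on Lemma~\ref{k0} argument (the paper cites Corollary~\ref{LK-zar} instead), and you apply Lemma~\ref{rkdim} straight to $Y$ rather than passing through $P=\zcl_{R^n}^K(Y)$ and Theorem~\ref{thm:gc} as the paper does.
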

\begin{proof}
Let $\kr\subset R$ be the real closure of $K$ and let $T:=S\cap(\kr)^n\subset(\kr)^n$. Observe that $S=T_R$ is the extension of coefficients of $T$ to $R$. Define $X:=\zcl_{(\kr)^n}(T)\subset(\ol{K}^r)^n$ and $Y:=\zcl_{R^n}(T)\subset R^n$. By Corollary \ref{LK-zar} and Proposition \ref{extension-zar}$(\mr{i})$, we have $Y=\zcl_{R^n}(X)=X_R$. In~particular, $Y\subset R^n$ is $\kr$-algebraic. As $T\subset S=T_R\subset X_R=Y=\zcl_{R^n}(T)$, we deduce that $\zcl_{R^n}(S)=Y$.

Set $P:=\zcl_{R^n}^K(Y)\subset R^n$. Observe that $P=\zcl_{R^n}^K(\zcl_{R^n}(S))=\zcl_{R^n}^K(S)$. By Theorem \ref{thm:gc}$(\mr{i}')(\mr{iv})$, we have $\dim_R(P)=\dim_R(Y)$. By Lemma \ref{rkdim}, we have
$$
\dim_R(S)=\dim_R(Y)=\dim_R(P)=\dim_K(P)=\dim_K(S),
$$ 
as required.
\end{proof}

The previous proposition justifies the following notation, which we will use throughout the rest of this section.

\begin{notation}\label{enotsa}
If $S\subset R^n$ is a $K$-semialgebraic set, we simply say that $S$ has \emph{dimension $\dim(S)$}, where $\dim(S):=\dim_R(S)=\dim_K(S)$.
\end{notation}

\subsubsection{$K$-algebraic partial manifolds}

Taking inspiration from Whitney's seminal paper \cite{wh}, we propose the following definition.

\begin{defn}\label{def:Q-alg-partial-subman}
Let $M$ be a subset of $R^n$. We say that $M$ is a \emph{$K$-algebraic partial submani\-fold of $R^n$} or $M\subset R^n$ is a \emph{$K$-algebraic partial manifold} if $M$ is a $K$-semialgebraic subset of~$R^n$ and either $M$ is open in $R^n$ or there exists $m\in\{0,\ldots,n-1\}$ with the following property: for each $p\in M$, there exist polynomials $f_1,\ldots,f_{n-m}\in\II_K(M)$ and an open neighborhood $U$ of $p$ in $R^n$ such that the gradients $\nabla f_1(p),\ldots,\nabla f_{n-m}(p)$ are linearly independent in $R^n$ and $M\cap U=\ZZ_R(f_1,\ldots,f_{n-m})\cap U$. $\sqbullet$
\end{defn}

\begin{remarks}\label{rem6314}
$(\mr{i})$ Let $M\subset R^n$ be a $K$-algebraic partial manifold and let $M_K:=\zcl_{R^n}^K(M)$. Suppose that $M$ is not an open subset of $R^n$ and let $m\in\{0,\ldots,n-1\}$ be as in Definition \ref{def:Q-alg-partial-subman}. Let us prove that $M$ is open in $\Reg^K(M_K)$ and $m=\dim(M)$, where $\dim(M)$ is understood in the sense of Notation \ref{enotsa}.

Let $p\in M$, let $f_1,\ldots,f_{n-m}\in\II_K(M)=\II_K(M_K)$ and let $U$ be an open neighborhood of $p$ in $R^n$ such that the gradients $\nabla f_1(p),\ldots,\nabla f_{n-m}(p)$ are linearly independent in $R^n$ and $M\cap U=\ZZ_R(f_1,\ldots,f_{n-m})\cap U$. We have $M\cap U\subset M_K\cap U\subset\ZZ_R(f_1,\ldots,f_{n-m})\cap U=M\cap U$, so $M\cap U=M_K\cap U=\ZZ_R(f_1,\ldots,f_{n-m})\cap U$. As $p\in M_K\cap U\subset M$, it follows that $M$ is open in $M_K$, because it is a neighborhood of each of its points. By Definition \ref{E|K-regular} and equivalence $(\mr{i})\Longleftrightarrow(\mr{iii}')$ of Proposition \ref{30}, we deduce that $p\in\Reg^K(M_K,m)$, so $M\subset\Reg^K(M_K,m)$. As $M$ is $K$-Zariski dense in $M_K\subset R^n$ and $M\subset\Reg^K(M_K,m)$, we have by Corollary \ref{30'}$(\mr{ii})$ that $m=\dim(M_K)=\dim_K(M)=\dim(M)$. We have thus proved that $m=\dim(M)$ and also that $M$ is an open subset of $\Reg^K(M_K)$.

$(\mr{ii})$ Let $M$ be a $K$-semialgebraic subset of $R^n$ and let $M_K:=\zcl_{R^n}^K(M)$. A consequence of the previous item is that the following assertions are equivalent:
\begin{itemize}
\item[$\bullet$] $M\subset R^n$ is a $K$-algebraic partial manifold.
\item[$\bullet$] $M$ is an open subset of the $K$-nonsingular locus $\Reg^K(M_K)$ of $M_K$.
\item[$\bullet$] There exists a $K$-algebraic set $X\subset R^n$ such that $M$ is an open subset of $\Reg^K(X)$.
\end{itemize}

$(\mr{iii})$ By the Implicit Function Theorem \cite[Cor.2.9.8]{bcr}, a $K$-algebraic partial submanifold $M$ of $R^n$ of dimension $m$ is also a Nash submanifold of $R^n$ of dimension $m$ in the usual sense of \cite[Def.2.9.9]{bcr}. Let $p\in M$, let $M_K:=\zcl_{R^n}^K(M)$, let $f_1,\ldots,f_{n-m}\in\II_K(M)$ and let $U$ be an open neighborhood of $p$ in $R^n$ such that the gradients $\nabla f_1(p),\ldots,\nabla f_{n-m}(p)$ are linearly independent in $R^n$ and $M\cap U=M_K\cap U=\ZZ_R(f_1,\ldots,f_{n-m})\cap U$. By Definition \ref{ek-Zar-tang} and Remarks \ref{rem528}$(\mr{i})$, we know that
$$
T^K_p(M_K)=\{v\in R^n:\langle\nabla f_1(p),v\rangle=0,\ldots,\langle\nabla f_{n-m}(p),v\rangle=0\},
$$
so
$$
T^K_p(M_K)=T_p(M),
$$
where $T_p(M)$ is the usual tangent space at $p$ of the Nash submanifold $M$ of $R^n$.~$\sqbullet$
\end{remarks}

\begin{remark}\label{rem6316}
Each $K$-algebraic set $X\subset R^n$ can be written as a disjoint finite union of $K$-algebraic partial submanifolds of $R^n$. Let $d:=\dim(X)$. By Corollary \ref{30'}$(\mr{iii})$, we know that the $K$-singular locus $\Sing^K(X)$ of $X$ is $K$-algebraic in $R^n$ and of dimension $<d$, so we can define the family $\Sing^K_X:=\{M_i\}_{i=0}^{d'}$ of $K$-algebraic partial submanifolds of $R^n$ inductively as follows: $M'_0:=X$, $M'_i:=\Sing^K(M'_{i-1})$ for $i\geq1$, $M_i:=M'_i\setminus M'_{i+1}=\Reg^K(M'_i)$ for $i\in\{0,\ldots,d\}$ and $d'$ is the smallest index $i\in\{0,\ldots,d\}$ such that $M'_{i+1}=\varnothing$.

For example, if $p:=\x_2^2-\x_3\x_1^2\in K[\x_1,\x_2,\x_3]$ and $X$ is the Whitney umbrella $\ZZ_R(p)\subset R^3$, then Corollary \ref{cor:kpol} implies that $\Sing^K_X=\{x\in X:\nabla p(x)=0\}$, so $\Sing^K(X)$ coincides with the $x_3$-axis $Z$ of $R^3$ and $\Sing^K_X=\{X\setminus Z,Z\}$. $\sqbullet$
\end{remark}

\subsubsection{$K$-algebraic stratifications and Whitney's regularity} Let us introduce the concepts of $K$-algebraic stratification and Whitney regular $K$-algebraic stratification.

\begin{defn}\label{def6317}
Let $S\subset R^n$ be a $K$-semialgebraic set. A \emph{$K$-algebraic stratification of $S$} is a finite partition $\{M_i\}_{i\in I}$ of $S$ with the following properties:
\begin{itemize}
\item[$(\mr{i})$] Each $M_i\subset R^n$ is a $K$-semialgebraically connected $K$-algebraic partial mani\-fold.
\item[$(\mr{ii})$] If $M_j\cap\cl_{R^n}(M_i)\neq\varnothing$ for some $i,j\in I$ with $i\neq j$, then $M_j\subset\cl_{R^n}(M_i)$.
\end{itemize} 
The sets $M_i$ are called the \emph{strata} of the $K$-algebraic stratification $\{M_i\}_{i\in I}$ and, if $m:=\dim(M_i)$, then $M_i$ is said to be a \emph{$d$-stratum} of $\{M_i\}_{i\in I}$.

Given a finite family $\{S_\lambda\}_{\lambda\in\Lambda}$ of $K$-semialgebraic subsets of $R^n$ contained in $S$, we say that the $K$-algebraic stratification $\{M_i\}_{i\in I}$ is \emph{compatible with $\{S_\lambda\}_{\lambda\in\Lambda}$} if each $S_\lambda$ is the union of some strata of $\{M_i\}_{i\in I}$.

Given another $K$-algebraic stratification $\{N_q\}_{q\in Q}$ of $S\subset R^n$, we say that $\{N_q\}_{q\in Q}$ is \emph{finer than $\{M_i\}_{i\in I}$} if each stratum $M_i$ is the union of some strata $N_q$. $\sqbullet$
\end{defn}

\begin{remark}
Condition $(\mr{ii})$ in the previous definition is called \emph{frontier condition}. Such a condition can be equivalently restated as follows: ``If $M_j\cap\cl_{R^n}(M_i)\neq\varnothing$ for some $i,j\in I$ with $i\neq j$, then $M_j\subset\cl_{R^n}(M_i)$ and $\dim(M_j)<\dim(M_i)$''. Indeed, $M_j\subset\cl_{R^n}(M_i)\setminus M_i$ and $\dim(\cl_{R^n}(M_i)\setminus M_i)<\dim(M_i)$ by \cite[Prop.2.8.13]{bcr}. $\sqbullet$
\end{remark}

By Proposition \ref{K-semialg-connected}$(\mr{ii})$ and Remarks \ref{rem6314}, a $K$-algebraic stratification of a $K$-semialgebraic set $S\subset R^n$ is also a Nash stratification of the set $S\subset R^n$, viewed as a usual $R$-semialgebraic set, in the sense of \cite[Def.9.1.7]{bcr}. If $S$ is equal to a $K$-algebraic subset $X$ of $R^n$, the partition $\Sing^K_X$ defined in Remark \ref{rem6316} may not be a $K$-algebraic stratification: for example, if $X$ is the Whitney umbrella $\ZZ_R(\x_2^2-\x_3\x_1^2)\subset R^3$, then $\Sing^K_X=\{X\setminus Z,Z\}$, where $Z$ is the $x_3$-axis of $R^3$, is not a $K$-algebraic stratification, because it does not satisfy $(\mr{i})$: $X\setminus Z$ has the two $K$-semialgebraically connected components $X_+:=X\cap\{x_1>0\}$ and $X_-:=X\cap\{x_1<0\}$. Partition $\{X_+,X_-,Z\}$ is not a $K$-algebraic stratification too, because it satisfies $(\mr{i})$, but it does not satisfy $(\mr{ii})$.

\vspace{.5em}
{\textit{Fix $m\in\{0,\ldots,n\}$}. Identify the affine space $\mc{M}_{n,n}(R)$ of all $(n\times n)$-matrices with coefficients in $R$ with $R^{n^2}$ and the Grassmannian $\G_{n,m}(R)$ of all $m$-dimensional $R$-vector subspaces of $R^n$ with the $K$-algebraic subset of $R^{n^2}$ of all matrices $X$ such that $X^T=X$, $X^2=X$ and $\mr{tr}(X)=m$, that is,
$$
\G_{n,m}(R)=\big\{X\in\R^{n^2}:X^T=X,X^2=X,\mr{tr}(X)=m\big\}.
$$
This identification is done via the map that maps an element $H$ of $\G_{n,m}(R)$ onto the matrix of the orthogonal projection onto $H$. Along this section, $\PP^{n-1}(R)=\G_{n,1}(R)$ represents the $1$-dimensional $R$-vector subspaces (lines through the origin) of $R^n$. An explicit algebraic embedding of $\PP^{n-1}(R)$ in $\mc{M}_{n,n}(R)=R^{n^2}$ is
$$
x:=[x_1,\ldots,x_n]\mapsto\left(\frac{x_ix_j}{\sum_{i=1}^nx_i^2}\right)_{1\leq i,j\leq n}=\frac{x\cdot x^T}{\|x\|^2},
$$
see \cite[p.72]{bcr}.

Consider $R^n\times\G_{n,m}(R)$ as a subset of $R^{n+n^2}=R^n\times R^{n^2}$ and $R^n\times\G_{n,m}(R)\times R^n\times \PP^{n-1}(R)$ as a subset of $R^{2n+2n^2}=R^n\times R^{n^2}\times R^n\times R^{n^2}$. Denote $\cl_1$ and $\cl_2$ the (Euclidean) closure operators of $R^{n+n^2}$ and 
$R^{2n+2n^2}$, respectively. Consider the elements of $R^n$ as column vectors.

The next definition extends \cite[Def.9.7.3]{bcr} to the $R|K$-algebraic setting.

\begin{defn}\label{def:K-Whitney}
Let $M$ and $N$ be two non-empty disjoint $K$-semialgebraically connected $K$-algebraic partial submanifolds of $R^n$ such that $\dim(M)=m$ and $N\subset\cl_{R^n}(M)$. Define:
\begin{align*}
&F_a(M):=\{(x,\tau)\in R^n\times{\mathbb G}_{n,m}(R):x\in M, \tau=T_x(M)\},\\
&F_b(M,N):=\{(x,\tau,y,\alpha)\in F_a(M)\times R^n\times\PP^{n-1}(R):y\in N, \alpha=[x-y]\}.
\end{align*}
Given a point $y$ of $N$, we say that:
\begin{itemize}
\item[$(\mr{i})$] The pair \emph{$(M,N)$ satisfies condition $a$ at $y$} if, for all $\tau\in\G_{n,m}(R)$ such that $(y,\tau)\in\cl_1(F_a(M))$, we have $T_y(N)\subset\tau$.
\item[$(\mr{ii})$] The pair \emph{$(M,N)$ satisfies condition $b$ at $y$} if, for all $\tau\in\G_{n,m}(R)$ and all $\delta\in\PP^{n-1}(R)$ such that $(y,\tau,y,\delta)\in\cl_2(F_b(M,N))$, we have $\delta\subset\tau$.
\end{itemize}
Recall that $\tau\in\G_{n,m}(R)$ is an $m$-dimensional $R$-vector subspace of $R^n$ and $\delta\in\PP^{n-1}(R)$ is a $1$-dimensional $R$-vector subspace of $R^n$.
 $\sqbullet$
\end{defn}

The definition of Whitney regular $K$-algebraic stratification reads as follows.

\begin{defn}
Let $S\subset R^n$ be a $K$-semialgebraic set and let $\{M_i\}_{i\in I}$ be a $K$-algebraic stratification of $S$. We say that $\{M_i\}_{i\in I}$ is \emph{Whitney regular} if, for each $i,j\in I$ with $i\neq j$ and $\varnothing\neq M_j\subset\cl_{R^n}(M_i)$, the pair $(M_i,M_j)$ satisfies both conditions $a$ and $b$ at each point of $M_j$. $\sqbullet$
\end{defn}

\begin{remark}
Let $C:=R[\ii]$ be the algebraic closure of $R$, let $X_C$ be the complex Whitney umbrella $\ZZ_C(\x_2^2-\x_3\x_1^2)\subset C^3$ and let $Z_C$ be the $x_3$-axis of $C^3$. Consider $X_C$ and $Z_C$ as real algebraic subsets of $C^3=R^6$. Then the family $\{X_C\setminus Z_C,Z_C\}$ is a $K$-algebraic stratification of $Z_C\subset R^6$ (because $Z_C$ is semialgebraically connected as it is a plane and $X_C\setminus Z_C$ is semialgebraically connected as it is the regular part of a $C$-irreducible $C$-algebraic set), which is not Whitney regular, because the pair $(X_C\setminus Z_C,Z_C)$ does not satisfy condition~$a$ at the origin of $R^6$. Consider the complex line $L_C=\ZZ_C(\x_2,\x_3)\subset X_C$ of $C^3$. The tangent plane to the points of $L_C\setminus\{(0,0,0)\}$ is $\Pi_C=\ZZ_C(\x_3)$. Thus, $((0,0,0),\Pi_C)\in\cl_1(F_a(X_C\setminus Z_C))$, but $T_{(0,0,0)}Z_C=Z_C\not\subset\Pi_C$. $\sqbullet$
\end{remark}

The main result of this subsection is an $R|K$-version of \cite[Prop.9.1.8\;\&\;Thm.9.7.11]{bcr}.

\begin{thm}\label{thm:R|K-Whitney}
Let $S\subset R^n$ be a $K$-semialgebraic set. We have:
\begin{itemize}
\item[$(\mr{i})$] Let $\{S_\lambda\}_{\lambda\in\Lambda}$ be any (possibly empty) finite family of $K$-semialgebraic subsets of $R^n$ contained in $S$. Then there exists a $K$-algebraic stratification $\{M_i\}_{i\in I}$ of $S$ compatible with $\{S_\lambda\}_{\lambda\in\Lambda}$.
\item[$(\mr{ii})$] For each $K$-algebraic stratification $\{M_i\}_{i\in I}$ of $S$, there exists a Whitney regular $K$-algebraic stratification $\{N_q\}_{q\in Q}$ of $S$ finer than $\{M_i\}_{i\in I}$.
\end{itemize}
\end{thm}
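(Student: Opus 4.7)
The proof will closely follow the classical constructions for semialgebraic Nash stratifications in \cite[Prop.9.1.8 \& Thm.9.7.11]{bcr}, adapted so that every auxiliary set produced along the way remains $K$-semialgebraic rather than merely $R$-semialgebraic. The $K$-semialgebraic category is closed under the operations we shall need: first-order logical operations with parameters in $R$ whose atomic formulas have coefficients in $K$ (Lemma~\ref{firstorder}), Euclidean closures (Proposition~\ref{clos}), and passage to $K$-semialgebraically connected components (Proposition~\ref{K-semialg-connected}); moreover, the $K$-singular locus of a $K$-algebraic set is $K$-algebraic of strictly smaller dimension (Corollary~\ref{30'}(iii)), and $K$-algebraic partial manifolds are exactly the open subsets of $\Reg^K(X)$ for suitable $K$-algebraic sets $X$ (Remarks~\ref{rem6314}).

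For part (i), the plan is to induct on $d:=\dim(S)$, the case $d=-1$ being trivial. For $d\geq 0$, set $X:=\zcl_{R^n}^K(S)$; Theorem~\ref{edimsa} gives $\dim(X)=d$, so $R(X):=\Reg^K(X)$ is a $K$-algebraic partial manifold of dimension $d$ and $\Sing^K(X)$ is $K$-algebraic of dimension $<d$. For each $A\subseteq\Lambda$, consider the $K$-semialgebraic set
\[
T_A:=S\cap R(X)\cap\bigcap_{\lambda\in A}S_\lambda\cap\bigcap_{\lambda\notin A}(R^n\setminus S_\lambda),
\]
let $T_A^\circ$ denote its interior in $R(X)$ (a $K$-semialgebraic open subset of $R(X)$ by Lemma~\ref{firstorder}), and split each $T_A^\circ$ into its finitely many $K$-semialgebraically connected components via Proposition~\ref{K-semialg-connected}; the resulting sets $M_1,\ldots,M_k$ are open in $R(X)$, hence top-dimensional $K$-algebraic partial manifolds. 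The residue
\[
S':=(S\cap\Sing^K(X))\cup\bigcup_{A\subseteq\Lambda}(T_A\setminus T_A^\circ)
\]
is $K$-semialgebraic of dimension $<d$, disjoint from the $M_j$, and together with them partitions $S$. I would then apply the induction hypothesis to $S'$ with the enlarged finite family $\{S_\lambda\cap S'\}_{\lambda\in\Lambda}\cup\{\cl_{R^n}(M_j)\cap S'\}_{j=1}^{k}$ (all $K$-semialgebraic by Proposition~\ref{clos}). Compatibility with $\{S_\lambda\}$ is automatic; the frontier condition is verified case by case, using that (a) distinct $K$-semialgebraically connected open components of $R(X)$ cannot each meet the other's closure, (b) a top-dimensional stratum cannot meet the closure of any subset of $S\cap\Sing^K(X)$ (since $\Sing^K(X)$ is closed in $R^n$ and disjoint from $R(X)$) nor of any $T_A\setminus T_A^\circ$ (since every $p\in T_{A'}^\circ$ admits a neighborhood in $R(X)$ contained in $T_{A'}^\circ$), and (c) compatibility with the sets $\cl_{R^n}(M_j)\cap S'$ forces the required containment in the remaining case, where $M_i$ is top-dimensional and $M_j$ is lower-dimensional.

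For part (ii), given a $K$-algebraic stratification $\{M_i\}_{i\in I}$ of $S$, I would form, for each ordered pair $i\neq j$ with $\varnothing\neq M_j\subset\cl_{R^n}(M_i)$, the Whitney bad locus
\[
B_{ij}:=\{y\in M_j:\text{$(M_i,M_j)$ fails condition $a$ or $b$ at $y$}\}.
\]
By the classical Whitney regularity argument for semialgebraic Nash manifolds, as employed in the proof of \cite[Thm.9.7.11]{bcr}, $B_{ij}$ is $R$-semialgebraic of dimension strictly less than $\dim(M_j)$. The key extra verification specific to the $K$-setting is that $B_{ij}$ is in fact $K$-semialgebraic: by Remarks~\ref{rem6314}(iii), the tangent-space map $y\mapsto T_y(M_i)$ on $M_i$ is described by the gradient conditions of a finite system of generators of $\II_K(\zcl_{R^n}^K(M_i))$ in $K[\x]$, so the graph sets $F_a(M_i)$ and $F_b(M_i,M_j)$ are $K$-semialgebraic in the ambient product spaces; their Euclidean closures remain $K$-semialgebraic by Proposition~\ref{clos}; and the failure of condition $a$ or $b$ at $y$ is then expressible by a first-order $K$-formula in the free variable $y$, so Lemma~\ref{firstorder} yields that $B_{ij}$ is $K$-semialgebraic. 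Applying part (i) to $S$ with the enriched family $\{M_i\}_{i\in I}\cup\{B_{ij}\}$ produces a $K$-algebraic stratification refining $\{M_i\}_{i\in I}$ and compatible with every $B_{ij}$; iterating (each iteration strictly decreasing the maximum dimension of a stratum on which some Whitney condition can still fail) terminates after finitely many steps in the desired Whitney regular $K$-algebraic stratification finer than $\{M_i\}_{i\in I}$.

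The crux of the proof, and the only point where the $K$-setting requires a genuinely new argument rather than a verbatim translation of the arguments in \cite{bcr}, is the $K$-semialgebraicity of the Whitney bad loci $B_{ij}$ in part (ii). This is precisely where the first-order definability tools of Subsection~\ref{subsec:Whitney}---Lemma~\ref{firstorder} together with Propositions~\ref{clos} and~\ref{K-semialg-connected}---are decisive. Once this is established, the dimensional induction, the Boolean-combinatorial construction of the partitions, and the finiteness of the Whitney refinement procedure parallel the classical $R$-semialgebraic arguments of \cite{bcr} without further essential difficulty.
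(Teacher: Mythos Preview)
Your approach to part~$(\mr{i})$ is correct and close in spirit to the paper's: both proceed by induction on $\dim(S)$, carve out top-dimensional $K$-algebraic partial manifolds, and recurse on the lower-dimensional remainder with the family enlarged by the relevant closure sets to force the frontier condition. The paper packages this as Lemma~\ref{strat}, using the auxiliary Lemma~\ref{strata} to extract the top piece; your direct use of $\Reg^K(\zcl_{R^n}^K(S))$ and interiors of the $T_A$ achieves the same end.

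For part~$(\mr{ii})$ you correctly isolate the one genuinely new ingredient in the $K$-setting, namely that the Whitney bad loci are $K$-semialgebraic; this is exactly what the paper proves as Proposition~\ref{fafb}. Where your argument diverges from the paper---and where there is a gap---is in the refinement strategy. You apply part~$(\mr{i})$ globally to all of $S$ with the family $\{M_i\}\cup\{B_{ij}\}$ and then iterate, asserting that each iteration strictly decreases the maximal dimension $k$ of a stratum on which Whitney can fail. This is not justified: your part~$(\mr{i})$ construction may subdivide a stratum $M_i$ of dimension $>k$ into pieces of various dimensions even though no $B_{i'j'}$ meets $M_i$ (the construction passes through $\Reg^K(\zcl_{R^n}^K(S))$, which need not respect the individual $M_i$). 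If $N_p\subset M_i$ is such a new piece with $\dim(N_p)<\dim(M_i)$, and $N_q\subset M_j$ lies in $\cl_{R^n}(N_p)$, then Whitney regularity for $(N_p,N_q)$ does not follow from that of $(M_i,M_j)$, since $T_x(N_p)\neq T_x(M_i)$; so the maximal bad dimension need not drop.

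The paper avoids this by following \cite[Thm.~9.7.11]{bcr}: it runs a downward induction on $k=d,\ldots,0$, and at step $k$ refines only the union of the $k$-dimensional bad loci $T_j$ together with all strata of dimension $<k$, leaving the strata of dimension $>k$ untouched. Because the higher-dimensional strata are fixed, the new $k$-strata (the components of $M_j\setminus T_j$) automatically satisfy condition~$b$ with respect to every higher stratum, and the induction proceeds. Your ingredients are right; what is missing is this top-down refinement discipline that makes termination immediate.
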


To prove this theorem, we need some preparation.

The following result generalizes \cite[Prop.9.7.4\&Thm.9.7.5]{bcr}.

\begin{prop}\label{fafb}
Let $M$, $N$, $m$, $F_a(M)$ and $F_b(M,N)$ be as in Definition \ref{def:K-Whitney}, let $S_a(M,N)$ be the subset of $N$ of all points $y$ such that $(M,N)$ does not satisfy condition $a$ at $y$, and let $S_b(M,N)$ be the subset of $N$ of all points $y$ such that $(M,N)$ does not satisfy condition $b$ at $y$. We have:
\begin{itemize}
\item[$(\mr{i})$] The sets $F_a(M)\subset R^{n+n^2}$ and $F_b(M,N)\subset R^{2n+2n^2}$ are $K$-semialgebraic.
\item[$(\mr{ii})$] The sets $S_a(M,N)\subset R^n$ and $S_b(M,N)\subset R^n$ are $K$-semialgebraic, $\dim(S_a(M,N))<\dim(N)$ and $\dim(S_b(M,N))<\dim(N)$.
\end{itemize}
\end{prop}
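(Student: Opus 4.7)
The plan has three steps: establish the $K$-semialgebraicity of $F_a(M)$ and $F_b(M,N)$ by a global polynomial description of the tangent spaces, describe $S_a(M,N)$ and $S_b(M,N)$ via first-order $K$-formulas of the language of ordered fields with parameters in $R$, and reduce the dimension bounds to the classical Whitney case. To start, put $M_K:=\zcl_{R^n}^K(M)$ and $N_K:=\zcl_{R^n}^K(N)$; by Remarks \ref{rem6314}(iii) (and the Noetherianity of the $K$-Zariski topology) there exist finite systems of generators $\{g_1,\ldots,g_s\}$ and $\{h_1,\ldots,h_t\}$ of $\II_K(M_K)$ and $\II_K(N_K)$ in $K[\x]$ such that $T_x(M)=\{v\in R^n:\langle\nabla g_i(x),v\rangle=0,\ i=1,\ldots,s\}$ for every $x\in M$ and $T_y(N)=\{v\in R^n:\langle\nabla h_j(y),v\rangle=0,\ j=1,\ldots,t\}$ for every $y\in N$.

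For $(x,\tau)\in M\times\G_{n,m}(R)$, the equations $\tau\nabla g_i(x)=0$ for $i=1,\ldots,s$ combined with $\tau^T=\tau$ are equivalent to $\mr{im}(\tau)\subset T_x(M)$; since $\dim_R(\mr{im}(\tau))=\mr{tr}(\tau)=m=\dim_R(T_x(M))$, this inclusion forces $\mr{im}(\tau)=T_x(M)$ and so $\tau$ agrees with the orthogonal projection onto $T_x(M)$ under the identification $\G_{n,m}(R)\subset\mc{M}_{n,n}(R)$. Therefore
\[
F_a(M)=\big\{(x,\tau)\in M\times\G_{n,m}(R):\tau\nabla g_i(x)=0,\ i=1,\ldots,s\big\}
\]
is $K$-semialgebraic, as $M\subset R^n$ is $K$-semialgebraic, $\G_{n,m}(R)\subset R^{n^2}$ is $K$-algebraic, and the remaining conditions are polynomial over $K$. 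Since $M\cap N=\varnothing$, for $(x,y)\in M\times N$ one has $x\neq y$, and $\alpha=[x-y]\in\PP^{n-1}(R)$ is characterized by the polynomial identity $\alpha\|x-y\|^2=(x-y)(x-y)^T$; hence
\[
F_b(M,N)=\{(x,\tau,y,\alpha)\in F_a(M)\times N\times\PP^{n-1}(R):\alpha\|x-y\|^2=(x-y)(x-y)^T\}
\]
is $K$-semialgebraic too. By Proposition \ref{clos}, $\cl_1(F_a(M))$ and $\cl_2(F_b(M,N))$ are $K$-semialgebraic as well.

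For $y\in N$ and $\tau\in\G_{n,m}(R)$, the inclusion $T_y(N)\subset\mr{im}(\tau)$ is captured by the first-order $K$-formula $\forall v\in R^n\,\big[\bigwedge_{j=1}^t\langle\nabla h_j(y),v\rangle=0\Longrightarrow\tau v=v\big]$, while for $\delta\in\PP^{n-1}(R)$ the inclusion $\delta\subset\mr{im}(\tau)$ is equivalent to the polynomial equation $\tau\delta=\delta$. Thus the condition
\[
y\in N\ \wedge\ \exists\tau\in\G_{n,m}(R)\,\big((y,\tau)\in\cl_1(F_a(M))\ \wedge\ T_y(N)\not\subset\tau\big),
\]
together with its analogue describing $S_b(M,N)$ with existentials over both $\tau$ and $\delta$ and the condition $(y,\tau,y,\delta)\in\cl_2(F_b(M,N))$, are first-order $K$-formulas, so Lemma \ref{firstorder} yields the $K$-semialgebraicity of $S_a(M,N)$ and $S_b(M,N)$. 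The dimension estimates $\dim S_a(M,N)<\dim N$ and $\dim S_b(M,N)<\dim N$ then reduce to the classical Whitney case \cite[Prop.9.7.4\&Thm.9.7.5]{bcr}, applied to $M$ and $N$ viewed as Nash submanifolds (cf.\ Remarks \ref{rem6314}(iii)), combined with Theorem \ref{edimsa} to identify $R$-dimensions with dimensions in the sense of Notation \ref{enotsa}. The main delicate point is the passage from the pointwise local polynomial description of a partial submanifold to a globally uniform one furnished by the generators of $\II_K(M_K)$ and $\II_K(N_K)$: this is what makes the tangent-space conditions expressible by a single first-order $K$-formula, thereby enabling the reduction to Lemma \ref{firstorder}.
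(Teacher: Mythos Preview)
Your proof is correct and, in places, cleaner than the paper's. The main structural difference lies in how you handle the $K$-semialgebraicity of $F_a(M)$ and of $S_a(M,N),S_b(M,N)$.

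For $F_a(M)$, the paper shows that the tangent map $\beta:M\to\G_{n,m}(R)$, $x\mapsto T_x(M)$, is $K$-regular by computing, for each $p\in M$, the local projection matrix $I_n-A(x)^T(A(x)A(x)^T)^{-1}A(x)$ (with $A(x)$ built from $n-m$ polynomials in $\II_K(M_K)$ whose gradients are independent near $p$), and then invokes Lemma~\ref{lem437} to pass from this local description to a single global fraction over $K[\x]$; $F_a(M)$ is then the graph of $\beta$. You bypass this local-to-global step entirely: using a full system of generators $g_1,\ldots,g_s$ of $\II_K(M_K)$, you characterize $F_a(M)$ by the closed conditions $\tau\nabla g_i(x)=0$ together with $\tau\in\G_{n,m}(R)$, and the dimension count $\mathrm{tr}(\tau)=m=\dim_R T_x(M)$ forces $\mathrm{im}(\tau)=T_x(M)$. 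This is a genuine simplification.

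For $S_a(M,N)$ and $S_b(M,N)$, the paper writes down explicit matrix rank conditions (on block matrices $P(y,\tau)$ and $Q(\tau,\delta)$) and expresses each set as a projection of a $K$-semialgebraic set, appealing to Propositions~\ref{prop625} and~\ref{clos}. You instead encode the inclusions $T_y(N)\subset\tau$ and $\delta\subset\tau$ as first-order $K$-formulas and invoke Lemma~\ref{firstorder}. Both routes are valid; the paper's is more explicit, yours more conceptual. The dimension estimates are handled identically in both proofs, by reduction to \cite[Thm.~9.7.5]{bcr} and Theorem~\ref{edimsa}.
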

\begin{proof}
If $m=n$, then $M$ is a non-empty open subset of $R^n$, $T_x(M)$ is equal to the $(n\times n)$-identity matrix $I_n$ for all $x\in M$, $F_a(M)=M\times\{I_n\}$, $F_b(M,N)$ is the graph of the $K$-regular map $F_a(M)\times N\ni(x,I_n,y)\mapsto (x-y)(x-y)^T\|x-y\|^{-2}\in R^{n^2}$, $S_a(M,N)=S_b(M,N)=\varnothing$ and so the result is proven.

Suppose $m<n$.

$(\mr{i})$ Denote $\beta:M\to\G_{n,m}(R)$ the map defined by $\beta(x):=T_x(M)$. Let us prove that $\beta$ is $K$-regular. Let $p\in M$ and let $M_K:=\zcl_{R^n}^K(M)\subset R^n$. By Remarks \ref{rem6314}$(\mr{i})(\mr{ii})$ and implication $(\mr{i})\Longrightarrow(\mr{iii})$ of Proposition \ref{30}, we have that $\dim(M_K)=m$, $p\in M\subset\Reg^{R|K}(M_K)$ and there exist a $K$-Zariski open neighborhood $W$ of $p$ in $R^n$ and polynomials $f_1,\ldots,f_{n-m}\in\II_K(M)=\II_K(M_K)$ such that the gradients $\nabla f_1(p),\ldots,\nabla f_{n-m}(p)$ are linearly independent in $R^n$ and $M_K\cap W=\ZZ_R(f_1,\ldots,f_{n-m})\cap W$. Shrinking $W$ around $p$ if necessary, we can assume that the gradients $\nabla f_1(x),\ldots,\nabla f_{n-m}(x)$ are linearly independent in $R^n$ for all $x\in W$. By Remarks \ref{rem6314}$(\mr{iii})$, $T_x(M)=\{v\in R^n:\langle\nabla f_1(x),v\rangle=0,\ldots,\langle\nabla f_{n-m}(x),v\rangle=0\}$. Let $x\in W$. Define the matrices $A(x)\in\mc{M}_{n-m,n}(R)$ and $B\in\mc{M}_{n,n}(R)$ by
$$\textstyle
A(x):=\big(\frac{\partial f_i}{\partial\x_j}(x)\big)_{i=1,\ldots,n-m,\,j=1,\ldots,n}
$$
and
$$
B(x):=I_n-A(x)^T(A(x)A(x)^T)^{-1}A(x).
$$
Let us check: $B(x)$ is the matrix of the orthogonal projection of $R^n$ onto $T_x(M)$.

As the gradients $\nabla f_1(x),\ldots,\nabla f_{n-m}(x)$ are linearly independent in $R^n$, the matrix $A(x)A(x)^T$ has rank $n-m$. Observe that $B(x)v=v$ for all $v\in T_x(M)$ (so ${\rm rk}(B(x))\geq m=\dim(T_x(M))$) and $B(x)A(x)^T=0$, so ${\rm rk}(B(x))+n-m\leq n$. We deduce ${\rm rk}(B(x))=m$ and $\tr(B(x))=m$, because $B(x)^T=B(x)$ (so it is diagonalizable) and $B(x)^2=B(x)$ (so its eigenvalues are $0,1$). Consequently, $B(x)$ is the matrix of the orthogonal projection of $R^n$ onto $T_x(M)$. 

Thus, if $x\in M\cap W$, then the matrix $B(x)$ is the element of $\G_{n,m}(R)$ representing $T_x(M)$. It follows that $\beta(x)=B(x)$ for all $x\in M\cap W$, so $\beta$ is $K$-regular.

By Lemma \ref{lem437}, there exist polynomials $p_{ij}\in K[\x]$ for $i,j=1,\ldots,n$ and $q\in K[\x]$ such that $\ZZ_R(q)\cap M=\varnothing$ and $\beta(x)=\big(\frac{p_{ij}(x)}{q(x)}\big)_{i,j=1,\ldots,n}$ for each $x\in M$. As $F_a(M)$ is the graph of~$\beta$, we have that $F_a(M)=\{(x,\tau)\in R^{n+n^2}:x\in M,q(x)\tau=(p_{ij}(x))_{i,j=1,\ldots,n}\}$, so $F_a(M)\subset R^{n+n^2}$ is $K$-semialgebraic. As $F_b(M,N)$ is the graph of the $K$-regular map $F_a(M)\times N\ni(x,\tau,y)\mapsto (x-y)(x-y)^T\|x-y\|^{-2}\in R^{n^2}$, $F_b(M,N)\subset R^{2n+2n^2}$ is also $K$-semialgebraic.

$(\mr{ii})$ Let $\pi_1:R^{n+n^2}\to R^n$ be the projection $\pi_1(x,\tau):=x$, let $e:=\dim(N)$ and let $\{g_1,\ldots,g_s\}$ be a system of generators of $\II_K(N)$ in $K[\x]$. For all $(y,\tau)\in R^n\times R^{n^2}$, define $P(y,\tau)$ as the matrix in $\mc{M}_{n+s,n}(R)$ whose first $n$ rows are the rows of $I_n-\tau$ (here we understand $\tau$ as a matrix) and whose last $s$ rows are $\nabla g_1(y),\ldots,\nabla g_s(y)$. We have: $\cl_1(F_a(M))\subset\cl_{R^n}(M)\times\G_{n,m}(R)$,
\begin{align*}
S_a(M,N)&=\{y\in N:\exists \tau\in\G_{n,m}(R),\text{ $(y,\tau)\in\cl_1(F_a(M))$ and $T_y(N)\not\subset\tau$}\}\\
&=\{y\in N:\exists \tau\in\G_{n,m}(R),\text{ $(y,\tau)\in\cl_1(F_a(M))$ and $\dim(T_y(N)\cap\tau)<e$}\},
\end{align*}
(here we understand $\tau$ as an $m$-dimensional $R$-vector subspace of $R^n$) and 
$$
T_y(N)\cap\tau=\{v\in R^n:P(y,\tau)v=0\}
$$
(here we understand $\tau$ with both descriptions). Consequently,
$$
S_a(M,N)=N\cap\pi_1\big(\cl_1(F_a(M))\cap\{(y,\tau)\in R^{n+n^2}:{\rm rk}(P(y,\tau))>n-e\}\big)
$$
(we understand here $\tau$ as a $R$-vector subspace of $R^n$), so $S_a(M,N)\subset R^{n+n^2}$ is $K$-semialgebraic by Propositions \ref{prop625} and \ref{clos}.

Let $\pi_2:R^{2n+2n^2}\to R^n$ be the projection $\pi_2(x,\tau,y,\delta):=x$. For all $(\tau,\delta)\in R^{n^2}\times R^{n^2}$, define $Q(\tau,\delta)$ as the matrix in $\mc{M}_{2n,n}(R)$ whose first $n$ rows are the rows of $I_n-\tau$ (here we understand $\tau$ as a matrix) and whose last $n$ rows are the rows of $I_n-\delta$ (here we understand $\delta$ as a matrix). Define the $K$-algebraic set $\Delta^*\subset R^{2n+2n^2}$ by $\Delta^*:=\{(x,\tau,y,\delta)\in R^{2n+2n^2}:x=y\}$. We have: $\cl_2(F_b(M,N))\subset\cl_{R^n}(M)\times\G_{n,m}(R)\times R^n\times\PP^{n-1}(R)$,
\begin{align*}
&S_b(M,N)=\{y\in N:\exists (\tau,\delta)\in\G_{n,m}(R)\times\PP^{n-1}(R),\text{ $(y,\tau,y,\delta)\in\cl_2(F_b(M,N))$ and $\delta\not\subset\tau$}\}\\
&=\{y\in N:\exists (\tau,\delta)\in\G_{n,m}(R)\times\PP^{n-1}(R),\text{ $(y,\tau,y,\delta)\in\cl_2(F_b(M,N))$ and $\dim(\delta\cap\tau)=0$}\},
\end{align*}
and $\delta\cap\tau=\{v\in R^n:Q(\tau,\delta)v=0\}$ (here we understand $\tau,\delta$ with both descriptions). Consequently,
$$
S_b(M,N)=N\cap\pi_2\big(\cl_2(F_b(M,N))\cap\Delta^*\cap\{(x,\tau,y,\delta)\in R^{2n+2n^2}:{\rm rk}(Q(\tau,\delta))=n\}\big)
$$
(we understand here $\delta,\tau$ as $R$-vector subspaces of $R^n$), so $S_b(M,N)\subset R^{2n+2n^2}$ is $K$-semialgebraic by Propositions \ref{prop625} and \ref{clos}.

Finally, by Proposition \ref{K-semialg-connected}$(\mr{iii})$, we know that $M$ and $N$ are $R$-semialgebraically connected Nash submanifolds of $R^n$, so we can apply \cite[Thm.9.7.5]{bcr} obtaining $\dim_R(S_a(M,N))<e$ and $\dim_R(S_b(M,N))<e$. By Theorem \ref{edimsa} and Notation \ref{enotsa}, we also have $\dim(S_a(M,N))=\dim_R(S_a(M,N))<e$ and $\dim(S_b(M,N))=\dim_R(S_b(M,N))<e$, as required.
\end{proof}

\begin{remark}\label{rem:sacsb}
Let $M$ and $N$ be two non-empty disjoint $K$-semialgebraically connected $K$-algebraic partial submanifolds of $R^n$ such that $N\subset\cl_{R^n}(M)$. As $M$ and $N$ are $R$-semialge\-braically connected Nash submanifolds of $R^n$, we can apply \cite[Prop.9.7.7]{bcr} obtaining that $S_a(M,N)\subset S_b(M,N)$. $\sqbullet$
\end{remark}

\begin{lem}\label{strata}
Let $S\subset R^n$ be a non-empty $K$-semialgebraic set of dimension $d$. Then there exists a $K$-algebraic partial manifold $M\subset R^n$ of dimension $d$ contained in $S$ such that $\dim(\cl_{R^n}(S)\setminus M)<d$ and $M$ is open both in $\cl_{R^n}(S)$ and in $\zcl_{R^n}^K(S)$ (thus in $S$).
\end{lem}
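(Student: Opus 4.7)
The plan is to set $T:=\zcl_{R^n}^K(S)$ and define
\begin{equation*}
M:=\Reg^K(T)\cap\Int_T(S),
\end{equation*}
where $\Int_T(S)$ denotes the Euclidean interior of $S$ inside $T$. Note $\dim T=\dim S=d$ by Theorem \ref{edimsa}, the set $\Reg^K(T)$ is $K$-Zariski open in $T$ by Corollary \ref{30'}(iii), and $\Int_T(S)=T\setminus\cl_{R^n}(T\setminus S)$ is $K$-semialgebraic by Proposition \ref{clos}. Hence $M$ is a $K$-semialgebraic subset of $S$ that is open in $T$, and automatically open in $\cl_{R^n}(S)$ as well, since $M\subset\cl_{R^n}(S)\subset T$. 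As $M$ is open in $\Reg^K(T)$, the third equivalent condition in Remarks \ref{rem6314}(ii) (applied with $X=T$) shows that $M\subset R^n$ is a $K$-algebraic partial manifold.

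The heart of the proof is the dimension bound $\dim(\cl_{R^n}(S)\setminus M)<d$. The decomposition
\begin{equation*}
\cl_{R^n}(S)\setminus M\;\subset\;\bigl(\cl_{R^n}(S)\cap\Sing^K(T)\bigr)\cup\bigl(\cl_{R^n}(S)\setminus S\bigr)\cup\bigl(S\setminus\Int_T(S)\bigr)
\end{equation*}
yields three pieces to estimate. The first has dimension $<d$ by Corollary \ref{30'}(iii), and the second by \cite[Prop.2.8.13]{bcr}. The main obstacle is the third piece, which I propose to control by the following key observation: since $\Int_T(S)=T\setminus\cl_T(T\setminus S)$ and $T$ is Euclidean closed in $R^n$, every point of $S\setminus\Int_T(S)$ lies in $\cl_T(T\setminus S)=\cl_{R^n}(T\setminus S)$ but, being in $S$, cannot lie in $T\setminus S$. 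Consequently
\begin{equation*}
S\setminus\Int_T(S)\;\subset\;\cl_{R^n}(T\setminus S)\setminus(T\setminus S).
\end{equation*}
Applying \cite[Prop.2.8.13]{bcr} to the semialgebraic set $T\setminus S$ (whose dimension is at most $\dim T=d$) then gives $\dim\bigl(\cl_{R^n}(T\setminus S)\setminus(T\setminus S)\bigr)<\dim(T\setminus S)\leq d$, completing the estimate.

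Finally, the disjoint decomposition $\cl_{R^n}(S)=M\sqcup(\cl_{R^n}(S)\setminus M)$ combined with $\dim\cl_{R^n}(S)=d$ and $\dim(\cl_{R^n}(S)\setminus M)<d$ forces $\dim M=d$, so $M$ realizes all the required properties: a $K$-algebraic partial manifold of dimension $d$ contained in $S$, open in both $\cl_{R^n}(S)$ and $\zcl_{R^n}^K(S)$, whose complement in $\cl_{R^n}(S)$ has dimension strictly less than $d$.
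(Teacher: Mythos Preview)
Your proof is correct and uses the same ingredients as the paper's proof (the $K$-Zariski closure $T=\zcl_{R^n}^K(S)$, its $K$-singular locus, an interior operation, and \cite[Prop.2.8.13]{bcr} for the boundary dimension drop), but your construction is more streamlined. The paper sets $Z:=T$, $Y:=\Sing^K(Z)$, then defines $S_0:=\Int_{\cl_{R^n}(S)}(S)$, $S_1:=S_0\setminus Y$, $S_2:=\cl_{R^n}(Z\setminus S_1)$, and finally $M:=S_1\setminus S_2$; in effect it takes the interior of $S$ in $\cl_{R^n}(S)$, removes $Y$, and then takes a \emph{second} interior in $Z$ to force openness there. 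You bypass this detour by taking the interior of $S$ in $T$ directly and intersecting with $\Reg^K(T)$, which is already open in $T$. Your set $M$ contains the paper's (since $\Int_{\cl_{R^n}(S)}(S)\setminus Y\subset S\setminus Y$ implies the paper's $M=\Int_Z(S_1)\subset\Int_Z(S\setminus Y)=\Int_T(S)\cap\Reg^K(T)$, which is your $M$), and the dimension estimate is correspondingly cleaner: your third piece $S\setminus\Int_T(S)\subset\cl_{R^n}(T\setminus S)\setminus(T\setminus S)$ is handled by a single application of \cite[Prop.2.8.13]{bcr}, whereas the paper needs separate estimates for $\cl_{R^n}(S)\setminus S_1$ and $S_2\cap S_1$.
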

\begin{proof}
Let $Z:=\zcl_{R^n}^K(S)\subset R^n$ and let $Y:=\Sing^K(Z)$. Observe that $\dim(Z)=\dim(S)=d$ and $Y\subset R^n$ is a $K$-algebraic set of dimension $<d$. 

Set $S_0:=\cl_{R^n}(S)\setminus\cl_{R^n}(\cl_{R^n}(S)\setminus S)=S\setminus\cl_{R^n}(\cl_{R^n}(S)\setminus S)$. The set $S_0$ is $K$-semialgebraic in $R^n$, is open in $\cl_{R^n}(S)$ and 
$$
\cl_{R^n}(S)\setminus S_0\subset\cl_{R^n}(\cl_{R^n}(S)\setminus S)\subset\zcl_{R^n}(\cl_{R^n}(S)\setminus S)$$ 
so, by \cite[Prop.2.8.13]{bcr}, we have
$$
\dim(\cl_{R^n}(S)\setminus S_0)\leq\dim(\cl_{R^n}(\cl_{R^n}(S)\setminus S))=\dim(\cl_{R^n}(S)\setminus S)<d.
$$

Set $S_1:=S_0\setminus Y$. The set $S_1$ is $K$-semialgebraic in $R^n$, open in $S_0$ (thus in $\cl_{R^n}(S)$) and
$$
\cl_{R^n}(S)\setminus S_1=(\cl_{R^n}(S)\setminus S_0)\sqcup(S_0\setminus S_1)\subset\cl_{R^n}(\cl_{R^n}(S)\setminus S)\cup Y,
$$
so $\dim(\cl_{R^n}(S)\setminus S_1)<d$.

Set $S_2:=\cl_{R^n}(Z\setminus S_1)\subset R^n$, which satisfies
\begin{align*}
S_2&=\cl_{R^n}((Z\setminus S_0)\cup Y)=\cl_{R^n}((Z\setminus\cl_{R^n}(S))\cup\cl_{R^n}(\cl_{R^n}(S)\setminus S)\cup Y)\\
&=\cl_{R^n}(Z\setminus\cl_{R^n}(S))\cup\cl_{R^n}(\cl_{R^n}(S)\setminus S)\cup Y.
\end{align*} 

The set $S_2\subset R^n$ is $K$-semialgebraic in $R^n$, closed in $Z$ and $S_2\cap S_1=\cl_{R^n}(Z\setminus S_1)\setminus(Z\setminus S_1)$ (because $S_1=Z\setminus(Z\setminus S_1)$), so, by \cite[Prop.2.8.13]{bcr}, we have
$$
\dim(S_2\cap S_1)<\dim(Z\setminus S_1)\leq d.
$$

Finally, set $M:=S_1\setminus S_2$, which satisfies 
$$
M=\cl_{R^n}(S)\setminus(\cl_{R^n}(Z\setminus\cl_{R^n}(S))\cup\cl_{R^n}(\cl_{R^n}(S)\setminus S)\cup Y)
$$
The set $M$ is $K$-semialgebraic in $R^n$ and is open in $S_1$ (thus in $\cl_{R^n}(S)$). We have: $\cl_{R^n}(S)\setminus M=(\cl_{R^n}(S)\setminus S_1)\sqcup(S_2\cap S_1)$, so $\dim(\cl_{R^n}(S)\setminus M)<d=\dim(S)=\dim(\cl_{R^n}(S))$, so $M\neq\varnothing$. We claim: $M$ is open in $Z$.

Indeed,
$$
Z\setminus M=(Z\setminus\cl_{R^n}(S))\cup\cl_{R^n}(Z\setminus\cl_{R^n}(S))\cup\cl_{R^n}(\cl_{R^n}(S)\setminus S)\cup Y=S_2
$$ 
(because $Z\setminus\cl_{R^n}(S)\subset\cl_{R^n}(Z\setminus\cl_{R^n}(S))$), which is a closed subset of $Z$, so $M=Z\setminus S_2$ is open in $Z$. It follows that $M=Z\setminus S_2$ is also a non-empty open subset of $Z\setminus Y=\Reg^K(Z)$ (because $Y\subset S_2$), so $M$ is by Remark \ref{rem6314}(ii) a $K$-algebraic partial submanifold of $R^n$ of dimension $d$, as required.
\end{proof}

\begin{lem}\label{strat}
Let $S\subset R^n$ be a non-empty $K$-semialgebraic set and let $\{S_1,\ldots,S_r\}$ be a non-empty finite family of (possibly empty) $K$-semialge\-braic subsets of~$R^n$ such that $S=\bigcup_{i=1}^rS_i$. Then there exists a $K$-algebraic stratification of $S$ compatible with $\{S_1,\ldots,S_r\}$.
\end{lem}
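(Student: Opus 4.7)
The plan is to proceed by induction on $d:=\dim(S)$, using Lemma \ref{strata} as the main engine for producing the top-dimensional strata at each step.

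First I would replace $\{S_1,\ldots,S_r\}$ by the finite refinement into non-empty sets of the form $T_I:=\bigcap_{i\in I}S_i\cap\bigcap_{i\notin I}(S\setminus S_i)$ for $I\subset\{1,\ldots,r\}$, relabeled $\{T_j\}_{j\in J}$: these are pairwise disjoint, $K$-semialgebraic, cover $S$, and each $S_i$ is a union of some of them. Any $K$-algebraic stratification compatible with $\{T_j\}$ will automatically be compatible with $\{S_1,\ldots,S_r\}$, so it suffices to treat the refined family. The base case $d=0$ is immediate: by Theorem \ref{edimsa} the set $S$ is finite; each singleton $\{p\}$ is a $K$-semialgebraically connected component of $S$, hence $K$-semialgebraic by Proposition \ref{K-semialg-connected}; and setting $X:=\zcl_{R^n}^K(\{p\})$, which is $K$-irreducible of dimension $0$, Theorem \ref{apl1}$(\mr{ii})$ forces $\Reg^K(X)=X$, so $\{p\}$ is open in $\Reg^K(X)$ and therefore a $K$-algebraic partial manifold by Remarks \ref{rem6314}$(\mr{ii})$. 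The frontier condition is vacuous for disjoint singletons.

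For the inductive step, let $J_d:=\{j\in J:\dim T_j=d\}$. For each $j\in J_d$, I would apply Lemma \ref{strata} to $T_j$ to obtain a $K$-algebraic partial manifold $M_j\subset T_j$ of dimension $d$ which is open in $\cl_{R^n}(T_j)$ and satisfies $\dim(\cl_{R^n}(T_j)\setminus M_j)<d$. The key technical move is the \emph{shrinking}
\[
M_j^*:=M_j\setminus\bigcup_{k\in J,\,k\neq j}\cl_{R^n}(T_k),
\]
which is $K$-semialgebraic by Proposition \ref{clos}, open in $M_j$, and still of dimension $d$: each intersection $M_j\cap\cl(T_k)$ lies in $\cl(T_k)\setminus T_k$ (since $T_j\cap T_k=\varnothing$), a set of dimension $<d$ by \cite[Prop.2.8.13]{bcr}. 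Crucially, $M_j^*$ is now open in $\cl_{R^n}(S)=\bigcup_k\cl_{R^n}(T_k)$: at any $p\in M_j^*$, one shrinks an open neighborhood of $p$ in $R^n$ to avoid the finitely many closed sets $\cl(T_k)$ with $k\neq j$, and uses openness of $M_j$ in $\cl(T_j)$. Next, using Proposition \ref{K-semialg-connected}, I decompose each $M_j^*$ into its finitely many $K$-semialgebraically connected components $\{M_{j,\alpha}\}$; each $M_{j,\alpha}$ remains a $K$-algebraic partial manifold of dimension $d$ (open in $M_j^*$, hence open in $\Reg^K((M_{j,\alpha})_K)$ by Remarks \ref{rem6314}$(\mr{ii})$), is open in $\cl_{R^n}(S)$ (shrink further past the finitely many closures of sibling components in $M_j^*$), and pairwise their closures are well-separated: for $j\neq k$, $M_{j,\alpha}\cap\cl(M_{k,\beta})\subset(R^n\setminus\cl(T_k))\cap\cl(T_k)=\varnothing$; for $j=k$ and $\alpha\neq\beta$, closedness of components inside $M_j^*$ gives $M_{j,\alpha}\cap\cl(M_{j,\beta})=\varnothing$.

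Setting $S':=S\setminus\bigsqcup_{j\in J_d,\alpha}M_{j,\alpha}$, the set $S'$ is $K$-semialgebraic with $\dim(S')<d$, and I apply the induction hypothesis to $S'$ with the enlarged compatibility family $\{T_j\cap S'\}_j\cup\{\cl_{R^n}(M_{j,\alpha})\cap S'\}_{j,\alpha}$, obtaining a $K$-algebraic stratification $\{N_\ell\}$ of $S'$. The final stratification is $\{M_{j,\alpha}\}\cup\{N_\ell\}$. Verifying the frontier condition splits into four cases: between two top strata it is vacuous by the separation above; between two lower strata it holds by induction; the direction $N_\ell\cap\cl(M_{j,\alpha})\neq\varnothing\Rightarrow N_\ell\subset\cl(M_{j,\alpha})$ is secured by compatibility of the inductive stratification with $\cl(M_{j,\alpha})\cap S'$. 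The main obstacle, and the reason for the shrinking step, is the opposite direction: one must rule out $M_{j,\alpha}\cap\cl(N_\ell)\neq\varnothing$ entirely, since dimension forbids containment. Here openness of $M_{j,\alpha}$ in $\cl_{R^n}(S)$ is decisive: if $p\in M_{j,\alpha}\cap\cl(N_\ell)$, then some neighborhood $V$ of $p$ in $R^n$ satisfies $V\cap\cl_{R^n}(S)\subset M_{j,\alpha}$; since $p\in\cl(N_\ell)$ and $N_\ell\subset\cl(S)$, we get $\varnothing\neq V\cap N_\ell\subset M_{j,\alpha}$, contradicting $N_\ell\cap M_{j,\alpha}=\varnothing$. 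Compatibility with the original family $\{S_1,\ldots,S_r\}$ follows from the initial refinement, completing the induction.
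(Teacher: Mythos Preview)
Your proof is correct and follows essentially the same strategy as the paper: induction on $\dim(S)$, refinement to a disjoint family, extraction of top-dimensional $K$-algebraic partial manifolds via Lemma \ref{strata}, a shrinking step to make these open in $\cl_{R^n}(S)$ and mutually separated, passage to connected components, and an inductive treatment of the lower-dimensional remainder with an enlarged compatibility family containing the frontiers of the top strata. The paper's shrinking set $C=\bigcup_{i\leq\ell}(\cl(S_i)\setminus M'_i)\cup\bigcup_{i>\ell}\cl(S_i)$ differs cosmetically from your $\bigcup_{k\neq j}\cl(T_k)$, but both achieve the same key property (each top stratum disjoint from $\cl(T_k)$ for $k\neq j$, hence open in $\cl(S)$); and your compatibility sets $\cl(M_{j,\alpha})\cap S'$ coincide with the paper's $T_p=S\cap(\cl(N_p)\setminus N_p)$ once one checks $\cl(M_{j,\alpha})$ meets no other top stratum. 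Your phrasing of the ``no upward closure'' case via openness in $\cl_{R^n}(S)$ is the same argument the paper gives via closedness of $S_{r+1}$ in $S$, stated dually.
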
 
\begin{proof}
We proceed by induction on $\dim(S)\geq0$.

If $\dim(S)=0$, then $S$ is a finite set and its singletons are its $K$-semialgebraically connected components so they are $K$-semialgebraic. It follows that the family of all the singletons of $S$ is a $K$-algebraic stratification of $S$ compatible with $\{S_1,\ldots,S_r\}$.

Let $d\in\N^*$. Suppose the result is true for $K$-semialgebraic sets of dimension $<d$. We have to check that it is also true for $K$-semialgebraic sets of dimension $d$. Denote $\mc{P}(r)$ the power set of $\{1,\ldots,r\}$ and, for each $J\in\mc{P}(r)\setminus\{\varnothing\}$, define $S_J:=(\bigcap_{j\in J}S_j)\setminus(\bigcup_{j\in\{1,\ldots,r\}\setminus J}S_j)$. Observe that $S=\bigcup_{J\in\mc{P}(r)\setminus\{\varnothing\}}S_J$, $S_{J_1}\cap S_{J_2}=\varnothing$ for all $J_1,J_2\in\mc{P}(r)\setminus\{\varnothing\}$ with $J_1\neq J_2$, and $S_J\subset S_i$ for all $J\in\mc{P}(r)\setminus\{\varnothing\}$ and $i\in\{1,\ldots,r\}$ with $S_J\cap S_i\neq\varnothing$. Consequently, it is enough to prove the result for the finite family $\{S_J\}_{J\in\mc{P}(r)\setminus\{\varnothing\}}$ of $K$-semialgebraic subsets of $R^n$. In other words, we can assume from the beginning that $S_i\cap S_j=\varnothing$ for all $i,j\in\{1,\ldots,r\}$ with $i\neq j$. 

Rearranging the indices if necessary, we can also assume that there exists $\ell\in\{1,\ldots,r\}$ such that $S_1,\ldots,S_\ell$ have dimension $d$ and the $S_i$ for $i>\ell$ have dimension $<d$ (if there exists any). By Lemma \ref{strata}, for each $i\in\{1,\ldots,\ell\}$, there exists a $K$-algebraic partial manifold $M'_i\subset R^n$ of dimension~$d$ contained in $S_i$ such that $\dim(\cl_{R^n}(S_i)\setminus M'_i)<d$ and $M'_i$ is open in $\cl_{R^n}(S_i)$. In particular, $\cl_{R^n}(S_i)\setminus M'_i$ is closed in $R^n$.

Define the closed $K$-semialgebraic set $C\subset R^n$ of dimension $<d$ by
$$\textstyle
C:=\bigcup_{i=1}^\ell(\cl_{R^n}(S_i)\setminus M'_i)\cup\bigcup_{i>\ell}^r\cl_{R^n}(S_i),
$$
where $\bigcup_{i>\ell}^r\cl_{R^n}(S_i):=\varnothing$ if $\ell=r$, and $M_i:=M'_i\setminus C$ for each $i\in\{1,\ldots,\ell\}$.

Pick $i\in\{1,\ldots,\ell\}$. For each $j\in\{1,\ldots,\ell\}\setminus\{i\}$, we have:
$$
M_i\cap\cl_{R^n}(M_j)\subset(M'_i\setminus C)\cap\cl_{R^n}(S_j)=M'_i\cap(\cl_{R^n}(S_j)\setminus C)\subset M'_i\cap M'_j\subset S_i\cap S_j=\varnothing.
$$
In addition, $M_i$ is open in $M'_i$ (thus in $\cl_{R^n}(S_i)$), and $\cl_{R^n}(S_i)\setminus M_i\subset(\cl_{R^n}(S_i)\setminus M'_i)\cup C$, so $\dim(\cl_{R^n}(S_i)\setminus M_i)<d$. Consequently, $M_i\subset R^n$ is a non-empty $K$-algebraic partial manifold of dimension~$d$.

Observe that $M_i$ is also open in $\cl_{R^n}(S)$. Indeed, $M_i\cap\cl_{R^n}(S_j)=(M'_i\setminus C)\cap\cl_{R^n}(S_j)=M'_i\cap(\cl_{R^n}(S_j)\setminus C)\subset M'_i\cap M'_j\subset S_i\cap S_j=\varnothing$ if $j\in\{1,\ldots,\ell\}\setminus\{i\}$, and $M_i\cap\cl_{R^n}(S_j)=M'_i\cap(\cl_{R^n}(S_j)\setminus C)=M'_i\cap\varnothing=\varnothing$ if $j>\ell$. Thus, 
$$\textstyle
\cl_{R^n}(S)\setminus M_i=(\cl_{R^n}(S_i)\setminus M_i)\cup\bigcup_{j\in\{1,\ldots,r\}\setminus\{i\}}\cl_{R^n}(S_j), 
$$
which is closed in $\cl_{R^n}(S)$. As $M_i\subset S_i\subset S$, $M_i$ is also open in $S$. 

Define $S_{r+1}:=S\setminus\bigcup_{i=1}^\ell M_i$. Observe that $S_{r+1}$ is closed in $S$, $K$-semialgebraic in~$R^n$, and it is the disjoint union of the $S_i\setminus M_i$ for $i\in\{1,\ldots,\ell\}$ and the $S_i$ for $i>\ell$, that is,
$$\textstyle
S_{r+1}=\big(\bigsqcup_{i=1}^\ell(S_i\setminus M_i)\big)\sqcup\big(\bigsqcup_{i>\ell}^rS_i\big),
$$
Moreover, it holds
$$\textstyle
S_{r+1}=S\setminus\bigcup_{i=1}^\ell(M'_i\setminus C)=(S\setminus\bigcup_{i=1}^\ell M'_i)\cup(S\cap C)\subset\bigcup_{i=1}^\ell(S_i\setminus M'_i)\cup C\subset C,
$$
so $\dim(S_{r+1})<d$.

Let $\{N_q\}_{q\in Q}$ denote the finite family of all $K$-semialgebraically connected components of $M_1,\ldots,M_\ell$, which are $K$-algebraic partial submanifolds of $R^n$ of dimension~$d$. Observe that $S$ is the disjoint union of $S_{r+1}$ and the $N_q$, that is,
$$\textstyle
S=\big(\bigsqcup_{q\in Q}N_q\big)\sqcup S_{r+1}=\big(\bigsqcup_{q\in Q}N_q\big)\sqcup\big(\bigsqcup_{i=1}^\ell(S_i\setminus M_i)\big)\sqcup\big(\bigsqcup_{j>\ell}^rS_j\big).
$$

Define $T_p:=S\cap(\cl_{R^n}(N_p)\setminus N_p)$ for each $p\in Q$. Observe that each $T_p$ is contained in $S_{r+1}$, because $\{N_q\}_{q\in Q}$ is the family of all $K$-semialgebraically connected components of $\bigcup_{i=1}^\ell M_i$, so $N_p\cap\cl_{R^n}(N_q)=\varnothing$ for all $q\in Q\setminus\{p\}$.

Define the family $\mc{F}$ of $K$-semialgebraic subsets of $R^n$ by
$$
\mc{F}:=\{T_p\}_{p\in Q}\cup\{S_i\setminus M_i\}_{i=1}^\ell\cup\{S_i\}_{i>\ell}^r,
$$
where $\{S_i\}_{i>\ell}^r$ is the empty family if $\ell=r$.

As $\dim(S_{r+1})<d$, by induction hypothesis, there exists a $K$-algebraic stratification $\{E_g\}_{g\in G}$ of $S_{r+1}$ compatible with $\mc{F}$. Define ${\mathfrak E}:=\{N_q\}_{q\in Q}\cup\{E_g\}_{g\in G}$.

We claim: \emph{${\mathfrak E}$ is a $K$-algebraic stratification of $S$ compatible with $\{S_1,\ldots,S_r\}$.} As ${\mathfrak E}$ is a partition of $S$ whose elements are $K$-semialgebraically connected $K$-algebraic partial submanifolds of $R^n$ and the family $\mc{F}$ contains $\{S_i\setminus M_i\}_{i=1}^\ell\cup\{S_i\}_{i>\ell}^r$ (so $\mathfrak E$ is compatible with $\{S_1,\ldots,S_r\}$), it is enough to prove that ${\mathfrak E}$ satisfies the frontier condition (ii) of Definition \ref{def6317}.

Let $\Gamma_1,\Gamma_2\in{\mathfrak E}$ be such that $\Gamma_1\cap\cl_{R^n}(\Gamma_2)\neq\varnothing$. We have to check that $\Gamma_1\subset\cl_{R^n}(\Gamma_2)$.

Let us distinguish three cases:

\textsc{Case 1.} If both $\Gamma_1$ and $\Gamma_2$ belong to $\{N_q\}_{q\in Q}$ or $\{E_g\}_{g\in G}$, then the inclusion $\Gamma_1\subset\cl_{R^n}(\Gamma_2)$ holds, because $\{N_q\}_{q\in Q}$ are $\{E_g\}_{g\in G}$ are $K$-algebraic stratifications.

\textsc{Case 2.} Suppose that $\Gamma_1=N_p$ and $\Gamma_2=E_g$ for some $p\in Q$ and $g\in G$, so $N_p\cap\cl_{R^n}(E_g)\neq\varnothing$. This situation cannot occur, because $E_g\subset S_{r+1}$ and $S_{r+1}$ is a closed subset of $S$ disjoint to $\bigcup_{q\in Q}N_q=\bigcup_{i=1}^\ell M_i$.

\textsc{Case 3.} Suppose that $\Gamma_1=E_g$ and $\Gamma_2=N_p$ for some $g\in G$ and $p\in Q$, so $E_g\cap\cl_{R^n}(N_p)\neq\varnothing$. As $E_g\subset S\setminus N_p$, we deduce $E_g\cap T_p\neq\varnothing$, so $E_g\subset T_p\subset\cl_{R^n}(N_p)$ (because $\{E_g\}_{g\in G}$ is a $K$-algebraic stratification of $S_{r+1}$ compatible with ${\mathcal F}$ and $T_p\in{\mathcal F}$), as required.
\end{proof}

We are ready to prove Theorem \ref{thm:R|K-Whitney}.

\begin{proof}[Proof of Theorem \em \ref{thm:R|K-Whitney}]
$(\mr{i})$ Apply Lemma \ref{strat} with $\{S_1,\dots,S_r\}:=\{S_\lambda\}_{\lambda\in\Lambda}\cup\{S\}$.

$(\mr{ii})$ We adapt the proof of \cite[Thn.9.7.11]{bcr} to the present situation. Let $d:=\dim(S)$. We can assume $d>0$, otherwise the result is obvious. Let us prove by induction on $k=d,\ldots,0$ that, after refining the $K$-algebraic stratification $\{M_i\}_{i\in I}$, we may assume that the following condition $(C_k)$ holds:
\begin{itemize}
\item[$(C_k)$] \emph{Each pair of distinct strata $(M_i,M_j)$ with $M_i\subset\cl_{R^n}(M_j)$ and $\dim(M_j)>k$ satisfies condition $b$ at every point of $M_j$.}
\end{itemize}

By Remark \ref{rem:sacsb}, all the pairs $(M_i,M_j)$ under the hypothesis of $(C_k)$ also satisfy condition $a$ at every point of~$M_j$.

Condition $(C_d)$ is clear: there is nothing to check. Suppose $d>0$ and $C_k$ is true for some $k\in\{1,\ldots,d\}$. For each $k$-stratum $M_j$, define
$$
T_j:=M_j\cap\cl_{R^n}\!\!\Big(\bigcup\big\{S_b(M_i,M_j):M_j\subset\cl_{R^n}(M_i),i\neq j\big\}\Big).
$$
By Proposition \ref{fafb}$(\mr{ii})$, we have $\dim(T_j)<k$. By Lemma \ref{strat}, there exists a $K$-algebraic stratification of the union of the $T_j$ and the strata $M_h$ of dimension $<k$ compatible with the $T_j$ and such strata~$M_h$. Thus, we obtain a $K$-algebraic stratification of $S$ finer than $\{M_i\}_{i\in I}$ with the same strata of dimension $>k$ as $\{M_i\}_{i\in I}$ and whose strata of dimension $k$ are the $K$-semialgebraically connected components of the difference $M_j\setminus T_j$. This new stratification satisfies $(C_{k-1})$ and proceeding inductively until $d=0$ the proof is complete. 
\end{proof}

For Whitney regular stratifications, their variants and applications, we refer to \cite{bcr,vdD,tr1,tr2,paru,pawlucki,tv} and the references mentioned therein.

\subsection{The $L|K$-generic projection theorem}\label{subsec:proj2}
\textit{Throughout this subsection, $L|K$ is an extension of fields such that $L$ is either algebraically closed or real closed, and $\kbar^\sqbullet$ is the algebraic closure of $K$ in~$L$.} We will use Remark \ref{dime} freely, and set $\dim(S):=\dim_L(S)$ for each algebraic set $S\subset L^n$.

Let $n,r\in\N^*$ be such that $r<n$. Identify $L^r$ and $L^{n-r}$ with the $L$-vector subspaces $L^r\times\{0\}$ and $\{0\}\times L^{n-r}$ of $L^n=L^r\times L^{n-r}$, respectively. Recall that the exponent $^T$ indicates the transpose operation. Denote $x':=(x_1,\ldots,x_r)^T$ the coordinates of $L^r$ and $x'':=(x_{r+1},\ldots,x_n)^T$ the coordinates of $L^{n-r}$, so $x=(x',x'')=(x_1,\ldots,x_n)^T$. Note that here, for convenience, we consider the coordinates $x'$, $x''$, and then $x$ as column vectors. Indicate $\mc{M}_{r,n-r}(L)$ the $L$-vector space of all $r\times(n-r)$-matrices with coefficients in $L$, and identify $\mc{M}_{r,n-r}(L)$ with $L^{r(n-r)}$.

\begin{defn}\label{V_AP_A}
For each $A\in\mc{M}_{r,n-r}(L)$, we define the $L$-vector subspace $V_A$ of $L^n$ by
$$
V_A:=\{(x',x'')\in L^n:x'=Ax''\}
$$
and the $L$-linear map $\pi_A:L^n\to L^r$ by
$$
\pi_A(x',x''):=x'-Ax''.\text{ $\sqbullet$}
$$
\end{defn}

Observe that $\{V_A\}_{A\in\mc{M}_{r,n-r}(L)}$ is the family of all $(n-r)$-dimensional $L$-vector subspaces of $L^n$ that are transverse to $L^r$, and $\pi_A$ is the (affine) projection of $L^n$ onto $L^r$ in the direction of $V_A$, that is, for each $x=(x',x'')\in L^n$, $\pi_A(x)=x'-Ax''$ corresponds to the unique point of the intersection $L^r\cap(x+V_A)$, because $(x',x'')+(A(-x''),-x'')=(x'-Ax'',0)$.

Denote $\mc{M}_{r,n-r}(K)$ the $K$-vector space of all $r\times(n-r)$-matrices with coefficients in $K$ and identify it with $K^{r(n-r)}$. Thus, we can write $\mc{M}_{r,n-r}(K)=K^{r(n-r)}\subset L^{r(n-r)}=\mc{M}_{r,n-r}(L)$ and, for each $A\in\mc{M}_{r,n-r}(K)$, we consider the $L$-vector subspace $V_A$ of $L^n$ and the $L$-linear map $\pi_A:L^n\to L^r$.

\begin{defn}\label{def:embedding-dim}
Let $X\subset L^n$ be a $K$-algebraic set. For each $a\in X$, we define the \emph{$L|K$-embedding dimension $e^{L|K}_a(X)$ of $X\subset L^n$ at $a$} as the $L$-vector dimension of the $L|K$-Zariski tangent space $T^{L|K}_a(X)$, that is, $e^{L|K}_a(X):=\dim_L(T^{L|K}_a(X))$ (see Definition \ref{ek-Zar-tang}). We call \emph{$L|K$-embedded dimension of $X\subset L^n$} the natural number $e^{L|K}(X):=\max_{a\in X}\{e^{L|K}_a(X)\}$. $\sqbullet$
\end{defn}

\begin{remark}\label{rem633}
By Remarks \ref{tang-bcr}$(\mr{iii})$, if $L$ is algebraically closed, $T^{L|K}_a(X)$ coincides with the usual Zariski tangent space $T_a(X)=T^{L|L}_a(X)$ of the set $X\subset L^n$, viewed as a usual ($L$-)algebraic subset of $L^n$, so $e^{L|K}(X)$ is equal to the usual embedding dimension $e(X):=e^{L|L}(X)$ of $X\subset L^n$. If~$L$ is a real closed field $R$, then $T_a(X)$ can be strictly contained in $T^{R|K}_a(X)$, and $e(X)$ strictly smaller than $e^{R|K}(X)$. For instance, if $K=\Q$ and $X$ is the line $\ZZ_R(\x_1-\sqrt[3]{2}\x_2)=\ZZ_R(\x_1^3-2\x_2^3)$ of $R^2$, then $e(X)=1<2=e^{R|\Q}(X)$. $\sqbullet$
\end{remark}

The next result is our $L|K$-generic projection theorem. In its statement, we will mention the concepts of $K$-biregular isomorphism and $K$-nonsingular/$K$-singular loci introduced in Definitions \ref{def:331} and \ref{E|K-regular}.

\begin{thm}[$L|K$-generic projection theorem]\label{emb1}
Let $X\subset L^n$ be a $K$-algebraic set of dimension $d$ and let $r:=\max\{e^{L|K}(X)+d-1,2d+1\}$. If $r<n$, then there exists a non-empty $K$-Zariski open subset $\Omega$ of $\mc{M}_{r,n-r}(K)=K^{r(n-r)}$ such that, for each $A\in\Omega$, we have:
\begin{itemize}
\item[$(\mr{i})$] $\pi_A(X)\subset L^r$ is $K$-algebraic.
\item[$(\mr{ii})$] The restriction of $\pi_A$ from $X\subset L^n$ to $\pi_A(X)\subset L^r$ is a $K$-biregular isomorphism. In particular, $\Reg^K(\pi_A(X))=\pi_A(\Reg^K(X))$ and $\Sing^K(\pi_A(X))=\pi_A(\Sing^K(X))$.
\end{itemize}
\end{thm}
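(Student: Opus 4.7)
The plan is to realize $\Omega$ as the complement in $K^{r(n-r)}$ of the trace on $K^{r(n-r)}$ of a proper $K$-algebraic subset of $\mc{M}_{r,n-r}(L)$, obtained as a union $B_1\cup B_2\cup B_3$ of bad loci for the failure of: (a) injectivity of $\pi_A|_X$; (b) injectivity of $\pi_A$ on every $L|K$-Zariski tangent space $T^{L|K}_a(X)$ for $a\in X$; (c) the extension of $\pi_A$ to a regular morphism on the $K$-Zariski projective closure $\overline{X}\subset\PP^n(L)$ from Lemma \ref{lem:266}. When $L=R$ is real closed, I first reduce to the complex case by complexifying: set $X_C:=\ZZ_C(\II_K(X))\subset C^n$, which is $K$-algebraic of $C$-dimension $d$ by Lemma \ref{rkdim} and Corollary \ref{kdim}, and deduce the real conclusion from the complex one via complex conjugation, which forces $\pi_A(X)=\pi_A(X_C)\cap R^r$ to be $K$-algebraic in $R^r$ and $\pi_A|_X$ to inherit $K$-biregularity.

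For (c), let $H_\infty:=\{[0:x_1:\ldots:x_n]\}\subset\PP^n(L)$ be the hyperplane at infinity and let $C_A:=\{[0:x':x'']:x'=Ax''\}\subset H_\infty$ be the $(n-r-1)$-dimensional center of the projective projection $\bar\pi_A:\PP^n(L)\setminus C_A\to\PP^r(L)$, $[x_0:x':x'']\mapsto[x_0:x'-Ax'']$. Whenever $\overline{X}\cap C_A=\varnothing$, the map $\bar\pi_A|_{\overline{X}}$ is regular, and Theorem \ref{thm:K-elimination} yields that $\bar\pi_A(\overline{X})\subset\PP^r(L)$ is $K$-algebraic, so $\pi_A(X)=\bar\pi_A(\overline{X})\cap L^r$ is $K$-algebraic in $L^r$. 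The failure locus $B_3=\{A:\overline{X}\cap C_A\neq\varnothing\}$ is the projection to $\mc{M}_{r,n-r}(L)$ of the $K$-algebraic incidence $\{([v],A)\in\PP^{n-1}(L)\times\mc{M}_{r,n-r}(L):[v]\in\overline{X}\cap H_\infty,\,v'=Av''\}$; by Theorem \ref{thm:K-elimination}, $B_3$ is $K$-constructible, and the bound $\dim(\overline{X}\cap H_\infty)\leq d-1$ combined with the codimension-$r$ fiber over each $[v]$ with $v''\neq 0$ yields $\dim B_3\leq d-1+r(n-r)-r<r(n-r)$ since $r\geq d$.

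For (a) and (b), one forms the analogous $K$-algebraic incidences $\{(a,b,A)\in\overline{X}\times\overline{X}\times\mc{M}_{r,n-r}(L):a\neq b,\,\bar\pi_A(a)=\bar\pi_A(b)\}$ and $\{(a,[v],A)\in X\times\PP^{n-1}(L)\times\mc{M}_{r,n-r}(L):v\in T^{L|K}_a(X)\setminus\{0\},\,v\in V_A\}$, the tangential condition being encoded by the Jacobian equations $\langle\nabla g(a),v\rangle=0$ over a finite system of generators $g$ of $\II_K(X)$. Projecting to $\mc{M}_{r,n-r}(L)$ via Theorem \ref{thm:K-elimination} produces $K$-constructible bad loci $B_1,B_2$. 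The estimates are $\dim B_1\leq 2d+r(n-r)-r<r(n-r)$ for $r\geq 2d+1$; and for $B_2$, stratifying the tangent bundle over the open $K$-nonsingular locus $\Reg^K(X)$ (where the fiber has $L$-dimension $d$, contributing total dimension $2d$) and its $K$-singular complement (of base dimension $<d$, with fibers of $L$-dimension at most $e^{L|K}(X)$) yields that the projectivized tangent incidence has dimension at most $\max\{2d-1,\,d+e^{L|K}(X)-2\}$, whence $\dim B_2\leq\max\{2d-1,\,d+e^{L|K}(X)-2\}+r(n-r)-r<r(n-r)$ under the hypothesis $r\geq\max\{2d+1,\,e^{L|K}(X)+d-1\}$.

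Set $B:=B_1\cup B_2\cup B_3$ and take $\Omega:=K^{r(n-r)}\setminus(\zcl^K_{\mc{M}_{r,n-r}(L)}(B)\cap K^{r(n-r)})$; this is $K$-Zariski open in $K^{r(n-r)}$, and it is non-empty because $K$ has characteristic zero and is therefore infinite, so no nonzero polynomial in $K[\mathbf{a}]$ (with $\mathbf{a}$ the coordinates on $\mc{M}_{r,n-r}$) can vanish identically on $K^{r(n-r)}$. For $A\in\Omega$, conditions (a), (b), (c) together force $\bar\pi_A|_{\overline{X}}:\overline{X}\to\bar\pi_A(\overline{X})$ to be a bijective projective morphism with injective differential at every point, hence a closed $K$-biregular immersion by a Zariski-main-theorem style argument; restricting to the affine chart $\{x_0\neq 0\}$ produces the desired $K$-biregular isomorphism $\pi_A|_X:X\to\pi_A(X)$, and the equalities $\Reg^K(\pi_A(X))=\pi_A(\Reg^K(X))$ and $\Sing^K(\pi_A(X))=\pi_A(\Sing^K(X))$ then follow from the fact that a $K$-biregular isomorphism induces ring isomorphisms on $L|K$-local rings. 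The principal obstacle will be verifying the $K$-algebraicity (rather than mere $L$-algebraicity) of each bad locus and the $K$-regularity of the inverse map: one must ensure that the incidence varieties involving $T^{L|K}_a(X)$ are cut out by polynomials with coefficients in $K$, and that Theorem \ref{thm:K-elimination} genuinely preserves the $K$-structure through projection, so that the intersection with $K^{r(n-r)}$ remains both $K$-Zariski open and non-empty.
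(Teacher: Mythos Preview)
Your overall architecture—bad loci for injectivity, tangent injectivity, and avoidance of the projection center at infinity—matches the paper's approach closely; the paper packages the first two via maps $F(x,y)=[0,x-y]$ and $G(x,v)=[v]$ into $H^n_0\cong\PP^{n-1}(L)$ rather than forming incidences over $\mc{M}_{r,n-r}(L)$, but this is cosmetic.

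There is, however, a genuine gap in your reduction for real closed $L=R$. You pass to $X_C=\ZZ_C(\II_K(X))$ and propose to apply the complex case. But the hypothesis gives you $r\geq e^{R|K}(X)+d-1$, whereas the complex case applied to $X_C$ would require $r\geq e^{C|K}(X_C)+d-1$. These need not coincide: the set $D=\{a\in X_C:e^{C|K}_a(X_C)>e^{R|K}(X)\}$ is $K$-algebraic in $C^n$ and satisfies $D\cap R^n=\varnothing$, but $D$ can be non-empty (for instance, for $f=\x_2^2-(\x_1^2+1)^3\in\Q[\x_1,\x_2]$ the real curve $\ZZ_R(f)$ is $\Q$-nonsingular while $\ZZ_C(f)$ is singular at $(\pm i,0)$). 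The paper handles this with Lemma~\ref{dimcompl}: it restricts to the $K$-Zariski open set $V:=X_C\setminus D$, a neighborhood of $X$ in $X_C$ on which the embedding dimension is controlled by $e^{R|K}(X)$, builds the tangential bad locus from $T^*V$ rather than $T^*X_C$, and then invokes item~$(\mr{iii})$ of Theorem~\ref{corproj} (the open-subset version, not the global item~$(\mr{iv})$) to conclude that $\pi_A|_V$ is a $K$-biregular isomorphism onto a $K$-Zariski open subset of $\pi_A(X_C)$; the real statement follows since $V\cap R^n=X$ and $\pi_A$ commutes with complex conjugation.

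A smaller point: your ``Zariski-main-theorem style argument'' is doing real work and cannot be left as a gesture. The paper proves from scratch (Theorems~\ref{thmproj} and~\ref{corproj}) that when $\overline{X}\cap\widehat{V}_A=\varnothing$, pointwise injectivity of $\pi$ together with injectivity of $d_c\pi$ \emph{for all $c\in X\cap(\kbar^\sqbullet)^n$} forces the pullback $\pi^*:\reg^{K|K}_{Y,\pi(c)}\to\reg^{K|K}_{X,c}$ to be a ring isomorphism, and then extracts a $K$-regular local inverse via Lemma~\ref{lem:222loc}. The classical Zariski main theorem would give $L$-biregularity; getting the inverse over $K$ is precisely the subtlety you flag at the end but do not resolve.
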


An immediate consequence is the following.

\begin{cor}\label{cor:emb1}
Let $X\subset L^n$ be a $K$-nonsingular $K$-algebraic set of dimension $d$ and let $r:=2d+1$. If $r<n$, then there exists a non-empty $K$-Zariski open subset $\Omega$ of $\mc{M}_{r,n-r}(K)=K^{r(n-r)}$ such that, for each $A\in\Omega$, we have:
\begin{itemize}
\item[$(\mr{i})$] $\pi_A(X)\subset L^r$ is a $K$-nonsingular $K$-algebraic set.
\item[$(\mr{ii})$] The restriction of $\pi_A$ from $X$ to $\pi_A(X)$ is a $K$-biregular isomorphism.
\end{itemize}
\end{cor}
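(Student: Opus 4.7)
The plan is to reduce this corollary directly to Theorem \ref{emb1} together with two elementary observations: a computation of the $L|K$-embedded dimension of a $K$-nonsingular set, and the invariance of the $K$-nonsingular locus under $K$-biregular isomorphisms. No new ingredient is needed.

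First I would compute $e^{L|K}(X)$. Since $X \subset L^n$ is $K$-nonsingular of dimension $d$, for every $a \in X$ the local ring $\reg^{L|K}_{X,a}$ is regular of dimension $d$. Writing $\gtM_a$ for its maximal ideal, we have $\dim_L(\gtM_a/\gtM_a^2) = d$, and Lemma \ref{451} gives an $L[a]=L$-linear isomorphism between $\gtM_a/\gtM_a^2$ and the dual of $T^{L|K}_a(X)$. Consequently $\dim_L(T^{L|K}_a(X)) = d$, i.e.\ $e^{L|K}_a(X) = d$ for every $a \in X$, and therefore $e^{L|K}(X) = d$. Alternatively, one may simply invoke Remarks \ref{rem528}$(\mr{ii})$.

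With this in hand, $\max\{e^{L|K}(X) + d - 1,\, 2d+1\} = \max\{2d-1,\, 2d+1\} = 2d+1 = r$, so the hypothesis of Theorem \ref{emb1} is met with exactly the value of $r$ in the statement. Applying Theorem \ref{emb1} produces a non-empty $K$-Zariski open subset $\Omega \subset \mc{M}_{r,n-r}(K) = K^{r(n-r)}$ such that, for every $A \in \Omega$, $\pi_A(X) \subset L^r$ is $K$-algebraic and $\pi_A|_X \colon X \to \pi_A(X)$ is a $K$-biregular isomorphism. This already gives conclusion $(\mr{ii})$.

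It remains to establish conclusion $(\mr{i})$, i.e.\ the $K$-nonsingularity of $\pi_A(X)$. Here I would invoke the general invariance of $K$-nonsingular loci under $K$-biregular isomorphisms recorded in the remark following Definition \ref{E|K-regular}: for any $K$-biregular isomorphism $f\colon X' \to Y'$ between $K$-algebraic sets, one has $\Reg^{L|K}(Y',e) = f(\Reg^{L|K}(X',e))$ for all $e \in \N$. Applying this with $f := \pi_A|_X$ and $e := d$ yields
\[
\Reg^{L|K}(\pi_A(X)) = \pi_A(\Reg^{L|K}(X)) = \pi_A(X),
\]
the second equality because $X$ is $K$-nonsingular. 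Hence every point of $\pi_A(X)$ is $K$-nonsingular of dimension $d$, which is $(\mr{i})$. No real obstacle is expected; the main task is just to correctly verify that the numerical value of $r$ in Theorem \ref{emb1} collapses to $2d+1$ under the $K$-nonsingularity hypothesis, which is exactly the role of the embedded dimension computation above.
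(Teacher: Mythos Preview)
Your proof is correct and follows essentially the same route as the paper: compute $e^{L|K}(X)=d$ via Remark \ref{rem528}$(\mr{ii})$, apply Theorem \ref{emb1}, and conclude $K$-nonsingularity of $\pi_A(X)$ from the invariance of $\Reg^{L|K}$ under $K$-biregular isomorphisms. The only cosmetic difference is that the paper cites this invariance directly from the ``In particular'' clause of Theorem \ref{emb1}$(\mr{ii})$ rather than from the separate remark after Definition \ref{E|K-regular}.
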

\begin{proof}
Apply Theorem \ref{emb1} to $X$. As $X\subset R^n$ is $K$-nonsingular, we have $r=2d+1$, because $e^{L|K}(X)=d$ by Remark \ref{rem528}$(\mr{ii})$. In addition, $\Reg^K(\pi_A(X))=\pi_A(\Reg^K(X))=\pi_A(X)$, so the $K$-algebraic set $\pi_A(X)\subset R^r$ is also $K$-nonsingular.
\end{proof}

To prove Theorem \ref{emb1}, we will adapt the classical argument used in the standard $L|L$-case to the present situation. As we will see, this is not an easy task. In fact, it will require a specific preparation based on several concepts and results that were first introduced and proved in the previous sections. To enable the reader to follow the proof of the $L|K$-generic projection theorem more quickly, we will postpone the proofs of some preliminary results to Appendix \ref{appendix:c}.4.

If $n\geq2$, we denote $\tilde{\x}$ the indeterminates $(\x_1,\ldots,\x_{n-1})$, $\tilde{x}$ the coordinates $(x_1,\ldots,x_{n-1})^T$ of $L^{n-1}$ and $\Pi:L^n\to L^{n-1}$ the projection map onto the first $n-1$ coordinates, that is, $\Pi(x):=\tilde{x}$ for each $x\in L^n$.

\begin{lem}\label{kJP}
Suppose that $L$ is algebraically closed and $n\geq2$. Let $\gta$ be an ideal of $K[\x]$ that contains a monic polynomial $\ell(\x):=\x_n^d+a_{n-1}(\tilde{\x})\x_n^{d-1}+\cdots+a_1(\tilde{\x})\x_n+a_0(\tilde{\x})\in K[\tilde{\x}][\x_n]=K[\x]$ with respect to $\x_n$. Then it holds:
$$
\Pi(\ZZ_L(\gta))=\ZZ_L(\gta\cap K[\tilde{\x}]).
$$
\end{lem}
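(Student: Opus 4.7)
The plan is to reduce the statement to the classical affine elimination theorem in the $L|L$-setting and then translate from $L[\tilde{\x}]$ back to $K[\tilde{\x}]$ by means of the restriction-of-coefficients identity proven in Corollary \ref{intkx}. The inclusion $\Pi(\ZZ_L(\gta))\subset\ZZ_L(\gta\cap K[\tilde{\x}])$ is immediate: if $(\tilde{y},y_n)\in\ZZ_L(\gta)$ and $f\in\gta\cap K[\tilde{\x}]$, then $f(\tilde{y})=f(\tilde{y},y_n)=0$. Only the reverse inclusion requires work.

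First I would pass from $\gta\subset K[\x]$ to its extension $\gta L[\x]\subset L[\x]$, observing that $\ZZ_L(\gta)=\ZZ_L(\gta L[\x])$ as subsets of $L^n$ and that $\gta L[\x]$ still contains the monic polynomial $\ell$. I would then invoke the classical $L|L$-affine elimination theorem, which asserts that
$$
\Pi(\ZZ_L(\gta L[\x]))=\ZZ_L\bigl((\gta L[\x])\cap L[\tilde{\x}]\bigr).
$$
This identity holds because the natural injection $L[\tilde{\x}]/((\gta L[\x])\cap L[\tilde{\x}])\hookrightarrow L[\x]/\gta L[\x]$ is an integral (indeed finite) extension: the class of $\x_n$ is annihilated by the monic polynomial $\ell$, which makes $L[\x]/(\ell)$ a free $L[\tilde{\x}]$-module of rank $d$ and hence $L[\x]/\gta L[\x]$ a finitely generated $L[\tilde{\x}]/((\gta L[\x])\cap L[\tilde{\x}])$-module. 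The lying-over property for integral extensions, combined with Hilbert's Nullstellensatz (valid because $L$ is algebraically closed), guarantees that every maximal ideal of the quotient ring on the base lifts to a maximal ideal upstairs, which is precisely the surjectivity of $\Pi$ onto $\ZZ_L((\gta L[\x])\cap L[\tilde{\x}])$ at the level of closed points.

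Next I would apply Corollary \ref{intkx} to obtain the crucial algebraic identity
$$
(\gta L[\x])\cap L[\tilde{\x}]=(\gta\cap K[\tilde{\x}])L[\tilde{\x}],
$$
from which it follows that
$$
\ZZ_L\bigl((\gta L[\x])\cap L[\tilde{\x}]\bigr)=\ZZ_L\bigl((\gta\cap K[\tilde{\x}])L[\tilde{\x}]\bigr)=\ZZ_L(\gta\cap K[\tilde{\x}]).
$$
Chaining the three equalities yields $\Pi(\ZZ_L(\gta))=\ZZ_L(\gta\cap K[\tilde{\x}])$.

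The only genuinely substantial ingredient is the classical $L|L$-elimination statement cited in the second step, whose proof in turn relies on the algebraic closedness of $L$. The novelty here is not geometric but bookkeeping: once the monic polynomial hypothesis is transported to $\gta L[\x]$ and the classical result is applied, Corollary \ref{intkx} absorbs the descent from $L[\tilde{\x}]$-coefficients to $K[\tilde{\x}]$-coefficients with no extra effort. Thus I expect the entire argument to be short, with the only delicate point being a clean reference to the integral-extension mechanism underlying the classical elimination theorem.
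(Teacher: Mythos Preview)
Your proposal is correct and follows essentially the same route as the paper: extend $\gta$ to $\gta L[\x]$, apply the classical projection theorem over $L$ (the paper cites \cite[Thm.2.2.8]{jp} rather than sketching the integral-extension argument as you do), and then invoke Corollary~\ref{intkx} to descend from $L[\tilde{\x}]$ back to $K[\tilde{\x}]$. The paper's proof is slightly more terse, omitting the easy inclusion and the justification of the classical step, but the structure is identical.
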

\begin{proof}
If $K=L$, this result reduces to the well-known Project Theorem \cite[Thm.2.2.8]{jp}. Thus, applying \cite[Thm.2.2.8]{jp} to the ideal $\gta L[\x]$ of $L[\x]$, we obtain:
$$
\Pi(\ZZ_L(\gta))=\Pi(\ZZ_L(\gta L[\x]))=\ZZ_L((\gta L[\x])\cap L[\tilde{\x}]).
$$
By Corollary \ref{intkx}, it holds $(\gta L[\x])\cap L[\tilde{\x}]=(\gta\cap K[\tilde{\x}])L[\tilde{\x}]$, so
$$
\Pi(\ZZ_L(\gta))=\ZZ_L((\gta\cap K[\tilde{\x}])L[\tilde{\x}])=\ZZ_L(\gta\cap K[\tilde{\x}]),
$$
as required.
\end{proof}

Suppose we have two $K$-algebraic sets $X\subset L^n$ and $Y\subset L^m$, and a $K$-regular map $f:X\to Y$ (see Definition \ref{def:331}). Of course, $X\subset L^n$ and $Y\subset L^m$ are also usual ($L$-)algebraic sets, and $f:X\to Y$ is also a usual ($L$-)regular map. Thus, for each $a\in X$, we can speak about the usual differential $d_af:T_a(X)\to T_{f(a)}(Y)$ of $f$ at $a$ between the usual Zariski tangent spaces. Such differential coincides with the $L$-differential $d^L_af:T^L_a(X)\to T^L_{f(a)}(Y)$, see Remarks \ref{rem:435}$(\mr{ii})$.

\begin{thm}\label{thmproj}
Suppose that $L$ is algebraically closed and $n\geq2$. Consider a $K$-algebraic set $X\subset L^n$ and assume that $\II_K(X)$ contains a monic polynomial $\ell(\x):=\x_n^d+a_{n-1}(\tilde{\x})\x_n^{d-1}+\cdots+a_1(\tilde{\x})\x_n+a_0(\tilde{\x})\in K[\tilde{\x}][\x_n]=K[\x]$ with respect to $\x_n$. Set $Y:=\Pi(X)$ and denote $\pi:X\to Y$ the restriction of $\Pi$ from $X$ to $Y$. Then $Y\subset L^{n-1}$ is a $K$-algebraic set, $\dim(Y)=\dim(X)$ and $\pi$ is a $K$-Zariski closed map, that is, the image under $\pi$ of each $K$-Zariski closed subset of $X$ is a $K$-Zariski closed subset of $Y$. Moreover, if we pick a point $a\in X\cap(\kbar^\sqbullet)^n$ and we set $b:=\pi(a)\in Y\cap(\kbar^\sqbullet)^{n-1}$, we have the following properties:
\begin{itemize}
\item[$(\mr{i})$] If $\pi^{-1}(b)=\{a\}$, then the homomorphism $\pi^*:\reg^{K|K}_{Y,b}\to\reg^{K|K}_{X,a}$ is finite.
\item[$(\mr{ii})$] If $\pi^{-1}(b)=\{a\}$ and $d_a\pi:T_a(X)\to T_b(Y)$ is injective, then:
 \begin{itemize}
\item[$(\mr{a})$] $K[a]=K[b]$, the differential $d^{K|K}_a\pi:T^{K|K}_a(X)\to T^{K|K}_b(Y)$ is well-defined and it is injective;
\item[$(\mr{b})$] $\pi^*:\reg^{K|K}_{Y,b}\to\reg^{K|K}_{X,a}$ is a ring isomorphism;
\item[$(\mr{c})$] there exists a $K$-Zariski open neighborhood $U$ of $a$ in $X$ such that $\pi(U)$ is a $K$-Zariski open neighborhood of $b$ in $Y$ and the restriction of $\pi$ from $U$ to $\pi(U)$ is a $K$-biregular isomorphism;
\item[$(\mr{d})$] $d_a\pi$ is a $L$-linear isomorphism and $d_a^{K|K}\pi$ is a $K[a]$-linear isomorphism.
 \end{itemize}
 \end{itemize}
In addition, we have:
\begin{itemize} 
\item[$(\mr{iii})$] Let $V$ be a $K$-Zariski open subset of $X$ and let $\pi|_V:V\to\pi(V)$ be the corresponding restriction of $\pi$. If $\pi^{-1}(\pi(c))=\{c\}$ and the differential $d_c\pi:T_c(X)\to T_{\pi(c)}(Y)$ is injective for each $c\in V\cap(\kbar^\sqbullet)^n$, then $\pi(V)$ is a $K$-Zariski open subset of $Y$ and $\pi|_V$ is a $K$-biregular isomorphism.
\item[$(\mr{iv})$] The map $\pi:X\to Y$ is a $K$-biregular isomorphism if and only if $\pi$ is injective and its differential $d_c\pi:T_c(X)\to T_{\pi(c)}(Y)$ is injective for each $c\in X\cap(\kbar^\sqbullet)^n$.
\end{itemize}
\end{thm}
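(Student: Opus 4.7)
The plan is to begin with the three preliminary assertions. That $Y \subset L^{n-1}$ is $K$-algebraic is immediate from Lemma \ref{kJP} applied to $\gta := \II_K(X)$, which gives $Y = \Pi(X) = \ZZ_L(\II_K(X) \cap K[\tilde{\x}])$. For the $K$-Zariski closedness of $\pi$, any $K$-Zariski closed subset $W$ of $X$ has the form $W = \ZZ_L(\gtb)$ for some ideal $\gtb \subset K[\x]$ containing $\II_K(X)$ (hence $\ell$), so Lemma \ref{kJP} again yields $\pi(W) = \Pi(W) = \ZZ_L(\gtb \cap K[\tilde{\x}])$, a $K$-Zariski closed subset of $Y$. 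For the dimension equality, the presence of the monic $\ell$ makes $K[\x]/\II_K(X)$ integral (hence finite as a module) over $K[\tilde{\x}]/(\II_K(X) \cap K[\tilde{\x}])$, which in turn is a quotient of $K[\tilde{\x}]/\II_K(Y)$. By the dimension formula for finite ring extensions and Theorem \ref{dimension}, I conclude $\dim(X) = \dim_K(X) = \dim_K(Y) = \dim(Y)$.

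For item (i), observe that the inclusion $K[\tilde{\x}]/\II_K(Y) \hookrightarrow K[\x]/\II_K(X)$ is finite via $\ell$. The hypothesis $\pi^{-1}(b) = \{a\}$ ensures that the unique maximal ideal of $K[\x]/\II_K(X)$ lying over $\gtn_{K,b}/\II_K(Y)$ is $\gtn_{K,a}/\II_K(X)$, so standard localization commutes with the finite module structure (see for instance the argument at the core of \cite[(33B)]{m}), giving the required finiteness of $\pi^* : \reg^{K|K}_{Y,b} \to \reg^{K|K}_{X,a}$.

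Item (ii) is where the main difficulty lies, especially the equality $K[a] = K[b]$ and the fact that $\pi^*$ is an isomorphism. The injectivity of $d_a\pi$ means that no nonzero tangent vector at $a$ is killed by forgetting the last coordinate; since $\ell \in \II_L(X)$, testing $\ell$ on a vector $(0,\ldots,0,v_n)$ shows $\frac{\partial\ell}{\partial\x_n}(a) \cdot v_n = 0$, hence $\frac{\partial\ell}{\partial\x_n}(a) \neq 0$. Combined with $\ell(\tilde{a}, a_n) = 0$, this says $\ell(\tilde{a},\x_n)$ has $a_n$ as a simple root, so by Hensel's lemma applied in the completion $\widehat{K[\tilde{\x}]_{\gtn_{K,b}}} \cong K[b][[\tilde{\x}-\tilde{a}]]$ (Corollary \ref{c}), the factor $(\x_n - \alpha)$ of $\ell$ corresponding to $a_n$ has $\alpha \in K[b][[\tilde{\x}-\tilde{a}]]$ with constant term $a_n$, forcing $a_n \in K[b]$ and hence $K[a] = K[b]$. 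This makes $d^{K|K}_a\pi$ well-defined; its injectivity follows from that of $d_a\pi$ by restriction. The isomorphism $\pi^*$ of (b) follows from Lemma \ref{completion}: the completion $\widehat{\reg^{K|K}_{X,a}}$ is the quotient of $K[a][[\x-a]]$ by the image of $\II_K(X)$, and Hensel's lemma together with $\pi^{-1}(b)=\{a\}$ identifies this quotient with $\widehat{\reg^{K|K}_{Y,b}}$. By finiteness from (i) and faithful flatness of completion, $\pi^*$ itself is an isomorphism, and (c) then follows from standard spreading-out of local isomorphisms to $K$-Zariski neighborhoods; (d) is a consequence via Lemma \ref{455}.

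For (iii), let $a \in V \cap (\kbar^\sqbullet)^n$ with image $b := \pi(a)$. By (ii)(c) applied at $a$, there is a $K$-Zariski open neighborhood $U_a$ of $a$ in $X$ with $\pi(U_a)$ open in $Y$ such that $\pi|_{U_a}$ is a $K$-biregular isomorphism; shrinking $U_a$, I may assume $U_a \subset V$. The Galois-invariance of $E|K$-local data (Lemma \ref{inv-E|K}) propagates these neighborhoods over the $K$-Zariski closure of $\{a\}$, and since $\pi(V)$ is then the union of finitely many $K$-Zariski open sets $\pi(U_a)$ as $a$ ranges over a $K$-Zariski dense subset of $V \cap (\kbar^\sqbullet)^n$, the image $\pi(V)$ is $K$-Zariski open in $Y$; the locally defined inverses glue to a global $K$-regular inverse by the injectivity assumption on $\pi$ on $V$. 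For (iv), the `only if' direction is trivial, while the `if' direction follows by applying (iii) with $V = X$. The hard step is producing the Hensel-lifting argument of (ii), because it requires simultaneously controlling the residue field extension $K[b] \subset K[a]$, the local rings, and their completions — handling these issues in the $K$-algebraic setting (rather than in the more forgiving $L$-algebraic setting) is what distinguishes this result from its classical counterpart.
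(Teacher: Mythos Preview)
Your overall strategy is reasonable, but there is a genuine error in your argument for (ii)(a), and a gap in (iii).

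\textbf{The error in (ii)(a).} You claim that injectivity of $d_a\pi$ forces $\frac{\partial\ell}{\partial\x_n}(a)\neq0$, and then apply Hensel's lemma to $\ell$. This implication is false. Injectivity of $d_a\pi$ says only that $(0,\ldots,0,1)^T\notin T_a(X)$, i.e.\ that \emph{some} $g\in\II_K(X)$ satisfies $\frac{\partial g}{\partial\x_n}(a)\neq0$; it says nothing about the particular monic polynomial $\ell$. For a concrete counterexample, take $X=\ZZ_L(\x_2)\subset L^2$, $a=(0,0)$, and choose $\ell=\x_2^2\in\II_K(X)$: here $d_a\pi$ is injective, yet $\frac{\partial\ell}{\partial\x_2}(a)=0$, so $\ell(b,\x_2)$ does \emph{not} have $a_2$ as a simple root and your Hensel step fails. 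The paper avoids this by working not with $\ell$ alone but with the entire ideal $\gtb:=\{f(b,\x_n):f\in\II_K(X)\}\subset K[b][\x_n]$. Since $K[b][\x_n]$ is a PID, $\gtb=(h(b,\x_n))$ for some $h\in\II_K(X)$; the hypothesis $\pi^{-1}(b)=\{a\}$ gives $h(b,\x_n)=c(\x_n-a_n)^m$, and injectivity of $d_a\pi$ then forces $m=1$ (else $(0,\ldots,0,1)^T$ would lie in $T_a(X)$, since every $f\in\II_K(X)$ factors through $h$ after specializing $\tilde\x\mapsto b$). From $m=1$ one reads off $a_n\in K[b]$. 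Your Hensel idea can be salvaged, but only after first locating a polynomial in $\II_K(X)$ with a simple root at $a_n$ over $b$---which is exactly what the paper's PID argument produces.

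\textbf{The gap in (iii).} You invoke ``the injectivity assumption on $\pi$ on $V$'' to glue local inverses, but the hypothesis only gives $\pi^{-1}(\pi(c))=\{c\}$ for $c\in V\cap(\kbar^\sqbullet)^n$, not for all $c\in V$. Passing from injectivity on $\kbar^\sqbullet$-points to injectivity (and $K$-biregularity) on all of $V$ is precisely the content of the extension-of-coefficients step: the paper shows that $\pi(V\cap(\kbar^\sqbullet)^n)=\pi(V)\cap(\kbar^\sqbullet)^{n-1}$, builds the $K$-regular inverse $\rho^{-1}$ on $\kbar^\sqbullet$-points from the local inverses of (ii)(c), and then obtains $(\pi|_V)^{-1}=(\rho^{-1})_L$ by extending coefficients. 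Your sketch neither supplies this step nor explains why ``finitely many'' open sets suffice.

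For (i) and (ii)(b), your commutative-algebra shortcuts (unique prime lying over, completion plus faithful flatness) are plausible but require care you have not supplied; the paper instead gives explicit constructions---a direct $\pi^*(\reg^{K|K}_{Y,b})$-module generating set $\{1,\x_n,\ldots,\x_n^{d-1}\}$ for (i), and a Krull--Nakayama argument from surjectivity of $\pi^{**}$ (via Lemma~\ref{455}) for (ii)(b)---that sidestep these subtleties.
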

\begin{proof}
 The proof will use Lemma \ref{kJP} and results from Sections \ref{s1} and \ref{s3}: see Appendix \ref{appendix:c}.4.
\end{proof}

As in Subsection \ref{subsec:proj}, for each $i\in\{0,\ldots,n\}$, we define $H^n_i:=\{[x_0,x]\in\PP^n(L):x_i=0\}$, $U^n_i:=\PP^n(L)\setminus H^n_i$ and $\theta^n_i:U^n_i\to L^n$ by $\theta^n_i([x_0,x]):=\big(\frac{x_0}{x_i},\ldots,\frac{x_{i-1}}{x_i},\frac{x_{i+1}}{x_i},\ldots,\frac{x_n}{x_i}\big)$. Identify $\PP^n(L)\setminus H^n_0$ with $L^n$ via $\theta^n_0$.} By Remarks \ref{rem265}$(\mr{iii})$, if $S\subset\PP^n(L)$ is $K$-constructible, then $\dim_K(S)=\dim_L(S)$. In particular, this is true when $S\subset\PP^n(L)$ is $K$-algebraic. \textit{In the remaining part of this subsection, we write $\dim(S):=\dim_K(S)=\dim_L(S)$ for each $K$-constructible set $S\subset\PP^n(L)$.}

By Definition \ref{V_AP_A}, given a matrix $A\in\mc{M}_{r,n-r}(L)$, $V_A$ is the $L$-vector subspace of $L^n$ defined by the $L$-linear equations $x'=Ax''$ and $\pi_A:L^n\to L^r$ is the corresponding projection $\pi(x)=x'-Ax''$ in the direction of $V_A$, where $x':=(x_1,\ldots,x_r)^T$ and $x'':=(x_{r+1},\ldots,x_n)^T$ are the coordinates of $L^r$ and $L^{n-r}$, respectively.

\begin{notation}\label{notaz526}
For each $A\in\mc{M}_{r,n-r}(L)$, we define the $(n-r-1)$-dimensional projective subspace $\widehat{V}_A$ of $\PP^n(L)$ by
$$
\widehat{V}_A:=H^n_0\cap\zcl_{\PP^n(L)}^L(V_A)=\{[0,x',x'']\in\PP^n(L):x'=Ax''\},
$$
where $\zcl_{\PP^n(L)}^L(V_A)$ is the usual ($L$-)Zariski closure of $V_A$ in~$\PP^n(L)$ (see Definition \ref{def:proj-K-zar}).
\end{notation}

Theorem \ref{thmproj} can be generalize as follows.

\begin{thm}\label{corproj}
Suppose that $L$ is algebraically closed. Let $X\subset L^n$ be a $K$-algebraic set, let $\overline{X}$ be the $K$-Zariski closure of $X$ in $\PP^n(L)$, let $r\in\N^*$ with $r<n$ and let $A\in\mc{M}_{r,n-r}(K)$ be such that $\overline{X}\cap\widehat{V}_A=\varnothing$. Consider the projection $\pi_A:L^n\to L^r$, set $Y:=\pi_A(X)$ and denote $\pi:X\to Y$ the restriction of $\pi_A$ from $X$ to $Y$. Then $Y\subset L^r$ is a $K$-algebraic set, $\dim(Y)=\dim(X)$ and $\pi$ is a $K$-Zariski closed map. Moreover, if we pick a point $a\in X\cap(\kbar^\sqbullet)^n$ and we set $b:=\pi(a)\in Y\cap(\kbar^\sqbullet)^r$, then items $(\mr{i})$ and $(\mr{ii})$ stated in Theorem \ref{thmproj} hold. In addition, items $(\mr{iii})$ and $(\mr{iv})$ stated in Theorem \ref{thmproj} are also true.
\end{thm}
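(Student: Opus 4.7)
The plan is to reduce Theorem \ref{corproj} to the already proven Theorem \ref{thmproj} via a $K$-linear change of coordinates followed by an iterated one-coordinate projection. First I would introduce the $K$-biregular affine automorphism $\Phi:L^n\to L^n$, $\Phi(x',x''):=(x'-Ax'',x'')$, whose inverse $(y',y'')\mapsto(y'+Ay'',y'')$ also has coefficients in $K$ (since $A\in\mc{M}_{r,n-r}(K)$). A direct computation shows $\pi_A=\pi_0\circ\Phi$, where $\pi_0:L^n\to L^r$ is the standard projection on the first $r$ coordinates, and the induced projective $K$-biregular automorphism $\widehat{\Phi}:[x_0:x':x'']\mapsto[x_0:x'-Ax'':x'']$ sends $\widehat{V}_A$ to $\widehat{V}_0$, so the hypothesis becomes $\overline{X'}\cap\widehat{V}_0=\varnothing$ for $X':=\Phi(X)$. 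This reduces the problem to the case $A=0$ and $\pi_A=\pi_0$.

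Assuming $A=0$, I would factor $\pi_0=\rho_r\circ\rho_{r+1}\circ\cdots\circ\rho_{n-1}$, where $\rho_k:L^{k+1}\to L^k$ is the projection on the first $k$ coordinates. Setting $X_n:=X$ and $X_k:=\rho_k(X_{k+1})\subset L^k$ for $k\in\{r,\ldots,n-1\}$, and denoting by $W_j\subset\PP^j(L)$ the $K$-Zariski closure of $X_j$ in $\PP^j(L)$, the heart of the argument is the key lemma
\begin{equation*}
W_j\cap\widehat{V}_0^{(j)}=\varnothing\quad\text{for all}\quad j\in\{r+1,\ldots,n\},
\end{equation*}
where $\widehat{V}_0^{(j)}:=\{[0:\cdots:0:y_{r+1}:\cdots:y_j]\in\PP^j(L):(y_{r+1},\ldots,y_j)\neq0\}$. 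I would prove this by descending induction on $j$, the base case $j=n$ being the hypothesis. For the inductive step I would consider the point-projection $\sigma_{j+1}:\PP^{j+1}(L)\setminus\{p_{j+1}\}\to\PP^j(L)$, where $p_{j+1}:=[0:\cdots:0:1]\in\widehat{V}_0^{(j+1)}$: since $p_{j+1}\notin W_{j+1}$ by the inductive hypothesis, $\sigma_{j+1}$ is defined on all of $W_{j+1}$, and its image $\sigma_{j+1}(W_{j+1})$ is $K$-algebraic in $\PP^j(L)$ by Theorem \ref{thm:K-elimination} applied to the projection onto $\PP^j(L)$ of $\Gamma\cap(W_{j+1}\times\PP^j(L))$, where $\Gamma\subset\PP^{j+1}(L)\times\PP^j(L)$ is the $K$-algebraic closed graph of $\sigma_{j+1}$. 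Since $\sigma_{j+1}(W_{j+1})\supset X_j$, one has $W_j\subset\sigma_{j+1}(W_{j+1})$; combining with the inclusion $\sigma_{j+1}^{-1}(\widehat{V}_0^{(j)})\subset\widehat{V}_0^{(j+1)}$ and the inductive hypothesis yields $W_j\cap\widehat{V}_0^{(j)}=\varnothing$.

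Granted the key lemma, for each $k\in\{r,\ldots,n-1\}$ the point $p_{k+1}\in\widehat{V}_0^{(k+1)}$ does not lie in $W_{k+1}$, so there is a homogeneous polynomial $G\in\II_K(W_{k+1})\subset K[\x_0,\ldots,\x_{k+1}]$ of some degree $d$ with $G(p_{k+1})\neq0$; since $G$ is homogeneous and $p_{k+1}$ has all coordinates zero except the one in position $k+1$, the coefficient of the monomial $\x_{k+1}^d$ in $G$ is a non-zero constant $c\in K$, and the dehomogenization $c^{-1}G(1,\x_1,\ldots,\x_{k+1})\in\II_K(X_{k+1})$ is monic of degree $d$ in $\x_{k+1}$ with coefficients in $K[\x_1,\ldots,\x_k]$. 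Therefore Theorem \ref{thmproj} applies to each one-coordinate projection $\rho_k:X_{k+1}\to X_k$, yielding the first assertions of Theorem \ref{corproj} (each $X_k$ is $K$-algebraic of the same dimension and each $\rho_k$ is $K$-Zariski closed) upon composing over $k=n-1,\ldots,r$.

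Finally, for the local items $(\mr{i})$--$(\mr{iv})$, given $a\in X\cap(\kbar^\sqbullet)^n$ with $b=\pi(a)$, I would set $a_k:=(\rho_k\circ\cdots\circ\rho_{n-1})(a)\in X_k\cap(\kbar^\sqbullet)^k$ and chain the corresponding items of Theorem \ref{thmproj} along the composition. The assumption $\pi^{-1}(b)=\{a\}$ forces $\rho_k^{-1}(a_k)=\{a_{k+1}\}$ for each $k$, giving item $(\mr{i})$ by composing the resulting finite homomorphisms $\rho_k^*$. Under the additional hypothesis that $d_a\pi$ is injective, one obtains inductively that each $d_{a_{k+1}}\rho_k$ is injective on all of $T_{a_{k+1}}(X_{k+1})$ (using Theorem \ref{thmproj}$(\mr{ii})(\mr{d})$ to conclude that the earlier differentials are $L$-linear isomorphisms, whence surjective), and items $(\mr{a})$--$(\mr{d})$ of Theorem \ref{corproj}$(\mr{ii})$ then follow by composition; items $(\mr{iii})$ and $(\mr{iv})$ follow analogously from iterated application of Theorem \ref{thmproj}$(\mr{iii})(\mr{iv})$. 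The main obstacle I expect is the rigorous application of the main theorem of elimination in the key lemma, in particular ensuring that $\sigma_{j+1}(W_{j+1})$ can be realized as the projection of a $K$-algebraic subset of $\PP^{j+1}(L)\times\PP^j(L)$ as indicated, which relies on the projective-product version of Theorem \ref{thm:K-elimination} alluded to in Remark \ref{product-project-spaces}.
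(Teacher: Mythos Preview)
Your proposal is correct and follows essentially the same approach as the paper: both reduce to $A=0$ via the $K$-biregular change of coordinates $(x',x'')\mapsto(x'-Ax'',x'')$, then induct on $n-r$ by peeling off one coordinate at a time via Theorem \ref{thmproj}, chaining items $(\mr{i})$--$(\mr{iv})$ through the composition exactly as you describe. The one notable difference is that the paper asserts the intermediate condition $\overline{X'}\cap\widehat{L}^{n-1-r}=\varnothing$ simply ``by construction'', whereas your key lemma supplies an explicit argument for it using the point projection $\sigma_{j+1}$ and Theorem \ref{thm:K-elimination}; your treatment here is more complete than the paper's, and the concern you flag about realizing $\sigma_{j+1}(W_{j+1})$ as a $K$-algebraic projection is already covered by Theorem \ref{thm:K-elimination}$(\mr{i})$ itself (which handles $\PP^{j+1}(L)\times\PP^j(L)\to\PP^j(L)$ directly).
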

\begin{proof}
See Appendix \ref{appendix:c}.4.
\end{proof}

The next result is a consequence of Theorem \ref{thm:K-elimination}$(\mr{i})$ and the fact that the set $K^n$ is dense in $L^n$ with respect to the usual ($L$-)Zariski topology: recall that $K$ has characteristic zero, so it is infinite.

\begin{lem}\label{618}
Suppose that $L$ is algebraically closed. Let $r\in\N^*$ and let $W\subset\PP^n(L)$ be a $K$-algebraic set such that $r<n$, $W\subset H^n_0$ and $\dim(W)\leq r-1$. Define $\Omega:=\{A\in\mc{M}_{r,n-r}(K):W\cap\widehat{V}_A=\varnothing\}$. Then $\Omega$ is a non-empty $K$-Zariski open subset of $\mc{M}_{r,n-r}(K)=K^{r(n-r)}$.
\end{lem}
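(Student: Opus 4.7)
The plan is to realize $\Omega$ as the set of $K$-rational points of the complement in $L^{r(n-r)}$ of a suitable $K$-algebraic ``bad locus'', using the incidence correspondence, the $L|K$-elimination theorem, and a fiber dimension count.

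First I would introduce the incidence set
$$
J:=\Big\{(A,[x_0,x_1,\ldots,x_n])\in L^{r(n-r)}\times\PP^n(L)\,:\,x_0=0,\; x_i-\textstyle\sum_{j=1}^{n-r}A_{ij}\,x_{r+j}=0\text{ for all }i\in\{1,\ldots,r\}\Big\}\cap\big(L^{r(n-r)}\times W\big).
$$
The defining equations are polynomial in the affine coordinates $A=(A_{ij})$ and homogeneous of degree $1$ in $[x_0,\ldots,x_n]$ with integer coefficients, so (together with the equations of $W$) they express $J$ as a $K$-algebraic subset of $L^{r(n-r)}\times\PP^n(L)$ in the sense of Subsection~\ref{subsec:proj}. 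Applying Theorem~\ref{thm:K-elimination}$(\mr{i})$ to the projection $\pi_1:L^{r(n-r)}\times\PP^n(L)\to L^{r(n-r)}$, the image $\Sigma_L:=\pi_1(J)$ is a $K$-algebraic subset of $L^{r(n-r)}$, and by construction $\Sigma_L=\{A\in L^{r(n-r)}:W\cap\widehat{V}_A\neq\varnothing\}$. Consequently $\Omega_L:=L^{r(n-r)}\setminus\Sigma_L$ is $K$-Zariski open in $L^{r(n-r)}$ and $\Omega=\Omega_L\cap K^{r(n-r)}$ is $K$-Zariski open in $K^{r(n-r)}=\mc{M}_{r,n-r}(K)$.

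For non-emptiness of $\Omega_L$ I would bound $\dim(\Sigma_L)$ through the second projection $\pi_2:J\to W$. Writing $[0,x',x'']\in W$ with $x'=(x_1,\ldots,x_r)$ and $x''=(x_{r+1},\ldots,x_n)$, a preimage $A$ exists only when $x''\neq0$ (otherwise the constraints force $x'=Ax''=0$, contradicting that $[0,x',x'']$ is a projective point); in that case the fiber is the affine subspace $\{A:Ax''=x'\}$, cut out by $r$ equations that decouple row-by-row and are each non-trivial, hence of dimension $r(n-r)-r$. Therefore
$$
\dim(J)\leq\dim(W)+\big(r(n-r)-r\big)\leq(r-1)+r(n-r)-r=r(n-r)-1,
$$
and $\dim(\Sigma_L)\leq\dim(J)<r(n-r)$ (dimensions taken in the sense of Definition~\ref{def267}, which by Remark~\ref{rem267} and Remark~\ref{dime} agree with the $L$-dimensions, so the standard image-dimension and fiber-dimension inequalities apply). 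Thus $\Sigma_L$ is a proper $K$-algebraic subset of $L^{r(n-r)}$, so $\Omega_L\neq\varnothing$. Finally, since $K$ has characteristic zero it is infinite, so $K^{r(n-r)}$ is $L$-Zariski dense, hence a fortiori $K$-Zariski dense, in $L^{r(n-r)}$: the polynomials in $K[\y_1,\ldots,\y_{r(n-r)}]$ cutting out $\Sigma_L$ cannot vanish identically on $K^{r(n-r)}$, giving $\Omega=\Omega_L\cap K^{r(n-r)}\neq\varnothing$.

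The main obstacle I anticipate is bookkeeping in the mixed affine/projective setting: verifying that $J$ is $K$-algebraic in $L^{r(n-r)}\times\PP^n(L)$ with the topology of Subsection~\ref{subsec:proj}, invoking Theorem~\ref{thm:K-elimination}$(\mr{i})$ in that mixed product (which follows after a routine reordering of factors), and justifying that the standard dimension-of-image and equidimensional-fiber formulas remain valid for $K$-algebraic sets, which is ensured by the coincidence $\dim_K=\dim_L$ on $K$-constructible sets.
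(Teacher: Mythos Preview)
Your proof is correct and follows the same strategy as the paper for the openness part: both set up the incidence correspondence in $\PP^{n-1}(L)\times L^{r(n-r)}$ (the paper's $\EuScript{Q}$, your $J$), project via Theorem~\ref{thm:K-elimination}$(\mr{i})$ to obtain that $\Sigma_L$ is $K$-algebraic in $L^{r(n-r)}$, and then pass to $K$-points using the density of $K^{r(n-r)}$ in $L^{r(n-r)}$ (the paper's Lemma~\ref{K-dense-L}).

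The genuine difference is in proving $\Omega_L\neq\varnothing$. The paper argues by induction on $n-r$: it picks a point $p\in\PP^{n-1}(L)\setminus(W\cup H^n_n)$, projects $W$ from $p$ into $H^n_n\simeq\PP^{n-2}(L)$ to obtain $W'$ with $\dim(W')=\dim(W)\leq r-1$, applies the inductive hypothesis to $W'$, and composes the two projections to produce an explicit $A\in\mc{M}_{r,n-r}(L)$ with $W\cap\widehat{V}_A=\varnothing$. Your approach instead bounds $\dim(\Sigma_L)$ via the second projection $\pi_2:J\to W$: the fiber over $[0,x',x'']\in W$ is empty when $x''=0$ and an affine space of dimension $r(n-r)-r$ when $x''\neq0$, giving $\dim(J)\leq\dim(W)+r(n-r)-r\leq r(n-r)-1$, hence $\Sigma_L\subsetneqq L^{r(n-r)}$. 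Your dimension count is shorter and more in the spirit of the classical generic-projection arguments; the paper's construction has the minor advantage of being explicit and self-contained (no appeal to the fiber-dimension inequality), but either route is perfectly valid here.
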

\begin{proof}
See Appendix \ref{appendix:c}.4.
\end{proof}

Let us extend the affine notion of $K$-regular map introduced in Definition \ref{def:331} to the proj\-ective case.

\begin{defn} \label{def:331-projection}
Let $X$ be a subset of $\PP^n(L)$ and let $Y$ be a subset of $\PP^m(L)$. We endow $X$ and $Y$ with the relative topology induced by the $K$-Zariski topology of $\PP^n(L)$ and $\PP^m(L)$, respectively. Let $f:X\to Y$ be a continuous map. Let $a\in X$, let $i\in\{0,\dots,n\}$ and let $j\in\{0,\ldots,m\}$ be such that $a\in U^n_i$ and $f(a)\in U^m_j$. Define the map $f_{ij}:\theta^n_i(X\cap U^n_i\cap f^{-1}(Y\cap U^m_j))\to\theta^m_j(Y\cap U^m_j)$ by $f_{ij}(x)=\theta^m_j(f((\theta^n_i)^{-1}(x)))$. We say that $f$ is \emph{$K$-regular at $a$} if $f_{ij}$ is $K$-regular at $a$ in the sense of Definition \ref{def:331}. We say that $f$ is \emph{$K$-regular} if $f$ is $K$-regular at each point of $X$. $\sqbullet$
\end{defn}

By Remarks \ref{rem265}$(\mr{i})$, the above definition is well-posed, that is, it does not depend on the chosen indices $i$ and $j$. The next result is a consequence of Theorem \ref{thm:K-elimination}$(\mr{ii})$.

\begin{prop}\label{prop-added}
Suppose that $L$ is algebraically closed. If $X\subset\PP^n(L)$ is a $K$-constructible set and $f:X\to\PP^m(L)$ is a $K$-regular map, then $f(X)\subset\PP^m(L)$ is also a $K$-constructible set.
\end{prop}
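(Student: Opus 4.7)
The strategy is to reduce to Theorem \ref{thm:K-elimination}$(\mr{ii})$ by showing that the graph
$$
\Gamma_f:=\{(a,f(a)):a\in X\}\subset\PP^n(L)\times\PP^m(L)
$$
is $K$-constructible, for then $f(X)=\rho(\Gamma_f)$ is $K$-constructible in $\PP^m(L)$, where $\rho:\PP^n(L)\times\PP^m(L)\to\PP^m(L)$ is the projection onto the second factor. To prove the $K$-constructibility of $\Gamma_f$, I will partition $\Gamma_f$ into the finitely many pieces $\Gamma_f\cap(U^n_i\times U^m_j)$ for $(i,j)\in\{0,\ldots,n\}\times\{0,\ldots,m\}$ and show that each of these is $K$-constructible in $U^n_i\times U^m_j$, hence in $\PP^n(L)\times\PP^m(L)$.

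For a fixed pair $(i,j)$, set $X_{ij}:=X\cap U^n_i\cap f^{-1}(U^m_j)$. Since $f$ is continuous for the $K$-Zariski topologies (this is part of Definition \ref{def:331-projection}) and $U^m_j$ is $K$-Zariski open in $\PP^m(L)$, the set $f^{-1}(U^m_j)$ is $K$-Zariski open in $X$, and so $X_{ij}$ is the intersection of the $K$-constructible set $X$ with the $K$-Zariski open set $U^n_i\cap f^{-1}(U^m_j)$, that is, $X_{ij}$ is $K$-constructible. Consequently, its image $\theta^n_i(X_{ij})\subset L^n$ under the homeomorphism $\theta^n_i$ is a $K$-constructible subset of $L^n$. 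By the very definition of $K$-regular map and by Remark \ref{rem:reg}, each point of $\theta^n_i(X_{ij})$ admits a $K$-Zariski open neighborhood in $L^n$ on which $f_{ij}$ is expressed as a single tuple $\big(\frac{p_1}{q},\ldots,\frac{p_m}{q}\big)$ of rational functions with $p_1,\ldots,p_m,q\in K[\x]$ and $q$ nonvanishing on that neighborhood. Using Noetherianity of the $K$-Zariski topology of $L^n$ (so that $\theta^n_i(X_{ij})$ is a compact topological subspace), we may extract a finite subcover $\{W_{ij}^{(\alpha)}\}_\alpha$ of $\theta^n_i(X_{ij})$ by such $K$-Zariski open subsets of $L^n$ on each of which $f_{ij}$ has an expression $\big(\tfrac{p^{(\alpha)}_1}{q^{(\alpha)}},\ldots,\tfrac{p^{(\alpha)}_m}{q^{(\alpha)}}\big)$ with $p^{(\alpha)}_1,\ldots,p^{(\alpha)}_m,q^{(\alpha)}\in K[\x]$.

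For each such index $\alpha$, the subset
$$
\Gamma^{(\alpha)}_{ij}:=\big\{(x,y)\in(\theta^n_i(X_{ij})\cap W^{(\alpha)}_{ij})\times L^m:q^{(\alpha)}(x)y_k-p^{(\alpha)}_k(x)=0,\,k=1,\ldots,m\big\}
$$
is the intersection of the $K$-constructible set $(\theta^n_i(X_{ij})\cap W^{(\alpha)}_{ij})\times L^m$ with the $K$-algebraic subset of $L^n\times L^m=L^{n+m}$ cut out by the polynomials $q^{(\alpha)}(\x)\y_k-p^{(\alpha)}_k(\x)\in K[\x,\y]$, so $\Gamma^{(\alpha)}_{ij}$ is $K$-constructible in $L^n\times L^m$. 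Since $\theta^n_i\times\theta^m_j:U^n_i\times U^m_j\to L^n\times L^m$ is a homeomorphism for the $K$-Zariski topologies, the preimage $(\theta^n_i\times\theta^m_j)^{-1}(\Gamma^{(\alpha)}_{ij})$ is a $K$-constructible subset of $U^n_i\times U^m_j$, hence of $\PP^n(L)\times\PP^m(L)$. The piece $\Gamma_f\cap(U^n_i\times U^m_j)$ equals the finite union of these preimages over $\alpha$, and therefore is $K$-constructible. Taking the finite union over the pairs $(i,j)$ gives $\Gamma_f$, and Theorem \ref{thm:K-elimination}$(\mr{ii})$ applied to $\rho$ concludes the proof.

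The main obstacle will be this patching argument for the graph: one must certify that the local rational expressions of $f$ in the projective affine charts can be assembled into a genuinely $K$-constructible subset of the product, and that the homeomorphism $\theta^n_i\times\theta^m_j$ really respects the $K$-Zariski topologies (not merely the finer $L$-Zariski ones). Once the graph is shown to be $K$-constructible, the rest is a direct invocation of the $L|K$-Chevalley projection result already established.
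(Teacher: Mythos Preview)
Your proof is correct and follows exactly the same strategy as the paper: show that the graph $\Gamma_f\subset\PP^n(L)\times\PP^m(L)$ is $K$-constructible and then apply Theorem \ref{thm:K-elimination}$(\mr{ii})$ to the projection $\rho$. The paper's proof is a three-line sketch that simply asserts the $K$-constructibility of $\Gamma_f$ from the hypotheses, whereas you have carefully supplied the chart-by-chart verification (using the continuity of $f$, the local rational presentations from Definition \ref{def:331-projection} and Remark \ref{rem:reg}, Noetherian compactness, and the fact that each $\theta^n_i\times\theta^m_j$ is a $K$-Zariski homeomorphism); your added detail is sound and fills in precisely what the paper leaves implicit.
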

\begin{proof}
Let $\Gamma$ be the graph of $f$ and let $\rho:\PP^n(L)\times\PP^m(L)\to\PP^m(L)$ be the projection $(\alpha,\beta)\mapsto\beta$. As $X\subset\PP^n(L)$ is $K$-constructible and $f$ is $K$-regular, we have that $\Gamma\subset\PP^n(L)\times\PP^m(L)$ is $K$-constructible, so $f(X)=\rho(\Gamma)\subset\PP^m(L)$ is also $K$-constructible by Theorem \ref{thm:K-elimination}$(\mr{ii})$.
\end{proof}

We need one last preliminary result.

\begin{lem}\label{dimcompl}
Let $R$ be a real closed field, let $C:=R[\ii]$ be its algebraic closure, let $X\subset R^n$ be a $K$-algebraic set and let $X_C$ be the $K$-Zariski closure of $X$ in $C^n$. Then there exists a $K$-Zariski open neighborhood $V$ of $X$ in $X_C$ such that $\max_{a\in V}\{e^{C|K}_a(X_C)\}=e^{R|K}(X)$.
\end{lem}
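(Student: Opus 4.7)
The plan is to identify $e^{R|K}_a(X)$ and $e^{C|K}_a(X_C)$ with corank functions of a single Jacobian matrix with entries in $K[\x]$, and then take $V$ to be the $K$-Zariski open stratum where this corank attains its desired value.

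First I would observe that, by the very definition of $K$-Zariski closure, $\II_K(X_C) = \II_K(X)$. Fix a finite system of generators $\{g_1,\dots,g_s\} \subset K[\x]$ of this common ideal, and form the Jacobian matrix $M(\x) := \big(\tfrac{\partial g_i}{\partial \x_j}\big)_{i,j}$, whose entries lie in $K[\x]$. Since $L=R$ is real closed with $E=R$ we have $R[a]=R$ for $a \in X$; since $L=C$ is algebraically closed with $E=C$ we have $C[a]=C$ for $a \in X_C$. By Remarks \ref{tang-bcr}$(\mr{i})$,
$$
T^{R|K}_a(X) = \ker_R M(a) \; (a \in X), \qquad T^{C|K}_b(X_C) = \ker_C M(b) \; (b \in X_C),
$$
so $e^{R|K}_a(X) = n - \mathrm{rk}_R M(a)$ and $e^{C|K}_b(X_C) = n - \mathrm{rk}_C M(b)$.

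Next I would use the elementary linear-algebra fact that, since $M(a)$ has entries in $R \subset C$ when $a \in X \subset R^n \subset C^n$, its rank over $R$ agrees with its rank over $C$. Setting $d := e^{R|K}(X)$, this yields $\min_{a \in X} \mathrm{rk}_C M(a) = n - d$, and in particular $\mathrm{rk}_C M(a) \ge n-d$ for every $a \in X$, with equality attained at some $a_0 \in X$. I would then define
$$
V := \big\{a \in X_C : \mathrm{rk}_C M(a) \ge n-d\big\},
$$
which is the complement in $X_C$ of the common zero set of all $(n-d)\times(n-d)$ minors of $M$; since these minors belong to $K[\x]$, the set $V$ is $K$-Zariski open in $X_C$, and by the previous step $X \subset V$.

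Finally, for every $a \in V$ we get $e^{C|K}_a(X_C) = n - \mathrm{rk}_C M(a) \le d$, while $a_0 \in X \subset V$ realizes $e^{C|K}_{a_0}(X_C) = d$, so $\max_{a\in V} e^{C|K}_a(X_C) = d = e^{R|K}(X)$, as required. There is no real obstacle in this argument; the only subtlety to double-check is the identification $\II_K(X_C) = \II_K(X)$ (so that the same Jacobian matrix governs both tangent spaces) together with the invariance of matrix rank under field extension $R \subset C$ for matrices with real entries, both of which are immediate.
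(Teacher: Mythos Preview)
Your proof is correct and follows essentially the same approach as the paper's: both pick generators of $\II_K(X)=\II_K(X_C)$, form the Jacobian, and take $V$ to be the $K$-Zariski open locus in $X_C$ where the rank is at least $n-e^{R|K}(X)$. Your justification via rank invariance under $R\subset C$ is slightly more explicit than the paper's appeal to Corollary~\ref{kreliablec}, but the argument is the same.
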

\begin{proof}
By Corollary \ref{kreliablec}, $\II_C(X_C)=\II_K(X)C[\x]$, so $e^{R|K}_a(X)=\dim_R(T^{R|K}_a(X))$ is equal to $e^{C|K}_a(X_C)=\dim_C(T^{C|K}_a(X_C))$ for each $a\in X$. Let $\{g_1,\ldots,g_s\}$ be a system of generators of $\II_K(X)$ in $K[\x]$, let $J_a\in\mc{M}_{s,n}(K)$ be the matrix $J_a:=\big(\frac{\partial g_i}{\partial \x_j}(a)\big)_{i=1,\ldots,s,\,j=1,\ldots,n}$ for each $a\in X_C$, let $N:=e^{R|K}(X)$ and let $D:=\{a\in X_C:\dim_C(T^{C|K}_a(X_C))>N\}=\{a\in X_C:{\rm rk}(J_a)<n-N\}$. As $D\subset C^n$ is $K$-algebraic and $D\cap X=\varnothing$, the set $V:=X_C\setminus D$ has the required property. 
\end{proof}

We are ready to prove the $L|K$-generic projection theorem. 

\begin{proof}[Proof of Theorem \em \ref{emb1}]
Let $X\subset L^n$ be a $K$-algebraic set of dimension $d$. Suppose that $r:=\max\{e^{L|K}(X)+d-1,2d+1\}<n$. Let $\{g_1,\ldots,g_s\}$ be a system of generators of $\II_K(X)$ in $K[\x]$. 

\emph{First, we assume that $L$ is algebraically closed.}

Define the following subsets of $L^{2n}$:
\begin{align*}
\Delta_X&:=\{(x,y)\in X\times X:x=y\},\\
S_X&:=(X\times X)\setminus\Delta_X,\\
TX&:=\{(x,v)\in X\times L^n:\langle\nabla g_i(x),v\rangle=0\,\text{ for all $i=1,\ldots,s$}\},\\
T^*X&:=TX\setminus(X\times\{0\}),\\
Z^0_X&:=T^*X\cap(\Reg^{L|K}(X)\times L^n),\\
Z^1_X&:=T^*X\cap(\Sing^{L|K}(X)\times L^n)
\end{align*}
and the following maps:
\begin{align*}
&F:S_X\to H^n_0, \quad F(x,y):=[0,x-y],\\
&G:T^*X\to H^n_0, \quad G(x,v):=[v].
\end{align*}
Observe that $T^*X=Z^0_X\cup Z^1_X$. Keeping in mind Corollary \ref{30'}$(\mr{iii})$, we see that $S_X$, $Z^0_X$, $Z^1_X$ and $T^*X$ are $K$-constructible subsets of~$L^{2n}$ and $\dim(S_X)=\dim_L(S_X)=2d$, $\dim(Z^0_X)=\dim_L(Z^0_X)=2d$ and
$$
\dim(Z^1_X)=\dim_L(Z^1_X)\leq e^{L|K}(X)+\dim_L(\Sing^{L|K}(X))\leq e^{L|K}(X)+d-1,
$$
so $\dim(T^*X)=\dim_L(T^*X)=\dim_L(Z^0_X\cup Z^1_X)\leq\max\{2d,e^{L|K}(X)+d-1\}\leq r$. As $F$ and $G$ are $K$-regular maps, their graphs are $K$-constructible subsets of $L^{2n}\times H^n_0=L^{2n}\times\PP^{n-1}(L)\subset\PP^{2n}(L)\times\PP^{n-1}(L)$. By Proposition \ref{prop-added}, we have that $F(S_X)$ and $G(T^*X)$ are $K$-constructible subsets of $H^n_0=\PP^{n-1}(L)$. In addition, we have $\dim(F(S_X))=\dim_L(F(S_X))\leq 2d\leq r-1$ and $\dim(G(T^*X))=\dim_L(G(T^*X))\leq r-1$ (recall that we have taken projective points $[v]\in H^n_0$ associated to the corresponding non-zero vectors $v\in L^n$). Consequently, $F(S_X)\cup G(T^*X)$ is a $K$-constructible subset of $H^n_0$ of dimension $\leq r-1$. Let $\overline{X}$ be the $K$-Zariski closure of $X$ in $\PP^n(L)$, which has dimension $d$ and none of its $K$-irreducible components is contained in $H^n_0$.

Indeed, observe that $X$ is contained in the union of those $K$-irreducible components of $\ol{X}$ that are not contained in $H^n_0$. As $\ol{X}$ is the $K$-Zariski closure of $X$ in $\PP^n(L)$, none of the $K$-irreducible components of $\ol{X}$ is contained in $H^n_0$. As none of the $K$-irreducible components of $\ol{X}$ is contained in $H^n_0$ and $H_0^n$ is $K$-irreducible, we deduce $\dim(\overline{X}\cap H^n_0)\leq d-1<r-1$. 

Let $W$ be the $K$-Zariski closure of the union $(\overline{X}\cap H^n_0)\cup F(S_X)\cup G(T^*X)$ in $H^n_0$ (or, equivalently, in $\PP^n(L)$). As $\dim(W)\leq r-1$, we have by Lemma \ref{618} that $\Omega:=\{A\in\mc{M}_{r,n-r}(K):W\cap\widehat{V}_A=\varnothing\}$ is a non-empty ($K$-)Zariski open subset of $\mc{M}_{r,n-r}(K)$. By Theorem \ref{corproj} (especially item $(\mr{iv})$), each matrix $A\in\Omega$ has the required properties.

\emph{Next, we assume that $L$ is a real closed field $R$. Set $C:=R[\ii]$}

Let $X_C:=\zcl_{C^n}^K(X)\subset C^n$. By Lemma \ref{dimcompl}, there exists a $K$-Zariski open neighborhood $V$ of $X$ in $X_C$ such that
\begin{equation}\label{maxV}
\textstyle
\max_{a\in V}\big\{e^{C|K}_a(X_C)\big\}=e^{R|K}(X).
\end{equation}
As $\II_K(X_C)=\II_K(X)$, we have $\dim_C(X_C)=\dim_K(X_C)=\dim_K(X)=\dim(X)=d$. Define $\Delta_{X_C}$, $S_{X_C}$, $TX_C$, $T^*X_C$, $Z^0_{X_C}$ and $Z^1_{X_C}$ as above replacing $X$ with $X_C$, and define
$$
T^*V:=T^*X_C\cap(V\times L^n),\quad Z^0_V:=Z^0_{X_C}\cap(V\times L^n),\quad
Z^1_V:=Z^1_{X_C}\cap(V\times L^n),
$$
so $T^*V=Z^0_V\cup Z^1_V$. We also define the $K$-regular maps $F:S_{X_C}\to H^n_0=\PP^{n-1}(C)$ and $G:T^*V\to H^n_0=\PP^{n-1}(C)$ as above, that is, $F(x,y):=[0,x-y]$ and $G(x,v):=[v]$. Let $\overline{X_C}$ be the $K$-Zariski closure of $X_C$ in $\PP^n(C)$, and let $W'$ and $W''$ be the $K$-Zariski closures in $H^n_0=\PP^{n-1}(C)$ of the $K$-constructible sets $F(S_{X_C})$ and $G(T^*V)$, respectively. We have: $\dim_C(\overline{X_C}\cap H^n_0)=d-1<r-1$, $\dim_C(F(S_{X_C}))\leq 2d$, $\dim_C(Z^0_V)=2d$ and, by \eqref{maxV}, $\dim_C(Z^1_V)\leq e^{R|K}(X)+d-1$ so $\dim_C(T^*V)\leq r$ and $\dim_C(G(T^*V))\leq r-1$. It follows that the $K$-algebraic subset $W:=(\overline{X_C}\cap H^n_0)\cup W'\cup W''$ of $H^n_0=\PP^{n-1}(C)$ has $C$-dimension $\leq r-1$. As above, by Lemma \ref{618}, the set $\Omega:=\{A\in\mc{M}_{r,n-r}(K):W\cap\widehat{V}_A=\varnothing\}$ is non-empty and ($K$-)Zariski open in $\mc{M}_{r,n-r}(K)$.

Let $A\in\Omega$ and let $\pi_{C,A}:C^n\to C^r$ be the projection $\pi_{C,A}(x)=x'-Ax''$. By Theorem \ref{corproj} (especially item $(\mr{iii})$), $Y_C:=\pi_{C,A}(X_C)\subset C^r$ is $K$-algebraic, $\pi_{C,A}(V)$ is a $K$-Zariski open subset of $Y_C$ and the restriction $\pi':V\to\pi_{C,A}(V)$ of $\pi_{C,A}$ is a $K$-biregular isomorphism. Observe that $X_C\cap R^n=\ZZ_C(\II_K(X_C))\cap R^n=\ZZ_R(\II_K(X))=X$, $V\cap R^n=V\cap X_C\cap R^n=V\cap X=X$, so $\pi_{C,A}(X)=\pi_{C,A}(X_C\cap R^n)=\pi'(V\cap R^n)$. As $A\in\mc{M}_{r,n-r}(K)\subset\mc{M}_{r,n-r}(R)$, the projection $\pi_{C,A}$ commutes with the conjugations of $C^n$ and $C^r$. Combining the latter fact with the injectivity of $\pi_{C,A}$ and the equality $X_C\cap R^n=X$, we deduce that $\pi_{C,A}(X)$ is equal to the $K$-algebraic set $Y:=Y_C\cap R^r\subset R^r$, so the restriction of $\pi'$ from $X$ to $Y$ is a $K$-biregular isomorphism, as required.
\end{proof}

\subsection{The Nash-Tognoli theorem over $\Q$ and its version for isolated singularities}\label{nash-tognoli-Q} \textit{Endow $\R^n$ with the Euclidean topology and each of its subsets with the relative topology. `Manifold' means `non-empty manifold without bounda\-ry', and `smooth(ly)' means `of class $\cinfty$'.}

Given a smooth manifold $M$, we denote $\cinfty(M,\R^m)$ the set of smooth maps from $M$ to $\R^m$ endowed with the usual weak $\cinfty$ topology, see \cite[p.36]{hirsch}.

The celebrated Nash-Tognoli theorem \cite{nash,to1} asserts the following: \textit{Every compact smooth manifold $M$ is smoothly diffeomorphic to a nonsingular algebraic subset $M'$ of some $\R^n$, called algebraic model of $M$. More precisely, if $M$ has dimension $d$ and $\psi:M\to\R^{2d+1}$ is a smooth embedding (whose existence is guaranteed by Whitney's embedding theorem), then $\psi$ can be approximated in $\cinfty(M,\R^{2d+1})$ by an arbitrarily close smooth embedding $\phi:M\to\R^{2d+1}$ whose image $M':=\phi(M)$ is a nonsingular algebraic subset of~$\R^{2d+1}$.}

There is an extensive literature dedicated to this fundamental result. See the books \cite[Sect.II.8]{akbking:tras}, \cite[Ch.14]{bcr}, the papers \cite{Be2022,GT2017,kucharz2011}, the surveys \cite[Sect.1]{delellis} and \cite[Sects.1\&2]{kollar2017} and the numerous references therein.

Let $K=\qr$ or $\Q$. According to Definition \ref{E|K-regular} with $L|E|K=\R|\R|K$, a $K$-algebraic set $X\subset\R^n$ is $K$-nonsingular if the local ring $\reg^{\R|K}_{X,a}:=\R[\x]_{\gtn_a}/(\II_K(X)\R[\x]_{\gtn_a})$ is regular of dimension $d:=\dim(X)$ for each $a\in X$, that is, $\Reg^\Q(X)=X$. By Proposition \ref{30}, this is equivalent to required that either $d=n$ (so $X=\R^n$) or $d<n$ and, for each $a\in X$, there exist $f_1,\ldots,f_{n-d}\in\II_K(X)$ and an open neighborhood $V$ of $a$ in $\R^n$ such that the gradients $\nabla f_1(a),\ldots,\nabla f_{n-d}(a)$ are linearly independent in $\R^n$, and $X\cap V=\{x\in V:f_1(x)=0,\ldots,f_{n-d}(x)=0\}$.

The field $\qr$ of real algebraic numbers is a real closed subfield of $\R$ (the smallest one), so the `extension of coefficients' procedure from $\qr$ to $\R$ is available and therefore the $\R|\qr$-version of Hardt's trivialization theorem for Nash manifolds \cite[Thm.A]{CS1992} is also available. An immediate consequence is the following improved version of the Nash-Tognoli theorem, which we can call `Nash-Tognoli theorem over the real algebraic numbers': \textit{The algebraic model $M'\subset\R^{2d+1}$ of $M$ can be chosen to be a $\qr$-nonsingular $\qr$-algebraic set}.

The field $\Q$ of rational numbers is not a real closed field, so we cannot use the `extension of coefficients' procedure to prove that $M$ has an algebraic model $M'$ in some $\R^n$ that is $\Q$-algebraic.

Let $X\subset\R^n$ be an algebraic set. Describe $X$ as the set of solutions of a fixed polynomial system $f_1=0,\ldots,f_s=0$ in $\R^n$, that is, $X=\{x\in\R^n:f_1(a,x)=\ldots=f_s(a,x)=0\}$, where $f_1,\ldots,f_s\in\Z[\mathtt{a}_1,\ldots,\mathtt{a}_m,\x_1,\ldots,\x_n]$ and $a=(a_1,\ldots,a_m)\in\R^m$ is the vector of the coefficients of the polynomials $f_i$, ordered in some way. In \cite[Thm.11 \& Rmk.13]{PR2020}, making use of Zariski equisingular deformations of the coefficients $a=(a_1,\ldots,a_m)$, Parusi\'{n}ski and Rond proved that $X\subset\R^n$ is homeomorphic to a $\Q$-algebraic subset of $\R^n$, if the field extension of $\Q$ obtained by adding $a_1,\ldots,a_m$ is purely transcendental. One could therefore hope to apply this technique to $X:=M'$, where $M'\subset\R^{2d+1}$ is an algebraic model of $M$ given by the Nash-Tognoli theorem, in order to obtain an algebraic model of $M$ that is $\Q$-nonsingular $\Q$-algebraic or at least $\Q$-algebraic up to homeomorphism. This approach does not work. The reason is that, even when $X$ is a compact nonsingular algebraic hypersurface of $\R^n$, the above Zariski equisingular deformations of the coefficients $a=(a_1,\ldots,a_m)$ preserve the polynomial relations over $\Q$ satisfied by $a_1,\ldots,a_m$. In general, the problem of making real algebraic sets $\Q$-algebraic up to homeomorphism is open, even for germs, see \cite[Open problems 1\,\&\,2, pp.199-200]{P2021}. See also \cite{PP,Ro,Te90} and the references mentioned therein for the related problem of making real algebraic sets $\Q$-algebraic up to equisingular deformation.

Here we present the main results of \cite{GS} that give a complete affirmative solution to the previous open problem for all nonsingular real algebraic sets and, more generally, for all real algebraic sets with isolated singularities, that is, with finitely many singularities. These results were obtained by carefully extending to the $\R|\Q$-algebraic setting the techniques of Nash and Tognoli \cite{nash,to1} in the nonsingular case, and the ones of Akbulut and King \cite{ak1981,akbking:tras} in the case of isolated singularities. This is not an easy task. To do so, the authors of \cite{GS} used the $L|K$-algebraic geometry developed in previous sections of this work, mainly in the case $L|K=\R|\Q$, as well as further developments of the $\R|\Q$-algebraic geometry obtained~in~\cite{GS}.

\begin{remark}[{\cite[Rem.1.4]{GS}}]\label{rem1}
Recall that $\Reg^\Q(X)\subset\Reg(X)=\Reg^\R(X)$ for each $\Q$-algebraic set $X\subset\R^n$ (see Remark \ref{regolare}) and this inclusion can be strict. In Examples~\ref{432}$(\mr{ii})(\mr{iii})(\mr{v})$, we have already seen some examples of noncompact $\Q$-algebraic sets $X\subset\R^n$ with $\Reg^\Q(X)\subsetneqq\Reg(X)$. Below we present a couple of compact examples.

Let $f_1:=\x_1^2+\x_2^2-\sqrt[3]{2}\x_1\in\R [\x]:=\R[\x_1,\x_2]$, let $g_1:=(\x_1^2+\x_2^2)^3-2\x_1^3\in\Q[\x]$ and let $C_1$ be the $\Q$-algebraic circumference $C_1:=\ZZ_\R(f_1)=\ZZ_\R(g_1)$ of $\R^2$. By Proposition \ref{prop:hyper}, we have that $\II_\R(C_1)=(f_1)\R[\x]$, $\II_\Q(C_1)=(g_1)\Q[\x]$ and $\Reg^\Q(C_1)=\{x\in C_1:\nabla g_1(x)=0\}=C_1\,\setminus\{(0,0)\}\subsetneqq C_1=\{x\in C_1:\nabla f_1(x)=0\}=\Reg(C_1)$, so $C_1$ is nonsingular but not $\Q$-nonsingular.

Let $f_2:=\x_2^2+\sqrt[3]{2}\x_1(\x_1-1)^3\in\R[\x]$, let $g_2:=\x_2^6+2\x_1^3(\x_1-1)^9\in\Q[\x]$ and let $C_2$ be the $\Q$-algebraic curve $C_2:=\ZZ_\R(f_2)=\ZZ_\R(g_2)$ of $\R^2$, which is homeomorphic to $C_1$ and has a cusp at $(1,0)$. Using again Proposition \ref{prop:hyper}, we have $\Reg^\Q(C_2)=\{x\in C_2:\nabla g_2(x)=0\}=C_2\setminus\{(0,0),(1,0)\}\subsetneqq C_2\setminus\{(1,0)\}=\{x\in C_2:\nabla f_2(x)=0\}=\Reg(C_2)$. $\sqbullet$ 
\end{remark}

The concept of $\Q$-algebraic model of a smooth manifold is the following:

\begin{defn}[{\cite[Def.1.5]{GS}}]
Let $M$ be a smooth manifold and let $M'\subset\R^m$ be a $\Q$-nonsingular $\Q$-algebraic set. If $M$ is smoothly diffeomorphic to $M'$, then we say that $M'$ is a \emph{$\Q$-algebraic model of~$M$}.~$\sqbullet$
\end{defn}The celebrated Nash-Tognoli theorem \cite{nash,to1} asserts the following: \textit{Every compact smooth manifold $M$ is smoothly diffeomorphic to a nonsingular algebraic subset $M'$ of some $\R^n$, called algebraic model of $M$. More precisely, if $M$ has dimension $d$ and $\psi:M\to\R^{2d+1}$ is a smooth embedding (whose existence is guaranteed by Whitney's embedding theorem), then $\psi$ can be approximated in $\cinfty(M,\R^{2d+1})$ by an arbitrarily close smooth embedding $\phi:M\to\R^{2d+1}$ whose image $M':=\phi(M)$ is a nonsingular algebraic subset of~$\R^{2d+1}$.}

The `Nash-Tognoli theorem over the rationals' is true.

\begin{thm}[{\cite[Thm.1.7]{GS}}]\label{thm:NTQ}
Every compact smooth manifold has a $\Q$-algebraic model.

More precisely, if $M$ is a compact smooth manifold of dimension $d$, $\psi:M\to\R^{2d+1}$ is a smooth embedding and $\,\mc{V}$ is a neighborhood of $\psi$ in $\cinfty(M,\R^{2d+1})$, then there exists a smooth embedding $\phi:M\to\R^{2d+1}$ belonging to $\mc{V}$ such that $\phi(M)\subset\R^{2d+1}$ is a $\Q$-nonsingular $\Q$-algebraic set.
\end{thm}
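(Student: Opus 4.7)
The strategy is to combine the classical Nash--Tognoli theorem with a careful rational approximation of the defining equations, and then use the $\R|\Q$-Jacobian criterion (Theorem \ref{Q-jacobian} with $L|E|K=\R|\R|\Q$) to turn mere nonsingularity into $\Q$-nonsingularity in the sense of Definition \ref{E|K-regular}.

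First, I would invoke the classical Nash--Tognoli theorem to obtain a smooth embedding $\phi_0\colon M\to\R^{2d+1}$, lying in some prescribed sub-neighborhood of $\psi$ inside $\mc{V}$, such that $M_0:=\phi_0(M)$ is a compact nonsingular algebraic subset of $\R^{2d+1}$. In Tognoli's stronger form the set $M_0$ appears as a union of connected components of a compact nonsingular real algebraic set $W_0=\ZZ_\R(F_1,\ldots,F_c)\subset\R^{2d+1}$, with $c:=d+1$ and $F_i\in\R[\x_1,\ldots,\x_{2d+1}]$, whose gradients $\nabla F_1(a),\ldots,\nabla F_c(a)$ are linearly independent at every $a\in W_0$.

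Second, I would perturb the coefficients of the $F_i$ to obtain $G_1,\ldots,G_c\in\Q[\x]$ of the same degrees and with coefficients arbitrarily close to those of the $F_i$. Since the rank-$c$ Jacobian condition is open and $W_0$ is compact, a standard implicit-function/isotopy argument shows that, for sufficiently close approximations, the zero set $\ZZ_\R(G_1,\ldots,G_c)$ contains a compact smooth submanifold $\tilde{W}$ close to $W_0$ that is smoothly diffeomorphic to $W_0$ via a diffeomorphism close to the identity, and $\nabla G_1,\ldots,\nabla G_c$ remain linearly independent at every point of $\tilde{W}$. If $\tilde{W}$ were itself $\Q$-algebraic, the $\R|\Q$-Jacobian criterion applied to the rational generators $G_1,\ldots,G_c\in\II_\Q(\tilde{W})$ would immediately yield $\Reg^\Q(\tilde{W})=\tilde{W}$, i.e.\ $\Q$-nonsingularity.

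Third, one must extract from $\ZZ_\R(G_1,\ldots,G_c)$ a $\Q$-algebraic subset $M'\subset\R^{2d+1}$ which is smoothly diffeomorphic to $M_0$ via an embedding close to $\phi_0$, hence close to $\psi$. Two distinct sources of unwanted pieces must be removed: the connected components of $W_0$ other than $M_0$, which reappear as nearby components of $\tilde{W}$, and possible extraneous components of $\ZZ_\R(G_1,\ldots,G_c)$ born far from $W_0$. The latter are controlled by an a priori rational bounding condition. The former require a Tognoli-type component-separation argument executed inside $\Q[\x]$: one constructs a rational polynomial vanishing on $M_0$ and non-vanishing on $W_0\setminus M_0$, perturbs it compatibly with the new system $(G_1,\ldots,G_c)$, and adjoins it (in combination with suitable multiples of the $G_i$'s) to obtain a $\Q$-polynomial system whose real zero set is $M'$ and whose Jacobian has rank $c$ along $M'$.

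The main obstacle is precisely this third step. Over $\R$ one separates components using any real polynomial with an appropriate sign pattern, but over $\Q$ this is substantially harder: a $\Q$-algebraic set need not be defined over $\Q$ (Remarks \ref{exa:2410}$(\mr{iii}')$), a polynomial geometric in $\R^n$ need not be $\Q$-reliable (Examples \ref{rem:polyn-reliable}), and a $\Q$-irreducible $\Q$-algebraic set can be $\R$-reducible with Galois-conjugate extra components appearing through the $\Q$-bad set (Section \ref{s2}). The whole $L|K$-algebraic machinery of Sections \ref{s1}--\ref{s4} -- Galois completions (Subsection \ref{sgc0}), $\Q$-reliability and definability over $\Q$ (Subsection \ref{rasdok}), invariance of dimension (Theorem \ref{dimension}), the $\R|\Q$-Jacobian criterion (Theorem \ref{Q-jacobian}), and the $\R|\Q$-generic projection theorem (Theorem \ref{emb1}) -- together with further $\R|\Q$-approximation tools developed in \cite{GS}, is brought to bear to carry out this separation inside $\Q[\x]$ while preserving both the $\Q$-algebraicity and the $\Q$-nonsingularity of the final model $\phi(M):=M'$.
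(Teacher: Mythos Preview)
This paper does not itself prove Theorem \ref{thm:NTQ}; the statement is quoted from \cite[Thm.1.7]{GS}, and Subsection \ref{nash-tognoli-Q} only records the overall strategy in one sentence: the result is obtained by ``carefully extending to the $\R|\Q$-algebraic setting the techniques of Nash and Tognoli'' using the $L|K$-machinery of Sections \ref{s1}--\ref{s4} together with further developments from \cite{GS}. Your outline is consistent with that description, and you correctly locate the essential difficulty in your third step --- separating the desired component over $\Q$ without losing $\Q$-algebraicity or $\Q$-nonsingularity --- and you explicitly defer its resolution to \cite{GS}. So at the level of detail the paper provides there is nothing further to compare against: both your proposal and the paper stop at the same point and hand off to \cite{GS}.

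Two cautions are worth recording. First, the paper stresses (see Subsection \ref{nash-tognoli-Q} and \cite[Sect.1.3]{GS}) that earlier published attempts along exactly the ``approximate-then-separate'' route --- \cite[Thm.2, p.56]{to2} and \cite[Thm.0.1]{bt} --- were incomplete; this is evidence that your third step is not a routine adaptation of the classical argument, and your write-up should not understate this. Second, the specific form you assume at the outset, that Nash--Tognoli delivers $M_0$ as a union of components of a global complete intersection $W_0=\ZZ_\R(F_1,\ldots,F_{d+1})\subset\R^{2d+1}$ with everywhere maximal-rank Jacobian, is not part of the statement of Nash--Tognoli as quoted here and would itself need justification; depending on how \cite{GS} proceeds, the rational approximation may have to be built into the Nash--Tognoli construction from the start rather than applied as a post-processing perturbation.
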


Let us introduce the notion of $\Q$-determined $\Q$-algebraic set.

\begin{defn}[{\cite[Def.1.9]{GS}}]\label{Q-determined}
A $\Q$-algebraic set $X\subset\R^n$ is said to be \emph{$\Q$-determined} if $\Reg^\Q(X)=\Reg(X)$. $\sqbullet$ 
\end{defn}

Observe that, if a $\Q$-algebraic set $X\subset\R^n$ is $\Q$-determined, then $\Sing(X)=X\setminus\Reg(X)$ is equal to $\Sing^\Q(X)=X\setminus\Reg^\Q(X)$. The curves $C_1$ and $C_2$ of $\R^2$ defined in Remark \ref{rem1} are examples of $\Q$-algebraic sets that are not $\Q$-determined, the first is nonsingular and the second has only one singular point.

For every $n,m\in\N$ with $m>n$, we identify $\R^n$ with the subset $\R^n\times\{0\}$ of $\R^n\times\R^{m-n}=\R^m$, so we can write $\R^n\subset\R^m$ and every subset of $\R^n$ is also a subset of $\R^m$.

Let $S$ be a topological space. We denote $\czero(S,\R^m)$ the set of continuous maps from $S$ to~$\R^m$, endowed with the usual compact-open topology. A map $\phi\in\czero(S,\R^m)$ is a continuous embedding if it is a homeomorphism onto its image.

\begin{thm}[{\cite[Thm.1.10\,\&\,Rem.1.11$(\mr{ii})$]{GS}}]\label{thm:main}
Every real algebraic set with isolated singularities is semialgebraically homeo\-morphic to a $\Q$-determined $\Q$-algebraic set with isolated singularities. 

More precisely, the following holds. Let $X$ be an algebraic subset of $\R^n$ of dimension $d$ with isolated singularities, let $m:=n+2d+4$, and let $i:X\hookrightarrow\R^m$ and $j:\Reg(X)\hookrightarrow\R^m$ be the inclusions of $X$ and $\Reg(X)$ in $\R^m$. Then, for every neighborhood $\mc{U}$ of $i$ in $\czero(X,\R^m)$ and for every neighborhood $\mc{V}$ of $j$ in $\cinfty(\Reg(X),\R^m)$, there exists a semialgebraic continuous embedding $\phi:X\to\R^m$ with the following properties:
\begin{itemize}
 \item[$(\mr{i})$] $X':=\phi(X)$ is a $\Q$-determined $\Q$-algebraic subset of $\R^m$ such that $\Sing(X')=\phi(\Sing(X))$ or, equivalently, $\phi(\Reg(X))=\Reg(X')$.
 \item[$(\mr{ii})$] The restriction $\phi|_{\Reg(X)}:\Reg(X)\to\R^m$ of $\phi$ to $\Reg(X)$ is a Nash embedding. In particular, the Nash manifolds $\Reg(X)$ and $\Reg(X')$ are Nash diffeomorphic.
 \item[$(\mr{iii})$] $\phi\in\mc{U}$ and $\phi|_{\Reg(X)}\in\mc{V}$.
\end{itemize}
\end{thm}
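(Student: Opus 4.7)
The strategy follows the Akbulut--King technique for dealing with isolated singularities, carefully adapted to the $\R|\Q$-algebraic setting by means of the Nash--Tognoli theorem over $\Q$ (Theorem \ref{thm:NTQ}) and the $\R|\Q$-algebraic tools developed above (especially the $L|K$-generic projection theorem \ref{emb1}, the $\Q$-Jacobian criterion in Theorem \ref{Q-jacobian}, and the structural results of Section \ref{s4}). The plan is: $(1)$ replace small neighborhoods of the isolated singularities with $\Q$-algebraic cone-like models built from $\Q$-algebraic approximations of their links; $(2)$ simultaneously approximate the Nash manifold $\Reg(X)$ by a $\Q$-nonsingular $\Q$-algebraic piece fitting compatibly with these cone models; $(3)$ glue everything inside $\R^m$ with $m := n+2d+4$ and verify the $\Q$-determined property.

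More precisely, write $\Sing(X) = \{p_1, \ldots, p_k\}$. First I would pick pairwise disjoint small closed balls $B_i \subset \R^n$ around $p_i$ whose boundary spheres $S_i$ meet $X$ transversally in compact smooth manifolds (links) $L_i := S_i \cap X$ of dimension $d-1$, so that $X \cap B_i$ is semialgebraically homeomorphic to the closed cone $c(L_i)$ with apex $p_i$. Applying Theorem \ref{thm:NTQ} to each $L_i$ produces $\Q$-nonsingular $\Q$-algebraic models $L_i' \subset \R^{2d-1}$ together with smooth diffeomorphisms $\varphi_i : L_i \to L_i'$, arbitrarily close to any prescribed smooth embedding. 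Using explicit equations for the affine cone $c(L_i') := \{t \cdot x : x \in L_i',\, t \in [0,1]\} \subset \R^{2d}$, one obtains a $\Q$-algebraic set whose only singularity is the apex and which is semialgebraically homeomorphic to $X \cap B_i$; to keep the equations in $\Q[\x]$ one would invoke the simultaneous Galois completion procedure of Subsection \ref{gcsequence} together with Corollary \ref{3113} to avoid introducing extraneous real points, using Theorem \ref{defined} to control when the resulting cone is actually defined over $\Q$.

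Next I would treat the regular complement $X \setminus \bigcup_i \mathring{B}_i$ by applying Theorem \ref{thm:NTQ} \emph{relatively}: approximate the inclusion $j : \Reg(X) \hookrightarrow \R^m$ in the prescribed neighborhood $\mc{V}$ by a Nash embedding whose image is an open piece of a $\Q$-nonsingular $\Q$-algebraic set and which agrees with the cone approximations $c(L_i')$ on collar neighborhoods of the boundary spheres $S_i \cap X$. The extra dimensions of $\R^m$ give enough room to run the $L|K$-generic projection argument of Theorem \ref{emb1} (with $K=\Q$), and the resulting union of the modified cones with the approximated regular part defines a semialgebraic continuous embedding $\phi : X \to \R^m$ with $X' := \phi(X)$ a $\Q$-algebraic set, $\Sing(X') = \phi(\Sing(X))$, and $\phi|_{\Reg(X)}$ a Nash embedding. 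The closeness conditions $\phi \in \mc{U}$ and $\phi|_{\Reg(X)} \in \mc{V}$ follow by choosing the balls $B_i$ small and the Nash--Tognoli approximations close enough.

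The main obstacle, and the heart of the construction, is the simultaneous verification that $X'$ is $\Q$-determined, namely $\Reg^\Q(X') = \Reg(X')$. Away from the singular points this reduces, via the equivalence $(\mr{i}) \Leftrightarrow (\mr{iii})$ of Proposition \ref{30} applied in the case $L=E=\R$, $K=\Q$, to the fact that the approximating $\Q$-nonsingular $\Q$-algebraic set has, at each of its points, $n-d$ polynomials in $\II_\Q$ with linearly independent gradients. Near each $\phi(p_i)$ the same property must be read off from the explicit $\Q$-algebraic equations of the cone $c(L_i')$, which is where the $\R|\Q$-algebraic geometry of Sections \ref{s3} and \ref{s4} is decisive: Corollary \ref{K-difference} allows one to carve away the ``$\Q$-bad'' components (in the sense of Definitions \ref{def:bad-points}) introduced by the Galois completion without disturbing the isolated-singularity structure, and Theorems \ref{regreg} and \ref{irredq} give the precise control required to guarantee that the surviving $\Q$-algebraic set has no $\Q$-singular points beyond the genuine $\R$-singular ones. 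Once this $\Q$-determinedness is ensured, items $(\mr{i})$, $(\mr{ii})$, $(\mr{iii})$ of the theorem follow directly from the construction.
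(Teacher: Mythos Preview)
The paper does not contain a proof of this statement: Theorem~\ref{thm:main} is presented in Subsection~\ref{nash-tognoli-Q} as a result quoted from \cite[Thm.1.10\,\&\,Rem.1.11$(\mr{ii})$]{GS}, with the surrounding text explaining that it was obtained in \cite{GS} by extending the Akbulut--King techniques for isolated singularities to the $\R|\Q$-algebraic setting, using the theory developed in the present paper together with further developments from \cite{GS}. There is therefore no ``paper's own proof'' to compare against.

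Your outline is broadly consistent with that description: the Akbulut--King cone-over-link strategy, combined with Theorem~\ref{thm:NTQ} for the links and the regular part, is indeed the template. However, what you have written is a proof \emph{plan}, not a proof. Several of the steps you list as routine are exactly the places where the paper indicates substantial new work was needed in \cite{GS}. In particular: making the cones $c(L_i')$ into genuine $\Q$-algebraic sets with a single isolated singularity at the apex and no $\Q$-bad points is not automatic from the tools you cite (Corollary~\ref{3113} and the Galois completion machinery concern hypersurfaces and $K$-Zariski closures, not cones over arbitrary $\Q$-nonsingular sets); the ``relative'' version of Theorem~\ref{thm:NTQ} that matches prescribed $\Q$-algebraic data on collars is not stated or proved in this paper and is one of the ``further developments'' alluded to; and your appeal to Corollary~\ref{K-difference} and Theorems~\ref{regreg}, \ref{irredq} to ``carve away $\Q$-bad components'' near the apices is vague---those results describe $B_\Q(X)$ and $\Sing^{\R|\Q}(X)$ but do not by themselves provide a mechanism to eliminate bad points while preserving the local cone structure. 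So the overall architecture you propose matches what the paper says was done in \cite{GS}, but the actual execution of each of these steps is where the content lies, and that content is in \cite{GS}, not here.
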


If we are willing to lose approximation properties $(\mr{iii})$, then we can drop the embedding dimension of $X'$.

\begin{thm}[{\cite[Thm.1.12]{GS}}]\label{thm:main-2}
Every real algebraic set $X$ of dimension $d$ with isolated singularities is semi\-algebraically homeomorphic to a $\Q$-determined $\Q$-algebraic subset $X'$ of $\R^{2d+5}$.

More precisely, there exists a semialgebraic homeomorphism $\eta:X\to X'$ such that $\eta(\Reg(X))=\Reg(X')$ and the restriction of $\eta$ from $\Reg(X)$ to $\Reg(X')$ is a Nash diffeomorphism.
\end{thm}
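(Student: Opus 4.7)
The plan is to derive Theorem \ref{thm:main-2} from the stronger approximation-type Theorem \ref{thm:main} by composing with a carefully chosen $\Q$-rational linear projection that reduces the ambient dimension from $n+2d+4$ down to $2d+5$, trading the approximation property for a smaller target.

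First, I would apply Theorem \ref{thm:main} to the given real algebraic set $X\subset\R^n$ of dimension $d$ with isolated singularities, taking any sufficiently loose neighborhoods $\mc{U}$ and $\mc{V}$ of the inclusion maps $i:X\hookrightarrow\R^{n+2d+4}$ and $j:\Reg(X)\hookrightarrow\R^{n+2d+4}$. The theorem yields a semialgebraic continuous embedding $\phi_1:X\to\R^{n+2d+4}$ such that $X_1:=\phi_1(X)$ is a $\Q$-determined $\Q$-algebraic set with $\Sing(X_1)=\phi_1(\Sing(X))$ (which is finite) and with $\phi_1|_{\Reg(X)}$ a Nash diffeomorphism onto $\Reg(X_1)$. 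In particular, $\phi_1$ is a semialgebraic homeomorphism from $X$ onto $X_1$ of the type demanded by the conclusion.

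Second, I would apply the $\R|\Q$-generic projection theorem (Theorem \ref{emb1}) to $X_1\subset\R^{n+2d+4}$ with target dimension $r=2d+5$. Setting $m:=n+2d+4$, the theorem requires $r\ge\max\{e^{\R|\Q}(X_1)+d-1,\,2d+1\}$, i.e.\ $e^{\R|\Q}(X_1)\le d+6$. Since $X_1$ is $\Q$-determined, at every regular point $a\in\Reg(X_1)$ one has $e^{\R|\Q}_a(X_1)=d$ by Remark \ref{rem528}(ii). At each of the finitely many singular points $q\in\Sing(X_1)=\phi_1(\Sing(X))$, the bound $e^{\R|\Q}_q(X_1)\le d+6$ has to be secured by tuning the choice of $\phi_1$ in the first step (using the approximation freedom provided by $\mc{U}$ and $\mc{V}$ in Theorem \ref{thm:main}, together with genericity arguments applied at the finitely many singular points to bring each tangent space $T^{\R|\Q}_q(X_1)$ into general position). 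Once this bound is in place, Theorem \ref{emb1} furnishes a non-empty $\Q$-Zariski open subset $\Omega\subset\mc{M}_{r,m-r}(\Q)=\Q^{r(m-r)}$ such that, for every $A\in\Omega$, the projection $\pi_A:\R^m\to\R^r$ restricts to a $\Q$-biregular isomorphism from $X_1$ onto a $\Q$-algebraic set $X'\subset\R^{2d+5}$. Because $\pi_A|_{X_1}$ is a $\Q$-biregular isomorphism, the $\Q$-nonsingular locus and the $\R$-nonsingular locus transfer exactly under $\pi_A$, so $\Reg^\Q(X')=\pi_A(\Reg^\Q(X_1))=\pi_A(\Reg(X_1))=\Reg(X')$, whence $X'$ is again $\Q$-determined, with $\Sing(X')=\pi_A(\Sing(X_1))$ finite. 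The composition $\eta:=\pi_A\circ\phi_1:X\to X'$ is then a semialgebraic homeomorphism carrying $\Reg(X)$ diffeomorphically onto $\Reg(X')$ in the Nash sense, which is the required map.

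The main obstacle is the bound $e^{\R|\Q}(X_1)\le d+6$ at the finitely many singularities of $X_1$: a priori, the $\R|\Q$-embedded dimension at a $\Q$-singular point can be arbitrarily large, and the plain output of Theorem \ref{thm:main} carries no such control. Overcoming this requires revisiting the proof of Theorem \ref{thm:main} so as to guarantee, by a finite-codimension genericity argument applied locally at each singularity, that the $\Q$-tangent spaces $T^{\R|\Q}_q(X_1)$ at the singular points $q$ are driven into general position with respect to a fixed $\Q$-rational splitting of $\R^{n+2d+4}$; this is the step where the specific constant $2d+5$ (equivalently, the slack $d+6$ for the singular $\R|\Q$-embedded dimension) is used, and it is the main technical input that distinguishes Theorem \ref{thm:main-2} from a naive projection of Theorem \ref{thm:main}.
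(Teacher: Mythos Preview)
The paper does not prove Theorem \ref{thm:main-2}; it merely quotes it from \cite[Thm.~1.12]{GS} as one of the ``main results of \cite{GS}'' presented in Subsection~\ref{nash-tognoli-Q}. So there is no proof in the paper to compare your proposal against. That said, your overall strategy---apply Theorem~\ref{thm:main} to obtain a $\Q$-determined model $X_1\subset\R^{n+2d+4}$, then project generically via Theorem~\ref{emb1} to $\R^{2d+5}$---is a natural reading of the paper's own hint (``If we are willing to lose approximation properties~$(\mr{iii})$, then we can drop the embedding dimension of $X'$'').

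However, your proposal is not a proof: it is a plan with an openly acknowledged gap at the decisive step. You correctly observe that Theorem~\ref{emb1} requires $e^{\R|\Q}(X_1)\le d+6$, and that this bound is automatic at $\Q$-nonsingular points of the $\Q$-determined set $X_1$; but at the finitely many singular points you offer only the assertion that the bound ``has to be secured by tuning the choice of $\phi_1$\ldots together with genericity arguments,'' and then concede that this ``requires revisiting the proof of Theorem~\ref{thm:main}.'' Nothing in the statement of Theorem~\ref{thm:main} as quoted here gives you any control whatsoever over $\dim_\R T^{\R|\Q}_q(X_1)$ at singular points $q$, and no mechanism in the present paper lets you impose such a bound from the outside. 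So the argument as written stops exactly where the real content of Theorem~\ref{thm:main-2} begins; what you have is a reduction of the theorem to a claim about the internal construction in \cite{GS}, not a derivation from the results stated in this paper.
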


A result of Shiota \cite[Rem.VI.2.11, p.208]{Sh} asserts that every noncompact Nash submanifold of some $\R^n$ is Nash diffeomorphic to a nonsingular real algebraic set. Combining this result, Theorem \ref{thm:NTQ}, the noncompact nonsingular cases of Theorems \ref{thm:main} and \ref{thm:main-2}, and the $L|K$-generic projection Corollary \ref{cor:emb1} with $L|K=\R|\Q$, we immediately obtain:

\begin{cor}[{\cite[Cor.1.13]{GS}}]\label{cor:main-3}
We have:
\begin{itemize}
 \item[$(\mr{i})$] Every (possibly noncompact) Nash submanifold $M$ of some $\R^n$ of dimension $d$ is Nash diffeomorphic to a $\Q$-nonsingular $\Q$-algebraic subset of $\R^{2d+1}$.
 \item[$(\mr{ii})$] If $M$ is a noncompact nonsingular algebraic subset of $\R^n$ of dimension $d$, $m:=n+2d+4$ and $\mc{V}$ is a neighborhood of the inclusion map $M\hookrightarrow\R^m$ in $\cinfty(M,\R^m)$, then there exists a Nash embedding $\phi:M\to\R^m$ belonging to $\mc{V}$ such that $\phi(M)\subset\R^m$ is a $\Q$-nonsingular $\Q$-algebraic set.
\end{itemize}
\end{cor}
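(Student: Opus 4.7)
The plan is to assemble the four ingredients listed just before the statement---Shiota's theorem that every noncompact Nash submanifold of some $\R^n$ is Nash diffeomorphic to a nonsingular real algebraic set, the Nash--Tognoli theorem over~$\Q$ (Theorem~\ref{thm:NTQ}), the noncompact nonsingular case of Theorem~\ref{thm:main}, and the $\R|\Q$-generic projection Corollary~\ref{cor:emb1}---treating separately whether $M$ is compact or not for~$(\mr{i})$, and appealing directly to Theorem~\ref{thm:main} for~$(\mr{ii})$.

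For the noncompact case of~$(\mr{i})$, I will proceed in three steps. \emph{First}, apply Shiota's theorem to~$M$: this yields a Nash diffeomorphism $\sigma\colon M\to M_1$ onto a noncompact nonsingular real algebraic set $M_1\subset\R^{n_1}$ of dimension~$d$. \emph{Second}, apply Theorem~\ref{thm:main} with $X=M_1$ and arbitrary admissible neighborhoods~$\mc{U}$,~$\mc{V}$: since $\Sing(M_1)=\varnothing$ and hence $\Reg(M_1)=M_1$, the resulting $\phi\colon M_1\to\R^{m_1}$, with $m_1=n_1+2d+4$, is a semialgebraic continuous embedding whose image $X':=\phi(M_1)$ is $\Q$-determined $\Q$-algebraic with $\Reg(X')=\phi(M_1)=X'$, hence $\Q$-nonsingular by Definition~\ref{Q-determined}, and $\phi|_{M_1}=\phi$ is a Nash embedding by property~$(\mr{ii})$ of Theorem~\ref{thm:main}. \emph{Third}, note that $\dim(X')=d$ and $m_1>2d+1$, so Corollary~\ref{cor:emb1} applied to~$X'$ with $L|K=\R|\Q$ and $r=2d+1$ produces a matrix $A\in\mc{M}_{r,m_1-r}(\Q)$ such that $\pi_A(X')\subset\R^{2d+1}$ is $\Q$-nonsingular $\Q$-algebraic and the restriction $\pi_A|_{X'}\colon X'\to\pi_A(X')$ is a $\Q$-biregular isomorphism, in particular a Nash diffeomorphism. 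The composition $\pi_A\circ\phi\circ\sigma$ then realises the desired Nash diffeomorphism between~$M$ and a $\Q$-nonsingular $\Q$-algebraic subset of~$\R^{2d+1}$.

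For the compact case of~$(\mr{i})$, I will apply Theorem~\ref{thm:NTQ} directly to a smooth embedding $\psi\colon M\to\R^{2d+1}$ (provided by Whitney) and obtain a smooth embedding $\phi\colon M\to\R^{2d+1}$ whose image $M'\subset\R^{2d+1}$ is a compact $\Q$-nonsingular $\Q$-algebraic set; the smooth diffeomorphism $M\cong M'$ is then upgraded to a Nash one using the classical Nash rigidity of compact Nash manifolds (two $\cinfty$-diffeomorphic compact Nash manifolds are Nash diffeomorphic). For~$(\mr{ii})$, I will apply Theorem~\ref{thm:main} directly with $X=M$, taking for $\mc{U}$ any $\czero$-neighborhood of the inclusion and for $\mc{V}$ the given neighborhood: since $M$ is nonsingular we have $\Reg(M)=M$, so property~$(\mr{i})$ of Theorem~\ref{thm:main} forces $\Reg(\phi(M))=\phi(M)$, i.e. $\phi(M)$ is $\Q$-nonsingular, while properties~$(\mr{ii})$ and~$(\mr{iii})$ give that $\phi|_M=\phi$ is a Nash embedding belonging to~$\mc{V}$.

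The main obstacle is bookkeeping rather than substance: one must verify in the noncompact part of~$(\mr{i})$ that the ambient dimension~$m_1$ after Theorem~\ref{thm:main} strictly exceeds $2d+1$ --- which is immediate because $m_1=n_1+2d+4\geq 2d+5$ --- and that the $\Q$-biregular isomorphism provided by Corollary~\ref{cor:emb1} is automatically a Nash diffeomorphism between $\Q$-nonsingular $\Q$-algebraic sets, which is clear since it is an $\R$-regular bijection with $\R$-regular inverse between nonsingular real algebraic sets. The only nontrivial external input, used in the compact part of~$(\mr{i})$, is the Nash rigidity of compact Nash manifolds, which is classical and implicit in the techniques of~\cite{GS}.
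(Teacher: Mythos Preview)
Your proposal is correct and follows the paper's indicated approach; the paper itself gives no proof beyond listing the four ingredients (Shiota's theorem, Theorem~\ref{thm:NTQ}, the noncompact nonsingular cases of Theorems~\ref{thm:main} and~\ref{thm:main-2}, and Corollary~\ref{cor:emb1}), so you have essentially filled in the details the authors left implicit. The only cosmetic difference is that the paper also mentions Theorem~\ref{thm:main-2}, which you could substitute for Theorem~\ref{thm:main} in step two of the noncompact case of~$(\mathrm{i})$ to get an intermediate embedding dimension of $2d+5$ rather than $n_1+2d+4$; since Corollary~\ref{cor:emb1} projects down to $2d+1$ either way, this is immaterial.
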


Given an algebraic set $X\subset\R^n$ and $a\in X$, the local dimension $\dim(X_a)$ of $X$ at $a$ is the dimension of a small enough semialgebraic neighborhood of $a$ in~$X$ (see \cite[Def.2.8.11]{bcr}). Another consequence of Theorem \ref{thm:main-2} is the following result concerning real algebraic set germs with an isolated singularity.

\begin{thm}[{\cite[Thm.1.14]{GS}}]\label{thm:main-germs}
Every real algebraic set germ with an isolated singularity is semialgebraically homeomorphic to a real $\Q$-determined $\Q$-algebraic set germ with an isolated singularity. 

More precisely, the following holds. Let $X\subset\R^n$ be an algebraic set containing the origin~$O$ of $\R^n$, and let $d:=\dim(X_O)$. Suppose that $O$ is an isolated point of $\Sing(X)$. Then there exist a compact $\Q$-determined $\Q$-algebraic set $X'\subset\R^{2d+4}$ that contains the origin $O'$ of $\R^{2d+4}$, a semialgebraic open neighborhood $U$ of $O$ in $X$, a semialgebraic open neighborhood $U'$ of $O'$ in $X'$ and a semialgebraic homeomorphism $\phi:U\to U'$ such that $\Sing(X)\cap U=\{O\}$, $\Sing(X')\cap U'=\{O'\}$, $\phi(O)=O'$ and the restriction of $\phi$ from $U\setminus\{O\}$ to $U'\setminus\{O'\}$ is a Nash diffeomorphism.
\end{thm}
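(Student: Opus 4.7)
The plan is to derive Theorem \ref{thm:main-germs} from Theorem \ref{thm:main-2} by first realizing $(X,O)$ as the germ at an isolated singular point of a suitable compact real algebraic set, then applying Theorem \ref{thm:main-2}, and finally using a generic $\Q$-rational linear projection to cut down one ambient dimension.

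First I would construct a compact real algebraic set $\widetilde X$ of dimension $d$ whose singular set is finite and contains a point $O_1$ with $(\widetilde X,O_1)$ semialgebraically homeomorphic to $(X,O)$. To do this, I would use the conic structure theorem for semialgebraic sets to identify, for small $\epsilon>0$, the neighborhood $X\cap B_\epsilon(O)$ with the open cone over the link $L:=X\cap S_\epsilon(O)$, which is a compact Nash submanifold of dimension $d-1$. By Corollary \ref{cor:main-3}$(\mr{i})$, $L$ has a $\Q$-nonsingular $\Q$-algebraic model in low ambient dimension, and a standard algebraic compactification of the cone (for instance via an algebraic ``cap'' or by embedding the cone in a compact projective variety and restricting to a real part) then produces the desired $\widetilde X$; after a generic perturbation, we may assume the singularities are isolated and that the embedded dimension of $\widetilde X$ at $O_1$ is small.

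Next, I would apply Theorem \ref{thm:main-2} to $\widetilde X$, obtaining a $\Q$-determined $\Q$-algebraic set $\widetilde X'\subset\R^{2d+5}$ together with a semialgebraic homeomorphism $\eta:\widetilde X\to\widetilde X'$ satisfying $\eta(\Reg(\widetilde X))=\Reg(\widetilde X')$, so that $O':=\eta(O_1)$ is an isolated singular point of $\widetilde X'$ and $(\widetilde X',O')$ is semialgebraically homeomorphic to $(X,O)$. To cut down to $\R^{2d+4}$ I would then invoke the $\R|\Q$-generic projection Theorem \ref{emb1}: provided the embedded dimension $e^{\R|\Q}(\widetilde X')$ is at most $d+5$, a $\Q$-Zariski generic matrix $A\in\mathcal M_{(2d+4),1}(\Q)$ yields a $\Q$-biregular isomorphism $\pi_A$ from $\widetilde X'$ onto its image, producing a compact $\Q$-determined $\Q$-algebraic subset of $\R^{2d+4}$. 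After a translation placing $\pi_A(O')$ at the origin, the resulting $X'\subset\R^{2d+4}$ and the homeomorphism $\phi$ defined as the restriction of $\pi_A\circ\eta$ (composed with the conic identification) to appropriate semialgebraic open neighborhoods of $O$ and of the origin would satisfy all the required properties.

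The hard part is guaranteeing the embedded dimension bound $e^{\R|\Q}(\widetilde X')\le d+5$ so that Theorem \ref{emb1} delivers $r\le 2d+4$. Theorem \ref{thm:main-2} a priori only places $\widetilde X'$ in $\R^{2d+5}$ without controlling the $\R|\Q$-tangent spaces at the (finitely many) singular points. This obstacle can be addressed either by keeping the ambient dimension of $\widetilde X$ small enough from the outset (for instance by first applying a generic $\Q$-rational linear projection to realize the germ $(X,O)$ inside $\R^{d+k}$ for a small $k$, which is possible because embedded dimension is a semilocal notion and only has to be controlled at the distinguished point) and then tracking this bound through a refined version of Theorem \ref{thm:main-2} drawn from its proof in \cite{GS}, or alternatively by combining the above strategy with the sharper embedding dimension bounds appearing in the Nash--Tognoli type arguments for isolated singularities of Akbulut--King.
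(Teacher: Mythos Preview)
The paper itself does not give a proof of this theorem: it is simply stated as \cite[Thm.1.14]{GS} and described as ``another consequence of Theorem~\ref{thm:main-2}'', with the actual argument left to \cite{GS}. So there is no in-paper proof to compare against, and your proposal must be judged on its own.

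Your overall strategy---realize the germ inside a compact algebraic set with isolated singularities, apply Theorem~\ref{thm:main-2}, then project---is reasonable, but two steps are genuinely incomplete.

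First, the construction of $\widetilde X$ is not rigorous. Passing from the link $L$ to an \emph{algebraic} compact set whose germ at a point is semialgebraically homeomorphic to $(X,O)$, with only isolated singularities, is not a matter of ``a standard algebraic compactification of the cone'' or ``an algebraic cap''. The cone on a Nash manifold is not in general algebraic, and producing an algebraic model of the germ (not just of the link) while keeping the singular locus finite is precisely the kind of delicate Akbulut--King type construction that underlies Theorems~\ref{thm:main} and~\ref{thm:main-2} in \cite{GS}; you cannot simply invoke it as standard.

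Second, and more seriously, the projection step does not go through as written. You correctly compute that Theorem~\ref{emb1} would require $e^{\R|\Q}(\widetilde X')\le d+5$, whereas a priori $\widetilde X'\subset\R^{2d+5}$ only gives $e^{\R|\Q}(\widetilde X')\le 2d+5$. Your proposed fixes---controlling the ambient dimension of $\widetilde X$ beforehand, or ``tracking this bound through a refined version of Theorem~\ref{thm:main-2}''---are not arguments: Theorem~\ref{thm:main-2} as stated gives no control on the $\R|\Q$-tangent spaces at singular points of the output, and lowering the ambient dimension of $\widetilde X$ says nothing about $e^{\R|\Q}$ of $\widetilde X'$ \emph{after} the homeomorphism of Theorem~\ref{thm:main-2} has been applied. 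This is the gap you yourself flag, and it is a real one; without access to the internal constructions of \cite{GS} (or an independent argument bounding the $\Q$-embedded dimension at the image singular point), the reduction from $2d+5$ to $2d+4$ remains unjustified.
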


For further results and remarks, we refer the readers to \cite{GS}.


\addtocontents{toc}{\protect\setcounter{tocdepth}{1}}

\appendix

\section{Extension of coefficients and Euclidean topologies in the algebraically closed case}\label{appendix}

\subsection{Extension of coefficients and Euclidean topologies: from the real case to the complex case}\label{appendixA1}

It is well-known that the nontrivial subgroups of finite order in the group ${\rm Aut}(C)$ of automorphisms of an algebraically closed field $C$ are subgroups of order $2$, that is, subgroups generated by an involution \cite{as} or \cite[Ch.VI.\S11.Thm.17]{j}. In addition, the fixed field $R$ of such involution is a real closed subfield of $C$ and $C\cong R[\ii]$. In \cite{ba}, it is shown that there always exist involutions in ${\rm Aut}(C)$ (recall that we are assuming that $C$ has characteristic zero, as all the fields involved in this paper). In \cite{sch}, the author studies the family of conjugacy classes of the involutions in ${\rm Aut}(C)$, that is, the family of non-equivalent real structures inside~$C$. By \cite{a,ba}, this family depends on the transcendence degree of $C$ over the prime field $\Q$. In particular, all the involutions of $C$ are conjugate with each other if and only if $C$ coincides with the algebraic closure $\qbar$ of $\Q$. Thus, $\qbar$ is the unique algebraically closed field that has only one real structure inside, that is, the field $\qr=\qbar\cap\R$ of real algebraic numbers.

Extension of coefficients between algebraically closed fields or real closed fields is an important tool in complex and real algebraic geometry, see \cite[\S3]{dm} and \cite[\S5]{bcr}. The algebraically closed case reduces to the real closed one. Let $L|C$ be an extension of algebraically closed fields, let $\varphi:C\to C$ be an involution and let $R$ be the fixed subfield of~$\varphi$, so $C\cong R[\ii]$. By \cite[Lem.2]{sch}, there exists an involution $\psi:L\to L$ that extends $\varphi$. If $F$ denotes the fixed subfield of $\psi$, then $L\cong F[\ii]$, $F|R$ is an extension of real closed fields and we can perform the extension of coefficients from $C$ to $L$ via the extension of coefficients from $R$ to~$F$.

Consider again an algebraically closed field $C$ and a real closed subfield $R$ of $C$ such that $C\cong R[\ii]$. The topology of the real closed field $R$ is the one induced by its unique ordering and the topology induced on each $R^n$ is the product topology. We can endow $C^n$ with the Euclidean topology of $C^n=R^{2n}$. The real closed field $R$ is in general not uniquely determined by $C$ and $C^n$ may have several different Euclidean topologies, as we will see below. All of these Euclidean topologies are finer than the usual $C$-Zariski topology of $C^n$, because the singletons are closed and the polynomial functions $C^n=R^{2n}\to C=R^2$ are continuous with respect to each Euclidean topology.

\subsection{Non-homeomorphic Euclidean topologies on $\C$}\label{appendixA2} The Euclidean topology on the field $\C$ of complex numbers induced by the field $\R$ of real numbers is the usual one, which is locally compact. We provide below an example of Euclidean topology on $\C$, which is not locally compact and therefore not even homeomorphic to the usual Euclidean topology. This example was suggested to us by Elias Baro. 

Consider the field $\Q((\t))$ of formal meromorphic series with coefficients in $\Q$. As a set, $\Q((\t))$ has the following form
$$
\textstyle
\Q((\t))=\big\{\sum_{i\geq k}a_i\t^i: k\in\Z,\, \text{$a_i\in\Q$ for each $i\geq k$},\, a_k\neq0\big\}\cup\{0\},
$$
so it has the same cardinality of $\R$ (thus of $\C$). The algebraic closure of $\Q((\t))$ is the field $\qbar((\t^*))$ of Puiseux series with coefficients in $\qbar$, see \cite[p.98]{walker}. The cardinality of $\ol{\Q}((\t^*))$ is the same as the cardinality of $\Q((\t))$, so $\C$ and $\qbar((\t^*))$ are algebraic closed fields of the same non-countable cardinality. By Steinitz's Theorem \cite[\S23.6\;\&\;\S24, p.301]{steinitz}, $\C$ and $\qbar((\t^*))$ are isomorphic, so $\C$ is the algebraic closure of both $\R$ and the real closure of $\Q((\t))$, which is the field $R:=\qr((\t^*))$ of Puiseux series with coefficients in~$\qr$. Recall that a non-zero element $\xi$ of $R$ is positive if $\xi=\sum_{i\geq k}a_i\t^{i/q}$ with $k\in\Z$, $q\in\N^*$, $a_i\in\qr$ for each $i\geq k$ and $a_k>0$ (in $\qr$).

Let us prove that the topology on $\C$ induced by $R$ is not locally compact. To do so, it is enough to show that $R$ is not locally compact or, equivalently, the interval $[0,1]$ of $R$ is not compact. As usual, if $\xi,\zeta\in R$ with $\xi<\zeta$, we denote $(\xi,\zeta)$ and $[\xi,\zeta]$ the corresponding open and closed intervals of $\qr((\t^*))$. Endow $[0,1]$ with the relative topology induced by that of $R$. For each $\rho\in[0,1]$, we define an open neighborhood $U_\rho$ of $\rho$ in $[0,1]$ as follows. If $\rho\in[0,1]\cap\qr$, we set $U_\rho:=(\rho-\t,\rho+\t)\cap[0,1]$. If $\rho\in[0,1]\setminus\qr$, then $\rho=a_0+\sum_{i\geq k}a_i\t^{i/q}$ for some $k\geq1$, $q\in\N^*$, $a_0\in\qr\cap[0,1]$, $a_i\in\qr$ for each $i\geq k$ and $a_k\neq0$ (because if $a_i=0$ for $i>0$, then $\rho\in\qr$). We set $U_\rho:=(\rho-\t^{(k+1)/q},\rho+\t^{(k+1)/q})\subset[0,1]$. By construction, $\{U_\rho\}_{\rho\in[0,1]}$ is an open cover of $[0,1]$, $U_\rho\cap([0,1]\cap\qr)=\{\rho\}$ for each $\rho\in[0,1]\cap\qr$ and $U_\rho\cap([0,1]\cap\qr)=\varnothing$ for each $\rho\in[0,1]\setminus\qr$. Therefore, each cover of $[0,1]$ extracted from $\{U_\rho\}_{\rho\in[0,1]}$ has to contain the infinite family $\{U_\rho\}_{\rho\in[0,1]\cap\qr}$. Consequently, the closed interval $[0,1]$ of $R$ is not compact, as required.

For recent applications of Puiseux series in real algebraic geometry, we refer to \cite{fs,fy}.

\section{Laksov's Nullstellensatz}\label{appendix:b}

\subsection{Laksov's radical ideal and Nullstellens\"atze}
In this paper, the main purpose is to study the geometry of $K$-algebraic subsets of $L^n$, where $L|K$ is an extension of fields. One main tool concerns the use of Nullstellens\"atze like Hilbert's Nullstellensatz and Real Nullstellensatz. Laksov analyzes in \cite{la} the Nullstellens\"atze in terms of radical operations that generalize the classical radical ideal, see also \cite{lw}.

Hilbert's Nullstellensatz is a crucial tool to compute zero ideals, when we are dealing with algebraically closed fields. This involves the use of the classical {\em radical ideal} $\sqrt{\gta}$ of a given ideal $\gta$ of $K[\x]$. Namely, if $L$ is algebraically closed and $\gta$ is an ideal of $K[\x]$, then $\I_K(\ZZ_L(\gta))=\sqrt{\gta}$, where
$$
\sqrt{\gta}:=\big\{f\in K[\x]:\exists \ell\geq1,\ f^\ell\in\gta\big\}.
$$
We presented this version of Hilbert's Nullstellensatz in Corollary \ref{kreliablec}. Roughly speaking, the operator $\sqrt{\cdot}$ `only erases' multiplicities.

When dealing with real closed fields, it appears a more technical ideal $\sqrt[r]{\gta}$, called the {\em real radical ideal} of $\gta$, firstly introduced and developed by Dubois-Risler-Stengle \cite{d,r,st}. Its properties are `worse' than the ones of the classical radical ideal, but provides a good description of zero ideals of zero sets of ideals of $K[\x]$, where $K$ is an ordered subfield of a real closed field~$L$. Namely, if $\gta$ is an ideal of $K[\x]$, we have $\I_K(Z_L(\gta))=
\sqrt[r]{\gta}$, where
\begin{align*}
\textstyle\sqrt[r]{\gta}:=\big\{f\in K[\x]:&\ \exists m\geq0,\ \exists a_1,\ldots,a_m\in K \text{ with }a_1>0,\ldots,a_m>0,\\
&\ \textstyle\exists f_1,\ldots,f_m\in K[\x],\ \exists \ell\geq1,f^{2\ell}+\sum_{i=1}^ma_if_i^2\in\gta\big\}.
\end{align*}
Roughly speaking, the operator $\sqrt[r]{\cdot}$ `breaks sums of squares scaled by positive elements of $K$' into their addends and `erases' multiplicities.

When dealing with an arbitrary algebraic extension of fields $L|K$, the {\em Atkins-Gianni-Tognoli-Laksov radical ideal $\sqrt[L]{\gta}$} of $\gta$ appears \cite{agt,la,lw}.

Let us recall some concepts introduced in \cite{la}, using slightly different notations. Let $m\in\N$, let $K[\y_0,\y_1,\ldots,\y_m]_\sfh$ be the set of all homogeneous polynomials in $K[\y_0,\ldots,\y_m]$ and let $P(m)$ be the set of all polynomials in $K[\y_0,\y_1,\ldots,\y_m]_\sfh$ whose zero set in $L^{m+1}$ is contained in $\ZZ_L(\y_0)$, that is,
$$
P(m):=\big\{F\in K[\y_0,\y_1,\ldots,\y_m]_\sfh:\ZZ_L(F)\subset\ZZ_L(\y_0)\big\}.
$$
Observe that $F\in P(m)$ if and only if $F(1,\y_1,\ldots,\y_m)$ has empty zero set. Identify $K[\y_0,\ldots,\y_m]$ with a subset of $K[\y_0,\ldots,\y_m,\y_{m+1}]$ in the natural way. Let $K[(\y_i)_{i\in\N}]:=\bigcup_{m\in\N}K[\y_0,\ldots,\y_m]$ and let $P$ be the subset of $K[(\y_i)_{i\in\N}]$ defined by $P:=\bigcup_{m\in\N}P(m)$. 

Let $\gta$ be an ideal of $K[\x]$. Define the {\em $L$-radical ideal $\sqrt[L]{\gta}$ of $\gta$} as:
$$
\sqrt[L]{\gta}:=\big\{f\in K[\x]:\exists m\geq0,\ \exists F\in P(m),\ \exists f_1,\ldots,f_m\in K[\x],\ F(f,f_1,\ldots,f_m)\in\gta\big\}.
$$
By \cite[{Prop.2$\&$3}]{la}, $\sqrt[L]{\gta}$ is an ideal of $K[\x]$ that contains the radical ideal $\sqrt{\gta}$ and $\sqrt[L]{\sqrt[L]{\gta}}=\sqrt[L]{\gta}$, that is, the $L$-radical ideal of an ideal is an $L$-radical ideal.

The main two results of \cite{la} are the following:

\begin{thm}[Weak Hilbert's $L$-Nullstellensatz]
Let $L|K$ be an algebraic extension of fields and let $\gta$ be an ideal of $K[\x]$. Then $1\in\sqrt[L]{\gta}$ if and only if $\ZZ_L(\gta)=\varnothing$.
\end{thm}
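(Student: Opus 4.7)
The plan is to prove the two implications separately. The easy direction ``$1 \in \sqrt[L]{\gta} \Longrightarrow \ZZ_L(\gta) = \varnothing$'' is a direct evaluation: if $F \in P(m)$ and $f_1, \ldots, f_m \in K[\x]$ satisfy $F(1, f_1, \ldots, f_m) \in \gta$, then any $x \in \ZZ_L(\gta)$ would give $F(1, f_1(x), \ldots, f_m(x)) = 0$ in $L^{m+1}$, contradicting the defining property of $P(m)$ since the first coordinate equals $1 \neq 0$.

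For the converse, I would first settle the case of a maximal ideal $\gtm$. Zariski's Lemma shows that $E := K[\x]/\gtm$ is a finite field extension of $K$, and one checks directly that $\ZZ_L(\gtm) = \varnothing$ is equivalent to the absence of a $K$-embedding $E \hookrightarrow L$ (points of $\ZZ_L(\gtm)$ correspond to such embeddings). Since the characteristic is zero, $E$ has a primitive element $\theta$ with minimal polynomial $p(\y) \in K[\y]$, and the non-embedding condition becomes ``$p$ has no roots in $L$''. Choosing any lift $q \in K[\x]$ of $\theta$, we get $p(q(\x)) \in \gtm$, and the homogenization $F(\y_0, \y_1) := \y_0^{\deg p}\, p(\y_1/\y_0)$ belongs to $P(1)$ (because $p$ has no $L$-roots), so $F(1, q(\x)) = p(q(\x)) \in \gtm$ proves $1 \in \sqrt[L]{\gtm}$.

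Next I would extend to ideals $\gta$ with $K[\x]/\gta$ Artinian, where only finitely many maximal ideals $\gtm_1, \ldots, \gtm_r$ contain $\gta$. The previous step provides polynomials $p_i \in K[\y]$ without roots in $L$ and lifts $q_i \in K[\x]$ with $p_i(q_i) \in \gtm_i$. Working with disjoint variables $\y^{(1)}, \ldots, \y^{(r)}$, the product $G := \prod_{i=1}^r p_i(\y^{(i)})$ has no zero in $L^r$, and $G(q_1, \ldots, q_r) \in \prod \gtm_i \subset \sqrt{\gta}$, so some power of it lies in $\gta$. Homogenizing $G^N$ then yields an element of $P(r)$ that certifies $1 \in \sqrt[L]{\gta}$.

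The main obstacle is the general case $\dim K[\x]/\gta \geq 1$, where infinitely many maximal ideals contain $\gta$ and the product construction no longer produces a single witness. I would attempt reduction to the Artinian case via Noether normalization: pick $y_1, \ldots, y_d \in K[\x]/\gta$ algebraically independent over $K$ with the quotient finite over $K[y_1, \ldots, y_d]$, so that $\ZZ_L(\gta) = \varnothing$ translates into the fact that every fiber of the natural projection $\ZZ_L(\gta) \to L^d$ is empty and the Artinian case applies over $L$ to each fiber. The technical challenge is then to assemble this family of fiberwise witnesses into a single identity holding in $K[\x]/\gta$; this should use a spreading-out argument together with the basic inclusion $\sqrt{\cdot} \subset \sqrt[L]{\cdot}$ (which follows from $F(\y_0, \y_1) := \y_0^\ell \in P(1)$) to deduce $1 \in \sqrt[L]{\gta}$.
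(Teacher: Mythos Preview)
The paper does not give its own proof of this statement; it records it as one of the two main results of Laksov's paper~\cite{la} and refers the reader there. So there is no in-paper argument to compare against beyond that citation.

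Your easy direction is correct, and your treatment of the maximal and Artinian cases is fine.

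The genuine gap is exactly where you place it: the reduction from $\dim K[\x]/\gta\ge 1$ to the Artinian situation. The Noether-normalization-plus-spreading-out sketch does not close as written. The fiber over a point $b\in L^d$ is governed by an ideal of $L[\x]$, not of $K[\x]$, because the additional generators $Z_i-b_i$ have coefficients in $L$. Applying your Artinian case ``over $L$'' therefore produces, for each $b$, witness data $(F_b,f_{b,1},\ldots,f_{b,m_b})$ with coefficients in $L$ and certifying membership only in $\gta L[\x]+(Z_i-b_i)_i$, not in $\gta$. Assembling this $L$-parametrized family into a single witness with $K$-coefficients lying in $\gta$ itself is precisely the substance of the theorem in positive dimension; your proposal names this step (``spreading-out'') but does not carry it out, and it is not a formality. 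A variant worth trying is to pass to the \emph{generic} fiber, i.e.\ localize at $K[z_1,\ldots,z_d]\setminus\{0\}$ and apply the Artinian case to the algebraic extension $L(z)\,|\,K(z)$, which at least keeps all data over a field extension of $K$; but one must then check that the hypothesis $\ZZ_{L(z)}=\varnothing$ holds for the localized ideal and that the resulting witness clears denominators back into $\gta\subset K[\x]$. Neither verification is automatic, and this is where the real work in~\cite{la} lies.
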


\begin{thm}[Hilbert's $L$-Nullstellensatz]\label{thm:laksov}
Let $L|K$ be an algebraic extension of fields. Then $\II_K(\ZZ_L(\gta))=\sqrt[L]{\gta}$ for each ideal $\gta$ of $K[\x]$.
\end{thm}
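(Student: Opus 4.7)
The plan is to handle the two inclusions separately. The inclusion $\sqrt[L]{\gta}\subset\II_K(\ZZ_L(\gta))$ is immediate from the definitions: if $f\in\sqrt[L]{\gta}$ is witnessed by $F\in P(m)$ and $f_1,\ldots,f_m\in K[\x]$ with $F(f,f_1,\ldots,f_m)\in\gta$, then for every $x\in\ZZ_L(\gta)$ the tuple $(f(x),f_1(x),\ldots,f_m(x))\in L^{m+1}$ lies in $\ZZ_L(F)\subset\ZZ_L(\y_0)$, so $f(x)=0$. The hypothesis that $L|K$ is algebraic plays no role here; only the defining property of $P(m)$.

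For the converse inclusion $\II_K(\ZZ_L(\gta))\subset\sqrt[L]{\gta}$, I would combine the weak Hilbert's $L$-Nullstellensatz (stated just before Theorem~\ref{thm:laksov}) with a Rabinowitsch-style trick. Given $f\in\II_K(\ZZ_L(\gta))$, adjoin an indeterminate $z$ and consider the ideal $\gtb:=\gta K[\x,z]+(zf-1)K[\x,z]\subset K[\x,z]$; since $f$ vanishes on $\ZZ_L(\gta)$, the relation $zf=1$ is unsolvable above any point of $\ZZ_L(\gta)$, so $\ZZ_L(\gtb)=\varnothing$. The weak $L$-Nullstellensatz supplies $F\in P(m)$ (homogeneous of some degree $d$) and $h_1,\ldots,h_m\in K[\x,z]$ with $F(1,h_1,\ldots,h_m)=A(\x,z)+B(\x,z)(zf-1)$ for some $A\in\gta K[\x,z]$ and $B\in K[\x,z]$. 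Substituting $z=1/f$ in the localization $K[\x]_f$ kills the $B$-term; setting $\tilde h_i:=h_i(\x,1/f)\in K[\x]_f$ and choosing $N\geq 2$ large enough that $g_i:=f^{N}\tilde h_i\in K[\x]$ and $f^{Nd}A(\x,1/f)\in\gta$, homogeneity of $F$ yields
\[
F(f^{N},g_1,\ldots,g_m)\;=\;f^{Nd}F(1,\tilde h_1,\ldots,\tilde h_m)\;=\;f^{Nd}A(\x,1/f)\;\in\;\gta.
\]

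The remaining step — and the principal obstacle — is that the relation above has $f^{N}$ rather than $f$ in the first slot, while the naive substitution $\y_0\mapsto\y_0^N$ destroys the homogeneity required by the definition of $P(m)$. To correct this, writing $F=\sum_I c_I\,\y_0^{i_0}\y_1^{i_1}\cdots\y_m^{i_m}$ with $i_0+\cdots+i_m=d$, I would introduce an auxiliary variable $\y_{m+1}$ and define
\[
G(\y_0,\y_1,\ldots,\y_m,\y_{m+1})\;:=\;\sum_{I}c_I\,\y_0^{Ni_0}\,\y_{m+1}^{(N-1)(d-i_0)}\,\y_1^{i_1}\cdots\y_m^{i_m},
\]
which is a genuine polynomial homogeneous of degree $Nd$ and satisfies $G(f,g_1,\ldots,g_m,1)=F(f^{N},g_1,\ldots,g_m)\in\gta$. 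Finally, I would check $G\in P(m+1)$: if $(y_0,\ldots,y_{m+1})\in L^{m+2}$ is a zero of $G$ with $y_{m+1}\neq 0$, then dividing by $y_{m+1}^{(N-1)d}$ shows that $F$ vanishes at $(y_0^{N}/y_{m+1}^{N-1},y_1,\ldots,y_m)$, whence $y_0=0$ by $F\in P(m)$; if instead $y_{m+1}=0$, only the monomial with $i_0=d$ (hence $i_1=\cdots=i_m=0$) survives, giving $c_{(d,0,\ldots,0)}y_0^{Nd}=0$, and the coefficient $c_{(d,0,\ldots,0)}=F(1,0,\ldots,0)$ is non-zero precisely because $(1,0,\ldots,0)\notin\ZZ_L(F)$, so again $y_0=0$. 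This certifies $f\in\sqrt[L]{\gta}$ and completes the argument.
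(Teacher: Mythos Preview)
Your argument is correct. The easy inclusion is handled cleanly, and for the reverse inclusion your Rabinowitsch localization followed by the homogeneity-restoring construction of $G\in P(m+1)$ is sound; the verification that $G\in P(m+1)$ (splitting on whether $y_{m+1}$ vanishes and using $F(1,0,\ldots,0)\neq 0$) is exactly what is needed. One small point worth making explicit: you should remark that $f$ may be assumed non-zero (so that the localization $K[\x]_f$ and the substitution $z\mapsto 1/f$ make sense), the case $f=0$ being trivial.

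As for comparison with the paper: the paper does not supply its own proof of Theorem~\ref{thm:laksov}; it is stated in Appendix~\ref{appendix:b} as one of the two main results of Laksov's article~\cite{la}, with only the surrounding definitions reproduced. Your derivation of the full $L$-Nullstellensatz from the weak one via the Rabinowitsch trick is the standard route, and the additional device of re-homogenizing with an auxiliary variable to pass from $f^N$ back to $f$ is precisely the kind of manipulation for which the family $P$ is designed. So there is nothing to compare against in the present paper; your proof stands on its own and is in the expected spirit.
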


\textit{Fix an algebraic extension of fields $L|K$.}

If $L$ is algebraically closed, then it is possible to describe the $L$-radical ideal $\sqrt[L]{\gta}$ of $\gta$ by the subfamily $P_H=\bigcup_{m\in\N}P_H(m)$ of $P$, where $P_H(m):=\{\y_0^\ell:\ell\geq1\}\subset P(m)$ for each $m\in\N$. Indeed, we have:
$$
\sqrt[L]{\gta}=\sqrt{\gta}=\big\{f\in K[\x]:\exists m\geq0,\ \exists F\in P_H(m),\ \exists f_1,\ldots,f_m\in K[\x],\ F(f,f_1,\ldots,f_m)\in\gta\big\}.
$$

Suppose $L$ is a real closed field and $K$ is an ordered subfield of $L$. In this case, as discussed in \cite[Ex.4.1(b)]{lw}, we can consider the subfamily $P_{DRS}=\bigcup_{m\in\N}P_{DRS}(m)$ of $P$, where
\begin{align*}
\textstyle P_{DRS}(m):=\big\{F\in K[\y_0,\ldots,\y_m]:&\ \exists q\geq0,\ \exists a_1,\ldots,a_q\in K \text{ with }a_1>0,\ldots,a_q>0,\\
&\ \text{$\exists G_1,\ldots,G_q\in K[\y_0,\ldots,\y_m]_\sfh$ of degree $\ell$ with}\\
&\ G_1(1,0,\ldots,0)=0,\ldots,G_q(1,0,\ldots,0)=0,\\
&\textstyle\ F=\y_0^{2d}+\sum_{i=1}^qa_iG_i(\y_0,\ldots,\y_m)^2\big\}.
\end{align*}
As the polynomial $\widehat{F}:=\y_0^{2d}+\sum_{i=1}^ma_i\y_i^2\y_{m+1}^{2d-2}$ belongs to $P_{DRS}(m+1)$ and $\widehat{F}(\y_0,\ldots,\y_m,1)=\y_0^{2\ell}+\sum_{i=1}^ma_i\y_i^2$, it is possible to describe $\sqrt[L]{\gta}$ by the subfamily $P_{DRS}$ of $P$:
$$
\sqrt[L]{\gta}=\sqrt[r]{\gta}=\big\{f\in K[\x]:\exists m\geq0,\ \exists F\in P_{DRS}(m),\ \exists f_1,\ldots,f_m\in K[\x],\ F(f,f_1,\ldots,f_m)\in\gta\big\}.
$$

In view of the above alternative (sub)families, one can wonder whether in the definition of $\sqrt[L]{\gta}$ it is possible to replace the family $P$ by its subfamily $P_0=\bigcup_{m\in\N}P_0(m)$, where
\begin{equation}\label{equab1}
P_0(m):=\big\{F\in K[\y_0,\y_1,\ldots,\y_m]_\sfh:\ZZ_L(F)=\{0\}\big\}.
\end{equation}
In \cite[\S5, Ex.2]{la}, Laksov shows that in general $P_0$ is not enough to define $\sqrt[L]{\gta}$. Namely, if $L=K:=\Q$ and $\gta:=(\x_1^3+\x_2^3+3\x_3^3)\Q[\x_1,\x_2,\x_3]$, then $\ZZ_\Q(\gta)=\{(q,-q,0)\in\Q^3:q\in\Q\}$, $\sqrt[L]{\gta}=(\x_1+\x_2,\x_3)\Q[\x_1,\x_2,\x_3]$ and the ideal $I_\gta$ of $K[\x]$ obtained by replacing $P(m)$ with $P_0(m)$ in the definition of $\sqrt[L]{\gta}$, that is,
$$
I_\gta:=\big\{f\in K[\x]:\exists m\geq0,\ \exists F\in P_0(m),\ \exists f_1,\ldots,f_m\in K[\x],\ F(f,f_1,\ldots,f_m)\in\gta\big\}
$$
is strictly contained in $\sqrt[L]{\gta}$, because it turns out that $I_\gta$ does not contain non-zero linear forms.

\subsection{Radical operations and Nullstellens\"atze.} In \cite{lw}, the authors study how the subfamilies of $P$ produce different types of radical operations. Let $\gtA$ be the set of all ideals of $K[\x]$. A map ${\mathfrak R}:\gtA\to\gtA$ is a {\em radical operation} if, for each $\gta\in\gtA$, it holds:\begin{itemize}
\item[$(\mr{i})$] ${\mathfrak R}(\gta)$ is the intersection of all prime ideals $\gtp$ of $K[\x]$ that contains $\gta$ and satisfy ${\mathfrak R}(\gtp)=\gtp$.
\item[$(\mr{ii})$] ${\mathfrak R}({\mathfrak R}(\gta))={\mathfrak R}(\gta)$.
\item[$(\mr{iii})$] $\sqrt{\gta}\subset{\mathfrak R}(\gta)$.
\end{itemize}
An example of a radical operation is the `$K$-ideal of $L$-zeros' operation, that is, 
$$
{\mathfrak R}_\ZZ^{L|K}(\gta):=\II_K(\ZZ_L(\gta))\ \text{ for each $\gta\in\gtA$.}
$$
Evidently, ${\mathfrak R}_\ZZ^{L|K}$ satisfies conditions $(\mr{ii})$ and $(\mr{iii})$. The fact that ${\mathfrak R}_\ZZ^{L|K}$ satisfies also condition $(\mr{ii})$ can be proved using Lemmas \ref{lem:prime} and \ref{lem:irred}.

Let $Q=\bigcup_{m\in\N}Q(m)$ be a subfamily of $P=\bigcup_{m\in\N}P(m)$ with $Q(m)\subset P(m)$ for each $m\in\N$. Define the map ${\mathfrak R}_Q:\gtA\to\gtA$ associated to $Q$ by
$$
{\mathfrak R}_Q(\gta):=\big\{f\in K[\x]:\exists m\geq0,\ \exists F\in Q(m),\ \exists f_1,\ldots,f_m\in K[\x],\ F(f,f_1,\ldots,f_m)\in\gta\big\}.
$$ 
A {\em Nullstellensatz for the algebraic extension of fields $L|K$} consists on finding such a subfamily $Q$ of $P$ such that ${\mathfrak R}_\ZZ^{L|K}={\mathfrak R}_Q$.

\subsection{Laksov's Criterion.} In \cite{lw}, the authors provide sufficient conditions for a subfamily $G$ of $P$ to define a radical operation. Namely,

\begin{crit}\label{lw0}
Let $Q=\bigcup_{m\in\N}Q(m)$ be a subfamily of $P=\bigcup_{m\in\N}P(m)$ with $Q(m)\subset P(m)$ for each $m\in\N$ such that:
\begin{itemize}
\item[$(\mr{C}1)$] Each $Q(m)$ contains polynomials of degree two. 
\item[$(\mr{C}2)$] If $m,m'\in\N$, $F\in G(m)$, $G\in G(m')$ and $S\in K[\y_0,\ldots,\y_{m+1}]_\sfh$ with $\deg(S)=\deg(F)-1$, we have $G(F,\y_{m+2}S,\ldots,\y_{m+m'+1}S)\in Q(m+m'+1)$.
\end{itemize}
Then ${\mathfrak R}_Q$ is a radical operation.
\end{crit}

The whole family $P$, that provides Atkins-Gianni-Tognoli-Laksov radical ideal and Hilbert's $L$-Nullstellensatz (by Laksov), satisfies both conditions of Criterion \ref{lw0}. The same happens with the subfamilies $P_H$ and $P_{DRS}$ of $P$, that provide the classical radical ideal involved in Hilbert's Nullstellensatz and the real radical ideal involved in Real Nullstellensatz, respectively.

If one considers the subfamily $P_0$ of $P$ introduced in \eqref{equab1}, then ${\mathfrak R}_{P_0}\neq{\mathfrak R}_\ZZ^{L|K}$ when $L$ is not algebraically closed \cite[Ex.7.2]{lw}. Consider the maps $\sqrt{{\mathfrak R}_{P_0}}:\gtA\to\gtA$ and ${\mathfrak R}_{P_0}^2:\gtA\to\gtA$ defined by $\sqrt{{\mathfrak R}_{P_0}}(\gta):=\sqrt{{\mathfrak R}_{P_0}(\gta)}$ and ${\mathfrak R}^2_{P_0}:={\mathfrak R}_{P_0}\circ{\mathfrak R}_{P_0}$. It is not know under which conditions on the extension of fields $L|K$ the maps $\sqrt{{\mathfrak R}_{P_0}}$ and ${\mathfrak R}_{P_0}\circ{\mathfrak R}_{P_0}$ define radical operations.

For each $s\geq 3$, we define inductively the map ${\mathfrak R}_{P_0}^s:\gtA\to\gtA$ by ${\mathfrak R}_{P_0}^s:={\mathfrak R}_{P_0}\circ{\mathfrak R}_{P_0}^{s-1}$. Denote ${\mathfrak R}_{P_0}^\infty:\gtA\to\gtA$ the map ${\mathfrak R}_{P_0}^\infty(\gta):=\bigcup_{s\geq1}{\mathfrak R}_{P_0}^s(\gta)$. One can define a subfamily $P^\infty_0=\bigcup_{m\in\N}P^\infty_0(m)$ of $P$ such that $P^\infty_0$ satisfies the two conditions of Criterion \ref{lw0} and ${\mathfrak R}_{P_0}^\infty={\mathfrak R}_{P^\infty_0}$. It follows that ${\mathfrak R}_{P_0}^\infty$ is a radical operation. 

In addition, the authors of \cite{lw} introduce the subfamily $P_0^*:=\{P_0^*(m)\}_{m\in\N}$ of $P$, defining $P_0^*(m)$ as the set of all polynomials in $K[\y_0,\y_1,\ldots,\y_m]$ that vanish at the origin of $L^{m+1}$ and are quasi-homogeneous in some blocks of variables $\{\y_0,\ldots,\y_{m_1}\},\ldots,\{\y_{m_{\ell-1}+1},\ldots,\y_{m_\ell}\}$ of some weights $w_1,\ldots,w_\ell$ and $m_1<\ldots<m_\ell:=m$. This subfamily of $P$ contains $P^\infty_0$, it satisfies the two conditions of Criterion \ref{lw0}, so it defines a radical operation. We have:
$$
\sqrt{{\mathfrak R}_{P_0}}(\gta)\subset{\mathfrak R}_{P_0}^2(\gta)\subset{\mathfrak R}^\infty_{P_0}(\gta)\subset{\mathfrak R}_{P^*_0}(\gta)\subset{\mathfrak R}_\ZZ^{L|K}(\gta)
$$
for each ideal $\gta$ of $K[\x]$. The main problem here concerning the Nullstellensatz is to determine the algebraic extension of fields $L|K$ such that one of the following equalities hold:
$$
\sqrt{{\mathfrak R}_{P_0}}={\mathfrak R}_\ZZ^{L|K},\qquad
{\mathfrak R}_{P_0}^2={\mathfrak R}_\ZZ^{L|K},\qquad
{\mathfrak R}^\infty_{P_0}={\mathfrak R}_\ZZ^{L|K},\qquad
{\mathfrak R}_{P^*_0}={\mathfrak R}_\ZZ^{L|K}.
$$

For recent developments concerning Laksov's Nullstellensatz, see \cite{lu} and the references mentioned therein.

\section{Proofs of results \ref{thm:K-elimination}, \ref{451}, \ref{localregular}, \ref{thmproj}, \ref{corproj} and \ref{618}}\label{appendix:c}

\subsection{Proof of Theorem \ref{thm:K-elimination}} For the reader's convenience, we rewrite the statement of this result here: {\it Let $L$ be an algebraically closed field and let $\rho:\PP^n(L)\times\PP^m(L)\to\PP^m(L)$ be the canonical projection onto the second factor. We have:
\begin{itemize}
\item[$(\mr{i})$] $\rho$ is a $K$-Zariski closed map, that is, if $S$ is a $K$-algebraic subset of $\PP^n(L)\times\PP^m(L)$, then $\rho(S)$ is a $K$-algebraic subset of $\PP^m(L)$. The same is true for the canonical projection $\PP^n(L)\times L^m\to L^m$.
\item[$(\mr{ii})$] $\rho$ maps $K$-construcible sets in $K$-construcible sets, that is, if $S$ is a $K$-constructible subset of $\PP^n(L)\times\PP^m(L)$, then $\rho(S)$ is a $K$-constructible subset of $\PP^m(L)$. The same is true for the canonical projection $\PP^n(L)\times L^m\to L^m$.
\end{itemize}}
\begin{proof}
$(\mr{i})$ Let us adapt the proof of Theorem (2.23) of \cite[pp.33-34]{mumford} to our setting. As the result is local with respect to the $K$-Zariski topology, the statement is equivalent to show that, if $S\subset\PP^n(L)\times L^m$ is $K$-Zariski closed, then $\rho(S)\subset L^m$ is $K$-Zariski closed as well. Let $f_1(\x_0,\x,\y),\ldots,f_\ell(\x_0,\x,\y)\in K[\y][\x_0,\x]_\sfh$ be such that $S$ is the common zero set of $f_1,\ldots,f_\ell$. Denote $d_i$ the degree of $f_i$ with respect to the variables $(\x_0,\x)$. By the projective version of Nullstellensatz, we have that $q\in L^m\setminus\rho(S)$ if and only if the radical of $(f_1(\x_0,\x,q),\ldots,f_\ell(\x_0,\x,q))L[\x_0,\x]$ in $L[\x_0,\x]$ contains $(\x_0,\ldots,\x_n)L[\x_0,\x]$. This is equivalent to the existence of $d\in\N^*$ such that
\begin{equation}\label{d*}
((\x_0,\ldots,\x_n)L[\x_0,\x])^d\subset(f_1(\x_0,\x,q),\ldots,f_\ell(\x_0,\x,q))L[\x_0,\x].
\end{equation}
If we set $\Omega_d:=\{q\in L^m:\text{\eqref{d*} is true}\}$ for each $d\in\N^*$, it holds $L^m\setminus\rho(S)=\bigcup_{d\in\N^*}\Omega_d$. It is enough to show that each $\Omega_d\subset L^m$ is $K$-Zariski open. Fix $d\in\N^*$. For each $t\in\Z$, define $V_t$ as the null $L$-vector space if $t<0$ and the $L$-vector subspace of all homogeneous polynomials in $L[\x_0,\x]_\sfh$ of degree $t$ if $t\geq0$. We endow $V_t$ with the standard ($L$-vector) basis constituted by the monomials in the indeterminates $(\x_0,\x)$ of degree $t$. For each $q\in L^m$, consider the $L$-linear map $T_d(q):\bigoplus_{i=1}^sV_{d-d_i}\to V_d$ defined by $T_d(q)(g_1,\ldots,g_s):=\sum_{i=1}^sg_i(\x_0,\x)f_i(\x_0,\x,q)$, define $m_d:=\dim_L(\bigoplus_{i=1}^sV_{d-d_i})$ and $n_d:=\dim_L(V_d)$, and denote $T^{(d)}(q)$ the $(n_d\times m_d)$-matrix of $T_d(q)$ with respect to the fixed canonical bases. Thus, a point $q\in L^m$ belongs to $\Omega_d$ if and only if $T_d(q)$ is surjective or, equivalently, if $\Omega_d=\{q\in L^m:{\rm rk}(T^{(d)}(q))=n_d\}$. Up to this point, the proof is almost identical to that of Theorem (2.23) of \cite[p.34]{mumford} mentioned above. The point now is that, because each polynomial $f_i(\x_0,\x,\y)$ has coefficients in $K$, the entries of $T^{(d)}(q)$ are polynomials in $K[q]$, so $\Omega_d\subset L^m$ is $K$-Zariski open.

$(\mr{ii})$ Let us follow the proof of Proposition (2.31) of \cite[pp.37-38]{mumford}. In addition to appropriately adapting the mentioned proof, we use crucially the fact that the subfield $K$ of $L$ is infinite. Again, the result is local with respect to the $K$-Zariski topology, so the statement is equivalent to prove that, if $S\subset L^n\times L^m=L^{n+m}$ is $K$-constructible, then $\rho(S)\subset L^m$ is $K$-constructible as well. By induction on $n\in\N^*$, we can assume $n=1$.

Suppose for a moment that the following assertion is true:
\begin{itemize}
\item[$(\ast)$] If $P\subset L^{1+m}$ is a non-empty $K$-irreducible $K$-Zariski locally closed set, then there exists a non-empty $K$-Zariski open subset $\Omega$ of $Q:=\zcl^K_{L^m}(\rho(P))$ such that $\Omega\subset\rho(P)$.
\end{itemize}

Fix a $K$-constructible set $S\subset L^{1+m}$. If $S=\varnothing$, then $\rho(S)=\varnothing$, which is $K$-constructible. Suppose $S\neq\varnothing$. Let us prove by induction on $e:=\dim_K(\rho(S))$ that $\rho(S)\subset L^m$ is $K$-constructible. Write $S=\bigcup_{i=1}^a P_i$, where $\{P_i\}_{i=1}^a$ is a finite family of non-empty $K$-irreducible $K$-Zariski locally closed subsets of $L^{1+m}$. By $(\ast)$, there exists a non-empty $K$-Zariski open subset $\Omega_i$ of $Q_i:=\zcl^K_{L^m}(\rho(P_i))$ such that $\Omega_i\subset\rho(P_i)\subset Q_i$. Observe that $\rho(S)=\bigcup_{i=1}^a\rho(P_i)$ and $e=\max_{i\in\{1,\ldots,a\}}\{\dim_K(Q_i)\}$. Moreover, $Q_i\subset L^m$ is $K$-irreducible, because so is $P_i$ in $L^{1+m}$. If $e=0$, then each $Q_i\subset L^m$ has zero $K$-dimension so, by Lemma \ref{dimirred}, we have $\Omega_i=\rho(P_i)=Q_i$ and $\rho(S)\subset L^m$ coincides with the $K$-algebraic set $\bigcup_{i=1}^aQ_i$, which of course is $K$-constructible. Suppose $e>0$. Rearranging the indices if necessary, we can assume that there exists $b\in\{1,\ldots,a\}$ such that $\dim_K(Q_i)=e$ for $i\leq b$ and $\dim_K(Q_i)<e$ for $i>b$ (where the last condition is omitted when $b=a$). By induction hypothesis, $\rho(P_i)\subset L^m$ is $K$-constructible for $i>b$. Let $i\in\{1,\ldots,b\}$. Using $(\ast)$ again, we can find a non-empty $K$-Zariski open subset $\Omega_i$ of $Q_i\subset L^m$ such that $\Omega_i\subset\rho(P_i)\subset Q_i$. 
Observe that $Q_i\setminus\Omega_i\subset L^m$ is $K$-algebraic and $\dim_K(Q_i\setminus\Omega_i)<\dim_K(Q_i)=e$ by Lemma \ref{dimirred}. Define the $K$-constructible set $P'_i:=P_i\setminus\rho^{-1}(\Omega_i)\subset L^{1+m}$. As $\rho(P'_i)\subset\rho(P_i)\setminus\Omega_i\subset Q_i\setminus\Omega_i$, we have $\dim_K(\rho(P'_i))<e$, so $\rho(P'_i)\subset L^m$ is $K$-constructible by induction. As $\Omega_i\subset\rho(P_i)$, we have $\rho(P_i)=\Omega_i\sqcup\rho(P'_i)$ so $\rho(P_i)\subset L^m$ is $K$-constructible as well. This proves that $\rho(S)\subset L^m$ is $K$-constructible, as required.
 
We prove next $(\ast)$. Denote $\widehat{P}$ the $K$-Zariski closure of $P$ in $\PP^1(L)\times L^m$. By $(\mr{i})$, we have $\rho(\widehat{P})=Q$. We distinguish two cases: $\widehat{P}=\PP^1(L)\times Q$ or $\widehat{P}\neq\PP^1(L)\times Q$ (and in this second case $\dim_K(\widehat{P})=\dim_L(\widehat{P})=\dim_L(Q)=\dim_K(Q)$).

\textsc{Case 1:} First, suppose $\widehat{P}=\PP^1(L)\times Q$. Pick $t_0\in L\subset\PP^1(L)$ and $x_0\in L^m$ such that $(t_0,x_0)\in P$. Denote $P_0$ the subset of $L$ such that $P\cap(L\times\{x_0\})=P_0\times\{x_0\}$. Observe that $P$ is also $K$-Zariski locally closed in $\PP^1(L)\times L^m$, so $P$ is $K$-Zariski open in $\widehat{P}=\PP^1(L)\times Q$. It follows that $P_0$ is $L$-Zariski open in $L$, it contains $t_0$, so $L\setminus P_0$ is a finite set, because $L\setminus P_0$ is a proper $L$-Zariski closed subset of $L$. We would like that $t_0$ belongs to $K$ (instead to merely~$L$), so let us change $t_0$ by an element $t^*\in K\subset L$. The intersection $K\cap P_0\neq\varnothing$, because $K$ is infinite and $L\setminus P_0$ is finite. Choose $t^*\in K\cap P_0$ and denote $Q^*$ the subset of $L^m$ such that $P\cap(\{t^*\}\times L^m)=\{t^*\}\times Q^*$. Observe that $Q^*$ is non-empty, because it contains $x_0$. As $t^*\in K$ and $P$ is a $K$-Zariski open subset of $\widehat{P}=\PP^1(L)\times Q$, we deduce that $Q^*$ is a non-empty $K$-Zariski open subset of $Q$ contained in $\rho(P)$.

\textsc{Case 2:} Next, suppose that $\dim_K(P)=\dim_K(\widehat{P})=\dim_K(Q)=:D$. Consider the $K$-Zariski closed set $\partial P:=\widehat{P}\setminus P\subset \PP^1(L)\times L^m$. Observe that $Q=\rho(\widehat{P})=\rho(P)\cup\rho(\partial P)$ so $Q\setminus\rho(\partial P)\subset\rho(P)$. As $\partial P$ is a proper $K$-Zariski closed subset of the $K$-irreducible $K$-Zariski closed set $\widehat{P}\subset\PP^1(L)\times L^m$, we deduce applying Lemma \ref{dimirred} via the two homeomorphisms $\{\theta^1_i\times\mr{id}_{L^m}\}_{i=0}^1$ that $\dim_K(\partial P)<D$. By $(\mr{i})$, $\rho(\partial P)$ is a $K$-algebraic subset of $Q\subset L^m$. As $\dim_K(\rho(\partial P))=\dim_L(\rho(\partial P))\leq\dim_L(\partial P)=\dim_K(\partial P)<D$, we deduce that $Q\setminus\rho(\partial P)$ is a non-empty $K$-Zariski open subset of $Q\subset L^m$ contained in $\rho(P)$, as required.
\end{proof}

\begin{remark}\label{rem:pitfall1}
The proof of item $(\mr{ii})$ of the previous theorem contains an additional argument, not needed in the classical proof of Proposition (2.31) of \cite[pp.37-38]{mumford}, that we followed finding the point $t^*\in K\cap P_0$ in {\sc Case 1}. The existence of $t^*$ follows from the fact that the (infinite) subfield $K$ of~$L$ is dense in $L$ with respect to the usual $L$-Zariski topology (see also Lemma \ref{K-dense-L}). $\sqbullet$
\end{remark}

\begin{remark}\label{product-project-spaces}
We can easily extend Theorem \ref{thm:K-elimination} to the case of all finite products of proje\-ctive spaces over $L$. Let $n_1,\ldots,n_\ell\in\N^*$ with $\ell\geq2$ and let $[\x^i]=[\x^i_0,\ldots,\x^i_{n_i}]$ be the homogeneous coordinates of $\PP^{n_i}(L)$. Let $S$ be a subset of $\PP^{n_1}(L)\times\ldots\times\PP^{n_\ell}(L)$. We say that $S$ is $K$-algebraic if it is the zero set of a family of polynomials in $K[\x^1,\ldots,\x^\ell]=K[\x^1_0,\ldots,\x^\ell_{n_\ell}]$ that are homogeneous separately in the variables $\x^i=(\x^i_0,\ldots,\x^i_{n_i})$ for all $i\in\{1,\ldots,\ell\}$. We also say that $S$ is $K$-constructible if it is a Boolean combination of $K$-algebraic subsets of $\PP^{n_1}(L)\times\ldots\times\PP^{n_\ell}(L)$. The $K$-algebraic subsets of $\PP^{n_1}(L)\times\ldots\times\PP^{n_\ell}(L)$ are the closed sets of a topology we call $K$-Zariski topology of $\PP^{n_1}(L)\times\ldots\times\PP^{n_\ell}(L)$. 

Recall that, for each $n\in\N^*$ and for each $j\in\{0,\ldots,n\}$, $\theta^n_j:U^n_j\to L^n$ is the affine chart $\theta^n_i([x_0,x]):=\big(\frac{x_0}{x_i},\ldots,\frac{x_{i-1}}{x_i},\frac{x_{i+1}}{x_i},\ldots,\frac{x_n}{x_i}\big)$, where $U^n_i:=\{[x_0,x]\in\PP^n(L):x_i\neq0\}$.

If $S\subset\PP^{n_1}(L)\times\ldots\times\PP^{n_\ell}(L)$ is $K$-constructible, then we define the $K$-dimension $\dim_K(S)$ of $S$ as the maximum over all the multi-indices $(j_1,\ldots,j_\ell)$ with $j_i\in\{0,\ldots,n_i\}$ of the $K$-dimension of the $K$-constructible subset $(\theta^{n_1}_{j_1}\times\ldots\times\theta^{n_\ell}_{j_\ell})(S\cap(U^{n_1}_{j_1}\times\ldots\times U^{n_\ell}_{j_\ell}))$ of $L^{n_1+\ldots+n_\ell}$. As in the case $\ell=2$, the nature of $\dim_K(S)$ is $K$-Zariski local and $\dim_K(S)$ coincides with the dimension $\dim_L(S)$ of $S$ viewed as a usual ($L$-)constructible subset of $\PP^{n_1}(L)\times\ldots\times\PP^{n_\ell}(L)$.

Equip each product of the form $\PP^{n_1}(L)\times\ldots\times\PP^{n_\ell}(L)$ with the $K$-Zariski topology and denote $\rho:\PP^{n_1}(L)\times\ldots\times\PP^{n_\ell}(L)\to\PP^{n_2}(L)\times\ldots\times\PP^{n_\ell}(L)$ the projection $(\alpha_1,\ldots,\alpha_\ell)\mapsto(\alpha_2,\ldots,\alpha_\ell)$. The preceding proof of Theorem \ref{thm:K-elimination} with suitable typographic changes proves that $\rho$ is $K$-Zariski closed and maps $K$-constructible sets onto $K$-constructible sets. By induction, the same is true for all the projections $\PP^{n_1}(L)\times\ldots\times\PP^{n_\ell}(L)\to\PP^{n_h}(L)\times\ldots\times\PP^{n_\ell}(L)$, $(\alpha_1,\ldots,\alpha_\ell)\mapsto(\alpha_h,\ldots,\alpha_\ell)$ with $h\in\{2,\ldots,\ell\}$. $\sqbullet$
\end{remark}

\subsection{Proof of Lemma \ref{451}}
The statement reads as follows: {\it Let $X\subset L^n$ be a $K$-algebraic set, let $a\in X\cap(\ove^\sqbullet)^n$ and let $\gtM_a$ be the maximal ideal of $\reg^{E|K}_{X,a}$. Denote $(T^{E|K}_a(X))^\wedge$ the dual of the $E[a]$-vector space $T^{E|K}_a(X)$. Then the map $\Psi_a:\gtM_a/\gtM_a^2\to(T^{E|K}_a(X))^\wedge$, defined by
$$
\textstyle
\Psi_a\big((\frac{p}{q}+\II_K(X)E[\x]_{\gtn_a})+\gtM_a^2\big):=\big(T^{E|K}_a(X)\ni v\mapsto q(a)^{-1}\langle\nabla p(a),v\rangle\in E[a]\big),
$$
is a well-defined $E[a]$-linear isomorphism.}

\vspace{3mm}
To prove this result, we adapt the standard argument to the present situation. To do so, we have to combine the new Corollary \ref{regular} with the classic \cite[Cor.2, p.302]{zs2}.

\begin{proof}
First, consider the map $\xi':\gtN_a\to\gtM_a/\gtM_a^2$ defined by $\xi'(\frac{p}{q}):=(\frac{p}{q}+\II_K(X)E[\x]_{\gtn_a})+\gtM_a^2$. Observe that $\xi'$ is a surjective homomorphism between abelian groups. The kernel of $\xi'$ is equal to $\II_K(X)E[\x]_{\gtn_a}+\gtN_a^2$. Thus, $\xi'$ induces the abelian group isomorphism $\xi:\gtN_a/(\II_K(X)E[\x]_{\gtn_a}+\gtN_a^2)\to\gtM_a/\gtM_a^2$ defined by
$$
\textstyle
\xi(\frac{p}{q}+(\II_K(X)E[\x]_{\gtn_a}+\gtN_a^2)):=(\frac{p}{q}+\II_K(X)E[\x]_{\gtn_a})+\gtM_a^2.
$$ 
We equip the abelian group $\gtN_a/(\II_K(X)E[\x]_{\gtn_a}+\gtN_a^2)$ with a structure of $E[a]$-vector space by mean of a scalar multiplication similar to the one given in \eqref{eq:star}, which we again denote~$\ast$:
$$\textstyle
b\ast\big(\frac{p}{q}+(\II_K(X)E[\x]_{\gtn_a}+\gtN_a^2)\big)\mapsto \frac{fp}{q}+(\II_K(X)E[\x]_{\gtn_a}+\gtN_a^2)
$$
if $b\in E[a]$ and $f$ is a polynomial in $E[\x]$ such that $b=f(a)$. One verifies that $\xi$ is an $E[a]$-linear isomorphism. 

Consider now the $E[a]$-linear map $\theta:\gtN_a/\gtN_a^2\to(E[a]^n)^\wedge$ defined by
$$
\textstyle
\theta(\frac{p}{q}+\gtN_a^2)=\big(E[a]^n\ni v\mapsto q(a)^{-1}\langle\nabla p(a),v\rangle\in E[a]\big).
$$
By Corollary \ref{regular}, the polynomials $\{f_1,\ldots,f_n\}\subset E[\x]$ defined in the statement of Lemma \ref{lem:gtn_a} form a regular system of parameters of $E[\x]_{\gtn_a}$. Thus, \cite[Cor.2, p.302]{zs2} assures that $\{f_1+\gtN_a^2,\ldots,f_n+\gtN_a^2\}$ is an $E[a]$-vector basis of $\gtN_a/\gtN_a^2$, and Remark \ref{F} implies that $\theta$ is an $E[a]$-linear isomorphism. In particular, we have:
\begin{equation}\label{order2}
\text{An element $\textstyle\frac{p}{q}\in\gtN_a$ belongs to $\gtN_a^2$ if and only if $\nabla p(a)=0$.} 
\end{equation}

Let $i:T^{E|K}_a(X)\hookrightarrow E[a]^n$ be the inclusion map, let $i^\wedge:(E[a]^n)^\wedge\to(T^{E|K}_a(X))^\wedge$ be its dual map and let $\Theta':\gtN_a/\gtN_a^2\to(T^{E|K}_a(X))^\wedge$ be the composition $\Theta':=i^\wedge\circ\theta$. Observe that
$$
\textstyle
\ker(\Theta')=\{\frac{p}{q}+\gtN_a^2\in\gtN_a/\gtN_a^2:\langle\nabla p(a),v\rangle=0,\ \forall v\in T^{E|K}_a(X)\}.
$$
Let us show the following equality:
\begin{equation}\label{eq:kerTheta'}
\ker(\Theta')=(\II_K(X)E[\x]_{\gtn_a}+\gtN_a^2)/\gtN_a^2.
\end{equation}
It is evident that $(\II_K(X)E[\x]_{\gtn_a}+\gtN_a^2)/\gtN_a^2\subset\ker(\Theta')$. Let us prove the converse inclusion. Let $\{g_1,\ldots,g_s\}$ be a system of generators of $\II_K(X)$ in $K[\x]$, let $\ell:={\rm rk}\big(\frac{\partial g_i}{\partial\x_j}(a)\big)_{i=1,\ldots,s,\,j=1,\ldots,n}$, let $W_1$ be the $E[a]$-vector subspace of $(E[a])^n$ generated by $\nabla g_1(a),\ldots,\nabla g_s(a)$ and let $W_2$ be the $E[a]$-vector subspace of $(E[a])^n$ defined by 
$$
\textstyle
W_2:=\{(w_1,\ldots,w_n)\in(E[a])^n:\sum_{i=1}^nw_iv_i=0, \forall(v_1,\ldots,v_n)\in T^{E|K}_a(X)\}.
$$
It holds: $\dim_{E[a]}(W_1)=\ell$, $\dim_{E[a]}(T^{E|K}_a(X))=n-\ell$, $\dim_{E[a]}(W_2)=n-(n-\ell)=\ell$ and $W_1\subset W_2$, so $W_1=W_2$. Consider $\frac{p}{q}+\gtN_a^2\in \ker(\Theta')$, that is, $\nabla p(a)\in W_2$. As $W_2=W_1$, there exist $\lambda_1,\ldots,\lambda_s\in E[a]$ such that $\nabla p(a)=\sum_{i=1}^s\lambda_i\nabla g_i(a)$. Choose $f_i\in E[\x]$ such that $f_i(a)=\lambda_i$ for each $i\in\{1,\ldots,s\}$, and define $\frac{f}{q}\in \II_K(X)E[\x]_{\gtn_a}$ by setting $f:=\sum_{i=1}^sf_ig_i$. As $\nabla(p-f)(a)=0$, \eqref{order2} assures that $\frac{p}{q}-\frac{f}{q}=\frac{p-f}{q}\in\gtN_a^2$ so $\frac{p}{q}\in\II_K(X)E[\x]_{\gtn_a}+\gtN_a^2$. This proves that $\ker(\Theta')\subset(\II_K(X)E[\x]_{\gtn_a}+\gtN_a^2)/\gtN_a^2$ and hence \eqref{eq:kerTheta'} holds. 

By \eqref{eq:kerTheta'}, $\Theta'$ induces the $E[a]$-linear isomorphism 
$$
\textstyle
\Theta:\gtN_a/(\II_K(X)E[\x]_{\gtn_a}+\gtN_a^2)\simeq(\gtN_a/\gtN_a^2)\big/((\II_K(X)E[\x]_{\gtn_a}+\gtN_a^2)/\gtN_a^2)\to(T^{E|K}_a(X))^\wedge
$$
defined by
$$
\textstyle
\Theta(\frac{p}{q}+(\II_K(X)E[\x]_{\gtn_a}+\gtN_a^2))=\big(E[a]^n\ni v\mapsto q(a)^{-1}\langle\nabla p(a),v\rangle\in E[a]\big).
$$

As $\Psi_a$ is the composition $\Theta\circ\xi^{-1}$ of two $E[a]$-linear isomorphisms, so is $\Psi_a$, as required. 
\end{proof}

\subsection{Proof of Lemma \ref{localregular}}
Let us recall the statement: {\it Let $a\in(\ove^\sqbullet)^n$, let $\gta$ be an ideal of $K[\x]$ such that $\gta\subset\gtn_a$, let $r:={\rm rk}_a(\gta)$, and let $A:=E[\x]_{\gtn_a}/(\gta E[\x]_{\gtn_a})$. The following four conditions are equivalent.
\begin{itemize}
\item[$(\mr{i})$] The local ring $A$ is regular.
\item[$(\mr{ii})$] There exist $r$ elements $g_1,\ldots,g_r$ of $\gta$ such that $\gta E[\x]_{\gtn_a}=(g_1,\ldots,g_r)E[\x]_{\gtn_a}$.
\item[$(\mr{iii})$] $\dim(A)=n-r$.
\item[$(\mr{\mr{iv}})$] $\mr{ht}(\gta E[\x]_{\gtn_a})=r$.
\end{itemize}

In addition, if $A$ is not regular, then the minimal cardinality of a system of generators of $\gta E[\x]_{\gtn_a}$ in $E[\x]_{\gtn_a}$ is $>r$, $\dim(A)<n-r$ and $\hgt(\gta E[\x]_{\gtn_a})>r$.
}

\vspace{3mm}
To prove this result, we need the following accurate version of a well-known result on regular local residue class rings \cite[Thm.26, p.303]{zs2}.

\begin{thm}\label{thm:ZS2-Thm26}
Let $B$ be a regular local ring of dimension $d$ with maximal ideal $\gtm$, let $\kappa:=B/\gtm$ be the residue field of $B$, let $\gtb$ be a proper ideal of $B$ and let $r$ be the dimension of $(\gtb+\gtm^2)/\gtm^2$ as a $\kappa$-vector subspace of $\gtm/\gtm^2$. Choose an arbitrary system of generators $\{g_1,\ldots,g_s\}$ of $\gtb$ in $B$. Rearranging the indices if necessary, we can assume $\{g_1+\gtm^2,\ldots,g_r+\gtm^2\}$ is a $\kappa$-vector basis of $(\gtb+\gtm^2)/\gtm^2$. Then the following assertions are equivalent.
\begin{itemize}
\item[$(\mr{i})$] The local ring $B/\gtb$ is regular.
\item[$(\mr{ii})$] $\gtb=(g_1,\ldots,g_r)B$ (that is, $\gtb$ is generated by $g_1,\ldots,g_r$ in $B$).
 \end{itemize}
In addition, if conditions $(\mr{i})$ and $(\mr{ii})$ are satisfied, then $\dim(B/\gtb)=d-r$.
\end{thm}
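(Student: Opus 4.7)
The plan is to prove the two implications separately, using in both directions the fundamental characterization of regular local rings as those whose maximal ideal has a minimal generating set of cardinality equal to the Krull dimension (equivalently, whose embedded dimension equals the Krull dimension), together with Krull's height theorem and the fact that regular local rings are integral domains.

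For the implication $(\mr{ii})\Longrightarrow(\mr{i})$, I will start from the hypothesis that $\{g_1+\gtm^2,\ldots,g_r+\gtm^2\}$ is linearly independent in the $\kappa$-vector space $\gtm/\gtm^2$ and complete it to a basis $\{g_1+\gtm^2,\ldots,g_r+\gtm^2, g_{r+1}+\gtm^2,\ldots,g_d+\gtm^2\}$. By the characterization of regular systems of parameters via \cite[Cor.2, p.302]{zs2}, the corresponding elements $g_1,\ldots,g_d$ form a regular system of parameters of $B$. Passing to the quotient $B/\gtb=B/(g_1,\ldots,g_r)B$, the images of $g_{r+1},\ldots,g_d$ generate the maximal ideal $\gtm/\gtb$, so the embedded dimension of $B/\gtb$ is at most $d-r$. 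On the other hand, $\gtb$ is generated by $r$ elements, so Krull's height theorem gives $\hgt(\gtb)\leq r$, hence $\dim(B/\gtb)\geq d-r$. Combining the two inequalities (recall that the embedded dimension is always $\geq$ the Krull dimension) yields equality, proving that $B/\gtb$ is regular of dimension $d-r$.

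For the reverse implication $(\mr{i})\Longrightarrow(\mr{ii})$, I will introduce the auxiliary ideal $\gtb':=(g_1,\ldots,g_r)B\subseteq\gtb$. The already established direction applies to $\gtb'$, giving that $B/\gtb'$ is a regular local ring of dimension $d-r$; in particular $\gtb'$ is prime of height $r$. Next I will compute the embedded dimension of $B/\gtb$ directly: by the third isomorphism theorem, $(\gtm/\gtb)/(\gtm/\gtb)^2\cong\gtm/(\gtm^2+\gtb)\cong(\gtm/\gtm^2)/((\gtb+\gtm^2)/\gtm^2)$, and by the definition of $r$ and the choice of the $g_i$ this quotient has $\kappa$-dimension exactly $d-r$. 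Since $B/\gtb$ is regular by hypothesis, $\dim(B/\gtb)=d-r$, and because $B$ is regular (hence catenary) and $B/\gtb$ is a domain, we obtain $\hgt(\gtb)=d-\dim(B/\gtb)=r$. Thus $\gtb'\subseteq\gtb$ are two prime ideals of the same height $r$, which forces $\gtb=\gtb'$.

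The two implications together also give the final dimension formula $\dim(B/\gtb)=d-r$. I do not foresee a major obstacle: both directions rely on standard tools from the theory of regular local rings (regular systems of parameters, Krull's height theorem, the fact that regular local rings are domains, and the equivalence of regularity with the equality of Krull dimension and embedded dimension), all of which are available in \cite{am,m,zs2}. The only subtle point is the dimension computation in $(\mr{i})\Longrightarrow(\mr{ii})$: one must carefully identify $(\gtm/\gtb)/(\gtm/\gtb)^2$ with a quotient of $\gtm/\gtm^2$ so as to read off its $\kappa$-dimension as $d-r$, and then combine this with catenarity of $B$ to conclude that $\gtb$ and $\gtb'$ have the same height and hence coincide.
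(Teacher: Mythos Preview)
Your proof is correct and follows essentially the same approach as the paper. The paper's proof is terse, deferring to \cite[Thm.26, p.303]{zs2} and only highlighting the key point for $(\mr{i})\Longrightarrow(\mr{ii})$: that $\{g_1+\gtm^2,\ldots,g_r+\gtm^2\}$ is a $\kappa$-basis of the kernel of the canonical surjection $\gtm/\gtm^2\to(\gtm/\gtb)\big/(\gtm/\gtb)^2$. Your argument makes explicit what Zariski--Samuel's proof contains: the identification $(\gtm/\gtb)/(\gtm/\gtb)^2\cong\gtm/(\gtb+\gtm^2)$ gives embedded dimension $d-r$, regularity of $B/\gtb$ forces $\dim(B/\gtb)=d-r$ hence $\hgt(\gtb)=r$, and comparing with the prime $\gtb'=(g_1,\ldots,g_r)B$ of the same height yields $\gtb=\gtb'$.
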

\begin{proof}
One can follow the proof of \cite[Thm.26, p.303]{zs2}. We have to pay attention to the following point. Assume that $B/\gtb$ is regular. Let $\gtb':=(g_1,\ldots,g_r)B$, let $\tilde{\gtm}=\gtm/\gtb$ be the maximal ideal of $B/\gtb$ and let $\varphi:\gtm/\gtm^2\to\tilde{\gtm}/\tilde{\gtm}^2=\gtm/(\gtb+\gtm^2)$ be the canonical surjective $\kappa=(B/\gtb)/\tilde{\gtm}$-linear map. As $\ker(\varphi)=(\gtb+\gtm^2)/\gtm^2$, the set $\{g_1+\gtm^2,\ldots,g_r+\gtm^2\}$ is a $\kappa$-vector basis of $\ker(\varphi)$. Thus, the last part of the mentioned proof of \cite[Thm.26, p.303]{zs2} implies that $\gtb=\gtb'$, as required.
\end{proof}

\begin{proof}[Proof of Lemma \em \ref{localregular}]
Choose a system of generators $\{g_1,\ldots,g_s\}$ of $\gta$ in $K[\x]$, where $s\geq r$. Observe that $\{g_1,\ldots,g_s\}$ is also a system of generators of $\gta E[\x]_{\gtn_a}$ in $E[\x]_{\gtn_a}$. By Lemma \ref{rk}, we can assume that $\{g_1+\gtN_a^2,\ldots,g_r+\gtN_a^2\}$ is a basis of the $E[a]$-vector space $(\gta E[\x]_{\gtn_a}+\gtN_a^2)/\gtN_a^2$. By Theorem \ref{thm:ZS2-Thm26}, if $A$ is regular, $\gta E[\x]_{\gtn_a}=(g_1,\ldots,g_r)E[\x]_{\gtn_a}$. In particular, as $\dim(E[\x]_{\gtn_a})=n$ (by Corollary \ref{regular}), we have $\dim(A)=\dim(E[\x]_{\gtn_a})-r=n-r$. Suppose now that $\gta E[\x]_{\gtn_a}=(g'_1,\ldots,g'_r)E[\x]_{\gtn_a}$ for some $g'_1,\ldots,g'_r\in\gta$. Using again Lemma \ref{rk}, we have that $\{g'_1+\gtN_a^2,\ldots,g'_r+\gtN_a^2\}$ is a $E[a]$-vector basis of $(\gta E[\x]_{\gtn_a}+\gtN_a^2)/\gtN_a^2$. By Theorem \ref{thm:ZS2-Thm26}, the local ring $A$ is regular. This proves equivalence $(\mr{i})\Longleftrightarrow(\mr{ii})$ and implication $(\mr{i})\Longrightarrow(\mr{iii})$.

Equivalence $(\mr{iii})\Longleftrightarrow(\mr{iv})$ follows immediately from \eqref{eq:ht}.

It remains to show implication $(\mr{iii})\Longrightarrow(\mr{i})$. Assume $\dim(A)=n-r$ and that $A$ is not regular. By Lemma \ref{rk}, we have $r\leq\min\{s,n\}$. Define $h_i(\x):=g_i(\x+a)\in E[a][\x]$ for each $i\in\{1,\ldots,s\}$. By Lemma \ref{completion}, the completion of $A$ is
$$
\widehat{A}\cong E[a][[\x]]/((h_1,\ldots,h_s)E[a][[\x]]).
$$
As a local ring is regular if and only if so is its completion (see \cite[(24.D), p.175]{m}), we have that $\widehat{A}$ is not regular. Moreover, $\dim(A)=\dim(\widehat{A})$. Rearranging the indices if necessary, we may assume
$$
\det\Big(\frac{\partial h_i}{\partial\x_j}(0)\Big)_{i,j=1,\ldots,r}=\det\Big(\frac{\partial g_i}{\partial\x_j}(a)\Big)_{i,j=1,\ldots,r}\neq0.
$$
After a substitution automorphism of $E[a][[\x]]$ that maps $\sum_{j=1}^n\frac{\partial h_i}{\partial\x_j}(0)\x_j$ to $\x_i$ for each $i\in\{1,\ldots,r\}$ and $\x_i$ to $\x_i$ for each $i\in\{r+1,\ldots,n\}$ (see \cite[Cor.2, p.137]{zs2}), we may also assume that $\big(\frac{\partial h_i}{\partial\x_j}(0)\big)_{i,j=1,\ldots,r}$ is the $(r\times r)$-identity matrix. Consider the ideal ${\mathfrak h}:=(h_1,\ldots,h_s)E[a][[\x]]$ of $E[a][[\x]]$ and let $\gtm$ be the maximal ideal of $E[a][[\x]]$. By Weierstrass' preparation theorem \cite[Cor.1, p.145]{zs2}, we may assume that $h_1$ (understood as generator of ${\mathfrak h}$) has the following form: $h_1=\x_1+h_1'$, where $h_1'\in\gtm^2\cap E[a][[\x_2,\ldots,\x_n]]$. Dividing each $h_2,\ldots,h_r$ by $h_1$ via Weierstrass' division theorem \cite[Thm.5, pp.139-140]{zs2}, we may assume that $h_2,\ldots,h_r$ (understood as generators of ${\mathfrak h}$) belong to $E[a][[\x_2,\ldots,\x_n]]$. Proceeding recursively using both theorems, we may also assume for each $j\in\{1,\ldots,r\}$ that $h_j=\x_j+h_j'$ for some $h_j'\in\gtm^2\cap E[a][[\x_{j+1},\ldots,\x_n]]$ (where $E[a][[\x_{j+1},\ldots,\x_n]]:=E[a]$ if $j=r=n$). If $r<s$, dividing each $h_{r+1},\ldots,h_s$ recursively by $h_1,\ldots,h_r$, we may assume $h_{r+1},\ldots,h_s\in E[a][[\x_{r+1},\ldots,\x_n]]$. After division, it may happen that some of the $h_j$ or even all are zero for each $j\in\{r+1,\ldots,s\}$. As the rank of the matrix $\big(\frac{\partial h_i}{\partial\x_j}(0)\big)_{i=1,\ldots,s,\,j=1,\ldots,n}$ is equal to $r$, we deduce $h_{r+1},\ldots,h_s\in\gtm^2$. Consequently, if we set
$$
\gtb:=
\begin{cases}
(h_{r+1},\ldots,h_s)E[a][[\x_{r+1},\ldots,\x_n]]&\text{if $r<s$},\\
(0)&\text{if $r=s$,}
\end{cases}
$$
then
\begin{equation}\label{eq:m^2}
\gtb\subset\gtm^2\cap E[a][[\x_{r+1},\ldots,\x_n]].
\end{equation}
Thus, using the substitution automorphism of $E[a][[\x]]$ that maps $h_i$ to $\x_i$ for each $i\in\{1,\ldots,r\}$ and $\x_i$ to $\x_i$ for each $i\in\{r+1,\ldots,n\}$, we obtain
\begin{equation}\label{eq:A}
\widehat{A}\cong E[a][[\x]]/((h_1,\ldots,h_s)E[a][[\x]])\cong E[a][[\x_{r+1},\ldots,\x_n]]/\gtb.
\end{equation}
As $\widehat{A}$ is not regular, by \eqref{eq:m^2}, \eqref{eq:A} and Theorem \ref{thm:ZS2-Thm26}, we deduce that $\gtb\neq(0)$. It follows that $r<s$, and also $r<n$ (otherwise, $r=n<s$ so $h_{r+1},\ldots,h_s\in\gtn_a\cap E[a]=\{0\}$ and $\gtb=(0)$). Observe that $E[a][[\x_{r+1},\ldots,\x_n]]/\gtb$ is a local ring whose maximal ideal $\gtd$ is generated by $\x_{r+1}+\gtb,\ldots,\x_n+\gtb$. Thus, the minimum number $\delta(\gtd)$ of elements we need to generate $\gtd$ is $\leq n-r$. As $\widehat{A}\cong E[a][[\x_{r+1},\ldots,\x_n]]/\gtb$ is not regular, we have $n-r=\dim(A)=\dim(\widehat{A})<\delta(\gtd)\leq n-r$ (see \cite[\S11]{am}), which is a contradiction.
\end{proof}

\subsection{Proofs of Theorems \ref{thmproj}, \ref{corproj} and Lemma \ref{618}} %

\emph{In this subsection, $L|K$ denotes an extension of fields such that $L$ is either algebraically closed or real closed, and $\kbar^\sqbullet$ is the algebraic closure of $K$ in~$L$. We will use Remark \emph{\ref{dime}} freely, and set $\dim(S):=\dim_L(S)$ for each algebraic set $S\subset L^n$.}

If $n\geq2$, we define $\tilde{\x}:=(\x_1,\ldots,\x_{n-1})$, $\tilde{x}:=(x_1,\ldots,x_{n-1})^T\in L^{n-1}$ and $\Pi:L^n\to L^{n-1}$ by $\Pi(x):=\tilde{x}$. Let us recall the statement of the first result we want to prove.

\vspace{2mm}
\noindent \textbf{Theorem \ref{thmproj}.} {\it Suppose that $L$ is algebraically closed and $n\geq2$. Consider a $K$-algebraic set $X\subset L^n$ and assume that $\II_K(X)$ contains a monic polynomial 
$$
\ell(\x):=\x_n^d+a_{n-1}(\tilde{\x})\x_n^{d-1}+\cdots+a_1(\tilde{\x})\x_n+a_0(\tilde{\x})\in K[\tilde{\x}][\x_n]=K[\x]
$$ 
with respect to $\x_n$. Set $Y:=\Pi(X)$ and denote $\pi:X\to Y$ the restriction of $\Pi$ from $X$ to $Y$. Then $Y\subset L^{n-1}$ is a $K$-algebraic set, $\dim(Y)=\dim(X)$ and $\pi$ is a $K$-Zariski closed map, that is, the image under $\pi$ of each $K$-Zariski closed subset of $X$ is a $K$-Zariski closed subset of $Y$. Moreover, if we pick a point $a\in X\cap(\kbar^\sqbullet)^n$ and we set $b:=\pi(a)\in Y\cap(\kbar^\sqbullet)^{n-1}$, we have the following properties:
\begin{itemize}
\item[$(\mr{i})$] If $\pi^{-1}(b)=\{a\}$, then the homomorphism $\pi^*:\reg^{K|K}_{Y,b}\to\reg^{K|K}_{X,a}$ is finite.
\item[$(\mr{ii})$] If $\pi^{-1}(b)=\{a\}$ and $d_a\pi:T_a(X)\to T_b(Y)$ is injective, then:
 \begin{itemize}
\item[$(\mr{a})$] $K[a]=K[b]$, the differential $d^{K|K}_a\pi:T^{K|K}_a(X)\to T^{K|K}_b(Y)$ is well-defined and it is injective;
\item[$(\mr{b})$] $\pi^*:\reg^{K|K}_{Y,b}\to\reg^{K|K}_{X,a}$ is a ring isomorphism;
\item[$(\mr{c})$] there exists a $K$-Zariski open neighborhood $U$ of $a$ in $X$ such that $\pi(U)$ is a $K$-Zariski open neighborhood of $b$ in $Y$ and the restriction of $\pi$ from $U$ to $\pi(U)$ is a $K$-biregular isomorphism;
\item[$(\mr{d})$] $d_a\pi$ is a $L$-linear isomorphism and $d_a^{K|K}\pi$ is a $K[a]$-linear isomorphism.
\end{itemize}
\end{itemize}
In addition, we have:
\begin{itemize} 
\item[$(\mr{iii})$] Let $V$ be a $K$-Zariski open subset of $X$ and let $\pi|_V:V\to\pi(V)$ be the corresponding restriction of $\pi$. If $\pi^{-1}(\pi(c))=\{c\}$ and the differential $d_c\pi:T_c(X)\to T_{\pi(c)}(Y)$ is injective for each $c\in V\cap(\kbar^\sqbullet)^n$, then $\pi(V)$ is a $K$-Zariski open subset of $Y$ and $\pi|_V$ is a $K$-biregular isomorphism.
\item[$(\mr{iv})$] The map $\pi:X\to Y$ is a $K$-biregular isomorphism if and only if $\pi$ is injective and its differential $d_c\pi:T_c(X)\to T_{\pi(c)}(Y)$ is injective for each $c\in X\cap(\kbar^\sqbullet)^n$.
\end{itemize}}

\begin{remark}
The condition `{\em $\,\pi^{-1}(\pi(c))=\{c\}$ for each $c\in V\cap(\kbar^\sqbullet)^n\,$}' is equivalent to the following one: `{\em$\,\pi^{-1}(\pi(V\cap(\kbar^\sqbullet)^n))=V\cap(\kbar^\sqbullet)^n$ and $\pi|_{V\cap(\kbar^\sqbullet)^n}:V\cap(\kbar^\sqbullet)^n\to Y\cap(\kbar^\sqbullet)^{n-1}$ is injective}$\,$'. $\sqbullet$
\end{remark}

\begin{proof}[Proof of Theorem \em\ref{thmproj}]
Let $Z$ be a $K$-Zariski closed subset of $X\subset L^n$. As $\ell\in\II_K(X)\subset\II_K(Z)$, we can apply Lemma \ref{kJP} to $\gta:=\II_K(Z)$ obtaining $\Pi(Z)=\Pi(\ZZ_L(\II_K(Z)))=\ZZ_L(\II_K(Z)\cap K[\tilde{\x}])$. Thus, $\Pi(Z)\subset L^{n-1}$ is $K$-algebraic. In particular, the set $Y=\Pi(X)\subset L^{n-1}$ is $K$-algebraic and the map $\pi$ is $K$-Zariski closed. Moreover, the existence of the polynomial $\ell$ in $\II_K(X)$ implies that the fibers of $\pi$ are finite, so $\dim(Y)=\dim_L(Y)=\dim_L(X)=\dim(X)$. 

$(\mr{i})$ Recall that $\reg^{K|K}_{X,a}=K[\x]_{\gtn_a}/(\II_K(X)K[\x]_{\gtn_a})$ is the $K$-local ring of $X\subset L^n$ at $a$, where $\gtn_a=\{f\in K[\x]:f(a)=0\}$. Consider an arbitrary element $\frac{f}{g}+\II_K(X)K[\x]_{\gtn_a}$ of $\reg^{K|K}_{X,a}$. As $g$ is a polynomial in $K[\x]$ with $g(a)\neq0$ and $\pi^{-1}(b)=\{a\}$ by hypothesis, it follows that $b\not\in\pi(\ZZ_L(g)\cap X)$. As $\pi(\ZZ_L(g)\cap X)$ is $K$-Zariski closed in $Y\subset L^{n-1}$, there exists $k\in K[\tilde{\x}]$ such that $k(b)\neq0$ and $\pi(\ZZ_L(g)\cap X)\subset\ZZ_L(k)$. Consider the polynomial $k\circ\pi\in K[\x]$. Observe that $(k\circ\pi)(a)=k(b)\neq0$ and $\ZZ_L(g)\cap X\subset\ZZ_L(k\circ\pi)$. The latter inclusion allows to define the regular function $1/g:X\setminus\ZZ_L(k\circ\pi)\to L$ by $(1/g)(x):=g(x)^{-1}$. As $X\cap\ZZ_L(g)$ is an algebraic subset of $L^n$ and the field $L$ is algebraically closed, Hilbert's Nullstellensatz implies the existence of an integer $s\geq1$ such that $(k\circ\pi)^s\in\{g\}L[\x]+\II_L(X)$. Thus, the product $(k\circ\pi)^s(1/g)$ defines a polynomial function on $X$ and there exists $h\in L[\x]$ such that $(k\circ\pi)^s-hg\in\II_L(X)$. Let $\Bb=\{u_j\}_{j\in J}$ be a basis of $L$ as a $K$-vector space such that $u_{j_0}=1$ for some $j_0\in J$, and let $\{g_1,\ldots,g_r\}$ be a system of generators of $\II_K(X)$ in $K[\x]$. By Corollary \ref{kreliablec}, we have $\II_L(X)=\II_K(X)L[\x]$, so $(k\circ\pi)^s-hg=\sum_{i=1}^rf_ig_i$ for some $f_1,\ldots,f_r\in L[\x]$. By Lemma \ref{k0}, we can write $h=\sum_{j\in J}u_jh_j$ and $f_i=\sum_{j\in J}u_jf_{ij}$, where $h_j,f_{ij}\in K[\x]$ and only finitely many of these polynomials are nonzero. As $k\circ\pi$ and $g$ belong to $K[\x]$, we have:
\begin{align*}\textstyle
u_{j_0}((k\circ\pi)^s-h_{j_0}g)-\sum_{j\in J\setminus\{j_0\}}u_j(h_jg)&\textstyle=(k\circ\pi)^s-hg=\sum_{i=1}^rf_ig_i\\
&\textstyle=\sum_{i=1}^r\big(\sum_{j\in J}u_jf_{ij}\big)g_i=\sum_{j\in J}u_j\big(\sum_{i=1}^rf_{ij}g_i\big).
\end{align*}
By the uniqueness part of Lemma~\ref{k0}, we deduce $(k\circ\pi)^s-h_{j_0}g=\sum_{i=1}^rf_{ij_0}g_i\in\II_K(X)$, so
\begin{equation}\label{shafarevich1}
\textstyle
\frac{f}{g}+\II_K(X)K[\x]_{\gtn_a}=\frac{h_{j_0}f}{(k\circ\pi)^s}+\II_K(X)K[\x]_{\gtn_a}=\pi^*(\frac{1}{k^s})(h_{j_0}f+\II_K(X)K[\x]_{\gtn_a})\quad\text{in $\reg^{K|K}_{X,a}$.}
\end{equation}

Let us now consider $h_{j_0}f\in K[\x]$ as a polynomial in $K[\tilde{\x}][\x_n]$ and divide it by $\ell(\x)=\x_n^d+a_{n-1}(\tilde{\x})\x_n^{d-1}+\cdots+a_1(\tilde{\x})\x_n+a_0(\tilde{\x})\in K[\tilde{\x}][\x_n]=K[\x]$, obtaining $(h_{j_0}f)(\x)=q(\x)\ell(\x)+\sum_{i=0}^{d-1}b_i(\tilde{\x})\x_n^i$ for some (unique) $q\in K[\x]$ and $b_0,\ldots,b_{d-1}\in K[\tilde{\x}]\subset K[\x]$. As $\ell\in\II_K(X)$, we deduce:
\begin{align}
h_{j_0}f+\II_K(X)K[\x]_{\gtn_a}&\textstyle=\sum_{i=0}^{d-1}(b_i(\tilde{\x})+\II_K(X)K[\x]_{\gtn_a})(\x_n^i+\II_K(X)K[\x]_{\gtn_a})\nonumber\\
&\textstyle=\sum_{i=0}^{d-1}\pi^*(b_i)(\x_n^i+\II_K(X)K[\x]_{\gtn_a})\quad\text{in $\reg^{K|K}_{X,a}$.}\label{shafarevich2}
\end{align}
By \eqref{shafarevich1} and \eqref{shafarevich2}, we have:
$$
\textstyle
\frac{f}{g}+\II_K(X)K[\x]_{\gtn_a}=\sum_{i=0}^{d-1}\pi^*(\frac{b_i}{k^s})(\x_n^i+\II_K(X)K[\x]_{\gtn_a})\quad\text{in $\reg^{K|K}_{X,a}$.}
$$
This proves that $\reg^{K|K}_{X,a}=\sum_{i=0}^{d-1}\pi^*(\reg^{K|K}_{Y,b})(\x_n^i+\II_K(X)K[\x]_{\gtn_a})$, so the homomorphism $\pi^*$ is finite.

$(\mr{ii})(\mr{a})$ As $b\in(\kbar^\sqbullet)^{n-1}$, $K[b]$ coincides with the field $K(b)$. Set $\gtb:=\{f(b,\x_n)\in K[b][\x_n]:f\in\II_K(X)\}\neq(0)$, because $\ell\in\II_K(X)$. Let us prove that $\gtb$ is an ideal of $K[b][\x_n]$. Evidently, $\gtb$ is closed under addition. If $g\in K[b][\x_n]$, there exists $G\in K[\x]$ such that $g(\x_n)=G(b,\x_n)$. Thus, for each $f\in\I_K(X)$, we have $g(\x_n)f(b,\x_n)=G(b,\x_n)f(b,\x_n)=(Gf)(b,\x_n)\in\gtb$, because $Gf\in\I_K(X)$. As $K[b][\x_n]$ is a PID, $\gtb$ is generated by an element $h(b,\x_n)\neq0$ for some $h\in\II_K(X)$. Let $a_n\in\kbar^\sqbullet$ be such that $a=(b,a_n)$. By hypothesis, $(\{b\}\times L)\cap X=\{a\}$, so $\{x_n\in L:h(b,x_n)=0\}=\ZZ_L(\gtb)=\{a_n\}$ and $h(b,\x_n)=c(\x_n-a_n)^m$ for some $c\in L\setminus\{0\}$ and $m\in\N^*$. In fact, $c$ belongs to $K[b]\setminus\{0\}$, because $h(b,\x_n)$ and $(\x_n-a_n)^m$ are both polynomials in $K[b][\x_n]$, and $(\x_n-a_n)^m$ is monic.

Let us prove that $m=1$. Suppose $m\geq2$. Choose an arbitrary $f\in\I_K(X)$ and write $f(b,\x_n)=H(b,\x_n)h(b,\x_n)=cH(b,\x_n)(\x_n-a_n)^m$ for some $H\in K[\x]$. Observe that
$$
\textstyle
\frac{\partial f}{\partial \x_n}(b,\x_n)=c\frac{\partial H}{\partial \x_n}(b,\x_n)(\x_n-a_n)^m+cH(b,\x_n)m(\x_n-a_n)^{m-1},
$$
so $\frac{\partial f}{\partial\x_n}(a)=0$. This means that $(0,\ldots,0,1)^T\in T^{L|K}_a(X)$. As $L$ is algebraically closed, Remarks \ref{tang-bcr}$(\mr{iii})$ assures that $T^{L|K}_a(X)=T_a(X)$, so $(0,\ldots,0,1)^T\in \ker(d_a\pi)$. This contradicts the injectivity of $d_a\pi$. Consequently, $m=1$ and $h(b,\x_n)=c(\x_n-a_n)$. As $h(b,\x_n)\in K[b][\x_n]$ and $c\in K[b]\setminus\{0\}$, we deduce $a_n\in K[b]$, so $K[a]=K[b]$.

As $b=\pi(a)$ and $K[a]=K[b]$, the differential $d^{K|K}_a\pi:T^{K|K}_a(X)\to T^{K|K}_b(Y)$ is well-defined (according to Definition \ref{def:differential}). As $L$ is algebraically closed, using again Remarks \ref{tang-bcr}$(\mr{iii})$, we know that $T^{K|K}_a(X)=T_a(X)\cap K[a]^n$ so $\ker(d^{K|K}_a\pi)=\ker(d_a\pi)\cap K[a]^n=\{0\}$ and $d^{K|K}_a\pi$ is injective.

$(\mr{ii})(\mr{b})$ Recall that $\reg^{K|K}_{Y,b}=K[\tilde{\x}]_{\gtn_b}/(\II_K(Y)K[\tilde{\x}]_{\gtn_b})$ is the $K$-local ring of $Y$ at $b$, where $\gtn_b=\{h\in K[\tilde{\x}]:h(b)=0\}$. Let $\gtM_a:=\gtn_aK[\x]_{\gtn_a}/(\II_K(X)K[\x]_{\gtn_a})$ be the maximal ideal of $\reg^{K|K}_{X,a}$ and let $\gtM_b:=\gtn_bK[\tilde{\x}]_{\gtn_b}/(\II_K(Y)K[\tilde{\x}]_{\gtn_b})$ be the maximal ideal of $\reg^{K|K}_{Y,b}$. Consider the pullback homomorphism $\pi^*:\reg^{K|K}_{Y,b}\to\reg^{K|K}_{X,a}$ induced by $\pi$. We will prove that $\pi^*$ is a ring isomorphism.

\textit{Let us see that $\pi^*$ is injective}. Let $\frac{h}{k}+\II_K(Y)K[\tilde{\x}]_{\gtn_b}\in\ker(\pi^*)$, that is, $\frac{h\circ\pi}{k\circ\pi}\in\II_K(X)K[\x]_{\gtn_a}$. This means that there exists a $K$-Zariski closed subset $P$ of $X\subset L^n$ such that $a\not\in P$ and the polynomial $h\circ\pi\in K[\x]$ vanishes on $X\setminus P$. We have that $\pi(P)$ is a $K$-Zariski closed subset of $Y\subset L^{n-1}$. Moreover, $b=\pi(a)\not\in\pi(P)$, because $\pi^{-1}(b)=\{a\}$ and $a\not\in P$. Observe that $Y=\pi(X)=\pi(X\setminus P)\cup\pi(P)$, so $\pi(X\setminus P)\supset Y\setminus\pi(P)$. As $h$ vanishes on $\pi(X\setminus P)$, $h$ also vanishes on the $K$-Zariski open neighborhood $Y\setminus\pi(P)$ of $b$ in $Y\subset L^{n-1}$, so $\frac{h}{k}\in\I_K(Y)K[\tilde{\x}]_{\gtn_b}$. This proves the injectivity of $\pi^*$.

\textit{Let us show that $\pi^*$ is surjective}. By $(\mr{ii})(\mr{a})$, $d^{K|K}_a\pi$ is well-defined and it is injective, so its dual map $(d^{K|K}_a\pi)^\wedge$ is surjective. By Lemma \ref{455}, the pullback $K[a]$-linear map $\pi^{**}:\gtM_b/\gtM_b^2\to\gtM_a/\gtM_a^2$ induced by $\pi$ is surjective as well. It follows that $\gtM_a=\pi^*(\gtM_b)\reg^{K|K}_{X,a}+\gtM_a^2$. Thus, $\gtM_a=\pi^*(\gtM_b)\reg^{K|K}_{X,a}+(\pi^*(\gtM_b)\reg^{K|K}_{X,a}+\gtM_a^2)\gtM_a=\pi^*(\gtM_b)\reg^{K|K}_{X,a}+\gtM_a^3$ and by induction $\gtM_a=\pi^*(\gtM_b)\reg^{K|K}_{X,a}+\gtM_a^\mu$ for all $\mu\geq2$. As $\reg^{K|K}_{X,a}$ is a Noetherian local ring, Krull's theorem \cite[Cor.10.20]{am} assures that $\bigcap_{\mu\geq2}(\pi^*(\gtM_b)\reg^{K|K}_{X,a}+\gtM_a^\mu)=\pi^*(\gtM_b)\reg^{K|K}_{X,a}$, so
\begin{equation}\label{krull}
\pi^*(\gtM_b)\reg^{K|K}_{X,a}=\gtM_a.
\end{equation}

Recall that the residue field $\reg^{K|K}_{Y,b}/\gtM_b$ of $\reg^{K|K}_{Y,b}$ is isomorphic to $K[b]$ via the field isomorphism $(\frac{h}{k}+\II_K(Y)K[\tilde{\x}]_{\gtn_b})+\gtM_b\mapsto h(b)(k(b))^{-1}$. Pick an arbitrary element $\frac{f}{g}+\II_K(X)K[\x]_{\gtn_a}$ of $\reg^{K|K}_{X,a}$. By $(\mr{ii})(\mr{a})$, we know that $K[a]=K[b]$, so $f(a)(g(a))^{-1}\in K[b]$. Thus, there exists $h\in K[\tilde{\x}]$ such that $h(b)=f(a)(g(a))^{-1}$. Consequently, $(h\circ\pi)+\II_K(X)K[\x]_{\gtn_a}$ is an element of $\pi^*(\reg^{K|K}_{Y,b})$ such that
$$\textstyle
\frac{f}{g}-(h\circ\pi)+\II_K(X)K[\x]_{\gtn_a}\in\gtM_a.
$$
This proves that $\reg^{K|K}_{X,a}=\pi^*(\reg^{K|K}_{Y,b})+\gtM_a$. Combining the latter equation with \eqref{krull}, we deduce:
\begin{equation}\label{nakayama}
\reg^{K|K}_{X,a}=\pi^*(\reg^{K|K}_{Y,b})+\pi^*(\gtM_b)\reg^{K|K}_{X,a}.
\end{equation}
As the homomorphism $\pi^*$ is finite by $(\mr{i})$, Nakayama's lemma \cite[Cor.2.7]{am} assures that $\reg^{K|K}_{X,a}=\pi^*(\reg^{K|K}_{Y,b})$, that is, $\pi^*$ is surjective, as required. 

$(\mr{ii})(\mr{c})$ By $(\mr{ii})(\mr{b})$, $\pi^*$ is surjective so there exist $v,w\in K[\tilde{\x}]$ such that $w(b)\neq0$ and 
$$
\textstyle
\frac{v\circ \pi}{w\circ\pi}+\II_K(X)K[\x]_{\gtn_a}=\pi^*\big(\frac{v}{w}+\II_K(Y)K[\tilde{\x}]_{\gtn_b}\big)=\x_n+\II_K(X)K[\x]_{\gtn_a}.
$$
Thus, there exists a polynomial $t\in K[\x]$ such that $t(a)\neq0$ and $t(\x)((v\circ\pi)(\x)-\x_n(w\circ\pi)(\x))\in\II_K(X)$. Set $D:=X\cap\ZZ_L(t)\subset L^n$. Observe that $X\setminus D$ is a $K$-Zariski open neighborhood of $a$ in $X\subset L^n$ and $x_n=v(x')(w(x'))^{-1}$ for all $(x',x_n)\in X\setminus D$. Moreover, $\pi(D)$ is a $K$-Zariski closed subset of $Y\subset L^{n-1}$. As $a\not\in D$ and $\pi^{-1}(b)=\{a\}$ by hypothesis, it follows that $b\not\in\pi(D)$ and $\pi^{-1}(\pi(D))$ is a $K$-Zariski closed subset of $X\subset L^n$ that does not contain $a$. Set $U:=X\setminus\pi^{-1}(\pi(D))$. Observe that $\pi(U)=Y\setminus\pi(D)$ is a $K$-Zariski open neighborhood of $b$ in $Y$ and the restriction $\pi':U\to\pi(U)$ of $\pi$ is bijective with inverse $(\pi')^{-1}(x')=(x',v(x')(w(x'))^{-1})$, so $\pi'$ is a $K$-biregular isomorphism.

$(\mr{ii})(\mr{d})$ This item follows immediately from $(\mr{ii})(\mr{c})$.

$(\mr{iii})$ Suppose that $\pi^{-1}(\pi(c))=\{c\}$ and the differential $d_c\pi$ is also injective for all $c\in V\cap(\kbar^\sqbullet)^n$. By the extension of coefficients and $(\mr{ii})(\mr{c})$, for each $c\in V\cap(\kbar^\sqbullet)^n$, there exists a $K$-Zariski open neighborhood $U^c$ of $c$ in $V\cap(\kbar^\sqbullet)^n$ such that $\pi(U^c)$ is a $K$-Zariski open neighborhood of $\pi(c)$ in $Y\cap(\kbar^\sqbullet)^{n-1}$ and $\pi|_{U^c}:U^c\to\pi(U^c)$ is a $K$-biregular isomorphism. As $\pi(V\cap(\kbar^\sqbullet)^n)=\bigcup_{c\in V\cap(\kbar^\sqbullet)^n}\pi(U^c)$, we deduce that $\pi(V\cap(\kbar^\sqbullet)^n)$ is a $K$-Zariski open subset of $Y\cap(\kbar^\sqbullet)^{n-1}$.

We claim that $\pi(V\cap(\kbar^\sqbullet)^n)=\pi(V)\cap(\kbar^\sqbullet)^{n-1}$. As the inclusion $\pi(V\cap(\kbar^\sqbullet)^n)\subset \pi(V)\cap(\kbar^\sqbullet)^{n-1}$ is evident, we only have to show that $\pi(V)\cap(\kbar^\sqbullet)^{n-1}\subset\pi(V\cap(\kbar^\sqbullet)^n)$. Let $x\in V$ be such that $\pi(x)\in(\kbar^\sqbullet)^{n-1}$. By the extension of coefficients, we have
$$
\{y\in V\cap(\kbar^\sqbullet)^n:\pi(y)=\pi(x)\}_L=(\pi|_V)^{-1}(\pi(x)).
$$
As $x\in(\pi|_V)^{-1}(\pi(x))$, we have $(\pi|_V)^{-1}(\pi(x))\neq\varnothing$, so $\{y\in V\cap(\kbar^\sqbullet)^n:\pi(y)=\pi(x)\}\neq\varnothing$ or, equivalently, there exists $c\in V\cap(\kbar^\sqbullet)^n$ such that $\pi(c)=\pi(x)$. This proves that $\pi(V)\cap(\kbar^\sqbullet)^{n-1}\subset\pi(V\cap(\kbar^\sqbullet)^n)$, so $\pi(V\cap(\kbar^\sqbullet)^n)=\pi(V)\cap(\kbar^\sqbullet)^{n-1}$, as claimed.

The equality $\pi(V\cap(\kbar^\sqbullet)^n)=\pi(V)\cap(\kbar^\sqbullet)^{n-1}$ implies that the restriction $\rho:V\cap(\kbar^\sqbullet)^n\to\pi(V)\cap(\kbar^\sqbullet)^n$ of $\pi$ is bijective. Moreover, $\rho^{-1}:\pi(V)\cap(\kbar^\sqbullet)^{n-1}\to V\cap(\kbar^\sqbullet)^n$ is $K$-regular (because $K$-Zariski locally it coincides with the maps $(\pi|_{U^c})^{-1}$). Using the extension of coefficients again, we deduce that $(\pi|_V)^{-1}=(\rho^{-1})_L$ is also $K$-regular, so $\pi|_V$ is a $K$-biregular isomorphism.

$(\mr{iv})$ If $\pi$ is a $K$-biregular isomorphism, then it is evident that $\pi$ and the differentials $\{d_c\pi\}_{c\in X}$ are injective. Conversely, if $\pi$ and the differentials $\{d_c\pi\}_{c\in X\cap(\kbar^\sqbullet)^n}$ are injective, then $(\mr{iii})$ implies that $\pi$ is a $K$-biregular isomorphism, as required.
\end{proof}

Let us rewrite the statement of the second result we want to prove. Recall that the symbol $\widehat{V}_A$ was defined in Notation \ref{notaz526}. Moreover, if $r<n$, then $x':=(x_1,\ldots,x_r)^T$ and $x'':=(x_{r+1},\ldots,x_n)^T$ are the coordinates of $L^r$ and $L^{n-r}$, respectively.

\vspace{2mm}

\noindent \textbf{Theorem \ref{corproj}.} {\it Suppose that $L$ is algebraically closed. Let $X\subset L^n$ be a $K$-algebraic set, let $\overline{X}$ be the $K$-Zariski closure of $X$ in $\PP^n(L)$, let $r\in\N^*$ with $r<n$ and let $A\in\mc{M}_{r,n-r}(K)$ be such that $\overline{X}\cap\widehat{V}_A=\varnothing$. Consider the projection $\pi_A:L^n\to L^r$, set $Y:=\pi_A(X)$ and denote $\pi:X\to Y$ the restriction of $\pi_A$ from $X$ to $Y$. Then $Y\subset L^r$ is a $K$-algebraic set, $\dim(Y)=\dim(X)$ and $\pi$ is a $K$-Zariski closed map. Moreover, if we pick a point $a\in X\cap(\kbar^\sqbullet)^n$ and we set $b:=\pi(a)\in Y\cap(\kbar^\sqbullet)^r$, then the items $(\mr{i})$ and $(\mr{ii})$ stated in Theorem \ref{thmproj} hold. In addition, the items $(\mr{iii})$ and $(\mr{iv})$ stated in Theorem \ref{thmproj} are also true.}
\begin{proof}
Consider the projective transformation $\psi:\PP^n(L)\to\PP^n(L)$ defined by $\psi([x_0,x',x'']):=[x_0,x'-Ax'',x'']$. Observe that $\psi(L^n)=L^n$ and $\psi(V_A)=V_O=L^{n-r}$, where $O$ denotes the zero matrix in $\mc{M}_{r,n-r}(K)$. In addition, as the entries of $A$ are in $K$, $\psi$ is a $K$-biregular isomorphism. Thus, replacing $X\subset L^n$ with $\psi(X)\subset L^n$ and $A$ with the zero matrix $O$, we can assume that $V_A=L^{n-r}$ and $\pi_A=\Pi_r$, where $\Pi_r:L^n\to L^r$ is the projection onto the first $r$ coordinates, that is, $\Pi_r(x)=x'$.

Let us now prove the statement by induction on $n-r\in\{1,\ldots,n-1\}$. Let $\Pi_{n-1}:L^n\to L^{n-1}$ be the projection onto the first $n-1$ coordinates, let $X':=\Pi_{n-1}(X)\subset L^{n-1}$, let $\overline{X'}$ be the $K$-Zariski closure of $X'$ in $\PP^{n-1}(L)$, let $\rho:X\to X'$ be the restriction of $\Pi_{n-1}$ from $X$ to $X'$, and let $\eta:X'\to Y$ be the projection $\eta(x_1,\ldots,x_{n-1}):=(x_1,\ldots,x_r)$. As $[0,\ldots,0,1]\not\in\overline{X}$, by Lemma \ref{lem:266}, there exists a polynomial $\ell\in\II_K(X)$ such that $s:=\ell^\sfh(0,\ldots,0,1)\in K\setminus\{0\}$, where $\ell^\sfh$ is the homogeneization of $\ell$. Thus, replacing $\ell$ with $s^{-1}\ell$, we can assume that $\ell$ is of the form $\ell(\x)=\x_n^d+a_{n-1}(\tilde{\x})\x_n^{d-1}+\cdots+a_1(\tilde{\x})\x_n+a_0(\tilde{\x})\in K[\tilde{\x}][\x_n]=K[\x]$, where $\tilde{\x}=(\x_1,\ldots,\x_{n-1})$. By Theorem \ref{thmproj}, the (base) case $n-r=1$ of the inductive process is proven. In particular, $X'\subset L^{n-1}$ is a $K$-algebraic set, $\dim(X')=\dim(X)$, $L^{n-1-r}=\Pi_{n-1}(L^{n-r})$ and, by construction, it holds $\overline{X'}\cap\widehat{L}^{n-1-r}=\varnothing$, where $\widehat{L}^{n-1-r}:=\{x_0=0,\ldots,x_r=0\}\subset\PP^{n-1}(L)$.

Suppose $n-r\geq2$. Let us prove items $(\mr{i})$-$(\mr{iv})$ as stated in Theorem \ref{thmproj}. Let $a\in X\cap(\kbar^\sqbullet)^n$, let $a':=\rho(a)\in X'\cap(\kbar^\sqbullet)^{n-1}$ and let $b:=\pi(a)\in Y\cap(\kbar^\sqbullet)^r$. Observe that $\pi=\eta\circ\rho$ and $d_a\pi=d_{a'}\eta\circ d_a\rho$. We have:

$(\mr{i})$ If $\pi^{-1}(b)=\{a\}$, then $\eta^{-1}(b)=\{a'\}$ so, by the inductive hypothesis, the homomorphism $\eta^*:\reg^{K|K}_{Y,b}\to\reg^{K|K}_{X',a'}$ is finite. As $\rho^{-1}(a')=\{a\}$, (by the case $n-r=1$) we deduce that $\rho^*:\reg^{K|K}_{X',a'}\to\reg^{K|K}_{X,a}$ is finite, so $\pi^*=\rho^*\circ\eta^*$ is also finite.

$(\mr{ii})$ Suppose that $\pi^{-1}(b)=\{a\}$ and $d_a\pi$ is injective. As $\pi^{-1}(b)=\rho^{-1}(\eta^{-1}(b))=\{a\}$, $\rho(a)=a'$ and $\eta(a')=b$, it follows that $\eta^{-1}(b)=\{a'\}$, $\rho^{-1}(a')=\{a\}$. In addition, $d_a\rho$ is injective. As $d_a\rho$ is a $L$-linear isomorphism by the case $n-r=1$, we deduce that $d_{a'}\eta$ is also injective. Now, $K[a']=K[b]$ by the inductive hypothesis, $d^K_{a'}\eta$ is well-defined and it is a $K[a']$-linear isomorphism, $\eta^*$ is a ring isomorphism, there exists a $K$-Zariski open neighborhood $U'$ of $a'$ in $X'$ such that $\eta(U')$ is a $K$-Zariski open neighborhood of $b$ in $Y$ and the restriction of $\eta$ from $U'$ to $\eta(U')$ is a $K$-biregular isomorphism and $d_a\eta$ is a $L$-linear isomorphism. By the case $n-r=1$, we also deduce: $K[a]=K[a']$ so $K[a]=K[b]$, $d^{K|K}_a\pi=d^{K|K}_a\rho\circ d^{K|K}_{a'}\eta$ is well-defined and it is a $K[a]$-linear isomorphism, $\pi^*=\rho^*\circ\eta^*$ is a ring isomorphism and, shrinking $U'$ around $a'$ if necessary and calling $U:=\rho^{-1}(U')$, we have that $U$ is a $K$-Zariski open neighborhood of $a$ in $X$ and the restriction of $\pi=\eta\circ\rho$ from $U$ to $\pi(U)=\eta(U')$ is a $K$-biregular isomorphism.

$(\mr{iii})$ Let $V$ be a given $K$-Zariski open subset of $X$ and let $\rho|:V\to\rho(V)$, $\eta|:\rho(V)\to\pi(V)$ and $\pi|:V\to\pi(V)$ be the corresponding restrictions of $\rho$, $\eta$ and $\pi$, respectively. Suppose that $\pi^{-1}(\pi(c))=\{c\}$ (so $\rho^{-1}(\eta^{-1}(\eta(\rho(c))))=\{c\}$) and $d_c\pi$ is injective for each $c\in V\cap(\kbar^\sqbullet)^n$. Then $\rho^{-1}(\rho(c))=\{c\}$ and $d_c\rho$ is injective for each $c\in V\cap(\kbar^\sqbullet)^n$. By the case $n-r=1$, $\rho(V)$ is a $K$-Zariski open subset of $X'$ and $\rho|$ is a $K$-biregular isomorphism. Thus, $\eta^{-1}(\eta(e))=\{e\}$ and $d_e\eta$ is injective for each $e\in \rho(V\cap(\kbar^\sqbullet)^n)=\rho(V)\cap(\kbar^\sqbullet)^{n-1}$. By the inductive hypothesis, $\eta(\rho(V))$ is a $K$-Zariski open subset of $Y$ and $\eta|$ is a $K$-biregular isomorphism. Consequently, $\pi(V)=\eta(\rho(V))$ is a $K$-Zariski open subset of $Y$ and $\pi|=\eta|\circ\rho|$ is also a $K$-biregular isomorphism.

$(\mr{iv})$ It follows immediately from the preceding item with $V=X$, as required.
\end{proof}

Recall that the fixed fields $L$ and $K$ are of characteristic zero, so they are infinite. The next result is well-known. We include the proof for the sake of completeness. 

\begin{lem}\label{K-dense-L}
The set $K^n$ is dense in $L^n$ with respect to the usual ($L$-)Zariski topology. In particular, if $\Omega$ is a non-empty $K$-Zariski open subset of $L^n$, then $\Omega \cap K^n$ is a non-empty ($K$-)Zariski open subset of $K^n$.
\end{lem}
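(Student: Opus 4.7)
The plan is to prove the first assertion (that $K^n$ is $L$-Zariski dense in $L^n$) by induction on $n$, exploiting the fact that $K$, being of characteristic zero, is infinite; the second assertion will then follow essentially for free.

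For the first assertion, it suffices to show $\II_L(K^n)=(0)$, since then $\zcl_{L^n}(K^n)=\ZZ_L(\II_L(K^n))=\ZZ_L((0))=L^n$. So I will show by induction on $n$ that every $f\in L[\x]=L[\x_1,\ldots,\x_n]$ vanishing identically on $K^n$ must be the zero polynomial. For the base case $n=1$, a nonzero $f\in L[\x_1]$ has at most $\deg(f)$ roots in $L$; since $K\subset L$ is infinite, $f$ cannot vanish on all of $K$, contradicting the hypothesis. For the inductive step with $n\geq 2$, write $\tilde{\x}:=(\x_1,\ldots,\x_{n-1})$ and
\[
f(\tilde{\x},\x_n)=\sum_{i=0}^d g_i(\tilde{\x})\,\x_n^{\,i},\qquad g_i\in L[\tilde{\x}].
\]
Fix $\tilde{x}\in K^{n-1}$. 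Then $f(\tilde{x},\x_n)\in L[\x_n]$ vanishes on $K$, hence equals $0$ by the base case, so $g_i(\tilde{x})=0$ for every $i\in\{0,\ldots,d\}$. This holds for all $\tilde{x}\in K^{n-1}$, so each $g_i$ vanishes identically on $K^{n-1}$ and, by the inductive hypothesis, $g_i=0$ in $L[\tilde{\x}]$. Therefore $f=0$, completing the induction.

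For the second assertion, write $\Omega=L^n\setminus X$ where $X\subset L^n$ is $K$-algebraic and, by Noetherianity, $X=\ZZ_L(f_1,\ldots,f_s)$ for some $f_1,\ldots,f_s\in K[\x]$. Then
\[
\Omega\cap K^n=K^n\setminus\bigl(X\cap K^n\bigr)=K^n\setminus\ZZ_K(f_1,\ldots,f_s),
\]
which is manifestly $K$-Zariski open in $K^n$. It remains to check non-emptiness. Since $\Omega\neq\varnothing$ we have $X\subsetneqq L^n$, so at least one $f_j$ is a nonzero polynomial in $K[\x]\subset L[\x]$. By the density just proved, $f_j$ does not vanish identically on $K^n$; hence there exists $x\in K^n$ with $f_j(x)\neq 0$, and such an $x$ lies in $\Omega\cap K^n$.

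The proof is essentially routine, so I do not expect a genuine obstacle. The only point that needs a moment of care is the inductive step for the density claim, where one has to evaluate at $\tilde{x}\in K^{n-1}$ (rather than in $L^{n-1}$) in order to apply the base case followed by the inductive hypothesis to the coefficient polynomials $g_i\in L[\tilde{\x}]$ — using once more that $K$ (and hence $K^{n-1}$) is infinite.
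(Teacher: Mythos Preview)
Your proof is correct and follows essentially the same standard inductive argument as the paper, which itself flags the proof as routine. The only cosmetic difference is that you phrase the induction algebraically (showing $\II_L(K^n)=(0)$ by expanding in the last variable and reducing to coefficient polynomials), whereas the paper phrases it topologically (showing every non-empty $L$-Zariski open set meets $K^n$ by slicing in the last coordinate and applying the inductive hypothesis to the slice); both rely on the same core fact that $K$ is infinite.
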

\begin{proof}
A standard argument works. Let $\Omega'$ be a non-empty ($L$-)Zariski open subset of $L^n$. Let us prove by induction on $n\in\N^*$ that $\Omega'\cap K^n\neq\varnothing$. If $n=1$, then $L\setminus\Omega'$ is finite, so $\Omega'\cap K\neq\varnothing$ because $K$ is infinite. Suppose $n\geq2$. Pick a point $a\in\Omega'$ and write $a=(a',a_n)\in L^{n-1}\times L=L^n$. By induction hypothesis, there exists $b'\in K^{n-1}$ such that $(b',a_n)\in\Omega'\cap(L^{n-1}\times\{a_n\})$. By the case $n=1$, there also exists $b'_n\in K$ such that $(b',b'_n)\in\Omega'\cap(\{b'\}\times K)$. Consequently, $(b',b'_n)\in\Omega'\cap K^n$, so $\Omega'\cap K^n\neq\varnothing$. Finally, if $\Omega'$ is $K$-Zariski open in $L^n$, then the intersection $\Omega'\cap K^n$ is non-empty and ($K$-)Zariski open in $K^n$.
\end{proof}

The third result to be proved is the following.

\vspace{2mm}
\noindent \textbf{Lemma \ref{618}.} {\it Suppose that $L$ is algebraically closed. Let $r\in\N^*$ and let $W\subset\PP^n(L)$ be a $K$-algebraic set such that $r<n$, $W\subset H^n_0$ and $\dim(W)\leq r-1$. Define $\Omega:=\{A\in\mc{M}_{r,n-r}(K):W\cap\widehat{V}_A=\varnothing\}$. Then $\Omega$ is a non-empty $K$-Zariski open subset of $\mc{M}_{r,n-r}(K)=K^{r(n-r)}$.}

\begin{proof}
Let $\Omega_L:=\{A\in\mc{M}_{r,n-r}(L):W\cap\widehat{V}_A=\varnothing\}$. As $\Omega=\Omega_L\cap\mc{M}_{r,n-r}(K)$, by Lemma \ref{K-dense-L}, it is enough to prove that $\Omega_L$ is a non-empty $K$-Zariski open subset of $\mc{M}_{r,n-r}(L)=L^{r(n-r)}$.

Identify $H^n_0\subset\PP^n(L)$ with $\PP^{n-1}(L)$ and its homogeneous coordinates $[0,x]:=[0,x',x'']$ with $[x]:=[x',x'']$, respectively. Consider $W$ as a $K$-algebraic subset of $\PP^{n-1}(L)$, and choose homogeneous polynomials $g_1,\ldots,g_s\in K[\x]_\sfh$ such that $W=\PP\ZZ_L(\{g_1\ldots,g_s\})\subset\PP^{n-1}(L)$. Consider also the following $K$-Zariski closed subset $\EuScript{Q}$ of $\PP^{n-1}(L)\times\mc{M}_{r,n-r}(L)=\PP^{n-1}(L)\times L^{r(n-r)}$:
$$
\textstyle
\EuScript{Q}:=\{([x],A)\in\PP^{n-1}(L)\times\mc{M}_{r,n-r}(L):x'=Ax'',g_1(x)=0,\ldots,g_s(x)=0\}.
$$
Let $\rho:\PP^{n-1}(L)\times\mc{M}_{r(n-r)}(L)\to\mc{M}_{r(n-r)}(L)$ be the canonical projection onto the second factor. As $\Omega_L=\mc{M}_{r(n-r)}(L)\setminus\rho(\EuScript{Q})$, we deduce that $\Omega_L$ is $K$-Zariski open in $\mc{M}_{r,n-r}(L)$ by Theorem \ref{thm:K-elimination}$(\mr{i})$.

It remains to show that $\Omega_L$ is non-empty. As $W$ is a usual ($L$-)algebraic subset of $\PP^{n-1}(L)$ of dimension $\leq r-1$, the fact that $\Omega_L\neq\varnothing$ follows from classical results. However, we provide an explicit proof of this fact here.

Let us prove by induction on $n-r\in\{1,\ldots,n-1\}$ that there exists $A\in\mc{M}_{r,n-r}(L)$ such that $W\cap\widehat{V}_A\neq\varnothing$. Define the algebraic set $Z\subset\PP^{n-1}(L)$ by $Z:=W\cup H_n^n$ where $H_n^n:=\{[x]\in\PP^{n-1}(L):x_n=0\}$. As $r-1<n-1$, it holds $\dim(Z)=n-2$ so $Z\subsetneqq\PP^{n-1}(L)$. Pick a point $[a_1,\ldots,a_{n-1},1]\in\PP^{n-1}(L)\setminus Z$ and define the column matrix $A_1:=(a_1,\ldots,a_{n-1})^T\in\mc{M}_{n-1,1}(L)$. Equip $H^n_n\simeq\PP^{n-1}(L)$ with the homogeneous coordinates $[x_0,\ldots,x_{n-1}]$. Let $x^*:=(x_1,\ldots,x_{n-1})^T$ be the coordinates of $L^{n-1}$, let $\pi_{A_1}:L^n\to L^{n-1}$ be the projection $\pi_{A_1}(x^*,x_n):=x^*-A_1x_n$ in the direction of $V_{A_1}$, let $p\in\PP^n(L)$ be the point $p:=[0,a_1,\ldots,a_{n-1},1]$ and let $\widehat{\pi}_{A_1}:\PP^n(L)\setminus\{p\}\to H^n_n$ be the projection of center $p$ defined as $\widehat{\pi}_{A_1}([x_0,x^*,x_n]):=[x_0,\pi_{A_1}(x^*,x_n)]$. As $\widehat{V}_{A_1}=\{p\}$ and $p\not\in W$, we have that $W\cap\widehat{V}_{A_1}=\varnothing$ and the base case $n-r=1$ is proven.

Observe that $\pi_{A_1}$ coincides with the restriction of $\widehat{\pi}_{A_1}$ to $L^n=(\PP^n(L)\setminus\{p\})\setminus H^n_0$ onto $L^{n-1}=H^n_n\setminus H^n_0$. Define $W':=\widehat{\pi}_{A_1}(W)$. As $\widehat{\pi}_{A_1}|_{W}$ is a ($L$-)Zariski closed map and no projective line of $\PP^n(L)$ passing through the point $p$ is contained in $W$ (because $p\not\in W$ and $W$ is Zariski closed in $\PP^{n-1}(L)$), we have that $W'$ is a Zariski closed subset of $H^n_n\simeq\PP^{n-1}(L)$ such that $\dim(W')=\dim(W)\leq r-1$. In addition, $W'\subset H^n_n\cap H^n_0$, so by inductive hypothesis there exist $A_2\in\mc{M}_{r,n-1-r}(L)$ such that $W'\cap\widehat{V}_{A_2}=\varnothing$, where $\widehat{V}_{A_2}:=\{[0,x',x''']\in H^n_n:x'=A_2x'''\}$ and $x''':=(x_{r+1},\ldots,x_{n-1})^T$. By construction,
\begin{equation}\label{eq:equa}
W\cap(\{p\}\cup(\widehat{\pi}_{A_1})^{-1}(\widehat{V}_{A_2}))=\varnothing.
\end{equation}
Let $\pi_{A_2}:L^{n-1}\to L^r$ be the projection $\pi_{A_2}(x',x'''):=x'-A_2x'''$ in the direction of $V_{A_2}$. Let us compute the matrix $M$ associated to the ($L$-linear) projection $\pi_{A_2}\circ\pi_{A_1}:L^n\to L^r$ (with respect to the standard bases of $L^n$ and $L^r$). Define $A_{1,1}:=(a_1,\ldots,a_r)^T\in\mc{M}_{r,1}(L)$, $A_{1,2}:=(a_{r+1},\ldots,a_{n-1})^T\in\mc{M}_{n-1-r,1}(L)$ and $B:=-A_{1,1}+A_2A_{1,2}\in\mc{M}_{r,1}(L)$. Consider the following block matrix product:
\begin{align}\label{eq:equa-A}
M&=
\textstyle
\left(
\begin{array}{c|c}
I_{r} & -A_2
\end{array}
\right)
\left(
\begin{array}{c|c}
I_{n-1} & -A_1
\end{array}
\right)
=
\left(
\begin{array}{c|c}
I_r & -A_2
\end{array}
\right)
\left(
\begin{array}{c|c|c}
I_r & 0 & -A_{1,1}\\
\hline
0 &I_{n-1-r} & -A_{1,2}
\end{array}
\right)\nonumber\\
&=
\left(
\begin{array}{c|c|c}
I_r & -A_2 & B
\end{array}
\right),
\end{align}
where $I_h$ denotes the $(h\times h)$-identity matrix. Define $A:=\big(-A_2\,|\,B\big)\in\mc{M}_{r,n-r}(L)$. By \eqref{eq:equa-A}, $\widehat{V}_A=\{p\}\cup(\widehat{\pi}_{A_1})^{-1}(\widehat{V}_{A_2})$, whereas $W\cap\widehat{V}_A=\varnothing$ by \eqref{eq:equa}, as required. 
\end{proof}


\bibliographystyle{amsalpha}

\end{document}